\documentclass[11pt,letterpaper]{amsart}

\usepackage[T1]{fontenc}
\usepackage{lmodern}
\usepackage{mathtools}
\usepackage{amssymb}
\usepackage{mathrsfs}
\usepackage[margin=1in]{geometry}
\usepackage{microtype}
\usepackage{enumitem}
\usepackage[numbers,square,sort&compress]{natbib}
\usepackage[hidelinks]{hyperref}
\usepackage{bookmark}
\newcommand{\doi}[1]{doi:\ \href{https://doi.org/#1}{\nolinkurl{#1}}}

\setlist[enumerate,1]{label=\textup{(\arabic*)},ref=\textup{(\arabic*)}}

\makeatletter
\renewcommand{\paragraph}{\@startsection{paragraph}{4}%
  \z@{.35\linespacing\@plus.15\linespacing}{-.75em}%
  {\normalfont\bfseries}}
\makeatother

\theoremstyle{plain}
\newtheorem{theorem}{Theorem}[section]
\newtheorem{lemma}[theorem]{Lemma}
\newtheorem{proposition}[theorem]{Proposition}
\newtheorem{corollary}[theorem]{Corollary}
\theoremstyle{definition}
\newtheorem{definition}[theorem]{Definition}
\newtheorem{assumption}[theorem]{Assumption}
\newtheorem{eexample}[theorem]{Example}
\theoremstyle{remark}
\newtheorem{remark}[theorem]{Remark}
\numberwithin{equation}{section}

\newcommand{\R}{\mathbb R}
\newcommand{\N}{\mathbb N}
\DeclareMathOperator{\Law}{Law}
\DeclareMathOperator{\supp}{supp}
\DeclareMathOperator{\dist}{dist}
\DeclareMathOperator{\Lip}{Lip}
\newcommand{\1}{\mathbf 1}
\allowdisplaybreaks[2]
\newcommand{\manuscripttitle}[2][]{%
  \title[#1]{#2}%
  \hypersetup{pdftitle={#2}}%
}
\hypersetup{
  pdfauthor={Jihao Long; Zhenhua Zhao},
  pdfsubject={Manuscript prepared for the Courant Journal of Pure and Applied Mathematics}
}

\manuscripttitle[Hessian estimates: coefficient mismatch]{%
Optimal Weighted \texorpdfstring{$L^2$}{L2} Hessian Estimates for Parabolic Equations:
General Diffusion and Coefficient Mismatch}
\author[Jihao Long]{Jihao Long\textsuperscript{*}}
\thanks{\textsuperscript{*}Institute for Advanced Algorithms Research,
Shanghai, China
(\href{mailto:longjh1998@gmail.com}{\nolinkurl{longjh1998@gmail.com}};
corresponding author).}
\author[Zhenhua Zhao]{Zhenhua Zhao\textsuperscript{\textdagger}}
\thanks{\textsuperscript{\textdagger}Peking University, Beijing, China
(\href{mailto:zhenhuazhao@stu.pku.edu.cn}{\nolinkurl{zhenhuazhao@stu.pku.edu.cn}}).}
\date{}
\subjclass[2020]{Primary 35B45, 35K10; Secondary 35B65, 60H10, 60H30}
\keywords{weighted Sobolev estimates; parabolic equations; diffusion marginals; heat kernel estimates}

\begin{document}
\begin{abstract}
We study weighted $L^2$ Hessian estimates under diffusion marginals,
allowing the equation's principal matrix to be chosen independently.
For regular uniformly elliptic coefficients, bounded reference drift
and compactly supported doubling initial laws, the optimal large-damping
constant on the full interval is the maximum of initial, flat and
propagation contributions. Finiteness yields weighted Sobolev
well-posedness and a contraction criterion for nonlinear perturbations.
An explicit operator formula determines the initial contribution;
propagation bounds become exact under uniform directional limits or
asymptotic isotropy at infinity. Under geometric assumptions including
nonnegative Ricci curvature, propagation is determined by terminal
momenta of action-minimizing paths. For reference covariance $a$,
matching principal matrix $a/2$ and Hessian normalization
$a^{1/2}D^2u\,a^{1/2}$, every initial law gives a finite estimate.
Without an additional time weight, the limiting constant lies between
$2$ and $2\sqrt2$: finite atomic laws attain the upper bound, while
nondegenerate Gaussian laws and their finite mixtures attain the lower
bound.
\end{abstract}
\maketitle

\section{Introduction}
\label{sec:introduction}

We determine optimal Hessian estimates for parabolic equations when
the source and the solution are measured in $L^2$ along a prescribed
diffusion.  We ask when the Hessian $D^2u$ can be estimated in terms of the forcing term in the same marginal $L^2$ norm, and how to compute the optimal limiting constant from the diffusion coefficients and initial law.
The coefficients of the equation need not agree with those of
the reference diffusion.

Fix $T>0$, an initial probability law $\mu$, and a reference diffusion
\begin{equation}
dX_t=b(t,X_t)\,dt+a(t,X_t)^{1/2}\,dW_t,
\qquad \Law(X_0)=\mu.
\label{eq:intro-diffusion}
\end{equation}
Write $\mu_t=\rho(t,\cdot)\mathcal L^d$ for its law at $t>0$. Given a coefficient matrix $A = A(t,x)$,
consider:
\begin{equation}
P_Au:=-u_t-A:D^2u=f,
\qquad \operatorname{Tr}_T u=g.
\label{eq:intro-equation}
\end{equation}
We allow the equation's coefficients to be chosen independently
of the reference generator, a setting we refer to as
\emph{coefficient mismatch}. In particular, $A$ need not equal $a/2$.
A bounded first-order coefficient in the equation may also differ
from $b$; we omit this term from $P_A$ because it does not change
the limiting Hessian gain (Corollary~\ref{cor:borel-perturbations}).
We also choose an independent observation matrix field $q=q(t,x)$ to measure the Hessian through
$\mathcal H_qu=q^{1/2}D^2u\,q^{1/2}$.
For $\alpha\ge0$, $I=(0,T)$ and $J=(s_0,s_1)\subseteq I$, set
\begin{equation}
\|f\|_{\alpha,J}^2
 =\int_J t^\alpha\mathbb E|f(t,X_t)|^2\,dt
 =\int_J\int_{\R^d}t^\alpha\rho(t,x)|f(t,x)|^2\,dx\,dt.
\label{eq:intro-marginal-norm}
\end{equation}
The Hessian is measured in the same marginal norm:
\begin{equation*}
\|\mathcal H_qu\|_{\alpha,J}^2
 =\int_J t^\alpha\mathbb E\left[
 \left|q(t,X_t)^{1/2}D^2u(t,X_t)q(t,X_t)^{1/2}\right|^2
 \right]\,dt.
\end{equation*}
For solutions of $P_Au=f$ on $J$ with zero terminal data at $s_1$,
we seek to control this Hessian norm by the source norm
$\|f\|_{\alpha,J}$ and determine the optimal constant on short
intervals and in the limit of large damping. The estimates for
$u$, $u_t$ and $Du$ use the same marginal norm.

The choice of an $L^2$ theory is motivated by the error metric in
high-dimensional numerical approximation. Classical convergence
analysis for fully nonlinear PDEs often uses the viscosity-solution
framework of \citet{barles-souganidis-1991}: monotonicity,
$L^\infty$ stability and consistency, together with comparison,
yield convergence. The probabilistic monotone schemes of
\citet{fahim-touzi-warin-2011,guo-zhang-zhuo-2015} follow this
approach. Incorporating learning errors into its uniform error
control requires more than their average size under the sampling
distribution.

Many deep learning solvers use squared regression losses
  \citep{hure-pham-warin-2020,pham-warin-germain-2021}.
  Such losses measure $L^2$ errors under the sampling law but
  need not control pointwise errors. This motivates stability
  estimates for strong Sobolev solutions that control solution
  and derivative errors by source perturbations in the same norm,
  particularly when the Hessian enters the nonlinearity.
  Deep Picard Iteration (DPI) \citep{han-hu-long-zhao-2026}
  provides a direct example: its value and gradient regressions
  use the marginal $L^2$ metric in \eqref{eq:intro-marginal-norm}
  with $\alpha=0$, up to normalization. The zeroth-order deep learning method
  (ZOD) of \citet{jia-ouyang-pham-zhou-2026} likewise uses
  marginal $L^2$ estimates, including Hessian errors, in its analysis.

The need for \emph{optimal constants} comes from quantitative
stability and perturbation arguments. Write $K_\alpha(I)$ for the
optimal limiting Hessian gain, defined precisely in
\eqref{eq:intro-short-constant} below. Once a bounded linear
inverse is available, the standard near-operator argument
\citep{Campanato1994,SmearsSuli2016} handles a remainder
$\mathcal N(t,x,u,Du,D^2u)$ with Hessian Lipschitz constant $L_2$
in the observable specified by $q$ whenever $L_2K_\alpha(I)<1$;
damping absorbs finite lower-order Lipschitz constants.
We record this Picard application in
Proposition~\ref{prop:nonlinear-terminal-problem}.
The exact gain determines how strong a Hessian-dependent
perturbation this argument can accommodate, and the limiting
amplification of source errors. A nonoptimal upper bound can
restrict the range of Sobolev well-posedness and stability.

This also motivates allowing coefficient mismatch.
For a fixed sampling law, choosing the linear center $A$ in
$P_Au=f+\mathcal N(t,x,u,Du,D^2u)$ changes both the remainder's
Hessian Lipschitz constant and the linear gain, so a useful center
can differ from $a/2$. Independent sampling and equation
coefficients also arise in off-policy evaluation: target and
behavior policies can generate
different diffusions \citep{SuttonMahmoodWhite2016,jia-zhou-2022}.
Their principal matrices can differ when control affects the
diffusion coefficient. Changing the sampling process to match
the equation's coefficients would change the norm in which
accuracy is sought.
When the sampling law itself can be chosen, mismatch can also improve
the optimal gain. For $A=1/2$, $q=1$ and $\alpha=0$, increasing the
variance of a scalar Brownian point start from $t$ to $2t$ reduces
the limiting constant from $2\sqrt2$ to at most
$1+\sqrt3<2\sqrt2$ (Example~\ref{ex:mismatch-improves-bound}).

The \emph{matching} choice $A=a/2$, $q=a$ is natural from the
probabilistic viewpoint; $q=a$ fixes the Hessian normalization.
For the reference generator, Feynman--Kac and, under suitable
regularity assumptions, Bismut--Elworthy--Li formulas represent
solutions and their spatial derivatives along the reference
diffusion \citep{elworthy-li-1994}. Direct estimates from these
formulas give a crude Hessian bound of order $(s-t)^{-1}$ in the
marginal $L^2$ norm (cf.~Lemma~\ref{lem:reference-smoothing}).
Taking norms inside Duhamel's integral therefore produces a
nonintegrable kernel. These bounds alone do not establish even
finiteness of the nonhomogeneous $L^2$ Hessian estimate for an
arbitrary source.

The time weight $t^\alpha$ allows us to reduce the influence of initial
concentration on the optimal Hessian constant. Increasing $\alpha$
gives less relative weight to observations near $t=0$, when a diffusion
started from a point or another singular law is most concentrated.
The optimal limiting constant is nonincreasing in $\alpha$
(Lemma~\ref{lem:weight-operations}). In the three-part formula below,
only the initial contribution depends on $\alpha$.
On subintervals we retain $t^\alpha$, keeping the weight tied
to the original initial time.

\subsection{Main results}
\label{subsec:intro-results}

Our basic coefficient assumptions allow bounded Borel reference
drift $b$ and bounded uniformly elliptic covariance $a$ that is
Lipschitz in space and has a square-root Dini modulus in time.
The independent matrices $A,q$ are bounded, uniformly elliptic
and uniformly continuous in the parabolic metric
(Assumption~\ref{ass:coefficients}).
The main identification theorem applies on the full interval
  $I=(0,T)$ to the class $\mathcal D$ of compactly supported
  doubling initial laws. Doubling controls how local mass grows
  when a ball's radius is doubled \citep{mattila-1995}.
  Examples include point masses, finite empirical distributions,
  uniform laws on balls and cubes, and normalized length or area
  measures on compact smooth curves or surfaces.

The first step, developed in Section~\ref{sec:sharp-constants},
connects damping to a local quantity that can be identified by
rescaling. Exponential damping suppresses the lower-order gains
and concentrates the inverse near short time separations.
The relevant intervals can move throughout $I$.
With the weighted spaces defined in Subsection~\ref{subsec:equation},
we define their limiting Hessian gain by
\begin{equation}
K_\alpha(I)
 =\lim_{h\downarrow0}
   \sup_{\substack{J\subseteq I\\0<|J|\le h}}
   \ \sup_{u\in\mathcal W_{\alpha,0}(J)}
       \frac{\|\mathcal H_qu\|_{\alpha,J}}{\|P_Au\|_{\alpha,J}}.
\label{eq:intro-short-constant}
\end{equation}
We use $0/0=0$ and allow $K_\alpha(I)=+\infty$, so this definition
does not presume a bounded inverse. When finite, it equals the
large-damping limit of the optimal Hessian norm with the additional
weight $e^{-2\kappa(T-t)}$. The optimal value and gradient gains
are respectively $O(\kappa^{-1})$ and $O(\kappa^{-1/2})$.
Thus the short-interval analysis identifies the optimal Hessian
constant in the complete linear estimate.

The second step is to show that constant-coefficient limit models
exhaust this gain. We jointly rescale the density and the matrices
$a,A,q$, keeping their correlations. If the initial time boundary
remains visible, the limit is a heat equation on a time half-line,
weighted by the heat evolution of a rescaled initial measure and
by the surviving power of time. If that boundary recedes, we
use whole-line models with a flat weight or a positive mixture
of heat exponentials
$\exp{(2(s\zeta^{\mathsf T}a_*\zeta-\zeta\cdot z))}$.
These alternatives account for initial concentration, locally
negligible density variation, and nontrivial propagation into the
tails. Propagation includes both spatial escape and observations
near fixed points outside the initial support as $t\downarrow0$.
Theorem~\ref{thm:sharp-limit} gives the exact decomposition
\begin{equation}
K_\alpha(I)
 =\max\{K_{\alpha,\mathrm{init}}(I),
         K_{\mathrm{flat}}(I),K_{\mathrm{prop}}(I)\}
 \quad\text{in }[0,\infty].
\label{eq:intro-three-contributions}
\end{equation}
Section~\ref{sec:proof} proves this formula by recovering every
model gain with tests for the original problem and obtaining the
reverse bound through uniform localization. Consequently, these
models determine the optimal constant and detect failure of a
uniform estimate at every short scale.
The flat contribution is immediately computable:
$K_{\mathrm{flat}}(I)=\sup_{(t,x)\in I\times\R^d}
\lambda_{\max}(A(t,x)^{-1/2}q(t,x)A(t,x)^{-1/2})$.

Finiteness of $K_\alpha(I)$ also ensures that the linear terminal
problem \eqref{eq:intro-equation} is well posed in the weighted
Sobolev space $\mathcal W_\alpha(I)$. More generally, for every
$J=(s_0,s_1)\subseteq I$, Theorem~\ref{thm:linear-theory} establishes
the bounded linear isomorphism
\begin{equation*}
(P_A,\operatorname{Tr}_{s_1}):\mathcal W_\alpha(J)
 \longrightarrow\mathcal H_\alpha(J)\times H^1(\mu_{s_1}).
\end{equation*}
Thus arbitrary source and terminal data in these spaces determine
a unique solution that depends continuously on the data, with
control of $u,u_t,Du,D^2u$ in the same marginal norm.
The zero-terminal solution operators are uniformly bounded over
subintervals (Appendix~\ref{app:realization}).
This well-posedness result and the equivalence between
the short-interval and damping limits hold for arbitrary initial
probability laws whenever $K_\alpha(I)<\infty$.

The third step, carried out in Section~\ref{sec:explicit-constants},
analyzes \eqref{eq:intro-three-contributions} by relating the
rescaled density limits to the original data.
The initial term depends only on $\mu,\alpha$ and the time-zero
fields $(a_0,A_0,q_0)=(a,A,q)(0,\cdot)$.
Normalized tangent measures at moving support centers
\citep{mattila-1995}, convolved with the frozen reference Gaussian,
yield explicit operator norms without the SDE marginal
(\eqref{eq:initial-spectral-formula}). For homogeneous measures,
these norms reduce to spatial resolvent problems
(Proposition~\ref{prop:homogeneous-initial-norm}), while Gaussian
gains admit convergent finite Hermite matrix approximations.
Finite atomic laws give Gaussian models, and positive densities on
a smooth support boundary give flat interior and half-space
boundary models; their gains are independent of atomic
masses and density values.

For the propagation term, each whole-line mixture gain depends on
the mixing measure only through its support directions. It is the
maximum of the flat gain and the supremum of reciprocal values of
the frozen directional ratio
\begin{equation*}
\mathsf R(a,A,q;v)=\frac{v^{\mathsf T}(a-A)v}{v^{\mathsf T}qv},
\qquad v\ne0,
\end{equation*}
over those directions, with a nonpositive ratio giving infinite
gain (Proposition~\ref{prop:whole-line-gains}). The remaining task
is to identify which directions the SDE realizes and the coefficient
limits with which they occur. Pointwise spectra do not suffice:
smooth fields with identical spectra and coefficient bounds can
have optimal limiting constants $2\sqrt2$ and $6$ at $\alpha=0$
(Example~\ref{ex:same-spectra}).

Comparing all tail directions with those recoverable by actual
density limits gives bounds directly from the coefficient fields
(Theorem~\ref{thm:coefficient-formulas}). They become exact when
the directional ratio has a uniform limit at spatial infinity
or is asymptotically isotropic, including in one dimension.
Alongside the initial term, the formula retains local propagation,
determined by shortest paths from the support in the metric
induced by $a_0^{-1}$
(Proposition~\ref{prop:local-directions}; see
\citet{burago-burago-ivanov-2001} for the metric notions).
These criteria use only the original data.

For fixed coefficient fields, laws in $\mathcal D$ with the same
support have the same propagation contribution. Their optimal
constants can differ only through the initial term
(Corollary~\ref{cor:propagation-support}).
In the matching case $A=a/2$, $q=a$, the complete finite estimate
holds for every initial probability law, with dimension-independent
bounds
\(
2\le K_\alpha(I)\le C_\alpha^{\rm G}\le2\sqrt2
\)
(Theorem~\ref{thm:matching-arbitrary-laws},
part~\ref{item:matching-arbitrary-laws}). Here $C_\alpha^{\rm G}$
is the Gaussian point-source constant, attained by finite atomic
laws. The lower endpoint is attained by the class $\mathcal G$ of
positive continuous densities with exponential-mixture tail
profiles, including nondegenerate Gaussians and their finite
mixtures (Corollary~\ref{cor:matching-g-initial-laws}). At
$\alpha=0$, the atomic value is $2\sqrt2$; for $\alpha\ge1/2$,
every initial law has constant $2$.

Constant matrices admit complete formulas for $\mu\in\mathcal D$,
combining the initial contribution with explicit eigenvalue terms
(Corollary~\ref{cor:constant-matrix-formulas}). They are independent
of the admissible bounded Borel drift and reduce to the Gaussian
initial norm for finite atomic laws. For a Brownian point start,
finiteness is equivalent to $a-A\succ0$.
With $a=q=I_d$ and $A=cI_d$, the exact range $0<c<1$ is
substantially larger in high dimension than
$|c-1/2|<1/(2\sqrt d)$, obtained by Frobenius perturbation
absorption from matching when $\alpha\ge1/2$.

The preceding exact evaluations rely on structural conditions on the
coefficient fields. We next impose geometric conditions on the
reference SDE, allowing arbitrary admissible $A,q$.
Under Assumption~\ref{ass:geometry}, including nonnegative Ricci
curvature, Theorem~\ref{thm:geometric-formulas} identifies the
propagation term with the asymptotic supremum of directional gains
over terminal momenta of action-minimizing paths from the initial
support. These paths extend geodesics to time-dependent coefficients
and drift \citep{bernard-2012-lax-oleinik,burago-burago-ivanov-2001,petersen-2016}.
Together with the initial and flat contributions, this gives a
complete deterministic formula for $K_\alpha(I)$ for every
$\mu\in\mathcal D$ using only the original data.
For standard Brownian motion started at the origin, minimizing
paths are straight segments, so only radial directions enter the
propagation term. With variable $A,q$,
Example~\ref{ex:homogeneous-reference} gives an exact formula and
reduces finiteness to uniform positivity of the radial margins
at time zero and at spatial infinity. The same formula holds after
adding any bounded spatially Lipschitz reference drift.

Finally, we construct a smooth negative-curvature example with
finite gain but a strictly smaller deterministic path prediction
(Example~\ref{ex:negative-curvature}, Appendix~\ref{app:negative-curvature}).
It satisfies the other geometric hypotheses, showing that the
additional nonnegative Ricci curvature assumption cannot simply
be dropped from the path characterization.
A Borel principal matrix has positive radial margins but infinite
gain on every interval; its smooth approximations share a finite
limiting constant without a common threshold for a uniform finite
short-interval bound (Example~\ref{ex:rough-principal},
Appendix~\ref{app:rough-principal}). This shows that positivity
cannot replace coefficient regularity and that the limiting
constant alone does not control the required time scale.
A smooth stationary diffusion with bounded elliptic covariance
and Lipschitz drift of linear growth gives positive-time matching
quotients above the bounded-drift value $2$, even with the full
reference drift in the PDE (Proposition~\ref{prop:linear-growth-transport}
in Appendix~\ref{app:linear-growth-transport}). Its moving limits
retain time-dependent principal coefficients, so extending the
matching result to such drifts requires additional local models.

\subsection{Related literature}
\label{subsec:intro-literature}

Two questions guide the comparison with existing work: full
weighted Sobolev regularity and the optimal limiting Hessian
constant. We are not aware of a full marginal $L^2$ estimate for
arbitrary sources and arbitrary initial laws under our basic
coefficient assumptions, even in the matching case. For
coefficient mismatch, we are also not aware of a complete formula
for the optimal limiting constant and a necessary and sufficient
finiteness criterion, even for Brownian reference marginals.

Complete nonhomogeneous estimates are available for specific
invariant and evolution measures. \citet[Theorem~1.3]{GeissertLunardi2008}
prove full $L^2$ terminal regularity with arbitrary forcing and
$H^1$ terminal data for nonautonomous Ornstein--Uhlenbeck equations;
\citet{geissert-lorenzi-schnaubelt-2010} develop the corresponding
$L^p$ theory. Their Gaussian evolution measures are tied to
stable linear dynamics and have uniformly nondegenerate
covariance. In an invariant measure, \citet{lorenzi-lunardi-2006}
prove elliptic Hessian estimates for self-adjoint operators with
state-dependent, possibly unbounded diffusion coefficients under
structural conditions. The stationary Langevin estimate in
\citet[Appendix~A]{jia-ouyang-pham-zhou-2026} similarly uses an
invariant measure to verify a contraction condition for its
learning method. Our matching estimate allows any prescribed
initial law, including point masses, and remains uniform over
subintervals approaching the finite initial time
(Theorem~\ref{thm:matching-arbitrary-laws},
part~\ref{item:matching-arbitrary-laws}).

For general nonautonomous Kolmogorov operators,
\citet{kunze-lorenzi-lunardi-2010} construct evolution families
and establish gradient estimates for homogeneous equations.
\citet{GeissGobet2014} relate fractional smoothness of terminal
data to gradients and Hessians of homogeneous backward solutions
along point-start diffusions under suitable changes of measure.
\citet{zhang-2004} establishes time regularity of the BSDE
martingale integrand for numerical discretization.
Our estimate takes arbitrary marginal $L^2$ forcing to a Hessian
in the same norm. Coefficient mismatch requires a further step:
equivalent changes of path measure preserve quadratic variation
and therefore cannot change the reference covariance to match
an independently prescribed principal matrix.

Classical Sobolev and operator maximal-regularity theories
\citep{Krylov2008,DenkHieberPruess2003}, trace theory for temporal
power weights \citep{meyries-schnaubelt-2012-traces}, and
Muckenhoupt-weighted estimates such as \citet{DongKrylov2019}
supply the broader analytic framework. Our weight couples
initial concentration with rapidly decaying spatial tails.
Even a Gaussian density is not a global spatial Muckenhoupt
weight, and conjugating by $\rho^{1/2}$ introduces logarithmic
density derivatives and potentials that need further control.
For the exact constant, lower-norm localization and limit-operator
methods
\citep{LindnerSeidel2014,HaggerLindnerSeidel2016,LastSimon2006}
provide precedents for recovering essential norms and spectra
from limits at infinity. Our localization must also preserve the
initial time boundary, relative density weights, and jointly
realized matrix fields to recover the optimal Hessian gain.

Known density estimates control the size and spatial derivatives
of transition kernels through Gaussian bounds
\citep{menozzi-pesce-zhang-2020}, and their sensitivity to coefficient
perturbations \citep{konakov-kozhina-menozzi-2017}.
For smooth time-independent diffusions, \citet{sheu-1991} also
bounds derivatives of the logarithmic transition density.
Our optimal-constant problem requires identifying the local shape
of the marginal density at centers approaching time zero or
escaping to spatial infinity. Gaussian upper and lower bounds
can differ exponentially in the tails, so a small absolute error
need not be small relative to the density. Bounds on logarithmic
derivatives constrain local variation but do not determine the
rescaled profiles needed to preserve the optimal weighted norm.

Varadhan identifies intrinsic distance in short-time logarithmic
kernel decay \citep[Theorem~2.2]{Varadhan1967Heat} and proves
killed-to-full kernel ratio limits under geodesic confinement
or exclusion hypotheses for drift-free diffusions with
time-independent uniformly elliptic H\"older covariance
\citep[Theorems~2.4 and~4.9]{Varadhan1967Diffusion}.
For action of order $M=|x-y|^2/(t-r)$, a qualitative $o(M)$
remainder need not be smaller than the action gaps used to
localize bridges to shrinking regions.
Building on \citet{norris-stroock-1991}, Section~\ref{sec:densities}
proves uniform $O(1+M^{2/3})$ kernel--action errors, improved to
$O(1+M^{1/3})$ for bounded endpoint displacement
(Proposition~\ref{prop:kernel-action}). These power bounds make
the errors smaller than the chosen action gaps and permit
relative kernel comparison over a shrinking terminal interval.
For $\mu\in\mathcal D$, this approximates the profiles
$\rho(t+\ell^2s,x+\ell z)/\rho(t,x)$ on the natural tail scale
$\ell$ by positive mixtures of heat
exponentials with vanishing relative error
(Lemma~\ref{lem:posterior-phase}). Together with initial heat
limits and Hardy estimates (Lemma~\ref{lem:dynamic-hardy}), this
controls localization errors in the weighted norms.
The estimates allow time-dependent coefficients and spatial
escape at positive times; uniformity in the starting point also
permits integration over arbitrary initial laws in the matching
argument.

The geometric formula requires errors smaller than the scale of
local logarithmic density increments. Heat-kernel comparison
under curvature bounds \citep{cheeger-yau-1981} and Harnack
estimates for smooth evolving metrics on compact manifolds
\citep{guenther-2002} provide geometric precedents.
Under Assumption~\ref{ass:geometry}, we obtain an
$O(1+M^{1/3}+\log(2+M))$ kernel--action remainder even for
unbounded endpoint displacement
(Proposition~\ref{prop:geometric-kernel}). After integration
against the initial law, increments of $-\log\rho$ and of the
minimum action from the support agree up to $o(L)$ at a local
increment scale $L$ smaller than the total action scale
(Proposition~\ref{prop:geometric-marginals} and
Lemma~\ref{lem:growing-windows}). This precision identifies
propagation gains through terminal momenta of minimizing paths.

\section{Optimal constants under diffusion marginals}
\label{sec:sharp-constants}

Positive constants $c,C$ may change from line to line.
For $c,C,O,o,\asymp$, dependence on the fixed parameters specified
in a result's statement is suppressed in the notation, including
in its proof. Any additional dependence is indicated explicitly.
Vector norms are Euclidean, matrix norms are Frobenius, and
$C:D=\operatorname{tr}(C^{\mathsf T}D)$.
For a symmetric positive definite matrix $G$, write
$|v|_G^2=v^{\mathsf T}Gv$.
We use $\preceq$ for the order on symmetric matrices and $I_d$
for the identity matrix; $\asymp$ for positive definite matrices
denotes comparison in this order. We write
$B(x,r)=\{z\in\R^d:|z-x|<r\}$ and $B_R=B(0,R)$.
For $0\le f\in L^1_{\mathrm{loc}}(\R^d)$, the Radon measure
$f\mathcal L^d$ is defined by
$(f\mathcal L^d)(E)=\int_E f(x)\,dx$ for Borel $E$;
it is equivalent to $\mathcal L^d$ exactly when $f>0$ almost everywhere.
We write $\mathcal P(K)$ for probability measures supported on $K$,
$\mathcal D'$ for distributions, $\mathbb N_0=\{0,1,\ldots\}$,
and $T_\#\nu$ for pushforward by $T$.
Weak convergence of probabilities is tested against bounded continuous
functions, and vague convergence of Radon measures against $C_c$.
The total variation is
$\|\nu-\widetilde\nu\|_{\mathrm{TV}}
=\sup_{f \text{ Borel, }|f|\le1}|\int f\,d(\nu-\widetilde\nu)|$.

\subsection{The equation and its optimal constants}
\label{subsec:equation}

We retain the reference diffusion and operators in
\eqref{eq:intro-diffusion}--\eqref{eq:intro-equation}, with
$\mathcal H_qu=q^{1/2}D^2u\,q^{1/2}$.

\begin{assumption}[Coefficients]\label{ass:coefficients}
The coefficients are defined on $[0,T]\times\R^d$.
The functions $a,b$ are Borel measurable, $a$ is symmetric, and
\begin{equation*}
0<\lambda_a I_d\preceq a(t,x)\preceq\Lambda_a I_d,
\qquad |b(t,x)|\le B_b,
\qquad
|a(t,x)-a(t,y)|\le L_a|x-y|.
\end{equation*}
There are a nondecreasing continuous modulus $\omega_a$, with
$\omega_a(0)=0$, and a number $h_*>0$ such that
\begin{equation*}
\sup_x|a(t,x)-a(v,x)|\le\omega_a(|t-v|),
\qquad
\int_0^{h_*}\frac{\sqrt{\omega_a(h)}}h\,dh<\infty.
\end{equation*}
The matrices $A,q$ are symmetric and satisfy
\begin{equation*}
0<\lambda_A I_d\preceq A(t,x)\preceq\Lambda_A I_d,
\qquad
0<\lambda_q I_d\preceq q(t,x)\preceq\Lambda_q I_d,
\end{equation*}
where all ellipticity constants are positive. For a nondecreasing
continuous modulus $\omega_{A,q}$ with $\omega_{A,q}(0)=0$,
\begin{equation*}
|A(t,x)-A(v,y)|+|q(t,x)-q(v,y)|
\le \omega_{A,q}\bigl(|t-v|^{1/2}+|x-y|\bigr).
\end{equation*}
\end{assumption}
The \emph{reference data} are
$d,T,\lambda_a,\Lambda_a,L_a,B_b,\omega_a,h_*$.
The \emph{$A,q$ ellipticity data} are
$\lambda_A,\Lambda_A,\lambda_q,\Lambda_q$.

By Lemma~\ref{lem:kernel-stability}, the reference diffusion has a
strictly positive, jointly continuous transition density $k$.
We set $\mu_0=\mu$ and
\begin{equation*}
\rho(t,x)=\int_{\R^d}k(0,y;t,x)\,\mu(dy),\qquad
\mu_t=\rho(t,\cdot)\mathcal L^d,\quad\forall t\in(0,T].
\end{equation*}
When displaying the initial law, we write $\rho^\mu=\rho$,
with $\rho^y=\rho^{\delta_y}$ for a point start and
$\rho^p=\rho^{p\mathcal L^d}$ for an initial density.

We use the same weighted spaces for the diffusion marginals and
the frozen Brownian heat densities introduced below.
Fix $\alpha\ge0$ and $J=(s_0,s_1)$ with $0\le s_0<s_1<\infty$.
For a positive continuous function $F$ on $(s_0,s_1]\times\R^d$, set
\begin{equation*}
\mathcal H_\alpha(J;F)
 =L^2(J\times\R^d,t^\alpha F(t,x)\,dt\,dx),
\qquad
\|f\|_{\alpha,J;F}=\|f\|_{\mathcal H_\alpha(J;F)}.
\end{equation*}
The weighted Sobolev space and its zero-terminal subspace are
\begin{equation*}
\mathcal W_\alpha(J;F)
 =\{u:u,u_t,Du,D^2u\in\mathcal H_\alpha(J;F)\}, \qquad
\mathcal W_{\alpha,0}(J;F)
 =\{u\in\mathcal W_\alpha(J;F):
                  \operatorname{Tr}^{\mathrm{loc}}_{s_1}u=0\},
\end{equation*}
with norm $\|u\|_{\mathcal W_\alpha(J;F)}^2
 =\|u\|_{\alpha,J;F}^2+\|u_t\|_{\alpha,J;F}^2+\|Du\|_{\alpha,J;F}^2+\|D^2u\|_{\alpha,J;F}^2$.
Derivatives are understood in the sense of distributions, and
$\operatorname{Tr}^{\mathrm{loc}}_{s_1}$ denotes the $H^1_{\mathrm{loc}}$
terminal trace in Lemma~\ref{lem:graph-core}. The left endpoint $s_0$ is free.
For approximation and localization, define the smooth test class
\begin{equation*}
\mathcal C_{\alpha,0}(J;F)=\bigl\{u\in C^\infty(J\times\R^d)\cap\mathcal W_\alpha(J;F):\ \exists R,\varepsilon>0,\ \supp(u)\subset(s_0,s_1-\varepsilon]\times\overline B_R\bigr\}.
\end{equation*}
Here $\supp(u)$ is taken relative to $J\times\R^d$; the radius
$R$ and terminal gap $\varepsilon$ may depend on $u$.
No vanishing or smooth extension is required at the left endpoint.
When $\alpha=0$, these definitions also apply to any finite interval
$J\subset\R$. For $F=\rho$ and $J\subseteq(0,T)$, we omit the
density argument in all four spaces and their norms; in particular,
$\mathcal H_\alpha(J)=\mathcal H_\alpha(J;\rho)$ and
$\|f\|_{\alpha,J}=\|f\|_{\alpha,J;\rho}$.

\begin{lemma}[Natural domain and smooth core]\label{lem:graph-core}
For every $F$ as above, the space $\mathcal W_\alpha(J;F)$ is Hilbert,
the trace map
$\operatorname{Tr}^{\mathrm{loc}}_{s_1}:\mathcal W_\alpha(J;F)\to H^1_{\mathrm{loc}}(\R^d)$
is continuous, and $\mathcal W_{\alpha,0}(J;F)$ is a closed subspace. Moreover,
\begin{equation*}
\mathcal C_{\alpha,0}(J;F)\subset\mathcal W_{\alpha,0}(J;F),
\qquad
\overline{\mathcal C_{\alpha,0}(J;F)}^{\,\mathcal W_\alpha(J;F)}
 =\mathcal W_{\alpha,0}(J;F).
\end{equation*}
For each $R>0$, the trace norm into $H^1(B_R)$ depends only
on $d,J,R$ and a positive lower bound for $t^\alpha F$ on
$[(s_0+s_1)/2,s_1]\times\overline B_{2R}$.
For $u\in\mathcal W_{\alpha,0}((s_0,s_1);F)$, let $\widetilde u$
be its zero extension to $(s_0,\infty)\times\R^d$. Then
\begin{equation*}
u|_{(c,s_1)}\in\mathcal W_{\alpha,0}((c,s_1);F)
\qquad\forall c\in[s_0,s_1), \qquad
\partial_t\widetilde u=\1_{(s_0,s_1)}u_t
\quad\text{in }\mathcal D'((s_0,\infty)\times\R^d).
\end{equation*}
\end{lemma}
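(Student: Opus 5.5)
The approach is to reduce everything to local unweighted parabolic Sobolev theory. Since $F$ is positive and continuous on $(s_0,s_1]\times\R^d$, the weight $t^\alpha F$ is bounded above and below by positive constants on every compact subset of $(s_0,s_1]\times\R^d$. Hence $\mathcal W_\alpha(J;F)$ embeds continuously into $W^{1,2}_2((c,s_1)\times B_R)$ for each $c\in(s_0,s_1)$ and $R>0$.

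\textbf{Completeness.} Let $(u_n)$ be Cauchy in $\mathcal W_\alpha(J;F)$. Then $u_n,\partial_tu_n,Du_n,D^2u_n$ converge in the weighted $L^2$ space $\mathcal H_\alpha(J;F)$, and therefore in $L^2_{\mathrm{loc}}(J\times\R^d)$. Distributional derivatives pass to $L^1_{\mathrm{loc}}$ limits, so the limits are $u,u_t,Du,D^2u$. This shows $\mathcal W_\alpha(J;F)$ is complete; the inner product is the obvious one.

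\textbf{Trace map.} Fix $R$ and a cutoff $\chi\in C_c^\infty(B_{2R})$ with $\chi=1$ on $B_R$, together with a time cutoff $\eta$ equal to $1$ near $s_1$ and supported in $((s_0+s_1)/2,s_1]$. Then $\eta\chi u$ lies in the anisotropic space $W^{1,2}_2(((s_0+s_1)/2,s_1)\times B_{2R})$. Its norm is bounded by $C(d,J,R)\,m^{-1/2}\|u\|_{\mathcal W_\alpha}$, where $m$ is the lower bound of $t^\alpha F$ on $[(s_0+s_1)/2,s_1]\times\overline B_{2R}$. The classical trace theorem $W^{1,2}_2\to H^1$ at a time slice (e.g.\ via $\partial_t\|\nabla v\|^2=-2\langle\Delta v,v_t\rangle$ and integrating from a cutoff point) then gives continuity into $H^1(B_R)$, with exactly the stated dependence. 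Closedness of $\mathcal W_{\alpha,0}(J;F)$ follows because it is the kernel of a continuous map into the Fréchet space $H^1_{\mathrm{loc}}$.

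\textbf{Restriction and zero extension.} The restriction claim is immediate, since norms only decrease and the trace is local at $s_1$. For the zero extension, test against $\varphi\in C_c^\infty((s_0,\infty)\times\R^d)$ and compute $\int_{s_0}^{s_1}\!\int u\,\partial_t\varphi$. On the support of $\varphi$ one has $u\in W^{1,2}_2$ locally, so $t\mapsto u(t)\in L^2_{\mathrm{loc}}$ is absolutely continuous up to $s_1$. Integration by parts then yields the boundary term $\int u(s_1)\varphi(s_1)$, which vanishes because the trace is zero. The left endpoint contributes nothing, since $\varphi$ vanishes near $s_0$.

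\textbf{Density of the core.} The inclusion $\mathcal C_{\alpha,0}\subset\mathcal W_{\alpha,0}$ is clear, as the trace of such $u$ at $s_1$ vanishes. For the converse I approximate $u\in\mathcal W_{\alpha,0}$ in three stages.
\begin{enumerate}
\item \emph{Spatial truncation.} Replace $u$ by $\chi_R u$ with $\chi_R=\chi(\cdot/R)$. The commutator terms carry factors $R^{-1}$ or $R^{-2}$ times $|u|$ or $|Du|$ on the annulus, so they tend to zero by dominated convergence, with no growth condition on $F$ needed.
\item \emph{Terminal shift.} Extend by zero past $s_1$, which is legitimate by the extension identity above, and translate forward in time, $u^\tau(t)=\widetilde u(t+\tau)$, so that the support ends at $s_1-\tau$. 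Since $\widetilde u$ has no jump at $s_1$, $\partial_t\widetilde u$ has no singular part there, and $u^\tau\in\mathcal W_\alpha$.
\item \emph{Mollification.} Mollify in $(t,x)$ at a scale much smaller than $\tau$.
\end{enumerate}

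\textbf{Main obstacle.} The difficulty is the free left endpoint $s_0$. There the weight $t^\alpha F$ may degenerate or blow up, for instance when $s_0=0$ or when $F$ is singular as $t\downarrow s_0$, so translations and mollifications need not converge in $\mathcal H_\alpha(J;F)$ near $s_0$. I would handle this with a partition of unity in time. On $(c,s_1]$, with $c>s_0$, the weight is uniformly comparable to a constant on the compact support of the truncated function, so the time shift and the mollification converge there by continuity of translation in $L^2$ for continuous positive weights. Near $s_0$ I would not translate in time at all; I would use spatial mollification only, at a scale $\delta(t)$ and on pieces $(s_0+2^{-k-1},s_0+2^{-k+1})$ where the weight is locally comparable to constants. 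The errors on the pieces are chosen to be summable, with tolerances $\varepsilon2^{-k}$.

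This leaves time smoothness near $s_0$ unaddressed. To obtain full $C^\infty$ there, I would add time mollification with a one-sided kernel supported in $[0,\delta)$, averaging over $u(t+s)$ with $s\ge0$, and $\delta=\delta_k$ small on each piece. Such a kernel never needs values below $s_0$, and local uniform continuity of the weight gives convergence piecewise. The summable choice of $\delta_k$ makes the glued approximation converge in $\mathcal W_\alpha(J;F)$ while keeping its support inside $(s_0,s_1-\varepsilon]\times\overline B_R$.
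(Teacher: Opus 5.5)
Your proposal is correct and follows essentially the paper's route. The local comparability of $t^\alpha F$ with constants reduces completeness and the trace to unweighted parabolic Sobolev theory, and density comes from spatial cutoffs, a terminal-gap step that uses the zero trace, and Meyers--Serrin-type local mollification on the open cylinder with no condition at $s_0$. The only real variation is at the terminal gap: you translate the zero extension in time, using the extension identity you proved first, whereas the paper multiplies by the cutoff $\chi_{t,\varepsilon}$ and bounds $\chi_{t,\varepsilon}'u_R$ by a one-sided Poincar\'e inequality from the zero trace. Both work equally well.
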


The proof is in Appendix~\ref{subsec:graph-approximation}.
Let $I=(t_0,t_1)$ be finite, with $t_0\ge0$ if $\alpha>0$.
For bounded measurable matrix fields $A,q$ on $I\times\R^d$ that
are symmetric and uniformly positive definite, and a positive
continuous weight $F$ on $(t_0,t_1]\times\R^d$, define
\begin{equation}
\begin{aligned}
k_\alpha(J;F,A,q)
 &=\sup_{u\in\mathcal W_{\alpha,0}(J;F)}
       \frac{\|\mathcal H_qu\|_{\alpha,J;F}}{\|P_Au\|_{\alpha,J;F}}
  =\sup_{u\in\mathcal C_{\alpha,0}(J;F)}
       \frac{\|\mathcal H_qu\|_{\alpha,J;F}}{\|P_Au\|_{\alpha,J;F}},\\
K_{\alpha,h}(I;F,A,q)
 &=\sup_{\substack{J\subseteq I\\0<|J|\le h}} k_\alpha(J;F,A,q),
\qquad
K_\alpha(I;F,A,q)=\lim_{h\downarrow0}K_{\alpha,h}(I;F,A,q).
\end{aligned}
\label{eq:short-time-constants}
\end{equation}
Thus $k_\alpha$ measures the gain on a fixed interval,
$K_{\alpha,h}$ allows that interval to move with length at most
$h$, and $K_\alpha$ is the limiting gain as $h\downarrow0$.
The equality of the two suprema defining $k_\alpha$ follows from
Lemma~\ref{lem:graph-core}. For the actual diffusion density $\rho$,
interval $J\subseteq I\subseteq(0,T)$
and the coefficient fields $A,q$ fixed in
Assumption~\ref{ass:coefficients}, we omit the three data arguments:
\begin{equation*}
\begin{gathered}
k_\alpha(J)=k_\alpha(J;\rho,A,q),\qquad
K_{\alpha,h}(I)=K_{\alpha,h}(I;\rho,A,q),\qquad
K_\alpha(I)=K_\alpha(I;\rho,A,q).
\end{gathered}
\end{equation*}
Throughout, $0/0=0$, a positive numerator divided by zero is $+\infty$,
and the supremum of an empty model family is zero.
The limit in \eqref{eq:short-time-constants} exists in $[0,\infty]$ by
monotonicity. Every subinterval inherits the physical time weight
$t^\alpha$. The intervals may shrink toward any time in
$\overline I$, and the spatial supports of their tests may move
without restriction.

\begin{lemma}[Positive mixtures and time weights]\label{lem:weight-operations}
Fix matrix fields $A,q$ and a finite interval $I$ as in
\eqref{eq:short-time-constants}.
\begin{enumerate}
\item\label{item:positive-mixtures}
Let $(Y,\sigma)$ be a $\sigma$-finite measure space, and let
$F_y:(t_0,t_1]\times\R^d\to(0,\infty)$ be jointly measurable
in $(y,t,x)$ and continuous in $(t,x)$ for $\sigma$-almost every
$y$, where $I=(t_0,t_1)$. If
$F=\int_YF_y\,\sigma(dy)$ is finite, positive and continuous, then $\forall J\subseteq I$ and $ h>0$,
\begin{equation*}
k_\alpha(J;F,A,q)\le\operatorname*{ess\,sup}_{y\in Y}k_\alpha(J;F_y,A,q),
\quad
K_{\alpha,h}(I;F,A,q)\le\operatorname*{ess\,sup}_{y\in Y}K_{\alpha,h}(I;F_y,A,q).
\end{equation*}
\item\label{item:time-weights}
For $\alpha'\ge\alpha\ge0$, a positive continuous weight $F$ and
$J=(s_0,s_1)\subseteq I$ with $s_0\ge0$,
\begin{equation*}
k_{\alpha'}(J;F,A,q)\le\sup_{s_0<s<s_1}k_\alpha((s,s_1);F,A,q).
\end{equation*}
Consequently, if $I\subseteq(0,\infty)$,
\begin{equation*}
K_{\alpha',h}(I;F,A,q)\le K_{\alpha,h}(I;F,A,q),\qquad
K_{\alpha'}(I;F,A,q)\le K_\alpha(I;F,A,q)
\qquad\forall h>0.
\end{equation*}
\end{enumerate}
\end{lemma}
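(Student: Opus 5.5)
The plan is a direct Fubini computation for both parts. By Lemma~\ref{lem:graph-core} it suffices to test the quotients on the smooth core $\mathcal C_{\alpha,0}(J;F)$, or more generally on $\mathcal W_{\alpha,0}(J;F)$. Every test is compared with the same test in the component spaces, and the component constants are squared and integrated.

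\emph{Part \ref{item:positive-mixtures}.} Put $k_*=\operatorname*{ess\,sup}_y k_\alpha(J;F_y,A,q)$ and assume $k_*<\infty$, since otherwise there is nothing to prove.
\begin{enumerate}
\item Take $u\in\mathcal C_{\alpha,0}(J;F)$. By Tonelli, for each of $w=u,u_t,Du,D^2u$,
\[
\|w\|_{\alpha,J;F}^2=\int_Y\|w\|_{\alpha,J;F_y}^2\,\sigma(dy)<\infty .
\]
Hence $u\in\mathcal W_\alpha(J;F_y)$ for $\sigma$-a.e.\ $y$. Joint measurability of $F_y$ is exactly what makes this step legitimate.
\item For such $y$, the support condition $\supp u\subset(s_0,s_1-\varepsilon]\times\overline B_R$ forces the local terminal trace to vanish. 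So $u\in\mathcal W_{\alpha,0}(J;F_y)$ (indeed $u\in\mathcal C_{\alpha,0}(J;F_y)$).
\item Consequently $\|\mathcal H_qu\|_{\alpha,J;F_y}^2\le k_*^2\|P_Au\|_{\alpha,J;F_y}^2$ for a.e.\ $y$. Integrating in $y$ gives $\|\mathcal H_qu\|_{\alpha,J;F}\le k_*\|P_Au\|_{\alpha,J;F}$. The conventions for $0/0$ are consistent with this.
\end{enumerate}
For the $K_{\alpha,h}$ bound, use $k_\alpha(J;F_y)\le K_{\alpha,h}(I;F_y)$ pointwise in $y$ for each admissible $J$. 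Take the essential supremum, then the supremum over $J$.

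\emph{Part \ref{item:time-weights}.} Set $\beta=\alpha'-\alpha$; the case $\beta=0$ is trivial, so take $\beta>0$. For $t>s_0\ge0$ decompose the extra weight as
\[
t^{\beta}=s_0^{\beta}+\int_{s_0}^{s_1}\beta s^{\beta-1}\1_{\{t>s\}}\,ds .
\]
This gives the identity
\[
\|w\|_{\alpha',J;F}^2=s_0^\beta\|w\|_{\alpha,J;F}^2+\int_{s_0}^{s_1}\beta s^{\beta-1}\|w\|_{\alpha,(s,s_1);F}^2\,ds .
\]
This is a mixture as in part \ref{item:positive-mixtures}, except that the components live on the shrinking intervals $(s,s_1)$. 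For that reason I will argue directly rather than invoke part \ref{item:positive-mixtures}.
\begin{enumerate}
\item Take $u\in\mathcal C_{\alpha',0}(J;F)$ and $s\in(s_0,s_1)$. Since $s>0$, the factor $t^{\beta}$ is bounded above and below on $(s,s_1)$. Hence $u|_{(s,s_1)}\in\mathcal W_\alpha((s,s_1);F)$, and its terminal trace vanishes, for instance by the compact-support argument or by the restriction statement of Lemma~\ref{lem:graph-core}.
\item Applying $k_\alpha((s,s_1))$ for each $s$ controls the integral term by $S^2$ times the corresponding $P_Au$ term, where $S=\sup_{s_0<s<s_1}k_\alpha((s,s_1);F,A,q)$.
\item The atom term only matters when $s_0>0$, and then $u\in\mathcal W_{\alpha,0}(J;F)$. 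I claim $k_\alpha(J)\le S$. For $u\in\mathcal W_{\alpha,0}(J;F)$, monotone convergence gives $\|\cdot\|_{\alpha,(s,s_1);F}\to\|\cdot\|_{\alpha,J;F}$ as $s\downarrow s_0$ for both numerator and denominator. Since each restriction satisfies the bound with constant $S$, so does the limit.
\end{enumerate}
Summing the two terms yields $\|\mathcal H_qu\|_{\alpha',J;F}\le S\|P_Au\|_{\alpha',J;F}$.

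For the consequence, assume $I\subseteq(0,\infty)$, so $s_0\ge0$ for every $J\subseteq I$. Every $(s,s_1)\subseteq J$ has length at most $|J|\le h$, so $S\le K_{\alpha,h}(I;F,A,q)$. Taking the supremum over $J$ gives $K_{\alpha',h}(I)\le K_{\alpha,h}(I)$, and letting $h\downarrow0$ gives the statement for $K_\alpha$.

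The main obstacle is bookkeeping rather than estimation. One must ensure the test function genuinely belongs to each component space, which means membership for a.e.\ $y$ in part \ref{item:positive-mixtures} and behaviour at $s\downarrow s_0$ (including $s_0=0$) in part \ref{item:time-weights}. One must also justify the monotone-limit step that absorbs the atom at $s_0$ into the supremum over open $s>s_0$.
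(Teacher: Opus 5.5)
Your proof is correct and follows the paper's argument. In part~\ref{item:positive-mixtures} both you and the paper apply Tonelli over $Y$ and integrate the squared component estimates. In part~\ref{item:time-weights} both use the same layer-cake identity: the paper writes it as $\int_0^{s_1}(\alpha'-\alpha)s^{\alpha'-\alpha-1}\|\mathcal Tu\|^2_{\alpha,(\max\{s_0,s\},s_1);F}\,ds$, which is exactly your atom $s_0^\beta$ plus the integral over $(s_0,s_1)$, and both then use restriction and monotone convergence as $s\downarrow s_0$. The only cosmetic difference is that you test on the smooth core, where the zero terminal trace is automatic, whereas the paper tests on $\mathcal W_{\alpha,0}(J;F)$ and uses that the local terminal trace does not depend on the weight.
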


\begin{proof}
Let $C<\infty$ bound $k_\alpha(J;F_y,A,q)$ for $\sigma$-almost every
$y$. For $u\in\mathcal W_{\alpha,0}(J;F)$, Tonelli gives
\begin{equation*}
\int_Y\|u\|_{\mathcal W_\alpha(J;F_y)}^2\,\sigma(dy)
 =\|u\|_{\mathcal W_\alpha(J;F)}^2<\infty.
\end{equation*}
The local terminal trace is independent of the weight, so
$u\in\mathcal W_{\alpha,0}(J;F_y)$ for $\sigma$-almost every $y$.
Integrating the squared estimates gives $k_\alpha(J;F,A,q)\le C$.
The same argument holds for every $J\subseteq I$, $|J|\le h$, proving
\ref{item:positive-mixtures}.

For $\alpha'>\alpha$ and $u\in\mathcal W_{\alpha',0}(J;F)$,
Tonelli gives
\begin{equation}
\|\mathcal Tu\|_{\alpha',J;F}^2
 =\int_0^{s_1}(\alpha'-\alpha)s^{\alpha'-\alpha-1}
   \|\mathcal Tu\|_{\alpha,(\max\{s_0,s\},s_1);F}^2\,ds,
\qquad\forall\mathcal T\in\{\mathcal H_q,P_A\}.
\label{eq:time-weight-mixture}
\end{equation}
Suppose $C:=\sup_{s_0<s<s_1}k_\alpha((s,s_1);F,A,q)<\infty$.
Since $t^{\alpha-\alpha'}$ is bounded on $(s,s_1)$ for $s>s_0\ge0$,
\begin{equation*}
u|_{(s,s_1)}\in\mathcal W_{\alpha,0}((s,s_1);F),\qquad
\|\mathcal H_qu\|_{\alpha,(s,s_1);F}
 \le C\|P_Au\|_{\alpha,(s,s_1);F},\qquad\forall s\in(s_0,s_1).
\end{equation*}
For $s_0>0$, monotone convergence as $s\downarrow s_0$ gives this
bound on $(s_0,s_1)$ as well. Applying these bounds inside
\eqref{eq:time-weight-mixture} proves the first inequality
in~\ref{item:time-weights}. For $\alpha'=\alpha$, restriction and
monotone convergence give the result directly.
Taking the suprema over $J\subseteq I$, $|J|\le h$, and then
$h\downarrow0$ gives the last two inequalities.
\end{proof}

\subsection{From short intervals to large damping}
\label{subsec:damping-limit}
Fix an observation interval $I=(t_0,t_1)\subseteq(0,T)$.
For every $J=(s_0,s_1)\subseteq I$, the damping parameter
$\kappa\ge0$ enters through the norm
\begin{equation}
\|f\|_{\alpha,\kappa,J}^2
 =\int_{s_0}^{s_1}\int_{\R^d}
   t^\alpha e^{-2\kappa(s_1-t)}\rho(t,x)|f(t,x)|^2\,dx\,dt.
\label{eq:damped-interval-norm}
\end{equation}
The horizon $I$ remains fixed as $\kappa\to\infty$. Under the
weighted conjugation used below, damping suppresses propagation
across time differences larger than order $1/\kappa$, while short
windows can still move throughout $I$. We identify this limit
with $K_\alpha(I)$ for arbitrary initial laws. The compact doubling
hypothesis in Subsection~\ref{subsec:initial-classes} enters the
main computation of that constant.

\begin{proposition}[Natural realization]\label{prop:realization}
For an arbitrary initial probability law under
Assumption~\ref{ass:coefficients}, $K_\alpha(I)<\infty$ if and only
if each map
\(
P_A:\mathcal W_{\alpha,0}(J)\longrightarrow\mathcal H_\alpha(J),
\,\forall J\subseteq I,
\)
is bijective and its inverse $S_J$ satisfies
\(
\sup_{J\subseteq I}
\|S_J\|_{\mathcal H_\alpha(J)\to\mathcal W_\alpha(J)}<\infty.
\)
For any $h_0>0$ with $K_{\alpha,h_0}(I)<\infty$, this supremum
is bounded in terms only of the reference data, $\alpha$,
the $A,q$ ellipticity data, $h_0$ and $K_{\alpha,h_0}(I)$.
Under these conditions, for $J=(s_0,s_1)\subseteq I$,
\begin{equation}
\1_{(s,s_1)}S_J\1_{(s_0,s)}=0
\quad\text{on }\mathcal H_\alpha(J),
\qquad\forall s\in(s_0,s_1).
\label{eq:backward-causality}
\end{equation}
As an operator on $\mathcal H_\alpha(J)$ with domain
$\mathcal W_{\alpha,0}(J)$, $P_A$ is closed, and
\begin{equation*}
\overline{\mathcal W_{\alpha,0}(J)}^{\,\mathcal H_\alpha(J)}
 =\mathcal H_\alpha(J),\qquad
\overline{\mathcal C_{\alpha,0}(J)}^{\,
 \|u\|_{\alpha,J}+\|P_Au\|_{\alpha,J}}
 =\mathcal W_{\alpha,0}(J).
\end{equation*}
Moreover,
\begin{equation}
\|\mathcal H_qS_J\|=k_\alpha(J),\qquad
\1_J\mathcal H_qS_I\1_J=\mathcal H_qS_J
\quad\text{on }J.
\label{eq:inverse-compression}
\end{equation}
\end{proposition}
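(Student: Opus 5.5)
The plan is to prove the two directions separately, with most of the work in showing that $K_\alpha(I)<\infty$ forces a uniformly bounded inverse. The converse is immediate. If every $P_A:\mathcal W_{\alpha,0}(J)\to\mathcal H_\alpha(J)$ is bijective with $M:=\sup_J\|S_J\|<\infty$, then $\|\mathcal H_qu\|_{\alpha,J}\le\Lambda_q\|D^2u\|_{\alpha,J}\le\Lambda_qM\|P_Au\|_{\alpha,J}$. Hence $k_\alpha(J)\le\Lambda_qM$ for every $J$, and $K_\alpha(I)\le\Lambda_qM$. For the forward direction, fix $h_0$ with $K_{\alpha,h_0}(I)<\infty$, which is possible since $K_{\alpha,h}(I)$ decreases to $K_\alpha(I)<\infty$. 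On each $J$ with $|J|\le h_0$ and each $u\in\mathcal C_{\alpha,0}(J)$, ellipticity of $q$ gives the Hessian bound $\|D^2u\|_{\alpha,J}\le\lambda_q^{-1}K_{\alpha,h_0}(I)\|P_Au\|_{\alpha,J}$.

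\textbf{Lower-order a priori bounds.} The first key step controls $u$ and $Du$ by the diffusion itself, avoiding the unbounded $D\log\rho$. For $u\in\mathcal C_{\alpha,0}(J)$, I apply It\^o's formula to $u(t,X_t)^2$ between $t$ and $s_1$, where $u$ vanishes. With $L=b\cdot D+\tfrac12a:D^2$ this gives
\begin{equation*}
\mathbb E u(t,X_t)^2+\int_t^{s_1}\mathbb E|a^{1/2}Du|^2\,dr
 =-2\int_t^{s_1}\mathbb E\bigl[u\,(u_t+Lu)\bigr]\,dr .
\end{equation*}
Here $u_t+Lu=-P_Au+(\tfrac12a-A):D^2u+b\cdot Du$. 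I bound $b\cdot Du$ by $B_b|Du|$ and absorb it with Young's inequality. Gronwall in $t$ over the short interval then gives $\sup_t\mathbb E u(t,X_t)^2+\|Du\|^2\lesssim\|P_Au\|^2+\|D^2u\|^2$ in the unweighted norm. The weight $t^\alpha$ is reinserted by the layer-cake identity \eqref{eq:time-weight-mixture}, applied exactly as in Lemma~\ref{lem:weight-operations}\ref{item:time-weights}. Combining with the Hessian bound and $u_t=-P_Au-A:D^2u$ gives $\|u\|_{\mathcal W_\alpha(J)}\le C\|P_Au\|_{\alpha,J}$. The constant $C$ depends only on the reference data, $\alpha$, the ellipticity data, $h_0$ and $K_{\alpha,h_0}(I)$. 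By Lemma~\ref{lem:graph-core} the bound extends to $\mathcal W_{\alpha,0}(J)$. This yields injectivity, closed range and closedness of $P_A$. It also shows that the graph-norm closure of $\mathcal C_{\alpha,0}(J)$ is $\mathcal W_{\alpha,0}(J)$, since the graph norm dominates the $\mathcal W_\alpha$ norm. Density of $\mathcal W_{\alpha,0}(J)$ in $\mathcal H_\alpha(J)$ holds because $C_c^\infty$ lies in $\mathcal C_{\alpha,0}$.

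\textbf{Surjectivity (main obstacle).} The range is closed, so it suffices to show it is dense. Suppose $g\in\mathcal H_\alpha(J)$ is orthogonal to $P_A\mathcal C_{\alpha,0}(J)$. Then $w=t^\alpha\rho g\in L^2_{\mathrm{loc}}$ solves the forward equation $\partial_tw-D^2:(Aw)=0$ in distributions on $J\times\R^d$. Since the test class allows no vanishing at $s_0$ but forces vanishing near $s_1$, $w$ also carries a zero initial condition at $s_0$ in the weak sense. I would first try to prove $w=0$ by a duality argument. I would solve the backward problem $-\phi_t-A:D^2\phi=\psi$ with $\psi\in C_c^\infty$ in local $W^{2,1}_p$ (Krylov theory, using uniform continuity of $A$). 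I would then truncate $\phi$ by spatial cutoffs $\chi_R$ and pair with $w$. The cutoff errors involve $D\phi\,D\chi_R$ and $\phi\,D^2\chi_R$ against $w=t^\alpha\rho g$. To make them vanish as $R\to\infty$, I need Gaussian decay of $\phi,D\phi$ compared against $\rho^{1/2}$ times an $L^2$ function. This requires Gaussian bounds on $\rho$ for the reference diffusion, which I expect to take from Lemma~\ref{lem:kernel-stability}. This is the delicate point, especially near $s_0$ when $s_0=0$, where the weight $t^\alpha\rho$ degenerates. If it fails, the fallback is existence by construction: solve with compactly supported $f$, truncate, and use the a priori estimate together with the same cutoff control.

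\textbf{Long intervals, causality and compression.} For $|J|\le h_0$ the above gives bijectivity with the stated bound. For general $J\subseteq I$, I split $J$ into at most $\lceil T/h_0\rceil$ consecutive pieces of length at most $h_0$ and solve backward piece by piece. The solution on a later piece supplies terminal $H^1$ data for the earlier one. This data is absorbed by subtracting an extension built from the previous piece via a cutoff in time, using the lemma's restriction property. This yields a uniform bound in terms of the listed data. Causality \eqref{eq:backward-causality} follows from uniqueness. If $f$ vanishes on $(s,s_1)$, then $u|_{(s,s_1)}\in\mathcal W_{\alpha,0}((s,s_1))$ solves $P_Au=0$, so it is zero there. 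The same uniqueness shows $\1_JS_I\1_J f=S_Jf$ on $J$: the restriction of $S_I(\1_Jf)$ to $J$ has zero terminal trace at $s_1$ by causality and solves the equation on $J$. Finally, $\|\mathcal H_qS_J\|=k_\alpha(J)$ is the definition of $k_\alpha$ rewritten through the bijection $f=P_Au$.
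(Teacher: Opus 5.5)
Your converse, your It\^o energy bound with the layer-cake reinsertion of $t^\alpha$ (essentially Lemma~\ref{lem:direct-energy}), and your closedness, core, causality and compression arguments are all fine. The genuine gap is surjectivity, which you flag as the main obstacle but do not close.

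In your duality route you ask for Gaussian decay of the backward solution $\phi$ relative to $\rho^{1/2}$. You do not prove this, and it is not the relevant obstruction. The spatial cutoff errors are harmless anyway. For $p>d+2$, Krylov's solution has bounded $\phi$ and $D\phi$, and each $\mu_t$ is a probability measure, so the terms involving $D\chi_R$ and $D^2\chi_R$ are $O(R^{-1})$ in $\mathcal H_\alpha(J)$.

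The real issue is temporal. When $s_0=0$ the weight $t^\alpha\rho$ can be as singular as $t^{-d/2}$ (point starts), while $\phi_t$ and $D^2\phi$ are only in $L^p$. So it is not yet known that $\chi_R\phi$, or the Krylov solution itself, lies in $\mathcal W_{\alpha,0}(J)$ at all. Spatial truncation cannot fix this, and your fallback inherits the same unaddressed point.

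Your long-interval step is also only sketched. Lifting ``terminal $H^1$ data'' into $\mathcal W_\alpha$ is the trace theory of Theorem~\ref{thm:linear-theory}, which comes later. The step does work if you solve on overlapping windows of length at most $h_0$ with source $\eta f-\eta' u$ and glue by short-interval uniqueness. The paper avoids piecewise solving altogether: it iterates the a priori bound backward with time cutoffs and then solves globally.

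The missing ingredient is the integrability of the weight itself. By \eqref{eq:kernel-gaussian}, $\|\rho(t,\cdot)\|_\infty\le Ct^{-d/2}$, and $\int\rho(t,x)\,dx=1$. Hence, whenever $p>d+2$,
\[
\int_0^T\!\!\int(t^\alpha\rho)^{p/(p-2)}\,dx\,dt\le C\int_0^Tt^{(\alpha p-d)/(p-2)}\,dt<\infty ,
\]
which is \eqref{eq:weighted-lp-embedding}. H\"older's inequality with exponents $p/(p-2)$ and $p/2$ then places every function in $W^{1,2}_p(J\times\R^d)$ directly in $\mathcal W_{\alpha,0}(J)$, with no cutoffs or decay estimates. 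This includes Krylov's zero-terminal solution of $P_Au=f$ for $f\in C_c^\infty(J\times\R^d)$, whose existence uses the VMO condition implied by uniform continuity of $A$.

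Existence on this dense set of sources, together with the uniform a priori bound and closedness, then gives surjectivity on every $J\subseteq I$. This is exactly the paper's argument. In your duality formulation the same fact finishes at once. It gives $\phi\in\mathcal W_{\alpha,0}(J)$, and the density in Lemma~\ref{lem:graph-core} then yields $\langle g,\psi\rangle_{\alpha,J}=\langle g,P_A\phi\rangle_{\alpha,J}=0$ for every $\psi\in C_c^\infty(J\times\R^d)$.
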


The proof is in Appendix~\ref{subsec:natural-realization}.
When $K_\alpha(I)<\infty$, define the damped Hessian norm by
\begin{equation}
C_2(\kappa;I)=
 \sup_{\substack{f\in\mathcal H_\alpha(I)\\f\ne0}}
 \frac{\|\mathcal H_qS_If\|_{\alpha,\kappa,I}}
      {\|f\|_{\alpha,\kappa,I}}.
\label{eq:damped-hessian-constant}
\end{equation}
The following estimate identifies its limit and also gives finite-damping
bounds from estimates on short intervals.

\begin{proposition}[Poisson averaging]\label{prop:poisson}
Suppose $K_\alpha(I)<\infty$. Extend $K_{\alpha,h}(I)$ constantly
for $h>|I|$ and put $E(h)=K_{\alpha,h}(I)-K_\alpha(I)$.
Then, for $\kappa>0$,
\begin{equation}
\begin{aligned}
C_2(\kappa;I)
 \le\left(\int_0^\infty ze^{-z}K_{\alpha,z/\kappa}(I)^2\,dz\right)^{1/2}, \qquad
0\le C_2(\kappa;I)-K_\alpha(I)
 \le\left(\int_0^\infty ze^{-z}E(z/\kappa)^2\,dz\right)^{1/2}.
\end{aligned}
\label{eq:poisson-average}
\end{equation}
In particular $C_2(\kappa;I)\to K_\alpha(I)$.
This convergence is uniform for families whose error profiles $E$
are bounded by a common bounded function tending to zero at $h=0$.
\end{proposition}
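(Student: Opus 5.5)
The plan is to reduce the damped problem to an undamped problem with a zeroth-order term, and then write its solution operator as a positive average of the short-window inverses $S_J$ from Proposition~\ref{prop:realization}. The estimate \eqref{eq:inverse-compression} controls each $S_J$ by $k_\alpha(J)\le K_{\alpha,|J|}(I)$.

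\emph{Step 1: conjugation.} Put $g=e^{-\kappa(t_1-t)}f$, $u=S_If$ and $w=e^{-\kappa(t_1-t)}u$. Then $\|f\|_{\alpha,\kappa,I}=\|g\|_{\alpha,I}$ and $\|\mathcal H_qu\|_{\alpha,\kappa,I}=\|\mathcal H_qw\|_{\alpha,I}$. Differentiating the weight gives $(P_A+\kappa)w=g$ with zero terminal trace. So $C_2(\kappa;I)$ is the norm of $\mathcal H_q(P_A+\kappa)^{-1}$ on $\mathcal H_\alpha(I)$. That inverse exists because $P_A+\kappa$ is a bounded perturbation of the closed, invertible, causal operator $P_A$; the backward Volterra structure \eqref{eq:backward-causality} lets us invert by a Neumann series on short slabs.

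\emph{Step 2: window representation.} Formally, with $\Phi$ the undamped backward propagator, we have $w(t)=\int_t^{t_1}e^{-\kappa(\sigma-t)}\Phi_{t,\sigma}g(\sigma)\,d\sigma$. I would rewrite the factor $e^{-\kappa(\sigma-t)}$ as a double integral over windows $J_{s,h}=(s,s+h)$ containing $t$:
\begin{equation*}
e^{-\kappa(\sigma-t)}=\int_{-\infty}^{t}\int_{\sigma-s}^{\infty}\kappa^2e^{-\kappa h}\,dh\,ds,\qquad \sigma>t.
\end{equation*}
Combined with the compression identity $\1_J\mathcal H_qS_I\1_J=\mathcal H_qS_J$ and causality, this suggests the representation
\begin{equation*}
\mathcal H_qw=\iint\kappa^2e^{-\kappa h}\,\1_{J_{s,h}}\,\mathcal H_qS_{J_{s,h}\cap I}\bigl(\1_{J_{s,h}}g\bigr)\,ds\,dh+\mathcal R .
\end{equation*}
Here $\mathcal R$ collects the sources on $(s,t)$ that are included although they lie before $t$; causality should make them vanish at time $t$. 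Justifying this identity rigorously is the main obstacle. I would first verify it on the smooth core $\mathcal C_{\alpha,0}$ and on sources of product form in time, where $S_J$ can be computed slab by slab. The identity would then pass to $\mathcal H_\alpha(I)$ using closedness of $P_A$ and the uniform bound on $\|S_J\|$.

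\emph{Step 3: Minkowski and Cauchy--Schwarz.} Given the representation, apply Minkowski's integral inequality in $(s,h)$, together with $\|\1_J\mathcal H_qS_J\1_Jg\|\le K_{\alpha,h}(I)\|\1_Jg\|$. Then use Cauchy--Schwarz against the measure $\kappa^2e^{-\kappa h}\,ds\,dh$. Each time $t$ lies in windows of total measure $\int_0^\infty\kappa^2he^{-\kappa h}\,dh=1$, so the resulting bound is
\begin{equation*}
\|\mathcal H_qw\|^2\le\int_0^\infty\kappa^2he^{-\kappa h}K_{\alpha,h}(I)^2\,dh\;\|g\|^2 .
\end{equation*}
Substituting $z=\kappa h$ gives the first inequality in \eqref{eq:poisson-average}. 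For the second, split $K_{\alpha,h}=K_\alpha+E(h)$ inside the linear representation rather than after squaring. Do this by comparing each $S_J$ with a model part of norm at most $K_\alpha$ plus an excess of norm at most $E(h)$, and apply Minkowski to the sum; since the weights have total mass one, $K_\alpha$ is recovered exactly.

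For the lower bound $C_2\ge K_\alpha$, test with sources supported in a short window $J$ near a time where $k_\alpha(J)$ nearly equals $K_{\alpha,h}$. On such a window the damping factor varies by at most $e^{2\kappa h}$, so $C_2(\kappa;I)\ge e^{-\kappa h}k_\alpha(J)$. Letting $h\downarrow0$ gives $C_2(\kappa;I)\ge K_\alpha(I)$. Convergence then follows by dominated convergence in $z$, because $E(z/\kappa)\to0$ pointwise and is bounded by $E(|I|)$. The same argument is uniform under a common dominating profile.
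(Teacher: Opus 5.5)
Your plan is the paper's mechanism in deterministic form. The measure $\kappa^2e^{-\kappa h}\,ds\,dh$ is the intensity of consecutive cut pairs $(s,s+h)$ of the Poisson process, and the per-$t$ mass $\int_0^\infty\kappa^2he^{-\kappa h}\,dh=1$ is the paper's statement that $\kappa(E_-+E_+)$ has density $ze^{-z}$; your lower bound and dominated-convergence step also match. But Step~2, which you defer, is the whole content of the proposition, and the route you sketch does not deliver it. No propagator for $P_A$ is available, only the abstract inverses of Proposition~\ref{prop:realization}, so $S_J$ cannot be computed slab by slab even for product sources. Moreover, even for the reference operator the Hessian of the Duhamel kernel is bounded only by the nonintegrable $C(\sigma-t)^{-1}$ (Lemma~\ref{lem:reference-smoothing}), so your kernel manipulation has no direct meaning. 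The identity in fact follows from causality alone. Put $T=\mathcal H_qS_I$ and $Q_r=\1_{(t_0,r)}$, extended by $Q_r=0$ for $r\le t_0$ and $Q_r=\mathrm{Id}$ for $r\ge t_1$. Then \eqref{eq:backward-causality} gives $TQ_r=Q_rTQ_r$, hence $\1_{(s,r)}T\1_{(s,r)}=(1-Q_s)TQ_r$ for $s<r$. Two scalar facts finish it: $\int_{-\infty}^r\kappa e^{-\kappa(r-s)}(1-Q_s)\,ds$ acts as multiplication by $e^{-\kappa(r-t)}$ on the range of $Q_r$, and $\int_\R\kappa e^{-\kappa r}Q_r\,dr=e^{-\kappa M_t}$. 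Hence
\begin{equation*}
\iint_{s<r}\kappa^2e^{-\kappa(r-s)}\1_{(s,r)}T\1_{(s,r)}\,ds\,dr
 =\int_\R\kappa e^{-\kappa r}e^{\kappa M_t}TQ_r\,dr
 =e^{\kappa M_t}Te^{-\kappa M_t},
\end{equation*}
with absolutely convergent operator integrals, since the windows meeting $I$ have total mass $1+\kappa|I|$. Combined with \eqref{eq:inverse-compression}, this is exactly your representation with $\mathcal R=0$. The paper reaches the same identity through $\int_I\mathscr D_sT\,ds=|I|T+\operatorname{ad}_{M_t}(T)$ and the exponential series.

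Step~3 also fails as written. Minkowski in $(s,h)$ on the output norm, followed by Cauchy--Schwarz against $\kappa^2e^{-\kappa h}\,ds\,dh$, charges that total mass $1+\kappa|I|$. You therefore get the desired bound multiplied by $\sqrt{1+\kappa|I|}$, which is useless as $\kappa\to\infty$. The per-$t$ mass one helps only if you keep each output localized on its window. For a test $G$, bound $|\langle\1_JG,T\1_JF\rangle|\le K_{\alpha,h}(I)\|\1_JG\|\,\|\1_JF\|$ and apply Cauchy--Schwarz in $(s,h)$ to this product. Tonelli then gives $\iint\kappa^2e^{-\kappa h}\|\1_{J_{s,h}}G\|^2\,ds\,dh=\|G\|^2$ and $\iint\kappa^2e^{-\kappa h}K_{\alpha,h}(I)^2\|\1_{J_{s,h}}F\|^2\,ds\,dh=\|F\|^2\int_0^\infty ze^{-z}K_{\alpha,z/\kappa}(I)^2\,dz$. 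This is the deterministic form of the paper's Jensen step $\|\mathbb E[T_\Pi F]\|^2\le\mathbb E\|T_\Pi F\|^2$ together with orthogonality of the interval pieces. Two simplifications are also available. In Step~1 the damped inverse is just $e^{\kappa M_t}S_Ie^{-\kappa M_t}$, so no Neumann series is needed. The second inequality in \eqref{eq:poisson-average} follows from the first by Minkowski in $L^2(ze^{-z}\,dz)$, since $\int_0^\infty ze^{-z}\,dz=1$, so no splitting of the operators is needed.
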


\begin{proof}
By Proposition~\ref{prop:realization}, $T_2=\mathcal H_qS_I:
\mathcal H_\alpha(I)\longrightarrow\mathcal H_\alpha(I)^{d\times d}$ 
is bounded with norm $k_\alpha(I)$.
On both spaces, let $M_t$ be multiplication by $t$ and
$Q_s$ multiplication by $\1_{(t_0,s)}$, for $s\in I$.
The isometry $F\mapsto e^{\kappa(t-t_1)}F$ from the damped to the
undamped norm gives
\begin{equation*}
T_{2,\kappa}=e^{\kappa M_t}T_2e^{-\kappa M_t},
\qquad \|T_{2,\kappa}\|=C_2(\kappa;I).
\end{equation*}
Equation~\eqref{eq:backward-causality} implies that $\mathcal T=T_2$
satisfies
\begin{equation}
(1-Q_s)\mathcal T Q_s=0\qquad\forall s\in I.
\label{eq:operator-projection-condition}
\end{equation}
For a bounded operator $\mathcal T$ between these spaces, put
\begin{equation*}
\mathscr D_s\mathcal T
 =Q_s\mathcal T Q_s+(1-Q_s)\mathcal T(1-Q_s),\qquad
\operatorname{ad}_{M_t}(\mathcal T)=M_t\mathcal T-\mathcal T M_t.
\end{equation*}
The time projections commute, so $\mathscr D_s$ preserves
\eqref{eq:operator-projection-condition}.
For operators satisfying this condition,
$\mathscr D_s\mathcal T-\mathcal T
 =\mathcal T Q_s-Q_s\mathcal T$ and $\int_IQ_s\,ds=t_1-M_t$ give
\begin{equation*}
\int_I\mathscr D_s\mathcal T\,ds
 =|I|\mathcal T+\operatorname{ad}_{M_t}(\mathcal T).
\end{equation*}
These integrals are taken after application to an input $F$;
strong continuity of $Q_s$ makes their integrands continuous.
Also $\|\operatorname{ad}_{M_t}(\mathcal T)\|\le2t_1\|\mathcal T\|$.

For the partition $\Pi(s_1,\ldots,s_n)$ formed by cuts
$s_1,\ldots,s_n$ in $I$, the commuting projections give
\begin{equation*}
T_\Pi:=\sum_{J\in\Pi}\1_JT_2\1_J
 =\mathscr D_{s_1}\cdots\mathscr D_{s_n}T_2,
\qquad \|T_\Pi\|\le\|T_2\|.
\end{equation*}
The bound follows from orthogonality of the interval inputs and
outputs. Now let $\Pi$ be the partition of a Poisson process of
intensity $\kappa$ on $I$. Conditioning on the number of cuts gives
\begin{equation*}
\mathbb E[T_\Pi F]
 =e^{-\kappa|I|}\sum_{n=0}^\infty\frac{\kappa^n}{n!}
   \int_{I^n}(\mathscr D_{s_1}\cdots\mathscr D_{s_n}T_2)F
                   \,ds_1\cdots ds_n
 =\bigl[e^{\kappa\operatorname{ad}_{M_t}}T_2\bigr]F.
\end{equation*}
Indeed, iteration of the compression integral gives
$(|I|\operatorname{Id}+\operatorname{ad}_{M_t})^nT_2$ for the $n$th operator
integral, whose norm is at most $|I|^n\|T_2\|$.
Thus the series converges in operator norm. Since left and right
multiplication by $M_t$ commute,
\begin{equation*}
\mathbb E[T_\Pi F]
 =e^{\kappa M_t}T_2e^{-\kappa M_t}F=T_{2,\kappa}F.
\end{equation*}

Let $\ell_\Pi(t)$ be the length of the interval of $\Pi$ containing
$t$. Jensen's inequality, \eqref{eq:inverse-compression} and
Tonelli's theorem yield
\begin{equation*}
\|T_{2,\kappa}F\|_{\alpha,I}^2
 \le\mathbb E\sum_{J\in\Pi}
       K_{\alpha,|J|}(I)^2\|\1_JF\|_{\alpha,I}^2
 =\int_I\int_{\R^d}t^\alpha\rho(t,x)|F(t,x)|^2
       \mathbb E\bigl[K_{\alpha,\ell_\Pi(t)}(I)^2\bigr]\,dx\,dt.
\end{equation*}
Extend the Poisson process to $\R$. For fixed $t\in I$, the
distances $E_-,E_+$ to the nearest cuts are independent
exponentials of parameter $\kappa$, and
\begin{equation*}
\ell_\Pi(t)
 =\min\{E_-,t-t_0\}+\min\{E_+,t_1-t\}\le E_-+E_+.
\end{equation*}
Since $\kappa(E_-+E_+)$ has density $ze^{-z}$, monotonicity gives
\begin{equation*}
\mathbb E\bigl[K_{\alpha,\ell_\Pi(t)}(I)^2\bigr]
 \le\int_0^\infty ze^{-z}K_{\alpha,z/\kappa}(I)^2\,dz.
\end{equation*}
Substitution and the supremum over $\|F\|_{\alpha,I}=1$ prove
the first inequality in \eqref{eq:poisson-average}.
Minkowski's inequality, with $K_{\alpha,h}(I)=K_\alpha(I)+E(h)$,
gives the upper bound for $C_2(\kappa;I)-K_\alpha(I)$ there.

For $J\subseteq I$, $|J|\le h$, the ratio between the largest
and smallest values of $e^{\kappa t}$ on $J$ is at most
$e^{\kappa h}$. Hence
\eqref{eq:inverse-compression} gives
\begin{equation*}
C_2(\kappa;I)\ge\|\1_JT_{2,\kappa}\1_J\|
 \ge e^{-\kappa h}\|\1_JT_2\1_J\|
 =e^{-\kappa h}k_\alpha(J).
\end{equation*}
Taking the supremum over $J$ and then $h\downarrow0$ proves
$C_2(\kappa;I)\ge K_\alpha(I)$. Finally,
$0\le E(h)\le K_{\alpha,|I|}(I)-K_\alpha(I)<\infty$ and
$E(h)\to0$ as $h\downarrow0$, so dominated convergence in
\eqref{eq:poisson-average} proves the limit.
\end{proof}

\subsection{Compactly supported doubling initial laws}
\label{subsec:initial-classes}

We compute the limiting constant for compactly supported doubling
initial laws. Doubling controls the mass of rescaled balls; the
initial tangent measures retain that mass, while the propagated
phase directions will depend only on the support.

\begin{definition}[Compactly supported doubling laws]\label{def:initial-laws}
The class $\mathcal D$ consists of probability measures with nonempty
compact support $S=\supp\mu$ such that, for some $C_D<\infty$,
\begin{equation*}
\mu(B(y,2r))\le C_D\,\mu(B(y,r)),
\qquad\forall y\in S,\quad\forall r>0.
\end{equation*}

\end{definition}

We study the complete interval $I=(0,T)$ for these initial laws.

\begin{eexample}
\label{ex:compact-initial-laws}
Finite atomic probability laws $\mu=\sum_{i=1}^N c_i\delta_{y_i}$,
with distinct $y_i$ and $c_i>0$, belong to $\mathcal D$ with
$C_D=(\min_i c_i)^{-1}$: every ball centered in the support
has mass at least $\min_i c_i$. Point masses have $C_D=1$.

Let $K$ be a compact convex set with nonempty relative interior
in a $k$-dimensional affine subspace of $\R^d$, $1\le k\le d$.
Probability laws $\mu=p\,\mathcal H^k\!\restriction_K$, where
$\mathcal H^k$ is Hausdorff measure and $0<p_0\le p\le p_1$ on $K$,
have $C_D=2^kp_1/p_0$, since convexity gives
\begin{equation*}
y+\tfrac12\bigl((K\cap B(y,2r))-y\bigr)\subset K\cap B(y,r),
\qquad y\in K,\quad r>0.
\end{equation*}
This includes uniform laws on line segments, balls and cubes
of any dimension, with $C_D=2^k$.

Finally, let $M\subset\R^d$ be a nonempty compact embedded
$k$-dimensional $C^1$ submanifold, possibly with boundary.
A probability law $\mu=p\,\mathcal H^k\!\restriction_M$ with
positive continuous $p$ also belongs to $\mathcal D$.
Local graph coordinates, the area formula and compactness
give $c,C,r_0>0$, depending on $M,p$, such that
\begin{equation*}
cr^k\le\mu(B(y,r))\le Cr^k,
\qquad y\in M,\quad0<r\le r_0;
\end{equation*}
see \citet{mattila-1995} for Hausdorff measure and the area formula.
The doubling ratio is at most $2^kC/c$ for $r\le r_0/2$
and $[c(r_0/2)^k]^{-1}$ otherwise.
Examples include normalized arc length on circles and
normalized surface measure on spheres.
\end{eexample}

\subsection{Local rescaling and realizing sequences}
\label{subsec:local-rescaling}

For $\mu\in\mathcal D$, we describe the local models for
$K_\alpha(I)$ and the sequences that realize them. The first
distinction is whether the initial time remains visible after
rescaling. A visible boundary retains the initial measure and
its heat evolution; once it recedes, the normalized density
gives either a flat or a phase weight. Each realization couples
the limiting weight with the three coefficient matrices along
the same sequence.

\paragraph*{Common rescaling.}
Take centers $t_j\in[0,T]$, $x_j\in\R^d$ and scales
$r_j\downarrow0$, and use the rescaled coordinates
\begin{equation}
s=\frac{t-t_j}{r_j^2},\qquad z=\frac{x-x_j}{r_j},\qquad
\Delta_j=\frac{t_j}{r_j^2},\qquad
\tau=\frac{t}{r_j^2}=s+\Delta_j.
\label{eq:rescaled-coordinates}
\end{equation}
Thus $s$ is centered at the observation time and $\tau$ at the
physical initial time, with
\begin{equation*}
0<t_j+r_j^2s\le T
\quad\Longleftrightarrow\quad
-\Delta_j<s\le\frac{T-t_j}{r_j^2}.
\end{equation*}
The uniform ellipticity bounds permit extraction of a common
subsequence with
\begin{equation}
(a(t_j,x_j),A(t_j,x_j),q(t_j,x_j))
 \longrightarrow(a_*,A_*,q_*).
\label{eq:coefficient-limits}
\end{equation}
The limits are positive definite. The same convergence holds
uniformly on fixed rescaled windows within the physical time
domain, and the rescaled drift tends to zero; see
\eqref{eq:local-coefficient-freezing}. After further extraction,
$\Delta_j\to\Delta\in[0,\infty]$. This limit determines which
of the following normalizations to use.

\paragraph*{Half-line models.}
Suppose $\Delta_j\to\Delta<\infty$. Then $t_j\to0$, the initial
boundary remains visible, and the limiting rescaled time domain is
$\tau\in(0,\infty)$.
We represent the initial data using centers in $S=\supp\mu$
at time zero. Nearby observation centers give spatial translates
of these models; see \eqref{eq:initial-center-shift}.
Write $a_0(x)=a(0,x)$, $A_0(x)=A(0,x)$ and
$q_0(x)=q(0,x)$. Take data $(x_j,r_j,m_j)$ with
\begin{equation}
\begin{gathered}
x_j\in S,\quad r_j\downarrow0,\quad
m_j=\mu(B(x_j,r_j)), \quad
\theta_j(E)=\frac{\mu(x_j+r_jE)}{m_j}
\quad\text{for every Borel }E\subset\R^d.
\end{gathered}
\label{eq:doubling-half-line-data}
\end{equation}
The ball mass normalizes initial shapes that may include
atoms or lower-dimensional support. The centers may move
within $S$ as $r_j\downarrow0$.
These data form a \emph{half-line realizing sequence} if
$x_j\to x_*\in S$ and $\theta_j$ converges vaguely to $\theta$.

A nonzero nonnegative Radon measure $\theta$ satisfying
$\int_{\R^d}e^{-c|z|^2}\,\theta(dz)<\infty$ for every $c>0$,
together with positive definite matrices $a_*,A_*,q_*$,
forms a heat tuple $\mathbf T=(\theta,a_*,A_*,q_*)$.
Lemma~\ref{lem:initial-tangents} verifies the measure conditions
for the limits above.
The normalization retains local mass shape, so $\theta$ may
have infinite total mass.
The matrix limits follow by continuity along the same centers,
so the jointly realized tuple is
\(
\mathbf T=(\theta,a_0(x_*),A_0(x_*),q_0(x_*)).
\)
We denote by $\mathfrak T(\mu)$ the set of all quadruples
$\mathbf T=(\theta,a_*,A_*,q_*)$ admitting this joint realization.
In full this family is $\mathfrak T(\mu;a_0,A_0,q_0)$.

The density attached to a heat tuple is
\begin{equation*}
F_{\mathbf T}(\tau,z)=
 \int_{\R^d}\gamma_{\tau a_*}(z-y)\,\theta(dy),\qquad \tau>0,
\end{equation*}
where, for $C\succ0$,
$\gamma_C(z)=(2\pi)^{-d/2}(\det C)^{-1/2}
\exp(-z^{\mathsf T}C^{-1}z/2)$
is the centered Gaussian density with covariance $C$.
The model weight is $\tau^\alpha F_{\mathbf T}(\tau,z)$.
Multiplying a density by a positive constant multiplies both
quotient norms by its square root and therefore preserves the
model gain.

\paragraph*{Whole-line models.}
Take data $(t_j,x_j,r_j)$ with $t_j\in(0,T]$,
$x_j\in\R^d$, $r_j\downarrow0$ and $\Delta_j\to\infty$.
The initial boundary $s=-\Delta_j$ recedes to $-\infty$.
At each $t_j>0$, normalize the density by $\rho(t_j,x_j)$.
The normalized time weight tends to one; see
\eqref{eq:tail-time-factor}. The spatial scale determines how
much density variation survives in the normalized profile.

To obtain phase limits, we choose scales that retain variation
of order one. For $\mu\in\mathcal D$, this occurs in propagation
away from $S$: a Gaussian tail at distance $D\gg\sqrt t$ has
logarithmic slope of order $D/t$. Accordingly, put
\begin{equation}
\ell_\mu(t,x)=\frac{t}{\dist(x,S)+\sqrt t},
\qquad\forall(t,x)\in(0,T]\times\R^d,\qquad S=\supp\mu.
\label{eq:doubling-tail-scale}
\end{equation}
For the phase case, the centers and scales satisfy
\begin{equation}
t_j\in(0,T],\qquad x_j\in\R^d,\qquad
\frac{\dist(x_j,S)}{\sqrt{t_j}}\longrightarrow\infty,
\qquad r_j=\ell_\mu(t_j,x_j).
\label{eq:doubling-tail-centers}
\end{equation}
The identity $t_j/r_j^2=(1+\dist(x_j,S)/\sqrt{t_j})^2$ gives
$\Delta_j\to\infty$.

Consider the conditional law of the diffusion at a suitable
earlier time, given the observation endpoint. Gaussian comparison
makes each contributing kernel log ratio asymptotically linear
in space, with slope $-2\zeta$ determined by the final displacement
after covariance and scale normalization. The frozen heat equation
gives the profile
$\exp(2(s\zeta^{\mathsf T}a_*\zeta-\zeta\cdot z))$;
posterior averaging produces a positive mixture. The phase laws
$\lambda_{t_j,x_j}$ in \eqref{eq:posterior-phase-law} have support
in a common compact annulus away from zero
(Lemma~\ref{lem:posterior-phase}); taking a weak limit jointly
with the coefficients gives the mixing law $\lambda$.

This leads to the following model weights. For positive
definite matrices $a_*,A_*,q_*$ and a compactly supported
probability measure $\lambda$, for $(s,z)\in\R\times\R^d$ define
\begin{equation*}
\begin{gathered}
W_\lambda^0(z)=\int_{\R^d}e^{-2\zeta\cdot z}\,\lambda(d\zeta),\qquad
W_{\lambda,a}(s,z)=\int_{\R^d}
 e^{2(s\zeta^{\mathsf T}a\zeta-\zeta\cdot z)}\,\lambda(d\zeta),
\\
\mathbf M=(\lambda,a_*,A_*,q_*),\qquad
W_{\mathbf M}=W_{\lambda,a_*},\qquad
W_{\lambda,a}(0,\cdot)=W_\lambda^0.
\end{gathered}
\end{equation*}
The superscript $0$ denotes the spatial profile at the centered
observation time.
Here $\lambda$ records phase vectors, while $\theta$ records
initial spatial positions.
The probability normalization gives $W_{\mathbf M}(0,0)=1$.
For $s>0$, $W_{\mathbf M}$ is the heat evolution of $W_{\lambda}^0$.
The tuple is flat when $\lambda=\delta_0$, so $W_{\mathbf M}=1$;
it is a heat-phase tuple when
\begin{equation*}
\supp\lambda\subset\{\zeta\in\R^d:m_{\mathrm{ph}}\le|\zeta|\le M_{\mathrm{ph}}\},
\qquad0<m_{\mathrm{ph}}\le M_{\mathrm{ph}}<\infty.
\end{equation*}

The data form a \emph{whole-line realizing sequence} for a
flat or heat-phase tuple $\mathbf M$ if the coefficient and
density limits hold jointly:
\begin{equation}
\begin{gathered}
(a(t_j,x_j),A(t_j,x_j),q(t_j,x_j))
 \longrightarrow(a_*,A_*,q_*),\\
\sup_{\substack{|s|,|z|\le L\\0<t_j+r_j^2s\le T}}
\left|\log\frac{\rho(t_j+r_j^2s,x_j+r_jz)}
                    {\rho(t_j,x_j)W_{\mathbf M}(s,z)}\right|
\longrightarrow0\qquad\forall L>0.
\end{gathered}
\label{eq:whole-line-convergence}
\end{equation}
The restriction to physical times allows centers approaching $T$:
the right endpoint $(T-t_j)/r_j^2$ may stay bounded, but every
fixed past window remains available since
$[t_j-Lr_j^2,t_j]\subset(0,T]$ eventually for every $L>0$.

A flat realizing sequence is the case $\lambda=\delta_0$.
It arises whenever density variation disappears at the observation
scale. In particular, fixing $(t_j,x_j)=(t,x)\in I\times\R^d$
and letting $r_j\downarrow0$ realizes
$(\delta_0,a(t,x),A(t,x),q(t,x))$ by continuity and positivity
of $\rho$.

A phase realizing sequence is a whole-line realization of a
heat-phase tuple with the centers and scales in
\eqref{eq:doubling-tail-centers}.
Proposition~\ref{prop:tail-compactness} shows that every such
tail sequence has a subsequence realizing a heat-phase tuple.
We denote by $\mathfrak M_+(I)$ the set of all
quadruples $\mathbf M=(\lambda,a_*,A_*,q_*)$ admitting a phase
realizing sequence. The subscript $+$ indicates that the phase
support is bounded away from zero. In full this family depends on
$(I;\mu,a,b,A,q)$, through the actual marginal density and the
joint coefficient limits; these data are fixed when the short notation
is used.

For later tail calculations, when $D=\dist(x,S)>0$, also write
\begin{equation}
e=\frac tD,\qquad M=\frac{D^2}{t}=\frac t{e^2},\qquad
\ell_\mu=\frac e{1+M^{-1/2}},\qquad
\frac t{\ell_\mu^2}=(\sqrt M+1)^2.
\label{eq:tail-window-scales}
\end{equation}
Thus $e^2=t/M\le T/M$ and $\ell_\mu/e\to1$ as $M\to\infty$.
The auxiliary scale $e$ is used only outside $S$; the natural
scale $\ell_\mu$ is defined everywhere at positive times.

\paragraph*{Brownian example.}
All three limits occur for $d=1$, $a=1$, $b=0$ and $\mu=\delta_0$,
with density
$\rho(t,x)=\gamma_t(x)=(2\pi t)^{-1/2}e^{-x^2/(2t)}$.
At the initial scale, $r\rho(r^2\tau,rz)=\gamma_\tau(z)$ gives
the half-line model with $\theta=\delta_0$.
At fixed $0<t_0<T$ and $x_0\in\R$, the ratio
$\rho(t_0+r^2s,x_0+rz)/\rho(t_0,x_0)$ tends to one as
$r\downarrow0$, giving the flat model with $\lambda=\delta_0$.
To retain phase variation as time tends to zero, fix $x_0>0$
and put $r=t/(x_0+\sqrt t)\sim t/x_0$.
Then $t/r^2\to\infty$ and
\begin{equation*}
\frac{\rho(t+r^2s,x_0+rz)}{\rho(t,x_0)}
 \longrightarrow e^{s/2-z}\qquad(t\downarrow0),
\end{equation*}
giving the phase model with $\lambda=\delta_{1/2}$.
Both density-ratio limits are locally uniform in $(s,z)$;
the last shows that a fixed point outside the initial support
can produce a whole-line phase model.

For $\mu\in\mathcal D$, the density limits and
model constants are established in
Subsections~\ref{subsec:initial-models}--\ref{subsec:tail-models}.
Their role in determining $K_\alpha(I)$ is stated in
Theorem~\ref{thm:sharp-limit}.

\subsection{Half-line limits and the initial constant}
\label{subsec:initial-models}

The spatial conditions in Subsection~\ref{subsec:local-rescaling}
ensure compactness of the normalized initial measures.

\begin{lemma}[Compactness of rescaled measures]\label{lem:initial-tangents}
For $\mu\in\mathcal D$, every sequence satisfying
\eqref{eq:doubling-half-line-data} has a subsequence realizing
a tuple in $\mathfrak T(\mu)$. Its measure is nonzero and
nonnegative, and, with $d_D=\log_2C_D$,
\begin{equation*}
\theta(B_R)\le C_DR^{d_D},\,\forall R\ge1,\qquad
\int_{\R^d}e^{-c|z|^2}\,\theta(dz)<\infty
\qquad\forall c>0.
\end{equation*}
\end{lemma}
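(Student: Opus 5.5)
The plan is to apply the doubling inequality at the moving centers $x_j$ and read off uniform polynomial bounds for the normalized measures $\theta_j$. Vague compactness then gives a limit, the bounds pass to it, and continuity of the time-zero fields gives the matrix limits.

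\emph{Uniform growth bound.} Fix $R\ge1$ and choose $k\in\mathbb N_0$ with $2^{k-1}<R\le2^k$. Since $x_j\in S$, iterating the doubling inequality $k$ times at $y=x_j$ gives
\begin{equation*}
\theta_j(B_R)=\frac{\mu(B(x_j,r_jR))}{m_j}\le\frac{\mu(B(x_j,2^kr_j))}{\mu(B(x_j,r_j))}\le C_D^k .
\end{equation*}
The denominator $m_j$ is positive because $x_j$ lies in the support. Since $C_D^k=2^{kd_D}$, this gives $\theta_j(B_R)\le 2^{d_D}R^{d_D}$. To get the stated constant $C_DR^{d_D}$, use $2^{d_D}=C_D$; the bound $C_D^k\le C_D\cdot C_D^{k-1}\le C_DR^{d_D}$ follows from $2^{k-1}<R$. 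Ball boundaries can be handled by comparing $B_R$ with a slightly larger open ball, so the same estimate holds for closed balls as well.

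\emph{Vague compactness and nontriviality.} The masses of the $\theta_j$ are uniformly bounded on every compact set. Banach--Alaoglu on $C_c(\R^d)'$, together with a diagonal argument over balls $B_N$, therefore yields a vaguely convergent subsequence $\theta_j\to\theta$. The limit is nonnegative. Extracting further, we may also assume $x_j\to x_*$, because $S$ is compact and closed. By lower and upper semicontinuity of vague limits on open and closed sets, the growth bound passes to $\theta$: for $R'>R$,
\begin{equation*}
\theta(B_R)\le\liminf_j\theta_j(B_{R'})\le C_DR'^{d_D},
\end{equation*}
and we then let $R'\downarrow R$.

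Nonzeroness is the one point that needs care, since mass could in principle escape to the boundary of $B_1$. By definition $\theta_j(B_1)=1$. Upper semicontinuity on the compact set $\overline B_1$ gives
\begin{equation*}
\theta(\overline B_1)\ge\limsup_j\theta_j(\overline B_1)\ge1,
\end{equation*}
so $\theta\ne0$.

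\emph{Gaussian integrability.} Write $\int e^{-c|z|^2}\,\theta(dz)\le\theta(B_1)+\sum_{n\ge1}e^{-c4^{n-1}}\theta(B_{2^n})$. The terms are bounded by $C_D\,2^{nd_D}e^{-c4^{n-1}}$, so the series converges for every $c>0$.

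\emph{Matrix limits.} The fields $A,q$ are uniformly continuous, so $A_0(x_j)\to A_0(x_*)$ and $q_0(x_j)\to q_0(x_*)$. The field $a$ is Lipschitz in space, so $a_0(x_j)\to a_0(x_*)$. All three limits are positive definite by uniform ellipticity. Hence $(\theta,a_0(x_*),A_0(x_*),q_0(x_*))$ is a heat tuple, and it is jointly realized along the extracted subsequence, so it lies in $\mathfrak T(\mu)$. I expect no real obstacle; the only delicate point is the closed-ball argument used for nonzeroness.
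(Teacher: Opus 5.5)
Your proposal is correct and follows the paper's proof: you iterate the doubling inequality at the support centers $x_j$ to get $\theta_j(B_R)\le C_DR^{d_D}$, extract a vague limit, pass the bound to open balls by lower semicontinuity, deduce Gaussian integrability, and obtain $x_j\to x_*\in S$ together with the matrix limits from compactness of $S$ and continuity of the time-zero fields. Your nontriviality step, upper semicontinuity on the compact set $\overline B_1$ combined with $\theta_j(B_1)=1$, is equivalent to the paper's use of a test function $\varphi\in C_c(\R^d)$ equal to one on $\overline B_1$.
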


\begin{proof}
Iteration of the doubling inequality gives
\begin{equation*}
\theta_j(B_R)
 =\frac{\mu(B(x_j,Rr_j))}{\mu(B(x_j,r_j))}
 \le C_DR^{d_D},\qquad R\ge1.
\end{equation*}
Thus a subsequence converges vaguely to a nonnegative Radon
measure $\theta$ \citep[Theorem~1.23]{mattila-1995}.
Choose $0\le\varphi\in C_c(\R^d)$ equal to one on
$\overline B_1$. Since $\theta_j(B_1)=1$,
\begin{equation*}
\int\varphi\,d\theta=\lim_j\int\varphi\,d\theta_j\ge1,
\end{equation*}
so $\theta\ne0$. The mass bound passes to open balls by lower
semicontinuity and implies Gaussian integrability.
Compactness of $S$ gives $x_j\to x_*$ on a further subsequence.
Continuity then supplies the three initial matrix limits
at $x_*$ on this same sequence.
\end{proof}

For a heat tuple $\mathbf T$, Gaussian integrability makes
$F_{\mathbf T}$ finite, positive and continuous for $\tau>0$, and
$\int_{\R^d}F_{\mathbf T}(\tau,z)\,dz=\theta(\R^d)$,
with infinite mass allowed.
The heat convergence is stated away from $\tau=0$, where a
singular initial measure need not have a density.

\begin{proposition}[Half-line density limits]
\label{prop:initial-heat-limits}
Under Assumption~\ref{ass:coefficients}, let $\mu\in\mathcal D$
and let $(x_j,r_j,m_j)$ realize $\mathbf T\in\mathfrak T(\mu)$.
Then
\begin{equation}
\sup_{\varepsilon\le\tau\le L,\ |z|\le N}
\left|\log\frac{r_j^d\rho(r_j^2\tau,x_j+r_jz)}
                    {m_jF_{\mathbf T}(\tau,z)}\right|
 \longrightarrow0,
\quad L,N>0,\quad 0<\varepsilon<L.
\label{eq:initial-heat-limit}
\end{equation}
The convergence is along each prescribed realizing sequence,
uniformly on the displayed windows.
\end{proposition}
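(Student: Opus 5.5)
We compare the rescaled marginal with the frozen Gaussian convolution of the rescaled initial measure. Write $\rho(t,x)=\int k(0,y;t,x)\,\mu(dy)$ and change variables $y=x_j+r_jw$, so that
\begin{equation*}
\frac{r_j^d\rho(r_j^2\tau,x_j+r_jz)}{m_j}
 =\int_{\R^d} r_j^d\,k\bigl(0,x_j+r_jw;\,r_j^2\tau,x_j+r_jz\bigr)\,\theta_j(dw).
\end{equation*}
The target is $F_{\mathbf T}(\tau,z)=\int\gamma_{\tau a_*}(z-w)\,\theta(dw)$ with $a_*=a_0(x_*)$. The proof then splits into three steps: a kernel comparison on compact sets of $w$, vague convergence of $\theta_j$, and a tail estimate for $w$ large. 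Since $F_{\mathbf T}$ is positive and continuous on $[\varepsilon,L]\times\overline B_N$, it is bounded below there. Uniform convergence of the ratio to one is therefore equivalent to uniform convergence of the logarithm.

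\emph{Step 1 (local kernel freezing).} We want
\begin{equation*}
r_j^dk(0,x_j+r_jw;r_j^2\tau,x_j+r_jz)\to\gamma_{\tau a_*}(z-w)
\end{equation*}
uniformly for $\tau\in[\varepsilon,L]$, $|z|\le N$ and $|w|\le R$. Parabolic rescaling turns $k$ into the transition density of the rescaled diffusion. That diffusion has covariance $a(r_j^2\tau',x_j+r_j\cdot)$ and drift $r_jb$. On the relevant windows the covariance converges uniformly to $a_*$, by the spatial Lipschitz bound, the time modulus $\omega_a$, and $x_j\to x_*$. The drift tends to zero. The stability result Lemma~\ref{lem:kernel-stability} (together with the freezing statement \eqref{eq:local-coefficient-freezing}) should then give locally uniform convergence of the rescaled kernels to the constant-coefficient heat kernel, for times bounded away from zero. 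I would invoke that lemma directly, or else a parametrix/Duhamel comparison with Gaussian bounds.

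\emph{Step 2 (vague convergence).} Fix a cutoff $\chi_R\in C_c$ with $\chi_R=1$ on $B_R$. The functions $w\mapsto\chi_R(w)\gamma_{\tau a_*}(z-w)$ form an equicontinuous, compactly supported family, indexed by the compact parameter set $(\tau,z)$. By vague convergence $\theta_j\to\theta$, together with the uniform mass bound $\theta_j(B_{2R})\le C_D(2R)^{d_D}$, the integrals converge uniformly in $(\tau,z)$. Step 1 lets us replace the true kernel by the Gaussian at cost $o(1)\cdot\theta_j(B_{2R})$.

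\emph{Step 3 (tails; main obstacle).} The contribution of $|w|>R$ must be small uniformly in $j$. Two-sided Gaussian upper bounds for $k$, uniform in the reference data and valid for $t\le T$, give
\begin{equation*}
r_j^dk\le C\tau^{-d/2}e^{-c|z-w|^2/\tau}\le C_\varepsilon e^{-c'|w|^2/L}
\end{equation*}
for $|w|\ge2N$. Doubling gives $\theta_j(B_{2^{k+1}})\le C_D2^{(k+1)d_D}$, so summing over dyadic shells bounds the tail by a quantity that tends to zero as $R\to\infty$, uniformly in $j$. The same bound controls the $\theta$-tail of $F_{\mathbf T}$, using Lemma~\ref{lem:initial-tangents}. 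The delicate point is that the Gaussian upper bound must hold without loss at small times: the rescaled times $r_j^2\tau$ are tiny and the drift enters only as $B_b\sqrt t$, which is harmless. I expect the real work to lie in ensuring Step 1 holds uniformly in moving centers $x_j$. This requires kernel stability under uniform coefficient convergence, not merely pointwise convergence.

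Combining the three steps, we let $j\to\infty$ and then $R\to\infty$.
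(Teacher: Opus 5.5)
Your proposal is correct and follows essentially the same route as the paper. The paper also rescales, uses \eqref{eq:local-coefficient-freezing} with Lemma~\ref{lem:kernel-stability} to get locally uniform convergence of the rescaled kernels to $\gamma_{\tau a_*}(z-y)$, controls the tails uniformly in $j$ by \eqref{eq:kernel-gaussian} and the doubling mass bounds of Lemma~\ref{lem:initial-tangents}, passes to the vague limit after truncation, and upgrades to logarithmic convergence by positivity of $F_{\mathbf T}$. The moving centers you flag as the main obstacle are handled directly by Lemma~\ref{lem:kernel-stability}, since it only needs local $L^p$ convergence of the rescaled coefficient sequence with common bounds, which the freezing estimate supplies.
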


The proof is given in Subsection~\ref{subsec:model-proofs}.
Along a half-line realizing sequence, the time measure transforms as
\begin{equation}
t^\alpha\,dt\,dx
 =r_j^{2\alpha+d+2}\tau^\alpha\,d\tau\,dz
\qquad\forall\,0<\tau\le T/r_j^2.
\label{eq:scaled-time-measure}
\end{equation}
Together with Proposition~\ref{prop:initial-heat-limits}, this
leaves the model measure $\tau^\alpha F_{\mathbf T}(\tau,z)\,d\tau\,dz$
after scalar normalization. Define
\begin{equation}
\mathfrak P_\alpha(\mathbf T)
 =\sup_{0<c<B<\infty} k_\alpha((c,B);F_{\mathbf T},A_*,q_*).
\label{eq:initial-model-gain}
\end{equation}
The data $\theta,a_*$ determine the full heat evolution, and the
supremum covers every interval $(c,B)$ with $0<c<B<\infty$.
In \eqref{eq:initial-model-gain}, the range may equivalently be
$0\le c<B<\infty$: restrict a test on $(0,B)$ to $(c,B)$ and
apply monotone convergence to the two squared norms as $c\downarrow0$.

Support centers also cover nearby observation centers.
Indeed, take $r_j\downarrow0$, $\sup_j t_j/r_j^2<\infty$ and
$x_j=y_j+r_jz_j$, where $y_j\in S$ and $z_j$ is bounded.
Extract $z_j\to z_*$.
Apply Lemma~\ref{lem:initial-tangents} at $y_j$ with
$m_j=\mu(B(y_j,r_j))$, obtaining $\mathbf T\in\mathfrak T(\mu)$.
Proposition~\ref{prop:initial-heat-limits} gives
\begin{equation}
\frac{r_j^d\rho(r_j^2\tau,x_j+r_jz)}{m_j}
 \longrightarrow F_{\mathbf T}(\tau,z+z_*)
\quad\text{locally uniformly for }\tau>0,
\label{eq:initial-center-shift}
\end{equation}
with the same matrix limits at $(t_j,x_j)$ and $(0,y_j)$.
The substitution $w=z+z_*$ takes a test $v(\tau,z)$ to
$v(\tau,w-z_*)$ and preserves both model norms.
Thus these shifted heat weights have the same gains as their
support-centered representatives.

Every initial heat model satisfies
\begin{equation}
\lambda_{\max}(A_*^{-1/2}q_*A_*^{-1/2})
 \le
\mathfrak P_\alpha(\theta,a_*,A_*,q_*)
 \le\mathfrak P_\alpha(\delta_0,a_*,A_*,q_*).
\label{eq:initial-model-comparison}
\end{equation}
For the upper bound, apply Lemma~\ref{lem:weight-operations},
part~\ref{item:positive-mixtures}, to
$F_{\mathbf T}(\tau,z)=\int\gamma_{\tau a_*}(z-y)\,\theta(dy)$
and translate each point kernel in space.
For the lower bound, fix $0<c<B$, $v\ne0$ and
$0\ne\chi\in C_c^\infty((c,B)\times\R^d)$.
For $u_n(\tau,z)=\chi(\tau,z)\cos(nv\cdot z)$, the leading
second derivatives and the Riemann--Lebesgue lemma give
\begin{equation*}
\frac{\|\mathcal H_{q_*}u_n\|_{\alpha,(c,B);F_{\mathbf T}}}
     {\|P_{A_*}u_n\|_{\alpha,(c,B);F_{\mathbf T}}}
 \longrightarrow\frac{v^{\mathsf T}q_*v}{v^{\mathsf T}A_*v}.
\end{equation*}
Taking the supremum over $v$ proves the first inequality.
The initial contribution is
\begin{equation*}
K_{\alpha,\mathrm{init}}(I)
 =\sup_{\mathbf T\in\mathfrak T(\mu)}\mathfrak P_\alpha(\mathbf T).
\end{equation*}

\subsection{Whole-line limits and their explicit constants}
\label{subsec:tail-models}

The natural phase scales in Subsection~\ref{subsec:local-rescaling}
yield the following compactness statement.

\begin{proposition}[Whole-line phase limits]\label{prop:tail-compactness}
Suppose Assumption~\ref{ass:coefficients} holds.
For $\mu\in\mathcal D$, every sequence satisfying
\eqref{eq:doubling-tail-centers} has a subsequence realizing a tuple in
$\mathfrak M_+(I)$.
There exist $0<m_{\mathrm{ph},\mu}\le M_{\mathrm{ph},\mu}<\infty$ such that
\begin{equation*}
\supp\lambda\subset\{\zeta:m_{\mathrm{ph},\mu}\le|\zeta|\le M_{\mathrm{ph},\mu}\}
\qquad\forall(\lambda,a_*,A_*,q_*)\in\mathfrak M_+(I).
\end{equation*}
The annulus constants depend only on the reference data and $C_D$.
\end{proposition}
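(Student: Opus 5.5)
The plan is to represent $\rho$ at each tail center as a posterior average of kernel ratios, and then show that every kernel ratio is close in log to a single heat exponential with a phase confined to a fixed annulus.

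\textbf{Step 1 (posterior representation).} Fix $(t_j,x_j)$ as in \eqref{eq:doubling-tail-centers}. Set $D_j=\dist(x_j,S)$, with $M_j=D_j^2/t_j\to\infty$ and $e_j=t_j/D_j$ as in \eqref{eq:tail-window-scales}. Choose an intermediate time $r'=t_j-\eta_jt_j$ with $\eta_j\downarrow0$ slowly, so that $\eta_jt_j\gg r_j^2$. The Chapman--Kolmogorov identity gives
\[
\frac{\rho(t_j+r_j^2s,x_j+r_jz)}{\rho(t_j,x_j)}=\int \frac{k(r',y;t_j+r_j^2s,x_j+r_jz)}{k(r',y;t_j,x_j)}\,\pi_j(dy),
\]
where $\pi_j(dy)\propto\rho(r',y)k(r',y;t_j,x_j)\,dy$ is the posterior law of $X_{r'}$ given $X_{t_j}=x_j$. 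This is already a positive mixture. The remaining task is to show that each integrand is uniformly close in log to $\exp(2(s\zeta^{\mathsf T}a_*\zeta-\zeta\cdot z))$, where $\zeta$ is the covariance-normalized terminal displacement scaled by $r_j/(t_j-r')$.

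\textbf{Step 2 (kernel ratios).} The kernel--action estimate of Proposition~\ref{prop:kernel-action} (the $O(1+M^{1/3})$ bound for bounded endpoint displacement) is applied together with a freezing of $a$ near $(t_j,x_j)$. The aim is to show that the log kernel ratio equals the frozen Gaussian exponent difference plus $o(1)$ on $|s|,|z|\le L$. The Gaussian exponent difference is exactly linear in $(s,z)$ up to $O(r_j^2/(t_j-r'))$ corrections, with slope $-2\zeta$. The drift contributes $O(B_br_j)\to0$ after rescaling. This step is the main obstacle: the action error must be smaller than the increments at scale $r_j$. I would first try to choose $\eta_j$ so that the posterior concentrates on $y$ with $|x_j-y|\lesssim \eta_jD_j$ in the sense of bounded effective action $|x_j-y|^2/(\eta_jt_j)$ relative to the gap. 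The $M^{1/3}$ remainder, compared with increments of size $D_jr_j/t_j\asymp1$, is then controlled by taking differences of the actions at the two endpoints, since the errors are smooth in the endpoint.

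\textbf{Step 3 (annulus and compactness).} Gaussian upper and lower bounds for $\rho(r',\cdot)$ follow from compact support and doubling of $\mu$, and Gaussian bounds hold for $k$. Together these show that the posterior puts all but vanishing mass on $y$ with $|x_j-y|/(t_j-r')\asymp D_j/t_j$. After multiplying by $r_j\asymp t_j/D_j$, the phases $\zeta$ then lie in an annulus $m_{\mathrm{ph}}\le|\zeta|\le M_{\mathrm{ph}}$, with constants depending only on $\lambda_a,\Lambda_a,B_b$, $T$ and $C_D$ (doubling prevents the near-support mass from being too small at scale $\sqrt{t}$). The posterior pushforward laws $\lambda_{t_j,x_j}$ are tight, so weak limits $\lambda$ and the coefficient limits \eqref{eq:coefficient-limits} can be extracted along a common subsequence. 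Since the exponentials are uniformly continuous on the compact annulus for bounded $(s,z)$, the weak limit passes inside the mixture, and the relative errors from Step 2 give \eqref{eq:whole-line-convergence}. The small tail mass of the posterior outside the annulus is handled by the relative Gaussian comparison.
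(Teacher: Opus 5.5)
Your overall architecture matches the paper's: a positive posterior mixture at a restart time, phases confined to a common annulus, and weak extraction of the phase laws jointly with the coefficients. However, Step~2, which you rightly call the crux, cannot give the precision you need. Since $r_j\asymp e_j=t_j/D_j$, your condition $\eta_jt_j\gg r_j^2$ says exactly that $H_j:=\eta_jM_j\to\infty$. The last segment then has displacement $\asymp H_je_j$ and effective action $|x_j-y|^2/(\eta_jt_j)\asymp H_j\to\infty$.

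Proposition~\ref{prop:kernel-action} gives only the absolute bound $|E_{\mathrm{ker}}|\le C(1+H_j^{1/3})$. It says nothing about how $E_{\mathrm{ker}}$ changes when the endpoint moves by $(r_j^2s,r_jz)$, and the paper never provides such regularity. The claim that "the errors are smooth in the endpoint" is therefore unsupported. This is why the geometric section has to work on windows of length $L\gg M^{1/3}$. Freezing $a$ along your segment fails as well: it costs $\asymp L_ae_jH_j^2$ in the exponent, which diverges when $\eta_j$ decays slowly. So no admissible $\eta_j$ makes the action route give $o(1)$ log ratios on unit windows.

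The missing idea is a \emph{relative} Gaussian comparison on a mesoscopic last segment, which is how Lemma~\ref{lem:posterior-phase} works. The paper restarts at $t-He^2$ with $M^{1/6}\le H\le 2M^{1/6}$. Because $H\to\infty$, \eqref{eq:long-gaussian-ratio} linearizes the Gaussian ratio with error $O(1/H)$. Because $eH^2\le 4\sqrt T\,M^{-1/6}\to0$, the tube comparison of Lemma~\ref{lem:long-tube} applies; it uses a Girsanov tilt and \eqref{eq:centered-uniform}, not action asymptotics. It gives kernel ratios $1+o(1)$, uniformly on the good set and over perturbed endpoints.

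The kernel--action bound enters only to localize the posterior of $X_{t-He^2}$ in $\{|v-x|\asymp He\}$, where its $M^{2/3}$ error is absorbed into exponential gaps. The single-age bridge estimate needs $H\gtrsim M^{2/3}$, so reaching $H\asymp M^{1/6}$ takes the finite descent of Lemma~\ref{lem:bridge-annulus}. This localization must also hold at the perturbed endpoints, as in \eqref{eq:restart-posterior-annulus}. The reason is that \eqref{eq:posterior-positive-integration} involves the exceptional posterior mass at both endpoints. Your remark that this tail mass is "handled by the relative Gaussian comparison" does not cover it, since that comparison is valid only on the good set.

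Your Step~3 localization is also too weak. The bounds \eqref{eq:kernel-gaussian} have mismatched exponent constants, so they give only $|x_j-y|\lesssim\sqrt{\eta_j}\,D_j$ and no lower bound; they do not confine the phases to an annulus.

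Finally, the support statement is claimed for \emph{every} tuple in $\mathfrak M_+(I)$. You still need to show that an arbitrary realizing law coincides with the constructed limit. The paper does this by equating $W^0$ at $s=0$ and invoking uniqueness of exponential moments.
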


\begin{proof}
Let $\lambda_j=\lambda_{t_j,x_j}$ and $W_j=W_{t_j,x_j}$ be
supplied by Lemma~\ref{lem:posterior-phase}. Their phase laws have
support in a common annulus $\{\zeta:m\le|\zeta|\le M\}$ with
$m>0$. Extract a subsequence such that
\eqref{eq:coefficient-limits} holds and
$\lambda_j\rightharpoonup\lambda$. Then
$\supp\lambda\subset\{\zeta:m\le|\zeta|\le M\}$.
For $\mathbf M=(\lambda,a_*,A_*,q_*)$, the common compact support,
covariance convergence and positivity give
\begin{equation*}
\sup_{|s|,|z|\le L}
 \left|\log\frac{W_j(s,z)}{W_{\mathbf M}(s,z)}\right|
 \longrightarrow0\qquad\forall L>0.
\end{equation*}
Indeed, the exponential integrands form a uniformly bounded,
equicontinuous family on each such window and the common phase
support. Combining this convergence with
Lemma~\ref{lem:posterior-phase} gives
\eqref{eq:whole-line-convergence} on the physical time range.
Equation~\eqref{eq:doubling-tail-centers} gives $\Delta_j\to\infty$.

If the sequence already realizes a law $\lambda$, the preceding
extraction gives a law $\widetilde\lambda$ with the common support
bounds and, by \eqref{eq:whole-line-convergence} at $s=0$,
\(
W_{\widetilde\lambda}^0(z)=W_{\lambda}^0(z),
\,\forall z\in\R^d.
\)
Differentiation at $z=0$ gives equality of all polynomial moments.
Polynomial approximation on the union of their compact supports
then gives $\widetilde\lambda=\lambda$. Thus the support bounds
hold for every tuple in $\mathfrak M_+(I)$.
\end{proof}

Along a whole-line realizing sequence, $\Delta_j\to\infty$ gives
\begin{equation}
\sup_{|s|\le L}
\left|\frac{(t_j+r_j^2s)^\alpha}{t_j^\alpha}-1\right|
 =\sup_{|s|\le L}|(1+s/\Delta_j)^\alpha-1|
\longrightarrow0\qquad\forall L>0.
\label{eq:tail-time-factor}
\end{equation}
Thus the normalized time factor tends to one, leaving the model
measure $W_{\mathbf M}(s,z)\,ds\,dz$. For a flat or heat-phase
tuple, define
\begin{equation}
\mathfrak Q(\mathbf M)=
 \sup_{v\in C_c^\infty(\R\times\R^d)}
 \frac{\|\mathcal H_{q_*}v\|_{L^2(W_{\mathbf M})}}
      {\|P_{A_*}v\|_{L^2(W_{\mathbf M})}}.
\label{eq:whole-line-gain}
\end{equation}

For positive definite matrices $a,A,q$ and $v\ne0$, define the
directional ratio $\mathsf R$ and the function
$\Phi:\R\to(0,+\infty]$ by
\begin{equation*}
\mathsf R(a,A,q;v)=\frac{v^{\mathsf T}(a-A)v}{v^{\mathsf T}qv},
\qquad
\Phi(c)=
\begin{cases}c^{-1},&c>0,\\+\infty,&c\le0.\end{cases}
\end{equation*}
The ratio $\mathsf R$ is unchanged by nonzero rescaling of $v$.

\begin{proposition}[Whole-line constants]\label{prop:whole-line-gains}
For $\mathbf M=(\delta_0,a_*,A_*,q_*)$,
\begin{equation}
\mathfrak Q(\mathbf M)=\lambda_{\max}(A_*^{-1/2}q_*A_*^{-1/2}).
\label{eq:flat-gain}
\end{equation}
For a heat-phase tuple $\mathbf M=(\lambda,a_*,A_*,q_*)$,
\begin{equation}
\mathfrak Q(\mathbf M)=
 \max\left\{\lambda_{\max}(A_*^{-1/2}q_*A_*^{-1/2}),
 \sup_{\zeta\in\supp\lambda}
   \Phi\bigl(\mathsf R(a_*,A_*,q_*;\zeta)\bigr)\right\},
\label{eq:phase-gain-formula}
\end{equation}
In both cases, for $J=(c,B)$,
\begin{equation}
k_\alpha(J;W_{\mathbf M},A_*,q_*)\le\mathfrak Q(\mathbf M),
\qquad\forall\alpha\ge0,\quad\forall\,0\le c<B<\infty.
\label{eq:whole-line-interval-bound}
\end{equation}
For $\alpha=0$, \eqref{eq:whole-line-interval-bound} also holds
for every $-\infty<c<B<\infty$.
Equality holds in \eqref{eq:whole-line-interval-bound} when
$\lambda=\delta_0$. Moreover,
\begin{equation}
\mathfrak Q(\mathbf M)=
\sup_{v\in C_c^\infty((-\infty,B)\times\R^d)}
 \frac{\|\mathcal H_{q_*}v\|_{L^2(W_{\mathbf M})}}
      {\|P_{A_*}v\|_{L^2(W_{\mathbf M})}},
\qquad\forall B\in\R.
\label{eq:whole-line-past-recovery}
\end{equation}
\end{proposition}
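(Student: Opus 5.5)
The plan is to reduce everything to a single exponential weight, which conjugation turns into a constant-coefficient Fourier multiplier problem. Then mixtures are handled by Lemma~\ref{lem:weight-operations} for the upper bound and by a Laplace-type concentration for the lower bound.

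\emph{Single phase.} Fix $\zeta\in\R^d$ and put $\varphi(s,z)=s\zeta^{\mathsf T}a_*\zeta-\zeta\cdot z$, so that $W=e^{2\varphi}$. Writing $w=e^{\varphi}v$ gives an isometry $L^2(W)\to L^2(ds\,dz)$. Since $\varphi$ is affine,
\begin{equation*}
e^{\varphi}D^2v=(D+\zeta)(D+\zeta)^{\mathsf T}w,\qquad
e^{\varphi}v_s=(\partial_s-\zeta^{\mathsf T}a_*\zeta)w .
\end{equation*}
After a Fourier transform in $(s,z)$ with dual variables $(\omega,\xi)$, the observation $\mathcal H_{q_*}$ has symbol $q_*^{1/2}pp^{\mathsf T}q_*^{1/2}$ with $p=i\xi+\zeta$. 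Its Frobenius norm is $|\xi|_{q_*}^2+|\zeta|_{q_*}^2$. The operator $P_{A_*}$ has symbol
\begin{equation*}
\zeta^{\mathsf T}(a_*-A_*)\zeta+\xi^{\mathsf T}A_*\xi-i\bigl(\omega+2\xi^{\mathsf T}A_*\zeta\bigr).
\end{equation*}
By Plancherel the whole-line gain is the essential supremum of the modulus ratio. Choosing $\omega=-2\xi^{\mathsf T}A_*\zeta$ kills the imaginary part, so the gain equals
\begin{equation*}
\sup_\xi\frac{|\xi|_{q_*}^2+|\zeta|_{q_*}^2}{|\xi|_{A_*}^2+\zeta^{\mathsf T}(a_*-A_*)\zeta}.
\end{equation*}
If $\mathsf R(a_*,A_*,q_*;\zeta)>0$, a ratio of sums is at most the larger ratio. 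Letting $\xi=0$ and $|\xi|\to\infty$ along the top eigenvector of $A_*^{-1/2}q_*A_*^{-1/2}$ shows that this supremum is exactly the maximum of $\lambda_{\max}(A_*^{-1/2}q_*A_*^{-1/2})$ and $\Phi(\mathsf R)$. If $\mathsf R<0$, the denominator vanishes on a sphere of $\xi$. If $\mathsf R=0$, it vanishes at $\xi\to0$. In both cases the numerator stays positive, so the gain is infinite. The case $\zeta=0$ gives \eqref{eq:flat-gain}. The frequencies of near-optimal multipliers can be localized, and truncating the corresponding $w$ by cutoffs in $C_c^\infty$ realizes these values with compactly supported tests. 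Compressing the support into $(-\infty,B)$ by a time translation, which preserves the ratio up to the constant factor $e^{2\cdot(\mathrm{shift})\,\zeta^{\mathsf T}a_*\zeta}$, gives \eqref{eq:whole-line-past-recovery} for a single phase.

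\emph{Upper bounds for mixtures and on intervals.} By Lemma~\ref{lem:weight-operations}, part~\ref{item:positive-mixtures}, with $Y=\supp\lambda$, it suffices to bound $k_\alpha(J;e^{2\varphi},A_*,q_*)$ by the single-phase gain for each $\zeta$ in the support. By part~\ref{item:time-weights} it suffices to take $\alpha=0$ on intervals $(c,B)$, now for arbitrary real $c$. This step needs care, because tests in $\mathcal W_{0,0}((c,B))$ need not vanish at $s=c$, so zero extension is not available. Instead, given $u$ with $P_{A_*}u=f$, I would solve the whole-line conjugated problem with source $\1_{(c,B)}f$ by the Fourier multiplier above. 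The real part of the symbol is positive when $\mathsf R>0$, so the associated kernel is supported in the past, which gives backward causality. The solution therefore vanishes for $s>B$. A weighted energy estimate for $P_{A_*}$ on $(c,B)$ with zero terminal data, after conjugation an $L^2$ energy identity in which the positive zeroth-order term has the good sign, shows that the solution agrees with $u$ on $(c,B)$. The bound then follows from the Plancherel estimate, since the Hessian norm on $(c,B)$ is dominated by the whole-line norm. When $\mathsf R\le0$ the right side of \eqref{eq:whole-line-interval-bound} is $+\infty$ and there is nothing to prove. Equality of \eqref{eq:whole-line-interval-bound} in the flat case follows from the oscillating tests $\chi\cos(nv\cdot z)$ already used for \eqref{eq:initial-model-comparison}.

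\emph{Lower bound for mixtures.} The flat term is again recovered by high-frequency oscillation localized anywhere, using local positivity and continuity of $W_{\mathbf M}$. For $\zeta_0\in\supp\lambda$, I would translate a near-optimal single-phase test to the center $(s,z)=(-S,-2Sa_*\zeta_0)$ with $S\to\infty$. There the mixture exponent equals $2S\bigl(|\zeta_0|_{a_*}^2-|\zeta-\zeta_0|_{a_*}^2\bigr)$ plus window-bounded terms. By a Laplace argument, the normalized weight on a fixed window then concentrates on $\zeta$ within $O(S^{-1/2})$ of $\zeta_0$. The concentrating mass is positive because $\zeta_0$ lies in the support. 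Consequently the ratio of $W_{\mathbf M}$ to a constant multiple of $e^{2\varphi_{\zeta_0}}$ tends to one uniformly on the window, and the quotient converges to the single-phase value at $\zeta_0$. Since these centers lie arbitrarily far in the past, this also yields \eqref{eq:whole-line-past-recovery}. I expect the main obstacle to be the interval upper bound, namely the causality and uniqueness argument identifying a test on $(c,B)$ that is free at $s=c$ with the restriction of the whole-line solution. The Laplace concentration step is routine by comparison.
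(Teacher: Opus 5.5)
Your proposal is correct. Apart from one step it follows the paper's proof: the same conjugation, the same multiplier $N(\xi)/|D_\xi-i(\omega+2\xi^{\mathsf T}A_*\zeta)|$ with $N(\xi)=|\xi|_{q_*}^2+|\zeta|_{q_*}^2$ and $D_\xi=|\xi|_{A_*}^2+\zeta^{\mathsf T}(a_*-A_*)\zeta$, the same reduction of mixtures and time weights to single phases via Lemma~\ref{lem:weight-operations}, and the same translation to $(-S,-2Sa_*\zeta_0)$. That translation uses the tilted laws $\lambda_S\propto e^{-2S|\zeta-\zeta_0|_{a_*}^2}\lambda$ for both the mixture lower bound and \eqref{eq:whole-line-past-recovery}.

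The step that differs is the interval bound with a free left endpoint. You extend the source by zero, solve on the whole line, and use causality plus uniqueness of the terminal problem to identify that solution with the test on $(c,B)$. This works; it is the mechanism the paper itself uses later for the reverse bound in the initial kernel formula. Fiberwise, with $\widehat w(\cdot,\xi)$ the spatial Fourier transform of the conjugated test and $\beta_\xi=2\xi^{\mathsf T}A_*\zeta$, your argument proves $D_\xi\|\widehat w\|_{L^2(c,B)}\le\|(-\partial_s+D_\xi-i\beta_\xi)\widehat w\|_{L^2(c,B)}$ by temporal Plancherel. The paper gets this inequality in one line: since $\widehat w(B,\xi)=0$, integration by parts gives
\begin{equation*}
\|(-\partial_s+D_\xi-i\beta_\xi)\widehat w\|_{L^2(c,B)}^2
 =\|\partial_s\widehat w+i\beta_\xi\widehat w\|_{L^2(c,B)}^2
  +D_\xi^2\|\widehat w\|_{L^2(c,B)}^2+D_\xi|\widehat w(c,\xi)|^2 .
\end{equation*}
So the free endpoint contributes only a nonnegative term, and $N(\xi)\le\mathfrak Q\,D_\xi$ finishes. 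The identity needs just $D_\xi\ge0$ and no whole-line solvability. Your extension lies in $L^2(\R)$ only when $D_\xi>0$, so in the flat case $\zeta=0$ you must explicitly discard the null set $\xi=0$.

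One small correction: the tilted laws need not concentrate within $O(S^{-1/2})$ of $\zeta_0$, since the rate depends on how little mass $\lambda$ puts near $\zeta_0$. Only $\lambda_S\rightharpoonup\delta_{\zeta_0}$ is needed, and it follows from
\[
\lambda_S(\{|\zeta-\zeta_0|_{a_*}\ge\varepsilon\})\le \frac{e^{-3S\varepsilon^2/2}}{\lambda(\{|\zeta-\zeta_0|_{a_*}<\varepsilon/2\})}.
\]
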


The proof of Proposition~\ref{prop:whole-line-gains} is given in
Subsection~\ref{subsec:model-proofs}. Define the actual flat
contribution and the directional propagation contribution by
\begin{equation}
\begin{aligned}
K_{\mathrm{flat}}(I)
 &=\sup_{(t,x)\in I\times\R^d}
   \lambda_{\max}\!\left(A(t,x)^{-1/2}q(t,x)A(t,x)^{-1/2}\right)
 \le\frac{\Lambda_q}{\lambda_A},\\
K_{\mathrm{prop}}(I)
 &=\sup_{\substack{\mathbf M=(\lambda,a_*,A_*,q_*)\in
                              \mathfrak M_+(I)}}
   \ \sup_{\zeta\in\supp\lambda}
       \Phi\bigl(\mathsf R(a_*,A_*,q_*;\zeta)\bigr).
\end{aligned}
\label{eq:propagation-contribution}
\end{equation}
The single-model constant $\mathfrak Q(\mathbf M)$ contains
both parts. Its flat part is bounded by $K_{\mathrm{flat}}(I)$,
so \eqref{eq:phase-gain-formula} gives
\begin{equation*}
\max\left\{K_{\mathrm{flat}}(I),
 \sup_{\mathbf M\in\mathfrak M_+(I)}
                  \mathfrak Q(\mathbf M)\right\}
=\max\{K_{\mathrm{flat}}(I),K_{\mathrm{prop}}(I)\}.
\end{equation*}
By \eqref{eq:flat-gain}, every flat realizing sequence satisfies
\begin{equation*}
\mathfrak Q(\delta_0,a_*,A_*,q_*)
 =\lim_{j\to\infty}
  \lambda_{\max}(A(t_j,x_j)^{-1/2}q(t_j,x_j)A(t_j,x_j)^{-1/2})
 \le K_{\mathrm{flat}}(I).
\end{equation*}
The inequality also holds when $t_j=T$, by continuity of $A,q$ at $T$.
The fixed-center realizations in
Subsection~\ref{subsec:local-rescaling} attain every coefficient
value in the definition of $K_{\mathrm{flat}}(I)$. Hence this is the
supremum of $\mathfrak Q(\mathbf M)$ over actual flat models.

\subsection{The optimal constant formula}
\label{subsec:main-theorem}

Before identifying the optimal constant, collect the three
model contributions in
\begin{equation*}
K_{\alpha,\mathrm{mod}}(I)
:=\max\{K_{\alpha,\mathrm{init}}(I),
        K_{\mathrm{flat}}(I),K_{\mathrm{prop}}(I)\}.
\end{equation*}
This independently defined model bound is used in the
comparison and recovery arguments below.

\begin{theorem}[Optimal limiting constant]\label{thm:sharp-limit}
Suppose Assumption~\ref{ass:coefficients} holds and
$\mu\in\mathcal D$.
Let $\alpha\ge0$ and $I=(0,T)$.
Then
\begin{equation}
K_\alpha(I)=K_{\alpha,\mathrm{mod}}(I).
\label{eq:sharp-limit-formula}
\end{equation}
The equality holds in the extended real numbers. If the maximum is
infinite, then $K_{\alpha,h}(I)=\infty$ for every $h>0$.
If the maximum is finite, the map
\(
P_A:\mathcal W_{\alpha,0}(I)\longrightarrow\mathcal H_\alpha(I)
\)
has a bounded inverse $S_I$. Its damped Hessian norm
\eqref{eq:damped-hessian-constant} satisfies
\begin{equation*}
C_2(\kappa;I)\longrightarrow K_\alpha(I)
 \qquad(\kappa\to\infty).
\end{equation*}
The inverse bound is as in Proposition~\ref{prop:realization},
and the convergence has the error bound \eqref{eq:poisson-average}
with $E(h)=K_{\alpha,h}(I)-K_\alpha(I)$.
\end{theorem}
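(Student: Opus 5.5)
The proof has two inequalities, $K_\alpha(I)\ge K_{\alpha,\mathrm{mod}}(I)$ (recovery) and $K_\alpha(I)\le K_{\alpha,\mathrm{mod}}(I)$ (localization). Once both hold, the remaining assertions follow from earlier results. If the common value is finite, Proposition~\ref{prop:realization} gives the bounded inverse $S_I$ with the stated bound. Proposition~\ref{prop:poisson} then gives $C_2(\kappa;I)\to K_\alpha(I)$ together with the error bound \eqref{eq:poisson-average}. If $K_{\alpha,\mathrm{mod}}(I)=\infty$, monotonicity in $h$ gives $K_{\alpha,h}(I)\ge K_\alpha(I)=\infty$ for every $h>0$.

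\textbf{Recovery.} For each model type I will transplant near-optimal model tests into the original problem along a realizing sequence.

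For the flat contribution, fix $(t,x)$ and use oscillating tests $\chi\cos(nv\cdot z)$ on shrinking cylinders, as in the proof of \eqref{eq:initial-model-comparison}. Continuity of $A,q,\rho$ gives the ratio $v^{\mathsf T}qv/v^{\mathsf T}Av$ in the limit.

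For a heat tuple $\mathbf T\in\mathfrak T(\mu)$, take a test $v\in\mathcal C_{\alpha,0}((c,B);F_{\mathbf T})$. By density (Lemma~\ref{lem:graph-core}) I may assume its support is compact in $\tau\in(c,B)$. Set $u_j(t,x)=v(t/r_j^2,(x-x_j)/r_j)$ on $J_j=(r_j^2c,r_j^2B)$. Then $|J_j|\to0$, and $P_Au_j=r_j^{-2}[P_{A(\cdot)}v]$ in rescaled variables. The coefficients freeze uniformly on the window by \eqref{eq:local-coefficient-freezing}. Proposition~\ref{prop:initial-heat-limits} and \eqref{eq:scaled-time-measure} show that, after a common scalar normalization, both weighted norms converge to the model norms. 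Hence $\liminf_j k_\alpha(J_j)\ge$ the model quotient. The window stays away from $\tau=0$, so the uniform convergence on $[\varepsilon,L]$ suffices.

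For phase tuples, use \eqref{eq:whole-line-past-recovery} to choose tests supported in the past $s<B$. Translate them to centers $(t_j,x_j)$ and apply \eqref{eq:whole-line-convergence} and \eqref{eq:tail-time-factor}. Past support guarantees the tests fit in $(0,T]$ even when $t_j\to T$. Taking suprema yields $K_\alpha(I)\ge K_{\alpha,\mathrm{mod}}(I)$; this includes the infinite case, since each model quotient can be approached arbitrarily closely.

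\textbf{Localization.} This is the main obstacle. Assume $K_{\alpha,\mathrm{mod}}(I)<\infty$, and argue by contradiction. Suppose there are intervals $J_j$ with $|J_j|\to0$ and tests $u_j\in\mathcal C_{\alpha,0}(J_j)$ with
\[
\|\mathcal H_qu_j\|_{\alpha,J_j}\ge(K_{\alpha,\mathrm{mod}}(I)+\eta)\|P_Au_j\|_{\alpha,J_j},
\]
normalized so that $\|P_Au_j\|_{\alpha,J_j}=1$.

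First I need an a priori bound on the Hessian. Before knowing finiteness, I would get one from the model bounds themselves, via a compactness/contradiction argument on the normalized problem. Next I partition space into cubes of side comparable to the natural local scale: $\sqrt{t}$ near the support, $\ell_\mu(t,x)$ in the tails, and $\sqrt{|J_j|}$ otherwise. Take a smooth partition of unity $\sum\psi_k^2=1$ adapted to this covering, with bounded overlap and $|D\psi_k|\lesssim$ scale$^{-1}$. The commutators $[P_A,\psi_k]$ involve $Du_j$ and $u_j$ multiplied by inverse scales. They are controlled by weighted interpolation and the Hardy-type estimate of Lemma~\ref{lem:dynamic-hardy}, together with the log-derivative control of $\rho$ implicit in Lemma~\ref{lem:posterior-phase}. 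The aim is to show that these commutators are $o(1)$ once the scale is taken as a small multiple of the natural scale.

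On each piece, rescale. By Lemma~\ref{lem:initial-tangents} and Propositions~\ref{prop:initial-heat-limits}, \ref{prop:tail-compactness}, every sequence of pieces subconverges to a half-line, flat or phase model. Weight ratios converge uniformly on the windows, so each piece's quotient is asymptotically at most the corresponding model gain. For heat models this uses \eqref{eq:initial-model-gain}; for whole-line models it uses \eqref{eq:whole-line-interval-bound}, including the case $c<0$ when $\alpha=0$. Those gains are in turn at most $K_{\alpha,\mathrm{mod}}(I)$.

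The delicate points are the following. The localization and limits must be uniform over all pieces simultaneously, so I would argue by contradiction on a worst-piece sequence, using compactness of the model families. Pieces at intermediate distance, where $\dist(x,S)\asymp\sqrt t$, must be handled as shifted heat models via \eqref{eq:initial-center-shift}. Summing the squared local estimates then contradicts the assumed gain $\eta$.
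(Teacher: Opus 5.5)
Your recovery half, and the deduction of the bounded inverse, the damping limit and the infinite case from Propositions~\ref{prop:realization} and~\ref{prop:poisson}, agree with the paper. The gap is in the localization half. It concerns short intervals $J=(s_0,s_1)$ with $s_1\to0$ and $s_0/s_1\to0$ (for instance $s_0=0$), which your plan does not single out.

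On a short interval the only lower-order control is Lemma~\ref{lem:direct-energy}: $|J|^{-1}\|u\|_{\alpha,J}+|J|^{-1/2}\|Du\|_{\alpha,J}\le C\mathscr G_J(u)$. Cutoffs adapted to $\sqrt t$ near $S$ produce commutators of size $u/t$ and $t^{-1/2}Du$. Lemma~\ref{lem:dynamic-hardy}, which you need for the tail commutators, itself carries $t^{-1}\|Du\|_{L^2(\mu_t)}^2$ and, after iteration, $t^{-2}\|u\|_{L^2(\mu_t)}^2$. These are dominated by the energy bound only when $|J|/t$ stays bounded on $J$. That means $|J|\le s_0$ or $J\subseteq(\vartheta s_1,s_1)$ with $\vartheta$ fixed, which is exactly the range of Corollary~\ref{cor:localization-budget}.

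Here is a concrete failure. Take $J=(0,s_1)$, $\alpha\le1$, and a test smooth up to $t=0$ with $u(0,\cdot)\ne0$ in $L^2(\mu)$. Then $\|t^{-1}u\|_{\alpha,J}=\infty$ while $\mathscr G_J(u)<\infty$. Since any admissible scale is at most $\sqrt t$ (note $\ell_\mu\le\sqrt t$), the budget \eqref{eq:partition-graph-budget} behind such a partition fails.

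Keeping the scale $\sqrt{s_1}$ near $S$ avoids these terms. But then the columns reach $\tau=0$, where Proposition~\ref{prop:initial-heat-limits} gives no uniform relative control of the weight, so Lemma~\ref{lem:window-stability} cannot be applied. Cutting those columns in $\log t$ brings $u/t$ back.

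Two smaller points. First, the cutoffs must vary over a \emph{large} multiple $L_{\mathrm{loc}}$ of the budget scale $\min\{\sqrt{|J|},\ell_\mu\}$, giving errors $CL_{\mathrm{loc}}^{-2}\mathscr G_J(u)^2$; a small multiple of a natural scale makes the commutators large, not $o(1)$. Second, no a priori Hessian bound is needed: normalize $\|\mathcal H_qu\|_{\alpha,J}=1$ and absorb with $|D^2u|\le\lambda_q^{-1}|\mathcal H_qu|$, as in Proposition~\ref{prop:comparison}.

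The missing idea is the paper's separate reduction, Lemma~\ref{lem:causal-columns}. It splits the \emph{source} rather than $u$. Take $f_{\mathrm{ref}}=P_{a/2}u$ and a partition $\sum_k\chi_k^2=1$ in $\log t$ of width $L$, and set $u_k(t)=\int_t^{s_1}U(t,s)\chi_k(s)f_{\mathrm{ref}}(s)\,ds$ with the reference evolution of Lemma~\ref{lem:reference-smoothing}. Then $D^2u_k=\chi_kD^2u+E_k$. The bound $\|D^2U(t,s)\|\le C/(s-t)$, combined with Young's inequality in $\log t$, controls the square sum and the cross terms of the $E_k$ by $CL^{-2}\mathscr G_J(u)^2$. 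The factor $(t/s)^{(\alpha+1)/2}$ makes the kernel integrable, and no $u/t$ term appears.

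Since $P_{a/2}u_k=0$ before the block of $\chi_k$, the smoothing bound makes the Hessian and $P_Au_k=-(A-a/2):D^2u_k$ on $(0,\delta\tau_k)$ of squared size $C\delta^{\alpha+1}\|\chi_kf_{\mathrm{ref}}\|_{\alpha,J}^2$. These pieces can be discarded, leaving fields on $J_k\subseteq(\vartheta T_k,T_k)$ with $\vartheta=\delta e^{-4L}$. There $|J|/t\le\vartheta^{-1}$ and the half-line windows stay in $\tau\ge\vartheta$. So your worst-piece compactness argument, which is the paper's Proposition~\ref{prop:uniform-model-estimates}\ref{item:uniform-model-relative-age}, applies with a common threshold $b_\vartheta$.

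Choose $L$ large, then $\delta$ small, then the intervals so short that $e^{4L}s_1\le b_\vartheta$; this gives the contradiction. Intervals accumulating at some $t_*>0$ are covered by part~\ref{item:uniform-model-positive-time} of that proposition.
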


Only the half-line contribution can depend on $\alpha$; see
\eqref{eq:tail-time-factor}. The proof of
Theorem~\ref{thm:sharp-limit} is given in
Subsection~\ref{subsec:main-proof}.

\subsection{Proofs of the model results}
\label{subsec:model-proofs}

The half-line density limit and the later model comparisons use
the following coefficient estimate. For a rescaling sequence as
in Subsection~\ref{subsec:local-rescaling} satisfying
\eqref{eq:coefficient-limits}, the coefficient assumptions give
\begin{equation}
\begin{gathered}
\begin{aligned}
\sup_{\substack{|s|,|z|\le L\\0\le t_j+r_j^2s\le T}}
|(a,A,q)(t_j+r_j^2s,x_j+r_jz)-(a_*,A_*,q_*)|
 \longrightarrow0,\qquad\forall L>0,
\end{aligned}\\
\sup_{\substack{s,z\\0\le t_j+r_j^2s\le T}}
 r_j|b(t_j+r_j^2s,x_j+r_jz)|
 \le r_jB_b\longrightarrow0.
\end{gathered}
\label{eq:local-coefficient-freezing}
\end{equation}
Indeed, on the displayed window the variation of $a$ from its
center value is at most $L_aLr_j+\omega_a(Lr_j^2)$, and the
sum of the variations of $A,q$ is at most
$\omega_{A,q}((\sqrt L+L)r_j)$. Together with
\eqref{eq:coefficient-limits}, these estimates prove
\eqref{eq:local-coefficient-freezing}.

\begin{proof}[Proof of Proposition~\ref{prop:initial-heat-limits}]
For $0<\tau\le T/r_j^2$, set
\begin{equation*}
G_j(\tau,z)=\frac{r_j^d\rho(r_j^2\tau,x_j+r_jz)}{m_j},\qquad
k_j(\tau,y,z)=r_j^d k(0,x_j+r_jy;r_j^2\tau,x_j+r_jz).
\end{equation*}
Equation~\eqref{eq:local-coefficient-freezing}, with $t_j=0$,
applies on $[0,L]\times\overline B_L$ in the $(\tau,z)$ coordinates.
Lemma~\ref{lem:kernel-stability} then gives
$k_j(\tau,y,z)\longrightarrow\gamma_{\tau a_*}(z-y)$
locally uniformly on $(0,\infty)\times\R^d\times\R^d$.
The polynomial mass bounds in Lemma~\ref{lem:initial-tangents} imply
\begin{equation*}
\lim_{R\to\infty}\sup_j
 \int_{|y|>R}e^{-c|y|^2}\,\theta_j(dy)=0
\qquad\forall c>0.
\end{equation*}
By \eqref{eq:kernel-gaussian}, these tails control the integral
of $k_j$ against $\theta_j$ outside a fixed ball, uniformly for
$\varepsilon\le\tau\le L$ and $|z|\le N$.
Truncation in $y$, the vague convergence of $\theta_j$ to $\theta$
and the kernel limit therefore give
\begin{equation*}
G_j(\tau,z)=\int_{\R^d}k_j(\tau,y,z)\,\theta_j(dy)
 \longrightarrow F_{\mathbf T}(\tau,z)
\quad\text{locally uniformly on }(0,\infty)\times\R^d.
\end{equation*}
Positivity of $F_{\mathbf T}$ gives the logarithmic convergence
in \eqref{eq:initial-heat-limit}.
\end{proof}

\begin{proof}[Proof of Proposition~\ref{prop:whole-line-gains}]
Complex tests give the same constants, since both squared norms
split into the sums for their real and imaginary parts.
For $\mathbf M_\zeta=(\delta_\zeta,a_*,A_*,q_*)$, put
\begin{equation*}
\begin{gathered}
\sigma=\zeta^{\mathsf T}a_*\zeta,\quad
V=\zeta^{\mathsf T}(a_*-A_*)\zeta,\qquad
v=e^{\sigma s-\zeta\cdot z}u,\quad \widehat v=\mathcal F_zv,\\
N(\xi)=\xi^{\mathsf T}q_*\xi+\zeta^{\mathsf T}q_*\zeta,
\quad D_\xi=\xi^{\mathsf T}A_*\xi+V.
\end{gathered}
\end{equation*}
The spatial Fourier symbols are
\begin{equation*}
\begin{aligned}
\mathcal F_z(e^{\sigma s-\zeta\cdot z}P_{A_*}u)
 =(-\partial_s+D_\xi-2i\xi^{\mathsf T}A_*\zeta)\widehat v,
\bigl|\mathcal F_z(e^{\sigma s-\zeta\cdot z}\mathcal H_{q_*}u)\bigr|
 =N(\xi)|\widehat v|.
\end{aligned}
\end{equation*}
Here $\mathcal F_z$ and $\mathcal F_{s,z}$ denote the unitary spatial
and space--time Fourier transforms, with kernel $e^{-i\xi\cdot z}$
and $e^{-i(\omega s+\xi\cdot z)}$, respectively; their normalization
factors are $(2\pi)^{-d/2}$ and $(2\pi)^{-(d+1)/2}$.
The matrix norm is Frobenius. Spatial and temporal Plancherel
give the upper bound in the multiplier formula
\begin{equation}
\mathfrak Q(\mathbf M_\zeta)
 =\operatorname*{ess\,sup}_{\omega,\xi}
   \frac{N(\xi)}{\sqrt{D_\xi^2+(\omega+2\xi^{\mathsf T}A_*\zeta)^2}}
 =\operatorname*{ess\,sup}_{\xi}\frac{N(\xi)}{|D_\xi|}.
\label{eq:single-phase-multiplier}
\end{equation}
For the reverse bound, fix a finite $k_0>0$ below the displayed
essential supremum. Choose $0\ne\psi\in C_c^\infty(\R\times\R^d)$
supported where
$N(\xi)^2>k_0^2[D_\xi^2+(\omega+2\xi^{\mathsf T}A_*\zeta)^2]$.
For $v=\mathcal F_{s,z}^{-1}\psi$ and
$u=e^{-\sigma s+\zeta\cdot z}v$, Plancherel gives
$\|\mathcal H_{q_*}u\|_{L^2(W_{\mathbf M_\zeta})}
 >k_0\|P_{A_*}u\|_{L^2(W_{\mathbf M_\zeta})}$.
Choose $\chi_0\in C_c^\infty(\R\times\R^d)$ equal to one near
$(0,0)$ and set $u^{(R)}(s,z)=\chi_0(s/R^2,z/R)u(s,z)$.
Since $v$ is Schwartz, cutoff approximation after conjugation gives
\begin{equation*}
\|\mathcal H_{q_*}(u^{(R)}-u)\|_{L^2(W_{\mathbf M_\zeta})}
 +\|P_{A_*}(u^{(R)}-u)\|_{L^2(W_{\mathbf M_\zeta})}\longrightarrow0.
\end{equation*}
Thus $u^{(R)}$ is admissible in \eqref{eq:whole-line-gain} and
has quotient greater than $k_0$ for all large $R$.
Taking the supremum over finite $k_0$ proves
\eqref{eq:single-phase-multiplier}, including infinite values.
For $\zeta=0$, the Rayleigh quotient gives \eqref{eq:flat-gain}.
For $V>0$, $N(\xi)/D_\xi$ is a weighted mean of
$\xi^{\mathsf T}q_*\xi/(\xi^{\mathsf T}A_*\xi)$ and
$\zeta^{\mathsf T}q_*\zeta/V$; taking $\xi=0$ and
$\xi=r\eta$, $r\to\infty$, gives
\begin{equation*}
\mathfrak Q(\mathbf M_\zeta)
 =\max\left\{\lambda_{\max}(A_*^{-1/2}q_*A_*^{-1/2}),
              \Phi\bigl(\mathsf R(a_*,A_*,q_*;\zeta)\bigr)\right\}.
\end{equation*}
If $\zeta\ne0$ and $V\le0$, a frequency with
$\xi^{\mathsf T}A_*\xi=-V$ has $D_\xi=0<N(\xi)$,
and \eqref{eq:single-phase-multiplier} gives $\mathfrak Q(\mathbf M_\zeta)=\infty$.

For $\mathfrak Q(\mathbf M_\zeta)<\infty$, the same symbols satisfy
$D_\xi\ge0$ and $N(\xi)\le\mathfrak Q(\mathbf M_\zeta)D_\xi$.
For $u\in\mathcal C_{0,0}(J;W_{\mathbf M_\zeta})$, $J=(c,B)$,
we have $\widehat v(\cdot,\xi)\in H^1(J)$ and $\widehat v(B,\xi)=0$
for almost every $\xi$.
Integration by parts gives
\begin{equation*}
\begin{aligned}
\|(-\partial_s+D_\xi-2i\xi^{\mathsf T}A_*\zeta)\widehat v\|_{L^2(J)}^2
 & =\|\partial_s\widehat v+2i\xi^{\mathsf T}A_*\zeta \widehat v\|_{L^2(J)}^2
 +D_\xi^2\|\widehat v\|_{L^2(J)}^2+D_\xi|\widehat v(c,\xi)|^2\\
 &\ge\mathfrak Q(\mathbf M_\zeta)^{-2}N(\xi)^2\|\widehat v\|_{L^2(J)}^2.
\end{aligned}
\end{equation*}
Spatial Plancherel and Lemma~\ref{lem:graph-core} prove
\eqref{eq:whole-line-interval-bound} for this single phase and
$\alpha=0$. Lemma~\ref{lem:weight-operations}\ref{item:time-weights},
applied with exponents $0,\alpha$ and $F=W_{\mathbf M_\zeta}$,
gives the estimate for $\alpha>0$ and $0\le c<B$.
For an infinite gain the bound is automatic.

For the flat weight, use the oscillatory tests following
\eqref{eq:initial-model-comparison}, now with
$0\ne\chi\in C_c^\infty(J\times\R^d)$ and $F=1$.
For every $0\ne\eta\in\R^d$, they give
\begin{equation*}
k_\alpha(J;1,A_*,q_*)\ge
 \frac{\eta^{\mathsf T}q_*\eta}{\eta^{\mathsf T}A_*\eta}.
\end{equation*}
Taking the supremum over $\eta$ and using the preceding upper bound
proves the flat equality
in \eqref{eq:whole-line-interval-bound} for every allowed $\alpha,J$.

For a mixture, we claim
\begin{equation*}
\mathfrak Q(\mathbf M)
 =\sup_{\zeta\in\supp\lambda}\mathfrak Q(\mathbf M_\zeta).
\end{equation*}
Lemma~\ref{lem:weight-operations}\ref{item:positive-mixtures}, with
$F_\zeta=W_{\mathbf M_\zeta}$ and mixing measure $\lambda$, and
the single-phase interval bound give
\begin{equation*}
k_\alpha(J;W_{\mathbf M},A_*,q_*)
 \le\sup_{\zeta\in\supp\lambda}\mathfrak Q(\mathbf M_\zeta)
\end{equation*}
for the $\alpha,J$ in \eqref{eq:whole-line-interval-bound}, and
for every finite real interval when $\alpha=0$.
Taking compact tests also gives
$\mathfrak Q(\mathbf M)\le\sup_{\zeta\in\supp\lambda}
\mathfrak Q(\mathbf M_\zeta)$.
Fix $\zeta_0\in\supp\lambda$, a finite
$0<k_0<\mathfrak Q(\mathbf M_{\zeta_0})$, and a compact test $u$ such that
\begin{equation*}
\|\mathcal H_{q_*}u\|_{L^2(W_{\mathbf M_{\zeta_0}})}^2
 -k_0^2\|P_{A_*}u\|_{L^2(W_{\mathbf M_{\zeta_0}})}^2>0.
\end{equation*}
Set $(s_R,z_R)=(-R,-2Ra_*\zeta_0)$ and define
\begin{equation*}
\lambda_R(E)=
 \frac{\displaystyle\int_E e^{-2R|\zeta-\zeta_0|_{a_*}^2}
                              \,\lambda(d\zeta)}
      {\displaystyle\int_{\R^d}e^{-2R|\eta-\zeta_0|_{a_*}^2}
                              \,\lambda(d\eta)},
\quad\forall\text{ Borel }E\subset\R^d,
\qquad |z|_{a_*}^2=z^{\mathsf T}a_*z.
\end{equation*}
For every $\varepsilon>0$,
\begin{equation*}
\lambda_R(\{|\zeta-\zeta_0|_{a_*}\ge\varepsilon\})
 \le\frac{e^{-3R\varepsilon^2/2}}
 {\lambda(\{|\zeta-\zeta_0|_{a_*}<\varepsilon/2\})}
 \longrightarrow0.
\end{equation*}
Hence $\lambda_R\rightharpoonup\delta_{\zeta_0}$, and direct
substitution and the common compact support give
\begin{equation*}
\frac{W_{\mathbf M}(s_R+s,z_R+z)}{W_{\mathbf M}(s_R,z_R)}
 =\int_{\R^d}e^{2(s\zeta^{\mathsf T}a_*\zeta-\zeta\cdot z)}
                  \,\lambda_R(d\zeta)
 \longrightarrow W_{\mathbf M_{\zeta_0}}(s,z)
\end{equation*}
uniformly on compact cylinders. For $u_R(s,z)=u(s-s_R,z-z_R)$,
\begin{equation*}
\frac{\|\mathcal T u_R\|_{L^2(W_{\mathbf M})}^2}
     {W_{\mathbf M}(s_R,z_R)}
 \longrightarrow\|\mathcal T u\|_{L^2(W_{\mathbf M_{\zeta_0}})}^2,
\qquad\forall\mathcal T\in\{\mathcal H_{q_*},P_{A_*}\}.
\end{equation*}
Thus, for every $B\in\R$ and all sufficiently large $R$,
\begin{equation*}
\supp u_R\subset(-\infty,B)\times\R^d,\qquad
\|\mathcal H_{q_*}u_R\|_{L^2(W_{\mathbf M})}^2
 -k_0^2\|P_{A_*}u_R\|_{L^2(W_{\mathbf M})}^2>0.
\end{equation*}
Taking the supremum over finite $k_0$ and then over $\zeta_0$ proves
the mixture identity and
\eqref{eq:whole-line-past-recovery}, including infinite values
and $\lambda=\delta_0$. The single-phase formula and the integrated
interval estimates give \eqref{eq:phase-gain-formula} and
\eqref{eq:whole-line-interval-bound}.
\end{proof}

\section{Local model comparison and the limiting optimal constant}
\label{sec:proof}

Throughout this section, Assumption~\ref{ass:coefficients} holds,
$\alpha\ge0$ and $I=(0,T)$. The initial law is an arbitrary
probability measure unless a class is specified.

To prove Theorem~\ref{thm:sharp-limit}, we first obtain the lower
bound by transferring finite model tests. For $\mu\in\mathcal D$
with a finite model bound, localization compares the actual problem
with the models of Section~\ref{sec:sharp-constants}.
The density limits of Section~\ref{sec:densities} control the weights,
while energy and Hardy estimates control cutoff errors.
Comparison of both squared norms, followed by Hessian absorption,
yields uniform bounds on short intervals away from zero and on
intervals $J_i\subseteq(\vartheta T_i,T_i)$ near zero, for fixed
$\vartheta\in(0,1)$.
To cover the remaining intervals approaching zero,
Lemma~\ref{lem:causal-columns} splits the source of the reference
equation in time and evolves each part to obtain fields on intervals
of the latter type, with a common $\vartheta$ and controllable errors.
Summing the bounds for these fields completes the upper bound.
The final subsection gives uniform reference estimates, identifies
the terminal trace space and establishes solvability and lower-order
damping bounds for the linear terminal problem.

\subsection{Recovery of the model constants}
\label{subsec:recovery}

\begin{lemma}\label{lem:recovery}
For an arbitrary initial probability law, let $k_{\mathrm{mod}}=\mathfrak Q(\mathbf M)$
where $\mathbf M$ is a flat tuple admitting a whole-line realization.
If $\mu\in\mathcal D$, also allow
$k_{\mathrm{mod}}=\mathfrak P_\alpha(\mathbf T)$ for $\mathbf T\in\mathfrak T(\mu)$,
or $k_{\mathrm{mod}}=\mathfrak Q(\mathbf M)$ for $\mathbf M\in\mathfrak M_+(I)$.
For every finite $0<k_0<k_{\mathrm{mod}}$ and
every $h>0$, there are $J\subseteq I$, $|J|\le h$, and
$u\in\mathcal C_{\alpha,0}(J)$ such that
\begin{equation}
\|\mathcal H_qu\|_{\alpha,J}^2-k_0^2\|P_Au\|_{\alpha,J}^2>0.
\label{eq:strict-recovery}
\end{equation}
Consequently, $K_\alpha(I)\ge K_{\mathrm{flat}}(I)$ for every
initial probability law. For $\mu\in\mathcal D$,
$K_\alpha(I)\ge K_{\alpha,\mathrm{mod}}(I)$ and
\begin{equation}
K_{\alpha,\mathrm{mod}}(I)=\infty\ \Longrightarrow\ K_{\alpha,h}(I)=\infty
\quad\forall h>0.
\label{eq:recovered-lower-bound}
\end{equation}
\end{lemma}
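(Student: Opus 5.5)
The plan is to transfer a strict model test back to the original problem by inverting the rescaling that produced the model. Fix $k_0<k_{\mathrm{mod}}$. By the definition of the model gain, choose a compact test $v$ with
\[
\|\mathcal H_{q_*}v\|^2-k_0^2\|P_{A_*}v\|^2>0
\]
in the model weight. The source of $v$ depends on the case.
\begin{itemize}
\item \emph{Half-line case.} Use \eqref{eq:initial-model-gain} with $0<c<B$, so that $v\in\mathcal C_{\alpha,0}((c,B);F_{\mathbf T})$. Then $v$ is smooth and supported in $(c,B-\varepsilon]\times\overline B_R$. I would first approximate by a test compactly supported in time, using Lemma~\ref{lem:graph-core} and the strictness of the inequality.
\item \emph{Whole-line case.} Use \eqref{eq:whole-line-past-recovery} with $B=0$, so that $v\in C_c^\infty((-\infty,0)\times\R^d)$. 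For a flat tuple the definition \eqref{eq:whole-line-gain} already suffices, since $W=1$ and any compact support works after translation.
\end{itemize}

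Given the realizing sequence $(t_j,x_j,r_j)$, define
\[
u_j(t,x)=v\bigl((t-t_j)/r_j^2,(x-x_j)/r_j\bigr).
\]
In the half-line case $t_j=0$ and $\tau=t/r_j^2$. Take
\[
J_j=(t_j+r_j^2s_-,\;t_j+r_j^2s_+),
\]
where $[s_-,s_+)$ contains the time support with a terminal gap. This gives $|J_j|\to0$. Also $J_j\subseteq I$ for large $j$:
\begin{itemize}
\item in the half-line case, because $c>0$ and $Br_j^2<T$;
\item in the whole-line case, because $s_+\le0$ and every fixed past window is physical once $\Delta_j\to\infty$.
\end{itemize}
Hence $u_j\in\mathcal C_{\alpha,0}(J_j)$.

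Now change variables. Since $D^2u_j=r_j^{-2}(D^2v)\circ\Phi_j$ and $\partial_tu_j=r_j^{-2}(\partial_sv)\circ\Phi_j$, we have
\[
r_j^2\,P_Au_j=\bigl(-\partial_sv-A(t_j+r_j^2s,x_j+r_jz):D^2v\bigr)\circ\Phi_j,
\]
and $r_j^2\,\mathcal H_qu_j$ has the analogous form. The common factor $r_j^{-4}$ cancels in the quotient. The coefficients converge uniformly on the compact support to $(A_*,q_*)$ by \eqref{eq:local-coefficient-freezing}. For the weight, multiply by the scalar normalization:
\begin{itemize}
\item in the half-line case, $t^\alpha\rho\,dt\,dx$ becomes, after dividing by $r_j^{2\alpha+d+2}m_jr_j^{-d}$, the measure $\tau^\alpha G_j\,d\tau\,dz$ with $G_j\to F_{\mathbf T}$ uniformly on the support, by Proposition~\ref{prop:initial-heat-limits} and \eqref{eq:scaled-time-measure};
\item in the whole-line case, dividing by $t_j^\alpha\rho(t_j,x_j)r_j^{d+2}$ leaves a weight converging uniformly to $W_{\mathbf M}$, by \eqref{eq:whole-line-convergence} and \eqref{eq:tail-time-factor}. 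Flat sequences are the case $W=1$.
\end{itemize}
Because $v$ has compact support, both normalized squared norms converge to the model norms. The strict inequality \eqref{eq:strict-recovery} therefore holds for large $j$, which gives $|J_j|\le h$.

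For the consequences, \eqref{eq:strict-recovery} gives $K_{\alpha,h}(I)\ge k_0$ for every $h$, hence $K_\alpha(I)\ge k_{\mathrm{mod}}$.
\begin{itemize}
\item \emph{Flat bound.} Fixed-center realizations attain every coefficient value, and \eqref{eq:flat-gain} gives their gains. This yields $K_\alpha(I)\ge K_{\mathrm{flat}}(I)$ for any initial law, since only continuity and positivity of $\rho$ were used.
\item \emph{Lower bound for $\mu\in\mathcal D$.} Take suprema over $\mathfrak T(\mu)$ and $\mathfrak M_+(I)$. The identity $\max\{K_{\mathrm{flat}},\sup\mathfrak Q\}=\max\{K_{\mathrm{flat}},K_{\mathrm{prop}}\}$ then gives $K_\alpha(I)\ge K_{\alpha,\mathrm{mod}}(I)$.
\item \emph{Infinite case.} If $K_{\alpha,\mathrm{mod}}(I)=\infty$, then for every finite $k_0$ and every $h$ the construction yields a quotient above $k_0$ on an interval of length at most $h$. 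Hence $K_{\alpha,h}(I)=\infty$.
\end{itemize}

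The main technical point is the half-line weight near $\tau\to0$. There the convergence in Proposition~\ref{prop:initial-heat-limits} is only uniform on $\tau\ge\varepsilon$. This is why the test must be supported in $\tau\ge c>0$, using the equivalent range of $c$ in \eqref{eq:initial-model-gain}; with that choice the obstacle disappears. In the whole-line case the corresponding care is to use the past-supported tests from \eqref{eq:whole-line-past-recovery}, because centers may approach $T$.
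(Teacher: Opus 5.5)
Your proposal is correct and follows the paper's proof essentially step for step. It uses the same rescaled transfer of a strict model test (half-line tests on $(c,B)$ with $c>0$, past-supported whole-line tests from \eqref{eq:whole-line-past-recovery}), the same normalizations $m_jr_j^{2\alpha+2}$ and $t_j^\alpha\rho(t_j,x_j)r_j^{d+2}$, uniform weight and coefficient convergence on the support, and the same passage to suprema for the consequences. One small correction: the preliminary approximation by a test compactly supported in time is not supplied by Lemma~\ref{lem:graph-core}, and it is not needed, because $\mathcal C_{\alpha,0}$ leaves the left endpoint free, so $v\in\mathcal C_{\alpha,0}((c,B);F_{\mathbf T})$ transfers directly to $J_j=r_j^2(c,B)$ exactly as in the paper.
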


\begin{proof}
Fix a finite $0<k_0<k_{\mathrm{mod}}$ and $h>0$. For $k_{\mathrm{mod}}=\mathfrak P_\alpha(\mathbf T)$,
\eqref{eq:initial-model-gain} and Lemma~\ref{lem:graph-core} allow
$\widetilde J=(c,B)$ with $0<c<B<\infty$ and
$v\in\mathcal C_{\alpha,0}(\widetilde J;F_{\mathbf T})$;
write $N_H=\|\mathcal H_{q_*}v\|_{\alpha,\widetilde J;F_{\mathbf T}}$
and $N_P=\|P_{A_*}v\|_{\alpha,\widetilde J;F_{\mathbf T}}$.
Along a sequence $(x_j,r_j,m_j)$ realizing $\mathbf T$, take
$t_j^0=0$ and $c_j=m_jr_j^{2\alpha+2}$. Since $r_j\downarrow0$,
the interval endpoints satisfy $0<r_j^2c<r_j^2B\le T$ for all
sufficiently large $j$.

For $k_{\mathrm{mod}}=\mathfrak Q(\mathbf M)$, \eqref{eq:whole-line-past-recovery}
allows $\widetilde J=(c,B)=(-L_v,0)$, with $L_v>1$, and
$v\in C_c^\infty(\widetilde J\times\R^d)$ with
$\supp v\subset(-L_v,-1)\times\R^d$; here write
$N_H=\|\mathcal H_{q_*}v\|_{0,\widetilde J;W_{\mathbf M}}$
and $N_P=\|P_{A_*}v\|_{0,\widetilde J;W_{\mathbf M}}$.
Along a sequence $(t_j,x_j,r_j)$ realizing $\mathbf M$, take
$t_j^0=t_j$ and $c_j=t_j^\alpha\rho(t_j,x_j)r_j^{d+2}$.
The condition $\Delta_j\to\infty$ gives $\Delta_j>L_v$ and hence
$0<t_j-L_vr_j^2<t_j\le T$ for all sufficiently large $j$.

In each case, choose $v$ so that
\begin{equation*}
N_H^2-k_0^2N_P^2>0,\qquad
\supp v\subset(c,B-\varepsilon]\times\overline B_{R_v}
\quad\text{for some }\varepsilon,R_v>0.
\end{equation*}
For large $j$, put $J_j=t_j^0+r_j^2\widetilde J\subseteq I$
and define $u_j(t_j^0+r_j^2s,x_j+r_jz)=r_j^2v(s,z)$. Then
\begin{equation*}
\begin{gathered}
(A_j^{\mathrm{sc}},q_j^{\mathrm{sc}})(s,z)=(A,q)(t_j^0+r_j^2s,x_j+r_jz),\\
(\partial_tu_j,D_xu_j,D_x^2u_j)(t_j^0+r_j^2s,x_j+r_jz)
 =(v_s,r_jD_zv,D_z^2v)(s,z).
\end{gathered}
\end{equation*}
With the normalized weight
$w_j^{\mathrm{act}}(s,z)=c_j^{-1}r_j^{d+2}(t_j^0+r_j^2s)^\alpha
\rho(t_j^0+r_j^2s,x_j+r_jz)$,
change of variables in the squared norms gives
\begin{equation}
\begin{aligned}
c_j^{-1}\|\mathcal H_qu_j\|_{\alpha,J_j}^2
 &=\int_{\widetilde J\times\R^d}
       w_j^{\mathrm{act}}|(q_j^{\mathrm{sc}})^{1/2}D^2v\,(q_j^{\mathrm{sc}})^{1/2}|^2,\\
c_j^{-1}\|P_Au_j\|_{\alpha,J_j}^2
 &=\int_{\widetilde J\times\R^d}
       w_j^{\mathrm{act}}|-v_s-A_j^{\mathrm{sc}}:D^2v|^2.
\end{aligned}
\label{eq:recovery-two-norms}
\end{equation}

Proposition~\ref{prop:initial-heat-limits} and
\eqref{eq:scaled-time-measure} give the half-line density limit,
while \eqref{eq:whole-line-convergence} and \eqref{eq:tail-time-factor}
give the whole-line limit. Together with
\eqref{eq:local-coefficient-freezing}, these read
\begin{equation*}
\begin{gathered}
\sup_{s\in\widetilde J,\ |z|\le R_v}
 \left|\log\frac{w_j^{\mathrm{act}}(s,z)}{s^\alpha F_{\mathbf T}(s,z)}\right|
 \longrightarrow0 \quad\text{for }\mathbf T,
\qquad
\sup_{s\in\widetilde J,\ |z|\le R_v}
 \left|\log\frac{w_j^{\mathrm{act}}(s,z)}{W_{\mathbf M}(s,z)}\right|
 \longrightarrow0 \quad\text{for }\mathbf M,\\
\sup_{s\in\widetilde J,\ |z|\le R_v}
 \bigl(|A_j^{\mathrm{sc}}(s,z)-A_*|+|q_j^{\mathrm{sc}}(s,z)-q_*|\bigr)
 \longrightarrow0 \quad\text{in both cases}.
\end{gathered}
\end{equation*}

The rescaling gives $\supp u_j\subset
 (t_j^0+r_j^2c,t_j^0+r_j^2(B-\varepsilon)]
 \times(x_j+r_j\overline B_{R_v})$, and the relative weight bounds give
\begin{equation*}
c_j^{-1}\|u_j\|_{\mathcal W_\alpha(J_j)}^2
 =\int_{\widetilde J\times\R^d}w_j^{\mathrm{act}}
     \bigl(r_j^4|v|^2+|v_s|^2+r_j^2|Dv|^2+|D^2v|^2\bigr)<\infty.
\end{equation*}
Thus $u_j\in\mathcal C_{\alpha,0}(J_j)$, and dominated convergence
in \eqref{eq:recovery-two-norms} yields
\begin{equation*}
c_j^{-1}\|\mathcal H_qu_j\|_{\alpha,J_j}^2\longrightarrow N_H^2,
\qquad c_j^{-1}\|P_Au_j\|_{\alpha,J_j}^2\longrightarrow N_P^2.
\end{equation*}
The strict inequality $N_H^2-k_0^2N_P^2>0$, together with $c_j>0$,
gives \eqref{eq:strict-recovery} for all large $j$; also
$|J_j|=r_j^2(B-c)\to0$, so $|J_j|\le h$ eventually.

The fixed positive-time flat realizations give
$K_\alpha(I)\ge K_{\mathrm{flat}}(I)$ for every initial probability law.
For $\mu\in\mathcal D$ and every finite
$0<k_0<K_{\alpha,\mathrm{mod}}(I)$, the definitions of the three
contributions give an actual model whose gain exceeds $k_0$.
Applying \eqref{eq:strict-recovery} to that model yields
$K_{\alpha,h}(I)>k_0$ for every $h>0$. Taking the supremum over $k_0$
and then letting $h\downarrow0$ in \eqref{eq:short-time-constants}
gives $K_\alpha(I)\ge K_{\alpha,\mathrm{mod}}(I)$. If $K_{\alpha,\mathrm{mod}}(I)=\infty$, the same inequalities
imply $K_{\alpha,h}(I)=\infty$ for every $h>0$.
\end{proof}

\subsection{Localization and comparison estimates}
\label{subsec:comparison}

For $u\in\mathcal W_{\alpha,0}(J)$, write
\begin{equation*}
\mathscr G_J(u)^2=\|D^2u\|_{\alpha,J}^2+\|P_Au\|_{\alpha,J}^2.
\end{equation*}
The identity $\sum_i\chi_i^2=1$ preserves both squared norms up
to cutoff errors. The lower-order estimates bound these by
$\mathscr G_J(u)^2$, whose Hessian term is absorbed in
Proposition~\ref{prop:comparison}. Window stability compares each
localized field with a frozen model; Subsection~\ref{subsec:localization}
chooses the scales and models.

\begin{lemma}[Variable scale square partitions]\label{lem:parabolic-partition}
Let $\Omega\subseteq\R\times\R^d$ be open,
$r\in C(\Omega;(0,\infty))$, and $L\ge2$. Write
\(
\mathcal Q_R(t,x)=(t-R^2,t+R^2)\times B(x,R),
\,\forall R>0.
\)
Suppose that, for every $(t,x)\in\Omega$,
\begin{equation}
\tfrac12 r(t,x)\le r(s,y)\le2r(t,x),
\qquad\forall(s,y)\in\Omega\cap\mathcal Q_{8Lr(t,x)}(t,x).
\label{eq:admissible-local-scale}
\end{equation}
There exist centers $(t_i,x_i)\in\Omega$, scales
$r_i=r(t_i,x_i)$, and a locally finite family
$\{\chi_i\}\subset C^\infty(\Omega;[0,1])$ such that
\begin{equation*}
\sum_i\chi_i^2=1\quad\text{on }\Omega,\qquad
\supp_\Omega\chi_i\subseteq\Omega\cap\mathcal Q_{2Lr_i}(t_i,x_i)
\quad\forall i,
\end{equation*}
and, with $C$ depending only on $d$,
\begin{equation}
\sum_i|D\chi_i|^2\le CL^{-2}r^{-2},\qquad
\sum_i\bigl(|D^2\chi_i|^2+|\partial_t\chi_i|^2\bigr)
 \le CL^{-4}r^{-4}
\qquad\text{on }\Omega.
\label{eq:partition-derivatives}
\end{equation}
\end{lemma}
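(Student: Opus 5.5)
We construct the partition by a Vitali/Whitney-type covering with comparable-radius cylinders, then normalize a family of smooth bumps. For each $(t,x)\in\Omega$ consider the cylinder $\mathcal Q_{Lr(t,x)/2}(t,x)$. Admissibility \eqref{eq:admissible-local-scale} makes $r$ comparable within a factor $2$ on $\mathcal Q_{8Lr}$. First choose a maximal family of centers $(t_i,x_i)$ whose cylinders $\mathcal Q_{Lr_i/4}(t_i,x_i)$ are pairwise disjoint; the scales involved are locally bounded below on compact subsets, so a Zorn/greedy argument in the style of Vitali works. Maximality plus comparability of scales shows that the enlarged cylinders $\mathcal Q_{Lr_i}(t_i,x_i)$ cover $\Omega$. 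Indeed, if $(t,x)$ is not in any of them, then $\mathcal Q_{Lr(t,x)/4}(t,x)$ must meet some $\mathcal Q_{Lr_i/4}(t_i,x_i)$. Since the two radii are within factor $2$ (the point lies in $\mathcal Q_{8Lr_i}$), this forces $(t,x)\in\mathcal Q_{Lr_i}(t_i,x_i)$. Note that parabolic cylinders of radius $R$ and $R'$ that intersect are contained in the cylinder of radius $\sqrt{2}(R+R')$ type, which requires care with the time direction (squares of radii add), but factors remain absolute.

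Next prove bounded overlap: any point $(t,x)$ lies in $\mathcal Q_{2Lr_i}(t_i,x_i)$ for at most $N(d)$ indices. All such $r_i$ lie in $[r/2,2r]$ with $r=r(t,x)$, because $(t,x)\in\mathcal Q_{8Lr_i}(t_i,x_i)$. The disjoint cylinders $\mathcal Q_{Lr_i/4}$ are then contained in $\mathcal Q_{CLr}(t,x)$ and each has volume $\gtrsim (Lr)^{d+2}$. Counting by volume gives $N$ depending only on $d$. This also yields local finiteness.

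Now fix $\phi\in C_c^\infty(\mathcal Q_2(0,0);[0,1])$ equal to $1$ on $\mathcal Q_1(0,0)$. Set $\psi_i(t,x)=\phi\bigl((t-t_i)/(Lr_i)^2,(x-x_i)/(Lr_i)\bigr)$, restricted to $\Omega$. Then $\supp\psi_i\subseteq\mathcal Q_{2Lr_i}(t_i,x_i)$, we have $\Psi=\sum_i\psi_i^2\ge1$ by the covering, and $\Psi\le N$. Define $\chi_i=\psi_i\Psi^{-1/2}$, so that $\sum\chi_i^2=1$. On the support, scaling gives $|D\psi_i|\lesssim(Lr_i)^{-1}\le 2(Lr)^{-1}$ and $|D^2\psi_i|+|\partial_t\psi_i|\lesssim (Lr)^{-2}$. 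The same bounds hold for $\Psi$ and $\Psi^{-1/2}$, since at most $N$ terms are nonzero and $\Psi\ge1$. The product rule then bounds $|D\chi_i|$ by $C(Lr)^{-1}$ and $|D^2\chi_i|,|\partial_t\chi_i|$ by $C(Lr)^{-2}$. Summing the squares over at most $N$ nonzero indices gives \eqref{eq:partition-derivatives}.

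The main obstacle is the covering step: handling parabolic (anisotropic) cylinders and the boundary of $\Omega$. Cylinders may exit $\Omega$, but this is harmless because every condition is imposed only relative to $\Omega$, and admissibility is assumed only on $\Omega\cap\mathcal Q_{8Lr}$. Hence every comparison point used must lie in $\Omega$, and I would check this carefully. The requirement $L\ge2$ merely gives room among the radius constants $L/4$, $L$, $2L$ and $8L$.
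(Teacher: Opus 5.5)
Your proposal is correct and follows the paper's proof essentially step for step: a maximal pairwise disjoint family of shrunken parabolic cylinders (your radius $Lr_i/4$ versus the paper's $Lr_i/10$), covering of $\Omega$ by the cylinders $\mathcal Q_{Lr_i}(t_i,x_i)$, a volume-counting overlap bound from comparable radii, and normalized rescaled bumps $\chi_i=\psi_i/(\sum_j\psi_j^2)^{1/2}$. In the covering step, get the factor-$2$ comparison by applying \eqref{eq:admissible-local-scale} at whichever of the two centers has the larger scale. That center's $\mathcal Q_{8Lr}$ cylinder contains the other center before comparability is known, whereas your remark that the point lies in $\mathcal Q_{8Lr_i}$ presupposes it.
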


\begin{proof}
Choose the centers so that
$\{\mathcal Q_{Lr_i/10}(t_i,x_i)\}$ is a maximal pairwise
disjoint family. Maximality and \eqref{eq:admissible-local-scale}
give $\Omega\subseteq\bigcup_i\mathcal Q_{Lr_i}(t_i,x_i)$.
If $\mathcal Q_{2Lr_i}(t_i,x_i)\cap
\mathcal Q_{2Lr_j}(t_j,x_j)\ne\varnothing$,
\eqref{eq:admissible-local-scale} gives $r_i/2\le r_j\le2r_i$.
Comparison of the disjoint smaller cylinders, whose volumes
are proportional to $(Lr_i)^{d+2}$, therefore gives
\(
\sum_i\mathbf1_{\mathcal Q_{2Lr_i}(t_i,x_i)}\le C.
\)
Positivity and continuity of $r$ make this cover locally finite
in $\Omega$.
Choose $\psi\in C_c^\infty(\mathcal Q_2(0,0);[0,1])$ with
$\psi=1$ on $\mathcal Q_1(0,0)$ and
$\|D\psi\|_\infty+\|D^2\psi\|_\infty+\|\partial_t\psi\|_\infty
\le C$, where $C$ depends only on $d$, and set
\begin{equation*}
\psi_i(t,x)=\psi\left(\frac{t-t_i}{L^2r_i^2},
                      \frac{x-x_i}{Lr_i}\right),\qquad
\chi_i=\frac{\psi_i}{(\sum_j\psi_j^2)^{1/2}}
\quad\text{on }\Omega.
\end{equation*}
The covering and overlap bounds give
$1\le\sum_i\psi_i^2\le C$ on $\Omega$.
On $\supp_\Omega\psi_i$, \eqref{eq:admissible-local-scale}
gives $r_i\asymp r$. Differentiation of the displayed formula,
using the bounded overlap, proves the derivative bounds.
\end{proof}

\begin{corollary}\label{cor:localization-budget}
There is $h>0$, depending only on the reference data, $\alpha$ and
$\Lambda_A$, such that for every initial probability law,
\begin{equation}
|J|^{-1}\|u\|_{\alpha,J}+|J|^{-1/2}\|Du\|_{\alpha,J}
 \le C\mathscr G_J(u),
\qquad\forall J\subseteq I,\quad 0<|J|\le h,\quad
\forall u\in\mathcal W_{\alpha,0}(J),
\label{eq:short-graph-budget}
\end{equation}
where $C$ has the same dependence.
For $\mu\in\mathcal D$, set
$r_J(t,x)=\min\{\sqrt{|J|},\ell_\mu(t,x)\}$.
The bound
\begin{equation}
\|r_J^{-1}Du\|_{\alpha,J}+\|r_J^{-2}u\|_{\alpha,J}
 \le C\mathscr G_J(u),
\qquad\forall u\in\mathcal W_{\alpha,0}(J),
\label{eq:localization-budget}
\end{equation}
holds in the following ranges, always with $|J|\le h$.
It holds for every $J=(s_0,s_1)\subseteq I$ with $|J|\le s_0$,
with a constant depending only on $C_D$, the reference data,
$\alpha$ and $\Lambda_A$. For each fixed $\vartheta\in(0,1)$,
it also holds with a constant $C$ depending only on the same
data and $\vartheta$, for every
$0<B_0\le T$ and $J\subseteq(\vartheta B_0,B_0)$;
$C$ is independent of $B_0,J,u$.

\end{corollary}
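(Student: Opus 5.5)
The plan has two parts: an energy identity along the reference diffusion for the unweighted budget \eqref{eq:short-graph-budget}, and then a localized Hardy argument at the natural scale $r_J$ for \eqref{eq:localization-budget}. By Lemma~\ref{lem:graph-core} both sides of each inequality are continuous in the $\mathcal W_\alpha(J)$ norm and $\mathcal C_{\alpha,0}(J)$ is dense in $\mathcal W_{\alpha,0}(J)$. So it suffices to prove everything for smooth $u$ with compact support in $(s_0,s_1-\varepsilon]\times\overline B_R$.

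\emph{Step 1: unweighted budget.} For such $u$, the weak Fokker--Planck equation for $\rho$ with generator $L=b\cdot D+\tfrac12 a:D^2$ gives $\frac{d}{dt}\int\rho|u|^2=\int\rho\,(2uu_t+L|u|^2)$. Substituting $u_t=-P_Au-A:D^2u$ and $L|u|^2=2uLu+|a^{1/2}Du|^2$, then integrating from $t$ to $s_1$ with zero terminal data, I expect
\begin{equation*}
\mathbb E|u(t,X_t)|^2+\int_t^{s_1}\mathbb E|a^{1/2}Du|^2\,ds
 =\int_t^{s_1}\mathbb E\bigl[2u\,P_Au+u\,(2A-a):D^2u-2u\,b\cdot Du\bigr]\,ds .
\end{equation*}
The favourable sign of the gradient term is what drives the estimate. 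Because $t^\alpha\le s^\alpha$ for $s\ge t$, I can multiply by $t^\alpha$ and move the weight inside the integral.

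Taking the supremum in $t$ and integrating over $J$, the Cauchy--Schwarz inequality gives
\begin{equation*}
\sup_t t^\alpha\mathbb E|u(t)|^2+\lambda_a\|Du\|_{\alpha,J}^2
 \le C\|u\|_{\alpha,J}\bigl(\mathscr G_J(u)+B_b\|Du\|_{\alpha,J}\bigr).
\end{equation*}
The left side also bounds $\|u\|_{\alpha,J}^2\le|J|\sup_t t^\alpha\mathbb E|u(t)|^2$. Young's inequality then absorbs the drift term once $|J|\le h$ is small in terms of $B_b,\lambda_a$. This yields $\|u\|_{\alpha,J}\le C|J|\mathscr G_J(u)$ and $\|Du\|_{\alpha,J}\le C|J|^{1/2}\mathscr G_J(u)$, which is \eqref{eq:short-graph-budget}. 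No assumption on $\mu$ enters this step.

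\emph{Step 2: the scale $r_J$.} In the stated ranges, $t$ is comparable to a single value $t_J$ throughout $J$: either $|J|\le s_0$, so $t\in(s_0,2s_0)$, or $J\subseteq(\vartheta B_0,B_0)$. Hence $\ell_\mu(t,x)$ varies by bounded factors on parabolic cylinders of radius comparable to itself. This makes $r_J$ an admissible scale for Lemma~\ref{lem:parabolic-partition} on $\Omega=J\times\R^d$, with a fixed $L$ depending on $\vartheta$.

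Where $r_J\asymp\sqrt{|J|}$, Step 1 already gives the bound. The new region is the tail $\dist(x,S)\gg\sqrt t$, where $r_J=\ell_\mu\approx t/\dist(x,S)$ is smaller than $\sqrt{|J|}$. There the needed input is a weighted Hardy inequality in space,
\begin{equation*}
\int\rho\,\ell_\mu^{-2}|w|^2\,dx\le C\int\rho\,|Dw|^2\,dx+\text{(terms at scale }\sqrt t\text{)},
\end{equation*}
obtained from the fact that $-D\log\rho$ has size of order $1/\ell_\mu$ and points outward (Lemma~\ref{lem:dynamic-hardy}, via the kernel--action comparison of Section~\ref{sec:densities}). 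The idea is to integrate by parts against the vector field $V=-D\log\rho$: the identity $\int\rho\,V\cdot D|w|^2=\int\rho\,(|V|^2-\operatorname{div}V)|w|^2$ bounds the $|V|^2\approx\ell_\mu^{-2}$ term by $\int\rho\,|w||Dw|/\ell_\mu$, plus curvature errors from $\operatorname{div}V$.

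Applying this first with $w=u$ and then with $w=Du$ (using $D^2u$ on the right), and integrating in $t$ with weight $t^\alpha$, I would obtain
\begin{equation*}
\|r_J^{-1}Du\|_{\alpha,J}\le C\bigl(\|D^2u\|_{\alpha,J}+\|Du\|_{\alpha,J}|J|^{-1/2}\bigr),
\qquad
\|r_J^{-2}u\|_{\alpha,J}\le C\bigl(\|r_J^{-1}Du\|_{\alpha,J}+\|u\|_{\alpha,J}|J|^{-1}\bigr).
\end{equation*}
Step 1 then controls the right-hand sides by $C\,\mathscr G_J(u)$, giving \eqref{eq:localization-budget}. The constants depend on $C_D$ only through the annulus bounds on the log-gradient, and on $\vartheta$ only through the comparability of $t$ across $J$. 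They are independent of $B_0$ because every estimate is scale-invariant under parabolic dilation.

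The main obstacle is the Hardy step. It requires two-sided control of $D\log\rho$ at the natural tail scale $\ell_\mu$: a lower bound on the outward radial component $V\cdot\nu\gtrsim 1/\ell_\mu$, and an upper bound $|\operatorname{div}V|\ll\ell_\mu^{-2}$. These must hold uniformly for $t$ near zero in the second range. My first attempt would be to use the heat-phase approximation of Lemma~\ref{lem:posterior-phase}, whose phases lie in a fixed annulus, on each partition cylinder. I would then patch the pieces with the partition of Lemma~\ref{lem:parabolic-partition}, whose derivative bounds \eqref{eq:partition-derivatives} produce commutator errors of relative size $L^{-1}$ that can be absorbed.
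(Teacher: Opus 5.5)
Your architecture matches the paper's. You use an energy estimate along the reference diffusion for \eqref{eq:short-graph-budget}. You then apply the slice-wise Hardy inequality \eqref{eq:doubling-hardy} of Lemma~\ref{lem:dynamic-hardy}, combined with $t^{-1}\le C|J|^{-1}$ on $J$ in both ranges and with $r_J^{-1}\le|J|^{-1/2}+\ell_\mu^{-1}$, $r_J^{-2}\le|J|^{-1}+\ell_\mu^{-2}$.

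\textbf{Step~1 fails as written.} Multiplying the identity at time $t$ by $t^\alpha$ and using $t^\alpha\le s^\alpha$ is fine on the right-hand side. On the left, however, it produces $t^\alpha\int_t^{s_1}\mathbb E|a^{1/2}Du|^2\,ds$, which is \emph{smaller} than $\int_t^{s_1}s^\alpha\,\mathbb E|a^{1/2}Du|^2\,ds$. Write $F(t)=\int_t^{s_1}\mathbb E|Du(s,X_s)|^2\,ds$. Integration by parts gives only
\[
\int_J s^\alpha\,\mathbb E|Du|^2\,ds=s_0^\alpha F(s_0)+\alpha\int_J s^{\alpha-1}F(s)\,ds .
\]
So $\sup_t t^\alpha F(t)$ controls $\|Du\|_{\alpha,J}^2$ only up to a factor $1+\alpha\log(s_1/s_0)$. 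This factor is unbounded for intervals such as $J=(0,h)$ when $\alpha>0$, and those intervals are included in \eqref{eq:short-graph-budget}.

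The fix is to put the weight inside before differentiating. Apply It\^o to $t^\alpha u(t,X_t)^2$; equivalently, integrate your identity against $d(t^\alpha)$ and add $s_0^\alpha$ times the identity at $s_0$. Because the problem is backward with zero terminal data, the extra term $\alpha t^{\alpha-1}\mathbb E|u|^2$ has the favourable sign. You then get, for every $t\in J$,
\[
t^\alpha\mathbb E|u(t)|^2+\int_t^{s_1}s^\alpha\,\mathbb E|a^{1/2}Du|^2\,ds\le\int_t^{s_1}s^\alpha\,\bigl|\mathbb E[\cdots]\bigr|\,ds .
\]
From there your supremum-in-time device and the Young absorption go through. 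Lemma~\ref{lem:direct-energy} instead inserts the damping factor $e^{-2\kappa(s_1-t)}$ with $\kappa=h^{-1}$; that is an equivalent way to generate the $|J|^{-2}\|u\|_{\alpha,J}^2$ term.

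\textbf{Step~2 has a smaller slip.} Applying the Hardy inequality to $w=u$ only bounds $\|\ell_\mu^{-1}u\|$, not $\|\ell_\mu^{-2}u\|$. To reach the second power you must apply \eqref{eq:doubling-hardy} to $\ell_\mu^{-1}u$. The ingredient you did not state is the a.e.\ bound $|D(\ell_\mu^{-1})|\le t^{-1}$, which follows from the $1$-Lipschitz distance function and yields \eqref{eq:iterated-hardy}. Then $t^{-1}\le C|J|^{-1}$ and \eqref{eq:short-graph-budget} finish the proof.

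Do not try to prove the Hardy inequality by integrating by parts against $-D\log\rho$. Pointwise control of $\operatorname{div}D\log\rho$ is not available for spatially Lipschitz $a$ and Borel $b$. The paper proves \eqref{eq:doubling-hardy} exactly by your fallback: the phase Hardy inequality \eqref{eq:phase-hardy} on the windows of Lemma~\ref{lem:posterior-phase}, patched via \eqref{eq:square-gradient-identity}. You can therefore cite it as a black box.

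The space--time partition and the region splitting are unnecessary here, since Hardy is applied slice by slice in $t$. Independence of $B_0$ does not come from parabolic dilation invariance, which this problem lacks. It comes from the uniformity of the Hardy constant over $0<t\le T$ together with $|J|/t\le\vartheta^{-1}$.
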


\begin{proof}
Choose $h\le\min\{1,T\}$ so that Lemma~\ref{lem:direct-energy}
gives \eqref{eq:short-graph-budget}.
For $u\in\mathcal C_{\alpha,0}(J)$, apply
\eqref{eq:doubling-hardy} to $Du$ and $\ell_\mu^{-1}u$
at each $t\in J$. Since $|D(\ell_\mu^{-1})|\le t^{-1}$ almost
everywhere,
\begin{equation}
\begin{aligned}
\|\ell_\mu^{-1}Du\|_{L^2(\mu_t)}^2
 &\le C(\|D^2u\|_{L^2(\mu_t)}^2+t^{-1}\|Du\|_{L^2(\mu_t)}^2),\\
\|\ell_\mu^{-2}u\|_{L^2(\mu_t)}^2
 &\le C(\|D^2u\|_{L^2(\mu_t)}^2
                 +t^{-1}\|Du\|_{L^2(\mu_t)}^2+t^{-2}\|u\|_{L^2(\mu_t)}^2).
\end{aligned}
\label{eq:iterated-hardy}
\end{equation}
The constants depend only on $d$ and the corresponding Hardy
constant, and are independent of $t$ and $u$.
For the two stated ranges,
\begin{equation*}
\frac{|J|}{t}\le1\quad\text{if }J=(s_0,s_1),\ |J|\le s_0,
\qquad
\frac{|J|}{t}\le\vartheta^{-1}\quad\text{if }J\subseteq(\vartheta B_0,B_0),
\qquad\forall t\in J.
\end{equation*}
Integrating \eqref{eq:iterated-hardy} against $t^\alpha\,dt$ and
using \eqref{eq:short-graph-budget} therefore gives
\eqref{eq:localization-budget}, since
$r_J^{-1}\le |J|^{-1/2}+\ell_\mu^{-1}$ and
$r_J^{-2}\le |J|^{-1}+\ell_\mu^{-2}$.
Lemma~\ref{lem:graph-core} extends this bound to
$\mathcal W_{\alpha,0}(J)$.
\end{proof}

\begin{lemma}[Square partition error]\label{lem:square-partition}
Let $J\subseteq I$, $u\in\mathcal W_{\alpha,0}(J)$, and let
$\{\chi_i\}_{i\ge1}\subset C^\infty(J\times\R^d;\R)$
be locally finite with $\sum_i\chi_i^2=1$.
For $\mathcal T\in\{\mathcal H_q,P_A\}$, set
$E_i^{\mathcal T}=\mathcal T(\chi_i u)-\chi_i\mathcal Tu$.
If $\sum_i\|E_i^{\mathcal T}\|_{\alpha,J}^2<\infty$, then
\begin{equation}
\begin{aligned}
&\sum_i\|\mathcal T(\chi_i u)\|_{\alpha,J}^2-\|\mathcal Tu\|_{\alpha,J}^2
 =2\operatorname{Re}\left\langle\mathcal Tu,\sum_i\chi_iE_i^{\mathcal T}\right\rangle_{\alpha,J}
   +\sum_i\|E_i^{\mathcal T}\|_{\alpha,J}^2,\\
&\sum_i\chi_iE_i^{\mathcal H_q}
 =-u\,q^{1/2}\left(\sum_iD\chi_i\otimes D\chi_i\right)q^{1/2},
\qquad
\sum_i\chi_iE_i^{P_A}
 =u\sum_i(D\chi_i)^{\mathsf T}A D\chi_i.
\end{aligned}
\label{eq:square-expansion}
\end{equation}
Let $L\ge2$ and let $r:J\times\R^d\to(0,\infty)$ be measurable.
If \eqref{eq:partition-derivatives} holds on $J\times\R^d$
with constant $C_1>0$ and, for some $C_0>0$,
\begin{equation}
\|r^{-1}Du\|_{\alpha,J}+\|r^{-2}u\|_{\alpha,J}
 \le C_0\mathscr G_J(u).
\label{eq:partition-graph-budget}
\end{equation}
then there is $C>0$, depending only on $d,C_0,C_1,\Lambda_A,\Lambda_q$,
such that
\begin{equation}
\sum_{\mathcal T\in\{\mathcal H_q,P_A\}}
 \left|\sum_i\|\mathcal T(\chi_i u)\|_{\alpha,J}^2
                  -\|\mathcal Tu\|_{\alpha,J}^2\right|
 \le CL^{-2}\mathscr G_J(u)^2.
\label{eq:square-localization-error}
\end{equation}
For the partitions constructed in Lemma~\ref{lem:parabolic-partition},
$C_1$ depends only on $d$, so $C$ depends only on
$d,C_0,\Lambda_A,\Lambda_q$.
\end{lemma}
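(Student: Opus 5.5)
The proof splits into an algebraic identity and an estimate.

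\emph{The identity \eqref{eq:square-expansion}.} Write $\mathcal T(\chi_iu)=\chi_i\mathcal Tu+E_i^{\mathcal T}$ and expand pointwise:
\[
\sum_i|\mathcal T(\chi_iu)|^2=\sum_i\chi_i^2|\mathcal Tu|^2+2\operatorname{Re}\Bigl\langle\mathcal Tu,\sum_i\chi_iE_i^{\mathcal T}\Bigr\rangle+\sum_i|E_i^{\mathcal T}|^2 ,
\]
where $\langle\cdot,\cdot\rangle$ is the Frobenius (or scalar) product. Then use $\sum_i\chi_i^2=1$. Integrating against $t^\alpha\rho$ is legitimate once $\sum_i\|E_i^{\mathcal T}\|^2<\infty$. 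Local finiteness makes every sum finite pointwise, and Cauchy--Schwarz gives $|\sum_i\chi_iE_i|^2\le\sum_i|E_i|^2$. So every term is integrable and the identity passes to norms; in particular $\sum_i\|\mathcal T(\chi_iu)\|^2$ is finite.

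\emph{The error formulas.} For $\mathcal H_q$ we have
\[
E_i=q^{1/2}\bigl(D\chi_i\otimes Du+Du\otimes D\chi_i+uD^2\chi_i\bigr)q^{1/2}.
\]
Multiply by $\chi_i$ and sum. The identity $\sum_i\chi_iD\chi_i=\tfrac12D\sum_i\chi_i^2=0$ kills the first-order terms. Differentiating once more gives $\sum_i\chi_iD^2\chi_i=-\sum_iD\chi_i\otimes D\chi_i$, which produces the stated formula. For $P_A$ we have
\[
E_i=-(\partial_t\chi_i)u-2(A D\chi_i)\cdot Du-u\,A:D^2\chi_i.
\]
The time term vanishes because $\sum_i\chi_i\partial_t\chi_i=0$, the gradient term vanishes as before, and the Hessian term yields $u\sum_i D\chi_i^{\mathsf T}AD\chi_i$. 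For $u\in\mathcal W_{\alpha,0}(J)$ these manipulations hold with distributional derivatives, since each $\chi_i$ is smooth and the sums are locally finite.

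\emph{The estimate \eqref{eq:square-localization-error}.} Pointwise we have $|E_i|\le C(|D\chi_i||Du|+(|D^2\chi_i|+|\partial_t\chi_i|)|u|)$. Summing squares and using \eqref{eq:partition-derivatives} with constant $C_1$ gives
\[
\sum_i|E_i^{\mathcal T}|^2\le C\bigl(L^{-2}r^{-2}|Du|^2+L^{-4}r^{-4}|u|^2\bigr).
\]
Integrate and apply \eqref{eq:partition-graph-budget}. Since $L\ge2$, this bounds $\sum_i\|E_i\|^2$ by $CL^{-2}\mathscr G_J(u)^2$. In particular the sum is finite, so the identity applies. For the cross term, $|\sum_i\chi_iE_i|\le C\sum_i|D\chi_i|^2|u|\le CL^{-2}r^{-2}|u|$. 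Cauchy--Schwarz then gives the bound $\|\mathcal Tu\|\cdot CL^{-2}\|r^{-2}u\|\le CL^{-2}\mathscr G_J(u)^2$, using $\|\mathcal H_qu\|\le\Lambda_q\|D^2u\|$ and the definition of $\mathscr G_J$ for $P_A$.

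The only delicate point is justifying the pointwise identities for Sobolev $u$ rather than smooth $u$. The cleanest route is to prove them for $u\in\mathcal C_{\alpha,0}(J)$, where the sums are locally finite and all derivatives are classical. Both sides of \eqref{eq:square-localization-error} are continuous in the $\mathcal W_\alpha$ norm, given the bound just derived, so density from Lemma~\ref{lem:graph-core} extends the result to all of $\mathcal W_{\alpha,0}(J)$. Alternatively, the identities can be verified directly in $\mathcal D'$. For the partitions of Lemma~\ref{lem:parabolic-partition}, $C_1$ depends only on $d$, which gives the final dependence claim.
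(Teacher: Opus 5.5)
Your proposal is correct and follows the paper's proof essentially step for step. Both arguments use the same ingredients: the product-rule commutators, the identities from differentiating $\sum_i\chi_i^2=1$, the pointwise square expansion with Cauchy--Schwarz for the cross term, and the two bounds obtained from \eqref{eq:partition-derivatives} and \eqref{eq:partition-graph-budget}.

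Drop the density route in your last paragraph, because it is not justified under the lemma's hypotheses. \eqref{eq:partition-graph-budget} is assumed only for the given $u$, and the weights $r^{-1},r^{-2}$ need not be controlled by the $\mathcal W_\alpha$ norm, so the left side of \eqref{eq:square-localization-error} is not known to be $\mathcal W_\alpha$-continuous. The direct distributional product rule you already invoke is enough: it is valid because $u,u_t,Du,D^2u\in L^2_{\mathrm{loc}}(J\times\R^d)$, and it is what the paper uses.
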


\begin{proof}
The product rules give
\begin{equation*}
E_i^{\mathcal H_q}
 =q^{1/2}(D\chi_i\otimes Du+Du\otimes D\chi_i+uD^2\chi_i)q^{1/2},\qquad
E_i^{P_A}
 =-u\partial_t\chi_i-uA:D^2\chi_i-2(D\chi_i)^{\mathsf T}A Du.
\end{equation*}
Differentiating $\sum_i\chi_i^2=1$ gives
\(
\sum_i\chi_i\partial_t\chi_i=0,\,
\sum_i\chi_iD\chi_i=0,\,
\sum_i\chi_iD^2\chi_i=-\sum_iD\chi_i\otimes D\chi_i.
\)
Substitution gives the two commutator identities in
\eqref{eq:square-expansion}.

For $\mathcal T\in\{\mathcal H_q,P_A\}$ with
$\sum_i\|E_i^{\mathcal T}\|_{\alpha,J}^2<\infty$,
Cauchy--Schwarz and $\sum_i\chi_i^2=1$ give convergence of
$\sum_i\chi_iE_i^{\mathcal T}$ in $\mathcal H_\alpha(J)$.
The limit of the finite square expansions is the first identity
in \eqref{eq:square-expansion}.
Moreover, under \eqref{eq:partition-derivatives} and
\eqref{eq:partition-graph-budget}, the product formulas imply
\begin{equation*}
\sum_{\mathcal T\in\{\mathcal H_q,P_A\}}\sum_i
     \|E_i^{\mathcal T}\|_{\alpha,J}^2
 \le C\bigl(L^{-2}\|r^{-1}Du\|_{\alpha,J}^2
          +L^{-4}\|r^{-2}u\|_{\alpha,J}^2\bigr)
 \le CL^{-2}\mathscr G_J(u)^2.
\end{equation*}
The commutator identities in \eqref{eq:square-expansion} give
\begin{equation*}
\sum_{\mathcal T\in\{\mathcal H_q,P_A\}}
 \left\|\sum_i\chi_iE_i^{\mathcal T}\right\|_{\alpha,J}
 \le CL^{-2}\|r^{-2}u\|_{\alpha,J}
 \le CL^{-2}\mathscr G_J(u).
\end{equation*}
Since $\|\mathcal Tu\|_{\alpha,J}
\le\max\{1,\Lambda_q\}\mathscr G_J(u)$ for
$\mathcal T\in\{\mathcal H_q,P_A\}$, substitution of these two
bounds into \eqref{eq:square-expansion} proves
\eqref{eq:square-localization-error}.
\end{proof}

\begin{lemma}\label{lem:window-stability}
Let $J=(c,B)\subset\R$ be finite, and let $w^{\mathrm{act}},w^{\mathrm{mod}}$ be
positive continuous weights on $(c,B]\times\R^d$, including any
time factor. Let $A,q$ be matrix fields and $A_*,q_*$ constant
matrices with the ellipticity bounds of
Assumption~\ref{ass:coefficients}. Suppose $0\le\varepsilon\le1$ and
\begin{equation*}
|\log(w^{\mathrm{act}}/w^{\mathrm{mod}})|\le\varepsilon,\qquad |A-A_*|+|q-q_*|\le\varepsilon
\qquad\forall(t,x)\in J\times\R^d.
\end{equation*}
Then
\(
\mathcal W_{0,0}(J;w^{\mathrm{act}})=\mathcal W_{0,0}(J;w^{\mathrm{mod}}),
\)
with equivalent Sobolev norms. For every
$u\in\mathcal W_{0,0}(J;w^{\mathrm{mod}})$,
\begin{gather*}
\begin{aligned}
&\left|\|\mathcal H_qu\|_{0,J;w^{\mathrm{act}}}^2
              -\|\mathcal H_{q_*}u\|_{0,J;w^{\mathrm{mod}}}^2\right|
 +\left|\|P_Au\|_{0,J;w^{\mathrm{act}}}^2
               -\|P_{A_*}u\|_{0,J;w^{\mathrm{mod}}}^2\right|\\
&\qquad\le C\varepsilon\bigl(\|D^2u\|_{0,J;w^{\mathrm{mod}}}^2
                    +\|P_{A_*}u\|_{0,J;w^{\mathrm{mod}}}^2\bigr),
\end{aligned}\\
\|D^2u\|_{0,J;w^{\mathrm{act}}}^2+\|P_Au\|_{0,J;w^{\mathrm{act}}}^2
\asymp
\|D^2u\|_{0,J;w^{\mathrm{mod}}}^2+\|P_{A_*}u\|_{0,J;w^{\mathrm{mod}}}^2.
\end{gather*}
The constants depend only
on $d$ and the ellipticity bounds of ($A,q,A_*$) and $q_*$, independently of $J,u$.
If the pointwise assumptions hold only on $J\times\overline B_R$,
then every $u\in\mathcal W_{0,0}(J;w^{\mathrm{mod}})$ with
$\supp u\subseteq J\times\overline B_R$ belongs to
$\mathcal W_{0,0}(J;w^{\mathrm{act}})$ and satisfies the same estimates,
with constants independent of $R$. These statements also hold
with $w^{\mathrm{act}},w^{\mathrm{mod}}$ interchanged.
\end{lemma}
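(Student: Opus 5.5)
The plan is to compare the two weights and the two coefficient sets pointwise, and then integrate the pointwise bounds. Write $w^{\mathrm{act}}=e^{\eta}w^{\mathrm{mod}}$ with $|\eta|\le\varepsilon\le1$. Then
\[
e^{-1}w^{\mathrm{mod}}\le w^{\mathrm{act}}\le e\,w^{\mathrm{mod}}
\qquad\text{and}\qquad
|w^{\mathrm{act}}-w^{\mathrm{mod}}|\le(e-1)\varepsilon\,w^{\mathrm{mod}}.
\]

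\textbf{Step 1: equality of the spaces.} The four $L^2$ norms of $u,u_t,Du,D^2u$ are equivalent under the two weights, with constants $e^{\pm1/2}$. The local terminal trace in Lemma~\ref{lem:graph-core} does not depend on the weight. Hence $\mathcal W_{0,0}(J;w^{\mathrm{act}})=\mathcal W_{0,0}(J;w^{\mathrm{mod}})$ with equivalent norms.

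In the localized version, where the hypotheses hold only on $J\times\overline B_R$, the support condition $\supp u\subseteq J\times\overline B_R$ makes every integrand vanish off $J\times\overline B_R$. The same comparison then applies there, with constants independent of $R$.

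\textbf{Step 2: the Hessian term.} Split the difference as
\[
\|\mathcal H_qu\|^2_{w^{\mathrm{act}}}-\|\mathcal H_{q_*}u\|^2_{w^{\mathrm{mod}}}
=\int\bigl(|\mathcal H_qu|^2-|\mathcal H_{q_*}u|^2\bigr)w^{\mathrm{act}}
+\int|\mathcal H_{q_*}u|^2\,(w^{\mathrm{act}}-w^{\mathrm{mod}}).
\]
The second integral is at most $C\varepsilon\|D^2u\|^2_{w^{\mathrm{mod}}}$ by the weight bound above.

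For the first integral, use that the matrix square root is Lipschitz on matrices with spectrum in $[\lambda_q,\Lambda_q]$, with constant about $1/(2\sqrt{\lambda_q})$; this follows, for instance, from the integral representation. This gives $|q^{1/2}-q_*^{1/2}|\le C\varepsilon$. Hence
\[
\bigl||\mathcal H_qu|^2-|\mathcal H_{q_*}u|^2\bigr|\le C\varepsilon|D^2u|^2,
\]
after factoring $|X|^2-|Y|^2=(|X|-|Y|)(|X|+|Y|)$. Finally, $w^{\mathrm{act}}\le e\,w^{\mathrm{mod}}$ converts this into a bound in the model weight.

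\textbf{Step 3: the operator term.} Write $P_Au=P_{A_*}u-(A-A_*):D^2u$. Then
\[
\bigl||P_Au|^2-|P_{A_*}u|^2\bigr|\le|(A-A_*):D^2u|\bigl(2|P_{A_*}u|+|(A-A_*):D^2u|\bigr)\le C\varepsilon\bigl(|P_{A_*}u|^2+|D^2u|^2\bigr).
\]
This uses $|(A-A_*):D^2u|\le\varepsilon|D^2u|$, then $2ab\le a^2+b^2$, and finally $\varepsilon^2\le\varepsilon$. Integrating against $w^{\mathrm{act}}$ and adding the weight-difference term $\int|P_{A_*}u|^2(w^{\mathrm{act}}-w^{\mathrm{mod}})$, exactly as in Step 2, gives the first estimate of the lemma.

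Note that we never need $u_t$ separately: $u_t$ is absorbed into $P_{A_*}u$, since $u_t=-P_{A_*}u-A_*:D^2u$ holds pointwise.

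\textbf{Step 4: the graph-norm equivalence.} We need it without a smallness assumption on $\varepsilon$, only $\varepsilon\le1$. First compare weights with the factor $e^{\pm1}$. Then compare $|P_Au|\le|P_{A_*}u|+|D^2u|$, and likewise in reverse, pointwise. This yields
\[
\|D^2u\|^2+\|P_Au\|^2\asymp\|D^2u\|^2+\|P_{A_*}u\|^2,
\]
with absolute constants.

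\textbf{Symmetry and the main point of care.} Every step is symmetric in act/mod, since the hypothesis $|\log(w^{\mathrm{act}}/w^{\mathrm{mod}})|\le\varepsilon$ is symmetric. This gives the interchanged statements. The only mildly delicate point is that the constants must depend on ellipticity alone and not on $\varepsilon$ or $J$. This is ensured by using the Lipschitz bound for the square root and by avoiding any absorption argument that would require $\varepsilon$ to be small.
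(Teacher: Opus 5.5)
Your proof is correct and follows essentially the same route as the paper. The weight comparison $e^{\pm\varepsilon}$ identifies the domains, since the local trace does not depend on the weight. The Lipschitz bound for the matrix square root on a fixed ellipticity class gives $|\mathcal H_qu-\mathcal H_{q_*}u|\le C\varepsilon|D^2u|$. The identity $P_Au-P_{A_*}u=-(A-A_*):D^2u$, together with $2ab\le a^2+b^2$, handles the source term, and interchanging the actual and frozen data gives the graph comparison. Your write-up simply makes explicit the expansions that the paper's proof leaves implicit.
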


\begin{proof}
The weight comparison gives
$e^{-\varepsilon}\|f\|_{0,J;w^{\mathrm{mod}}}^2\le\|f\|_{0,J;w^{\mathrm{act}}}^2
\le e^\varepsilon\|f\|_{0,J;w^{\mathrm{mod}}}^2$ for every
$f\in\mathcal H_0(J;w^{\mathrm{mod}})$.
Applying this to $u,u_t,Du,D^2u$ proves the domain assertion;
the local terminal trace is independent of the equivalent norm.
On a fixed ellipticity class the square-root map is Lipschitz, hence
$|\mathcal H_qu-\mathcal H_{q_*}u|\le C\varepsilon|D^2u|$.
Also $P_Au-P_{A_*}u=-(A-A_*):D^2u$.
Expand each squared norm and use
$2ab\le a^2+b^2$. Interchanging the actual and frozen coefficients
gives the graph comparison.
\end{proof}

\begin{proposition}[Finite-scale comparison]\label{prop:comparison}
Let $J\subseteq I$ and $u\in\mathcal W_{\alpha,0}(J)$.
For a finite or countable index set, let $J_i\subseteq I$ and
$\widetilde J_i=(s_{0,i},s_{1,i})\subset\R$ be finite intervals,
$w_i^{\mathrm{mod}}$ positive continuous weights on $(s_{0,i},s_{1,i}]\times\R^d$,
and $A_i,q_i$ constant matrices with the ellipticity bounds of
Assumption~\ref{ass:coefficients}. Let
$u_i\in\mathcal W_{\alpha,0}(J_i)$,
$\widetilde u_i\in\mathcal W_{0,0}(\widetilde J_i;w_i^{\mathrm{mod}})$ and $c_i>0$,
and write
\(
\|f\|_{\mathrm{mod},i}^2=c_i\|f\|_{0,\widetilde J_i;w_i^{\mathrm{mod}}}^2.
\)
Suppose $\varepsilon_{\mathrm{loc}},\varepsilon_{\mathrm{app}},
\varepsilon_{\mathrm{gain}}\ge0$ and $0\le C_g<\infty$ satisfy
\begin{align*}
&\left|\sum_i\|\mathcal H_qu_i\|_{\alpha,J_i}^2-\|\mathcal H_qu\|_{\alpha,J}^2\right|
 +\left|\sum_i\|P_Au_i\|_{\alpha,J_i}^2-\|P_Au\|_{\alpha,J}^2\right|
 \le\varepsilon_{\mathrm{loc}}\mathscr G_J(u)^2,\\
&\sum_i\bigl(\|D^2\widetilde u_i\|_{\mathrm{mod},i}^2
             +\|P_{A_i}\widetilde u_i\|_{\mathrm{mod},i}^2\bigr)
 \le C_g\mathscr G_J(u)^2.\\
&\sum_i\left(
 \left|\|\mathcal H_qu_i\|_{\alpha,J_i}^2
       -\|\mathcal H_{q_i}\widetilde u_i\|_{\mathrm{mod},i}^2\right|
 +\left|\|P_Au_i\|_{\alpha,J_i}^2
       -\|P_{A_i}\widetilde u_i\|_{\mathrm{mod},i}^2\right|\right)
 \le\varepsilon_{\mathrm{app}}\mathscr G_J(u)^2,
\end{align*}
For $k_0>0$, assume
\begin{equation*}
\|\mathcal H_{q_i}\widetilde u_i\|_{\mathrm{mod},i}^2
 -k_0^2\|P_{A_i}\widetilde u_i\|_{\mathrm{mod},i}^2
 \le\varepsilon_{\mathrm{gain}}
 \bigl(\|D^2\widetilde u_i\|_{\mathrm{mod},i}^2
       +\|P_{A_i}\widetilde u_i\|_{\mathrm{mod},i}^2\bigr)
 \qquad\forall i.
\end{equation*}
Set $\varepsilon_{\mathrm{tot}}=\varepsilon_{\mathrm{loc}}+\varepsilon_{\mathrm{app}}
+\varepsilon_{\mathrm{gain}}$.
There are $\varepsilon_0,C>0$, depending only on $k_0,C_g$
and $\lambda_q$, such that
\begin{equation}
\|\mathcal H_qu\|_{\alpha,J}
 \le(k_0+C\varepsilon_{\mathrm{tot}})\|P_Au\|_{\alpha,J}
\qquad\text{if }\varepsilon_{\mathrm{tot}}\le\varepsilon_0.
\label{eq:finite-scale-bound}
\end{equation}
If $\varepsilon_{\mathrm{tot}}\le\varepsilon_0$ and for every
$u\in\mathcal C_{\alpha,0}(J)$ there are such fields
with $k_0,C_g$ and the three errors independent of $u$, then
$k_\alpha(J)\le k_0+C\varepsilon_{\mathrm{tot}}$.
If these constants are also independent of $J\subseteq I$,
$|J|\le h$, then $K_{\alpha,h}(I)\le k_0+C\varepsilon_{\mathrm{tot}}$.
\end{proposition}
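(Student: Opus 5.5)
The plan is a chain of inequalities on the squared norms, followed by absorption of the Hessian term. Write $G=\mathscr G_J(u)^2=\|D^2u\|_{\alpha,J}^2+\|P_Au\|_{\alpha,J}^2$ and set $\varepsilon=\varepsilon_{\mathrm{tot}}$.

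First, the localization hypothesis moves $\|\mathcal H_qu\|_{\alpha,J}^2$ to $\sum_i\|\mathcal H_qu_i\|_{\alpha,J_i}^2$ at cost $\varepsilon_{\mathrm{loc}}G$. The approximation hypothesis then replaces each summand by $\|\mathcal H_{q_i}\widetilde u_i\|_{\mathrm{mod},i}^2$ at total cost $\varepsilon_{\mathrm{app}}G$. Next I apply the gain hypothesis termwise and sum, using the $C_g$ bound on the right-hand side. This gives
\begin{equation*}
\|\mathcal H_qu\|_{\alpha,J}^2
 \le k_0^2\sum_i\|P_{A_i}\widetilde u_i\|_{\mathrm{mod},i}^2
 +(\varepsilon_{\mathrm{loc}}+\varepsilon_{\mathrm{app}}+C_g\varepsilon_{\mathrm{gain}})G.
\end{equation*}
Running the same two comparisons backwards for $P_A$ turns $\sum_i\|P_{A_i}\widetilde u_i\|_{\mathrm{mod},i}^2$ into $\|P_Au\|_{\alpha,J}^2$ plus an error of at most $(\varepsilon_{\mathrm{app}}+\varepsilon_{\mathrm{loc}})G$. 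Altogether,
\begin{equation*}
\|\mathcal H_qu\|_{\alpha,J}^2\le k_0^2\|P_Au\|_{\alpha,J}^2+C'\varepsilon\,G,
\qquad C'=(1+k_0^2)(1+C_g).
\end{equation*}

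Second, the Hessian contained in $G$ has to be absorbed. The lower bound $q\succeq\lambda_qI_d$ gives $|q^{1/2}Mq^{1/2}|\ge\lambda_q|M|$ for symmetric $M$, by writing $|q^{1/2}Mq^{1/2}|^2=\operatorname{tr}(qMqM)$ and diagonalizing. Hence $\|D^2u\|_{\alpha,J}\le\lambda_q^{-1}\|\mathcal H_qu\|_{\alpha,J}$ and
\begin{equation*}
G\le\lambda_q^{-2}\|\mathcal H_qu\|_{\alpha,J}^2+\|P_Au\|_{\alpha,J}^2.
\end{equation*}
Substituting this, I choose $\varepsilon_0$ so that $C'\varepsilon_0\lambda_q^{-2}\le\tfrac12$, which depends only on $k_0,C_g,\lambda_q$. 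Then
\begin{equation*}
\|\mathcal H_qu\|_{\alpha,J}^2\le\frac{k_0^2+C'\varepsilon}{1-C'\varepsilon\lambda_q^{-2}}\,\|P_Au\|_{\alpha,J}^2
\le\bigl(k_0^2+C''\varepsilon\bigr)\|P_Au\|_{\alpha,J}^2 .
\end{equation*}
Taking square roots gives $\sqrt{k_0^2+C''\varepsilon}\le k_0+C''\varepsilon/(2k_0)$, which is \eqref{eq:finite-scale-bound} with $C$ depending only on $k_0,C_g,\lambda_q$.

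For the last two assertions, if the fields exist for every $u\in\mathcal C_{\alpha,0}(J)$ with uniform constants, the bound holds on that class. By Lemma~\ref{lem:graph-core} and the equality of the two suprema in \eqref{eq:short-time-constants}, this gives $k_\alpha(J)\le k_0+C\varepsilon$. If the constants are also uniform in $J\subseteq I$ with $|J|\le h$, the supremum over $J$ gives the bound for $K_{\alpha,h}(I)$.

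There is no real obstacle here; the only care needed is bookkeeping. The absorption step is what requires the smallness $\varepsilon_0$, and it is the only place $\lambda_q$ enters. One must also check that no constant picks up dependence on $u$ or $J$.
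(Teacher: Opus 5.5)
Your proposal is correct and follows the paper's proof essentially step for step. You sum the localization, approximation and model-gain bounds to get $\|\mathcal H_qu\|_{\alpha,J}^2\le k_0^2\|P_Au\|_{\alpha,J}^2+C\varepsilon_{\mathrm{tot}}\mathscr G_J(u)^2$, absorb the Hessian via $|D^2u|\le\lambda_q^{-1}|\mathcal H_qu|$ with $\varepsilon_0$ proportional to $\lambda_q^2$, and extend from the smooth core by Lemma~\ref{lem:graph-core}; the only difference is your cruder constant $(1+k_0^2)(1+C_g)$ in place of the paper's $\max\{1,k_0^2,C_g\}$, which does not matter.
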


\begin{proof}
With $C_0=\max\{1,k_0^2,C_g\}$, summation of the model estimates
and the three error bounds gives
\begin{equation*}
\|\mathcal H_qu\|_{\alpha,J}^2
 \le k_0^2\|P_Au\|_{\alpha,J}^2+C_0\varepsilon_{\mathrm{tot}}\mathscr G_J(u)^2.
\end{equation*}
Since $|D^2u|\le\lambda_q^{-1}|\mathcal H_qu|$,
\begin{equation*}
(1-C_0\varepsilon_{\mathrm{tot}}/\lambda_q^2)\|\mathcal H_qu\|_{\alpha,J}^2
 \le(k_0^2+C_0\varepsilon_{\mathrm{tot}})\|P_Au\|_{\alpha,J}^2.
\end{equation*}
For $\varepsilon_{\mathrm{tot}}\le\varepsilon_0=\lambda_q^2/(2C_0)$, division and
taking square roots give \eqref{eq:finite-scale-bound}.
Under the uniform hypotheses, Lemma~\ref{lem:graph-core} extends
this estimate from $\mathcal C_{\alpha,0}(J)$ to
$\mathcal W_{\alpha,0}(J)$; \eqref{eq:short-time-constants}
then gives the bounds for $k_\alpha(J)$ and $K_{\alpha,h}(I)$.
\end{proof}

\begin{remark}[Bounds at finite damping]
Suppose $K_\alpha(I)<\infty$. If the hypotheses of
Proposition~\ref{prop:comparison} hold uniformly over
$J\subseteq I$, $|J|\le h$, with $C_g$ bounded independently of $h$,
then bounds for $k_0-K_\alpha(I)$ and
$\varepsilon_{\mathrm{loc}},\varepsilon_{\mathrm{app}},
\varepsilon_{\mathrm{gain}}$ in terms of $h$ give a bound for
$K_{\alpha,h}(I)-K_\alpha(I)$ through
\eqref{eq:finite-scale-bound}.
For example, given
$K_{\alpha,h}(I)\le c_h<\infty$ and
$K_{\alpha,|I|}(I)\le c_I<\infty$ for some $0<h\le|I|$,
splitting \eqref{eq:poisson-average} at $z=\kappa h$ yields
\(
C_2(\kappa;I)^2
 \le [1-(1+\kappa h)e^{-\kappa h}]c_h^2
       +(1+\kappa h)e^{-\kappa h}c_I^2,
\) for every $\kappa>0$.
These estimates provide an interface for analysis at finite
$\kappa$. Deriving the required error bounds from the coefficients
and initial law, and choosing $h$ as a function of $\kappa$,
are not pursued in this paper.
\end{remark}

\subsection{Uniform estimates from the local models}
\label{subsec:localization}

The localization scale controls cutoff errors; comparison with
a phase model uses the natural density scale, which can be larger.
For the sets $Q_i$ and their rescalings $\widetilde Q_i$ defined
below, compactness of the relative weights gives
\eqref{eq:uniform-column-comparison}. With $L_{\mathrm{loc}}$ and the applicable
$s_0,\vartheta,R$ fixed, its threshold is common to all $Q_i$, and the
model assigned to $Q_i$ controls every test supported there.

\begin{proposition}[Uniform bounds on short intervals]\label{prop:uniform-model-estimates}
Suppose $\mu\in\mathcal D$ and
$K_{\alpha,\mathrm{mod}}(I)<\infty$, and fix
$k_1>K_{\alpha,\mathrm{mod}}(I)$. Then
\begin{equation}
\|\mathcal H_qu\|_{\alpha,J}\le k_1\|P_Au\|_{\alpha,J}
\qquad\forall u\in\mathcal W_{\alpha,0}(J)
\label{eq:uniform-model-bound}
\end{equation}
holds in the following ranges.
\begin{enumerate}
\item\label{item:uniform-model-positive-time} For every fixed $s_0\in(0,T)$,
there exists $h_{s_0}>0$ such that \eqref{eq:uniform-model-bound}
holds for every $J\subseteq(s_0,T)$ with $|J|\le h_{s_0}$.
\item\label{item:uniform-model-relative-age} For every fixed $\vartheta\in(0,1)$,
there exists $b_\vartheta\in(0,T]$ such that
\eqref{eq:uniform-model-bound} holds for every $0<s_1\le b_\vartheta$
and every $J\subseteq(\vartheta s_1,s_1)$.
\end{enumerate}
The thresholds depend on the fixed problem $(\mu,a,b,A,q,\alpha)$,
$k_1$ and the indicated $s_0$ or $\vartheta$. They are common to
all displayed intervals and tests.
\end{proposition}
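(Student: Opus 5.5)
The approach is to apply Proposition~\ref{prop:comparison} with $k_0$ strictly between $K_{\alpha,\mathrm{mod}}(I)$ and $k_1$. I fix $k_0=(K_{\alpha,\mathrm{mod}}(I)+k_1)/2$, so that $k_0+C\varepsilon_{\mathrm{tot}}\le k_1$ once $\varepsilon_{\mathrm{tot}}$ is small. Since $C$ and $\varepsilon_0$ depend only on $k_0,C_g,\lambda_q$, it suffices to build, for every $u\in\mathcal C_{\alpha,0}(J)$ and every admissible $J$, localized fields $u_i=\chi_iu$ and rescaled model fields $\widetilde u_i$. The three errors must be bounded by a prescribed $\varepsilon$, with constants uniform in $J$ and $u$.

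\emph{Localization.} I take the scale $r=\delta\,r_J$, where $r_J=\min\{\sqrt{|J|},\ell_\mu\}$ and $\delta$ is small. I first check that this scale satisfies \eqref{eq:admissible-local-scale} on $\Omega=J\times\R^d$ for a fixed large $L=L_{\mathrm{loc}}$. This holds because $\ell_\mu$ is Lipschitz-comparable on parabolic cylinders of radius $8L\delta\ell_\mu$ when $\delta L$ is small: $t/r^2\gtrsim\delta^{-2}$, so $t$ changes by a factor near one, and $\dist(\cdot,S)$ changes by at most $8L\delta\ell_\mu\le8L\delta(\dist+\sqrt t)$. Lemma~\ref{lem:parabolic-partition} then gives the partition. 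Corollary~\ref{cor:localization-budget} gives \eqref{eq:partition-graph-budget} in both ranges: range~(1) once $h_{s_0}\le s_0$, and range~(2) for $J\subseteq(\vartheta s_1,s_1)$. The constant $C_0$ there picks up a factor $\delta^{-2}$. Lemma~\ref{lem:square-partition} then yields $\varepsilon_{\mathrm{loc}}\le C\delta^{-2}L^{-2}$, and I choose $L$ after $\delta$.

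\emph{Model comparison.} On each support cylinder $Q_i\subseteq\mathcal Q_{2Lr_i}(t_i,x_i)$, I rescale by a model scale $\bar r_i$ around $(t_i,x_i)$. I take $\bar r_i=\ell_\mu(t_i,x_i)$ in the tail regime, and otherwise $\bar r_i=r_i$ or the time scale $\sqrt{t_i}$ in the half-line regime. The rescaled $Q_i$ is then a window of bounded size $\lesssim L$ in $(s,z)$. I assign a model according to three cases. If $t_i/\bar r_i^2$ is bounded, I use a half-line heat tuple in $\mathfrak T(\mu)$, which can only occur in range~(2) near $t=0$; the ratio $\vartheta$ keeps the rescaled interval inside some $(c,B)$ with $c\ge\vartheta B$. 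If $t_i/\bar r_i^2$ is large and $\dist(x_i,S)/\sqrt{t_i}$ is large, I use a heat-phase tuple. Otherwise I use a flat tuple.

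The key claim is \eqref{eq:uniform-column-comparison}. For every $\varepsilon>0$ there is a threshold such that every such cylinder admits a constant tuple with $|\log(w^{\mathrm{act}}/(c_iw^{\mathrm{mod}}))|+|A-A_i|+|q-q_i|\le\varepsilon$ on the rescaled window, whose model gain is at most $K_{\alpha,\mathrm{mod}}(I)<k_0$. I prove this by contradiction and compactness. If it failed, there would be a sequence of cylinders violating it with scales tending to zero. Lemma~\ref{lem:initial-tangents} and Proposition~\ref{prop:initial-heat-limits} (with the center shift \eqref{eq:initial-center-shift}), Proposition~\ref{prop:tail-compactness}, and the flat case (continuity and positivity of $\rho$, with \eqref{eq:tail-time-factor}) would extract a realized tuple. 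That tuple approximates the weights along a subsequence, which is a contradiction. Given this comparison, Lemma~\ref{lem:window-stability} gives $\varepsilon_{\mathrm{app}}\le C\varepsilon$, and summed graph norms give $C_g$ bounded by the square-partition bound. The model bounds \eqref{eq:whole-line-interval-bound} and \eqref{eq:initial-model-gain} give $\varepsilon_{\mathrm{gain}}=0$ at gain $K_{\alpha,\mathrm{mod}}(I)\le k_0$, after the translation argument for shifted heat weights.

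\emph{Main obstacle.} The hardest step is making the compactness uniform in the intermediate regime, where $\dist(x_i,S)/\sqrt{t_i}$ is large but $\ell_\mu$ is much larger than the cutoff scale $\delta\sqrt{|J|}$. The localized test then lives on a small subwindow of the phase window, and a phase model must still control it. I would handle this by always rescaling by the natural scale $\ell_\mu$ rather than the cutoff scale, so that the support is a bounded window of the phase model. I would then invoke \eqref{eq:whole-line-interval-bound} with $\alpha=0$ on arbitrary real intervals, since time weights are absorbed via \eqref{eq:tail-time-factor}. The remaining delicate point is the half-line case in range~(2), where the window may touch $\tau$ near $\vartheta\tau_{\max}$ but never $0$. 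There, Proposition~\ref{prop:initial-heat-limits} applies on $\varepsilon\le\tau\le L$ with $\varepsilon\sim\vartheta$, which is exactly why the $\vartheta$-restriction is needed.
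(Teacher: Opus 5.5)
\textbf{There is a genuine gap in the localization step.} You need $L\delta$ small to verify \eqref{eq:admissible-local-scale} for $r=\delta r_J$. You then want $\varepsilon_{\mathrm{loc}}\le C\delta^{-2}L^{-2}$ small by choosing $L$ after $\delta$. These two requirements contradict each other.

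The partition of Lemma~\ref{lem:parabolic-partition} has mesh $L\delta r_J$, whereas the budget \eqref{eq:localization-budget} controls $u,Du$ only at the scale $r_J$. So the error in Lemma~\ref{lem:square-partition} is small only if the mesh is large compared with $r_J$, and it is of order $(L\delta)^{-2}$ at best.

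The conflict is intrinsic, not an artifact of your admissibility estimate. In range~(2), $J\subseteq(\vartheta s_1,s_1)$ may have $|J|$ comparable to $s_1$. Within distance $\sqrt t$ of $S$ one has $\ell_\mu\asymp\sqrt t$, hence $r_J\asymp_\vartheta\sqrt t$. At distance $8L\delta r_J$ from $S$, however, $\ell_\mu$ has dropped to about $\sqrt t/(1+8L\delta)$. Hence any fixed multiple of $r_J$ is admissible there only if the mesh is a small fraction of $\sqrt t$.

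For a test varying on the scale $\sqrt t$ near $S$, cutting at such a mesh makes $\sum_i\|P_A(\chi_iu)\|_{\alpha,J}^2$ and $\sum_i\|\mathcal H_q(\chi_iu)\|_{\alpha,J}^2$ much larger than the original squared norms. So the hypothesis on $\varepsilon_{\mathrm{loc}}$ in Proposition~\ref{prop:comparison} genuinely fails.

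In range~(1) your admissibility argument has the same defect, but the repair is easy. Take $\delta=1$ and $|J|\le cs_0/L^2$. Then $r_J\le\sqrt{|J|}\ll\sqrt{s_0}/L$, so the cylinders $\mathcal Q_{8Lr_J}$ are small compared with $\sqrt t\le\dist(x,S)+\sqrt t$. Admissibility comes from the shortness of $J$, not from $\delta$.

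\textbf{What is missing is a two-region construction near the support in range~(2).} The paper proceeds as follows.
\begin{itemize}
\item It first splits $u$ with $\chi_{\mathrm{in},R}^2+\chi_{\mathrm{out},R}^2=1$ at distance about $R\sqrt t$ from $S$, using a mollified distance $d_t$. This split costs $CR^{-2}\mathscr G_J(u)^2$ by \eqref{eq:iterated-hardy} and \eqref{eq:short-graph-budget}.
\item The inner part is cut by a time-independent spatial partition of fixed mesh $L_{\mathrm{loc}}\sqrt{s_1}\ge L_{\mathrm{loc}}r_J$. Its error is $CL_{\mathrm{loc}}^{-2}$, and no admissibility is needed.
\item Each inner column is a bounded window in the coordinates $\tau=t/s_1\in(\vartheta,1)$, $z=(x-y)/\sqrt{s_1}$ with $y\in S$. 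It is compared directly with a half-line model via Proposition~\ref{prop:initial-heat-limits}.
\item The outer part is cut by Lemma~\ref{lem:parabolic-partition} with scale $r_J$ itself on $\{d_t>R\sqrt t/2\}$. There $t/r_J^2\ge cR^2$ yields \eqref{eq:admissible-local-scale} once $R\ge CL_{\mathrm{loc}}$.
\item The parameters are chosen in the order $\vartheta$, $L_{\mathrm{loc}}$, $R$, the comparison error, and finally $b_\vartheta$.
\end{itemize}

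With this localization in place, the rest of your plan is essentially the paper's argument. That includes $k_0$ between $K_{\alpha,\mathrm{mod}}(I)$ and $k_1$, the compactness-by-contradiction identification of half-line, flat and phase models, rescaling phase columns by $\ell_\mu$, and the use of $\vartheta$ to keep $\tau$ away from $0$. One small correction: flat columns with $t_j\to0$ and $\dist(x_j,S)/\sqrt{t_j}$ bounded require the half-line limit at scale $\sqrt{t_j}$, not merely continuity and positivity of $\rho$.
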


\begin{proof}
Fix $u\in\mathcal C_{\alpha,0}(J)$, with $J$ in one of the
stated ranges.
The partition errors and graph bounds below use only the reference
data, the $A,q$ ellipticity data, $C_D,\alpha$ and, in
part~\ref{item:uniform-model-relative-age}, $\vartheta$.
Their constants are independent of $L_{\mathrm{loc}},R$ and the column index.

\paragraph*{Construction of the columns.}
Use $r_J$ from \eqref{eq:localization-budget} and fix an aperture
$L_{\mathrm{loc}}\ge2$. For part~\ref{item:uniform-model-positive-time}, set
$\Omega=J\times\R^d$ and take $|J|\le c s_0/L_{\mathrm{loc}}^2$.
For $(t,x)\in\Omega$ and
$(s,y)\in\Omega\cap\mathcal Q_{8L_{\mathrm{loc}}r_J(t,x)}(t,x)$,
\begin{equation}
\left|\frac{s}{t}-1\right|\le\frac{64L_{\mathrm{loc}}^2r_J(t,x)^2}{t},\qquad
\frac{|\dist(y,S)-\dist(x,S)|}{\sqrt t}
 \le\frac{8L_{\mathrm{loc}}r_J(t,x)}{\sqrt t}.
\label{eq:column-scale-increments}
\end{equation}
Since $t/r_J(t,x)^2\ge s_0/|J|$, the formula
\eqref{eq:doubling-tail-scale} gives
\eqref{eq:admissible-local-scale} for sufficiently small $c$.
Apply Lemma~\ref{lem:parabolic-partition} with
$(\Omega,r,L)=(J\times\R^d,r_J,L_{\mathrm{loc}})$.
With \eqref{eq:partition-derivatives} and \eqref{eq:localization-budget},
Lemma~\ref{lem:square-partition} gives total error
$CL_{\mathrm{loc}}^{-2}\mathscr G_J(u)^2$ in the two squared norms.

For part~\ref{item:uniform-model-relative-age}, fix $\vartheta$ and $J\subseteq(\vartheta s_1,s_1)$.
Choose $0\le\varphi\in C_c^\infty(B_1)$ with $\int\varphi=1$
and $\|D\varphi\|_{L^1}\le C$, where $C$ depends only on $d$, and set
\begin{equation*}
D(x)=\dist(x,S),\qquad
d_t(x)=\int_{\R^d}D(x-\sqrt t\,z)\varphi(z)\,dz,
\qquad t>0.
\end{equation*}
The Lipschitz bound for $D$ gives
\begin{equation*}
|d_t-D|\le C\sqrt t,\qquad |Dd_t|\le1,\qquad
|D^2d_t|+|\partial_td_t|\le Ct^{-1/2}.
\end{equation*}
Choose a smooth square pair $\chi_{\mathrm{in},R}^2+\chi_{\mathrm{out},R}^2=1$, where $\chi_{\mathrm{in},R}=1$
on $d_t\le R\sqrt t$ and $\chi_{\mathrm{in},R}=0$ on $d_t\ge2R\sqrt t$.
Choose both as functions of $d_t/(R\sqrt t)$ with first and
second derivatives bounded by a numerical constant.
On its transition region,
\begin{equation*}
R^2\ell_\mu\bigl(|D\chi_{\mathrm{in},R}|+|D\chi_{\mathrm{out},R}|\bigr)
+R^3\ell_\mu^2\bigl(|D^2\chi_{\mathrm{in},R}|+|D^2\chi_{\mathrm{out},R}|\bigr)
+R^2\ell_\mu^2\bigl(|\partial_t\chi_{\mathrm{in},R}|+|\partial_t\chi_{\mathrm{out},R}|\bigr)\le C.
\end{equation*}
Equations \eqref{eq:square-expansion},
\eqref{eq:iterated-hardy} and \eqref{eq:short-graph-budget}
bound the error of this split by
$CR^{-2}\mathscr G_J(u)^2$.

Partition $\chi_{\mathrm{in},R}u$ by a time-independent spatial
square partition of mesh $L_{\mathrm{loc}}\sqrt{s_1}$, with
overlap and rescaled derivatives through order two bounded
only in terms of $d$. Its error is
$CL_{\mathrm{loc}}^{-2}\mathscr G_J(u)^2$.
Each nonzero central column is contained in
\(
(\vartheta s_1,s_1)\times B(y,C(L_{\mathrm{loc}}+R)\sqrt{s_1}),\, y\in S.
\)
The coordinates $t=s_1\tau$, $x=y+\sqrt{s_1}z$ place it in
$(\vartheta,1)\times B_{C(L_{\mathrm{loc}}+R)}$. For $\chi_{\mathrm{out},R}u$, set
\begin{equation*}
\Omega_R=\{(t,x)\in J\times\R^d:d_t(x)>\tfrac12R\sqrt t\}.
\end{equation*}
On $\Omega_R$, $t/r_J^2\ge cR^2$, so
\eqref{eq:column-scale-increments} verifies
\eqref{eq:admissible-local-scale} when $R\ge CL_{\mathrm{loc}}$.
Apply Lemma~\ref{lem:parabolic-partition} on $\Omega_R$ with
$(r,L)=(r_J,L_{\mathrm{loc}})$, obtaining cutoffs $\chi_i^{\rm ext}$.
The products $\chi_{\mathrm{out},R}\chi_i^{\rm ext}$ extend smoothly by zero to
$J\times\R^d$, since $\chi_{\mathrm{out},R}=0$ where $d_t\le R\sqrt t$.
Applying the square estimates to
the pair and its two refinements gives a partition
$\{\chi_i\}$ with total error
$C(L_{\mathrm{loc}}^{-2}+R^{-2})\mathscr G_J(u)^2$.

Let $\mathcal I_{\rm c}$ index the cutoffs obtained by the spatial
partition of $\chi_{\mathrm{in},R}u$ in part~\ref{item:uniform-model-relative-age},
and let $\mathcal I_{\rm e}$ index all remaining cutoffs;
$\mathcal I_{\rm c}=\varnothing$ in part~\ref{item:uniform-model-positive-time}.
For $i\in\mathcal I_{\rm c}$, take
$(t_i,x_i,r_i)=(0,y_i,\sqrt{s_1})$ with $y_i\in S$ and $I_i=\R$.
For $i\in\mathcal I_{\rm e}$, retain the centers and scales of
Lemma~\ref{lem:parabolic-partition} and set
\(
r_i=r_J(t_i,x_i)\le\ell_\mu(t_i,x_i),\,
I_i=(t_i-9L_{\mathrm{loc}}^2r_i^2,t_i+9L_{\mathrm{loc}}^2r_i^2).
\)
In both cases put
\begin{equation}
J_i=J\cap I_i,\qquad u_i=(\chi_i u)|_{J_i},\qquad
Q_i=\begin{cases}
J_i\times B(y_i,C(L_{\mathrm{loc}}+R)\sqrt{s_1}),&i\in\mathcal I_{\rm c},\\
J_i\times B(x_i,3L_{\mathrm{loc}}r_i),&i\in\mathcal I_{\rm e}.
\end{cases}
\label{eq:comparison-columns}
\end{equation}
Then $\supp(u_i)\subset Q_i$. If $\sup J_i<\sup J$, the cutoff
vanishes near $\sup J_i$; otherwise the zero trace comes from
$u\in\mathcal W_{\alpha,0}(J)$. The derivative bounds and
\eqref{eq:localization-budget} give
$u_i\in\mathcal W_{\alpha,0}(J_i)$.
In both cases the localization estimate implies
\begin{equation}
\sum_i\mathscr G_{J_i}(u_i)^2\le C\mathscr G_J(u)^2,
\label{eq:column-graph-budget}
\end{equation}
by ellipticity of $q$. In part~\ref{item:uniform-model-relative-age}, the constants may depend on
the fixed $\vartheta$.

\paragraph*{Model identification.}
Fix $L_{\mathrm{loc}}$ and, where applicable, $s_0,\vartheta,R$. Consider any sequence
of the sets $Q_i$ above, denoted by $Q_j$, as
$|J|\to0$ in part~\ref{item:uniform-model-positive-time}, or $s_1\to0$ in part~\ref{item:uniform-model-relative-age}.
Their scales tend to zero. In the coordinates
\eqref{eq:rescaled-coordinates}, extract
$\Delta_j\to\Delta\in[0,\infty]$ and the coefficient limits
\eqref{eq:coefficient-limits}.
Choose $y_j\in S$ with
$D_j:=|x_j-y_j|=\dist(x_j,S)$.

If $\Delta<\infty$, use the support centers $y_j$.
For indices in $\mathcal I_{\rm c}$, $x_j=y_j$.
For indices in $\mathcal I_{\rm e}$, the scale bound gives
\begin{equation*}
r_j(D_j+\sqrt{t_j})\le t_j,\qquad
\frac{D_j}{r_j}\le\Delta_j.
\end{equation*}
Thus $(x_j-y_j)/r_j$ is bounded. Lemma~\ref{lem:initial-tangents},
applied at $y_j$ with $m_j=\mu(B(y_j,r_j))$, and
Proposition~\ref{prop:initial-heat-limits} identify a half-line
tuple in $\mathfrak T(\mu)$. We compare in coordinates centered
at $y_j$; the translated spatial windows remain bounded, as
in \eqref{eq:initial-center-shift}. The time weight is $\tau^\alpha$
by \eqref{eq:scaled-time-measure}. For indices in $\mathcal I_{\rm c}$,
the time coordinates of $Q_j$ satisfy $\vartheta<\tau<1$.
For indices in $\mathcal I_{\rm e}$ with $\Delta<\infty$, only
part~\ref{item:uniform-model-relative-age} can occur;
there $\Delta_j\ge cR^2$ and $R\ge CL_{\mathrm{loc}}$ give
$\tau\ge\Delta_j-9L_{\mathrm{loc}}^2\ge\Delta_j/2>0$.
Thus Proposition~\ref{prop:initial-heat-limits}
applies on the rescaled sets $Q_j$.

Suppose $\Delta=\infty$. If $\sup_j|x_j|<\infty$ and
$\liminf_j t_j>0$, positivity and continuity of $\rho$ give a flat weight. If $t_j\to0$ and $D_j/\sqrt{t_j}$ is bounded,
rescale by $\sqrt{t_j}$ about $y_j\in S$. After extracting
$(x_j-y_j)/\sqrt{t_j}\to z_*$, the column center tends to
$(\tau,z)=(1,z_*)$. Proposition~\ref{prop:initial-heat-limits}
gives a positive continuous limiting density there, and
$r_j/\sqrt{t_j}\to0$ then gives a flat weight on the column.

The remaining sequences escape at the natural scale:
$D_j/\sqrt{t_j}\to\infty$. Put $\ell_j=\ell_\mu(t_j,x_j)$ and extract
$r_j/\ell_j\to c\in[0,1]$.
If $c=0$, Lemma~\ref{lem:posterior-phase} approximates the density
at scale $\ell_j$ by phase weights with uniformly bounded slopes.
On the column their arguments are
$((r_j/\ell_j)^2s,(r_j/\ell_j)z)$, so the normalized weight
tends uniformly to one. If $c>0$, then
\begin{equation*}
\frac{t_j}{\ell_j^2}
 =\Delta_j\left(\frac{r_j}{\ell_j}\right)^2\longrightarrow\infty,
\end{equation*}
and Proposition~\ref{prop:tail-compactness} gives an actual
heat-phase model at scale $\ell_j$. We use this scale for the
phase comparison. Since $r_j\le\ell_j$, the image of $Q_j$ under
$(t,x)\mapsto((t-t_j)/\ell_j^2,(x-x_j)/\ell_j)$ lies in
$[-9L_{\mathrm{loc}}^2,9L_{\mathrm{loc}}^2]\times\overline B_{3L_{\mathrm{loc}}}$, independently of $j$.
All other comparisons use scale $r_j$.
In the whole-line cases, \eqref{eq:tail-time-factor} gives
constant temporal weight at the selected scale.

\paragraph*{Uniform comparison.}
For a comparison with center $(t_i^0,x_i^{\mathrm{cmp}})$
and scale $r_i^{\mathrm{cmp}}$, set
\begin{equation*}
\widetilde J_i=\frac{J_i-t_i^0}{(r_i^{\mathrm{cmp}})^2},\qquad
\widetilde v(s,z)=(r_i^{\mathrm{cmp}})^{-2}
 v(t_i^0+(r_i^{\mathrm{cmp}})^2s,x_i^{\mathrm{cmp}}+r_i^{\mathrm{cmp}} z).
\end{equation*}
Here $r_i^{\mathrm{cmp}}=r_i$ for the half-line and flat comparisons,
and $r_i^{\mathrm{cmp}}=\ell_\mu(t_i,x_i)$ for the phase comparison.
For a half-line tuple $\mathbf T_i$, use
\begin{equation*}
\begin{aligned}
&t_i^0=0,\qquad x_i^{\mathrm{cmp}}=y_i,\qquad
m_i=\mu(B(y_i,r_i^{\mathrm{cmp}})),\qquad
y_i\in S,\quad |x_i-y_i|=\dist(x_i,S),\\
&c_i=m_i(r_i^{\mathrm{cmp}})^{2\alpha+2},\qquad
w_i^{\mathrm{mod}}(s,z)=s^\alpha F_{\mathbf T_i}(s,z).
\end{aligned}
\end{equation*}
For a whole-line tuple $\mathbf M_i$, use
\begin{equation*}
t_i^0=t_i,\qquad x_i^{\mathrm{cmp}}=x_i,\qquad
c_i=t_i^\alpha\rho(t_i,x_i)(r_i^{\mathrm{cmp}})^{d+2},\qquad
w_i^{\mathrm{mod}}=W_{\mathbf M_i}.
\end{equation*}
Take $A_i,q_i$ from the selected tuple and use the model norms
of Proposition~\ref{prop:comparison} with these $c_i,w_i^{\mathrm{mod}}$ and
$\widetilde J_i$.
On $\widetilde Q_i=\{(s,z):(t_i^0+(r_i^{\mathrm{cmp}})^2s,
 x_i^{\mathrm{cmp}}+r_i^{\mathrm{cmp}} z)\in Q_i\}$, the actual weight is
\begin{equation*}
w_i^{\mathrm{act}}(s,z)=c_i^{-1}(r_i^{\mathrm{cmp}})^{d+2}
 (t_i^0+(r_i^{\mathrm{cmp}})^2s)^\alpha
 \rho(t_i^0+(r_i^{\mathrm{cmp}})^2s,x_i^{\mathrm{cmp}}+r_i^{\mathrm{cmp}} z).
\end{equation*}
Equation~\eqref{eq:recovery-two-norms} applies with this $c_i$ and
$w_i^{\mathrm{act}}$. The model estimates \eqref{eq:initial-model-gain} and
\eqref{eq:whole-line-interval-bound} have gain at most
$K_{\alpha,\mathrm{mod}}(I)$ on $\widetilde J_i$, including
intervals with a varying terminal endpoint.

Fix $L_{\mathrm{loc}}$ and, where applicable, $s_0,\vartheta,R$. For every $\varepsilon>0$,
there is a smallness threshold for $|J|$ or $s_1$ in the stated range,
depending on $\varepsilon$, these parameters and the fixed problem
$(\mu,a,b,A,q,\alpha)$. Below this
threshold, every column admits a model, chosen independently of the
test, for which every
$v\in\mathcal W_{\alpha,0}(J_i)$ with $\supp(v)\subset Q_i$
satisfies $\widetilde v\in\mathcal W_{0,0}(\widetilde J_i;w_i^{\mathrm{mod}})$ and
\begin{equation}
\left|\|\mathcal H_qv\|_{\alpha,J_i}^2
       -\|\mathcal H_{q_i}\widetilde v\|_{\mathrm{mod},i}^2\right|
 +\left|\|P_Av\|_{\alpha,J_i}^2
       -\|P_{A_i}\widetilde v\|_{\mathrm{mod},i}^2\right|
       \le\varepsilon\mathscr G_{J_i}(v)^2.
\label{eq:uniform-column-comparison}
\end{equation}
Indeed, if no common threshold exists, choose a sequence of
violating columns and extract one of the models just classified.
The extraction involves the column parameters, coefficients and
weights; it requires no compactness of the supported tests.
On the rescaled columns, the relative density limits and
\eqref{eq:scaled-time-measure} or \eqref{eq:tail-time-factor}
give $\|\log(w_j^{\mathrm{act}}/w_j^{\mathrm{mod}})\|_{L^\infty(\widetilde Q_j)}\to0$.
Together with \eqref{eq:local-coefficient-freezing},
Lemma~\ref{lem:window-stability} gives the domain assertion and
\eqref{eq:uniform-column-comparison} for every supported $v$,
with the same model and error bound. This contradicts the choice
of $Q_j$. The convergence is uniform on the containing sets
specified in the model classification, so it also controls
varying endpoints of $\widetilde J_j$.

Apply Proposition~\ref{prop:comparison} with the fields $u_i$
constructed above and $\widetilde u_i$ given by the
rescaling of $u_i$. The square estimates give
$\varepsilon_{\mathrm{loc}}=CL_{\mathrm{loc}}^{-2}$ in
part~\ref{item:uniform-model-positive-time}
and $\varepsilon_{\mathrm{loc}}=C(L_{\mathrm{loc}}^{-2}+R^{-2})$ in
part~\ref{item:uniform-model-relative-age}.
Summing \eqref{eq:uniform-column-comparison} gives
$\varepsilon_{\mathrm{app}}=C\varepsilon$.
Equation~\eqref{eq:column-graph-budget} and
Lemma~\ref{lem:window-stability} give a common $C_g$ once
$\varepsilon_{\mathrm{loc}},\varepsilon_{\mathrm{app}}\le1$.
Choose $K_{\alpha,\mathrm{mod}}(I)<k_0<k_1$; the model gains then
allow $\varepsilon_{\mathrm{gain}}=0$, so
$\varepsilon_{\mathrm{tot}}=\varepsilon_{\mathrm{loc}}+\varepsilon_{\mathrm{app}}$.
First choose $L_{\mathrm{loc}}$ large; in
part~\ref{item:uniform-model-relative-age}, fix $\vartheta$ before choosing
$L_{\mathrm{loc}}$ and then $R\ge CL_{\mathrm{loc}}$ large. Next choose $\varepsilon$ small
and then the uniform physical threshold so that
$\varepsilon_{\mathrm{tot}}\le\varepsilon_0$ and $C\varepsilon_{\mathrm{tot}}<k_1-k_0$.
Proposition~\ref{prop:comparison} gives
$k_\alpha(J)\le k_0+C\varepsilon_{\mathrm{tot}}<k_1$, proving
\eqref{eq:uniform-model-bound}.
\end{proof}

\subsection{Reference evolution and reduction away from time zero}

The remaining $\mathcal D$ intervals $J=(s_0,s_1)$ can have
$s_0/s_1\to0$, so no common $\vartheta>0$ ensures $J\subseteq(\vartheta s_1,s_1)$. A direct cutoff in $\log t$
produces terms involving $u/t$. We therefore split the source
$P_{a/2}u$ in logarithmic time, where
\(
P_{a/2}=-\partial_t-\tfrac12a:D^2
\), and evolve each part before truncating it on
$J\cap(0,\delta\tau_k)$, with $\delta,\tau_k$ defined below.
The reference smoothing bounds use the same marginal laws $\mu_t$.

\begin{lemma}[Reference smoothing]\label{lem:reference-smoothing}
Fix $p_*>d+2$. For $g\in C_c^\infty(\R^d)$, let $U(t,s)g=v(t)$,
where $v\in W^{1,2}_{p_*}((0,s)\times\R^d)$ solves
$P_{a/2}v=0$ and $v(s)=g$.
These operators extend from $L^2(\mu_s)$ to $L^2(\mu_t)$ and satisfy
\begin{equation}
\|(U(t,s), (s-t)^{1/2}DU(t,s),(s-t)D^2U(t,s))\|_{L^2(\mu_s)\to L^2(\mu_t)}\le C
\label{eq:reference-smoothing}
\end{equation}
for every $s\in(0,T]$ and almost every $t\in(0,s)$, with $C$
depending only on the reference data.
\end{lemma}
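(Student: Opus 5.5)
The plan is to represent $U(t,s)g$ stochastically and use the Markov property of the marginals, so that the $L^2(\mu_s)\to L^2(\mu_t)$ bounds follow from pointwise kernel estimates integrated against the transition density. Since $P_{a/2}$ lacks the drift $b$, the backward solution is not exactly $\mathbb E[g(X_s)\mid X_t=x]$. It is the expectation under the driftless reference diffusion $dY=a(r,Y)^{1/2}dW$:
\[
v(t,x)=\int \bar k(t,x;s,y)g(y)\,dy,
\]
where $\bar k$ is the transition density of the $b=0$ reference process. By Lemma~\ref{lem:kernel-stability} (applied with zero drift), $\bar k$ is positive and continuous, with Gaussian bounds \eqref{eq:kernel-gaussian}. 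I would expect, or derive by the parametrix/Norris--Stroock machinery already used for Proposition~\ref{prop:kernel-action}, derivative bounds of the form
\[
|D_x^m\bar k(t,x;s,y)|\le C(s-t)^{-m/2}\gamma_{c(s-t)}(x-y),\qquad m=0,1,2 .
\]
This is the only step that needs the time regularity of $a$. Spatial Lipschitz continuity plus the square-root Dini modulus $\omega_a$ is exactly what makes second derivatives of the parametrix converge: the time increment enters as $\omega_a(s-t)$ against a $(s-t)^{-1}$ singularity, and the $\sqrt{\omega_a}$ Dini condition gives integrability. I expect this to be the main obstacle, in particular getting the $m=2$ bound with a constant depending only on the reference data and not on finer regularity. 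The $W^{1,2}_{p_*}$ solution is identified with the kernel formula by uniqueness in that class, which holds since $p_*>d+2$ gives Hölder gradients.

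With these bounds, fix $m\in\{0,1,2\}$ and write $D^mv(t,x)=\int D_x^m\bar k\, g\,dy$. Cauchy--Schwarz against the positive Gaussian weight gives
\[
(s-t)^{m}|D^mv(t,x)|^2\le C\int \gamma_{c(s-t)}(x-y)|g(y)|^2dy .
\]
Integrating against $\rho(t,x)dx$ leaves the quantity
\[
\int \Big(\int\rho(t,x)\gamma_{c(s-t)}(x-y)\,dx\Big)|g(y)|^2\,dy ,
\]
and the remaining task is the comparison
\[
\int\rho(t,x)\gamma_{c(s-t)}(x-y)\,dx\le C\rho(s,y).
\]
Expanding $\rho(t,\cdot)=\int k(0,z;t,\cdot)\mu(dz)$, it suffices to prove a Chapman--Kolmogorov type inequality
\[
\int k(0,z;t,x)\gamma_{c(s-t)}(x-y)\,dx\le Ck(0,z;s,y)
\]
uniformly in $z$, which then integrates over an arbitrary $\mu$. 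Here the Gaussian constants must be matched. With the upper bound $k(0,z;t,\cdot)\le C\gamma_{C't}$, the convolution is at most $C\gamma_{C't+c(s-t)}(z-y)$. This is not in general below the lower bound $c\gamma_{c''s}(z-y)$ if $C'>c''$, so the naive route fails in the tails.

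To fix this, instead of the crude Gaussian for the $D^m$ kernel, I would write $D^m\bar k$ as $\bar k$ itself times a factor. I would use a relative (logarithmic-derivative) bound of the form
\[
|D^m\bar k|\le C(s-t)^{-m/2}\Big(1+\frac{|x-y|}{\sqrt{s-t}}\Big)^{m}\bar k
\]
modulo slightly enlarged kernels, or, more robustly, split $\bar k\le C\bar k^{(\varepsilon)}$ with a nearby-covariance kernel. Then the Cauchy--Schwarz weight becomes the true transition density $\bar k$. After Girsanov for the bounded drift $b$, which compares $\bar k$ with $k$ up to a factor absorbing the polynomial weights, the Chapman--Kolmogorov identity $\int k(0,z;t,x)k(t,x;s,y)dx=k(0,z;s,y)$ gives the needed comparison exactly, with only a constant loss. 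The bound then holds for each $t<s$; the qualifier "almost every $t$" comes for free. Density of $C_c^\infty$ in $L^2(\mu_s)$ (since $\rho(s,\cdot)$ is locally bounded and positive) provides the extension. Making this relative-kernel comparison uniform, with the drift handled through Girsanov on a time interval of length at most $T$, is where I would spend most of the effort.
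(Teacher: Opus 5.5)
There is a genuine gap in your final comparison step. The obstruction is the drift, not the mismatch of Gaussian constants.

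For $m=0$ your route is lossless up to that step. The driftless kernel has unit mass, so Jensen gives $|v(t,x)|^2\le\int\bar k(t,x;s,y)|g(y)|^2\,dy$. Since $g$ is arbitrary, you then need
\[
\int\rho(t,x)\,\bar k(t,x;s,y)\,dx\le C\rho(s,y)\qquad\text{for a.e. }y,
\]
and adding polynomial weights only strengthens this requirement.

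This inequality is false. Take $a=I_d$, $b\equiv\beta\ne0$ and $\mu=\delta_0$. Then $\rho(s,y)=\gamma_{sI_d}(y-\beta s)$, while the left side equals $\gamma_{sI_d}(y-\beta t)$, and
\[
\frac{\gamma_{sI_d}(y-\beta t)}{\gamma_{sI_d}(y-\beta s)}
 =\exp\Bigl\{\frac{(s-t)\,\beta\cdot(\beta(s+t)-2y)}{2s}\Bigr\}
\]
is unbounded along $y=-R\beta$, $R\to\infty$, for every fixed $t<s$. Equivalently, Girsanov gives $\bar k/k=\exp\{-\beta\cdot(y-x)+|\beta|^2(s-t)/2\}$, which is exponential in the displacement. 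Under the posterior of $X_t$ given $X_s=y$, the displacement $|X_t-y|$ is of order $(1-t/s)|y|$, so this factor cannot be absorbed into polynomial weights or a constant.

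Your scheme would work if $b=0$, since then $\bar k=k$ and Chapman--Kolmogorov closes it exactly. The lemma is still true in the drifted example. What makes it true is cancellation: the gradient dissipation beats the drift. A positivity/Jensen argument built on $\bar k$ cannot detect that.

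The paper lets the mismatch enter through energy instead. It applies It\^o's formula along the \emph{drifted} reference diffusion. Because $P_{a/2}v=0$, the drift contributes only $2v\,b\cdot Dv$, and Young's inequality absorbs this into $Dv^{\mathsf T}aDv$. With $V_j(t)=\|D^jv(t)\|_{L^2(\mu_t)}^2$, this gives $V_0'\ge\tfrac12\lambda_aV_1-2B_b^2\lambda_a^{-1}V_0$. Differentiating the equation gives $P_{a/2}\partial_iv=\tfrac12\partial_ia:D^2v$, which uses only $L_a$, and yields the analogous inequality for $V_1$. Integrating and averaging in time then gives the bounds for $U$ and $DU$.

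The Hessian bound comes from variation of constants for $\partial_iv$. This involves only the integrable singularity $(\tau-t)^{-1/2}$ of $DU$, the weakly singular Gronwall lemma, and an intermediate time $m$ in the middle third of $(t,s)$. No pointwise bounds on $D_x^2\bar k$ and no time regularity of $a$ are used, so the step you identified as the main obstacle is not needed.

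If you prefer a kernel proof, represent $v$ with the drifted kernel $k$. For it, $\int\rho(t,x)k(t,x;s,y)\,dx=\rho(s,y)$ holds exactly, and \eqref{eq:kernel-log-gradient} gives relative gradient bounds. Treat $-b\cdot Dv$ as a Duhamel source and close with a weakly singular Gronwall argument. This handles $U$ and $DU$. For the Hessian, however, the Borel source produces a nonintegrable $(r-t)^{-1}$ singularity, so you would still need the differentiated-equation argument above.
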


\begin{proof}
Fix $0<s\le T$ and $g\in C_c^\infty(\R^d)$.
Theorem~2.1 of \citet{krylov-2007-vmo}, applied to $v-g$,
constructs $v$. For the $i$th coordinate vector $\mathbf e_i$ and $h\ne0$,
write $\delta_h^if(t,x)=[f(t,x+h\mathbf e_i)-f(t,x)]/h$. Then
\begin{equation*}
\left(-\partial_t-\tfrac12a(t,x+h\mathbf e_i):D^2\right)\delta_h^iv
 =\tfrac12\delta_h^ia:D^2v,
\qquad \delta_h^iv(s,\cdot)=\delta_h^ig.
\end{equation*}
The coefficient quotients satisfy $|\delta_h^ia|\le L_a$,
and spatial translations preserve the VMO bounds. Applying the
same theorem after subtracting $\delta_h^ig$ gives uniform
$W^{1,2}_{p_*}$ bounds for $\delta_h^iv$ as $h\to0$.
Weak compactness therefore gives
$Dv\in W^{1,2}_{p_*}((0,s)\times\R^d)$ and, for $w_i=\partial_i v$,
\(
P_{a/2}w_i=\tfrac12\partial_i a:D^2v,
\,\forall i\in\{1,\ldots,d\}.
\)
The Gaussian bound \eqref{eq:kernel-gaussian} and $\mu_t(\R^d)=1$ give,
for every $\alpha\ge0$,
\begin{equation}
\int_0^T\int_{\R^d}
 (t^\alpha\rho(t,x))^{p_*/(p_*-2)}\,dx\,dt
 \le C\int_0^T t^{(\alpha p_*-d)/(p_*-2)}\,dt<\infty.
\label{eq:weighted-lp-embedding}
\end{equation}
Here $C$ depends only on the reference data and $p_*$.
H\"older's inequality gives $v,Dv\in\mathcal W_\alpha((0,s))$.
Applying the Sobolev It\^o formula \citep[Remark~2.2]{krylov-2007-vmo}
to $v(t,X_t)$ and $Dv(t,X_t)$ shows that
$V_{j}(t)=\|D^jv(t)\|_{L^2(\mu_t)}^2$, $j=0,1,2$, satisfy,
almost everywhere on $(0,s)$,
\begin{equation}
V_{0}'\ge\tfrac12\lambda_aV_{1}-\frac{2B_b^2}{\lambda_a}V_{0},
\qquad
V_{1}'\ge\tfrac12\lambda_aV_{2}-\frac{(2B_b+L_a)^2}{2\lambda_a}V_{1}, \qquad
|(V_{0}+V_{1})'|\le C(V_{0}+V_{1}+V_{2}).
\label{eq:reference-energy}
\end{equation}
Here $C$ depends only on the reference data. Ellipticity and
Young's inequality are applied with
$|b|\le B_b$ and $|(Dv\cdot\nabla)a|\le L_a|Dv|$.
Integration gives
\begin{equation*}
V_{0}(t)+c\int_t^s V_{1}(\tau)\,d\tau\le CV_{0}(s),\qquad
V_{1}(t)\le e^{C(\tau-t)}V_{1}(\tau),\quad\forall\tau\in(t,s).
\end{equation*}
Averaging the second inequality in $\tau$ proves the first two
bounds in \eqref{eq:reference-smoothing}.
Density of $C_c^\infty(\R^d)$ in $L^2(\mu_s)$ therefore extends
$U(t,s)$ and $DU(t,s)$ with these bounds.

For $0<t<m<s$, variation of constants in the equation for $w_i$ gives
\begin{equation*}
w_i(t)=U(t,m)w_i(m)
 +\tfrac12\int_t^m U(t,\tau)(\partial_i a:D^2v)(\tau)\,d\tau.
\end{equation*}
The gradient bound, with $h(t)=\|D^2v(t)\|_{L^2(\mu_t)}$, yields
\begin{equation*}
h(t)\le C(m-t)^{-1/2}\|Dv(m)\|_{L^2(\mu_m)}
 +CL_a\int_t^m(\tau-t)^{-1/2}h(\tau)\,d\tau.
\end{equation*}
Here $h\in L^2(0,s)$ by \eqref{eq:weighted-lp-embedding} with
$\alpha=0$. The weakly singular Gronwall estimate
\citep[Lemma~7.1.1]{henry-1981} gives
\begin{equation*}
h(t)\le C(m-t)^{-1/2}\|Dv(m)\|_{L^2(\mu_m)},
\qquad\text{for a.e. }t\in(0,m),
\end{equation*}
with $C$ depending only on the reference data.
By Fubini, for almost every $t\in(0,s)$ we may choose
$m\in((2t+s)/3,(t+2s)/3)$ for which this inequality and the
gradient bound at $m$ both hold. They give
\begin{equation*}
h(t)\le\frac{C\|g\|_{L^2(\mu_s)}}{\sqrt{(m-t)(s-m)}}
      \le\frac{C\|g\|_{L^2(\mu_s)}}{s-t},
\end{equation*}
proving the Hessian estimate. The same density argument extends
$D^2U(t,s)$ to $L^2(\mu_s)$ with this bound.
\end{proof}

The parameters $L$ and $\delta$ below control the width of the
logarithmic partition and the early-time truncation, respectively.

\begin{lemma}[Reduction away from time zero]\label{lem:causal-columns}
For every initial probability law $\mu$, $J=(s_0,s_1)\subseteq I$,
$u\in\mathcal W_{\alpha,0}(J)$, $L\ge2$ and $0<\delta\le1/2$,
there is a countable family $(J_i,T_i,v_i)$ with
\(
J_i\subseteq J\cap(\delta e^{-4L}T_i,T_i),\,
0<T_i<e^{4L}s_1,\,
v_i\in\mathcal W_{\alpha,0}(J_i)
\) for all $i$,
such that
\begin{equation}
\left|\sum_i\|\mathcal H_qv_i\|_{\alpha,J_i}^2-\|\mathcal H_qu\|_{\alpha,J}^2\right|
+\left|\sum_i\|P_Av_i\|_{\alpha,J_i}^2-\|P_Au\|_{\alpha,J}^2\right|
 \le C(L^{-2}+\delta^{\alpha+1})\mathscr G_J(u)^2.
\label{eq:causal-column-estimate}
\end{equation}
Here $C$ depends only on the reference data, $\alpha$ and
the $A,q$ ellipticity data in Assumption~\ref{ass:coefficients}.
\end{lemma}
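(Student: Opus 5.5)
The plan is to split the source of the \emph{reference} equation rather than $u$ itself, so that no $u/t$ commutator appears. Put $f=P_{a/2}u=P_Au+(A-\tfrac12a):D^2u$, so that $\|f\|_{\alpha,J}\le C\mathscr G_J(u)$. Fix a logarithmic square partition $\sum_k\phi_k(t)^2=1$ on $(0,\infty)$. Choose $\phi_k$ supported in $(e^{-2L}\tau_k,e^{2L}\tau_k)$ with $\tau_k=e^{Lk}$ (rescaled so that $s_1$ lies in the range), overlap bounded independently of $L$, and $|t\phi_k'|\le C/L$. For each $k$, let $w_k\in\mathcal W_{\alpha,0}(J)$ be the zero-terminal solution of $P_{a/2}w_k=\phi_k^2f$ on $J$. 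It is given by the Duhamel formula
\[
w_k(t)=\int_t^{s_1}U(t,s)\phi_k(s)^2f(s)\,ds .
\]
Lemma~\ref{lem:reference-smoothing} and the Krylov construction used there give that $w_k$ exists. Linearity gives $\sum_kw_k=u$, and backward causality gives $w_k=0$ for $t\ge e^{2L}\tau_k$. Set $T_k=e^{3L}\tau_k$ (so $T_k<e^{4L}s_1$ for the relevant $k$). Let $\psi_k(t)$ be a cutoff equal to $1$ on $[\delta e^{-3L}T_k,\infty)$, vanishing below $\tfrac12\delta e^{-3L}T_k$, with $|t\psi_k'|\le C$. Define
\[
J_k=J\cap(\delta e^{-4L}T_k,T_k),\qquad v_k=(\psi_k w_k)|_{J_k}.
\]
Zero terminal trace holds because either $T_k$ exceeds the support of $w_k$ or $\sup J_k=s_1$.

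The estimate \eqref{eq:causal-column-estimate} then follows from three comparisons.

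\textbf{Truncation.} On $(0,\delta e^{-3L}T_k)$ the source of $w_k$ vanishes, since the support of $\phi_k$ starts at $e^{-2L}\tau_k$, well above $\delta e^{-3L}T_k=\delta\tau_k$. There $w_k(t)=U(t,m)w_k(m)$ with $m\asymp\tau_k$. So \eqref{eq:reference-smoothing} bounds $\|D^jw_k(t)\|_{L^2(\mu_t)}\le C\tau_k^{-j/2}\|w_k(m)\|_{L^2(\mu_m)}$, after averaging $m$ over a dyadic window, while $\|w_k(m)\|\lesssim\tau_k\|\phi_kf\|$. Integrating $t^\alpha$ over $t<\delta\tau_k$ produces the factor $\delta^{\alpha+1}$ relative to $\tau_k^{\alpha+1}$. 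The same holds for the $\psi_k'$ term, since $|\psi_k'|\le C/(\delta\tau_k)$ on a set of length $\delta\tau_k$. Summing over $k$ with bounded overlap gives the error $C\delta^{\alpha+1}\mathscr G_J(u)^2$.

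\textbf{Energy of each piece.} Maximal-regularity-type bounds for $w_k$ in terms of $\|\phi_k^2f\|$ would follow from Corollary~\ref{cor:localization-budget}, but that corollary needs a priori finiteness. Instead I would use \eqref{eq:reference-energy} directly. This is the second point where the reference equation, rather than $P_A$, is essential.

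\textbf{Near-orthogonality.} We need $\|\mathcal Tu\|^2\approx\sum_k\|\mathcal Tw_k\|^2$ for $\mathcal T\in\{\mathcal H_q,P_A\}$. I would compare each $w_k$ with $\phi_k u$. The difference $z_k=w_k-\phi_k^2u$ solves
\[
P_{a/2}z_k=-(\phi_k^2)'u\qquad(\text{up to sign}),
\]
whose source is $O(L^{-1}t^{-1}|u|)$, again supported where $t\asymp\tau_k$. Writing $z_k$ by Duhamel and using the smoothing bounds, its Hessian on $(0,e^{2L}\tau_k)$ should be controlled by $CL^{-1}$ times $\sup_{t\asymp\tau_k}t^{-1}\|u\|$-type quantities. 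These I would control by \eqref{eq:short-graph-budget} applied on logarithmic windows, or by a Hardy inequality in time for zero-terminal $u$:
\[
\int t^{\alpha-2}\|u\|^2\le C\int t^{\alpha}\|u_t\|^2 .
\]
Then $\sum_k\|\mathcal T\psi_kw_k\|^2=\sum_k\|\mathcal T(\phi_k^2u)\|^2+O(L^{-1}+\delta^{\alpha+1})\mathscr G_J^2$. To reach the sharper $L^{-2}$, I would replace $\phi_k^2$ in the source splitting by $\phi_k$ combined with a square-root pairing, mirroring \eqref{eq:square-expansion}, so that the first-order cross terms cancel through $\sum_k\phi_k\phi_k'=0$. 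The remaining step, $\sum_k\|\mathcal T(\phi_ku)\|^2$ versus $\|\mathcal Tu\|^2$, is Lemma~\ref{lem:square-partition} with time-only cutoffs; its errors are exactly the $u/t$ terms, which the time-Hardy bound and the $L^{-1}$ derivative size turn into $L^{-2}$.

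The main obstacle is this last comparison. The crux is making the interaction between the pieces $w_k$ for distant $k$ (the long early-time tails of $w_k$) and the commutator errors genuinely quadratic in $L^{-1}$ rather than linear. I would first try to show the tails of $w_k$ below $e^{-2L}\tau_k$ are summably small by the smoothing decay, giving geometric decay in $|k-l|$ of the cross inner products. If that fails, I would redefine $v_k$ via the solution with source $\phi_kP_{a/2}(\phi_ku)$ to force the exact square structure.
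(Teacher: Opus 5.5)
Your near-orthogonality step has a genuine gap, and it is exactly the obstruction this lemma is built to avoid. Comparing $w_k$ with the direct cutoff $\phi_k u$ creates terms $\phi_k'u$ of size $L^{-1}t^{-1}|u|$, and these are not controlled by $\mathscr G_J(u)$ uniformly in $J$. Neither of your proposed controls works.
\begin{itemize}
\item The bound \eqref{eq:short-graph-budget} gives $|J|^{-1}\|u\|_{\alpha,J}$, not $\|t^{-1}u\|_{\alpha,J}$. Restrictions of $u$ to interior logarithmic windows also have no zero terminal trace.
\item The time Hardy inequality $\int t^{\alpha-2}\|u\|^2\le C\int t^{\alpha}\|u_t\|^2$ is false for $\alpha\le1$. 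Take $u(t,x)=\eta(t)$, with $\eta=1$ on $(s_0,s_1/2)$ and $\eta$ linear down to $0$ at $s_1$. Then $D^2u=0$ and $P_Au=-\eta'$, so $\mathscr G_J(u)^2\asymp s_1^{\alpha-1}$. But $\int_{s_0}^{s_1/2}t^{\alpha-2}\,dt$ is of order $s_0^{\alpha-1}$, or $\log(s_1/s_0)$ when $\alpha=1$.
\end{itemize}
For this $u$, the error in your final step is
\[
\sum_k\|P_A(\phi_ku)\|_{\alpha,J}^2-\|P_Au\|_{\alpha,J}^2=\sum_k\|\phi_k'u\|_{\alpha,J}^2 ,
\]
which is unbounded relative to $\mathscr G_J(u)^2$ as $s_0/s_1\to0$ with $L$ fixed. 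That is precisely the regime in which the lemma is needed. Your Duhamel estimate for $z_k$ breaks down even earlier: its source $(\phi_k^2)'u$ does not vanish at $s=t$, so the kernel bound $\|D^2U(t,s)\|\le C(s-t)^{-1}$ from \eqref{eq:reference-smoothing} is not integrable there. Separately, with the $\phi_k^2$ splitting the discrepancy $\int(1-\sum_k\phi_k^4)|\mathcal Tu|^2$ is of order one, not $O(L^{-1})$. So passing to $\phi_k$ is essential, not a refinement toward $L^{-2}$.

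The missing idea is never to form $P_{a/2}(\phi_ku)$, and instead to freeze the cutoff at the observation time. Split the source by $\phi_k$, setting $w_k(t)=\int_t^{s_1}U(t,s)\phi_k(s)f(s)\,ds$, and write $\phi_k(t)u(t)=\int_t^{s_1}U(t,s)\phi_k(t)f(s)\,ds$. Then $D^2w_k=\phi_kD^2u+E_k$ with
\[
E_k(t)=\int_t^{s_1}D^2U(t,s)\,[\phi_k(s)-\phi_k(t)]\,f(s)\,ds .
\]
Substituting $u(s)=\int_s^{s_1}U(s,r)f(r)\,dr$ into your Duhamel integral for $z_k$ leads to the same formula.
\begin{itemize}
\item Since $(\sum_k|\phi_k(s)-\phi_k(t)|^2)^{1/2}\le CL^{-1}\log(s/t)$, the factor $(s-t)^{-1}$ is cancelled. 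With the weight $t^{(\alpha+1)/2}$ this gives the integrable logarithmic-time kernel $L^{-1}ve^{-(\alpha+1)v/2}/(1-e^{-v})$.
\item Young's inequality then gives $(\sum_k\|E_k\|_{\alpha,J}^2)^{1/2}\le CL^{-1}\|f\|_{\alpha,J}$.
\item The identity $\sum_k\phi_k(t)[\phi_k(s)-\phi_k(t)]=-\tfrac12\sum_k[\phi_k(s)-\phi_k(t)]^2$ gives $\|\sum_k\phi_kE_k\|_{\alpha,J}\le CL^{-2}\|f\|_{\alpha,J}$.
\item Because $P_{a/2}w_k=\phi_kP_{a/2}u$ exactly, $P_Aw_k=\phi_kP_Au-(A-a/2):E_k$ and $\mathcal H_qw_k=\phi_k\mathcal H_qu+q^{1/2}E_kq^{1/2}$.
\end{itemize}
Expanding the squares yields the $L^{-2}$ error with only $f$ and $D^2u$ on the right, with no $u/t$ term and no maximal-regularity bound for $w_k$.

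Your truncation also needs correcting. Because $U$ preserves constants, $w_k$ does not decay below the support of its source. So the term $\psi_k'w_k$ in $P_A(\psi_kw_k)$ is not of relative order $\delta^{\alpha+1}$; for $u=\eta(t)$ and $\alpha\le1$ it is at least of order $\delta^{\alpha-1}$. Instead, simply restrict $w_k$ to $J_k$, whose left endpoint is free in $\mathcal W_{\alpha,0}(J_k)$. Place the cut at $\delta\sigma_k$, where $\sigma_k$ is the lower end of $\supp\phi_k$. In your parametrization $e^{-2L}\tau_k<\delta\tau_k$ whenever $\delta>e^{-2L}$, so the source does not vanish below $\delta\tau_k$ as you claim. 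On the discarded part, $P_Aw_k=-(A-a/2):D^2w_k$, and $s-t\ge s/2$ gives $\|D^2w_k(t)\|_{L^2(\mu_t)}\le C\sigma_k^{-(\alpha+1)/2}\|\phi_kf\|_{\alpha,J}$. Integrating this over $t<\delta\sigma_k$ produces the factor $\delta^{\alpha+1}$.
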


\begin{proof}
First let $u\in\mathcal C_{\alpha,0}(J)$ and set
$f_{\mathrm{ref}}=P_{a/2}u=P_Au+(A-a/2):D^2u\in\mathcal H_\alpha(J)$.
On every $(c,s_1)$ with $s_0<c<s_1$, classical variation of constants
for the evolution in Lemma~\ref{lem:reference-smoothing} gives
\begin{equation*}
u(t)=\int_t^{s_1} U(t,s)f_{\mathrm{ref}}(s)\,ds,\qquad\forall t\in J.
\end{equation*}
Choose $\varphi\in C_c^\infty((-2,2);[0,1])$ equal to one
on $[-1,1]$, with $\|\varphi'\|_\infty$ bounded by a numerical
constant, and set $
T_k=s_1e^{L(k+2)},\, \tau_k=e^{-4L}T_k$ and
\begin{equation*}
\chi_k(t)=
\frac{\varphi(L^{-1}\log(t/s_1)-k)}
 {\{\sum_{j\in\mathbb Z}\varphi(L^{-1}\log(t/s_1)-j)^2\}^{1/2}}, \quad \forall k\in\mathbb Z, \,\forall t>0.
\end{equation*}
Then $\sum_k\chi_k^2=1$,
$\sum_k|\chi_k'|^2\le C/(L^2t^2)$ and
$\chi_k\in C_c^\infty((\tau_k,T_k))$.
Define, on $J$,
\begin{equation*}
u_k(t)=\int_t^{s_1} U(t,s)\chi_k(s)f_{\mathrm{ref}}(s)\,ds.
\end{equation*}
The first two bounds in \eqref{eq:reference-smoothing},
$(t/s)^{\alpha/2}\le1$ for $t\le s$, and Young's inequality give
\begin{equation*}
|J|^{-1}\|u_k\|_{\alpha,J}+|J|^{-1/2}\|Du_k\|_{\alpha,J}
 \le C\|\chi_k f_{\mathrm{ref}}\|_{\alpha,J}.
\end{equation*}
Set
\(
E_k(t)=\int_t^{s_1}D^2U(t,s)[\chi_k(s)-\chi_k(t)]f_{\mathrm{ref}}(s)\,ds.
\)
The derivative bound for the partition implies
\begin{equation*}
\left(\sum_k|\chi_k(s)-\chi_k(t)|^2\right)^{1/2}
 \le CL^{-1}\log(s/t),\qquad\forall\,s_0<t<s\le s_1.
\end{equation*}
Extend $t^{(\alpha+1)/2}\|f_{\mathrm{ref}}(t)\|_{L^2(\mu_t)}$ by zero outside
$J$ in the coordinate $\log t$. By \eqref{eq:reference-smoothing},
the integral defining $E_k$ is controlled by the scalar kernel $L^{-1}\mathsf k_\alpha(v)$ with
\(
\mathsf k_\alpha(v)= (v e^{-(\alpha+1)v/2})/(1-e^{-v}),
\,\forall v>0.
\)
Both $\mathsf k_\alpha$ and $v\mathsf k_\alpha(v)$ belong to
$L^1(0,\infty)$. Young's inequality therefore gives
\begin{equation}
\left(\sum_k\|E_k\|_{\alpha,J}^2\right)^{1/2}
 \le C L^{-1}\|f_{\mathrm{ref}}\|_{\alpha,J}.
\label{eq:causal-commutator-square-sum}
\end{equation}
For the cross terms, $\sum_k\chi_k^2=1$ gives
\begin{equation*}
\sum_k\chi_k(t)E_k(t)
 =-\tfrac12\int_t^{s_1}D^2U(t,s)
          \sum_k[\chi_k(s)-\chi_k(t)]^2f_{\mathrm{ref}}(s)\,ds.
\end{equation*}
Young's inequality with $L^{-2}v\mathsf k_\alpha(v)$ gives
\begin{equation}
\left\|\sum_k\chi_kE_k\right\|_{\alpha,J}
 \le C L^{-2}\|f_{\mathrm{ref}}\|_{\alpha,J}.
\label{eq:causal-commutator-cross-term}
\end{equation}
Restricting the integral for $u_k-\chi_k(t)u$ to $s\ge t+\varepsilon$,
differentiating in space, and letting $\varepsilon\downarrow0$
using $\mathsf k_\alpha,v\mathsf k_\alpha\in L^1(0,\infty)$ gives
\begin{equation}
D^2u_k=\chi_kD^2u+E_k
\quad\text{in }\mathcal D'(J\times\R^d).
\label{eq:causal-commutator}
\end{equation}
The evolution equation gives $P_{a/2}u_k=\chi_k f_{\mathrm{ref}}$ and hence
$(u_k)_t=-\chi_k f_{\mathrm{ref}}-a:D^2u_k/2$ in distributions.
Together with the lower-order bounds, these identities imply
\begin{equation*}
\sum_k\|u_k\|_{\mathcal W_\alpha(J)}^2
 \le C\bigl(\|D^2u\|_{\alpha,J}^2+\|P_{a/2}u\|_{\alpha,J}^2\bigr).
\end{equation*}
For core fields, $f_{\mathrm{ref}}$ and all $u_k$ vanish near $s_1$.
Apply the same estimates to differences of core fields.
Lemma~\ref{lem:graph-core} then extends the construction to
$u\in\mathcal W_{\alpha,0}(J)$, with
$u_k\in\mathcal W_{\alpha,0}(J)$ and all the preceding identities.

Equation~\eqref{eq:causal-commutator} and $P_{a/2}u_k=\chi_k f_{\mathrm{ref}}$ give
\begin{equation*}
P_Au_k=\chi_kP_Au-(A-a/2):E_k,\qquad
\mathcal H_qu_k=\chi_k\mathcal H_qu+q^{1/2}E_kq^{1/2}.
\end{equation*}
Expanding the squares and using
\eqref{eq:causal-commutator-square-sum}--\eqref{eq:causal-commutator-cross-term}
yields
\begin{equation}
\sum_{\mathcal T\in\{\mathcal H_q,P_A\}}
 \left|\sum_k\|\mathcal Tu_k\|_{\alpha,J}^2
                         -\|\mathcal Tu\|_{\alpha,J}^2\right|
 \le C L^{-2}\mathscr G_J(u)^2.
\label{eq:causal-untrimmed-squares}
\end{equation}

For every $k$ with $\chi_k f_{\mathrm{ref}}\ne0$, $\tau_k<s_1$ and $T_k<e^{4L}s_1$.
If $t\in J$ and $t<\delta\tau_k$, then $s-t\ge s/2$ for $s\ge\tau_k$.
Equation~\eqref{eq:reference-smoothing} and weighted
Cauchy--Schwarz give
\begin{equation*}
\|D^2u_k(t)\|_{L^2(\mu_t)}
 \le C\int_{\max\{s_0,\tau_k\}}^{s_1} s^{-1}\|\chi_k f_{\mathrm{ref}}(s)\|_{L^2(\mu_s)}\,ds
 \le C\tau_k^{-(\alpha+1)/2}\|\chi_k f_{\mathrm{ref}}\|_{\alpha,J}.
\end{equation*}
Since $P_{a/2}u_k=0$ on $J\cap(0,\tau_k)$,
$P_Au_k=-(A-a/2):D^2u_k$ there. Integration gives
\begin{equation}
\|\mathcal H_qu_k\|_{\alpha,J\cap(0,\delta\tau_k)}^2
 +\|P_Au_k\|_{\alpha,J\cap(0,\delta\tau_k)}^2
 \le C\delta^{\alpha+1}\|\chi_k f_{\mathrm{ref}}\|_{\alpha,J}^2.
\label{eq:causal-early-squares}
\end{equation}
Set
\(
J_k=(\max\{s_0,\delta\tau_k\},\min\{s_1,T_k\}),\,
v_k=u_k|_{J_k},
\)
retaining indices with $\chi_k f_{\mathrm{ref}}\ne0$ and $J_k\ne\varnothing$.
The integral formula gives $u_k=0$ on $J\cap[T_k,s_1)$;
its terminal trace at $s_1$ is zero. Restriction and
Lemma~\ref{lem:graph-core} therefore give
$v_k\in\mathcal W_{\alpha,0}(J_k)$.
Finally,
\begin{equation*}
\sum_k\|\chi_k f_{\mathrm{ref}}\|_{\alpha,J}^2
 =\|f_{\mathrm{ref}}\|_{\alpha,J}^2=\|P_{a/2}u\|_{\alpha,J}^2
 \le C\mathscr G_J(u)^2.
\end{equation*}
Combining \eqref{eq:causal-untrimmed-squares} and
\eqref{eq:causal-early-squares}, with Tonelli for the countable
sum, proves \eqref{eq:causal-column-estimate}.
The definitions give $J_k\subset J\cap(\delta e^{-4L}T_k,T_k)$
and $0<T_k<e^{4L}s_1$.
\end{proof}

\subsection{Proof of Theorem~\ref{thm:sharp-limit}}
\label{subsec:main-proof}

\begin{proof}
Lemma~\ref{lem:recovery} gives the lower bound in
\eqref{eq:sharp-limit-formula}, and \eqref{eq:recovered-lower-bound}
proves the theorem when $K_{\alpha,\mathrm{mod}}(I)=\infty$.
Suppose now that $K_{\alpha,\mathrm{mod}}(I)<\infty$ and $K_\alpha(I)>K_{\alpha,\mathrm{mod}}(I)$.
Choose $K_{\alpha,\mathrm{mod}}(I)<k_0<k_1<K_\alpha(I)$. By
\eqref{eq:short-time-constants} and Lemma~\ref{lem:graph-core},
there are intervals $J_n=(s_{0,n},s_{1,n})$ with $|J_n|\to0$ and core
fields $u_n$ such that
\begin{equation*}
\|\mathcal H_qu_n\|_{\alpha,J_n}=1,\qquad
\|P_Au_n\|_{\alpha,J_n}<k_1^{-1},\qquad
\mathscr G_{J_n}(u_n)^2\le\lambda_q^{-2}+k_1^{-2}.
\end{equation*}
After extraction, $s_{0,n},s_{1,n}\to t_*\in[0,T]$.
If $t_*>0$, take $s_0=t_*/2$ and apply
Proposition~\ref{prop:uniform-model-estimates}\ref{item:uniform-model-positive-time}
with threshold $k_0$. For large $n$, it gives
$1\le k_0\|P_Au_n\|_{\alpha,J_n}<k_0/k_1<1$.

It remains to consider $s_{1,n}\to0$.
The choices in this step have the order
\begin{equation*}
(k_0,k_1)\longrightarrow L\longrightarrow\delta
\longrightarrow\vartheta=\delta e^{-4L}\longrightarrow b_\vartheta
\longrightarrow n.
\end{equation*}
Choose $L\ge2$ large and then $0<\delta\le1/2$ small so that
\begin{equation}
C(L^{-2}+\delta^{\alpha+1})(1+k_0^2)
             (\lambda_q^{-2}+k_1^{-2})<1-k_0^2k_1^{-2}.
\label{eq:main-upper-parameters}
\end{equation}
Fix $\vartheta=\delta e^{-4L}$ and let $b_\vartheta$ be the threshold in
Proposition~\ref{prop:uniform-model-estimates}\ref{item:uniform-model-relative-age} for $k_0$.
For sufficiently large $n$, $e^{4L}s_{1,n}\le b_\vartheta$.
Lemma~\ref{lem:causal-columns} gives fields $v_{n,i}$ with
$J_{n,i}\subset(\vartheta T_{n,i},T_{n,i})$ and
$T_{n,i}<e^{4L}s_{1,n}\le b_\vartheta$. Thus
\begin{equation*}
\|\mathcal H_qv_{n,i}\|_{\alpha,J_{n,i}}
 \le k_0\|P_Av_{n,i}\|_{\alpha,J_{n,i}}
\qquad\forall i .
\end{equation*}
Summation and \eqref{eq:causal-column-estimate} yield
\begin{equation}
\begin{aligned}
1
&\le k_0^2\|P_Au_n\|_{\alpha,J_n}^2
 +C(L^{-2}+\delta^{\alpha+1})(1+k_0^2)
                                      \mathscr G_{J_n}(u_n)^2\\
&\le k_0^2k_1^{-2}
 +C(L^{-2}+\delta^{\alpha+1})(1+k_0^2)
                         (\lambda_q^{-2}+k_1^{-2})<1,
\end{aligned}
\label{eq:main-upper-contradiction}
\end{equation}
a contradiction. Hence $K_\alpha(I)\le K_{\alpha,\mathrm{mod}}(I)$, and the lower
bound gives \eqref{eq:sharp-limit-formula}.
When $K_{\alpha,\mathrm{mod}}(I)<\infty$, Proposition~\ref{prop:realization} gives
the bounded inverse $S_I$, and Proposition~\ref{prop:poisson}
gives its Hessian damping limit $K_\alpha(I)=K_{\alpha,\mathrm{mod}}(I)$.
\end{proof}

\subsection{Uniform reference estimates and terminal data}
\label{subsec:linear-theory}

We collect the common short-interval thresholds in the matching
case. Uniformity in the starting point permits integration
against arbitrary initial laws.

\begin{corollary}[Uniform reference estimates]
\label{cor:uniform-point-bound}
Fix common reference data and $\alpha\ge0$. Both bounds below
hold for every initial probability law $\mu$ and every reference
diffusion with those data, with thresholds independent of these choices.
\begin{enumerate}
\item\label{item:reference-positive-time}
For every $s_0\in(0,T)$ and $\varepsilon>0$ there is
$h_{s_0,\varepsilon}\in(0,T-s_0]$, depending only on the reference
data, $\alpha,s_0,\varepsilon$, such that
\begin{equation}
K_{\alpha,h_{s_0,\varepsilon}}((s_0,T);\rho^\mu,a/2,a)\le2+\varepsilon.
\label{eq:reference-positive-time-bound}
\end{equation}
\item\label{item:reference-point-starts}
Suppose a finite $M\ge2$ satisfies
\begin{equation}
\mathfrak P_\alpha(\delta_0,a_*,a_*/2,a_*)\le M,
\qquad\forall a_*\text{ with }\lambda_a I_d\preceq a_*\preceq\Lambda_a I_d.
\label{eq:point-model-budget}
\end{equation}
For every $\varepsilon>0$ there exists $h_\varepsilon\in(0,T]$,
depending only on the reference data, $\alpha,M,\varepsilon$, such that
\begin{equation*}
K_{\alpha,h_\varepsilon}(I;\rho^\mu,a/2,a)\le M+\varepsilon.
\end{equation*}
\end{enumerate}
\end{corollary}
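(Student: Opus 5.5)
The plan is to reduce both parts to point starts. Then we exploit the fact that the localization/comparison machinery of Proposition~\ref{prop:uniform-model-estimates} only ever uses compactness of the relevant parameters. In the matching case those parameters range over a compact family, independently of $\mu$.

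\emph{Step 1: reduction to Dirac initial laws.} Since $\rho^\mu=\int\rho^y\,\mu(dy)$ with $\rho^y$ jointly measurable in $y$ and continuous in $(t,x)$, Lemma~\ref{lem:weight-operations}\ref{item:positive-mixtures} gives
\[
K_{\alpha,h}(I';\rho^\mu,a/2,a)\le\sup_{y}K_{\alpha,h}(I';\rho^y,a/2,a)
\]
for each $I'$. So it suffices to find thresholds that are uniform over all point starts $y\in\R^d$ and all reference diffusions with the given data. Translating $y$ to $0$ is a change of coefficients within the same data class, so we may assume $y=0$.

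\emph{Step 2: contradiction by compactness.} Suppose part~\ref{item:reference-point-starts} fails. Then there are reference diffusions $(a_n,b_n)$ with the fixed data, intervals $J_n$ with $|J_n|\to0$, and core tests $u_n$ normalized as in the proof of Theorem~\ref{thm:sharp-limit} with quotient $\ge M+\varepsilon$.

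We rerun the localization of Proposition~\ref{prop:uniform-model-estimates} together with the reduction of Lemma~\ref{lem:causal-columns}. Their error constants already depend only on the reference data, $\alpha$, $C_D=1$ and the ellipticity data (here $A=a/2$, $q=a$). In the model-identification step, we extract limits jointly along the sequence of columns and of diffusions. Lemma~\ref{lem:kernel-stability} and the uniform kernel--action estimates are uniform in the coefficients within the data class, and the half-line tangent of a point mass is always $\theta=\delta_0$. So the possible models are the following:
\begin{enumerate}
\item half-line Gaussian models $(\delta_0,a_*,a_*/2,a_*)$, with gain $\le M$ by \eqref{eq:point-model-budget};
\item flat models, with gain $\lambda_{\max}(2a_*^{-1/2}a_*a_*^{-1/2})=2$;
\item phase models, with gain $\max\{2,\Phi(\mathsf R)\}$, where $\mathsf R(a_*,a_*/2,a_*;\zeta)=1/2$ gives $\Phi=2$.
\end{enumerate}
Hence all models have gain $\le M$. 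The comparison in Proposition~\ref{prop:comparison} and the summation \eqref{eq:main-upper-contradiction} then give the contradiction $1<1$.

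For part~\ref{item:reference-positive-time} the same argument applies on $(s_0,T)$. The intervals then stay at times $\ge s_0$, so $\Delta_j\to\infty$ and only flat and phase models occur. The gain bound is $2$, and no causal reduction is needed.

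\emph{Main obstacle.} The delicate point is the uniformity of the density limits in Step 2. The extraction in Proposition~\ref{prop:uniform-model-estimates} used a fixed problem, whereas here the coefficients vary with $n$. I would first check that Proposition~\ref{prop:initial-heat-limits}, Lemma~\ref{lem:posterior-phase} and Proposition~\ref{prop:tail-compactness} hold along sequences of diffusions with common data. Their proofs rely only on Gaussian bounds, the kernel--action estimates of Proposition~\ref{prop:kernel-action} with data-dependent constants, and the freezing estimate \eqref{eq:local-coefficient-freezing}, which is uniform in the data. Fixed-center flat limits at positive times also need uniform positivity and equicontinuity of $\rho^0_n$ on compacts, and these follow from the uniform Gaussian two-sided bounds. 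The point-start half-line rescaling needs no tangent-measure compactness.
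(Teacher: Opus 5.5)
Your proposal is correct and follows essentially the paper's proof. You reduce to point starts via Lemma~\ref{lem:weight-operations}\ref{item:positive-mixtures} and run the column comparison of Proposition~\ref{prop:uniform-model-estimates} with starting points and coefficients varying in the data class; the half-line models $\gamma_{\tau a_*}$ have gain $\le M$ and the flat and phase models have gain $2$. You then cover intervals near zero by Lemma~\ref{lem:causal-columns} and the remaining short intervals by part~\ref{item:reference-positive-time}. The paper only phrases this as explicit thresholds $b_\vartheta$, $\eta=e^{-4L}b_\vartheta$ and $h_\varepsilon=\min\{\eta/2,h_{\eta/2,\varepsilon}\}$ rather than as a single contradiction sequence.

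One small correction. Uniform equicontinuity of the densities at positive times does not follow from the two-sided Gaussian bounds alone; it comes from the data-uniform H\"older estimate \eqref{eq:kernel-holder}. Also, Lemma~\ref{lem:posterior-phase} already states the uniformity over coefficients and starting points that you planned to check.
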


\begin{proof}
It suffices to prove both bounds uniformly for $\mu=\delta_y$:
since $\rho^\mu=\int\rho^y\,\mu(dy)$,
Lemma~\ref{lem:weight-operations}\ref{item:positive-mixtures}
then integrates the squared estimates at the same threshold.
Point laws have $C_D=1$, so Lemma~\ref{lem:dynamic-hardy} makes
the bounds in \eqref{eq:localization-budget},
\eqref{eq:square-localization-error} and \eqref{eq:column-graph-budget}
uniform over the stated class.
We use the columns of Proposition~\ref{prop:uniform-model-estimates},
with centers $(t_n,x_n)$ and scales $r_n\to0$, allowing
the starting points $y_n$ and coefficient fields
$(a_n,b_n)$ to vary with the common data; write $k_n$ for their kernels.

For~\ref{item:reference-positive-time}, $t_n\ge s_0$ forces
$t_n/r_n^2\to\infty$. When $|x_n-y_n|$ is bounded,
\eqref{eq:kernel-gaussian} and \eqref{eq:kernel-holder} give flat
limits uniformly in the varying data. When $|x_n-y_n|\to\infty$,
Lemma~\ref{lem:posterior-phase} gives the flat or phase comparisons
of Proposition~\ref{prop:uniform-model-estimates}, with the same
uniformity. All these limits satisfy $A_*=a_*/2$, $q_*=a_*$ and
have gain $2$ by Proposition~\ref{prop:whole-line-gains}.
Thus the compactness argument for \eqref{eq:uniform-column-comparison}
and the absorption in \eqref{eq:finite-scale-bound} give a common
$h_{s_0,\varepsilon}$, proving~\ref{item:reference-positive-time}.

For~\ref{item:reference-point-starts}, the additional input near
time zero is the Gaussian comparison. Under
$t=r_n^2\tau$, $x=y+r_nz$, the spatial Lipschitz and drift bounds
are $r_nL_a$ and $r_nB_b$. Thus \eqref{eq:centered-frozen} and
the common time modulus $\omega_a$ give, for every $r_n\downarrow0$,
\begin{equation*}
\sup_{y\in\R^d}\ \sup_{c\le\tau\le L,\ |z|\le N}
\left|\log\frac{r_n^d k_n(0,y;r_n^2\tau,y+r_nz)}
                    {\gamma_{\tau a_n(0,y)}(z)}\right|\longrightarrow0,
\qquad\forall\,0<c<L,\quad\forall N>0.
\end{equation*}
In the support-centered coordinates of
Proposition~\ref{prop:uniform-model-estimates}, every half-line
limit is therefore $\gamma_{\tau a_*}$ with
$a_* =\lim_n a_n(0,y_n)$ along a subsequence, and has gain at
most $M$ by \eqref{eq:point-model-budget}.
When $t_n/r_n^2\to\infty$ and $t_n\to0$, the same convergence
at scale $\sqrt{t_n}$ gives a flat limit if
$|x_n-y_n|/\sqrt{t_n}$ is bounded; otherwise the uniform phase
comparison applies. Hence all limiting gains are at most $M$.
This bound holds over the full class of matching limits, so the
same compactness and absorption arguments give, for each
$\vartheta\in(0,1)$ and $k_0>M$, a common $b_\vartheta\in(0,T]$ with
\begin{equation*}
k_\alpha(J;\rho^y,a/2,a)\le k_0,
\qquad\forall J\subseteq(\vartheta s_1,s_1),\quad
\forall\,0<s_1\le b_\vartheta,\quad\forall y\in\R^d.
\end{equation*}

To cover all short intervals, set $k_0=M+\varepsilon/2$ and
$k_1=M+\varepsilon$. Choose $L,\delta$ by
\eqref{eq:main-upper-parameters} with $\lambda_q=\lambda_a$,
then put $\vartheta=\delta e^{-4L}$ and $\eta=e^{-4L}b_\vartheta$.
For $\sup J\le\eta$, Lemma~\ref{lem:causal-columns} gives
$T_i<e^{4L}\sup J\le b_\vartheta$ and
$J_i\subseteq(\vartheta T_i,T_i)$.
The upper-bound calculation \eqref{eq:main-upper-contradiction}
therefore gives $k_\alpha(J;\rho^y,a/2,a)\le k_1$.
All choices use only the stated data, since the constant in
Lemma~\ref{lem:causal-columns} is common.
If $|J|\le\eta/2$ and $\sup J>\eta$, then $J\subseteq(\eta/2,T)$;
part~\ref{item:reference-positive-time} covers these intervals.
Thus $h_\varepsilon=\min\{\eta/2,h_{\eta/2,\varepsilon}\}$ works
for point starts, and the mixture reduction proves
\ref{item:reference-point-starts}.
\end{proof}

We use the positive-time reference bound to extend the local
terminal trace to $H^1(\mu_{s_1})$ and solve with nonzero terminal data.
For $0<s_1\le T$, set
\(
H^1(\mu_{s_1})=\{g\in H^1_{\mathrm{loc}}(\R^d):g,Dg\in L^2(\mu_{s_1})\},
\,
\|g\|_{H^1(\mu_{s_1})}^2
 =\|g\|_{L^2(\mu_{s_1})}^2+\|Dg\|_{L^2(\mu_{s_1})}^2.
\)
The coefficients in the lower-order estimates are
\begin{equation}
\kappa_{\mathrm{eff}}=\kappa-B_b^2/\lambda_a,\qquad
e_q=\mathop{\rm ess\,sup}_{[0,T]\times\R^d}
       |q^{-1/2}(A-a/2)q^{-1/2}|.
\label{eq:linear-energy-data}
\end{equation}
When $K_\alpha(I)<\infty$, define, alongside
$C_2(\kappa;I)$ in \eqref{eq:damped-hessian-constant},
\begin{equation}
C_j(\kappa;I)
 =\sup_{\substack{f\in\mathcal H_\alpha(I)\\f\ne0}}
   \frac{\|D^jS_If\|_{\alpha,\kappa,I}}{\|f\|_{\alpha,\kappa,I}},
\qquad\forall j\in\{0,1\},\quad\forall\kappa\ge0.
\label{eq:damped-lower-order-constants}
\end{equation}

\begin{theorem}[Linear terminal problem]\label{thm:linear-theory}
Let $\mu$ be any initial probability law and suppose
$K_\alpha(I)<\infty$. For every $J=(s_0,s_1)\subseteq I$, the local
terminal trace takes values in $H^1(\mu_{s_1})$, and
\begin{equation*}
u\longmapsto(P_Au,\operatorname{Tr}_{s_1}u):
\mathcal W_\alpha(J)\longrightarrow
\mathcal H_\alpha(J)\times H^1(\mu_{s_1})
\end{equation*}
is a bounded linear isomorphism. For every $f\in\mathcal H_\alpha(J)$
and $g\in H^1(\mu_{s_1})$, there is a unique
$u\in\mathcal W_\alpha(J)$ satisfying $P_Au=f$ and
$\operatorname{Tr}_{s_1}u=g$. It is given by
\begin{equation*}
v=U(\cdot,s_1)g,\qquad
u=v+S_J(f-P_Av),\qquad P_Av=(a/2-A):D^2v.
\end{equation*}
The trace norm is bounded in terms only of the reference data,
$\alpha,J$. The solution norm is bounded in terms only of the
reference data, $\alpha,\Lambda_A$ and
$\sup_{E\subseteq I}\|S_E\|_{\mathcal H_\alpha(E)\to\mathcal W_\alpha(E)}$,
whose dependence is specified in Proposition~\ref{prop:realization}.
For every $\kappa>B_b^2/\lambda_a$,
\begin{equation}
C_0(\kappa;I)\le\frac{1+e_qC_2(\kappa;I)}{\kappa_{\mathrm{eff}}},
\qquad
C_1(\kappa;I)\le
 \frac{1+e_qC_2(\kappa;I)}{\sqrt{\lambda_a\kappa_{\mathrm{eff}}}}.
\label{eq:linear-lower-order}
\end{equation}
In particular, $C_0(\kappa;I)=O(\kappa^{-1})$ and
$C_1(\kappa;I)=O(\kappa^{-1/2})$ as $\kappa\to\infty$.
The constants in these two $O$ bounds depend only on the reference data,
$e_q$ and $k_\alpha(I)$; these bounds hold for
$\kappa\ge\max\{1,2B_b^2/\lambda_a\}$.
\end{theorem}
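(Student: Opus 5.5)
The proof splits into three parts: the trace, the isomorphism, and the damped lower-order bounds.

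\emph{Trace.} I would show that $\operatorname{Tr}_{s_1}$ maps $\mathcal W_\alpha(J)$ boundedly into $H^1(\mu_{s_1})$ with an energy identity along the reference marginals. For a smooth $u$ supported on bounded sets, apply the Sobolev It\^o formula (as in the proof of Lemma~\ref{lem:reference-smoothing}) to $|u|^2$ and $|Du|^2$ on $(c,s_1)$ with $c=(s_0+s_1)/2$. Since $u_t=-P_{a/2}u-\tfrac12a:D^2u$, the derivatives of $\|u(t)\|_{L^2(\mu_t)}^2$ and $\|Du(t)\|_{L^2(\mu_t)}^2$ are bounded by $C(|u|^2+|Du|^2+|D^2u|^2+|u_t|^2)$ integrated against $\mu_t$. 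The spatial derivative of $a$ enters only through $L_a$. Averaging the starting time $c$ over $((s_0+s_1)/2,s_1)$ and using $t^\alpha\ge c^\alpha$ bounds $\|u(s_1)\|_{H^1(\mu_{s_1})}$ by $C(\alpha,J)\|u\|_{\mathcal W_\alpha(J)}$. The bound passes to $\mathcal W_\alpha(J)$ by local approximation (Lemma~\ref{lem:graph-core}) and Fatou.

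\emph{Isomorphism.} For surjectivity, take $g\in H^1(\mu_{s_1})$ and set $v=U(\cdot,s_1)g$. I need $v\in\mathcal W_\alpha(J)$; this is the main obstacle. Lemma~\ref{lem:reference-smoothing} gives only $\|D^2v(t)\|\le C(s_1-t)^{-1}\|g\|_{L^2}$, which is not square integrable. The fix is to apply the lemma to $Dg$ instead, through the commuted equation $P_{a/2}\partial_iv=\tfrac12\partial_ia:D^2v$. Duhamel then gives
\[
\|D^2v(t)\|\le C(s_1-t)^{-1/2}\|Dg\|+C\!\int_t^{s_1}(\tau-t)^{-1/2}\|D^2v(\tau)\|\,d\tau,
\]
and the weakly singular Gronwall lemma yields $\|D^2v(t)\|\le C(s_1-t)^{-1/2}\|g\|_{H^1}$. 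This is still not $L^2$ in time, so a sharper argument is needed. I would use the energy inequality $V_1'\ge\tfrac12\lambda_aV_2-CV_1$ from \eqref{eq:reference-energy}. Integrating it over $(t,s_1)$ gives $\int_J V_2\le C V_1(s_1)=C\|Dg\|^2_{L^2(\mu_{s_1})}$, which is exactly the required square integrability. Hence $v,Dv,D^2v\in\mathcal H_\alpha(J)$, and $v_t$ follows from the equation. Then $P_Av=(a/2-A):D^2v\in\mathcal H_\alpha(J)$, so $u=v+S_J(f-P_Av)$ solves the problem by Proposition~\ref{prop:realization}. For uniqueness, the difference of two solutions lies in $\mathcal W_{\alpha,0}(J)$ (zero local trace) and is annihilated by $P_A$, so it vanishes by injectivity. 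The solution bound combines the $v$ bounds with $\sup_E\|S_E\|$, and boundedness of the inverse is immediate.

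\emph{Damped lower-order bounds.} For $u=S_If$, write $P_{a/2}u=f+(A-a/2):D^2u$. This gives
\[
\|P_{a/2}u\|_{\alpha,\kappa}\le\bigl(1+e_qC_2(\kappa;I)\bigr)\|f\|_{\alpha,\kappa},
\]
since $|(A-a/2):D^2u|\le e_q|\mathcal H_qu|$ after conjugating by $q^{1/2}$. Then apply the It\^o energy identity to $e^{-2\kappa(T-t)}t^\alpha|u|^2$ along $X_t$ with zero terminal data. The time derivative of the weight contributes $2\kappa$ plus a nonnegative $\alpha t^{\alpha-1}$ term. The drift is absorbed by Young into $\tfrac12\lambda_a|Du|^2+(B_b^2/\lambda_a)|u|^2$. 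The result is
\[
\kappa_{\mathrm{eff}}\|u\|^2+\tfrac12\lambda_a\|Du\|^2\le\|u\|\,\|P_{a/2}u\|
\]
in damped norms, up to the correct constants. Dividing gives the $C_0$ bound, and substituting back gives the $C_1$ bound. The $O$ statements follow because $C_2(\kappa;I)\le k_\alpha(I)\cdot$const by Proposition~\ref{prop:poisson}'s first inequality (monotone in $h$), and because $\kappa_{\mathrm{eff}}\ge\kappa/2$ for $\kappa\ge2B_b^2/\lambda_a$.
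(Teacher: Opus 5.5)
Your isomorphism step (energy bounds for $v=U(\cdot,s_1)g$ from the first two inequalities in \eqref{eq:reference-energy}, then $u=v+S_J(f-P_Av)$, with uniqueness through $\mathcal W_{\alpha,0}(J)$) agrees with the paper. So does your damped lower-order step, which is Lemma~\ref{lem:direct-energy}. There, completing the square rather than substituting back gives the stated $C_1$ constant without an extra factor $2$.

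The gap is in the trace step, which you also need for the forward map to be bounded into $H^1(\mu_{s_1})$. For a general $u$, the It\^o formula for $|Du|^2$ does not have a derivative bounded by $|u|^2+|Du|^2+|D^2u|^2+|u_t|^2$. It produces $2Du\cdot Du_t$ and the third-derivative term $\sum_k\partial_ku\,(a:D^2\partial_ku)$. With $P_{a/2}=-\partial_t-\tfrac12a:D^2$, these combine into the integrand
\[
-2\,Du\cdot D(P_{a/2}u)-\textstyle\sum_k\partial_ku\,(\partial_ka:D^2u)+2\,Du\cdot D^2u\,b+a:(D^2u)^2
\]
against $\mu_t$. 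The first term contains the gradient of the source, which the $\mathcal W_\alpha(J)$ norm does not control. Moving that derivative onto the weight would require bounds on $D\log\rho$, and these are unavailable for an arbitrary initial law and Borel drift. The third inequality in \eqref{eq:reference-energy} holds only for homogeneous reference solutions, where $P_{a/2}\partial_iv=\tfrac12\partial_ia:D^2v$ involves only second derivatives.

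The paper therefore proves the trace bound \eqref{eq:terminal-trace-energy} only for $v=U(\cdot,s_1)g$, on a terminal interval $E=(s_1-\delta,s_1)$, and then reduces a general field to that case. This uses an ingredient missing from your proposal: a bounded inverse $S_E^{a/2}$ of the matching reference operator on $E$. Corollary~\ref{cor:uniform-point-bound}, part~\ref{item:reference-positive-time}, gives a finite matching short-interval gain on positive-time intervals for every initial law. Proposition~\ref{prop:realization} with $(A,q)=(a/2,a)$ turns this into a bounded inverse whose norm depends only on the reference data, $\alpha$ and $J$.

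For spatial truncations $u_R=\chi_Ru$ with compactly supported local traces $g_R$, uniqueness on $\mathcal W_{\alpha,0}(E)$ gives $U(\cdot,s_1)g_R=u_R-S_E^{a/2}P_{a/2}u_R$ on $E$. Then \eqref{eq:terminal-trace-energy} yields $\|g_R-g_{R'}\|_{H^1(\mu_{s_1})}\le C\|u_R-u_{R'}\|_{\mathcal W_\alpha(J)}$, which defines $\operatorname{Tr}_{s_1}u$ together with its bound. Without this inverse your trace argument does not close. The alternative would be an $L^2$ Hessian bound for the reference Duhamel integral $\int_t^{s_1}U(t,s)P_{a/2}u(s)\,ds$, and the smoothing bound of Lemma~\ref{lem:reference-smoothing} alone does not give that, because its kernel $(s-t)^{-1}$ is not integrable.
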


\begin{proof}
For $g\in C_c^\infty(\R^d)$, put $v(t)=U(t,s_1)g$.
The first two inequalities in \eqref{eq:reference-energy},
integrated with their exponential factors, give
\begin{equation}
\sup_{s_0<t\le s_1}\bigl(\|v(t)\|_{L^2(\mu_t)}^2
                     +\|Dv(t)\|_{L^2(\mu_t)}^2\bigr)
 +\int_{s_0}^{s_1}\|D^2v(t)\|_{L^2(\mu_t)}^2\,dt
 \le C\|g\|_{H^1(\mu_{s_1})}^2.
\label{eq:terminal-data-energy}
\end{equation}
Here one first integrates over $(c,s_1)$ and lets $c\downarrow s_0$.
The constant depends only on the reference data.
Since $v_t=-a:D^2v/2$ and $t^\alpha\le T^\alpha$,
$\|v\|_{\mathcal W_\alpha(J)}\le C\|g\|_{H^1(\mu_{s_1})}$.
Set $\delta=(s_1-s_0)/2$ and $E=(s_1-\delta,s_1)$.
Integrating the last inequality in \eqref{eq:reference-energy}
from $t$ to $s_1$ and averaging over $t\in E$ gives
\begin{equation}
\|g\|_{H^1(\mu_{s_1})}^2
 \le\left(\frac{2}{s_1}\right)^\alpha(\delta^{-1}+C)
 \bigl(\|v\|_{\alpha,E}^2+\|Dv\|_{\alpha,E}^2
                          +\|D^2v\|_{\alpha,E}^2\bigr).
\label{eq:terminal-trace-energy}
\end{equation}

For $g\in H^1(\mu_{s_1})$, spatial cutoff and local mollification give
$g_n\in C_c^\infty(\R^d)$ with $g_n\to g$ in $H^1(\mu_{s_1})$.
The bound \eqref{eq:terminal-data-energy}, the $L^2$ extension in
Lemma~\ref{lem:reference-smoothing} and the local trace continuity
in Lemma~\ref{lem:graph-core} give
\begin{equation*}
U(\cdot,s_1)g_n\longrightarrow v=U(\cdot,s_1)g
 \quad\text{in }\mathcal W_\alpha(J),\qquad
P_{a/2}v=0,\qquad \operatorname{Tr}^{\mathrm{loc}}_{s_1}v=g.
\end{equation*}
Both \eqref{eq:terminal-data-energy} and
\eqref{eq:terminal-trace-energy} pass to this limit.

Applying \eqref{eq:reference-positive-time-bound} on $(s_1-\delta,T)$
and letting $\varepsilon\downarrow0$ gives
$K_\alpha(E;\rho,a/2,a)\le2$.
Proposition~\ref{prop:realization}, with $(A,q)=(a/2,a)$, therefore gives
a bounded inverse
\begin{equation*}
S_E^{a/2}:\mathcal H_\alpha(E)\longrightarrow\mathcal W_{\alpha,0}(E),
\qquad P_{a/2}S_E^{a/2}=\operatorname{Id}.
\end{equation*}
Its norm depends only on the reference data, $\alpha,J$:
take $\varepsilon=1$ in \eqref{eq:reference-positive-time-bound}
to obtain a common finite short-interval bound, and use the
quantitative assertion of Proposition~\ref{prop:realization}.
For arbitrary $u\in\mathcal W_\alpha(J)$, the spatial cutoffs
from the proof of Lemma~\ref{lem:graph-core} give
\begin{equation*}
u_R=\chi_Ru\longrightarrow u\quad\text{in }\mathcal W_\alpha(J),
\qquad g_R=\operatorname{Tr}^{\mathrm{loc}}_{s_1}u_R\in H^1(\mu_{s_1}).
\end{equation*}
Uniqueness for $P_{a/2}$ on $\mathcal W_{\alpha,0}(E)$ gives
\(
U(\cdot,s_1)g_R=u_R-S_E^{a/2}P_{a/2}u_R
\)
on $E$.
Thus \eqref{eq:terminal-trace-energy} and boundedness of $S_E^{a/2}$ yield
\begin{equation*}
\|g_R-g_{R'}\|_{H^1(\mu_{s_1})}
 \le C\|u_R-u_{R'}\|_{\mathcal W_\alpha(J)}.
\end{equation*}
The limit of $g_R$ in $H^1(\mu_{s_1})$ agrees locally with
$\operatorname{Tr}^{\mathrm{loc}}_{s_1}u$. Denoting it by
$\operatorname{Tr}_{s_1}u$, we obtain
\(
\|\operatorname{Tr}_{s_1}u\|_{H^1(\mu_{s_1})}
 \le C\|u\|_{\mathcal W_\alpha(J)}.
\)

For the data $(f,g)$, the construction above gives
$v=U(\cdot,s_1)g\in\mathcal W_\alpha(J)$ and
$P_Av=(a/2-A):D^2v\in\mathcal H_\alpha(J)$.
Proposition~\ref{prop:realization} now gives
\begin{equation*}
\begin{gathered}
u=v+S_J(f-P_Av)\in\mathcal W_\alpha(J),\qquad
P_Au=f,\qquad\operatorname{Tr}_{s_1}u=g,\\
\|u\|_{\mathcal W_\alpha(J)}
 \le C\bigl(\|f\|_{\alpha,J}+\|g\|_{H^1(\mu_{s_1})}\bigr).
\end{gathered}
\end{equation*}
Here $C$ has the solution-norm dependence stated in the theorem.
For two solutions, $u_1-u_2\in\mathcal W_{\alpha,0}(J)$ and
$P_A(u_1-u_2)=0$, so $u_1=u_2$ by the same proposition.

Finally, apply Lemma~\ref{lem:direct-energy} to $u=S_If$ and use
$\|\mathcal H_qS_If\|_{\alpha,\kappa,I}
\le C_2(\kappa;I)\|f\|_{\alpha,\kappa,I}$.
Taking the supremum over $f\ne0$ gives \eqref{eq:linear-lower-order}.
Proposition~\ref{prop:poisson} bounds $C_2(\kappa;I)$ as
$\kappa\to\infty$, proving the two stated orders.
\end{proof}

\section{Explicit formulas for limiting constants}
\label{sec:explicit-constants}

For $\mu\in\mathcal D$, Theorem~\ref{thm:sharp-limit} expresses
$K_\alpha(I)$ as the maximum of the initial, flat, and propagation
contributions. The flat term is already explicit. We compute the
initial term from the rescaled initial law and the coefficients
at time zero, and the propagation term from the phase directions
realized jointly with coefficient limits.
The initial term admits operator representations, while the
propagation term has coefficient bounds that become exact under
structural conditions. Matching and constant matrices give explicit
evaluations and finiteness criteria; the matching bounds also extend
to arbitrary initial laws. Geometric conditions give a deterministic
propagation formula through the terminal momenta of action-minimizing
paths from the initial support.
The final subsection records nonlinear perturbation applications
and a Borel counterexample illustrating the role of coefficient
continuity.
Throughout this section, $I=(0,T)$ and $\alpha\ge0$, and identities
between optimal constants are understood in $[0,\infty]$.

\subsection{The initial contribution}
\label{subsec:initial-convolution}

In this and the next two subsections, Assumption~\ref{ass:coefficients}
holds.
By \eqref{eq:initial-model-gain},
\begin{equation}
K_{\alpha,\mathrm{init}}(I)
 =\sup_{\mathbf T=(\theta,a_*,A_*,q_*)\in\mathfrak T(\mu)}
       \ \sup_{0<c<B<\infty}
               k_\alpha((c,B);F_{\mathbf T},A_*,q_*),
\label{eq:initial-linear-formula}
\end{equation}
where $F_{\mathbf T}(s,\cdot)=\gamma_{sa_*}*\theta$.
The definition of $\mathfrak T(\mu)$ uses the normalized
initial measures in \eqref{eq:doubling-half-line-data} and the
initial matrices at the same limiting support center.
Thus \eqref{eq:initial-linear-formula} depends only on
$\mu,\alpha$ and $a_0,A_0,q_0$.
For a fixed tuple, we omit the matrix subscripts $*$.
The remaining problem is the optimal Hessian estimate for the
linear equation with constant matrices
\begin{equation*}
-u_s-A:D^2u=f\quad\text{on }(c,B)\times\R^d,
\qquad u(B,\cdot)=0.
\end{equation*}
The source $f$ and Hessian $\mathcal H_qu$ are measured in
$\mathcal H_\alpha((c,B);F_{\mathbf T})$.
Thus $a$ evolves the initial measure, $A$ determines the terminal
equation, and $q$ specifies the Hessian norm.

For $\xi$ with density $\gamma_{I_d}$, $\tau>0$ and
$\varphi\in C_c^\infty(\R^d)$, two Gaussian integrations by parts give
\begin{equation}
\mathcal H_q\mathbb E\bigl[\varphi(z+\sqrt{2\tau}A^{1/2}\xi)\bigr]
 =\frac{q^{1/2}A^{-1/2}}{2\tau}
    \mathbb E\bigl[(\xi\xi^{\mathsf T}-I_d)
               \varphi(z+\sqrt{2\tau}A^{1/2}\xi)\bigr]
                       A^{-1/2}q^{1/2}.
\label{eq:gaussian-hessian-identity}
\end{equation}
Applied to the terminal inverse, this identity places the spatial derivatives
on the Gaussian factor. Multiplying the source and output by
$(s^\alpha F_{\mathbf T})^{1/2}$ gives the kernel form of
\eqref{eq:initial-linear-formula} below.

\begin{proposition}[Initial kernel formula]
For $\mu\in\mathcal D$,
\begin{equation}
K_{\alpha,\mathrm{init}}(I)
 =\sup_{\mathbf T\in\mathfrak T(\mu)}
                  \|\mathcal T_{\alpha,\mathbf T}\|,
\label{eq:initial-spectral-formula}
\end{equation}
where, for $\mathbf T=(\theta,a,A,q)$ and
$Q=(0,\infty)\times\R^d$, the operator is defined on
$g\in C_c^\infty(Q)$ by
\begin{equation}
\begin{aligned}
(\mathcal T_{\alpha,\mathbf T}g)(s,z)
 &=\int_s^\infty\int_{\R^d}
 \left(\frac{s}{t}\right)^{\alpha/2}
 \sqrt{\frac{F_{\mathbf T}(s,z)}{F_{\mathbf T}(t,y)}}
 \frac{\gamma_{2(t-s)A}(z-y)}{2(t-s)}\\
 &\qquad{}\times q^{1/2}
 \left(
 \frac{A^{-1}(z-y)(z-y)^{\mathsf T}A^{-1}}{2(t-s)}-A^{-1}
  \right)q^{1/2}g(t,y)\,dy\,dt .
\end{aligned}
\label{eq:initial-integral-operator}
\end{equation}
The spatial integral is taken first. The norm is from scalar
$L^2(Q)$ to matrix-valued $L^2(Q)$ with the Frobenius norm,
and equals $+\infty$ if there is no bounded extension.
For every heat tuple of Subsection~\ref{subsec:local-rescaling},
the same operator satisfies
\begin{equation*}
\|\mathcal T_{\alpha,\mathbf T}\|
 =\mathfrak P_\alpha(\mathbf T).
\end{equation*}
\end{proposition}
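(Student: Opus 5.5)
The plan is to establish $\|\mathcal T_{\alpha,\mathbf T}\|=\mathfrak P_\alpha(\mathbf T)$ for each heat tuple; the kernel formula \eqref{eq:initial-spectral-formula} then follows by taking the supremum over $\mathfrak T(\mu)$ in \eqref{eq:initial-linear-formula}. We fix $\mathbf T=(\theta,a,A,q)$ and write $w(s,z)=s^\alpha F_{\mathbf T}(s,z)$. The unitary map $f\mapsto w^{1/2}f$ carries $\mathcal H_\alpha((c,B);F_{\mathbf T})$ onto $L^2((c,B)\times\R^d)$, and the same holds for matrix-valued functions. Under this map, $\mathfrak P_\alpha(\mathbf T)$ becomes the supremum over $0<c<B$ of the norm of the conjugated operator $w^{1/2}\mathcal H_q S^{(c,B)} w^{-1/2}$. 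Here $S^{(c,B)}$ is the constant-coefficient terminal inverse
\[
(S^{(c,B)}f)(s)=\int_s^B e^{(t-s)A:D^2}f(t)\,dt ,
\]
and $e^{\tau A:D^2}\varphi(z)=\mathbb E\,\varphi(z+\sqrt{2\tau}A^{1/2}\xi)$.

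First, we identify the kernel. For $f\in C_c^\infty((c,B)\times\R^d)$ we apply \eqref{eq:gaussian-hessian-identity} at each fixed $t>s$ with $\tau=t-s$. This expresses $\mathcal H_q e^{(t-s)A:D^2}f(t)$ as an integral against $\gamma_{2(t-s)A}(z-y)$ times the displayed matrix factor. The identity $\mathbb E[(\xi\xi^{\mathsf T}-I)\varphi]$ converts to $A^{-1/2}$-conjugated quadratic forms in $z-y$ with $\xi=(2\tau)^{-1/2}A^{-1/2}(y-z)$. Writing $g=w^{1/2}f$ gives exactly \eqref{eq:initial-integral-operator}, truncated to $t<B$. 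We must justify differentiating under the time integral; the inner spatial integral is absolutely convergent for each $t>s$, and the time integral is taken after it. It suffices to work with smooth $u=S^{(c,B)}f$, which is classical, and to compute $D^2u$ pointwise as a limit of $\int_{s+\varepsilon}^B$, as in \eqref{eq:causal-commutator}. Hence $\mathcal T_{\alpha,\mathbf T}$ restricted to inputs supported in $(c,B)\times\R^d$, and observed on $(c,B)$, is unitarily equivalent to $\mathcal H_qS^{(c,B)}$. By causality the output vanishes for $s\ge B$, and for $s<c$ the output is not part of the $(c,B)$ norm.

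Next, we compare the two norms. One direction is the restriction argument. For $g\in C_c^\infty(Q)$ supported in $(c,B)\times\R^d$, the output of $\mathcal T_{\alpha,\mathbf T}g$ on $(c,B)$ has norm at most $k_\alpha((c,B);F_{\mathbf T},A,q)\|g\|$. The part on $(0,c)$ is handled by the remark after \eqref{eq:initial-model-gain}: intervals $(0,B)$ are allowed, so we enlarge to $(c',B)$ with $c'\downarrow0$ and use monotone convergence. This gives $\|\mathcal T g\|\le\mathfrak P_\alpha\|g\|$ on the dense set $C_c^\infty(Q)$. Conversely, for $u\in\mathcal C_{\alpha,0}((c,B);F_{\mathbf T})$ we set $f=P_Au$, so that $u=S^{(c,B)}f$ by uniqueness of the terminal problem for bounded smooth data. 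Approximating $f$ in the weighted norm by compactly supported smooth functions, we obtain $\|\mathcal H_qu\|\le\|\mathcal T\|\,\|P_Au\|$ on $(c,B)$, since the restriction to $(c,B)$ only decreases the norm. Taking suprema gives equality, including the case where both sides are $+\infty$, because a bounded extension exists exactly when the supremum is finite.

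The main obstacle is the approximation step in the converse direction: $f=P_Au$ is smooth with compact support in space, but it is not compactly supported in time away from $c$, so it need not be a test function in $C_c^\infty(Q)$. We handle this by truncating $f$ near $c$ with $\1_{(c+\varepsilon,B)}$, or with a smooth version of it. By causality, $S$ applied to the truncation agrees with $u$ on $(c+\varepsilon,B)$. The resulting Hessian norm converges by monotone convergence, while $\|f\|$ on the smaller interval is no larger. This requires only the uniqueness and causality structure of the explicit heat integral and no a priori bound, so the infinite case is also covered.
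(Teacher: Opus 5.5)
Your proposal follows the paper's proof essentially step for step: you identify $\mathcal T_{\alpha,\mathbf T}$ with $(s^\alpha F_{\mathbf T})^{1/2}\mathcal H_q$ applied to the Gaussian terminal inverse via the Gaussian Hessian identity, bound it by the model estimate on $(c,B)$ with $c\downarrow0$, and obtain the reverse inequality from a smooth time cutoff of $P_Au$ near $c$ together with causality of the terminal inverse. The one check you pass over as ``classical'', and which the paper makes explicit, is that this terminal inverse is an admissible competitor, i.e.\ that it lies in $\mathcal W_{\alpha,0}$; smoothness of $u$ alone does not give this, because $\theta$ may be singular with infinite mass, so you need the Gaussian decay of $u,u_s,Du,D^2u$ combined with the bound $\sup_{0<s\le B}\int e^{-c|z|^2}F_{\mathbf T}(s,z)\,dz<\infty$.
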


\begin{proof}
Write $F=F_{\mathbf T}$. For every $B,c>0$, Gaussian integration
and Tonelli give
\begin{equation*}
\sup_{0<s\le B}\int e^{-c|z|^2}F(s,z)\,dz
 \le\int e^{-c|\xi|^2/(1+2cB\lambda_{\max}(a))}\,\theta(d\xi)
 <\infty.
\end{equation*}
For $g\in C_c^\infty(Q)$, put $f=(s^\alpha F)^{-1/2}g$ and
use the Gaussian terminal inverse
\begin{equation*}
u(s,z)=\int_s^\infty\int_{\R^d}
              \gamma_{2(t-s)A}(z-y)f(t,y)\,dy\,dt.
\end{equation*}
It satisfies $P_Au=f$ and vanishes at sufficiently large times.
Choose $B,R>0$ so that $\supp f\subset(0,B)\times B_R$.
Differentiating the Gaussian convolution, with spatial
derivatives placed on $f$ near $t=s$, gives
\begin{equation*}
|u_s(s,z)|+\sum_{j=0}^2|D^ju(s,z)|
 \le Ce^{-c|z|^2},\qquad 0<s<B,\quad z\in\R^d.
\end{equation*}
Here $c,C>0$ depend only on $d,B,R$, the ellipticity bounds
for $A$, and $\|f\|_\infty+\|D^2f\|_\infty$.
This estimate, the preceding integral bound and $\alpha\ge0$ give
$u\in\mathcal W_{\alpha,0}((0,B);F)$, also for singular $\theta$.
Equation~\eqref{eq:gaussian-hessian-identity} gives
\begin{equation*}
\mathcal T_{\alpha,\mathbf T}g
 =(s^\alpha F)^{1/2}\mathcal H_qu.
\end{equation*}
At $t=s$, convergence follows by moving spatial derivatives
onto the smooth source. Applying the model estimate on $(c,B)$
and letting $c\downarrow0$ gives
$\|\mathcal T_{\alpha,\mathbf T}\|\le\mathfrak P_\alpha(\mathbf T)$.

For the reverse bound, assume $\|\mathcal T_{\alpha,\mathbf T}\|<\infty$
and take $u\in\mathcal C_{\alpha,0}((c,B);F)$, $0\le c<B$.
For $0<\varepsilon<B-c$, choose a smooth time cutoff
$0\le\chi_\varepsilon\le1$ which vanishes near $c$ and equals
one on $[c+\varepsilon,B]$. Extend
$f_\varepsilon=\chi_\varepsilon P_Au$ by zero outside $(c,B)$,
and set $g_\varepsilon=(s^\alpha F)^{1/2}f_\varepsilon\in C_c^\infty(Q)$.
Its terminal inverse agrees with $u$ on $(c+\varepsilon,B)$, so
\begin{equation*}
\|\mathcal H_qu\|_{\alpha,(c+\varepsilon,B);F}
 \le\|\mathcal T_{\alpha,\mathbf T}g_\varepsilon\|_{L^2(Q)}
 \le\|\mathcal T_{\alpha,\mathbf T}\|
                         \|P_Au\|_{\alpha,(c,B);F}.
\end{equation*}
Let $\varepsilon\downarrow0$ and take the suprema over $u,c,B$,
using the smooth-core characterization in \eqref{eq:short-time-constants}, to obtain
$\mathfrak P_\alpha(\mathbf T)\le\|\mathcal T_{\alpha,\mathbf T}\|$.
If the operator norm is infinite, the first bound already
forces $\mathfrak P_\alpha(\mathbf T)=\infty$.
Taking the supremum over $\mathbf T\in\mathfrak T(\mu)$ in
\eqref{eq:initial-linear-formula} proves
\eqref{eq:initial-spectral-formula}.
\end{proof}

For the point and region laws below, the limiting measures are
homogeneous up to translation and multiplication by a positive
constant. This permits a reduction to spatial operator norms.
For a constant matrix $A\succ0$, put
\begin{equation*}
\mathcal L_A=-A:D^2+\tfrac12x\cdot D.
\end{equation*}
For $c>0$ and $\omega\in\R$, $(\mathcal L_A+c-i\omega)^{-1}$ denotes the
Ornstein--Uhlenbeck resolvent on $C_0(\R^d)$, acting componentwise
on matrices. For a positive spatial weight $F$, write
$L^2(F)=L^2(\R^d,F(x)\,dx)$.
Operator norms below are from scalar to matrix-valued $L^2(F)$,
with the Frobenius norm on the output, and are denoted by
$\|\cdot\|_{\mathrm{op},L^2(F)}$. They are defined first on complex
$C_c^\infty(\R^d)$ and equal $+\infty$ if there is no bounded
extension in this weighted space.

\begin{proposition}[Homogeneous initial measures]
\label{prop:homogeneous-initial-norm}
Let $\mathbf T=(\theta,a,A,q)$ be an initial heat model with
\begin{equation*}
\theta(rE)=r^k\theta(E),\qquad
\forall r>0,\quad\forall\text{ Borel }E\subset\R^d,
\end{equation*}
for some $k\ge0$. Then
\begin{equation}
\|\mathcal T_{\alpha,\mathbf T}\|
 =\sup_{\omega\in\R}
 \left\|
 \left(\mathcal L_A+\frac{\alpha+1}{2}+\frac{k}{4}-i\omega\right)^{-1}
                    \mathcal H_q
 \right\|_{\mathrm{op},L^2(F_{\mathbf T}(1,\cdot))}.
\label{eq:homogeneous-initial-norm}
\end{equation}
\end{proposition}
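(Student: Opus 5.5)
The plan is to use the scaling symmetry of a homogeneous tangent measure to turn the half-line model into a problem that is translation invariant in logarithmic time. A Fourier transform in that variable should then reduce the operator norm to a supremum over frequencies of spatial resolvent norms.

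\emph{Self-similarity of the weight.} First I will record that homogeneity of $\theta$ makes the weight self-similar. Substituting $y=\sqrt t\,y'$ in $F_{\mathbf T}(t,z)=\int\gamma_{ta}(z-y)\,\theta(dy)$ and using $\gamma_{ta}(\sqrt t\,w)=t^{-d/2}\gamma_a(w)$ together with $\theta(\sqrt t\,E)=t^{k/2}\theta(E)$ gives
\[
F_{\mathbf T}(t,z)=t^{(k-d)/2}F_1(z/\sqrt t),\qquad F_1:=F_{\mathbf T}(1,\cdot).
\]

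\emph{Change of variables.} Next I will pass to the coordinates $\sigma=\log t$, $x=z/\sqrt t$ and write $u(t,z)=w(\sigma,x)$. A direct computation gives
\[
t\,\mathcal H_qu=\mathcal H_qw,\qquad t\,P_Au=(-\partial_\sigma+\mathcal L_A)w,
\]
with $\mathcal L_A=-A:D^2+\tfrac12x\cdot D$, because $t\,\partial_tu=\partial_\sigma w-\tfrac12x\cdot Dw$. The measure transforms as $dt\,dz=t^{1+d/2}\,d\sigma\,dx$. Hence both squared norms in $k_\alpha((0,\infty);F_{\mathbf T},A,q)$ become integrals against $e^{2\beta\sigma}F_1(x)\,d\sigma\,dx$, with one common exponent $\beta$ depending on $\alpha$ and $k$. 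Conjugating by $W=e^{\beta\sigma}w$ turns the weight into $F_1\,d\sigma\,dx$ and the operator into $-\partial_\sigma+\beta+\mathcal L_A$. So the model quotient becomes the norm of
\[
\mathcal H_q(-\partial_\sigma+c+\mathcal L_A)^{-1}
\]
on $L^2(\R_\sigma;L^2(F_1))$, where $c$ should be $\tfrac{\alpha+1}2+\tfrac k4$. The terminal condition corresponds to solving backward in $\sigma$, so the inverse is a causal (backward) convolution in $\sigma$. I expect it to be given by $\int_0^\infty e^{-c r}\mathcal P_r\,dr$, where $\mathcal P_r$ is the Ornstein--Uhlenbeck semigroup generated by $-\mathcal L_A$. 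This matches the kernel formula \eqref{eq:initial-integral-operator}: there the factor $(s/t)^{\alpha/2}$ and the ratio $\sqrt{F(s)/F(t)}$ produce exactly such an exponential in $\log(t/s)$.

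\emph{Bookkeeping of the constant.} The exponent bookkeeping is delicate. A naive count from the measure alone gives $(\alpha-1)/2+k/4$. I will therefore redo it carefully on the kernel side, using the Gaussian kernel $\gamma_{2(t-s)A}$ after rescaling together with the square roots of the weights in \eqref{eq:initial-integral-operator}, and check against the point mass case $\theta=\delta_0$, $k=0$. The target value $c=\tfrac{\alpha+1}2+\tfrac k4$ must come out exactly, with the extra unit coming from the $t^{-1}$ normalization of the source.

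\emph{Plancherel and the two bounds.} Once the operator is identified as a $\sigma$-convolution with operator-valued kernel supported on $r>0$, I will apply Plancherel in $\sigma$ for $L^2(F_1)$-valued functions. This gives the bound $\le\sup_\omega\|(\mathcal L_A+c-i\omega)^{-1}\mathcal H_q\|$. For the reverse inequality I will take tests $e^{i\omega\sigma}\chi(\sigma/R)\phi(x)$ with $\phi\in C_c^\infty$ and let $R\to\infty$. This shows that each frequency's spatial norm is attained up to $o(1)$; the $\sigma$ cutoff errors are $O(R^{-1})$ because the kernel is integrable. The infinite-norm case follows from the same tests. Finally, by \eqref{eq:initial-model-gain} the supremum over finite intervals $(c,B)$ equals the norm on all of $(0,\infty)$ (monotone exhaustion), and this equals $\|\mathcal T_{\alpha,\mathbf T}\|$ by the preceding proposition.

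\emph{Main obstacle.} The main difficulty is justifying that the resolvent, defined on $C_0(\R^d)$, makes sense and coincides with the Plancherel multiplier on $L^2(F_1)$. This space is not the OU invariant space, and $\mathcal L_A$ is not sectorial there in any obvious way. To handle it, I will work on $C_c^\infty$ inputs, where the Laplace integral $\int_0^\infty e^{-(c-i\omega)r}\mathcal P_r\,dr$ converges pointwise. I will then identify the $\sigma$-Fourier transform of the causal convolution with this integral by Fubini, using the Gaussian decay bounds on $u$ and its derivatives from the kernel-formula proof to justify the interchanges.
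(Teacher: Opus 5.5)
Your overall route is the paper's: self-similarity $F_{\mathbf T}(t,z)=t^{(k-d)/2}F_1(z/\sqrt t)$, logarithmic time, translation invariance, Plancherel in $\sigma$, and long modulated tests for the lower bound. The gap is in the central identification step, and recounting exponents will not repair it.

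Your formulas $t\mathcal H_qu=\mathcal H_qw$ and $tP_Au=(-\partial_\sigma+\mathcal L_A)w$ are correct. Source and Hessian carry the \emph{same} factor $t^{-1}$, so the quotient becomes exactly $\|\mathcal H_qW\|/\|(-\partial_\sigma+\beta+\mathcal L_A)W\|$ in $L^2(d\sigma\,F_1\,dx)$, with $\beta=\tfrac{\alpha-1}2+\tfrac k4$. Your ``naive count'' is therefore the right constant for the ordering $\mathcal H_q(-\partial_\sigma+\beta+\mathcal L_A)^{-1}$. No extra unit comes from normalizing the source.

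The shift to $c=\beta+1=\tfrac{\alpha+1}2+\tfrac k4$ appears only when you move the Hessian to the other side of the resolvent. For constant $A,q$ this uses the commutation $\mathcal H_q\mathcal L_A=(\mathcal L_A+1)\mathcal H_q$, equivalently $\mathcal H_q\mathcal P_h=e^{-h}\mathcal P_h\mathcal H_q$, which formally gives $\mathcal H_q(\mathcal L_A+\beta-i\omega)^{-1}=(\mathcal L_A+c-i\omega)^{-1}\mathcal H_q$. Your Plancherel step swaps the order without this shift. Combined with the constant you assert for your ordering, it lands one unit away from the target.

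The ordering also resolves your ``main obstacle'', which in your ordering is a real failure, not a technicality. For $0\le\varphi\in C_c^\infty$ you have $\mathcal P_r\varphi(x)\to\int\gamma_{2A}\varphi>0$. Hence $\int_0^\infty e^{-\beta r}\mathcal P_r\varphi\,dr$ diverges pointwise whenever $\beta\le0$; the point start at $\alpha=0$, where $\beta=-\tfrac12$, is an example. Since $\|\mathcal P_r\varphi\|_{L^2(F_1)}$ grows like $e^{kr/4}$, the integral lies in $L^2(F_1)$ only if $\beta>k/4$, i.e.\ $\alpha>1$. The factor $e^{-h}$ you gain by commuting $\mathcal H_q$ through $\mathcal P_h$ is exactly what restores convergence.

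The paper therefore moves the two spatial derivatives in the kernel formula \eqref{eq:initial-integral-operator} onto the source. This gives directly $(U\mathcal T_{\alpha,\mathbf T}U^{-1}G)(r,\cdot)=\int_0^\infty e^{-ch}\mathcal P_h\mathcal H_qG(r+h,\cdot)\,dh$ with the target $c$. Convergence in $L^2(F_1)$ then follows from $\|\mathcal P_h\mathcal H_q\varphi\|_{L^2(F_1)}\le Ce^{kh/4}$ for $h\ge1$ (Gaussian decay of the Mehler kernel plus $\theta(B_R)=R^k\theta(B_1)$), together with $c-k/4=\tfrac{\alpha+1}2>0$. The Laplace formula for the resolvent and Fourier transformation in $r$ then give the multiplier $(\mathcal L_A+c-i\omega)^{-1}\mathcal H_q$. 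The same integrability justifies your lower-bound tests. With the Hessian placed on the source in this way, the rest of your plan goes through.
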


\begin{proof}
Write $F(x)=F_{\mathbf T}(1,x)$ and
$c=(\alpha+1)/2+k/4>0$. Homogeneity gives
\begin{equation*}
F_{\mathbf T}(s,z)=s^{(k-d)/2}F(z/\sqrt s).
\end{equation*}
The change $s=e^r$, $z=e^{r/2}x$ induces the unitary map
\begin{equation*}
(Ug)(r,x)=e^{(d+2)r/4}F(x)^{-1/2}g(e^r,e^{r/2}x)
\end{equation*}
from $L^2((0,\infty)\times\R^d)$ to
$L^2(\R\times\R^d,dr\,F(x)\,dx)$, on both input and output.
The Mehler formula \citep[equation~(2.4)]{lunardi-metafune-pallara-2020},
with diffusion matrix $A$ and drift matrix $-I_d/2$, gives
\begin{equation*}
(S_A(h)\varphi)(x)
 :=(e^{-h\mathcal L_A}\varphi)(x)
 =\int_{\R^d}\gamma_{2(1-e^{-h})A}(y-e^{-h/2}x)\varphi(y)\,dy.
\end{equation*}
Changing variables in \eqref{eq:initial-integral-operator} and
moving its two spatial derivatives onto the source gives,
for a smooth compactly supported $G$,
\begin{equation*}
(U\mathcal T_{\alpha,\mathbf T}U^{-1}G)(r,\cdot)
 =\int_0^\infty e^{-ch}S_A(h)\mathcal H_qG(r+h,\cdot)\,dh.
\end{equation*}
Thus the transformed operator is invariant under translation
of logarithmic time.

For $\varphi\in C_c^\infty(B_R)$, $R\ge1$, the last kernel and
$\theta(B_R)=R^k\theta(B_1)$ imply
\begin{equation*}
\|S_A(h)\mathcal H_q\varphi\|_{L^2(F)}
 \le Ce^{kh/4},\qquad h\ge1.
\end{equation*}
Indeed its pointwise modulus is bounded by
$Ce^{-c_1e^{-h}|x|^2}$, and the mass of $F(x)\,dx$
in $B_r$, $r\ge1$, is at most $Cr^k$.
The constants in these bounds depend only on $d,k,R$,
$\theta(B_1)$, the ellipticity bounds for $a,A,q$, and
$\|D^2\varphi\|_\infty$.
On $0\le h\le1$ the same norms are bounded by Gaussian decay.
Consequently the integral with $e^{-ch}$ converges in $L^2(F)$,
since $c-k/4=(\alpha+1)/2>0$.

The Laplace formula for the $C_0$-semigroup resolvent and Fourier
transformation in $r$ therefore give, first on finite sums of
product tests and then by smooth approximation on fixed supports,
\begin{equation*}
\widehat{U\mathcal T_{\alpha,\mathbf T}U^{-1}G}(\omega)
 =(\mathcal L_A+c-i\omega)^{-1}
                         \mathcal H_q\widehat G(\omega).
\end{equation*}
Plancherel gives the upper bound in
\eqref{eq:homogeneous-initial-norm}.
For the lower bound, fix $\omega\in\R$ and
$0\ne\varphi\in C_c^\infty(\R^d)$, and use
\begin{equation*}
G_R(r,x)=R^{-1/2}\chi(r/R)e^{i\omega r}\varphi(x),
\qquad \chi\in C_c^\infty(\R),\quad\|\chi\|_2=1.
\end{equation*}
The integrability just proved and continuity of translations
in $L^2(\R)$ show that its quotient tends, as $R\to\infty$, to
\begin{equation*}
\frac{\|(\mathcal L_A+c-i\omega)^{-1}\mathcal H_q\varphi\|_{L^2(F)}}
     {\|\varphi\|_{L^2(F)}}.
\end{equation*}
Each $U^{-1}G_R$ is a compactly supported smooth input on
$(0,\infty)\times\R^d$.
Taking the two suprema proves the identity, including infinite
values. Complexification preserves the original real norm.
\end{proof}

\begin{corollary}[Finite atomic laws]
\label{cor:atomic-poles}
Let $\mu=\sum_{i=1}^N c_i\delta_{y_i}$, where the $y_i$ are
distinct, $c_i>0$ and $\sum_i c_i=1$.
Write $(a_i,A_i,q_i)=(a_0(y_i),A_0(y_i),q_0(y_i))$. Then
\begin{equation}
K_{\alpha,\mathrm{init}}(I)
 =\max_{1\le i\le N}\sup_{\omega\in\R}
 \left\|
 \left(-A_i:D^2+\tfrac12x\cdot D
             +\tfrac{\alpha+1}{2}-i\omega\right)^{-1}\mathcal H_{q_i}
 \right\|_{\mathrm{op},L^2(\gamma_{a_i})}.
\label{eq:atomic-poles}
\end{equation}
The fixed starting point is the case $N=1$; the value is
independent of the atomic masses.
\end{corollary}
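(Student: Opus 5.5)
The plan is to first identify the set $\mathfrak T(\mu)$ of half-line tuples for a finite atomic law. Then Proposition~\ref{prop:homogeneous-initial-norm} with $k=0$ evaluates each model norm, and \eqref{eq:initial-spectral-formula} turns these norms into $K_{\alpha,\mathrm{init}}(I)$.

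\textbf{Step 1: identify the tangent measures.} Take a half-line realizing sequence $(x_j,r_j,m_j)$ as in \eqref{eq:doubling-half-line-data}. Since $S=\{y_1,\dots,y_N\}$ is finite and $x_j\in S$, after extraction $x_j=y_i$ for a fixed $i$ and all $j$. Let $\delta_{\min}=\min_{l\ne l'}|y_l-y_{l'}|$. Once $r_j<\delta_{\min}$, we have $m_j=\mu(B(y_i,r_j))=c_i$. For each fixed $R$, once $Rr_j<\delta_{\min}$, the ball $x_j+r_jB_R$ contains only the atom $y_i$. Hence $\theta_j\restriction_{B_R}=\delta_0\restriction_{B_R}$, and the vague limit is $\theta=\delta_0$. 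The matrix limits are $(a_i,A_i,q_i)$. Conversely, fixing $x_j=y_i$ and any $r_j\downarrow0$ realizes $(\delta_0,a_i,A_i,q_i)$. Therefore
\[
\mathfrak T(\mu)=\{(\delta_0,a_i,A_i,q_i):1\le i\le N\}.
\]
The masses $c_i$ cancel in the normalization, which already explains why the answer does not depend on them.

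\textbf{Step 2: apply the homogeneous formula.} The measure $\theta=\delta_0$ satisfies $\theta(rE)=\theta(E)$, so it is homogeneous with $k=0$. Also $F_{\mathbf T}(1,\cdot)=\gamma_{a_i}$. Proposition~\ref{prop:homogeneous-initial-norm} then gives
\[
\|\mathcal T_{\alpha,\mathbf T}\|=\sup_{\omega}\bigl\|(\mathcal L_{A_i}+\tfrac{\alpha+1}{2}-i\omega)^{-1}\mathcal H_{q_i}\bigr\|_{\mathrm{op},L^2(\gamma_{a_i})},
\]
and $\mathcal L_{A_i}=-A_i:D^2+\tfrac12x\cdot D$ is exactly the operator in \eqref{eq:atomic-poles}. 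Taking the maximum over the finitely many tuples in \eqref{eq:initial-spectral-formula} yields the stated formula. The case $N=1$ is the point start.

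\textbf{Main obstacle.} The only delicate point is Step 1: checking that vague convergence gives exactly $\delta_0$ and not a limit that depends on the subsequence. The separation of the atoms gives exact equality $\theta_j=\delta_0$ on each fixed ball for large $j$. This exact equality makes the vague limit immediate, and it also confirms that no other half-line tuples arise. Everything else is a direct substitution into the earlier propositions.
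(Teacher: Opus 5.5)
Your proof is correct and follows the paper's argument. You identify $\mathfrak T(\mu)=\{(\delta_0,a_i,A_i,q_i):1\le i\le N\}$ using the separation of the atoms, since a convergent sequence of support centers is eventually a fixed atom and the ball masses equal $c_i$. You then apply \eqref{eq:homogeneous-initial-norm} with $k=0$ and $F_{\mathbf T}(1,\cdot)=\gamma_{a_i}$, and take the maximum in \eqref{eq:initial-spectral-formula}; the only point left implicit is that \eqref{eq:initial-spectral-formula} requires $\mu\in\mathcal D$, which holds for finite atomic laws by Example~\ref{ex:compact-initial-laws}.
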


\begin{proof}
For small $r$, $\mu(B(y_i,r))=c_i$, and the normalized measures
centered at $y_i$ converge vaguely to $\delta_0$.
A moving support center is a fixed atom along a subsequence.
The definition of $\mathfrak T(\mu)$ and
\eqref{eq:initial-spectral-formula} reduce the initial contribution
to these models. Apply \eqref{eq:homogeneous-initial-norm} with
$k=0$ and $F_{\mathbf T}(1,\cdot)=\gamma_{a_i}$.
\end{proof}

\begin{proposition}[Initial laws on smooth domains]
Let $\Omega$ be a nonempty bounded $C^1$ domain,
$p\in C(\overline\Omega)$ be strictly positive, and
$\int_\Omega p=1$. Then
$\mu=p\1_\Omega\mathcal L^d\in\mathcal D$.
For $y\in\partial\Omega$, let $n(y)$ be the inward unit normal
and define the spatial weight
\begin{equation}
F_y(z)
 =\int_{\{\xi\cdot n(y)>0\}}\gamma_{a_0(y)}(z-\xi)\,d\xi
 =\int_{-\infty}^{z\cdot n(y)}
                \gamma_{n(y)^{\mathsf T}a_0(y)n(y)}(v)\,dv.
\label{eq:boundary-initial-weight}
\end{equation}
The last integral uses the one-dimensional Gaussian density.
The space $L^2(F_y)$ below has measure $F_y(z)\,dz$ on $\R^d$.
With all differential operators acting on $z$,
\begin{equation}
\begin{aligned}
K_{\alpha,\mathrm{init}}(I)=\max\Biggl\{&
 \sup_{y\in\overline\Omega}
       \lambda_{\max}\bigl(A_0(y)^{-1/2}q_0(y)A_0(y)^{-1/2}\bigr),\\
 &\sup_{\substack{y\in\partial\Omega\\\omega\in\R}}
 \left\|
 \left(\mathcal L_{A_0(y)}+\frac{\alpha+1}{2}+\frac d4-i\omega\right)^{-1}
              \mathcal H_{q_0(y)}
  \right\|_{\mathrm{op},L^2(F_y)}\Biggr\}.
\end{aligned}
\label{eq:region-pole}
\end{equation}
The weight depends only on the normal coordinate; the operator
retains the tangential and mixed Hessian entries.
The formula is independent of $p$ within the stated class:
the positive factor $p(y)$ cancels in the normalization of
each tangent measure.
\end{proposition}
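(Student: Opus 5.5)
Membership in $\mathcal D$ comes first. Since $\Omega$ is a bounded $C^1$ domain, it satisfies a uniform interior cone condition. Hence there are $c,C,r_0>0$ with $cr^d\le|\Omega\cap B(y,r)|\le Cr^d$ for $y\in\overline\Omega$ and $0<r\le r_0$. Combined with $0<\min p\le p\le\max p$, this gives the doubling bound as in Example~\ref{ex:compact-initial-laws}, with the large-radius case handled by compactness.

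Next I classify the tangent measures in $\mathfrak T(\mu)$. Take a realizing sequence $(x_j,r_j,m_j)$ with $x_j\in\overline\Omega$ and $x_j\to x_*$. Let $\delta_j=\dist(x_j,\partial\Omega)/r_j$ and extract $\delta_j\to\delta\in[0,\infty]$.

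If $\delta=\infty$, the normalized measures $\theta_j$ have density $p(x_j+r_j\cdot)r_j^d/m_j$ on a set exhausting $\R^d$. Continuity of $p$ gives $\theta=|B_1|^{-1}\mathcal L^d$, and then $F_{\mathbf T}$ is constant in space.

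If $\delta<\infty$, then $x_*\in\partial\Omega$ and the domain flattens: $C^1$ regularity makes $(\Omega-x_j)/r_j$ converge locally in Hausdorff sense, up to measure-zero boundaries, to the shifted half-space $\{\xi\cdot n(x_*)>-\delta\}$. So $\theta$ is a positive multiple of Lebesgue measure on this half-space. Here $p(x_j)/m_j\cdot r_j^d$ converges to a positive constant, and this constant is where $p$ cancels.

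By the center-shift observation \eqref{eq:initial-center-shift}, translating by $\delta n$ preserves the gain. So each tangent tuple is, up to translation and a scalar, either $(\mathcal L^d,a_0(y),A_0(y),q_0(y))$ with $y\in\overline\Omega$, or $(\mathcal L^d\restriction_{\{\xi\cdot n(y)>0\}},a_0(y),A_0(y),q_0(y))$ with $y\in\partial\Omega$. Conversely, both kinds are realized: fixed interior centers realize the first, and boundary centers realize the half-space. The flat tuples at boundary points are realized by centers $x_j$ at distance $\gg r_j$ from a boundary point $y$.

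Both measures are homogeneous of degree $k=d$, so Proposition~\ref{prop:homogeneous-initial-norm} applies with constant $c=(\alpha+1)/2+d/4$. For the half-space, $F_{\mathbf T}(1,\cdot)=\gamma_{a_0(y)}*\1_{\{\xi\cdot n>0\}}$. Integrating out the directions orthogonal to $n$ gives the one-dimensional formula \eqref{eq:boundary-initial-weight}, since $\xi\cdot n$ is Gaussian with variance $n^{\mathsf T}a_0n$. This produces the second term in \eqref{eq:region-pole}.

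For the full-space tuple, I will not evaluate the resolvent formula. Instead I use \eqref{eq:initial-model-comparison} together with the fact that the weight $F_{\mathbf T}$ is constant. Then $\mathfrak P_\alpha$ is the flat model gain with time weight $\tau^\alpha$. The flat equality in Proposition~\ref{prop:whole-line-gains}, i.e.\ \eqref{eq:whole-line-interval-bound} with $\lambda=\delta_0$, valid for all $\alpha\ge0$ and $0\le c<B$, gives exactly $\lambda_{\max}(A_0^{-1/2}q_0A_0^{-1/2})$. Taking the supremum over $y\in\overline\Omega$ yields the first term.

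The main obstacle is the half-space limit, and in particular uniformity when the centers $x_j$ move along the boundary while $\delta_j$ stays bounded. For this I will use local $C^1$ graph coordinates with a modulus of continuity for the normals that is uniform on compact $\partial\Omega$. This yields vague convergence of $\theta_j$ and the identification of $n(x_*)$ as the limit direction. Everything else reduces to results already stated.
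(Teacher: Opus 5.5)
Your proposal is correct and follows essentially the same route as the paper. You prove doubling via interior cones, classify the tangent measures as normalized Lebesgue measure on $\R^d$ or on a shifted half-space by flattening the $C^1$ boundary in graph coordinates, check that both types are realized, and apply Proposition~\ref{prop:homogeneous-initial-norm} with $k=d$ to the half-space weight. The differences are minor: you normalize the boundary offset by Euclidean distance rather than the graph height $(y_{j,d}-\psi(y_j'))/r_j$, and for flat tuples you invoke the equality case of \eqref{eq:whole-line-interval-bound}, which handles the time weight $\tau^\alpha$ explicitly, where the paper simply cites \eqref{eq:flat-gain}.
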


\begin{proof}
Compactness gives $0<p_0\le p\le p_1$ on $\overline\Omega$.
A finite boundary graph cover has a common Lipschitz bound,
so each sufficiently small ball centered on $\overline\Omega$
contains an interior cone of height proportional to its radius.
Consequently, for some $r_0,c,C>0$ depending only on
$d,\Omega,p_0,p_1$,
\begin{equation*}
cr^d\le\mu(B(y,r))\le Cr^d,
\qquad\forall y\in\overline\Omega,\quad0<r\le r_0.
\end{equation*}
For $r\le r_0/2$ these bounds give doubling; for $r>r_0/2$,
use $\mu(B(y,r))\ge c(r_0/2)^d$ and $\mu(B(y,2r))\le1$.

To identify $\mathfrak T(\mu)$, classify the normalized initial
measures $\theta_j$ with centers $y_j\in\overline\Omega$ as in
\eqref{eq:doubling-half-line-data}. Pass to a subsequence with $y_j\to y$,
and put $E_j=(\Omega-y_j)/r_j$. If $y\in\Omega$, every fixed
ball lies in $E_j$ for large $j$.
At $y\in\partial\Omega$, choose orthonormal coordinates with
$y=0$ and $n(y)=\mathbf e_d$, in which the domain is locally
$\{x_d>\psi(x')\}$, where $\psi\in C^1$ and
$\psi(0)=D\psi(0)=0$. Set
\begin{equation*}
\ell_j=\frac{y_{j,d}-\psi(y'_j)}{r_j}\ge0,
\qquad \ell_j\longrightarrow\ell\in[0,\infty]
\end{equation*}
along a further subsequence. For each $L<\infty$,
\begin{equation*}
\sup_{|z'|\le L}
\left|\frac{\psi(y'_j+r_jz')-\psi(y'_j)}{r_j}\right|
\le L\sup_{|v'-y'_j|\le Lr_j}|D\psi(v')|
\longrightarrow0.
\end{equation*}
Thus, on every bounded set, the condition $y_j+r_jz\in\Omega$
converges to $z_d> -\ell$ if $\ell<\infty$, and holds
everywhere for large $j$ if $\ell=\infty$.
Returning to the original coordinates, in all cases
\begin{equation*}
\1_{E_j}\longrightarrow\1_E\quad\text{in }L^1_{\mathrm{loc}}(\R^d),
\qquad
E=\R^d\quad\text{or}\quad
E=\{z:z\cdot n(y)>-\ell\},\quad0\le\ell<\infty.
\end{equation*}
Continuity of $p$ and $p(y)>0$ now give both the normalization
and the vague limit:
\begin{equation*}
\frac{\mu(B(y_j,r_j))}{r_j^d}
 \longrightarrow p(y)|E\cap B_1|,
\qquad
\theta_j\longrightarrow\frac{\1_E}{|E\cap B_1|}\mathcal L^d.
\end{equation*}
Here $|E\cap B_1|\ge|B_1|/2$, and the matrices converge
to $(a_0(y),A_0(y),q_0(y))$ along the same sequence.

Multiplying the initial measure by a positive constant and
translating it in space preserve the model norm. Hence the
preceding limits give only flat models and the boundary models
\begin{equation*}
\mathbf T_y=(\1_{\{z\cdot n(y)>0\}}\mathcal L^d,
                         a_0(y),A_0(y),q_0(y)),
\qquad y\in\partial\Omega,
\end{equation*}
up to these two invariances.
Conversely, the choice $y_j=y\in\partial\Omega$ realizes
$\mathbf T_y$ up to normalization. For any $y\in\overline\Omega$,
choose $y_j\in\Omega$ tending to $y$ and
$0<r_j<\min\{j^{-1},\dist(y_j,\partial\Omega)/j\}$.
This realizes the flat model at $y$.

The flat gains are given by \eqref{eq:flat-gain}.
For a boundary model, Gaussian convolution gives
$F_{\mathbf T_y}(1,z)=F_y(z)$ by
\eqref{eq:boundary-initial-weight}; its measure is homogeneous
of degree $d$. Apply \eqref{eq:homogeneous-initial-norm} with
$k=d$, and then \eqref{eq:initial-spectral-formula}, to obtain
\eqref{eq:region-pole}.
\end{proof}

\subsection{Exact constants under coefficient conditions}
\label{subsec:coefficient-formulas}

The initial contribution in Subsection~\ref{subsec:initial-convolution}
is determined by the spatial limits of $\mu$ and the fields
$a_0,A_0,q_0$. For the whole-line contribution,
Proposition~\ref{prop:whole-line-gains} gives, for
$\mathbf M=(\lambda,a_*,A_*,q_*)\in\mathfrak M_+(I)$,
\begin{equation*}
\mathfrak Q(\mathbf M)
 =\max\left\{
   \lambda_{\max}(A_*^{-1/2}q_*A_*^{-1/2}),
   \sup_{\zeta\in\supp\lambda}
        \Phi\bigl(\mathsf R(a_*,A_*,q_*;\zeta)\bigr)
       \right\}.
\end{equation*}
The law $\lambda$ enters this formula only through its support.
The support itself depends on the evolution of the reference
diffusion, so this representation does not yet give a closed
formula in terms of $\mu$ and the coefficients.
Since $\mathsf R$ is invariant under rescaling of its direction,
the remaining task is to characterize which directions occur
in these supports and with which joint coefficient limits.
The coefficient conditions in this subsection make the smallest
directional ratio at spatial infinity recoverable.

For the coefficient fields, write
$\mathsf R(t,x,v)=\mathsf R\bigl(a(t,x),A(t,x),q(t,x);v\bigr)$.
For escaping centers, the observation time may tend to zero
while $t_j/r_j^2\to\infty$. The coefficient extrema therefore
include $t=0$. Define
\begin{equation}
\begin{alignedat}{2}
\mathsf r_\infty^{\mathrm{all}}&= \lim_{R\to\infty}
       \inf_{\substack{t\in[0,T],\ |x|\ge R\\|v|=1}}
            \mathsf R(t,x,v), \qquad
&\mathsf r_\infty^{\mathrm{dir}}&= \inf_{\substack{t\in[0,T]\\|v|=1}}
       \lim_{R\to\infty}\sup_{|x|\ge R}\mathsf R(t,x,v),\\
\mathsf r_\infty^{\mathrm{ctr}}&= \lim_{R\to\infty}
       \inf_{\substack{t\in[0,T]\\|x|\ge R}}
            \sup_{|v|=1}\mathsf R(t,x,v), \qquad
&\mathsf r_\infty^{\mathrm{rec}}&= \min\{\mathsf r_\infty^{\mathrm{dir}},\mathsf r_\infty^{\mathrm{ctr}}\}.
\end{alignedat}
\label{eq:coefficient-tail-numbers}
\end{equation}
The number $\mathsf r_\infty^{\mathrm{all}}$ bounds limiting directional ratios at spatial
infinity from below, giving the directional gain bound $\Phi(\mathsf r_\infty^{\mathrm{all}})$.
The numbers $\mathsf r_\infty^{\mathrm{dir}},\mathsf r_\infty^{\mathrm{ctr}}$ correspond to two recovery constructions.
For $\mathsf r_\infty^{\mathrm{dir}}$, a time and a direction in the original coordinates
can be prescribed, but the escaping centers are supplied by
recovery; the tail supremum controls the ratio along any such
sequence. For $\mathsf r_\infty^{\mathrm{ctr}}$, one chooses times and centers where the
largest directional ratio is small, so the same bound applies
to every phase direction realized there. Both constructions
give lower bounds for the gains.
Always $\mathsf r_\infty^{\mathrm{all}}\le\mathsf r_\infty^{\mathrm{rec}}$; equality makes the coefficient upper bound
recoverable. In the coefficient extrema below, $t$ ranges over $[0,T]$.

For $\mu\in\mathcal D$, bounded spatial centers can also produce
phase models when $t_j\downarrow0$ and
$\dist(x_j,S)/\sqrt{t_j}\to\infty$. Their contribution uses
shortest paths in the initial metric $g_0=a_0^{-1}$.

\begin{definition}[Metric distance and shortest paths]\label{def:shortest-paths}
For a uniformly elliptic Lipschitz metric $g_0$ and a nonempty
compact set $S$, define
\begin{equation*}
d_{g_0}(y,x)=
 \inf_{\substack{\gamma\in H^1([0,1];\R^d)\\\gamma(0)=y,\ \gamma(1)=x}}
       \int_0^1|\dot\gamma(s)|_{g_0(\gamma(s))}\,ds,\qquad
d_{g_0}(S,x)=\inf_{y\in S}d_{g_0}(y,x).
\end{equation*}
Let $\operatorname{Min}_{g_0}(S,x)$ be all shortest paths from
$S$ to $x$, parametrized on $[0,1]$ at constant metric speed.
\end{definition}

Uniform ellipticity makes $(\R^d,d_{g_0})$ a complete, locally
compact length space. The length-space Hopf--Rinow theorem
\citep[Theorem~2.5.23]{burago-burago-ivanov-2001} and compactness
of $S$ ensure that $\operatorname{Min}_{g_0}(S,x)$ is nonempty.
These paths are $C^{1,1}$ by
\citet[Theorem~1.4]{lytchak-yaman-2006}, with H\"older exponent one.
For $g_0=a_0^{-1}$ and $S=\supp\mu$, define
\begin{equation}
K_{\mathrm{prop}}^{\mathrm{loc}}(S)
 =\sup_{x\notin S}\ \sup_{\gamma\in\operatorname{Min}_{g_0}(S,x)}
   \Phi\bigl(\mathsf R(0,x,g_0(x)\dot\gamma(1))\bigr).
\label{eq:local-shortest-number}
\end{equation}
The notation suppresses the dependence on $A_0,q_0$ through
$\mathsf R$.

\begin{theorem}[Coefficient bounds and recovery]
\label{thm:coefficient-formulas}
Let $\mu\in\mathcal D$, $S=\supp\mu$ and $I=(0,T)$.
Then
\begin{equation*}
\max\{K_{\alpha,\mathrm{init}}(I),K_{\mathrm{flat}}(I),
 K_{\mathrm{prop}}^{\mathrm{loc}}(S),\Phi(\mathsf r_\infty^{\mathrm{rec}})\}
 \le K_\alpha(I)
\le
\max\{K_{\alpha,\mathrm{init}}(I),K_{\mathrm{flat}}(I),
 K_{\mathrm{prop}}^{\mathrm{loc}}(S),\Phi(\mathsf r_\infty^{\mathrm{all}})\}.
\end{equation*}
If $\mathsf r_\infty^{\mathrm{all}}=\mathsf r_\infty^{\mathrm{rec}}$,
the bounds agree and $K_\alpha(I)$ is independent of the
admissible bounded Borel drift $b$.
\end{theorem}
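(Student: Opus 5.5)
By Theorem~\ref{thm:sharp-limit}, $K_\alpha(I)=\max\{K_{\alpha,\mathrm{init}}(I),K_{\mathrm{flat}}(I),K_{\mathrm{prop}}(I)\}$. Since the initial and flat terms appear on both sides, it suffices to sandwich $K_{\mathrm{prop}}(I)$:
\[
\max\{K^{\mathrm{loc}}_{\mathrm{prop}}(S),\Phi(\mathsf r_\infty^{\mathrm{rec}})\}
\le\max\{K_{\alpha,\mathrm{mod}}(I)\}
\]
and
\[
K_{\mathrm{prop}}(I)\le\max\{K_{\mathrm{flat}}(I),K^{\mathrm{loc}}_{\mathrm{prop}}(S),\Phi(\mathsf r_\infty^{\mathrm{all}})\}.
\]
By Proposition~\ref{prop:whole-line-gains}, each $\mathbf M\in\mathfrak M_+(I)$ contributes $\Phi(\mathsf R(a_*,A_*,q_*;\zeta))$ for $\zeta\in\supp\lambda$. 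So the task is to identify which pairs (coefficient limit, phase direction) are realized by phase realizing sequences \eqref{eq:doubling-tail-centers}. I split these sequences by the behaviour of the centers: \emph{escaping} ($|x_j|\to\infty$) and \emph{bounded} ($x_j\to x_*$, which forces $t_j\to0$ because $\dist(x_j,S)/\sqrt{t_j}\to\infty$).

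For the upper bound, take an escaping sequence. The coefficients converge to a limit taken at $|x|\ge R$ for every $R$, with $t\in[0,T]$. Hence $\mathsf R(a_*,A_*,q_*;\zeta)\ge\mathsf r_\infty^{\mathrm{all}}$ for every $\zeta\ne0$. Because $\Phi$ is nonincreasing on $(0,\infty)$ and equals $+\infty$ on $(-\infty,0]$, the contribution is at most $\Phi(\mathsf r_\infty^{\mathrm{all}})$. For a bounded sequence with $t_j\to0$, the coefficient limits are $(a_0,A_0,q_0)(x_*)$ with $x_*\notin S$ or $x_*\in\partial S$. Here I need the phase support to consist of directions $g_0(x_*)\dot\gamma(1)$ for $\gamma\in\operatorname{Min}_{g_0}(S,x_*)$, up to positive scaling. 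The tool is a small-time Varadhan/Laplace argument applied to the posterior phase law of Lemma~\ref{lem:posterior-phase}. Near $t=0$ we have $-t\log k(0,y;t,x)\approx d_{g_0}(y,x)^2/2$, and the slope of $-\log\rho$ at scale $\ell_\mu\sim t/\dist(x,S)$ concentrates on gradients of $d_{g_0}(S,\cdot)^2/2$. These gradients are the terminal momenta $g_0\dot\gamma(1)$ of shortest paths, taken at the points achieving the minimum. I would use the uniform kernel--action bounds of Proposition~\ref{prop:kernel-action} to control relative errors at the needed scale. Two limiting cases need care. If $x_*\in\partial S$ or the centers approach $S$ at rates between $\sqrt{t_j}$ and $O(1)$, the momentum directions are obtained as limits of shortest paths from nearby points $x_j\notin S$; the supremum over $x\notin S$ in \eqref{eq:local-shortest-number} absorbs these. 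Upper semicontinuity of the set of minimizers, via Arzel\`a--Ascoli on $C^{1,1}$ paths, shows that limits of directions are again terminal momenta. This local identification is the main obstacle. It requires the action comparison to be accurate to $o(1)$ relative to the scale $\dist\cdot\ell_\mu/t\sim1$ of the phase, and it requires ruling out phases from non-minimizing endpoints through the action gap.

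For the lower bound, I recover gains via Lemma~\ref{lem:recovery} by constructing phase realizing sequences with prescribed data. For $K^{\mathrm{loc}}_{\mathrm{prop}}$, fix $x\notin S$ and $\gamma\in\operatorname{Min}_{g_0}(S,x)$. The difficulty is that $\lambda$ may contain all minimizing momenta at $x$, and I only need the direction of $\gamma$. I would move the center to an interior point $x'=\gamma(1-\eta)$ of $\gamma$. There the minimizer from $S$ is unique, namely the restriction of $\gamma$, and its momentum converges to $g_0(x)\dot\gamma(1)$ as $\eta\to0$. Taking $t_j\downarrow0$ at $x'$ gives a phase model whose support is a single ray in that direction, with coefficients at $x'$. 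Letting $\eta\to0$ and using continuity of $\mathsf R$ gives the lower bound.

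For $\Phi(\mathsf r_\infty^{\mathrm{ctr}})$, choose times $t_j$ and centers $x_j$ with $|x_j|\to\infty$ where $\sup_{|v|=1}\mathsf R$ approaches $\mathsf r_\infty^{\mathrm{ctr}}$. Since $t$ may be $0$, I replace $t_j$ by small positive times using the time modulus. Proposition~\ref{prop:tail-compactness} then yields a phase model, and every direction in its support has ratio at most $\mathsf r_\infty^{\mathrm{ctr}}+o(1)$. For $\Phi(\mathsf r_\infty^{\mathrm{dir}})$, fix $t$ and $v$. I need escaping centers whose phase direction is asymptotically $v$; the tail supremum then bounds the ratio along them. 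To produce them, I take $x_j=x_0+Rv'$ with $R\to\infty$ and $t_j\to t$, where $v'$ is chosen so that the momentum is parallel to $v$ (e.g.\ $v'\propto a v$ for frozen covariance). Gaussian comparison shows the phase direction is close to $v$ up to a bounded drift and Lipschitz corrections, which are negligible as $R\to\infty$ at fixed $t$. I expect to need a continuity argument in $v'$ to hit the direction $v$ exactly. Combining the recovered lower bounds with Theorem~\ref{thm:sharp-limit} gives the left inequality.

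Finally, if $\mathsf r_\infty^{\mathrm{all}}=\mathsf r_\infty^{\mathrm{rec}}$, the two bounds coincide. Each quantity in them depends only on $\mu$, $a_0$, $a,A,q$ and not on $b$, since the initial term uses only $a_0,A_0,q_0$ and $\theta$. Hence $K_\alpha(I)$ is independent of the drift.
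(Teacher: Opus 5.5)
Your architecture matches the paper's. You reduce via Theorem~\ref{thm:sharp-limit} to sandwiching $K_{\mathrm{prop}}(I)$ and split phase realizing sequences into bounded and escaping centers. You bound escaping limits by $\Phi(\mathsf r_\infty^{\mathrm{all}})$ and recover $\Phi(\mathsf r_\infty^{\mathrm{ctr}})$ by moving the times into $(0,T)$ and invoking Proposition~\ref{prop:tail-compactness}. You recover $K_{\mathrm{prop}}^{\mathrm{loc}}(S)$ at prefix points $\gamma(1-\eta)$. For Lipschitz $g_0$ only the terminal velocity of shortest paths to such points is unique, by \eqref{eq:shortest-prefix-velocity}, but that is all you use.

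\textbf{First gap: recovery of $\Phi(\mathsf r_\infty^{\mathrm{dir}})$.} At fixed $t>0$ with $|x_j|\to\infty$, the relevant paths from $S$ to $x_j$ have length of order $|x_j|$. Along them $a$ varies by an amount bounded only by the ellipticity range, not $o(1)$; it freezes only on the final window of size $He\to0$. So no Gaussian comparison shows that $x_0+Rv'$ with $v'\propto av$ has phase direction near $v$. The phase is governed by terminal momenta of variable-metric minimizers, may be a mixture, and need not converge as $R\to\infty$. Your continuity argument in $v'$ therefore has no continuous map to act on.

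The missing idea, in Lemma~\ref{lem:fixed-direction-recovery}, is a Legendre-type selection that needs no geometry: maximize $\rho(t,x)e^{Lv\cdot x}$ in $x$.
\begin{itemize}
\item Gaussian bounds give $|x_L|\asymp L$ and, along a subsequence, $L\ell_\mu(t,x_L)\to2c_v>0$.
\item Proposition~\ref{prop:tail-compactness} yields a phase law $\lambda$ at these centers.
\item Maximality at $s=0$ gives $W_\lambda^0(z)\le e^{-2c_vv\cdot z}$ for all $z$.
\item By \eqref{eq:exponential-domination} this forces $\lambda=\delta_{c_vv}$, a pure phase exactly in direction $v$ with ratio $\lim_j\mathsf R(t,x_{L_j},v)$.
\end{itemize}

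\textbf{Second gap: bounded-center identification.} You flag this step, but the mechanism you propose does not work. Proposition~\ref{prop:kernel-action} has errors $O(1+M^{1/3})$ or $O(M^{2/3})$. The phase, however, is an $O(1)$ increment of $\log\rho$ over distance $\ell_\mu$, so kernel--action comparison cannot resolve it at that scale.

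The paper uses action gaps only at macroscopic separation, where they are of order $M$:
\begin{itemize}
\item Lemma~\ref{lem:exterior-posterior} concentrates $X_{(1-\delta)t_j}$ near $\gamma(1-\delta)$ for shortest $\gamma$.
\item When $x_*\in S$, Lemma~\ref{lem:near-support-posterior} concentrates $X_0$ on $\varepsilon_jD_j^2$-near minimizers of the frozen metric.
\item The averaged velocity is then carried down to the phase scale by the dyadic Gaussian bridge comparison \eqref{eq:bridge-vector-iteration} and Corollary~\ref{cor:restart-phase}.
\end{itemize}

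For $x_*\in S$, Arzel\`a--Ascoli on paths gives nothing, since the paths shrink to a point. Instead, Lemma~\ref{lem:nearby-shortest-endpoint} takes a near-minimizer $y_j$ and the midpoint $(x_j+y_j)/2$. Every shortest path from $S$ to that midpoint has terminal momentum close to $G_j(x_j-y_j)/2$. This converts approximate minimizers into exact shortest-path momenta at exterior points.

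Your $K_{\mathrm{prop}}^{\mathrm{loc}}(S)$ recovery inherits this second gap, since it needs the inclusion \eqref{eq:exterior-direction-inclusion} at $\gamma(1-\eta)$.
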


\begin{corollary}
Write
$R_q=q^{-1/2}(a-A)q^{-1/2}$ and
$\bar r_q=\operatorname{tr}(R_q)/d$. For $v\ne0$, setting
$w=q(t,x)^{1/2}v$ gives
\begin{equation*}
\mathsf R(t,x,v)=\frac{w^{\mathsf T}R_q(t,x)w}{|w|^2}.
\end{equation*}
Thus the extreme directional ratios are the extreme eigenvalues of
$R_q(t,x)$. Either of the following conditions implies $\mathsf r_\infty^{\mathrm{all}}=\mathsf r_\infty^{\mathrm{rec}}$:
\begin{enumerate}
\item\label{item:coefficient-directional-limit}
$\mathsf R(t,x,v)\to \mathsf R_\infty(t,v)$ as $|x|\to\infty$,
uniformly in $t$ for each fixed unit vector $v$.
Then $\mathsf r_\infty^{\mathrm{dir}}=\mathsf r_\infty^{\mathrm{all}}=\min_{t,|v|=1}\mathsf R_\infty(t,v)$.
\item\label{item:coefficient-isotropy}
$\lim_{R\to\infty}\sup_{t,\ |x|\ge R}
 \|R_q(t,x)-\bar r_q(t,x)I_d\|_{\mathrm{op}}=0$.
Then $\mathsf r_\infty^{\mathrm{ctr}}=\mathsf r_\infty^{\mathrm{all}}$, even if $\bar r_q$ continues to oscillate.
\end{enumerate}
\end{corollary}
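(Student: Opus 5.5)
The plan is to verify the algebraic identity first and then, under each condition, prove the reverse inequality $\mathsf r_\infty^{\mathrm{rec}}\le\mathsf r_\infty^{\mathrm{all}}$. The text before Theorem~\ref{thm:coefficient-formulas} already gives $\mathsf r_\infty^{\mathrm{all}}\le\mathsf r_\infty^{\mathrm{rec}}$, so this suffices.

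\emph{The identity.} For $v\ne0$, put $w=q^{1/2}v$, so that $v=q^{-1/2}w$. Then
\[
v^{\mathsf T}(a-A)v=w^{\mathsf T}q^{-1/2}(a-A)q^{-1/2}w=w^{\mathsf T}R_qw,\qquad v^{\mathsf T}qv=|w|^2 .
\]
The map $v\mapsto q^{1/2}v$ is a bijection of $\R^d\setminus\{0\}$, and $\mathsf R$ is invariant under rescaling. Hence
\[
\inf_{|v|=1}\mathsf R(t,x,v)=\lambda_{\min}(R_q(t,x)),\qquad \sup_{|v|=1}\mathsf R(t,x,v)=\lambda_{\max}(R_q(t,x))
\]
by the Rayleigh quotient.

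\emph{Condition \ref{item:coefficient-directional-limit}.} I first upgrade the pointwise-in-$v$ convergence to convergence uniform in $(t,v)$ on $[0,T]\times\{|v|=1\}$. The ellipticity bounds of Assumption~\ref{ass:coefficients} make $v\mapsto\mathsf R(t,x,v)$ Lipschitz on the unit sphere, with a constant independent of $(t,x)$: the numerator is a bounded quadratic form and the denominator is at least $\lambda_q$. Take a finite $\eta$-net of the sphere. On the net, convergence is uniform in $t$ by hypothesis, and equicontinuity gives uniform convergence on the whole sphere. It follows that $\mathsf R_\infty$ is Lipschitz in $v$.

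Next I use continuity in time. The modulus $\omega_a$ and the modulus $\omega_{A,q}$ make $t\mapsto\mathsf R(t,x,v)$ equicontinuous uniformly in $(x,v)$. Hence $\mathsf R_\infty$ is continuous on the compact set $[0,T]\times S^{d-1}$, and its infimum is a minimum. Uniform convergence now gives
\[
\lim_{R\to\infty}\inf_{t,\,|x|\ge R,\,|v|=1}\mathsf R(t,x,v)=\min_{t,|v|=1}\mathsf R_\infty(t,v),
\]
so $\mathsf r_\infty^{\mathrm{all}}$ equals this minimum. For each fixed $(t,v)$, the inner limit of $\sup_{|x|\ge R}\mathsf R$ is $\mathsf R_\infty(t,v)$, so $\mathsf r_\infty^{\mathrm{dir}}$ equals the same minimum. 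Therefore
\[
\mathsf r_\infty^{\mathrm{rec}}\le\mathsf r_\infty^{\mathrm{dir}}=\mathsf r_\infty^{\mathrm{all}}.
\]
The one point needing care is exactly this upgrade from fixed-$v$ to uniform-in-$v$ convergence. The paper's hypothesis only provides the former, so the equicontinuity argument is essential.

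\emph{Condition \ref{item:coefficient-isotropy}.} Let $\varepsilon_R=\sup_{t,|x|\ge R}\|R_q-\bar r_qI_d\|_{\mathrm{op}}$, which tends to $0$. For $|x|\ge R$, Weyl's inequality gives
\[
\lambda_{\max}(R_q)\le\bar r_q+\varepsilon_R\le\lambda_{\min}(R_q)+2\varepsilon_R .
\]
Taking $\inf_{t,|x|\ge R}$ of both sides and using the identity above, the left side becomes the quantity defining $\mathsf r_\infty^{\mathrm{ctr}}$ at radius $R$. The right side becomes the quantity defining $\mathsf r_\infty^{\mathrm{all}}$ at radius $R$, plus $2\varepsilon_R$. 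Letting $R\to\infty$ gives $\mathsf r_\infty^{\mathrm{ctr}}\le\mathsf r_\infty^{\mathrm{all}}$. Combined with
\[
\mathsf r_\infty^{\mathrm{all}}\le\mathsf r_\infty^{\mathrm{rec}}\le\mathsf r_\infty^{\mathrm{ctr}},
\]
this yields equality. No convergence of $\bar r_q$ is used, so $\bar r_q$ may continue to oscillate.
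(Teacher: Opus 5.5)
Your proposal is correct and follows the paper's proof. For the first condition, a uniform Lipschitz bound in $v$ and a finite net upgrade the convergence to uniformity in $(t,v)$. For the second condition, the eigenvalues of $R_q$ lie within $\varepsilon_R$ of $\bar r_q$. The only differences are minor: you spell out that the time moduli make $\mathsf R_\infty$ continuous, so the minimum is attained, and you close the second condition via $\lambda_{\max}(R_q)\le\lambda_{\min}(R_q)+2\varepsilon_R$ rather than identifying both $\mathsf r_\infty^{\mathrm{all}}$ and $\mathsf r_\infty^{\mathrm{ctr}}$ with $\lim_{R\to\infty}\inf_{t,|x|\ge R}\bar r_q$.
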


\begin{proof}
Under condition~\ref{item:coefficient-directional-limit}, ellipticity
and boundedness imply that the functions
$v\mapsto\mathsf R(t,x,v)$ have a common Lipschitz constant
on the unit sphere, controlled by the ellipticity bounds of
$a,A,q$. A finite net therefore makes the
convergence uniform in $(t,v)$, with continuous limit
$\mathsf R_\infty$.
Equation~\eqref{eq:coefficient-tail-numbers} then gives $\mathsf r_\infty^{\mathrm{dir}}=\mathsf r_\infty^{\mathrm{all}}$.

For condition~\ref{item:coefficient-isotropy}, the Rayleigh quotient defining
$\mathsf R$ satisfies
\begin{equation*}
|\mathsf R(t,x,v)-\bar r_q(t,x)|
 \le\|R_q(t,x)-\bar r_q(t,x)I_d\|_{\mathrm{op}},
\qquad v\ne0.
\end{equation*}
Thus both $\mathsf r_\infty^{\mathrm{all}}$ and $\mathsf r_\infty^{\mathrm{ctr}}$ equal
$\lim_{R\to\infty}\inf_{t,\ |x|\ge R}\bar r_q(t,x)$.
\end{proof}

\begin{lemma}[Directional recovery at spatial infinity]
\label{lem:fixed-direction-recovery}
Let $\mu\in\mathcal D$ and $S=\supp\mu$.
For every $t\in(0,T]$ and unit vector $v$, there are $c_v>0$,
centers $|x_j|\to\infty$ and positive definite matrices
$a_*,A_*,q_*$ such that \eqref{eq:coefficient-limits} and
\eqref{eq:whole-line-convergence} hold with
\begin{equation*}
t_j=t,\qquad r_j=\ell_\mu(t,x_j),\qquad
\mathbf M=(\delta_{c_vv},a_*,A_*,q_*).
\end{equation*}
Moreover,
\begin{equation}
\Phi(\mathsf r_\infty^{\mathrm{rec}})
 \le\sup_{\substack{\mathbf M=(\lambda,a_*,A_*,q_*)\in
                                \mathfrak M_+(I)\\
                    \mathbf M\ \text{realized with }|x_j|\to\infty}}
       \ \sup_{\zeta\in\supp\lambda}
           \Phi\bigl(\mathsf R(a_*,A_*,q_*;\zeta)\bigr)
 \le\Phi(\mathsf r_\infty^{\mathrm{all}}).
\label{eq:coefficient-infinity-bounds}
\end{equation}
\end{lemma}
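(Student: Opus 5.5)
The plan is to handle the realization claim and the two inequalities in \eqref{eq:coefficient-infinity-bounds} separately. The realization carries the difficulty; the inequalities follow from it and from Proposition~\ref{prop:tail-compactness}.

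\emph{Step 1 (a realized phase in direction $v$).} Fix $t\in(0,T]$ and take centers $x$ with $|x|=R\to\infty$. Then $\dist(x,S)/\sqrt t\to\infty$, so \eqref{eq:doubling-tail-centers} holds with $t_j=t$ and $r_j=\ell_\mu(t,x_j)$. By Lemma~\ref{lem:posterior-phase} and Proposition~\ref{prop:tail-compactness}, every such sequence has a subsequence realizing a heat-phase tuple whose phase law lies in the fixed annulus $\{m_{\mathrm{ph},\mu}\le|\zeta|\le M_{\mathrm{ph},\mu}\}$.

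I want $v/|v|$ to lie in the support direction of one of these limits. To get it, consider the barycentre map
\[
x\mapsto\beta_R(x)=\int\zeta\,\lambda_{t,x}(d\zeta),\qquad |x|=R,
\]
where $\lambda_{t,x}$ is the posterior phase law of \eqref{eq:posterior-phase-law}. I expect this map to be continuous in $x$, because it is built from kernel ratios that are continuous in the endpoint. By the kernel--action comparison (Proposition~\ref{prop:kernel-action}), $\beta_R(x)\cdot x>0$ on the large sphere: the phase is a normalized final displacement $a^{-1}(x-y)/(2D)$ up to $o(1)$, and $y$ lies in the compact set $S$. Hence $\beta_R/|\beta_R|$ restricted to the sphere is homotopic to the identity and has degree one, so it is surjective. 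Choose $x_R$ with $\beta_R(x_R)$ parallel to $v$. Extracting a subsequence gives a tuple $(\lambda,a_*,A_*,q_*)$ whose barycentre is $c\,v$ with $c>0$. Consequently $\supp\lambda$ meets the closed half-space $\{\zeta\cdot v\ge c|v|^2\}$, but $\lambda$ need not be a Dirac mass.

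\emph{Step 2 (concentration to a Dirac mass).} I would repeat the tilting argument from the proof of Proposition~\ref{prop:whole-line-gains}. Take $\zeta_0\in\supp\lambda$; the difficulty is to choose $\zeta_0$ parallel to $v$. To do so I would replace the barycentre in Step 1 by a tilted barycentre that weights $\lambda_{t,x}$ by $e^{-2R'|\zeta-cv|^2_{a}}$, or alternatively apply the degree argument to the extremal phase in direction $v$. Then shift the centers by $(s_{R'},z_{R'})=(-R',-2R'a_*\zeta_0)$ in rescaled units. The shifted physical time is $t-R'r_j^2$, which tends to $t$ because $\Delta_j\to\infty$. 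A diagonal choice $R'=R'_j\to\infty$ slowly keeps \eqref{eq:whole-line-convergence} valid, since the tilted laws converge weakly to $\delta_{\zeta_0}$. The remaining centers can be re-indexed at time exactly $t$ by using the uniform posterior phase approximation, so the realizing sequence has $t_j=t$ and tuple $(\delta_{c_vv},a_*,A_*,q_*)$. I expect Steps 1--2 to be the main obstacle: identifying the realized direction at exactly $v$ needs a continuity or degree argument applied to phase laws that are only known up to vanishing relative error. My first attempt would be a degree argument on the approximate barycentre, combined with Lemma~\ref{lem:posterior-phase}'s uniformity in $x$.

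\emph{Step 3 (the inequalities).} For the first inequality, if $\mathsf r_\infty^{\mathrm{rec}}=\mathsf r_\infty^{\mathrm{dir}}$, pick $(t,v)$ nearly attaining the infimum. Any realized limit satisfies
\[
\mathsf R(a_*,A_*,q_*;v)\le\lim_{R\to\infty}\sup_{|x|\ge R}\mathsf R(t,x,v),
\]
by continuity of $\mathsf R$, and $\Phi$ is nonincreasing; the case $t=0$ is handled by letting $t\downarrow0$. If instead $\mathsf r_\infty^{\mathrm{rec}}=\mathsf r_\infty^{\mathrm{ctr}}$, choose $(t_R,x_R)$ with $|x_R|\ge R$ and $\sup_{|v|=1}\mathsf R$ nearly minimal. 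Proposition~\ref{prop:tail-compactness} then gives a realized model, with a time shift if $t_R=0$, and every direction in its phase support obeys the bound. For the second inequality, every model realized with $|x_j|\to\infty$ has coefficient limits taken at escaping points. Hence each direction satisfies $\mathsf R(a_*,A_*,q_*;\zeta)\ge\mathsf r_\infty^{\mathrm{all}}$, and monotonicity of $\Phi$ gives the bound.
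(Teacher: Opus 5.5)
Steps~1--2 contain a genuine gap, and the inequalities depend on them. The first assertion needs a realizing sequence at the \emph{fixed} time $t_j=t$ whose phase law is \emph{exactly} $\delta_{c_vv}$. Your degree argument rests on unproved claims.
\begin{itemize}
\item Nothing shows that $x\mapsto\beta_R(x)$ is continuous. The laws $\lambda_{t,x}$ involve $x$-dependent restart times $t-He^2$ and annular conditioning.
\item Nothing shows that $\beta_R(x)\cdot x>0$. Your heuristic that the phase is approximately $a^{-1}(x-y)/(2D)$ with $y\in S$ holds only when the metric is effectively frozen along the whole path. At fixed $t$ and $|x|\to\infty$ the phase is tied to the terminal momentum of a long minimizing path.
\item In Appendix~\ref{app:negative-curvature}, at $t=3$ and $x\approx(R,3)$, the realized phase is $(1,-1)/32$, far from the direction of $a_*^{-1}x$.
\item Proposition~\ref{prop:kernel-action} has error $O(1+M^{2/3})$, so it cannot resolve the $O(1)$ increments of $\log\rho$ on the natural scale that determine phase directions.
\end{itemize}
Even granting a barycentre parallel to $v$, no point of $\supp\lambda$ need be parallel to $v$. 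Your proposed repairs are either circular (a tilt centred at the unknown $cv$) or unproved (an extremal phase, which need be neither unique nor continuous in $x$). Finally, the tilt from the proof of Proposition~\ref{prop:whole-line-gains} uses the time shift $s_{R'}=-R'$. It therefore realizes a pure tuple at times $t-R'_jr_j^2\ne t$, and you give no mechanism for ``re-indexing'' back to $t$.

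The missing idea is a Legendre-type exponential tilt of the actual density at time $t$; it replaces all of Steps~1--2.
\begin{itemize}
\item For compactly supported $\mu$, \eqref{eq:kernel-gaussian} gives $ce^{-C|x|^2}\le\rho(t,x)\le Ce^{-c|x|^2}$. Hence $x\mapsto\rho(t,x)e^{Lv\cdot x}$ attains a maximum at some $x_L$.
\item Comparing with the value at $\varepsilon Lv$ gives $cL\le|x_L|\le CL$, hence $c\le L\ell_\mu(t,x_L)\le C$.
\item Proposition~\ref{prop:tail-compactness} gives a subsequence realizing some $(\lambda,a_*,A_*,q_*)$ at the fixed time $t$, with $L_j\ell_\mu(t,x_{L_j})\to2c_v>0$.
\item Maximality yields $W_\lambda^0(z)\le e^{-2c_vv\cdot z}$ for every $z$, that is, $\int e^{-2(\zeta-c_vv)\cdot z}\,\lambda(d\zeta)\le1$.
\item Testing at $z=\pm\mathbf e_i$ gives $\int\cosh(2(\zeta_i-c_vv_i))\,\lambda(d\zeta)\le1$, which forces $\lambda=\delta_{c_vv}$.
\end{itemize}
This uses no continuity or degree theory and keeps $t_j=t$. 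Once you have this realization, your Step~3 is correct and agrees with the paper. Scale invariance of $\mathsf R$ together with $t_j=t$ gives the $\mathsf r_\infty^{\mathrm{dir}}$ bound, and the time modulus covers $t\in\{0,T\}$. Moving the times into $(0,T)$ and applying Proposition~\ref{prop:tail-compactness} gives the $\mathsf r_\infty^{\mathrm{ctr}}$ bound. The upper bound follows directly from the escaping coefficient limits.
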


\begin{proof}
For a probability measure $\lambda$ on $\R^d$ and $\xi\in\R^d$,
we use the elementary implication
\begin{equation}
\int e^{-2(\zeta-\xi)\cdot z}\,\lambda(d\zeta)\le1
 \quad\forall z\in\R^d
 \quad\Longrightarrow\quad \lambda=\delta_\xi.
\label{eq:exponential-domination}
\end{equation}
Indeed, the bounds at $z=\pm\mathbf e_i$ give
$\int\cosh(2(\zeta_i-\xi_i))\,d\lambda\le1$ for each $i$;
the equality condition for $\cosh u\ge1$ proves the claim.

Fix $t\in(0,T]$. Compact support and \eqref{eq:kernel-gaussian}
give constants $c,C>0$, depending on $t,\mu$ and the reference
data, such that
$ce^{-C|x|^2}\le\rho(t,x)\le Ce^{-c|x|^2}$.
Thus $\rho(t,x)e^{Lv\cdot x}$ attains a maximum at some $x_L$.
Fix $\varepsilon=(4C)^{-1}$. Comparing at $\varepsilon Lv$ gives
\begin{equation*}
\log c+\tfrac\varepsilon2L^2
 \le\log\bigl[\rho(t,x_L)e^{Lv\cdot x_L}\bigr]
 \le\log C-c|x_L|^2+L|x_L|.
\end{equation*}
Hence $cL\le|x_L|\le CL$ for large $L$.
Since $S$ is compact, the natural scales
$r_L=t/(\dist(x_L,S)+\sqrt t)$ satisfy
$c\le Lr_L\le C$.
Proposition~\ref{prop:tail-compactness} allows a common subsequence
$L_j\to\infty$ on which $L_jr_{L_j}\to2c_v>0$,
the three coefficients converge and the normalized densities
converge to $W_{\lambda,a_*}$.
Maximality, restricted to the spatial slice $s=0$, gives
\begin{equation*}
W_\lambda^0(z)
 =\lim_j\frac{\rho(t,x_{L_j}+r_{L_j}z)}{\rho(t,x_{L_j})}
 \le e^{-2c_vv\cdot z},\qquad z\in\R^d.
\end{equation*}
Equation~\eqref{eq:exponential-domination} identifies
$\lambda=\delta_{c_vv}$.

Every coefficient limit at escaping centers satisfies
$\mathsf R(a_*,A_*,q_*;v)\ge\mathsf r_\infty^{\mathrm{all}}$,
which proves the upper bound in \eqref{eq:coefficient-infinity-bounds}.
For fixed $t\in(0,T)$ and $|v|=1$, the recovered tuple has ratio
$\lim_j\mathsf R(t,x_j,v)$, and therefore gain at least
\begin{equation*}
\Phi\left(\lim_{R\to\infty}\sup_{|x|\ge R}\mathsf R(t,x,v)\right).
\end{equation*}
The common time modulus of $\mathsf R(t,x,v)$ permits approximation
of $t=0,T$ from $(0,T)$. Taking the supremum over $t,v$ gives
the lower bound $\Phi(\mathsf r_\infty^{\mathrm{dir}})$.

For the other lower bound, choose $|x_j|\to\infty$ and
$t_j\in[0,T]$ with
$\sup_{|v|=1}\mathsf R(t_j,x_j,v)\to\mathsf r_\infty^{\mathrm{ctr}}$.
Set
\begin{equation}
\widehat t_j=\min\{T-(1+|x_j|)^{-2},
                         \max\{t_j,(1+|x_j|)^{-1}\}\}, \qquad
\widehat r_j=\ell_\mu(\widehat t_j,x_j).
\label{eq:coefficient-recovery-times}
\end{equation}
For large $j$, $0<\widehat t_j<T$ and
$|\widehat t_j-t_j|\le(1+|x_j|)^{-1}\to0$.
Moreover,
$\dist(x_j,S)^2/\widehat t_j\ge\dist(x_j,S)^2/T\to\infty$.
Proposition~\ref{prop:tail-compactness} gives an actual tuple
$(\lambda,a_*,A_*,q_*)$ on a common coefficient subsequence.
Uniform continuity and convergence on the unit sphere give
\begin{equation*}
\sup_{|v|=1}\mathsf R(a_*,A_*,q_*;v)
 =\lim_j\sup_{|v|=1}\mathsf R(\widehat t_j,x_j,v)
 =\mathsf r_\infty^{\mathrm{ctr}}.
\end{equation*}
Its phase law is supported away from zero, so its directional gain
is at least $\Phi(\mathsf r_\infty^{\mathrm{ctr}})$.
Finally,
$\max\{\Phi(\mathsf r_\infty^{\mathrm{dir}}),
\Phi(\mathsf r_\infty^{\mathrm{ctr}})\}
=\Phi(\mathsf r_\infty^{\mathrm{rec}})$, including infinite values.
\end{proof}

The following two lemmas record the endpoint information needed
for the local phase directions.
\begin{lemma}[Terminal momentum of shortest paths]
\label{lem:shortest-path-momentum}
Let $x\notin S$, $D=\dist(x,S)$ and $L_g=\Lip g_0$.
For $\gamma\in\operatorname{Min}_{g_0}(S,x)$ as in
Definition~\ref{def:shortest-paths},
$\pi=g_0(\gamma)\dot\gamma$ has a Lipschitz representative and
\begin{equation}
|\pi(s)|\asymp D,\qquad
|\dot\pi(s)|\le CL_gD^2\quad\text{a.e.},\qquad
|\dot\gamma(s)-\dot\gamma(s')|\le CL_gD^2|s-s'|.
\label{eq:shortest-path-momentum}
\end{equation}
The constants in \eqref{eq:shortest-path-momentum} depend only
on $d$ and the ellipticity bounds of $g_0$.
The paths are $C^{1,1}$ and are relatively compact in $C^1$
when their endpoints range over a compact set; every limit
is a shortest path to its limiting endpoint.
Writing $f=d_{g_0}(S,\cdot)^2/2$, at every differentiability
point $x\notin S$,
\begin{equation*}
Df(x)=g_0(x)\dot\gamma(1)
\qquad\forall\gamma\in\operatorname{Min}_{g_0}(S,x).
\end{equation*}
For every $x\notin S$, $\gamma\in\operatorname{Min}_{g_0}(S,x)$
and $0<\varepsilon<1$,
\begin{equation}
\dot\eta(1)=(1-\varepsilon)\dot\gamma(1-\varepsilon)
\qquad
\forall\eta\in\operatorname{Min}_{g_0}(S,\gamma(1-\varepsilon)).
\label{eq:shortest-prefix-velocity}
\end{equation}
\end{lemma}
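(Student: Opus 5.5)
We treat the shortest paths as minimizers of the energy functional $E(\gamma)=\int_0^1|\dot\gamma|_{g_0(\gamma)}^2\,ds$ with free initial point on $S$ and fixed endpoint $x$. At constant metric speed, length minimizers are energy minimizers with $E(\gamma)=d_{g_0}(S,x)^2=2f(x)$. The metric speed is then $|\dot\gamma|_{g_0}=d_{g_0}(S,x)\asymp D$, by uniform ellipticity and comparison of $d_{g_0}$ with Euclidean distance. This gives $|\dot\gamma|\asymp D$ and $|\pi|=|g_0\dot\gamma|\asymp D$.

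\textbf{Euler--Lagrange equation in integrated form.} Since $g_0$ is only Lipschitz, we vary $\gamma$ by $\gamma+\epsilon\phi$ with $\phi\in C_c^\infty((0,1))$. Differentiating the energy (the integrand is Lipschitz in $\gamma$, so dominated convergence applies) gives
\[
\int_0^1\Bigl(2\pi\cdot\dot\phi+\dot\gamma^{\mathsf T}\bigl(Dg_0(\gamma)\phi\bigr)\dot\gamma\Bigr)\,ds=0,
\]
with $Dg_0$ the a.e.\ derivative. The Lipschitz case needs care: one-sided difference quotients suffice, giving the variational inequality for $\pm\phi$, hence an inequality in both directions. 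Equivalently, we may cite Lytchak--Yaman, already invoked in the paper, for $C^{1,1}$ regularity, and then read off the weak equation. Either way, $\pi$ is weakly differentiable with $|\dot\pi|\le L_g|\dot\gamma|^2\le CL_gD^2$. So $\pi$ has a Lipschitz representative. Then $\dot\gamma=g_0(\gamma)^{-1}\pi$ is Lipschitz, with $|\ddot\gamma|\le C(L_g|\dot\gamma||\pi|+|\dot\pi|)\le CL_gD^2$. This proves \eqref{eq:shortest-path-momentum}.

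\textbf{Compactness.} The $C^{1,1}$ bounds are uniform for endpoints in a compact set, since $D$ is then bounded. Arzel\`a--Ascoli therefore gives $C^1$ relative compactness. Limits are shortest paths for two reasons: length is lower semicontinuous (in fact continuous under $C^1$ convergence), and $d_{g_0}(S,\cdot)$ is continuous. Starting points stay in the closed set $S$.

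\textbf{First variation of $f$.} Let $\gamma\in\operatorname{Min}_{g_0}(S,x)$ and let $w$ be a vector. Use the competitor $\gamma_\epsilon(s)=\gamma(s)+\epsilon\psi(s)w$, with $\psi$ smooth, $\psi=0$ near $0$ and $\psi(1)=1$. Then
\[
2f(x+\epsilon w)\le E(\gamma_\epsilon)=2f(x)+2\epsilon\,\pi(1)\cdot w+o(\epsilon).
\]
Here we integrate by parts using the weak Euler--Lagrange equation; the boundary term at $s=1$ is $2\pi(1)\cdot w$. This shows that $g_0(x)\dot\gamma(1)$ is a supergradient of $f$ at $x$, so $f$ is semiconcave-like there. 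At a differentiability point, the supergradient must equal $Df(x)$, for every minimizer.

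\textbf{Prefix identity \eqref{eq:shortest-prefix-velocity}.} Let $y=\gamma(1-\varepsilon)$. The restriction $\gamma|_{[0,1-\varepsilon]}$, reparametrized on $[0,1]$ as $\tilde\gamma(s)=\gamma((1-\varepsilon)s)$, is a shortest path to $y$ at constant speed. Suppose $\eta$ is any shortest path to $y$. Concatenating $\eta$ with $\gamma|_{[1-\varepsilon,1]}$, after matching speeds, gives a path of length $d(S,x)$, hence a shortest path to $x$. By the $C^1$ regularity above, it has no corner at the junction. Therefore $\dot\eta(1)/(1-\varepsilon)$, with time rescaled by $1-\varepsilon$, equals $\dot\gamma(1-\varepsilon)$, which is the identity.

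\textbf{Expected obstacle.} The main difficulty is the rigorous derivation of the Euler--Lagrange equation and the $C^{1,1}$ regularity for a merely Lipschitz metric. We would lean on the cited Lytchak--Yaman result and verify only the quantitative constants in $D$ by scaling: the bounds follow from the weak equation once $\gamma\in C^1$.
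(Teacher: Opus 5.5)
Your speed bound, momentum estimate, compactness argument and prefix identity follow the paper's proof. The two-sided inequality you get from one-sided difference quotients,
$\bigl|\int_0^1\pi\cdot\dot\phi\,ds\bigr|\le CL_g\int_0^1|\dot\gamma|^2|\phi|\,ds$, is exactly \eqref{eq:action-weak-variation}, and it is all that the bounds on $\dot\pi$ and $\ddot\gamma$ need.

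The gap is in the first variation of $f$. There you take a fixed profile $\psi$ and assert $E(\gamma_\epsilon)=2f(x)+2\epsilon\,\pi(1)\cdot w+o(\epsilon)$, integrating by parts against the ``weak Euler--Lagrange equation''. For Lipschitz $g_0$ that equation, with equality, is not available. The a.e.\ derivative $Dg_0$ need not exist at any point of $\gamma$, which is a Lebesgue null set. The $C^{1,1}$ result of Lytchak--Yaman does not supply the equality either. With only the inequality, two terms appear: the metric term $\int_0^1\dot\gamma^{\mathsf T}\{g_0(\gamma+\epsilon\psi w)-g_0(\gamma)\}\dot\gamma\,ds$, and the term $-2\epsilon\int_0^1\psi\,w\cdot\dot\pi\,ds$ from the integration by parts. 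Each is $O(\epsilon L_gD^2|w|\int_0^1|\psi|\,ds)$, and they need not cancel.

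In fact the claimed expansion is false. Take $d=2$, $g_0=(1+\min\{|x_2|,1\})I_2$, $S=\{0\}$ and $x=\mathbf e_1$. The unique shortest path is $\gamma(s)=s\mathbf e_1$, and $\pi\equiv\mathbf e_1$. For $w=\mathbf e_2$ one computes $E(\gamma_\epsilon)=1+\epsilon\int_0^1|\psi|\,ds+O(\epsilon^2)$, whereas $2f(x)+2\epsilon\,\pi(1)\cdot w=1$. A fixed profile therefore only gives a supergradient inequality with an error of exact order $\epsilon$, which cannot identify $Df(x)$.

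The repair is the paper's two-parameter argument. Concentrate the profile at the endpoint, $\psi_\delta(s)=\delta^{-1}(s-1+\delta)_+$, and take the limits in a fixed order. The three error terms then behave as follows.
\begin{itemize}
\item The metric term is $O(\delta|h|)$, because $\int_0^1\psi_\delta\le\delta$.
\item The cross term equals $2\delta^{-1}\int_{1-\delta}^1\pi\cdot h\,ds=2\pi(1)\cdot h+O(\delta|h|)$, using the Lipschitz bound on $\pi$ you have already proved.
\item The quadratic term is $O(|h|^2/\delta)$.
\end{itemize}
This gives $f(x+h)-f(x)\le\pi(1)\cdot h+C_x(\delta|h|+|h|^2/\delta)$ for $|h|\le\delta$. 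At a differentiability point, let $h\to0$ along $\pm w$ first, and only then let $\delta\downarrow0$; this yields $Df(x)\cdot w=\pi(1)\cdot w$ for every minimizer. Choosing $\delta=|h|^{1/2}$ also recovers your supergradient statement, with remainder $O(|h|^{3/2})$.
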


\begin{proof}
Constant metric speed and ellipticity give $|\dot\gamma|\asymp D$.
Apply \eqref{eq:action-weak-variation} with
$a(s,z)=g_0(z)^{-1}$, $r=0$, $t=1$ and $y=\gamma(0)$.
Here the action is $d_{g_0}(S,x)^2/2$, and
$|x-\gamma(0)|\asymp D$. Since
$\operatorname{Lip}(g_0^{-1})\le CL_g$, the variation bound gives
$\|\dot\pi\|_\infty\le CL_gD^2$.
Since $\dot\gamma=g_0(\gamma)^{-1}\pi$, its Lipschitz estimate
follows as well. The common bounds give $C^1$ compactness;
continuity of the distance and lower semicontinuity of the
energy identify the limit as a shortest path.

For $\gamma_h(s)=\gamma(s)+\delta^{-1}(s-1+\delta)_+h$,
the Lipschitz bounds on
$g_0$ and $\pi$ give, for $0<\delta<1$ and $|h|\le\delta$,
\begin{equation*}
f(x+h)-f(x)\le\pi(1)\cdot h
                 +C_x\{\delta|h|+|h|^2/\delta\}.
\end{equation*}
Here $C_x\le C(1+L_g)(1+D)^2$, with $C$ depending only
on $d$ and the ellipticity bounds of $g_0$.
At a differentiability point of $f$, first let $h\to0$
in both directions and then $\delta\downarrow0$ to obtain
$Df(x)=\pi(1)$.

Fix $\eta\in\operatorname{Min}_{g_0}(S,\gamma(1-\varepsilon))$.
On $[0,1-\varepsilon]$, the path
$s\mapsto\eta(s/(1-\varepsilon))$ has the speed of $\gamma$.
Concatenating it with $\gamma|_{[1-\varepsilon,1]}$ gives
a shortest path to $x$, whose velocity is continuous by
\eqref{eq:shortest-path-momentum}. At the join this gives
$\dot\eta(1)/(1-\varepsilon)=\dot\gamma(1-\varepsilon)$,
which is \eqref{eq:shortest-prefix-velocity}.
\end{proof}

\begin{lemma}
\label{lem:nearby-shortest-endpoint}
Let $g_0$ be a uniformly elliptic Lipschitz metric,
$x\notin S$, $D=\dist(x,S)$, $G=g_0(x)$ and $L_g=\Lip g_0$.
Here $d_G(S,x)=\inf_{y\in S}|x-y|_G$ uses the constant matrix
$G$; it is distinct from the distance for the variable metric $g_0$.
Suppose $y\in S$ and
\begin{equation*}
|x-y|_G^2\le d_G(S,x)^2+\varepsilon D^2,\qquad x'=(x+y)/2.
\end{equation*}
If $\varepsilon+L_gD$ is sufficiently small, then
$\dist(x',S)\asymp D$ and $|x'-x|\le CD$. Every
$\gamma'\in\operatorname{Min}_{g_0}(S,x')$ has terminal
momentum $\pi'=g_0(x')\dot\gamma'(1)$ satisfying
\begin{equation}
\left|\pi'-\tfrac12G(x-y)\right|
 \le CD\sqrt{\varepsilon+L_gD},\qquad
\left|\frac{\pi'}{|\pi'|}
          -\frac{G(x-y)}{|G(x-y)|}\right|
 \le C\sqrt{\varepsilon+L_gD}.
\label{eq:nearby-shortest-endpoint}
\end{equation}
The constants and the smallness threshold depend only on
$d$ and the ellipticity bounds of $g_0$.
The same conclusion holds if the hypothesis is replaced by
$d_{g_0}(x,y)^2-d_{g_0}(x,S)^2\le\varepsilon D^2$.
\end{lemma}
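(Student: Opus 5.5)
The plan is to reduce everything to the constant metric $G=g_0(x)$ on a ball $B(x,CD)$, where $g_0$ differs from $G$ by at most $CL_gD$. Near-optimality of $y$ then forces every shortest path to $x'$ to be almost a straight $G$-segment parallel to $x-y$.

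\textbf{Step 1 (localization).} From $|x-y|_G^2\le d_G(S,x)^2+\varepsilon D^2$ and ellipticity, $|x-y|\le CD$, hence $|x'-x|=|x-y|/2\le CD$. For the lower bound on $\dist(x',S)$, take any $y''\in S$. The triangle inequality gives $|x'-y''|_G\ge d_G(S,x)-|x-x'|_G$. Also $|x-x'|_G=\tfrac12|x-y|_G\le\tfrac12 d_G(S,x)+C\sqrt\varepsilon D$, so $d_G(S,x')\ge cD$ once $\varepsilon$ is small; this gives $\dist(x',S)\asymp D$.

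Any $\gamma'\in\operatorname{Min}_{g_0}(S,x')$ has metric length at most $d_{g_0}(y,x')\le CD$. So $\gamma'$ stays in $B(x,CD)$, where $|g_0-G|\le CL_gD$. On paths of Euclidean length $\le CD$ inside this ball, the lengths in $g_0$ and in $G$ therefore differ by at most $CL_gD^2$. The same comparison shows $d_{g_0}(\cdot,\cdot)^2$ and $|\cdot-\cdot|_G^2$ differ by $C(L_gD)D^2$ for points of this ball. This handles the alternative hypothesis: points $y$ with $d_{g_0}(x,y)^2-d_{g_0}(x,S)^2\le\varepsilon D^2$ lie at distance $\le CD$, so they satisfy the $G$-hypothesis with $\varepsilon$ replaced by $\varepsilon+CL_gD$.

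\textbf{Step 2 (near-equality in the triangle inequality).} Let $y'=\gamma'(0)\in S$ and $\eta=\varepsilon+L_gD$. By Step 1,
\[
|x'-y'|_G\le d_{g_0}(S,x')+CL_gD^2\le|x'-y|_G+CL_gD^2=\tfrac12|x-y|_G+CL_gD^2 .
\]
On the other hand, $|x-y'|_G\ge d_G(S,x)\ge|x-y|_G-C\varepsilon D$, using $|x-y|_G\asymp D$. Combining these,
\[
|x-x'|_G+|x'-y'|_G-|x-y'|_G\le C\eta D .
\]
Near-equality in the Euclidean (inner-product $G$) triangle inequality, with both legs comparable to $D$, forces alignment: the unit vectors of $x'-y'$ and $x-x'$ differ by $C\sqrt{\eta}$. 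The lengths also satisfy $|x'-y'|_G=\tfrac12|x-y|_G+O(\eta D)$. Since $x-x'=\tfrac12(x-y)$, we conclude
\[
\bigl|(x'-y')-\tfrac12(x-y)\bigr|\le CD\sqrt\eta .
\]

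\textbf{Step 3 (straightness and momentum).} By Lemma~\ref{lem:shortest-path-momentum}, $|\dot\gamma'(s)-\dot\gamma'(1)|\le CL_gD^2$ on $[0,1]$. Integrating from $0$ to $1$ gives $|\dot\gamma'(1)-(x'-y')|\le CL_gD^2\le CD\,\eta$. Then $\pi'=g_0(x')\dot\gamma'(1)$ differs from $G\dot\gamma'(1)$ by $C|x'-x|L_g\cdot D\le CL_gD^2$. Together with Step 2 this yields the first bound in \eqref{eq:nearby-shortest-endpoint}.

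The direction bound follows because $|\tfrac12G(x-y)|\ge cD$, so dividing by $|\pi'|$ costs only a constant factor. Constants depend only on $d$ and the ellipticity bounds, and the threshold is whatever makes $C\eta$ small in Steps 1--2.

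The main obstacle is Step 2: tracking the error exponents so that the metric-comparison error $L_gD^2$ and the $\varepsilon D^2$ slack enter the triangle defect linearly, giving the $\sqrt{\varepsilon+L_gD}$ rate. Step 1's lower bound $\dist(x',S)\ge cD$ is also needed there, so that both triangle legs are comparable to $D$.
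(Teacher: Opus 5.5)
Your proof is correct and follows essentially the paper's route. You freeze the metric at $G$ on a ball of radius $CD$ and use $y$ as a competing initial point. This shows that the initial point $y'$ of $\gamma'$ satisfies $|y-y'|\le CD\sqrt{\varepsilon+L_gD}$; note that $(x'-y')-\tfrac12(x-y)=y-y'$. You then transfer this to the terminal momentum via the near-constancy of $\dot\gamma'$ from Lemma~\ref{lem:shortest-path-momentum}.

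The only difference is cosmetic. The paper bounds $|y-y'|_G^2$ in one line from the exact midpoint (parallelogram) identity on squared $G$-distances. You instead use the triangle-inequality defect and an angle estimate, which also needs the lower bound $|x'-y'|_G\ge cD$ that you correctly establish first.
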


\begin{proof}
Ellipticity gives $|x-y|\le CD$. Shortest paths between
points in $B(x,CD)$ have length $O(D)$ and remain in a
larger ball of that scale. Freezing the metric there gives
\begin{equation*}
\bigl|d_{g_0}(u,v)^2-|u-v|_G^2\bigr|\le CL_gD^3.
\end{equation*}
Let $z=\gamma'(0)$. Using $y$ as a competing initial point,
then freezing the metric, yields
$|x'-z|_G^2\le|x'-y|_G^2+CL_gD^3$.
The exact square identity at the midpoint gives
\begin{equation*}
|z-y|_G^2
 \le |x-y|_G^2-|x-z|_G^2+CL_gD^3
 \le(\varepsilon+CL_gD)D^2.
\end{equation*}
Also
$d_G(S,x')\ge d_G(S,x)-|x-y|_G/2\ge cD$.
This proves the distance comparability.
Set $\pi_{\gamma'}(s)=g_0(\gamma'(s))\dot\gamma'(s)$, so
$\pi'=\pi_{\gamma'}(1)$. Lemma~\ref{lem:shortest-path-momentum}
gives $\sup_s|\pi_{\gamma'}(s)-\pi'|\le CL_gD^2$.
Consequently
\begin{equation*}
x'-z=\int_0^1g_0(\gamma'(s))^{-1}\pi_{\gamma'}(s)\,ds
     =G^{-1}\pi'+O(L_gD^2),
\qquad
\pi'=G(x'-z)+O(L_gD^2).
\end{equation*}
The preceding square identity proves the first estimate in
\eqref{eq:nearby-shortest-endpoint}. Division by the
momentum size, comparable to $D$, proves the second.
The same freezing estimate reduces the last hypothesis
to the first, with error increased by $CL_gD$.
\end{proof}

\begin{proposition}[Local phase directions]
\label{prop:local-directions}
Let $\mu\in\mathcal D$ and
$\mathbf M=(\lambda,a_*,A_*,q_*)\in\mathfrak M_+(I)$.
Suppose $(t_j,x_j,r_j)$ realizes $\mathbf M$, satisfies
\eqref{eq:doubling-tail-centers}, and has $\sup_j|x_j|<\infty$.
Then
\begin{equation}
\sup_{\zeta\in\supp\lambda}
 \Phi\bigl(\mathsf R(a_*,A_*,q_*;\zeta)\bigr)
 \le K_{\mathrm{prop}}^{\mathrm{loc}}(S).
\label{eq:local-direction-bound}
\end{equation}
If $x_j\to x_*\notin S$ and $D_*=\dist(x_*,S)$, then
\begin{equation}
\supp\lambda
 \subset
 \left\{\frac{g_0(x_*)\dot\gamma(1)}{2D_*}:
            \gamma\in\operatorname{Min}_{g_0}(S,x_*)\right\}.
\label{eq:exterior-direction-inclusion}
\end{equation}

Conversely, for every $x\notin S$ and
$\gamma\in\operatorname{Min}_{g_0}(S,x)$, put
\begin{equation*}
\zeta_\gamma=\frac{g_0(x)\dot\gamma(1)}{2\dist(x,S)},
\qquad
\mathbf M_\gamma
 =(\delta_{\zeta_\gamma},a_0(x),A_0(x),q_0(x)).
\end{equation*}
There exist $t_j\downarrow0$ and $x_j\to x$ such that
$r_j=\ell_\mu(t_j,x_j)$ satisfies
\eqref{eq:doubling-tail-centers}, and
\eqref{eq:coefficient-limits} and
\eqref{eq:whole-line-convergence} hold for $\mathbf M_\gamma$.
Consequently, the supremum of the left side of
\eqref{eq:local-direction-bound} over all such tuples equals
$K_{\mathrm{prop}}^{\mathrm{loc}}(S)$.
\end{proposition}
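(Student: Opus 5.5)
The plan is to read off the phase law $\lambda$ from the posterior description in Lemma~\ref{lem:posterior-phase}, and to identify its support through the action asymptotics of Proposition~\ref{prop:kernel-action}. Since $\sup_j|x_j|<\infty$ and $\dist(x_j,S)/\sqrt{t_j}\to\infty$, we have $t_j\to0$. Hence the coefficient limits \eqref{eq:coefficient-limits} are $(a_0,A_0,q_0)$ evaluated at a limit point $x_*$, by \eqref{eq:local-coefficient-freezing} and the time modulus.

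\emph{Identifying the posterior directions.} Lemma~\ref{lem:posterior-phase} represents $\lambda_{t_j,x_j}$ as the law of the normalized final displacement of the reference bridge from $\mu$ to $(t_j,x_j)$. Write $D_j=\dist(x_j,S)$, so that the action scale is $M_j=D_j^2/t_j\to\infty$.
\begin{enumerate}
\item The kernel--action bounds of Proposition~\ref{prop:kernel-action} show that, up to relative errors of order $O(1+M_j^{2/3})=o(M_j)$, the posterior weight of a starting point $y\in S$ is $\exp(-d_{g_0}(y,x_j)^2/(2t_j))$. Hence the posterior concentrates on those $y$ with $d_{g_0}(y,x_j)^2-d_{g_0}(S,x_j)^2\le\varepsilon_jD_j^2$ for some $\varepsilon_j\to0$.
\item For such $y$, the same bounds localize the bridge near $g_0$-shortest paths. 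Its final normalized displacement is then, to leading order, the terminal momentum $g_0\dot\gamma(1)/(2D_j)$.
\item When $x_j\to x_*\notin S$, the $C^1$ compactness in Lemma~\ref{lem:shortest-path-momentum} says that near-minimizing paths to $x_j$ converge to elements of $\operatorname{Min}_{g_0}(S,x_*)$. Their terminal momenta converge accordingly, which gives the inclusion \eqref{eq:exterior-direction-inclusion}.
\item When instead $D_j\to0$ (the case $x_*\in S$), I apply Lemma~\ref{lem:nearby-shortest-endpoint} with $\varepsilon_j+L_gD_j\to0$. Each realized direction is then within $o(1)$ of the terminal momentum direction of a shortest path to the midpoint $x_j'$, and $x_j'\to x_*$ as well.
\end{enumerate}
Since $\mathsf R$ is scale invariant and continuous in $(a,A,q,v)$, in both cases each $\zeta\in\supp\lambda$ has $\mathsf R(a_*,A_*,q_*;\zeta)$ equal to a limit of the ratios $\mathsf R(0,x'_j,g_0\dot\gamma'_j(1))$. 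I then pass to the limit using that $\Phi$ is continuous on $(0,\infty)$ and tends to $+\infty$ as its argument decreases to $0$. If the limiting ratio is nonpositive, the approximating values $\Phi$ are unbounded, so $K_{\mathrm{prop}}^{\mathrm{loc}}(S)=\infty$ and the bound is trivial. This proves \eqref{eq:local-direction-bound}.

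\emph{Converse construction.} Given $x\notin S$ and $\gamma\in\operatorname{Min}_{g_0}(S,x)$, take $x_j=\gamma(1-\varepsilon_j)$ with $\varepsilon_j\downarrow0$. By \eqref{eq:shortest-prefix-velocity}, every shortest path to $x_j$ has terminal velocity $(1-\varepsilon_j)\dot\gamma(1-\varepsilon_j)$, so the terminal momentum at $x_j$ is unique. Now let $t_j\downarrow0$ fast enough, depending on $\varepsilon_j$, that the action gap separating other near-minimizers at $x_j$ dominates the $o(M_j)$ kernel errors. Then the preceding argument forces $\lambda_{t_j,x_j}$ to converge to $\delta_{\zeta_\gamma}$. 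Here I use the $C^{1,1}$ estimate \eqref{eq:shortest-path-momentum} to get $g_0(x_j)\dot\gamma(1-\varepsilon_j)\to g_0(x)\dot\gamma(1)$ and $\dist(x_j,S)\to\dist(x,S)$. Proposition~\ref{prop:tail-compactness} supplies the whole-line convergence along a subsequence, and uniqueness of the limiting law identifies it as $\mathbf M_\gamma$. The equality of suprema follows by combining this realization with \eqref{eq:local-direction-bound}.

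\emph{Main obstacle.} The hard step is the quantitative localization in the forward argument: showing that the posterior phase law, and not merely its mean, is carried by near-minimizing terminal momenta. The exponential-domination trick \eqref{eq:exponential-domination} only controls a single slope, so it is not enough here. I would try first to compare posterior masses directly with the uniform $O(1+M^{2/3})$ kernel--action bounds, and to use the prefix uniqueness from Lemma~\ref{lem:shortest-path-momentum} to pin the direction once a path has been selected.
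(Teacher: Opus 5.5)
Your outline matches the paper's architecture: posterior concentration of $X_0$ on near-minimizing starting points, the midpoint $x_j'=(x_j+y_j)/2$ with Lemma~\ref{lem:nearby-shortest-endpoint} when $x_*\in S$, and the converse through prefix points $\gamma(1-\varepsilon_j)$ with unique terminal momentum and a diagonal choice of $t_j$. But step~2, which you flag as the main obstacle, is a genuine gap, and your proposed remedy cannot close it. The phase law $\lambda_{t_j,x_j}$ of Lemma~\ref{lem:posterior-phase} is the posterior law of the displacement over the last $He_j^2$ units of time, with $H\asymp M_j^{1/6}$, which is a vanishing fraction of $t_j$. Kernel--action bounds only localize the bridge at macroscopic ages $\asymp t_j$. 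At age $He_j^2$, an error $\eta$ in the normalized direction $V_H$ changes the split action by only about $\eta^2H\asymp\eta^2M_j^{1/6}$. That is swamped by the $O(1+M_j^{2/3})$ remainder of \eqref{eq:kernel-action-regimes}, and even by the refined $O(1+M_j^{1/3})$ remainder of \eqref{eq:kernel-action} available for bounded displacement. (Passing to larger ages through Corollary~\ref{cor:restart-phase} would already use the iteration described below.) So knowing that the bridge stays near shortest paths does not give its final-window direction, and comparing posterior masses with the uniform kernel--action bounds cannot pin it.

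The missing idea is the dyadic bridge iteration of Lemma~\ref{lem:adjacent-bridge-ages}. The spatially frozen estimate \eqref{eq:spatial-frozen-kernel} has error $O(1+H^{1/3}+L_aeH^2)$, measured relative to the current age $H$ rather than to $M$. Hence at each age the bridge midpoint law is comparable to a Gaussian bridge, and $\pi_{H/2}\approx\pi_H$ off an event of probability $Ce^{-cH^{2/3}}$. Summing over dyadic ages gives \eqref{eq:bridge-vector-iteration}: $V_{H_{\mathrm{lo}}}-V_{H_0}\to0$ in posterior probability whenever $H_0/H_{\mathrm{lo}}\to\infty$ and $e_jH_0,e_j^2H_0\to0$. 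The direction is then read off at a macroscopic age, in one of two ways.
\begin{itemize}
\item For $x_*\notin S$, take $H_{0,j}=\delta_jM_j$ with $\delta_j\downarrow0$ chosen diagonally in Lemma~\ref{lem:exterior-posterior}. Then $X_{(1-\delta_j)t_j}$ lies within $\delta_j^2D_j$ of some $\gamma(1-\delta_j)$, and the $C^{1,1}$ bound \eqref{eq:shortest-path-momentum} gives $\dist(V_{H_0},\{\dot\gamma(1)/D_*\})\to0$.
\item For $x_*\in S$, take $H_{0,j}=M_j/2$ and use Lemma~\ref{lem:near-support-posterior}, which places $X_{t_j/2}$ near $(x_j+X_0)/2$.
\end{itemize}
This also repairs your step~3, where you invoke the $C^1$ compactness of Lemma~\ref{lem:shortest-path-momentum} for near-minimizing paths. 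That compactness holds for exact shortest paths, and terminal velocities of near-minimizers need not converge; this is why the paper localizes positions at the fraction $1-\delta_j$ instead. Your converse relies on the forward inclusion at each fixed point $\gamma(1-\varepsilon_j)$ as $t\downarrow0$. It therefore inherits this gap, though it is otherwise the paper's argument.
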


\begin{proof}
We prove the upper bound separately for limits outside $S$ and
on $S$, then recover each shortest-path momentum by approaching
its endpoint along shortest prefixes.
Fix a realizing sequence for $\mathbf M$ with $\sup_j|x_j|<\infty$.
After extraction, $x_j\to x_*$. Put
$D_j=\dist(x_j,S)$, $M_j=D_j^2/t_j\to\infty$,
$e_j=t_j/D_j$ and $G_j=g_0(x_j)$.
Then $t_j\to0$, and the matrix limits are
$(a_*,A_*,q_*)=(a_0(x_*),A_0(x_*),q_0(x_*))$.
Let $\mathbb P_j$ be the diffusion law conditioned on
$X_{t_j}=x_j$, and define
\(
V_{j,H}=(x_j-X_{t_j-He_j^2})/(He_j),\,
\)
for $H\in(0,M_j]$.
Thus $V_{j,H}$ is the average velocity over the last $He_j^2$
units of time, multiplied by $e_j$.
Choose $H_{\mathrm{lo},j}=2^{-k_j}M_j$ in
$[M_j^{1/6},2M_j^{1/6})$.
Equation~\eqref{eq:local-phase-vector}, together with
$\ell_\mu(t_j,x_j)/e_j\to1$ and $t_j\to0$, gives
\(
\Law_{\mathbb P_j}\bigl(G_jV_{j,H_{\mathrm{lo},j}}/2\bigr)
 \rightharpoonup\lambda.
\)
Posterior concentration will identify $V_{j,H_{0,j}}$ on
$[t_j-H_{0,j}e_j^2,t_j]$, for the choices of $H_{0,j}$ below.
The bridge iteration then transfers this description to $V_{j,H_{\mathrm{lo},j}}$.

Suppose first that $x_*\notin S$.
Apply Lemma~\ref{lem:exterior-posterior} at each fixed
$\delta=2^{-k}$, $k\ge1$. A diagonal choice gives
$\delta_j\downarrow0$ of this form such that
$\delta_j\ge M_j^{-1/4}$, $|x_j-x_*|\le\delta_j^2D_j$, and
\begin{equation*}
\mathbb P_j\left\{
 \inf_{\gamma\in\operatorname{Min}_{g_0}(S,x_*)}
 |X_{(1-\delta_j)t_j}-\gamma(1-\delta_j)|
       >\delta_j^2D_j\right\}\longrightarrow0.
\end{equation*}
Set $H_{0,j}=\delta_jM_j$ and
\(
\mathcal V_{*}=
 \{\dot\gamma(1)/D_*:
             \gamma\in\operatorname{Min}_{g_0}(S,x_*)\}.
\)
Lemma~\ref{lem:shortest-path-momentum} implies that
$\mathcal V_{*}$ is compact and
$\dist(V_{j,H_{0,j}},\mathcal V_{*})\to0$ in $\mathbb P_j$-probability.
Here $H_{0,j}/H_{\mathrm{lo},j}\to\infty$,
$e_jH_{0,j}=\delta_jD_j\to0$ and
$e_j^2H_{0,j}=\delta_jt_j\to0$.
Equation~\eqref{eq:bridge-vector-iteration} therefore gives
$V_{j,H_{\mathrm{lo},j}}-V_{j,H_{0,j}}\to0$ in $\mathbb P_j$-probability.
Since $G_j\to g_0(x_*)$, the weak convergence above implies
$\supp\lambda\subset g_0(x_*)\mathcal V_{*}/2$, which is
\eqref{eq:exterior-direction-inclusion}.
Equation~\eqref{eq:local-shortest-number} then gives
\eqref{eq:local-direction-bound}.

If $x_*\in S$, then $D_j\to0$. We first express the phase law
through the initial-point posterior and then compare its
directions with shortest paths to nearby exterior endpoints.
Choose $\varepsilon_j\downarrow0$ as in
Lemma~\ref{lem:near-support-posterior}, and set
\begin{equation*}
\mathcal Y_j=\left\{y\in S:
 |x_j-y|_{G_j}^2\le\min_{z\in S}|x_j-z|_{G_j}^2+\varepsilon_jD_j^2\right\},
\qquad
\xi_j(y)=\frac{G_j(x_j-y)}{2D_j}.
\end{equation*}
The lemma gives $\mathbb P_j\{X_0\in\mathcal Y_j\}\to1$
and \eqref{eq:near-support-posterior-vector}.
Apply \eqref{eq:bridge-vector-iteration} with $H_{0,j}=M_j/2$.
Since $e_jH_{0,j}=D_j/2\to0$ and
$e_j^2H_{0,j}=t_j/2\to0$, it follows that
\begin{equation*}
G_jV_{j,H_{\mathrm{lo},j}}/2-\xi_j(X_0)\longrightarrow0
 \quad\text{in }\mathbb P_j\text{-probability},
\qquad
\Law_{\mathbb P_j}(\xi_j(X_0))\rightharpoonup\lambda.
\end{equation*}
For each $\zeta\in\supp\lambda$, these limits give
$y_j\in\mathcal Y_j$ with $\xi_j(y_j)\to\zeta$:
each fixed neighborhood of $\zeta$ has probability bounded
away from zero, while
$\mathbb P_j\{X_0\notin\mathcal Y_j\}\to0$.
Set $x'_j=(x_j+y_j)/2$.
Lemma~\ref{lem:nearby-shortest-endpoint} gives $x'_j\notin S$.
For any $\gamma'_j\in\operatorname{Min}_{g_0}(S,x'_j)$,
its terminal momentum $\pi_j=g_0(x'_j)\dot\gamma'_j(1)$ satisfies
\begin{equation*}
\left|\frac{\pi_j}{D_j}-\xi_j(y_j)\right|
 \le C\sqrt{\varepsilon_j+(\Lip g_0)D_j}\longrightarrow0,
\qquad |x'_j-x_j|\le CD_j\longrightarrow0.
\end{equation*}
Hence
\(
\mathsf R(0,x'_j,\pi_j)
 =\mathsf R(0,x'_j,\pi_j/D_j)
 \longrightarrow\mathsf R(a_*,A_*,q_*;\zeta).
\)
If $K_{\mathrm{prop}}^{\mathrm{loc}}(S)<\infty$, its definition gives
$\mathsf R(0,x'_j,\pi_j)\ge K_{\mathrm{prop}}^{\mathrm{loc}}(S)^{-1}$.
Passing to the limit proves \eqref{eq:local-direction-bound}.
For $K_{\mathrm{prop}}^{\mathrm{loc}}(S)=\infty$ the bound is immediate.

To construct $\mathbf M_\gamma$, fix $x\notin S$ and
$\gamma\in\operatorname{Min}_{g_0}(S,x)$.
For $j\ge2$, put $x_j=\gamma(1-j^{-1}),\, D_j=\dist(x_j,S)$,
\begin{equation*}
\zeta_j=\frac{(1-j^{-1})g_0(x_j)\dot\gamma(1-j^{-1})}{2D_j},
\qquad
\mathbf M_j=(\delta_{\zeta_j},a_0(x_j),A_0(x_j),q_0(x_j)).
\end{equation*}
Since $d_{g_0}(S,x_j)=(1-j^{-1})d_{g_0}(S,x)>0$, $D_j>0$.
Equation~\eqref{eq:shortest-prefix-velocity} gives, for every
$\eta\in\operatorname{Min}_{g_0}(S,x_j)$,
\(
\dot\eta(1)=(1-j^{-1})\dot\gamma(1-j^{-1}).
\)

For each fixed $j$, Proposition~\ref{prop:tail-compactness}
and \eqref{eq:exterior-direction-inclusion} imply that every
sequence of times tending to zero at $x_j$ has a subsequence
with limit $\mathbf M_j$.
Thus the normalized densities converge to $W_{\mathbf M_j}$
uniformly for $|s|,|z|\le L$, for each fixed $L>0$, as time tends to zero.
Choose $t_j\downarrow0$ with
$t_j<\min\{T/2,D_j^2/j^2\}$ such that, for
$r_j=\ell_\mu(t_j,x_j)$,
\begin{equation*}
\sup_{|s|,|z|\le j}
\left|\log\frac{\rho(t_j+r_j^2s,x_j+r_jz)}
 {\rho(t_j,x_j)W_{\mathbf M_j}(s,z)}\right|\le j^{-1}.
\end{equation*}
Here $jr_j^2<t_j/j$, so this window lies in $(0,T)$.
As $j\to\infty$, $x_j\to x$, $\zeta_j\to\zeta_\gamma$,
and the matrices converge to $(a_0(x),A_0(x),q_0(x))$.
It follows that \eqref{eq:whole-line-convergence} holds for
$\mathbf M_\gamma$, with $D_j^2/t_j\ge j^2$.
Taking the supremum over $x,\gamma$ and using the scale
invariance of $\mathsf R$ gives the asserted equality with
$K_{\mathrm{prop}}^{\mathrm{loc}}(S)$, including infinite values.
\end{proof}

\begin{proof}[Proof of Theorem~\ref{thm:coefficient-formulas}]
Every whole-line realizing sequence has, after extraction,
either bounded or escaping spatial centers.
Proposition~\ref{prop:local-directions} identifies the directional
supremum for bounded centers with $K_{\mathrm{prop}}^{\mathrm{loc}}(S)$.
For escaping centers, \eqref{eq:coefficient-infinity-bounds}
bounds that supremum between $\Phi(\mathsf r_\infty^{\mathrm{rec}})$ and $\Phi(\mathsf r_\infty^{\mathrm{all}})$.
Substitution in \eqref{eq:phase-gain-formula} and then
\eqref{eq:sharp-limit-formula} gives both inequalities.
When $\mathsf r_\infty^{\mathrm{all}}=\mathsf r_\infty^{\mathrm{rec}}$, all terms in the resulting formula are
independent of $b$; for the initial contribution this follows
from \eqref{eq:initial-linear-formula} and the definition of
$\mathfrak T(\mu)$.
\end{proof}

\begin{corollary}[Propagation and support]
\label{cor:propagation-support}
For fixed coefficient fields, all $\mu\in\mathcal D$ with
the same support $S$ have the same propagation contribution
$K_{\mathrm{prop}}(I)$, including its restriction to tuples
realized by escaping centers. Their complete constants can
differ only through $K_{\alpha,\mathrm{init}}(I)$.
\end{corollary}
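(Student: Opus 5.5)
The plan is to show that $K_{\mathrm{prop}}(I)$, which by \eqref{eq:propagation-contribution} is a supremum over tuples in $\mathfrak M_+(I)$, equals a quantity depending only on $S$, the coefficient fields and the reference data, not on the masses of $\mu$. Split every phase realizing sequence, after extraction, into one with bounded centers and one with escaping centers $|x_j|\to\infty$. This is the same dichotomy used in the proof of Theorem~\ref{thm:coefficient-formulas}. For bounded centers, Proposition~\ref{prop:local-directions} gives that the supremum of the directional gains over all such tuples equals $K_{\mathrm{prop}}^{\mathrm{loc}}(S)$. The distance $d_{g_0}(S,\cdot)$, the shortest paths $\operatorname{Min}_{g_0}(S,x)$ and the fields $a_0,A_0,q_0$ enter there, so this part depends on $\mu$ only through $S$.

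For escaping centers the bounds of Lemma~\ref{lem:fixed-direction-recovery} are not sharp in general, so I argue directly that the set of escaping-center tuples is determined by $S$. I would first reduce the phase law to an integral over starting points. By Lemma~\ref{lem:posterior-phase}, $\lambda_{t,x}$ is the law of a phase vector under the posterior of the diffusion conditioned on $X_t=x$. This posterior mixes point-start posteriors with weights proportional to $k(0,y;t,x)\mu(dy)$, and $\rho^\mu=\int\rho^y\,\mu(dy)$.

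The key step is then a comparison between two laws $\mu,\nu\in\mathcal D$ with $\supp\mu=\supp\nu=S$. Along a sequence realizing $\mathbf M\in\mathfrak M_+(I)$ for $\mu$, I would pass to a further subsequence on which the same centers and scales realize some $\mathbf M'$ for $\nu$; Proposition~\ref{prop:tail-compactness} provides this. The coefficient limits are the same. It remains to show $\supp\lambda=\supp\lambda'$. For $\zeta\in\supp\lambda$, the posterior under $\mu$ puts mass bounded below on starting regions $y\in S$ whose point-start phase vector lies near $\zeta$. Since $\nu$ charges every ball centered in $S$, and doubling gives uniform comparability on the relevant scales, those regions also receive $\nu$-posterior mass. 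This yields $\zeta\in\supp\lambda'$, and the reverse inclusion follows by symmetry.

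The main obstacle is that posterior weights concentrate exponentially, since $\log k\sim -M$ where $M=D^2/t$. A bounded ratio $d\nu/d\mu$ is therefore not available, and the $\nu$-mass of a near-optimal region could in principle be negligible against exponentially better regions. I would control this with the kernel--action bounds of Proposition~\ref{prop:kernel-action}, which have errors $o(M)$ of size $O(1+M^{2/3})$. These show that the phase vector is determined, up to vanishing error, by the action-minimizing starting points within an action window of width $o(M)$. Doubling bounds the $\mu$- and $\nu$-masses of balls of radius $\ge e^{-o(M)}$-scale in $S$ by powers of the radius, so both are only $e^{o(M)}$ relative factors. Since the supports agree, the two posteriors then have the same asymptotic set of contributing directions.

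Combining the two parts shows that $K_{\mathrm{prop}}(I)$, including its restriction to escaping centers, depends only on $S$. Theorem~\ref{thm:sharp-limit} then shows the full constants differ only through $K_{\alpha,\mathrm{init}}(I)$, because $K_{\mathrm{flat}}(I)$ does not involve $\mu$ at all.
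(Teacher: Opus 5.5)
Your bounded-center part via Proposition~\ref{prop:local-directions} is fine. The escaping-center step, however, has a genuine gap. You claim that at the \emph{same} centers and scales the phase laws $\lambda,\lambda'$ of $\mu,\nu$ have equal supports; this is not justified and is false in general. At most your argument shows that near-minimizing starting regions charged by one posterior are charged by the other, up to relative factors $e^{o(M)}$. Doubling compares ball masses within one law, not the ratio $\mu(B(y,r))/\nu(B(y,r))$, which can vary polynomially in $r$. But $\zeta\in\supp\lambda'$ requires posterior mass near $\zeta$ bounded below in the weak limit, and such factors can drive that mass to zero.

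Your step never uses that the centers escape, and it already fails for bounded ones. Take $d=2$, $a=I_2$, $b=0$, $S=\{\pm1\}\times[-\tfrac12,\tfrac12]$, $x_*=0$ and $t_j\downarrow0$, so that $|x_*-y|^2=1+s^2$ for $y=(\pm1,s)$.
\begin{itemize}
\item For normalized arclength $\mu$, the posterior splits $1:1$ between neighborhoods of $y_{1,2}=(\mp1,0)$. Hence $\lambda=\tfrac12(\delta_{\zeta_1}+\delta_{\zeta_2})$ with $\zeta_{1,2}=(\pm\tfrac12,0)$.
\item Let $\nu$ be the doubling law whose density is proportional to $|s|^\gamma$, $\gamma>0$, on the segment through $y_1$ and constant on the other segment. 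Its two posterior weights are in ratio of order $t_j^{\gamma/2}:1$, so $\lambda'=\delta_{\zeta_2}$.
\end{itemize}
With variable $a$, nothing in your argument excludes such ties between minimizers with different terminal momenta at escaping centers. The $O(1+M^{2/3})$ kernel--action errors you invoke are also far too coarse to decide which tied region survives.

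The missing idea is to compare the two laws through a quantity insensitive to $O(\log M)$ errors, and to let the center move. The kernel maximum $k_*(t,x)$ in \eqref{eq:initial-kernel-integration} depends only on $S$, so it cancels and gives $\log\rho-\log\widetilde\rho=\log m_\mu-\log m_{\widetilde\mu}+O(1+\log M)$. By \eqref{eq:neighboring-initial-mass}, the local-mass terms change by $O(1)$ between $(t,x)$ and $(t-Le^2,x-2Le\,a(t,x)\zeta)$. With $L=\lceil M^{1/12}\rceil$, the defects \eqref{eq:density-phase-defect} therefore satisfy $|\mathfrak d_{\rho,L}-\mathfrak d_{\widetilde\rho,L}|\le C_N(1+\log M)/L\to0$.

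Lemma~\ref{lem:phase-recovery}, part~\ref{item:density-phase-detection}, detects every $\zeta\in\supp\lambda$ through $\mathfrak d_{\rho,L}\to0$. Part~\ref{item:density-phase-recovery} then realizes $(\delta_\zeta,a_*,A_*,q_*)$ for $\widetilde\rho$ at centers displaced by $O(Le)$, preserving escaping type and the coefficient limits. In the example, this is how $\zeta_1$ reappears for $\nu$. Since this transfers bounded and escaping directions alike, the separate appeal to Proposition~\ref{prop:local-directions} becomes unnecessary.
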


\begin{proof}
Let $\rho,\widetilde\rho$ be the marginal densities for
$\mu,\widetilde\mu\in\mathcal D$ with support $S$.
The density and local-mass comparisons below use only the reference
data and the two doubling constants, with $N$ added after it is fixed.
We transfer each phase direction for one law to the other
using Lemma~\ref{lem:phase-recovery}.
For $D(x)=\dist(x,S)>0$, put $e=t/D(x)$, $M=D(x)^2/t$ and
$(H,L)=(M^{1/6},\lceil M^{1/12}\rceil)$, and let $M\to\infty$.
These scales are common to both laws. Choose the same Euclidean nearest point
for their local masses $m_\mu,m_{\widetilde\mu}$ in
Lemma~\ref{lem:initial-kernel-integration}.
Equation~\eqref{eq:initial-kernel-integration},
with the common kernel maximum $k_*(t,x)$, gives
\begin{equation*}
\log\rho(t,x)-\log\widetilde\rho(t,x)
 =\log m_\mu(x)-\log m_{\widetilde\mu}(x)+O(1+\log M).
\end{equation*}
Fix $N\ge2$. For $N^{-1}\le|\zeta|\le N$ and the endpoints
$(t^-,x^-)=(t-Le^2,x-2Le\,a(t,x)\zeta)$,
$M(t^-,x^-)/M(t,x)=1+O_N(L/M)$. The two nearest points are
$O_N(D(x))$ apart, so \eqref{eq:neighboring-initial-mass}
bounds the increment of each local log mass by a constant.
Subtracting the density comparison at these endpoints and
dividing by $2L$ gives
\begin{equation*}
\left|\mathfrak d_{\rho,L}(t,x,\zeta)
 -\mathfrak d_{\widetilde\rho,L}(t,x,\zeta)\right|
 \le C_N\frac{1+\log M}{L}\longrightarrow0
\end{equation*}
uniformly for the stated $\zeta$, where $\mathfrak d_{\rho,L}$
is the density increment in \eqref{eq:density-phase-defect}.
Apply Lemma~\ref{lem:phase-recovery},
part~\ref{item:density-phase-detection}, to a support vector of
any phase tuple for $\rho$; part~\ref{item:density-phase-recovery}
then recovers the same vector as a pure phase for
$\widetilde\rho$, with the same coefficient limits.
Reversing the laws proves equality of the suprema in
\eqref{eq:propagation-contribution}. Recovery preserves
escaping centers, so the restricted suprema also agree.
The last assertion follows from Theorem~\ref{thm:sharp-limit}.
\end{proof}

\subsection{Matching and constant matrices}

We combine the initial formulas of
Subsection~\ref{subsec:initial-convolution} with the coefficient
conditions of Subsection~\ref{subsec:coefficient-formulas}.
A Hermite expansion computes the Gaussian initial norm, identifies
when it is finite, and becomes diagonal under matching.
We first state the complete constant-matrix formulas, then use the
matching Gaussian value to bound initial contributions and complete
constants for arbitrary initial laws. Examples compare the effects
of coefficient alignment and sampling covariance.

\begin{proposition}[Matrix formula for point starts]
Let $a,A,q\succ0$ be constant matrices, and set
\begin{equation*}
\widehat A=a^{-1/2}Aa^{-1/2},
\qquad \widehat q=a^{-1/2}qa^{-1/2}.
\end{equation*}
For a multi-index $\mathbf n\in\N_0^d$, write
$|\mathbf n|=n_1+\cdots+n_d$ for its total degree.
In the coordinates $x=a^{1/2}y$, let $H_{\mathbf n}$ be the
orthonormal product Hermite basis of $L^2(\gamma_{I_d})$
from the Wiener chaos decomposition \citep[Chapter~2]{janson-1997}.
For each integer $N\ge2$, put
$\mathcal J_N=\{\mathbf n\in\N_0^d:2\le|\mathbf n|\le N\}$.
Affine terms have zero Hessian; the proof uses them to cancel
the source coefficients of degrees zero and one.
Extend $c\in\mathbb C^{\mathcal J_N}$ by zero outside
$\mathcal J_N$. The coefficient of $H_{\mathbf m}$ in the Hessian
of $\sum_{\mathbf n\in\mathcal J_N}c_{\mathbf n}H_{\mathbf n}$ is
\begin{equation*}
\mathsf U_{\mathbf m}c
 =\left(
   \sqrt{(m_i+1)(m_j+1+\1_{i=j})}\,
       c_{\mathbf m+\mathbf e_i+\mathbf e_j}
   \right)_{i,j=1}^d,
\qquad \mathbf m\in\N_0^d.
\end{equation*}
Here $\mathbf e_j$ is the $j$th coordinate vector, and
$\mathsf U_{\mathbf m}c=0$ for $|\mathbf m|>N-2$.
For $\omega\in\R$, the following maps record the source
coefficients of degrees $2,\ldots,N$ after Fourier transformation
in logarithmic time and the coefficients of the observed Hessian,
respectively:
\begin{equation}
\begin{aligned}
(\mathsf B_{\alpha,N}^{\widehat A}(\omega)c)_{\mathbf m}
 &=\left(\frac{|\mathbf m|+\alpha-1}{2}-i\omega\right)c_{\mathbf m}
 +\left(\tfrac12I_d-\widehat A\right):\mathsf U_{\mathbf m}c,
       \qquad \mathbf m\in\mathcal J_N,\\
\mathsf H_{\widehat q,N}c
 &=\bigl(\widehat q^{1/2}\mathsf U_{\mathbf m}c
                       \widehat q^{1/2}\bigr)_{|\mathbf m|\le N-2}.
\end{aligned}
\label{eq:gaussian-hermite-matrices}
\end{equation}
The superscripts and subscripts display the frozen matrices and
the truncation parameters used in these two maps.
The coefficient spaces have their Euclidean norms; on the
Hessian output, the squared norm is the sum of squared
Frobenius norms. Then
\begin{equation}
\mathfrak P_\alpha(\delta_0,a,A,q)
 =\sup_{N\ge2}\sup_{\omega\in\R}
       \|\mathsf H_{\widehat q,N}
                  \mathsf B_{\alpha,N}^{\widehat A}(\omega)^{-1}\|_{\mathrm{op}}.
\label{eq:gaussian-matrix-norm}
\end{equation}
For each $N$, the supremum over $\omega$ is a lower bound for
the Gaussian norm; these bounds increase to the full norm as
$N\to\infty$.
\end{proposition}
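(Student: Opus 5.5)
The plan is to start from Proposition~\ref{prop:homogeneous-initial-norm} with $\theta=\delta_0$, so $k=0$ and $F_{\mathbf T}(1,\cdot)=\gamma_a$. Together with \eqref{eq:initial-spectral-formula}, this gives
\[
\mathfrak P_\alpha(\delta_0,a,A,q)=\sup_{\omega\in\R}\bigl\|(\mathcal L_A+\tfrac{\alpha+1}2-i\omega)^{-1}\mathcal H_q\bigr\|_{\mathrm{op},L^2(\gamma_a)}.
\]
Next I will change variables $x=a^{1/2}y$. This maps $L^2(\gamma_a)$ unitarily onto $L^2(\gamma_{I_d})$. It turns $\mathcal L_A$ into $-\widehat A:D_y^2+\tfrac12y\cdot D_y$, and it turns $\mathcal H_q$ into the matrix $\widehat q^{1/2}D_y^2\widehat q^{1/2}$ after an orthogonal conjugation on the output. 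That conjugation is the polar factor of $q^{1/2}a^{-1/2}$, and it preserves the Frobenius norm.

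In these coordinates I will write $\mathcal L_{\widehat A}=\mathcal L_{I_d/2}+(\tfrac12I_d-\widehat A):D^2$. Here $\mathcal L_{I_d/2}$ is the Ornstein--Uhlenbeck number operator $\tfrac12(|\mathbf n|)$ on $H_{\mathbf n}$. I will then use the Hermite facts $\partial_iH_{\mathbf n}=\sqrt{n_i}H_{\mathbf n-\mathbf e_i}$, which give the stated formula $\mathsf U_{\mathbf m}$ for the Hessian coefficients. For a source $f$ with coefficients supported in degrees $\le N$, the resolvent reduces to a finite triangular problem. The operator $D^2$ lowers degree by two, so the solution $u$ stays polynomial of degree $\le N$.

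To avoid differentiating the source, I will reformulate the norm through the resolvent variable itself. Since the resolvent commutes with constants and affine terms only up to lower degrees, I instead parametrize by $u$. I take $u=\sum_{\mathcal J_N}c_{\mathbf n}H_{\mathbf n}$ plus affine terms, chosen so that its image $f$ has vanishing degree-$0$ and degree-$1$ coefficients modulo the Hessian contributions. The degrees $\ge2$ part of $(\mathcal L_{\widehat A}+\tfrac{\alpha+1}2-i\omega)u$ equals $\mathsf B_{\alpha,N}^{\widehat A}(\omega)c$; the shift by $\alpha-1$ comes from pairing eigenvalue $|\mathbf m|/2$ with the source index. The observed Hessian equals $\mathsf H_{\widehat q,N}c$.

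The main obstacle is justifying that the source is effectively $\mathcal H_q$ applied to $\varphi$, so that the norm is computed on $\mathcal H_q$ of the input rather than on $f$. I would handle this by reading the Proposition~\ref{prop:homogeneous-initial-norm} norm as the operator $g\mapsto$ (Hessian of the resolvent applied to $g$). This is the kernel operator of \eqref{eq:initial-integral-operator} after moving derivatives, and on polynomial inputs it equals $\mathsf H\,\mathsf B^{-1}$ on the degree-$\ge2$ part. Degree $0,1$ components of $g$ produce affine solutions, because $\mathcal L$ preserves the affine space. Their Hessian is zero, so discarding them only decreases the input norm without changing the output; this justifies restricting to $\mathcal J_N$.

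It remains to check invertibility of $\mathsf B$. Block-triangularity in degree gives diagonal entries $\tfrac{|\mathbf m|+\alpha-1}2-i\omega$, which are nonzero since $|\mathbf m|\ge2$ and $\alpha\ge0$. Each truncated supremum is then a lower bound, since polynomials of degree $\le N$ are admissible tests. The truncations are nested, which gives monotonicity in $N$. For convergence to the full norm I would use density of polynomials in $L^2(\gamma_{I_d})$ together with continuity of the resolvent-Hessian map on the finite-norm side. If the norm is infinite, the lower bounds already diverge, because any test can be approximated by its Hermite truncations with converging quotients.
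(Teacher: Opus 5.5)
\textbf{Main gap: the upper bound is not proved.} Write $T_\omega=(\mathcal L_A+\tfrac{\alpha+1}{2}-i\omega)^{-1}\mathcal H_q$ for the operator in Proposition~\ref{prop:homogeneous-initial-norm}, and put $K=\sup_N\|\mathsf H_{\widehat q,N}\mathsf B^{\widehat A}_{\alpha,N}(\omega)^{-1}\|_{\mathrm{op}}$. You need $\|T_\omega\|\le K$, and in particular that $K<\infty$ rules out $\|T_\omega\|=\infty$. You support this only by density of polynomials, \emph{continuity of the resolvent-Hessian map}, and the claim that Hermite truncations of a test have converging quotients. Neither works as stated.

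\begin{itemize}
\item Continuity is circular. Boundedness of $T_\omega$ on $L^2(\gamma_{I_d})$ is exactly what is being characterized. Moreover, $\gamma_{I_d}$ is not invariant for $\mathcal L_{\widehat A}$ unless $\widehat A=I_d/2$, so there is no a priori bound to appeal to.
\item Truncating the input is not harmless. $\mathsf B^{\widehat A}_{\alpha,N}(\omega)^{-1}$ is only triangular, so each degree-$|\mathbf m|$ output coefficient depends on all input coefficients of degrees $|\mathbf m|+2k\le N$. Every output coefficient therefore changes with $N$, and nothing you state controls this.
\end{itemize}

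The missing idea is to truncate on the solution side, as the paper does with $w=(1-\Pi_{\mathrm{aff}})v$.
\begin{itemize}
\item For $\varphi\in C_c^\infty$, the Laplace--Mehler integral in the proof of Proposition~\ref{prop:homogeneous-initial-norm} gives $T_\omega\varphi\in L^2$.
\item The identity $DS_A(h)=e^{-h/2}S_A(h)D$ gives $T_\omega\varphi=\int_0^\infty e^{-(\frac{\alpha-1}{2}-i\omega)h}\mathcal H_qS_A(h)\varphi\,dh$. The Hessian of $S_A(h)\varphi$ decays like $e^{-h}$.
\item In whitened coordinates set $c_{\mathbf n}=\int_0^\infty e^{-(\frac{\alpha-1}{2}-i\omega)h}\langle S_A(h)\varphi,H_{\mathbf n}\rangle\,dh$ for $|\mathbf n|\ge2$. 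Then $T_\omega\varphi$ is the (conjugated) Hessian of $\sum c_{\mathbf n}H_{\mathbf n}$, the vector $c$ solves the untruncated system $(\mathsf B(\omega)c)_{\mathbf m}=\langle\varphi,H_{\mathbf m}\rangle$ for $|\mathbf m|\ge2$, and $\sum|\mathbf n|^2|c_{\mathbf n}|^2<\infty$ by \eqref{eq:hermite-hessian-norm}.
\item Now $\mathsf H_{\widehat q,N}P_Nc\to T_\omega\varphi$. The defect $\mathsf B^{\widehat A}_{\alpha,N}(\omega)P_Nc-P_N\varphi$ lives only in degrees $N-1,N$, and it is bounded by the Hessian tail in degrees $N+1,N+2$, which tends to zero.
\item Letting $N\to\infty$ gives $\|T_\omega\varphi\|\le K\|\varphi\|$.
\end{itemize}
The paper gets this tail control for free, because it starts from actual smooth tests whose source and Hessian are a priori in $L^2$. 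In your spatial route it has to be supplied.

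\textbf{Smaller corrections.}
\begin{itemize}
\item The shift is off by one. The degree-$\ge2$ part of $(\mathcal L_{\widehat A}+\tfrac{\alpha+1}{2}-i\omega)u$ has diagonal $\tfrac{|\mathbf m|+\alpha+1}{2}-i\omega$, so it is not $\mathsf B^{\widehat A}_{\alpha,N}(\omega)c$. The correct statement uses $\mathcal H_q\mathcal L_A=(\mathcal L_A+1)\mathcal H_q$ and invertibility of $\mathcal L_{\widehat A}+\tfrac{\alpha+1}{2}-i\omega$ on polynomials of degree at most $N-2$. This gives $T_\omega\varphi=\mathcal H_qu$ with $u=\sum_{\mathcal J_N}c_{\mathbf n}H_{\mathbf n}$ and $c=\mathsf B^{\widehat A}_{\alpha,N}(\omega)^{-1}f$, since $(\mathcal L_{\widehat A}+\tfrac{\alpha-1}{2}-i\omega)u-\varphi$ is affine.
\item With the correct shift you cannot in general choose affine terms to cancel the degree-$0,1$ image. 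The diagonals $\tfrac{\alpha-1}{2}-i\omega$ and $\tfrac{\alpha}{2}-i\omega$ vanish at $\omega=0$ for $\alpha=1$ and $\alpha=0$ respectively. You do not need this, since $\mathcal H_q$ kills the affine residual.
\item Polynomials are not admissible inputs, because that norm is defined on $C_c^\infty$. The lower bound therefore needs cutoffs $\chi_R\varphi$. This is harmless: the Mehler formula preserves polynomial growth uniformly in $h$, so dominated convergence applies.
\end{itemize}
With these repairs and the solution-side truncation, your route is a valid and somewhat shorter alternative to the paper's. The paper instead redoes the logarithmic-time reduction and applies Plancherel only to the finite truncated systems.
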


\begin{proof}
Use the whitening $x=a^{1/2}y$ and the logarithmic coordinates
of Proposition~\ref{prop:homogeneous-initial-norm}. On a model
interval $(c,B)$, $c>0$, set $J=(\log c,\log B)$ and
\begin{equation*}
v(r,y)=e^{(\alpha-1)r/2}u(e^r,e^{r/2}a^{1/2}y),\qquad
B_\alpha=-\widehat A:D_y^2+\tfrac12y\cdot D_y+\tfrac{\alpha-1}{2}.
\end{equation*}
The source and Hessian norms become those of
$(-\partial_r+B_\alpha)v$ and $\mathcal H_{\widehat q}v$
in $L^2(J\times\R^d,dr\,\gamma_{I_d}(y)\,dy)$.
For the Hessian norm identity, put $R=q^{1/2}a^{-1/2}$,
so $R^*R=\widehat q$, where the star denotes conjugate transpose.
For every complex matrix $M$, the Frobenius norm satisfies
\begin{equation*}
|RMR^*|^2
 =\operatorname{tr}(M^*\widehat qM\widehat q)
 =|\widehat q^{1/2}M\widehat q^{1/2}|^2.
\end{equation*}
Thus the norm identity holds for arbitrary positive definite $a,q$.
The scalar shift in $B_\alpha$ is one lower than in the
resolvent of \eqref{eq:homogeneous-initial-norm} with $k=0$.
Indeed, for constant matrices $A,q$,
\begin{equation*}
\mathcal H_q\mathcal L_A=(\mathcal L_A+1)\mathcal H_q
\end{equation*}
on smooth functions. This identity accounts for the change
from the Hessian resolvent to the source operator acting on $v$.

The Hermite normalization gives
\begin{equation*}
\partial_jH_{\mathbf n}=\sqrt{n_j}H_{\mathbf n-\mathbf e_j},
\qquad (-\Delta_y+y\cdot D_y)H_{\mathbf n}=|\mathbf n|H_{\mathbf n}.
\end{equation*}
Let $\Pi_{\mathrm{aff}}$ be the orthogonal projection in
$L^2(\gamma_{I_d})$ onto Hermite degrees zero and one, acting
at each time $r$. Since $B_\alpha$ preserves this affine space,
the source projected onto degrees at least two depends only
on $w=(1-\Pi_{\mathrm{aff}})v$, and
$\mathcal H_{\widehat q}w=\mathcal H_{\widehat q}v$.
Orthogonal projection contracts the source norm.
For a finite expansion $w=\sum_{\mathbf n\in\mathcal J_N}
c_{\mathbf n}(r)H_{\mathbf n}$, set
$c_N(r)=(c_{\mathbf n}(r))_{\mathbf n\in\mathcal J_N}$.
The projected source and Hessian have coefficient vectors
$-c_N'+\mathsf B_{\alpha,N}^{\widehat A}(0)c_N$ and
$\mathsf H_{\widehat q,N}c_N$, respectively.

These coefficient identities also identify the optimal norms.
Indeed, for a smooth model test, Parseval gives
\begin{equation}
\|D_y^2w\|_{L^2(J\times\R^d;\gamma_{I_d})}^2
 =\sum_{|\mathbf n|\ge2}|\mathbf n|(|\mathbf n|-1)
                         \|c_{\mathbf n}\|_{L^2(J)}^2.
\label{eq:hermite-hessian-norm}
\end{equation}
The Gaussian Ornstein--Uhlenbeck part of $B_\alpha$ is diagonal
in this basis; its degree factors are controlled by the
displayed sum. The remaining spatial part is
$(I_d/2-\widehat A):D_y^2$, and time differentiation commutes
with Hermite truncation. Thus the truncated Hessians and
projected sources converge in $L^2$.

Conversely, for a finite profile $w$ with $H^1$ coefficients
and zero right trace,
solve the finite-dimensional terminal equation
\begin{equation*}
(-\partial_r+B_\alpha)\ell=-\Pi_{\mathrm{aff}}B_\alpha w,
\qquad \ell(\log B,\cdot)=0,
\end{equation*}
in the affine space. The completion $w+\ell$ has the same
Hessian, and its full source equals the projected source of $w$.
On the bounded interval $J$, its polynomial spatial dependence
gives all required Gaussian Sobolev integrability. Undoing the
coordinate change and applying the smooth-core approximation
of Lemma~\ref{lem:graph-core} therefore yields
\begin{equation*}
\mathfrak P_\alpha(\delta_0,a,A,q)
 =\sup_{N,J,c_N}
 \frac{\|\mathsf H_{\widehat q,N}c_N\|_{L^2(J)}}
      {\|-c_N'+\mathsf B_{\alpha,N}^{\widehat A}(0)c_N\|_{L^2(J)}},
\end{equation*}
where $J$ ranges over bounded intervals in $\R$ and
$0\ne c_N\in H^1(J;\mathbb C^{\mathcal J_N})$ has zero right trace.
Padding the coefficient vector with zero entries embeds a profile
into every larger degree truncation, so these optimal ratios
are nondecreasing in $N$.

The source matrix lowers degree by two apart from its diagonal,
whose entries are $(|\mathbf m|+\alpha-1)/2\ge(1+\alpha)/2>0$.
For each fixed $N$, triangularity therefore gives
\begin{equation*}
\|e^{-s\mathsf B_{\alpha,N}^{\widehat A}(0)}\|_{\mathrm{op}}
 \le C_N(1+s^{|\mathcal J_N|-1})e^{-(1+\alpha)s/2},\qquad s\ge0.
\end{equation*}
The resulting kernel is integrable for each $N$.
Extending sources by zero in logarithmic time
and exhausting $\R$ by finite intervals identifies the last
supremum, for each $N$, with the $L^2(\R)$ norm of
\begin{equation*}
f\longmapsto\int_0^\infty
 \mathsf H_{\widehat q,N}e^{-s\mathsf B_{\alpha,N}^{\widehat A}(0)}f(\,\cdot+s)\,ds.
\end{equation*}
Its Fourier multiplier is
$\mathsf H_{\widehat q,N}\mathsf B_{\alpha,N}^{\widehat A}(\omega)^{-1}$.
The Plancherel norm identity used in
Proposition~\ref{prop:homogeneous-initial-norm} gives
\eqref{eq:gaussian-matrix-norm}.
\end{proof}

\begin{corollary}[Finiteness for point starts]
\label{cor:gaussian-finiteness}
Let $I=(0,T)$. For constant matrices $a,A,q\succ0$ and every $\alpha\ge0$,
$K_\alpha(I;\gamma_{ta},A,q)=\mathfrak P_\alpha(\delta_0,a,A,q)$,
and
\begin{equation*}
K_\alpha(I;\gamma_{ta},A,q)<\infty
\quad\Longleftrightarrow\quad a-A\succ0.
\end{equation*}
When $a-A\succ0$, a finite upper bound depends only on $d,\alpha$,
the ellipticity bounds of $a,A,q$, and $\lambda_{\min}(a-A)^{-1}$.
\end{corollary}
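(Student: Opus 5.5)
The plan is to squeeze $K_\alpha(I;\gamma_{ta},A,q)$ between the initial contribution and a dilation bound. Finiteness will then be read off from the three-term formula of Theorem~\ref{thm:sharp-limit}, plus one direct coercivity estimate.

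\emph{Equality with the Gaussian model.} Take $\mu=\delta_0$, $b=0$ and the given constant $a,A,q$. Assumption~\ref{ass:coefficients} holds trivially, $\mu\in\mathcal D$, and $\rho(t,x)=\gamma_{ta}(x)$.

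\begin{itemize}
\item Lower bound. Corollary~\ref{cor:atomic-poles}, or directly the definition of $\mathfrak T(\delta_0)=\{(\delta_0,a,A,q)\}$, gives $K_{\alpha,\mathrm{init}}(I)=\mathfrak P_\alpha(\delta_0,a,A,q)$. Lemma~\ref{lem:recovery} then gives $K_\alpha(I)\ge\mathfrak P_\alpha(\delta_0,a,A,q)$.
\item Upper bound. For $\lambda>0$, the parabolic dilation $u\mapsto u(\lambda^2t,\lambda x)$ maps $\mathcal C_{\alpha,0}(J;\gamma_{ta})$ onto $\mathcal C_{\alpha,0}(\lambda^{-2}J;\gamma_{ta})$. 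It multiplies both $\|\mathcal H_qu\|$ and $\|P_Au\|$ by the same factor, because $t^\alpha\gamma_{ta}(x)\,dt\,dx$ is homogeneous and the matrices are constant.
\item Hence $k_\alpha(J;\gamma_{ta},A,q)$ depends only on the shape of $J$ up to dilation. Every $J\subseteq(0,T)$ is an interval $(c,B)$ with $0\le c<B<\infty$ for the model weight $F_{(\delta_0,a,A,q)}=\gamma_{\tau a}$.
\item By \eqref{eq:initial-model-gain} and the remark allowing $c=0$, $k_\alpha(J;\gamma_{ta},A,q)\le\mathfrak P_\alpha(\delta_0,a,A,q)$ for every such $J$. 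Therefore $K_\alpha(I)\le K_{\alpha,h}(I)\le\mathfrak P_\alpha(\delta_0,a,A,q)$.
\end{itemize}

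This proves the asserted equality without analysing the other contributions.

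\emph{Necessity of $a-A\succ0$.} The coefficients are constant, so $\mathsf R(t,x,v)=\mathsf R(a,A,q;v)$. By the corollary following Theorem~\ref{thm:coefficient-formulas} (condition~\ref{item:coefficient-directional-limit}), $\mathsf r^{\mathrm{all}}_\infty=\mathsf r^{\mathrm{rec}}_\infty=\min_{|v|=1}\mathsf R(a,A,q;v)$. Theorem~\ref{thm:coefficient-formulas} therefore gives
\begin{equation*}
K_\alpha(I)\ge\Phi\Bigl(\min_{|v|=1}\mathsf R(a,A,q;v)\Bigr).
\end{equation*}
If $a-A\not\succ0$, some unit $v$ has $v^{\mathsf T}(a-A)v\le0$. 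Then $\Phi=\infty$, so $K_\alpha(I)=\mathfrak P_\alpha=\infty$.

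\emph{Sufficiency and the quantitative bound.} This is the main obstacle: it requires a bound on $\mathfrak P_\alpha(\delta_0,a,A,q)$ that is uniform in the Hermite truncation $N$. I would first whiten as in the matrix-formula proposition, $x=a^{1/2}y$, so that $\widehat A=a^{-1/2}Aa^{-1/2}\prec I_d$ with $I_d-\widehat A\succeq\lambda_{\min}(a-A)/\Lambda_a\,I_d$. I would then work in logarithmic time with the operator
\begin{equation*}
-\partial_r+B_\alpha,\qquad B_\alpha=\tfrac12(-\Delta_y+y\cdot D_y)+(\tfrac12I_d-\widehat A):D_y^2+\tfrac{\alpha-1}{2},
\end{equation*}
in $L^2(dr\,\gamma_{I_d})$, restricted to Hermite degrees $\ge2$ after removing the affine part.

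The first attempt is an energy identity: pair the source with $\mathcal L w$, where $\mathcal L=-\Delta_y+y\cdot D_y$, and integrate by parts in $\gamma_{I_d}$. The commutation $\partial_i\mathcal L=(\mathcal L+1)\partial_i$ gives
\begin{equation*}
\operatorname{Re}\langle -M:D^2w,\mathcal Lw\rangle=\langle M,\Gamma(w)\rangle,
\end{equation*}
where $\Gamma(w)$ is a positive semidefinite matrix built from $D^2w$ and $D^3w$ through the Gaussian carré du champ. The intended result is a lower bound on $\operatorname{Re}\langle(-\partial_r+B_\alpha)w,\mathcal Lw\rangle$ of the form $c\,\lambda_{\min}(I_d-\widehat A)\,\|D^2w\|^2$. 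The time-derivative term has a favourable sign from the zero terminal trace, as in the proof of Proposition~\ref{prop:whole-line-gains}. On degrees $\ge2$ one has $\|\mathcal Lw\|\lesssim\|D^2w\|$ by \eqref{eq:hermite-hessian-norm}, so this closes the estimate.

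If the cross terms resist, the fallback is the Fourier-in-$r$ multiplier: bound $\|\mathsf B^{\widehat A}_{\alpha,N}(\omega)^{-1}\|$ degree by degree. One would use triangularity and the fact that the Hessian coupling acts like $(I_d-\widehat A)$ times a degree factor against the diagonal $|\mathbf m|/2$.

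Either route yields a constant depending only on $d,\alpha$, the ellipticity bounds and $\lambda_{\min}(a-A)^{-1}$.
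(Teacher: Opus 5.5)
Your equality argument is fine: $\gamma_{ta}$ is itself the model weight, so the upper bound is immediate, and recovery or dilation gives the lower bound. Necessity is also fine. Passing through Theorem~\ref{thm:coefficient-formulas} with $\mathsf r_\infty^{\mathrm{all}}=\mathsf r_\infty^{\mathrm{rec}}=\min_{|v|=1}\mathsf R(a,A,q;v)$ is heavier than the paper's route, which realizes the phases $\tfrac12a^{-1}v$ directly at $x_j=R_jv$ and then applies Lemma~\ref{lem:recovery}. It still gives \eqref{eq:gaussian-directional-lower-bound}, with no circularity.

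The gap is in sufficiency, which is the substance of the corollary, and your first route rests on a false claim. $D^2$ lowers Hermite degree by two while $\mathcal L$ is diagonal, so $\operatorname{Re}\langle -M:D^2w,\mathcal Lw\rangle$ couples degrees $n$ and $n+2$ and has no sign. For $d=1$, $M=1$ and $w=x_2H_2+x_4H_4$ it equals $-4\sqrt3\,x_2x_4$, so it is not $\langle M,\Gamma(w)\rangle$ with $\Gamma(w)\succeq0$. The coercivity you aim for fails too. Take $\widehat A=(\tfrac12-\sigma)I_d$, and let $f_2,f_4$ be the $L^2(\gamma_{I_d})$-normalized rotation-invariant Hermite polynomials of degrees two and four, signed so that $\Delta f_4=2\sqrt{d+2}\,f_2$. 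For $w=x_2f_2+x_4f_4$ one computes
\begin{equation*}
\operatorname{Re}\langle B_\alpha w,\mathcal Lw\rangle_{L^2(\gamma_{I_d})}
=(1+\alpha)x_2^2+4\sigma\sqrt{d+2}\,x_2x_4+(6+2\alpha)x_4^2 .
\end{equation*}
This form is indefinite once $\sigma^2(d+2)>(1+\alpha)(3+\alpha)/2$. An example is $\widehat A=0.3I_d$, $\alpha=0$, $d=100$, where $\lambda_{\min}(I_d-\widehat A)=0.7$. Replacing $w$ by $\phi(r)w$ with $\phi$ compactly supported in time removes the time term. Hence no lower bound by a positive multiple of $\|D^2w\|^2$ can hold.

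Your fallback does not supply the missing mechanism either. Triangularity inverts $\mathsf B^{\widehat A}_{\alpha,N}(\omega)$ for each fixed $N$, but the corollary needs bounds uniform in $N$ and $\omega$. Relative to the diagonal $\mathsf D$ (degree $n\mapsto(n-1+\alpha)/2-i\omega$), the lowering part is $\mathsf L=(\tfrac12I_d-\widehat A):D^2$, not $(I_d-\widehat A):D^2$.

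The decisive fact is that $0\prec\widehat A\prec I_d$ gives $\vartheta=2\|\tfrac12I_d-\widehat A\|_{\mathrm{op}}<1$; this uses $A\succ0$ as well as $a-A\succ0$. With $\Pi_n$ the projection onto Hermite degree $n$, a weighted Cauchy--Schwarz gives $\|\mathsf L\mathsf D^{-1}\Pi_n\|_{\mathrm{op}}\le\vartheta\sqrt{n(n+d-2)}/(n-1+\alpha)$. This is below $(1+\vartheta)/2$ for $n\ge n_0$, so a Neumann series controls the high-degree block.

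At low degrees this norm can be of order $\sqrt d$; the example above is exactly the case $n=4$. There one uses that $\mathsf L\mathsf D^{-1}$ restricted to degrees below $n_0$ is nilpotent. The inverse of that block is then a finite series, bounded uniformly in $N,\omega$ since the diagonal moduli are at least $(1+\alpha)/2$. Finally \eqref{eq:hermite-hessian-norm} turns the resulting bound $\|\mathsf Dc\|\le C\|\mathsf B^{\widehat A}_{\alpha,N}(\omega)c\|$ into the Hessian bound with the stated dependence.
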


\begin{proof}
Parabolic dilation preserves every Gaussian model quotient
and maps its interval into arbitrarily short intervals approaching
time zero within $I$, proving the equality of the two norms.
To obtain the necessary condition directly, fix $t_0\in(0,T)$
and a unit vector $v$. For $R_j\uparrow\infty$, take the natural scales
\begin{equation*}
x_j=R_jv,\qquad t_j=t_0,\qquad
r_j=\ell_{\delta_0}(t_0,x_j)
    =\frac{t_0}{R_j+\sqrt{t_0}}.
\end{equation*}
Then $R_jr_j/t_0\to1$ and $t_0/r_j^2\to\infty$.
Expansion of the Gaussian density gives, locally uniformly in $(s,z)$,
\begin{equation*}
\begin{aligned}
\log\frac{\gamma_{(t_0+r_j^2s)a}(R_jv+r_jz)}
               {\gamma_{t_0a}(R_jv)}
 &\longrightarrow
 \frac{s}{2}v^{\mathsf T}a^{-1}v-v^{\mathsf T}a^{-1}z\\
 &=2(s\zeta_v^{\mathsf T}a\zeta_v-\zeta_v\cdot z),
 \qquad \zeta_v=\tfrac12a^{-1}v.
\end{aligned}
\end{equation*}
These are phase realizations for the Brownian point start.
Since $a^{-1}$ is invertible, the vectors $\zeta_v$ cover all directions.
Proposition~\ref{prop:whole-line-gains}, Lemma~\ref{lem:recovery}
and the equality of the two norms just proved therefore give
\begin{equation}
\mathfrak P_\alpha(\delta_0,a,A,q)
 \ge\Phi\!\left(\lambda_{\min}
                   (q^{-1/2}(a-A)q^{-1/2})\right).
\label{eq:gaussian-directional-lower-bound}
\end{equation}
Thus finiteness forces $a-A\succ0$.

Conversely, suppose $a-A\succ0$. In the whitened coordinates
of \eqref{eq:gaussian-hermite-matrices}, put
$\mathsf C=I_d/2-\widehat A$ and
$\vartheta=2\|\mathsf C\|_{\mathrm{op}}<1$.
On $\mathbb C^{\mathcal J_N}$, write
$\mathsf B_{\alpha,N}^{\widehat A}(\omega)=\mathsf D+\mathsf L$,
where $\mathsf D$ acts on degree $n$ by
$(n-1+\alpha)/2-i\omega$ and $\mathsf L$ is $\mathsf C:D^2$
projected onto degrees at least two. Let $\Pi_n$ denote the
projection onto degree $n$. An orthogonal change of coordinates
diagonalizes $\mathsf C$ with eigenvalues $\sigma_i$ and preserves
each Gaussian Hermite degree. For a coefficient vector $c$ supported
on degree $n\ge2$, weighted Cauchy--Schwarz gives, for $|\mathbf m|=n-2$,
\begin{equation*}
\left|\sum_i\sigma_i\sqrt{(m_i+1)(m_i+2)}\,
                         c_{\mathbf m+2\mathbf e_i}\right|^2
 \le\|\mathsf C\|_{\mathrm{op}}^2(n+d-2)
           \sum_i(m_i+2)|c_{\mathbf m+2\mathbf e_i}|^2,
\end{equation*}
where $\sum_i(m_i+1)=n+d-2$.
Summing over $\mathbf m$ and reindexing gives
\begin{equation*}
\sum_{|\mathbf m|=n-2}\sum_i(m_i+2)
                         |c_{\mathbf m+2\mathbf e_i}|^2
 =\sum_{|\mathbf k|=n}\left(\sum_{i:k_i\ge2}k_i\right)|c_{\mathbf k}|^2
 \le n\|c\|^2.
\end{equation*}
Projection onto degrees at least two can only decrease the norm, so
\begin{equation*}
\|\mathsf L\Pi_n\|_{\mathrm{op}}
 \le\frac\vartheta2\sqrt{n(n+d-2)},\qquad
\|\mathsf L\mathsf D^{-1}\Pi_n\|_{\mathrm{op}}
 \le\vartheta\frac{\sqrt{n(n+d-2)}}{n-1+\alpha}.
\end{equation*}
Choose $\vartheta_*=(1+\vartheta)/2$ and $n_0$ so that the last bound is at most
$\vartheta_*$ for every $n\ge n_0$.
Both choices depend only on the data stated in the corollary.
Let $\Pi_{\mathrm{high}}$ and $\Pi_{\mathrm{low}}=I-\Pi_{\mathrm{high}}$ project onto
Hermite degrees at least $n_0$ and below $n_0$ in $\mathcal J_N$,
and put $\mathsf T=\mathsf L\mathsf D^{-1}$.
Orthogonality of the images of different degrees gives
$\|\mathsf T \Pi_{\mathrm{high}}\|\le \vartheta_*$.
For $f=\mathsf B_{\alpha,N}^{\widehat A}(\omega)c$ and $\widehat c=\mathsf Dc$,
degree lowering gives
\begin{equation*}
\Pi_{\mathrm{high}}f=(I+\Pi_{\mathrm{high}}\mathsf T \Pi_{\mathrm{high}})\Pi_{\mathrm{high}}\widehat c,
\qquad
\|\Pi_{\mathrm{high}}\widehat c\|\le(1-\vartheta_*)^{-1}\|f\|.
\end{equation*}
On the lower degrees, $\Pi_{\mathrm{low}}\mathsf T \Pi_{\mathrm{low}}$ is
nilpotent, with norm bounded uniformly in $N,\omega$ since
the diagonal moduli of $\mathsf D$ are at least $(1+\alpha)/2$.
The inverse of $I+\Pi_{\mathrm{low}}\mathsf T \Pi_{\mathrm{low}}$ on this subspace
is the finite sum
$\sum_{k=0}^{n_0}(-\Pi_{\mathrm{low}}\mathsf T \Pi_{\mathrm{low}})^k$.
The equation
\begin{equation*}
\Pi_{\mathrm{low}}f=(I+\Pi_{\mathrm{low}}\mathsf T \Pi_{\mathrm{low}})\Pi_{\mathrm{low}}\widehat c
             +\Pi_{\mathrm{low}}\mathsf T \Pi_{\mathrm{high}}\widehat c
\end{equation*}
therefore gives $\|\widehat c\|\le C_{n_0,\vartheta_*}\|f\|$, uniformly in $N,\omega$.
By \eqref{eq:hermite-hessian-norm},
\begin{equation*}
\|\mathsf H_{\widehat q,N}c\|
 \le2\sqrt2\,\|\widehat q\|_{\mathrm{op}}\|\mathsf Dc\|
 \le C\|\mathsf B_{\alpha,N}^{\widehat A}(\omega)c\|.
\end{equation*}
This proves finiteness in \eqref{eq:gaussian-matrix-norm}.
\end{proof}

\begin{corollary}[Complete formulas for constant matrices]
\label{cor:constant-matrix-formulas}
Let $a,A,q\succ0$ be constant on $[0,T]\times\R^d$, and put
\begin{equation*}
k_{\mathrm{flat}}=\lambda_{\max}(A^{-1/2}qA^{-1/2}),\qquad
k_{\mathrm{dir}}=\Phi\!\left(\lambda_{\min}
                       (q^{-1/2}(a-A)q^{-1/2})\right).
\end{equation*}
Thus $k_{\mathrm{dir}}=\lambda_{\max}((a-A)^{-1/2}q(a-A)^{-1/2})$ when
$a-A\succ0$, and $k_{\mathrm{dir}}=+\infty$ otherwise.
\begin{enumerate}
\item\label{item:constant-matrix-doubling}
If $\mu\in\mathcal D$, then
$K_\alpha(I)=\max\{k_{\mathrm{flat}},k_{\mathrm{dir}},K_{\alpha,\mathrm{init}}(I)\}$,
with $K_{\alpha,\mathrm{init}}(I)$ given by
\eqref{eq:initial-spectral-formula}.
\item\label{item:constant-matrix-atomic}
If $\mu$ is a finite atomic law, then
$K_\alpha(I)=\mathfrak P_\alpha(\delta_0,a,A,q)$,
with the Gaussian norm given by \eqref{eq:gaussian-matrix-norm}.
\end{enumerate}
These formulas are independent of the admissible bounded Borel
drift $b$.
\end{corollary}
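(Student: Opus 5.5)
The plan is to specialize Theorem~\ref{thm:coefficient-formulas} to constant matrices and then evaluate each of its terms. For constant $a,A,q$ the directional ratio $\mathsf R(t,x,v)=\mathsf R(a,A,q;v)$ does not depend on $(t,x)$. Condition~\ref{item:coefficient-directional-limit} of the corollary following Theorem~\ref{thm:coefficient-formulas} therefore holds trivially, with $\mathsf R_\infty(t,v)=\mathsf R(a,A,q;v)$. This gives
\[
\mathsf r_\infty^{\mathrm{all}}=\mathsf r_\infty^{\mathrm{rec}}=\lambda_{\min}\bigl(q^{-1/2}(a-A)q^{-1/2}\bigr),
\]
so $\Phi(\mathsf r_\infty^{\mathrm{all}})=k_{\mathrm{dir}}$. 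Theorem~\ref{thm:coefficient-formulas} then yields the exact formula
\[
K_\alpha(I)=\max\{K_{\alpha,\mathrm{init}}(I),K_{\mathrm{flat}}(I),K^{\mathrm{loc}}_{\mathrm{prop}}(S),k_{\mathrm{dir}}\},
\]
and this value is independent of $b$. The flat term is immediate: $K_{\mathrm{flat}}(I)=k_{\mathrm{flat}}$. For the local propagation term, every value $\Phi(\mathsf R(0,x,g_0(x)\dot\gamma(1)))$ in \eqref{eq:local-shortest-number} is at most $\sup_v\Phi(\mathsf R(a,A,q;v))=k_{\mathrm{dir}}$. Hence $K^{\mathrm{loc}}_{\mathrm{prop}}(S)$ is absorbed into $k_{\mathrm{dir}}$, which proves part~\ref{item:constant-matrix-doubling}. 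The alternative expression for $k_{\mathrm{dir}}$ when $a-A\succ0$ is the standard identity $\lambda_{\min}(q^{-1/2}Mq^{-1/2})^{-1}=\lambda_{\max}(M^{-1/2}qM^{-1/2})$ for $M=a-A$.

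For part~\ref{item:constant-matrix-atomic}, I apply Corollary~\ref{cor:atomic-poles}. With constant matrices, every atom contributes the same resolvent norm, so $K_{\alpha,\mathrm{init}}(I)=\mathfrak P_\alpha(\delta_0,a,A,q)$ by Proposition~\ref{prop:homogeneous-initial-norm}. This norm has the matrix representation \eqref{eq:gaussian-matrix-norm}. It remains to show that the Gaussian norm dominates both $k_{\mathrm{flat}}$ and $k_{\mathrm{dir}}$. The bound $\mathfrak P_\alpha(\delta_0,a,A,q)\ge k_{\mathrm{flat}}$ is the lower inequality in \eqref{eq:initial-model-comparison}. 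The bound $\mathfrak P_\alpha(\delta_0,a,A,q)\ge k_{\mathrm{dir}}$ is exactly \eqref{eq:gaussian-directional-lower-bound} from the proof of Corollary~\ref{cor:gaussian-finiteness}.

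The one step that needs care is that \eqref{eq:gaussian-directional-lower-bound} is stated for the pure Brownian point-start density $\gamma_{ta}$ rather than for the actual marginal. It is nevertheless a statement about the model quantity $\mathfrak P_\alpha(\delta_0,a,A,q)$ alone, since it goes through the identity $K_\alpha(I;\gamma_{ta},A,q)=\mathfrak P_\alpha$ proved there. So it transfers verbatim, and the maximum collapses to the Gaussian norm. Drift independence then follows from the independence already established in part~\ref{item:constant-matrix-doubling}.
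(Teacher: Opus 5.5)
Your proposal is correct and matches the paper's proof essentially step by step. Constancy of the ratio gives $\mathsf r_\infty^{\mathrm{all}}=\mathsf r_\infty^{\mathrm{rec}}=\lambda_{\min}(q^{-1/2}(a-A)q^{-1/2})$, and $K^{\mathrm{loc}}_{\mathrm{prop}}(S)$ is absorbed into $k_{\mathrm{dir}}$, so Theorem~\ref{thm:coefficient-formulas} yields part~(1); part~(2) then follows from \eqref{eq:atomic-poles}, \eqref{eq:homogeneous-initial-norm} and the two lower bounds \eqref{eq:initial-model-comparison} and \eqref{eq:gaussian-directional-lower-bound} on $\mathfrak P_\alpha(\delta_0,a,A,q)$, applied to finite atomic laws, which lie in $\mathcal D$.
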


\begin{proof}
The constant directional ratios give
\begin{equation*}
\mathsf r_\infty^{\mathrm{all}}=\mathsf r_\infty^{\mathrm{dir}}=\mathsf r_\infty^{\mathrm{rec}}
 =\lambda_{\min}(q^{-1/2}(a-A)q^{-1/2}),\qquad
K_{\mathrm{flat}}(I)=k_{\mathrm{flat}}.
\end{equation*}
For $\mathcal D$, every local direction also has gain at most
$k_{\mathrm{dir}}$, so $K_{\mathrm{prop}}^{\mathrm{loc}}(S)\le k_{\mathrm{dir}}$.
Substitution into Theorem~\ref{thm:coefficient-formulas} proves
part~\ref{item:constant-matrix-doubling};
the Rayleigh quotient identifies $k_{\mathrm{dir}}$.

Equations~\eqref{eq:initial-model-comparison} and
\eqref{eq:gaussian-directional-lower-bound} give
$\mathfrak P_\alpha(\delta_0,a,A,q)\ge\max\{k_{\mathrm{flat}},k_{\mathrm{dir}}\}$.
For a finite atomic law, \eqref{eq:atomic-poles} and
\eqref{eq:homogeneous-initial-norm} with $\theta=\delta_0$ give
$K_{\alpha,\mathrm{init}}(I)=\mathfrak P_\alpha(\delta_0,a,A,q)$.
Part~\ref{item:constant-matrix-doubling} therefore proves
part~\ref{item:constant-matrix-atomic}.
All terms depend only on $\mu,\alpha,a,A,q$.
\end{proof}

Under matching, the flat and phase gains are both $2$, including
for variable coefficients. The Gaussian initial norm supplies the
remaining upper bound, uniformly over initial laws.

\begin{theorem}[Matching constants]
\label{thm:matching-arbitrary-laws}
For every constant $a_*\succ0$,
\begin{equation}
\mathfrak P_\alpha(\delta_0,a_*,a_*/2,a_*)
 =C_\alpha^{\rm G}
 :=\sup_{\substack{n\in\N\\n\ge2}}
       \frac{2\sqrt{n(n-1)}}{n-1+\alpha}
 \le2\sqrt2.
\label{eq:matching-gaussian-value}
\end{equation}
The superscript $\mathrm G$ refers to the matching Gaussian
point-source model. Here $C_0^{\rm G}=2\sqrt2$,
$C_\alpha^{\rm G}>2$ for $0\le\alpha<1/2$, and
$C_\alpha^{\rm G}=2$ for $\alpha\ge1/2$.
\begin{enumerate}
\item\label{item:matching-initial-laws}
If $\mu\in\mathcal D$, $A_0=a_0/2$ and $q_0=a_0$ on
$\supp\mu$, then
\begin{equation*}
2\le K_{\alpha,\mathrm{init}}(I)\le C_\alpha^{\rm G},
\end{equation*}
with equality at the upper endpoint for finite atomic laws.
In particular, $K_{\alpha,\mathrm{init}}(I)=2$ when $\alpha\ge1/2$.
\item\label{item:matching-arbitrary-laws}
If $A=a/2$ and $q=a$ on $[0,T]\times\R^d$, then every initial
probability measure $\mu$ satisfies
\begin{equation*}
2\le K_\alpha(I;\rho^\mu,a/2,a)\le C_\alpha^{\rm G},
\end{equation*}
with equality at the upper endpoint for finite atomic laws.
For $\mu\in\mathcal D$, $K_\alpha(I)=K_{\alpha,\mathrm{init}}(I)$.
For $\alpha\ge1/2$ the constant is $2$ for every $\mu$.
Moreover, for every $\alpha\ge0$ and $0<s_0<s_1\le T$,
\begin{equation}
K_\alpha((s_0,s_1);\rho^\mu,a/2,a)=2.
\label{eq:matching-positive-time}
\end{equation}
\end{enumerate}
\end{theorem}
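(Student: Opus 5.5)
First I would evaluate the Gaussian value \eqref{eq:matching-gaussian-value} from the Hermite formula \eqref{eq:gaussian-matrix-norm}. Under matching, $\widehat A=I_d/2$ and $\widehat q=I_d$, so the off-diagonal term $(\tfrac12I_d-\widehat A):\mathsf U_{\mathbf m}c$ vanishes. Then $\mathsf B_{\alpha,N}(\omega)$ is diagonal with entries $(n-1+\alpha)/2-i\omega$ on degree $n$. By \eqref{eq:hermite-hessian-norm}, $\mathsf H_{I_d,N}$ maps the degree-$n$ block to Hessian coefficients with squared norm exactly $n(n-1)\|c\|^2$. Different degrees have orthogonal Hessian images, since the Hessian lowers degree by exactly two. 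The operator norm is therefore the maximum over degrees of $\sqrt{n(n-1)}/|(n-1+\alpha)/2-i\omega|$, and the supremum over $\omega$ is attained at $\omega=0$. This gives $C_\alpha^{\rm G}$ independently of $a_*$.

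The elementary facts about $C_\alpha^{\rm G}$ come from studying $g(n)=4n(n-1)/(n-1+\alpha)^2$:
\begin{itemize}
\item At $\alpha=0$, $g(n)=4n/(n-1)$ is decreasing, so the maximum is $g(2)=8$ and $C_0^{\rm G}=2\sqrt2$.
\item As $n\to\infty$, $g(n)\to4$.
\item $g(n)\le4$ if and only if $n(n-1)\le(n-1+\alpha)^2$, which is equivalent to $(n-1)(1-2\alpha)\le\alpha^2$. This holds for every $n$ when $\alpha\ge1/2$, and fails for large $n$ when $\alpha<1/2$.
\item The supremum is at most $2\sqrt2$ by monotonicity in $\alpha$.
\end{itemize}

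For part \ref{item:matching-initial-laws}, the upper bound follows from \eqref{eq:initial-model-comparison}. Every tuple in $\mathfrak T(\mu)$ has $A_*=a_*/2$ and $q_*=a_*$, so $\mathfrak P_\alpha(\mathbf T)\le\mathfrak P_\alpha(\delta_0,a_*,a_*/2,a_*)=C_\alpha^{\rm G}$. The lower bound is the left inequality of \eqref{eq:initial-model-comparison}, since $\lambda_{\max}(A_*^{-1/2}q_*A_*^{-1/2})=2$. Equality at the upper endpoint for atomic laws is Corollary~\ref{cor:atomic-poles} combined with the Gaussian value.

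For part \ref{item:matching-arbitrary-laws} with arbitrary $\mu$:
\begin{itemize}
\item The lower bound $2$ is $K_{\mathrm{flat}}$ from Lemma~\ref{lem:recovery}, which holds for every law.
\item The upper bound is Corollary~\ref{cor:uniform-point-bound}\ref{item:reference-point-starts} with $M=C_\alpha^{\rm G}$, which is admissible by the Gaussian value. Letting $\varepsilon\downarrow0$ gives $K_\alpha\le C_\alpha^{\rm G}$.
\item \eqref{eq:matching-positive-time} follows from Corollary~\ref{cor:uniform-point-bound}\ref{item:reference-positive-time} on $(s_0,T)$, restricted to subintervals of $(s_0,s_1)$, together with the flat recovery argument of Lemma~\ref{lem:recovery} at fixed interior points for the lower bound.
\item For $\mu\in\mathcal D$, Theorem~\ref{thm:sharp-limit} applies. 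The flat term is $2$. Every phase tuple has $\mathsf R=\zeta^{\mathsf T}(a_*/2)\zeta/\zeta^{\mathsf T}a_*\zeta=1/2$, so $K_{\mathrm{prop}}\le2$. Together with $K_{\alpha,\mathrm{init}}\ge2$ this gives $K_\alpha=K_{\alpha,\mathrm{init}}$.
\item Atomic equality then follows from part \ref{item:matching-initial-laws}, and the value $2$ for $\alpha\ge1/2$ from $C_\alpha^{\rm G}=2$.
\end{itemize}

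I expect the main obstacle to be the Hermite step. It needs care to check that $\mathsf H_{I_d,N}$ restricted to degree $n$ has exact norm $\sqrt{n(n-1)}$ in the Frobenius output norm, and that no cross-degree interaction spoils the blockwise maximum. The rest is bookkeeping with earlier results.
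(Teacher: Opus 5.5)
Your proposal is correct and follows essentially the same route as the paper: the diagonal Hermite evaluation under matching gives the Gaussian value, \eqref{eq:initial-model-comparison} and Corollary~\ref{cor:atomic-poles} give part~(1), and Corollary~\ref{cor:uniform-point-bound}, flat recovery and Theorem~\ref{thm:sharp-limit} give part~(2). Your check that each degree-$n$ Hessian block is $\sqrt{n(n-1)}$ times an isometry, with mutually orthogonal images across degrees, is exactly the orthogonality the paper invokes.
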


\begin{proof}
In \eqref{eq:gaussian-hermite-matrices}, matching gives
$\widehat A=I_d/2$ and $\widehat q=I_d$, so the lowering part
vanishes. On Hermite degree $n\ge2$, the Hessian has squared norm
$n(n-1)$ times the input squared norm, giving frequency gain
\begin{equation*}
\frac{\sqrt{n(n-1)}}
 {\sqrt{(n-1+\alpha)^2/4+\omega^2}}.
\end{equation*}
Orthogonality and \eqref{eq:gaussian-matrix-norm} give
\eqref{eq:matching-gaussian-value}, with the frequency maximum
at zero. At $\alpha=0$ the degree $2$ value is $2\sqrt2$
and all others are smaller; each term decreases with $\alpha$.
The identity
\(
(n-1+\alpha)^2-n(n-1)
 =n(2\alpha-1)+(1-\alpha)^2
\)
shows that every term is at most $2$ when $\alpha\ge1/2$,
and some terms exceed $2$ when $\alpha<1/2$.
Their limit as $n\to\infty$ is $2$.

For part~\ref{item:matching-initial-laws}, every tuple in
$\mathfrak T(\mu)$ has matching matrices by the hypothesis on
$\supp\mu$. Equations~\eqref{eq:initial-model-comparison} and
\eqref{eq:matching-gaussian-value} give the two bounds;
\eqref{eq:atomic-poles} gives equality for finite atomic laws.

For part~\ref{item:matching-arbitrary-laws}, the Gaussian value
verifies the hypothesis of Corollary~\ref{cor:uniform-point-bound},
part~\ref{item:reference-point-starts}, with $M=C_\alpha^{\rm G}$.
Taking the short-interval limit there and then letting
$\varepsilon\downarrow0$ gives the upper bound for every $\mu$.
Flat recovery in Lemma~\ref{lem:recovery} gives $K_\alpha(I)\ge2$,
so $C_\alpha^{\rm G}=2$ gives equality when $\alpha\ge1/2$.
For $\mu\in\mathcal D$, \eqref{eq:flat-gain} and
\eqref{eq:phase-gain-formula} give gain $2$ for every whole-line
model. Equation~\eqref{eq:sharp-limit-formula} and
part~\ref{item:matching-initial-laws} therefore yield
\begin{equation*}
K_\alpha(I)=\max\{K_{\alpha,\mathrm{init}}(I),2\}
           =K_{\alpha,\mathrm{init}}(I),
\end{equation*}
including the asserted atomic equality. On $(s_0,s_1)$,
part~\ref{item:reference-positive-time} of
Corollary~\ref{cor:uniform-point-bound} gives the limiting
upper bound $2$, so flat recovery proves
\eqref{eq:matching-positive-time}.
\end{proof}

The spatial profiles of the propagated models suggest the
following class of positive initial densities. It provides the
lower endpoint of the matching bounds even without a time weight.

\begin{definition}[Initial densities with exponential-mixture limits]
\label{def:g-initial-laws}
Put $\ell_G(x)=(1+|x|)^{-1}$.
A positive continuous probability density $p$ belongs to $\mathcal G$
if there are $0<m_{\mathrm{ph},p}^0\le M_{\mathrm{ph},p}^0<\infty$ such that every sequence
$|x_j|\to\infty$ has a subsequence and a probability measure $\lambda$
supported on $\{\zeta:m_{\mathrm{ph},p}^0\le|\zeta|\le M_{\mathrm{ph},p}^0\}$ for which
\begin{equation}
\sup_{|z|\le L}
\left|
 \log\frac{p(x_j+\ell_G(x_j)z)}
               {p(x_j)W_{\lambda}^0(z)}
\right|\longrightarrow0
\qquad\forall L>0.
\label{eq:full-space-law}
\end{equation}
For measures, $\mu\in\mathcal G$ means $\mu=p\mathcal L^d$ with
$p\in\mathcal G$.
\end{definition}

\begin{eexample}[Gaussians and finite mixtures]
\label{ex:positive-initial-laws}
Every density $p(x)=\gamma_\Sigma(x-m)$, $\Sigma\succ0$, belongs
to $\mathcal G$. Indeed, along $|x_j|\to\infty$ with
$x_j/|x_j|\to n$,
\begin{equation*}
\log\frac{p(x_j+\ell_G(x_j)z)}{p(x_j)}
 \longrightarrow-(\Sigma^{-1}n)\cdot z
\end{equation*}
locally uniformly, giving the phase law
$\lambda=\delta_{\Sigma^{-1}n/2}$. One may take
$m_{\mathrm{ph},p}^0=(2\lambda_{\max}(\Sigma))^{-1}$ and
$M_{\mathrm{ph},p}^0=(2\lambda_{\min}(\Sigma))^{-1}$.

Finite positive mixtures $p=\sum_{i=1}^N c_ip_i$, with
$p_i\in\mathcal G$, $c_i>0$ and $\sum_i c_i=1$, also belong to
$\mathcal G$. Along a common subsequence with component phase laws
$\lambda_i$ and weights $w_i=\lim_j c_ip_i(x_j)/p(x_j)$, the
normalized density ratio converges locally uniformly to $W_{\lambda}^0$, where
\begin{equation*}
\lambda(E)=\sum_{i=1}^Nw_i\lambda_i(E),
\qquad\forall\text{ Borel }E\subset\R^d.
\end{equation*}
Here $w_i\ge0$, $\sum_iw_i=1$, and the common annulus is given by
$m_{\mathrm{ph},p}^0=\min_i m_{\mathrm{ph},p_i}^0$, $M_{\mathrm{ph},p}^0=\max_i M_{\mathrm{ph},p_i}^0$.
Positivity of $W_{\lambda}^0$ gives \eqref{eq:full-space-law}.
This includes all finite mixtures of nondegenerate Gaussian densities.
\end{eexample}

For the Gaussian density in Example~\ref{ex:positive-initial-laws},
$D\log p(x)=-\Sigma^{-1}(x-m)$ has magnitude comparable to
$|x|$ for $|x|\ge2|m|$, with comparison constants depending only
on $\Sigma$. Thus $\ell_G(x)$ keeps the logarithmic density
variation on rescaled balls of order one.

\begin{corollary}[Matching constant for $\mathcal G$]
\label{cor:matching-g-initial-laws}
Suppose $A=a/2$, $q=a$ on $[0,T]\times\R^d$ and
$\mu=p\mathcal L^d\in\mathcal G$. Then
\begin{equation*}
K_\alpha((0,T);\rho^p,a/2,a)=2,\qquad \alpha\ge0.
\end{equation*}
\end{corollary}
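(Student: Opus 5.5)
The lower bound $K_\alpha\ge2$ comes from the flat recovery in Lemma~\ref{lem:recovery}: every matching flat model has gain $\lambda_{\max}(A^{-1/2}qA^{-1/2})=2$. The work is the upper bound $K_\alpha((0,T);\rho^p,a/2,a)\le2$. Since $p\notin\mathcal D$, Theorem~\ref{thm:sharp-limit} cannot be quoted directly. Instead I would rerun the compactness-and-absorption scheme of Proposition~\ref{prop:uniform-model-estimates} and Corollary~\ref{cor:uniform-point-bound}. Every limiting model arising along columns should be a matching flat or heat-phase model, and both have gain $2$ by Proposition~\ref{prop:whole-line-gains}. Positive times are already handled: Corollary~\ref{cor:uniform-point-bound}\ref{item:reference-positive-time} gives $K_{\alpha,h}((s_0,T);\rho^\mu,a/2,a)\le2+\varepsilon$ for every law. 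So it remains to treat intervals $J$ with $\sup J\to0$. There I would use Lemma~\ref{lem:causal-columns}, which is valid for arbitrary $\mu$, to reduce to intervals $J_i\subseteq(\vartheta T_i,T_i)$. This mirrors the last step of the proof of Corollary~\ref{cor:uniform-point-bound}, with $M=2$.

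For the relative-age intervals, I would localize by the partition of Lemma~\ref{lem:parabolic-partition}, using the scale $r(t,x)=\min\{\sqrt{|J|},t\,\ell_G(x)/(\sqrt t\,\ell_G(x)+t\cdot\text{const})\}$. Heuristically this is $\min\{\sqrt{|J|},\ell_G(x)\}$, possibly reduced further where $\sqrt t$ is smaller. The required Hardy bound \eqref{eq:localization-budget} would replace the doubling Hardy inequality: I would derive it from $|D\log\rho(t,\cdot)|\lesssim1+|x|$ for small $t$, which itself follows from writing $\rho(t,\cdot)$ as a kernel average of $p$ and using \eqref{eq:full-space-law}. Near $t=0$ the key point is that there is no half-line initial model. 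A column with $t_j\to0$ and $t_j/r_j^2$ bounded has, at scale $r_j\ll\ell_G(x_j)$, initial measure $p(x_j+r_j\cdot)/p(x_j)\to1$ locally uniformly. By \eqref{eq:full-space-law} this holds uniformly on balls of radius $o(\ell_G)$, and for bounded $x_j$ it follows from continuity and positivity. The initial tangent is therefore Lebesgue measure, with $F_{\mathbf T}\equiv1$, and the model is flat with gain $2$. The weight $\tau^\alpha$ is harmless by Lemma~\ref{lem:weight-operations}\ref{item:time-weights}. Columns at scale comparable to $\ell_G(x_j)$ with $|x_j|\to\infty$ and $t_j\to0$ should give whole-line phase limits. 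Here $\rho(t,\cdot)$ is essentially $p$ convolved with a narrow Gaussian of width $\sqrt t\ll\ell_G$ after normalization, so the spatial profile is $W_\lambda^0$. For the time dependence I would show that the heat-type evolution of $W_\lambda^0$ under $a_*$ appears as $W_{\lambda,a_*}$, which gives a matching heat-phase model with gain $\max\{2,\sup\Phi(\mathsf R)\}=2$, since $\mathsf R=1/2$.

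The main obstacle is justifying these density limits uniformly, that is, the analogue of Lemma~\ref{lem:posterior-phase} for initial densities with unbounded support and tails of the form in \eqref{eq:full-space-law}. In particular, $\log\rho(t,x_j+r_jz)-\log\rho(t,x_j)$ must converge with vanishing error even though Gaussian kernel bounds lose exponential factors. As a first attempt I would avoid sharp kernel asymptotics by writing
\[
\rho(t,x)=\int k(0,y;t,x)p(y)\,dy
\]
and comparing with $p$ directly. For $t\le\eta\,\ell_G(x)^2$, the posterior on $y$ concentrates in $B(x,C\sqrt{t}\,(1+|x|)\sqrt t)$, which is $o(\ell_G(x))$. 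The heat-kernel limits of Lemma~\ref{lem:kernel-stability} at scale $\sqrt t$, together with exponential-tilt estimates against $e^{-2\zeta\cdot z}$, should then give the profile. Larger $t$ relative to $\ell_G^2$ falls in the positive-time regime. If this proves too delicate, a fallback is to approximate $p$ from above by mixtures $\sum c_i\gamma_{\Sigma_i}(\cdot-m_i)$ and use Lemma~\ref{lem:weight-operations}\ref{item:positive-mixtures}. However, the point-start bound from a Gaussian initial density started at time $0$ is itself what is being proved, so that route only shifts the problem to Gaussians. In either case, once uniform model identification holds, Proposition~\ref{prop:comparison} with $k_0\downarrow2$ finishes the argument.
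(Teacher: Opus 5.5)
Your outer architecture is sound. Flat recovery gives $K_\alpha\ge2$. Corollary~\ref{cor:uniform-point-bound}\ref{item:reference-positive-time} covers intervals in $(s_0,T)$ for a fixed $s_0>0$. Every matching flat, half-line phase or whole-line phase model has gain $2$, so compactness and absorption would close the upper bound.

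Your reduction through Lemma~\ref{lem:causal-columns} is legitimate, since that lemma holds for any law, and it differs from the paper. The paper reduces to $\alpha=0$ by Lemma~\ref{lem:weight-operations}\ref{item:time-weights}. It then localizes directly on $J\subseteq(0,2\delta)$ with the time-independent scale $\min\{\sqrt{|J|},\ell_G\}$. This works because $\rho$ is positive and continuous through $t=0$, \eqref{eq:positive-hardy} has no $t^{-1}$ term, and \eqref{eq:whole-line-interval-bound} with $\alpha=0$ allows a finite left endpoint. Your route keeps all rescaled times at least $\vartheta$, so it would avoid the closed-window convergence through $\tau=0$ (the column-mass bound \eqref{eq:closed-window-column}). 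Both routes, however, need the density input of Lemma~\ref{lem:matching-density-windows}, and your sketch of it fails at two points.

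\textbf{Hardy estimate.} It cannot come from $|D\log\rho(t,\cdot)|\lesssim1+|x|$. That is an upper bound, and any density with polynomial tails satisfies it while violating $\|\ell_G^{-1}f\|_{L^2(\mu_t)}\le C(\|Df\|_{L^2(\mu_t)}+\|f\|_{L^2(\mu_t)})$. What is needed is the lower phase bound $m^0_{\mathrm{ph},p}>0$ of Definition~\ref{def:g-initial-laws}, transported to $\rho(t,\cdot)$. Concretely, approximate $\rho(t,\cdot)$ on balls $B(x,L\ell_G(x))$ by $W^0_\lambda$ with $\supp\lambda$ in a fixed annulus, uniformly for $0\le t\le2\delta$. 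Then patch \eqref{eq:phase-hardy} as in Lemma~\ref{lem:dynamic-hardy}.

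\textbf{The regime $\ell_G(x)^2\ll t\ll1$.} You assign $t\gtrsim\ell_G(x)^2$ to the positive-time regime and picture $\rho(t,\cdot)$ as $p$ smoothed at width $\sqrt t\ll\ell_G(x)$. Since $\ell_G(x)\to0$ as $|x|\to\infty$, this range of small times is not covered by the positive-time corollary. There $\sqrt t\gg\ell_G(x)$, and the initial posterior lies at distance $\asymp t(1+|x|)\gg\ell_G(x)$ from $x$. The profile at scale $\ell_G$ is therefore a propagated phase, and your concentration argument says nothing about it.

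This is where the work lies, and the paper proceeds as follows.
\begin{enumerate}
\item Lemma~\ref{lem:matching-initial-density} derives the growth bound \eqref{eq:initial-growth} and the static Hardy inequality \eqref{eq:static-hardy} from \eqref{eq:full-space-law}.
\item It proves the initial posterior annulus \eqref{eq:initial-posterior-annulus} with $e=\ell_G(x)$ and $M=t/e^2$. The outer bound comes from \eqref{eq:initial-growth}, after shrinking $\delta$ to absorb the quadratic term. The inner bound comes from applying \eqref{eq:static-hardy} to $\chi\sqrt{k(0,\cdot;t,x)}$ with \eqref{eq:kernel-log-gradient}, iterating in the radius.
\item The last assertion of Lemma~\ref{lem:posterior-annuli} and \eqref{eq:posterior-fixed-window} then give the phase mixture when $t/\ell_G^2\to\infty$.
\item For bounded $t/\ell_G^2$, kernel stability with the growth envelope gives the heat evolution $W_{\lambda_0,a_*}$ of $W^0_{\lambda_0}$. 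Renormalized at the current time, this is again a phase weight with unchanged support.
\end{enumerate}
Without these steps, neither the localization budget \eqref{eq:localization-budget} nor the uniform model identification in your scheme is justified.
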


The proof is given in Subsection~\ref{subsec:matching-density-tools}.
It uses exponential mixtures only for their common matching
upper bound and does not require the diffusion to preserve a
single phase or a Gaussian density.

\begin{remark}
The general identification of optimal constants with mismatched
coefficients also extends to $\mathcal G$. At the initial boundary,
the exponential-mixture profiles defining this class replace the
initial tangent measures; their heat evolutions give the half-line
weights, with the time factor $s^\alpha$ retained. The same
posterior localization gives whole-line phase weights away from
that boundary, and the local comparison and recovery arguments
apply to these models. This extension uses the same analytic
ingredients and introduces no substantial new difficulty.
We restrict the presentation for this class to matching to avoid
two parallel formulations of the general theory.
\end{remark}

\begin{eexample}[Equal spectra, different constants]
\label{ex:same-spectra}
Let $d=2$, $a=I_2$, $b=0$ and $\mu=\delta_0$.
Choose smooth cutoffs $0\le\chi_t,\chi_x\le1$ such that
$\chi_t=0$ on $[0,T/3]$ and $\chi_t=1$ on $[2T/3,T]$,
while $\chi_x$ is radial, zero on $B_1$ and one outside $B_2$.
For $x\ne0$, put $n=x/|x|$ and define
\begin{equation*}
\begin{aligned}
A_{\mathrm{tan}}(t,x)
 &=\tfrac12I_2+\tfrac14\chi_t(t)\chi_x(x)(I_2-nn^{\mathsf T}),
 &q_{\mathrm{tan}}&=2A_{\mathrm{tan}},\\
A_{\mathrm{rad}}(t,x)
 &=\tfrac12I_2+\tfrac14\chi_t(t)\chi_x(x)nn^{\mathsf T},
 &q_{\mathrm{rad}}&=2A_{\mathrm{rad}}.
\end{aligned}
\end{equation*}
Extend both $A$ fields by $I_2/2$ at the origin. They satisfy
Assumption~\ref{ass:coefficients} and have the same pointwise
eigenvalues $1/2$ and $1/2+\chi_t\chi_x/4$; the $q$ eigenvalues
are twice these values.
Both pairs have
\begin{equation*}
K_{\alpha,\mathrm{init}}(I)=C_\alpha^{\rm G},\qquad
K_{\mathrm{flat}}(I)=K_{\mathrm{prop}}^{\mathrm{loc}}(\{0\})=2.
\end{equation*}
Here the initial value follows from \eqref{eq:atomic-poles} and
\eqref{eq:matching-gaussian-value}. The local propagation value
uses $A(0,x)=I_2/2$, $q(0,x)=I_2$ for every $x$.

If $\vartheta\in[0,1]$ is the squared projection of a unit vector
$v$ onto the perturbed eigenspace, then
\begin{equation*}
\mathsf R(t,x,v)
 =\frac{2-\chi_t(t)\chi_x(x)\vartheta}
        {4+2\chi_t(t)\chi_x(x)\vartheta}.
\end{equation*}
Thus $\mathsf r_\infty^{\mathrm{all}}=1/6$ for both pairs.
Each center has an unperturbed direction, so
$\mathsf r_\infty^{\mathrm{ctr}}=1/2$.
For each fixed $t,v$, one can choose arbitrarily distant centers
with $n=v$ for the tangential pair or $n\perp v$ for the radial
pair. Hence $\mathsf r_\infty^{\mathrm{dir}}=
\mathsf r_\infty^{\mathrm{rec}}=1/2$ as well.
Theorem~\ref{thm:coefficient-formulas} gives the same strict bracket
\begin{equation*}
C_\alpha^{\rm G}\le K_\alpha(I)\le6
\end{equation*}
for both examples.
To compute the actual constants, use $\rho(t,x)=\gamma_{tI_2}(x)$.
For $|x_j|/\sqrt{t_j}\to\infty$, put
$r_j=t_j/(|x_j|+\sqrt{t_j})$ and extract a common subsequence
on which $x_j/|x_j|\to n_*$ and the coefficients converge.
Gaussian expansion gives
\begin{equation*}
\log\frac{\rho(t_j+r_j^2s,x_j+r_jz)}{\rho(t_j,x_j)}
 \longrightarrow s/2-n_*\cdot z,
\qquad \lambda=\delta_{n_*/2}.
\end{equation*}
The convergence is uniform on $|s|,|z|\le L$ with
$0<t_j+r_j^2s\le T$, for every fixed $L$.
Since $t_j/r_j^2\to\infty$, it includes every fixed past interval.
All actual phase directions are radial. Their gain is $2$ for
the tangential pair, and at most $6$ for the radial pair;
the latter attains $6$ at fixed $t_j=5T/6$ and escaping centers.
Equation~\eqref{eq:sharp-limit-formula} therefore gives
\begin{equation*}
K_\alpha^{\mathrm{tan}}(I)=C_\alpha^{\rm G},\qquad
K_\alpha^{\mathrm{rad}}(I)=6.
\end{equation*}
The two examples attain the lower and upper endpoints of the
same coefficient bracket, despite their equal pointwise spectra.
\end{eexample}

\begin{eexample}[Improvement by coefficient mismatch]
\label{ex:mismatch-improves-bound}
For a scalar Brownian point start, write $\rho_a(t,x)=\gamma_{ta}(x)$,
where $a>0$ is the sampling covariance per unit time.
Fix $\alpha=0$, $A=1/2$ and $q=1$. Changing the sampling
covariance from $t$ to $2t$ replaces the integration weight
$\rho_1$ by $\rho_2$ and gives
\begin{equation*}
K_0(I;\rho_2,1/2,1)\le1+\sqrt3
 <2\sqrt2=K_0(I;\rho_1,1/2,1).
\end{equation*}
The matching equality follows from
\eqref{eq:matching-gaussian-value}.
To prove the upper bound, we use the scalar Hermite matrices.
For general scalar parameters, put
$x_n=(q/a)\sqrt{n(n-1)}c_n$. Then
\eqref{eq:gaussian-hermite-matrices} gives Hessian squared norm
$\sum_{n\ge2}|x_n|^2$ and source rows
\begin{equation*}
d_n(\omega)x_n+c_{\mathrm{low}} x_{n+2},\qquad
d_n(\omega)=\frac{a(n-1+\alpha-2i\omega)}{2q\sqrt{n(n-1)}},
\qquad c_{\mathrm{low}}=\frac{a-2A}{2q}.
\end{equation*}
Corollary~\ref{cor:constant-matrix-formulas}, part~\ref{item:constant-matrix-atomic},
identifies the complete
constant with the supremum of these Hessian-to-source ratios
over real frequencies and nonzero finite sequences.
For $a=2$, with $\alpha,A,q$ fixed as above, the rows have
$c_{\mathrm{low}}=1/2$ and
$d_n(\omega)=\sqrt{(n-1)/n}-2i\omega/\sqrt{n(n-1)}$.
Taking $\vartheta=1/\sqrt3$ in
$|z+w|^2\ge(1-\vartheta)|z|^2+(1-\vartheta^{-1})|w|^2$ gives
\begin{equation*}
\sum_{n\ge2}|d_n(\omega)x_n+\tfrac12x_{n+2}|^2
\ge(1-\vartheta)\sum_{n\ge2}\frac{n-1}{n}|x_n|^2
       +\frac{1-\vartheta^{-1}}4\sum_{n\ge4}|x_n|^2
\ge\left(1-\frac{\sqrt3}{2}\right)\sum_{n\ge2}|x_n|^2.
\end{equation*}
Here the first two coefficients are at least $(1-\vartheta)/2$;
for $n\ge4$, use $(n-1)/n\ge3/4$.
Thus the gain for the sampling covariance $a=2$ is bounded by
$(1-\sqrt3/2)^{-1/2}=1+\sqrt3 < 2\sqrt{2}$, proving the strict comparison.
\end{eexample}

\subsection{Deterministic paths under nonnegative Ricci curvature}
\label{subsec:geometric-reduction}

We express the propagation contribution through the terminal
momenta of paths minimizing a deterministic action.
Under the conditions below, this path formula follows directly
from the actual phase contribution in \eqref{eq:propagation-contribution}.
A negative-curvature example shows why the geometric hypothesis
is needed for this identification.

Set $g=a^{-1}$ and $w_g=\sqrt{\det g}$, and sum over repeated
indices. The Laplace--Beltrami operator of this metric is
$\Delta_g f=w_g^{-1}\partial_i(w_ga^{ij}\partial_jf)$;
see \citet[Sections~2.1.3 and~2.4]{petersen-2016}.
Thus the reference generator decomposes as
\begin{equation*}
\tfrac12a:D^2+b\cdot D=\tfrac12\Delta_g+Z\cdot D,\qquad
b_g^i=\frac1{2w_g}\partial_j(w_ga^{ji}),\qquad Z=b-b_g .
\end{equation*}
To state the curvature condition in coordinates, define the
Christoffel symbols and the components of the Ricci tensor by
\begin{equation}
\Gamma^k_{ij}
 =\tfrac12a^{k\ell}
       (\partial_i g_{j\ell}+\partial_j g_{i\ell}
                                     -\partial_\ell g_{ij}), \qquad
(\operatorname{Ric}_{g_t})_{ij}
 =\partial_k\Gamma^k_{ij}-\partial_j\Gamma^k_{ik}
       +\Gamma^k_{ij}\Gamma^\ell_{k\ell}
       -\Gamma^\ell_{ik}\Gamma^k_{j\ell}.
\label{eq:ricci-coordinate}
\end{equation}
All fields are evaluated at $(t,x)$, and all derivatives here
are spatial. These are the standard coordinate formulas with
the curvature convention of
\citet[Sections~2.4 and~3.1.4]{petersen-2016}.

\begin{assumption}[Nonnegative Ricci curvature]\label{ass:geometry}
In addition to Assumption~\ref{ass:coefficients}, $a$ has a
uniform spatial $C^{1,1}$ bound and $b$ is uniformly Lipschitz
in space. The fields $a$ and $b_g$ have bounded weak time
derivatives:
\(
\partial_ta,\ \partial_tb_g\in L^\infty_{t,x}.
\)
For every $t\in[0,T]$, the symmetric matrix in
\eqref{eq:ricci-coordinate}, computed using weak spatial
derivatives, satisfies
\begin{equation*}
(\operatorname{Ric}_{g_t})_{ij}(x)\xi^i\xi^j\ge0
\qquad\text{for a.e. }x\in\R^d\text{ and every }\xi\in\R^d.
\end{equation*}
\end{assumption}

The \emph{geometric data} are the reference data, the spatial
$C^{1,1}$ bound for $a$, the spatial Lipschitz bound for $b$,
and $\|\partial_ta\|_\infty,\|\partial_tb_g\|_\infty$.
Uniform ellipticity transfers the spatial $C^{1,1}$ bound
to $g$, so the Christoffel symbols are Lipschitz in space
and the Ricci components belong to $L^\infty_x$.
The matrix inequality is therefore precisely nonnegative
Ricci curvature in the distributional sense.
For the distributional construction for nonsmooth metrics,
see \citet[Proposition~4.4]{lefloch-mardare-2007}.
The time-dependent fields $a,b_g,g$ have Lipschitz
representatives in $L^\infty_x$, while $Z$ is bounded,
uniformly Lipschitz in space and Borel in time.
For example, bounded $\partial_ta,D_x\partial_ta$ imply
the stated time bounds by the displayed formula for $b_g$.

The starting point may be chosen anywhere in the compact support
$S$. We minimize the action over these starting points and over
all paths to the prescribed endpoint.

\begin{definition}[Minimum action and conjugate momenta]\label{def:geometric-action}
Under Assumption~\ref{ass:geometry}, for $0<t\le T$ and
$\gamma\in H^1([0,t];\R^d)$, set
\begin{equation*}
\mathcal I_t(\gamma)
 =\tfrac12\int_0^t
   |\dot\gamma(s)-Z(s,\gamma(s))|_{g_s(\gamma(s))}^2\,ds .
\end{equation*}
For $x,y\in\R^d$, define the minimum action
\begin{equation}
\mathcal A^{g,Z}(0,y;t,x)
 =\inf_{\substack{\gamma\in H^1([0,t];\R^d)\\
                    \gamma(0)=y,\ \gamma(t)=x}}
       \mathcal I_t(\gamma).
\label{eq:geometric-action}
\end{equation}
For $\mu\in\mathcal D$ and $S=\supp\mu$, define $\Psi_S(t,x)=\min_{y\in S}\mathcal A^{g,Z}(0,y;t,x)$,
\begin{equation*}
\operatorname{Min}^{g,Z}_S(t,x)
 =\{\gamma\in H^1([0,t];\R^d):\gamma(0)\in S,
       \gamma(t)=x,\ \mathcal I_t(\gamma)=\Psi_S(t,x)\}.
\end{equation*}
The conjugate momentum is the velocity derivative of the
quadratic integrand defining $\mathcal I_t$, namely
the covector
\begin{equation*}
\pi_\gamma(s)
 =g_s(\gamma(s))\{\dot\gamma(s)-Z(s,\gamma(s))\},
\qquad\text{for a.e. }s\in[0,t].
\end{equation*}
For a minimizing path, Lemma~\ref{lem:geometric-paths} gives
a Lipschitz representative of $\pi_\gamma$; we use this
representative to define $\pi_\gamma(0)$ and $\pi_\gamma(t)$.
\end{definition}

For a static metric and $Z=0$, equivalently $b=b_g$, the
action is the Riemannian energy, and the energy--length
comparison gives
\begin{equation*}
\mathcal A^{g,Z}(0,y;t,x)=\frac{d_g(y,x)^2}{2t},\qquad
\Psi_S(t,x)=\frac{d_g(S,x)^2}{2t}.
\end{equation*}
Indeed, Cauchy--Schwarz bounds energy below by squared
length divided by $2t$, with equality at constant speed.
Thus the minimizing paths are all constant-speed minimizing
geodesics from closest points in $S$.
With the $[0,1]$ parametrization in
Definition~\ref{def:shortest-paths}, their terminal momentum
on $[0,t]$ is $g(x)\dot\gamma(1)/t$.

Write $D(x)=\dist(x,S)$ and use $e,M$ from \eqref{eq:tail-window-scales}.
For $R\ge16$, define
\begin{equation}
\mathscr T_S(R)=\{(t,x):0<t<T,\ x\notin S,\ D(x)^2/t\ge R\}.
\label{eq:propagation-tail-sets}
\end{equation}

Terminal momenta enter the endpoint variation of the path cost
in \eqref{eq:geometric-endpoint-variation}.
At spatial scale $e$, the corresponding phase vector is
$e\pi_\gamma(t)/2$; homogeneity of $\mathsf R$ allows us to use
$\pi_\gamma(t)$ itself in the directional ratio.
Different minimizing paths can have different terminal momenta,
so all of them enter the following supremum.

\begin{definition}[Geometric propagation number]
\label{def:geometric-numbers}
Using the sets in \eqref{eq:propagation-tail-sets}, define
\begin{equation}
K_{\mathrm{prop}}^{\mathrm{geo}}(I;S)
 =\lim_{R\to\infty}
   \sup_{\substack{(t,x)\in\mathscr T_S(R)\\
                   \gamma\in\operatorname{Min}^{g,Z}_{S}(t,x),\ \pi_\gamma(t)\ne0}}
                   \Phi\bigl(\mathsf R(t,x,\pi_\gamma(t))\bigr).
\label{eq:geometric-propagation-number}
\end{equation}
An empty supremum is zero. The limit exists in $[0,+\infty]$ by monotonicity.
Equation~\eqref{eq:geometric-tail-momenta} gives $c,C>0$ such that
$c\le|e\pi_\gamma(t)|\le C$ for every minimizing path
when $M$ is sufficiently large. Thus the nonzero restriction
is eventually automatic under Assumption~\ref{ass:geometry}.
\end{definition}

\begin{theorem}[Deterministic path reduction]\label{thm:geometric-formulas}
Under Assumption~\ref{ass:geometry}, let $\mu\in\mathcal D$
and $S=\supp\mu$. Then
\begin{equation}
K_{\mathrm{prop}}(I)=K_{\mathrm{prop}}^{\mathrm{geo}}(I;S).
\label{eq:geometric-reduction}
\end{equation}
Consequently
\begin{equation}
K_\alpha(I)
 =\max\{K_{\mathrm{flat}}(I),K_{\alpha,\mathrm{init}}(I),
                       K_{\mathrm{prop}}^{\mathrm{geo}}(I;S)\}.
\label{eq:geometric-constant-formulas}
\end{equation}
\end{theorem}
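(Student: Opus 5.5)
The second identity \eqref{eq:geometric-constant-formulas} follows from \eqref{eq:geometric-reduction} by substitution into Theorem~\ref{thm:sharp-limit}. The work is therefore to prove $K_{\mathrm{prop}}(I)=K_{\mathrm{prop}}^{\mathrm{geo}}(I;S)$, and we do this by two inequalities.

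\textbf{Upper bound $K_{\mathrm{prop}}\le K_{\mathrm{prop}}^{\mathrm{geo}}$.} Fix a phase realizing sequence $(t_j,x_j,r_j)$ for $\mathbf M=(\lambda,a_*,A_*,q_*)\in\mathfrak M_+(I)$, and a direction $\zeta_0\in\supp\lambda$. Then $M_j=D(x_j)^2/t_j\to\infty$, so $(t_j,x_j)\in\mathscr T_S(R)$ eventually, for every $R$. We want to show that $\zeta_0$ is, up to vanishing error, a positive multiple of $\pi_{\gamma_j}(t_j)$ for some $\gamma_j\in\operatorname{Min}^{g,Z}_S(t_j',x_j')$, where $(t_j',x_j')$ is a nearby point also in the tail set. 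Once this holds, continuity and homogeneity of $\mathsf R$ give $\Phi(\mathsf R(a_*,A_*,q_*;\zeta_0))\le K_{\mathrm{prop}}^{\mathrm{geo}}$.

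To identify the direction, we use the geometric kernel--action estimate (Proposition~\ref{prop:geometric-kernel}) integrated against $\mu$ (Proposition~\ref{prop:geometric-marginals}). These say that increments of $-\log\rho$ over windows of local scale $L\,e$ agree with increments of $\Psi_S$ up to $o(L)$, via Lemma~\ref{lem:growing-windows}. The phase law $\lambda$ describes the slopes of $-\log\rho$ at scale $e$. A slope $\zeta_0$ in $\supp\lambda$ must therefore be detected as a growing-window increment. We do this with Lemma~\ref{lem:phase-recovery}, part~\ref{item:density-phase-detection}, which localizes to a region where the normalized density is dominated by $e^{2(s\zeta_0^{\mathsf T}a\zeta_0-\zeta_0\cdot z)}$. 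The same increment of $\Psi_S$ is then approximately linear there. Semiconcavity of $\Psi_S$ (from Lemma~\ref{lem:geometric-paths} and \eqref{eq:geometric-endpoint-variation}) shows that a one-sided linear upper bound at a nearby point is realized by a superdifferential element. Superdifferentials of $\Psi_S$ at a point are convex combinations of terminal momenta of minimizers. Extreme points are attained by actual minimizers, and we conclude by passing to differentiability points, where the momentum is unique, exactly as in Proposition~\ref{prop:local-directions}. The scaling matches: the phase vector is $e\pi_\gamma(t)/2$, with $c\le|e\pi_\gamma|\le C$ by \eqref{eq:geometric-tail-momenta}.

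\textbf{Lower bound.} Fix $R$ large, a point $(t,x)\in\mathscr T_S(R)$ and $\gamma\in\operatorname{Min}^{g,Z}_S(t,x)$. Following the prefix trick of Lemma~\ref{lem:shortest-path-momentum}, we move to the interior point $(t',\gamma(t'))$ with $t'<t$ close to $t$. There every minimizer shares the prefix velocity, so $\Psi_S$ is differentiable with gradient $\pi_\gamma(t')\approx\pi_\gamma(t)$. Using the geometric marginal comparison at growing windows, the density increments match the linear increments of $\Psi_S$ up to $o(L)$. Lemma~\ref{lem:phase-recovery}, part~\ref{item:density-phase-recovery}, then produces a pure phase tuple $(\delta_{e\pi/2},a_*,A_*,q_*)$ whose coefficient limits equal the values at $(t,x)$, up to continuity. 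Taking suprema and $R\to\infty$ (a diagonal argument over sequences with $M\to\infty$) gives $K_{\mathrm{prop}}\ge K_{\mathrm{prop}}^{\mathrm{geo}}$. Points with $t$ near $0$ or $T$ are handled by the time modulus, as in \eqref{eq:coefficient-recovery-times}.

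\textbf{Main obstacle.} The hard step is the precision needed in the upper bound. The identification requires the error between $-\log\rho$ and $\Psi_S$ to be $o(L)$ on windows of size $L$, with $1\ll L\ll M$. The kernel remainder is only $O(1+M^{1/3}+\log M)$, so the windows must be chosen as $L\gg M^{1/3}$ while staying sublinear in $M$, and the nonnegative Ricci hypothesis is exactly what makes the remainder small enough. We would also need to control the $\mu$-integration by Laplace's method, with the doubling masses contributing only $O(\log M)$. Finally, we must ensure that the posterior over starting points does not produce averaged slopes that are not themselves momenta. That is the role of the semiconcavity and extreme-point argument, and we expect it to be the main technical difficulty.
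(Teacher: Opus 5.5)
Your overall plan is the paper's: two inequalities, with density increments compared to increments of $\Psi_S$ through Proposition~\ref{prop:geometric-marginals} on growing windows with $L\gg M^{1/3}$, and phase detection and recovery through Lemma~\ref{lem:phase-recovery}. However, the step that turns a detected phase into a minimizing momentum is missing.

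\textbf{Upper bound.} Lemma~\ref{lem:phase-recovery}\ref{item:density-phase-detection} does not localize anything, and it does not give an exponential domination of the density. It gives only $\mathfrak d_{\rho,L}(t_j,x_j,\zeta_*)\to0$. By \eqref{eq:geometric-density-increments}, this is a single integrated statement: $\Psi_S$ drops by $2L_j|\zeta_*|_{a_j}^2+o(L_j)$ along the space-time segment $u\mapsto(t_j-ue_j^2,\,x_j-2ue_ja_j\zeta_*)$, $0\le u\le L_j$.

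Your semiconcavity, superdifferential and extreme-point machinery is not available here.
\begin{itemize}
\item $Z$ is only Borel in time.
\item Lemma~\ref{lem:geometric-action-laplacian} gives semiconcavity only for smoothed coefficients, with constants depending on higher derivatives.
\item Nothing in this setting identifies superdifferentials of $\Psi_S$ with convex hulls of terminal momenta.
\end{itemize}
Even granting these, you do not explain how a local one-sided bound with non-uniform error turns an $o(L)$ increment over $L\to\infty$ rescaled steps into a momentum near $\zeta_*$. A spatial first-order bound would in any case see only $\pi\cdot z$, not $\pi$.

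The missing idea is to apply \eqref{eq:geometric-endpoint-variation} along exactly this segment. For a.e.\ $u$ and every minimizer $\gamma$ at $(t_j(u),x_j(u))$, with $\xi=e_j\pi_\gamma(t_j(u))/2$,
\[
\tfrac12\tfrac{d}{du}\Psi_S\bigl(t_j(u),x_j(u)\bigr)+|\zeta_*|_{a_j}^2
 =|\xi-\zeta_*|_{a_j}^2+O(M_j^{-7/48}).
\]
The square is completed by the time component $-e_j^2$ of the direction, through the Hamiltonian term.
\begin{itemize}
\item Over $[0,L_j]$ the left side has average $\mathfrak d_{\rho,L}(t_j,x_j,\zeta_*)+O(M_j^{-1/48})\to0$.
\item Hence at some $u_j$, \emph{every} minimizer has scaled momentum close to $\zeta_*$.
\item This uses only Lipschitz continuity of the restricted cost. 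No differentiability or semiconcavity of $\Psi_S$ is needed.
\item The averaged-slope issue you worry about never arises, because detection is applied to each support point separately.
\end{itemize}

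\textbf{Lower bound.} Here the gap is similar but easier to repair. You claim differentiability of $\Psi_S$ at a prefix point without proof, and in any case it is pointwise and non-quantitative. It cannot give the $o(L)$ accuracy of $\Psi_S$ increments over $L\to\infty$ steps of size $e$ that you then invoke.

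Instead, stay at $(t,x)$ with the given minimizer, and put $\zeta=e\pi_\gamma(t)/2$, $t^-=t-Le^2$, $x^-=x-2Le\,a(t,x)\zeta$.
\begin{enumerate}
\item By \eqref{eq:geometric-momentum} and \eqref{eq:geometric-tail-momenta}, $e\pi_\gamma$ changes by $O(Le)$ over the last $Le^2$ time units.
\item The prefix identity \eqref{eq:geometric-prefix} then gives $\Psi_S(t^-,\gamma(t^-))-\Psi_S(t,x)=-2L|\zeta|_{a(t,x)}^2+o(L)$, with $\gamma(t^-)=x^-+o(Le)$.
\item A fixed-time application of \eqref{eq:geometric-endpoint-variation} replaces $\gamma(t^-)$ by $x^-$ at cost $o(L)$.
\item Then \eqref{eq:geometric-density-increments} gives $\mathfrak d_{\rho,L}(t,x,\zeta)\to0$, and Lemma~\ref{lem:phase-recovery}\ref{item:density-phase-recovery} concludes as you say.
\end{enumerate}
The detour through a differentiability point is unnecessary.
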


The proof uses the path variations below and the comparison
of marginal densities with $\Psi_S$ in
Proposition~\ref{prop:geometric-marginals}, proved in
Subsection~\ref{subsec:geometric-tools}. We identify the two
directional suprema by recovering their directions at nearby
centers while preserving the joint coefficient limits.

For smooth Hamiltonians, the variational identities below are
classical; see \citet[Proposition~1, Property~19(1) and
Proposition~49]{bernard-2012-lax-oleinik}.
We establish the momentum bounds and endpoint comparison for
a drift that is only Borel in time and Lipschitz in space.

\begin{lemma}[Momentum and endpoint variation]
\label{lem:geometric-paths}
Under Assumption~\ref{ass:geometry}, the minimum in
\eqref{eq:geometric-action} is attained. Every minimizing path
has momentum $\pi\in W^{1,\infty}([0,t];\R^d)$ satisfying
\begin{equation}
\begin{gathered}
\mathcal E_\gamma(s):=\tfrac12\pi(s)^{\mathsf T}a(s,\gamma(s))\pi(s)
       \asymp\mathcal A^{g,Z}(0,y;t,x)/t,\qquad
|\pi(s)|\le C(|x-y|/t+1),\\
|\dot\pi(s)|\le C\{L_a|\pi(s)|^2+|\pi(s)|\}
\quad\text{for a.e. }s.
\end{gathered}
\label{eq:geometric-momentum}
\end{equation}
For $\delta_x=|z_0|+|z_1|$, the spatial action increments obey
\begin{equation}
|\mathcal A^{g,Z}(0,y+z_0;t,x+z_1)-\mathcal A^{g,Z}(0,y;t,x)|
 \le C\left(\frac{|x-y|+\delta_x}{t}+1\right)\delta_x .
\label{eq:geometric-spatial-increment}
\end{equation}
The constants in \eqref{eq:geometric-momentum} and
\eqref{eq:geometric-spatial-increment} depend only on the geometric data.

For $\mu\in\mathcal D$, $\Psi_S$ is finite, attained and
locally Lipschitz on $(0,T]\times\R^d$.
On every affine segment $(t(u),x(u))=(t_0+u\tau,x_0+uz)$
in this domain, for almost every $u$ and every
$\gamma\in\operatorname{Min}^{g,Z}_{S}(t(u),x(u))$,
\begin{equation}
\frac d{du}\Psi_S(t(u),x(u))
 =\pi\cdot z-\tau\{\tfrac12\pi^{\mathsf T}a(t(u),x(u))\pi
                                  +Z(t(u),x(u))\cdot\pi\},
\qquad \pi=\pi_\gamma(t(u)).
\label{eq:geometric-endpoint-variation}
\end{equation}
Every $\gamma\in\operatorname{Min}^{g,Z}_{S}(t,x)$ also satisfies, for $0<s<t$,
\begin{equation}
\gamma|_{[0,s]}\in\operatorname{Min}^{g,Z}_{S}(s,\gamma(s)),\qquad
\Psi_S(t,x)-\Psi_S(s,\gamma(s))=\int_s^t \mathcal E_\gamma(r)\,dr .
\label{eq:geometric-prefix}
\end{equation}
\end{lemma}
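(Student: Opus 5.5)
The plan is to treat the action minimization as a Tonelli problem for the Lagrangian $\ell(s,z,v)=\tfrac12|v-Z(s,z)|_{g_s(z)}^2$. This Lagrangian is uniformly convex and quadratic in $v$, and it is Lipschitz in $z$ because $a$ has a spatial $C^{1,1}$ bound and $Z$ is spatially Lipschitz. Existence comes from the direct method. A minimizing sequence is bounded in $H^1$, since $|\dot\gamma|^2\le C(\ell+|Z|^2)$ with $|Z|$ bounded. Weak $H^1$ compactness and uniform convergence then give weak lower semicontinuity of $\mathcal I_t$, by convexity in $v$ and continuity in $z$. To get the a priori size, compare with the straight segment $y\to x$ at constant speed. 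This gives $\mathcal A^{g,Z}\le C(|x-y|^2/t+t)$, and also a lower bound $\mathcal A\ge c|x-y|^2/t-Ct$.

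For the momentum regularity I would derive the Euler--Lagrange equation in integrated (du Bois-Reymond) form, which avoids needing time regularity of $Z$. Spatial variations $\gamma+\varepsilon\phi$ with $\phi\in C_c^\infty$ are admissible. Since $\ell$ is $C^1$ in $z$ with $|\partial_z\ell|\le C(L_a|\pi|^2+|\pi|)$ (from $D_xa$ and $D_xZ$), we get $\pi(s)=\pi(s_0)+\int_{s_0}^s\partial_z\ell\,dr$. So $\pi\in W^{1,\infty}$ with the stated bound on $\dot\pi$, once $\pi$ is known to be bounded. Bootstrapping goes as follows. $\pi\in L^2$ is known. The Riccati-type inequality $|\dot\pi|\le C(|\pi|^2+|\pi|)$ together with $\int|\pi|^2\le C\mathcal A$ controls the oscillation of $|\pi|$. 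One then combines the averaged bound $\frac1t\int_0^t|\pi|\lesssim |x-y|/t+1$ (from $\int\dot\gamma=x-y$ and bounded $Z$) with a Gronwall argument on $\mathcal E_\gamma$.

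To obtain $\mathcal E_\gamma\asymp\mathcal A/t$, I would use the time-reparametrization variation $\gamma\circ\sigma_\varepsilon$. This gives an approximate energy conservation law, $\frac{d}{ds}\mathcal E_\gamma=O(|\partial_ta||\pi|^2+|\partial_tb_g|\,|\pi|+\dots)$ in a weak sense. Here the bounded weak time derivatives of $a$ and $b_g$ enter. The Borel-in-time part of $Z$ is the delicate point, and I expect this to be the main obstacle: inner variations differentiate $Z$ in time, which is not available. I would try to circumvent this by writing $Z=b-b_g$ and transporting the $b$-part through a spatial change of variables along the flow of $b$. If that fails, I would use the fact that $Z$ is bounded, so $\pi\cdot Z$ terms cost only $O(|\pi|)$, and accept comparability constants depending on the geometric data. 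Once $\mathcal E$ varies by at most a bounded factor on $[0,t]$, averaging gives $\mathcal E\asymp\mathcal A/t$ and $|\pi|\le C(|x-y|/t+1)$.

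The spatial increment bound \eqref{eq:geometric-spatial-increment} follows by competitor construction. Add an affine correction $s\mapsto z_0(1-s/t)+z_1 s/t$ to a minimizer and expand the action using $|\pi|\le C(|x-y|/t+1)$ and Lipschitz bounds, giving $|\Delta\mathcal A|\le C(\sup|\pi|+\delta_x/t+1)\delta_x$. The same argument run in reverse gives the lower inequality. For $\Psi_S$, the minimum over compact $S$ is attained by continuity in $y$. Local Lipschitz continuity in $x$ follows from \eqref{eq:geometric-spatial-increment}, and in $t$ from time-rescaling competitors $\gamma(s\,t'/t)$, which cost $O(|t'-t|\,\mathcal E)$.

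For \eqref{eq:geometric-endpoint-variation}, Rademacher gives differentiability a.e. on the segment. At such $u$, compare with one-sided competitors: extend or truncate $\gamma$ on $[t,t+h\tau]$ while shifting the endpoint by $hz$. This produces upper bounds on the one-sided difference quotients by the Hamiltonian expression $\pi\cdot z-\tau H$, where $H=\tfrac12\pi^{\mathsf T}a\pi+Z\cdot\pi$ is the Legendre transform evaluated at the terminal point. Using both signs of $h$ at a differentiability point forces equality, for every minimizer $\gamma$. Finally, \eqref{eq:geometric-prefix} is the dynamic programming principle. A prefix that is not minimal could be replaced to lower the total cost, so the prefix is minimal. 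Integrating \eqref{eq:geometric-endpoint-variation} along the curve $u\mapsto(u,\gamma(u))$ with $\tau=1$ and $z=\dot\gamma$ gives $\frac{d}{du}\Psi_S=\pi\cdot\dot\gamma-\tfrac12\pi^{\mathsf T}a\pi-Z\cdot\pi=\tfrac12\pi^{\mathsf T}a\pi=\mathcal E_\gamma$.
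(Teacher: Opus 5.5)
Your existence argument and the Euler--Lagrange step are essentially right (though with $Z$ only Lipschitz in space, $\ell$ is not $C^1$ in $z$). You should write the equation as $\dot\pi=-\tfrac12D_x(\pi^{\mathsf T}a\pi)+R$, with a remainder $|R|\le C|\pi|$ coming from the Lipschitz bound on $Z$.

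The genuine gap is the central estimate $\mathcal E_\gamma\asymp\mathcal A^{g,Z}(0,y;t,x)/t$. The bound $|\pi|\le C(|x-y|/t+1)$ and everything after it rest on this estimate.
\begin{itemize}
\item The oscillation bootstrap cannot supply it. The available bound $\int_0^t|\dot\pi|\,ds\le C\int_0^t(L_a|\pi|^2+|\pi|)\,ds$ is of order $|x-y|^2/t$. This is much larger than the mean size $|x-y|/t$ of $|\pi|$ when $|x-y|$ is large. Indeed $\pi$ can genuinely turn through a large angle; only its $a$-length is nearly conserved.
\item Your route to the Gronwall inequality uses time reparametrizations, which, as you note, differentiate $Z$ in time. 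Neither fallback repairs this. Boundedness of $Z$ gives no control of the $\pi\cdot\partial_sZ$ term, and the flow transport is not carried out.
\end{itemize}
The missing observation is that no inner variation is needed. Track the Lagrangian value $\mathcal E_\gamma=\tfrac12\pi^{\mathsf T}a(s,\gamma)\pi$, not the Hamiltonian $\mathcal E_\gamma+Z\cdot\pi$. Differentiate it using only three facts: $\dot\gamma=a\pi+Z$, the outer equation for $\dot\pi$, and $\tfrac{d}{ds}a(s,\gamma(s))=T_\gamma+D_xa[\dot\gamma]$ with $|T_\gamma|\le\|\partial_ta\|_\infty$. The cubic terms $\pm\tfrac12\pi^{\mathsf T}D_xa[a\pi]\pi$ cancel, leaving $\mathcal E_\gamma'=\tfrac12\pi^{\mathsf T}(T_\gamma+D_xa[Z])\pi+R\cdot a\pi$. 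Hence $|\mathcal E_\gamma'|\le C\mathcal E_\gamma$, with no time derivative of $Z$ or $b_g$. Gronwall on $[0,t]\subset[0,T]$ and $\int_0^t\mathcal E_\gamma\,ds=\mathcal A^{g,Z}(0,y;t,x)$ finish the argument.

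The same time irregularity breaks your time-Lipschitz argument for $\Psi_S$. The competitor $\gamma(st'/t)$ compares $Z(s,\cdot)$ with $Z(st/t',\cdot)$, and for $Z$ merely Borel in time this error is not $O(|t'-t|)$. Instead, truncate $\gamma$, or extend it past $t$ by $\dot{\bar\gamma}=a(s,\bar\gamma)\pi(t)+Z(s,\bar\gamma)$. Then a time increment $h$ costs $\int_t^{t+h}H_Z(s,x,\pi(t))\,ds$, with $H_Z(s,x,\xi)=\tfrac12\xi^{\mathsf T}a(s,x)\xi+Z(s,x)\cdot\xi$. Consequently \eqref{eq:geometric-endpoint-variation} holds where, besides differentiability, $t(u)$ is a Lebesgue point of $Z$ uniformly on compact sets (via $Z\in L^1(0,T;C(\overline B_R))$). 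This is still almost every $u$ when $\tau\ne0$, but your argument at a mere differentiability point omits it.

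For \eqref{eq:geometric-spatial-increment}, a single affine correction spread over all of $[0,t]$ does not give the stated constant. Lipschitz bounds on $g$ control the metric change only by $C\delta_x\int_0^t|\dot\gamma-Z|_g^2\,ds\sim\delta_x\mathcal A^{g,Z}\sim(|x-y|^2/t)\delta_x$. Even after the Euler--Lagrange identity reduces the first-order part to $\pi(t)\cdot z_1-\pi(0)\cdot z_0$, a second-order term of size up to $\delta_x^2\mathcal A^{g,Z}$ remains. Instead, localize the perturbation in short initial and terminal layers. This gives $\partial_\vartheta\mathcal A^{g,Z}=-\pi_\vartheta(0)\cdot z_0+\pi_\vartheta(t)\cdot z_1$ at differentiability points of the endpoint segment, where $\pi_\vartheta$ is the momentum of a minimizer from $y+\vartheta z_0$ to $x+\vartheta z_1$. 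Then integrate in $\vartheta$, using the momentum bound at the perturbed endpoints.

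Finally, \eqref{eq:geometric-prefix} follows directly by subtracting the cost of the minimizing prefix from $\mathcal I_t(\gamma)=\int_0^t\mathcal E_\gamma\,ds$. Integrating the affine-segment formula along the curved path $(u,\gamma(u))$ is unnecessary, and it is not justified by an almost-everywhere statement on affine segments.
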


\begin{proof}
Consider paths with $\gamma(0)\in S$ and $\gamma(t)=x$.
The fixed-source action is the case $S=\{y\}$.
Ellipticity and bounded $Z$ give
\begin{equation*}
\mathcal I_t(\gamma)\ge c\|\dot\gamma\|_{L^2(0,t)}^2-Ct,
\qquad
\|\gamma\|_\infty\le |x|+\sqrt t\|\dot\gamma\|_{L^2(0,t)}.
\end{equation*}
Fix $y_0\in S$. The straight path $s\mapsto y_0+s(x-y_0)/t$ gives
\begin{equation*}
\Psi_S(t,x)\le C\{|x-y_0|^2/t+t\}.
\end{equation*}
Thus a minimizing sequence $(\gamma_n)$ is bounded in $H^1$.
After extraction, $\gamma_n\rightharpoonup\gamma$ in $H^1$
and $\gamma_n\to\gamma$ uniformly. Thus $\gamma(t)=x$, and
closedness of $S$ gives $\gamma(0)\in S$.
The common spatial Lipschitz bounds imply
\begin{equation*}
g_s(\gamma_n)^{1/2}\{\dot\gamma_n-Z(s,\gamma_n)\}
 \rightharpoonup
g_s(\gamma)^{1/2}\{\dot\gamma-Z(s,\gamma)\}
\quad\text{in }L^2(0,t;\R^d).
\end{equation*}
Weak lower semicontinuity gives
$\mathcal I_t(\gamma)\le\liminf_n\mathcal I_t(\gamma_n)=\Psi_S(t,x)$,
proving attainment. The displayed upper bound is uniform on
each compact $K\subset(0,T]\times\R^d$. Applying the same
coercive bounds to every minimizer gives
\begin{equation*}
\sup_{\substack{(t,x)\in K\\\gamma\in\operatorname{Min}^{g,Z}_{S}(t,x)}}
 \{\|\gamma\|_\infty+\|\dot\gamma\|_{L^2(0,t)}\}<\infty .
\end{equation*}

For any minimizing $\gamma$ and $0<s<t$, replacing
$\gamma|_{[0,s]}$ by a path from $S$ with smaller action would strictly lower
the total action at $(t,x)$.
Thus this prefix minimizes $\Psi_S(s,\gamma(s))$;
subtracting its cost proves both identities in
\eqref{eq:geometric-prefix}.

For zero-endpoint variations $\xi$, the spatial Lipschitz bound
on $Z$ gives
\begin{equation*}
\left|\int_0^t
 \{\pi\cdot\dot\xi+
   \tfrac12D_xg[\xi](\dot\gamma-Z,\dot\gamma-Z)\}\,ds\right|
 \le C\int_0^t|\pi|\,|\xi|\,ds .
\end{equation*}
Hence, in distributions along the path,
\begin{equation*}
\dot\gamma=a\pi+Z,\qquad
\dot\pi=-\tfrac12D_x(\pi^{\mathsf T}a\pi)+R,\qquad
|R|\le C|\pi|,
\end{equation*}
where $D_x$ differentiates $a$ with $\pi$ fixed.
Initially the right-hand side is in $L^1$, so
$\pi\in W^{1,1}\subset L^\infty$; the same identity then gives
$\pi\in W^{1,\infty}$ and the derivative bound in
\eqref{eq:geometric-momentum}.
The absolutely continuous composition $a(s,\gamma(s))$ satisfies
\begin{equation*}
T_\gamma:=\frac d{ds}a(s,\gamma(s))-D_xa(s,\gamma(s))[\dot\gamma],
\qquad |T_\gamma|\le\|\partial_ta\|_\infty,
\end{equation*}
and cancellation of the cubic momentum terms gives
\begin{equation*}
\mathcal E_\gamma'=\tfrac12\pi^{\mathsf T}(T_\gamma+D_xa[Z])\pi
                          +R\cdot a\pi,\qquad |\mathcal E_\gamma'|\le C\mathcal E_\gamma.
\end{equation*}
Gronwall, $\int_0^t\mathcal E_\gamma=\mathcal A^{g,Z}$ and the straight-path bound
$\mathcal A^{g,Z}\le C(|x-y|^2/t+t)$ prove the remaining estimates
in \eqref{eq:geometric-momentum}.

We use one endpoint competitor. For this calculation, assume
$|y|+|x|\le R$ and $t\ge\tau>0$. All constants and the smallness
threshold for $\delta$ depend only on the geometric data, $R,\tau$.
Fix a minimizing path from $y$ to $(t,x)$, put $\pi_t=\pi(t)$ and define
\begin{equation*}
H_Z(s,y,\xi)=\tfrac12\xi^{\mathsf T}a(s,y)\xi+Z(s,y)\cdot\xi.
\end{equation*}
For a small increment $(h,v)$ with $0<t+h\le T$, set
$\delta=|h|+|v|>0$ and $\sigma=\sqrt\delta<t/2$.
If $h>0$, extend $\gamma$ by the Carath\'eodory equation
$\dot\bar\gamma=a(s,\bar\gamma)\pi_t+Z(s,\bar\gamma)$ with $\bar\gamma(t)=x$;
if $h\le0$, restrict $\gamma$ to $[0,t+h]$.
In both cases $\bar\gamma=\gamma$ on their common interval. Set
\begin{equation*}
z_{\mathrm{corr}}=x+v-\bar\gamma(t+h),\qquad
\widetilde\gamma(s)
 =\bar\gamma(s)+\frac{(s-t+\sigma)_+}{\sigma+h}z_{\mathrm{corr}},
\quad 0\le s\le t+h .
\end{equation*}
The initial point is unchanged, the terminal point is $x+v$,
and $|z_{\mathrm{corr}}|\le C\delta$. Expansion of the quadratic integrand gives
\begin{equation*}
\mathcal I_{t+h}(\widetilde\gamma)-\mathcal I_{t+h}(\bar\gamma)
 =\pi_t\cdot z_{\mathrm{corr}}+O(\sigma\delta+\delta^2/\sigma).
\end{equation*}
Indeed, on the modified interval the momentum of $\bar\gamma$ is
$\pi_t+O(\sigma)$ and $|\widetilde\gamma-\bar\gamma|\le C\delta$;
the added velocity has squared integral $O(\delta^2/\sigma)$.
On the added or deleted interval the momentum is $\pi_t+O(\delta)$,
so, with oriented integrals when $h<0$,
\begin{equation*}
\begin{aligned}
\bar\gamma(t+h)-x
 &=\int_t^{t+h}\{a(s,x)\pi_t+Z(s,x)\}\,ds+O(\delta^2),\\
\mathcal I_{t+h}(\bar\gamma)-\mathcal I_t(\gamma)
 &=\tfrac12\int_t^{t+h}\pi_t^{\mathsf T}a(s,x)\pi_t\,ds+O(\delta^2).
\end{aligned}
\end{equation*}
Consequently
\begin{equation}
\mathcal I_{t+h}(\widetilde\gamma)-\mathcal I_t(\gamma)
 =\pi_t\cdot v-\int_t^{t+h}H_Z(s,x,\pi_t)\,ds+O(\delta^{3/2}).
\label{eq:geometric-terminal-transport}
\end{equation}

At fixed time, the initial-endpoint perturbation
$\gamma(s)+(1-s/\sigma)_+z_0$ has boundary term
$-\pi(0)\cdot z_0$ by the same calculation.
Comparing minimizers in both directions proves local Lipschitz
continuity of $\Psi_S$ and of the action in its spatial endpoints.
At each differentiability point $\vartheta$ of an endpoint
segment, let $\gamma_\vartheta$ be any minimizer from
$y+\vartheta z_0$ to $x+\vartheta z_1$ on $[0,t]$.
The endpoint comparisons in both directions give
\begin{equation*}
\frac d{d\vartheta}\mathcal A^{g,Z}(0,y+\vartheta z_0;t,x+\vartheta z_1)
 =-\pi_{\gamma_\vartheta}(0)\cdot z_0
                         +\pi_{\gamma_\vartheta}(t)\cdot z_1 .
\end{equation*}
The momentum bound, integrated over $0\le\vartheta\le1$, yields
\eqref{eq:geometric-spatial-increment}.

For the balls $B_R$, $R\in\mathbb N$, Lebesgue differentiation of
$Z\in L^1(0,T;C(\overline B_R))$ gives, outside one null set
of times and for every $x,\xi$,
\begin{equation*}
\lim_{h\to0}\frac1h\int_t^{t+h}H_Z(s,x,\xi)\,ds=H_Z(t,x,\xi).
\end{equation*}
On an affine segment with $\tau\ne0$, these are almost all
parameter values; if $\tau=0$, the time integral is zero.
At a differentiability point of the restricted cost, use any
one minimizing path in \eqref{eq:geometric-terminal-transport}
with $(h,v)=\pm\varepsilon(\tau,z)$. Optimality gives the two
inequalities for the derivative, proving
\eqref{eq:geometric-endpoint-variation} for every minimizing path.
\end{proof}

\begin{proof}[Proof of Theorem~\ref{thm:geometric-formulas}]
Use the scales $e,M$ in \eqref{eq:tail-window-scales} and fix
\begin{equation*}
(H,L)=(M^{3/8},\lceil M^{17/48}\rceil).
\end{equation*}
For this pair, Proposition~\ref{prop:geometric-marginals}
compares the increments of $\Psi_S$ and $-\log\rho$ with
error $o(L)$, and Lemma~\ref{lem:phase-recovery} detects and
recovers actual phases from the same density increments.
We first bound all minimizing momenta on the scale $1/e$,
then prove the two inequalities in \eqref{eq:geometric-reduction}.

\paragraph*{Momentum bounds.}
Proposition~\ref{prop:geometric-marginals} gives
$|y_*-x|\le Ct/e$ for every optimal starting point $y_*$.
The reverse bound is $|y_*-x|\ge D(x)=t/e$.
Thus $|y_*-x|\asymp t/e$ for every minimizer.
The action bound
$\mathcal A^{g,Z}(0,y_*;t,x)\ge c|y_*-x|^2/t-Ct$,
the energy comparison in \eqref{eq:geometric-momentum}, and
$e\to0$ now give
\begin{equation}
c/e\le|\pi_\gamma(s)|\le C/e
\qquad
\forall\gamma\in\operatorname{Min}^{g,Z}_{S}(t,x),\quad 0\le s\le t
\label{eq:geometric-tail-momenta}
\end{equation}
for all sufficiently large $M$, uniformly over these paths and
centers. The constants and the lower threshold for $M$ depend only
on the geometric data and $C_D$; the path remainders below have
the same dependence.

\paragraph*{From phases to momenta.}
Fix $\mathbf M=(\lambda_*,a_*,A_*,q_*)\in
\mathfrak M_+(I)$ and $\zeta_*\in\supp\lambda_*$,
and choose a natural realizing sequence $(t_j,x_j)$.
Write $e_j=e(t_j,x_j)$, $M_j=M(t_j,x_j)$,
$L_j=\lceil M_j^{17/48}\rceil$ and $a_j=a(t_j,x_j)$.
Lemma~\ref{lem:phase-recovery}\ref{item:density-phase-detection}
gives $\mathfrak d_{\rho,L}(t_j,x_j,\zeta_*)\to0$.
Consider the affine segment
\begin{equation*}
(t_j(u),x_j(u))
 =(t_j-ue_j^2,x_j-2ue_ja_j\zeta_*),
 \qquad 0\le u\le L_j.
\end{equation*}
By \eqref{eq:density-phase-endpoints}, its centers remain in
\eqref{eq:tail-window-scales}, with
$t_j(u)/t_j\to1$ and $e(t_j(u),x_j(u))/e_j\to1$ uniformly.
For any minimizing path at $(t_j(u),x_j(u))$, put
$\xi_{\gamma,j}(u)=e_j\pi_\gamma(t_j(u))/2$.
The endpoint variation \eqref{eq:geometric-endpoint-variation},
with $(\tau,z)=(-e_j^2,-2e_ja_j\zeta_*)$, gives for almost
every $u$ and every such path
\begin{equation}
\tfrac12\frac d{du}\Psi_S(t_j(u),x_j(u))
                    +|\zeta_*|_{a_j}^2
 =|\xi_{\gamma,j}(u)-\zeta_*|_{a_j}^2+O(M_j^{-7/48}).
\label{eq:geometric-defect-square}
\end{equation}
By \eqref{eq:geometric-tail-momenta}, replacing
$a(t_j(u),x_j(u))$ by $a_j$ costs
$O(L_je_j+L_je_j^2)$, and the drift term after scaling is
$O(e_j)$. Since $L_je_j\le C M_j^{-7/48}$, these errors
give the stated bound uniformly in $u,\gamma$.

The restricted cost is absolutely continuous. Its derivative
has average equal to its endpoint increment divided by $L_j$.
Choose $u_j\in(0,L_j)$ where
\eqref{eq:geometric-defect-square} holds and the derivative is
at most this average plus $M_j^{-1/48}$. Apply
\eqref{eq:geometric-density-increments} with
$(s,z)=(-1,-2a_j\zeta_*)$ at $(t_j,x_j)$.
For any minimizer $\gamma_j$ at $(t_j(u_j),x_j(u_j))$,
the definition \eqref{eq:density-phase-defect} then gives
\begin{equation*}
|\xi_{\gamma_j,j}(u_j)-\zeta_*|_{a_j}^2
 \le\mathfrak d_{\rho,L}(t_j,x_j,\zeta_*)+O(M_j^{-1/48})\to0.
\end{equation*}
The new centers preserve all three coefficient limits and
belong to $\mathscr T_S(R_j')$ for some $R_j'\to\infty$.
In particular, $t_j(u_j)<t_j\le T$ covers realizing centers
at $T$. Homogeneity of $\mathsf R$ and continuity of
$\Phi\circ\mathsf R$ on nonzero vectors, with values in
$(0,+\infty]$, imply
\begin{equation*}
\Phi\bigl(\mathsf R(t_j(u_j),x_j(u_j),\pi_{\gamma_j}(t_j(u_j)))\bigr)
 \longrightarrow\Phi\bigl(\mathsf R(a_*,A_*,q_*;\zeta_*)\bigr).
\end{equation*}
Taking the supremum over $\mathbf M,\zeta_*$ in
\eqref{eq:propagation-contribution} proves
$K_{\mathrm{prop}}(I)\le K_{\mathrm{prop}}^{\mathrm{geo}}(I;S)$.

\paragraph*{From momenta to phases.}
Take $(t_j,x_j)\in\mathscr T_S(R_j)$ with $R_j\to\infty$
and any $\gamma_j\in\operatorname{Min}^{g,Z}_{S}(t_j,x_j)$.
Use $e_j,L_j,a_j$ as above and put
$\zeta_j=e_j\pi_{\gamma_j}(t_j)/2$.
By \eqref{eq:geometric-tail-momenta}, these vectors lie in
one fixed nonzero compact annulus. On the last $L_je_j^2$
units of time, \eqref{eq:geometric-momentum} gives
\begin{equation*}
\sup_{t_j-L_je_j^2\le s\le t_j}
 |e_j\pi_{\gamma_j}(s)-e_j\pi_{\gamma_j}(t_j)|
 \le CL_je_j\to0.
\end{equation*}
Here $|\gamma_j(s)-x_j|\le CL_je_j\to0$ by
$\dot\gamma_j=a\pi_{\gamma_j}+Z$ and the same momentum bound.
Coefficient continuity and the prefix identity
\eqref{eq:geometric-prefix}, at $t_j^-=t_j-L_je_j^2$, therefore
give, with $x_j^-=x_j-2L_je_ja_j\zeta_j$,
\begin{equation*}
\gamma_j(t_j^-)=x_j^-+O(L_je_jM_j^{-7/48}),\qquad
\Psi_S(t_j^-,\gamma_j(t_j^-))-
       \Psi_S(t_j,x_j)
 =-2L_j|\zeta_j|_{a_j}^2+O(L_jM_j^{-7/48}).
\end{equation*}
To replace the prefix endpoint by $x_j^-$, join them at fixed
time $t_j^-$. This segment remains in the same regime, with
spatial scales asymptotic to $e_j$. The endpoint variation
\eqref{eq:geometric-endpoint-variation} with $\tau=0$ and
\eqref{eq:geometric-tail-momenta} yield
\begin{equation*}
|\Psi_S(t_j^-,x_j^-)-\Psi_S(t_j^-,\gamma_j(t_j^-))|
 \le\frac C{e_j}|x_j^--\gamma_j(t_j^-)|
 =O(L_jM_j^{-7/48}).
\end{equation*}
Apply \eqref{eq:geometric-density-increments} with
$(s,z)=(-1,-2a_j\zeta_j)$, which stays in a fixed bounded set.
Equation~\eqref{eq:density-phase-defect} now gives
$\mathfrak d_{\rho,L}(t_j,x_j,\zeta_j)\to0$.

Extract a subsequence attaining the directional gain limsup
on which $(a,A,q)(t_j,x_j)\to(a_*,A_*,q_*)$ and
$\zeta_j\to\zeta_*$. Lemma~\ref{lem:phase-recovery}%
\ref{item:density-phase-recovery} realizes
$(\delta_{\zeta_*},a_*,A_*,q_*)$ at nearby centers.
Homogeneity and the same extended continuity identify its
gain with the sequence limsup, which is at most
$K_{\mathrm{prop}}(I)$.
The decreasing suprema in \eqref{eq:geometric-propagation-number}
converge to the supremum of these sequence limsups, including
infinite values. Hence
$K_{\mathrm{prop}}^{\mathrm{geo}}(I;S)\le K_{\mathrm{prop}}(I)$,
proving \eqref{eq:geometric-reduction}.
Theorem~\ref{thm:sharp-limit} gives
\eqref{eq:geometric-constant-formulas}.
\end{proof}

For unit reference covariance and a fixed starting point, the
tail directions are radial. This gives the following explicit formula
for general target fields $A(t,x),q(t,x)$.

\begin{eexample}[Radial propagation]
\label{ex:homogeneous-reference}
Under Assumption~\ref{ass:coefficients}, let $a=I_d$,
$\mu=\delta_0$, and suppose $b$ is uniformly Lipschitz in space.
Set
\begin{equation*}
\mathsf r_0^{\mathrm{rad}}=\inf_{x\ne0}\mathsf R(0,x,x),\qquad
\mathsf r_\infty^{\mathrm{rad}}=\lim_{R\to\infty}
 \inf_{\substack{0\le t\le T\\|x|\ge R}}\mathsf R(t,x,x).
\end{equation*}
Then, $K_\alpha(I)<\infty$ if and only if $\mathsf r_0^{\mathrm{rad}}>0$ and
$\mathsf r_\infty^{\mathrm{rad}}>0$ and, including infinite values,
\begin{equation}
K_\alpha(I)=\max\left\{
\mathfrak P_\alpha(\delta_0,I_d,A(0,0),q(0,0)),
K_{\mathrm{flat}}(I),\Phi(\mathsf r_0^{\mathrm{rad}}),\Phi(\mathsf r_\infty^{\mathrm{rad}})\right\}.
\label{eq:homogeneous-reference-constant}
\end{equation}
The initial model norm is given by
\eqref{eq:gaussian-matrix-norm}.
The value is the same for every bounded drift in this
class, including $b=0$, which gives Brownian motion
started at the origin.
\end{eexample}

\begin{proof}
Assumption~\ref{ass:geometry} holds, with geometric data controlled
by the reference data and the spatial Lipschitz bound of $b$.
Comparison with the segment $s\mapsto sx/t$ gives, for every
optimal path $\gamma$,
\begin{equation*}
\int_0^t|\pi_\gamma(s)-x/t|^2\,ds\le C(|x|+t).
\end{equation*}
Put $e=t/|x|$. Using
$|\dot\pi_\gamma|\le C(|x|/t+1)$ from
\eqref{eq:geometric-momentum} and averaging over the last
$te^{1/3}$ units of time gives
\begin{equation*}
e|\pi_\gamma(t)-x/t|\le Ce^{1/3}\longrightarrow0
\qquad(|x|^2/t\to\infty),
\end{equation*}
uniformly over optimal paths.
Bounded tail centers have $t\to0$, while escaping centers
enter $\mathsf r_\infty^{\mathrm{rad}}$. Conversely, fixed $x\ne0$ with
$t\downarrow0$ recovers $\mathsf r_0^{\mathrm{rad}}$. For $\mathsf r_\infty^{\mathrm{rad}}$, choose escaping
centers and times approaching its infimum, and replace the times
by \eqref{eq:coefficient-recovery-times}. The replacements lie in
$(0,T)$, change $A,q$ by at most
$\omega_{A,q}((1+|x_j|)^{-1/2})$, and satisfy
$|x_j|^2/\widehat t_j\to\infty$, including when the original
times approach $0$ or $T$.
Thus homogeneity and coefficient continuity give
$K_{\mathrm{prop}}^{\mathrm{geo}}(I;\{0\})=\max\{\Phi(\mathsf r_0^{\mathrm{rad}}),\Phi(\mathsf r_\infty^{\mathrm{rad}})\}$.
Theorem~\ref{thm:geometric-formulas} and
Corollary~\ref{cor:atomic-poles} yield the formula.
Finally, $\mathsf r_0^{\mathrm{rad}}>0$ implies
$I_d-A(0,0)\succeq \mathsf r_0^{\mathrm{rad}}q(0,0)\succ0$ by letting
$x\to0$ along each direction, so
Corollary~\ref{cor:gaussian-finiteness} also makes the initial
term finite. This proves the criterion.
\end{proof}

The next example shows why the curvature condition is needed
for the path formula, even for a static metric with $Z=0$.

\begin{eexample}[A negative-curvature obstruction]
\label{ex:negative-curvature}
There are smooth bounded uniformly elliptic coefficients in
dimension two, a static metric $g=a^{-1}$, drift $b=b_g$,
smooth target matrices and the initial point
$(0,0)$, for which the nonnegative Ricci condition is the
only geometric hypothesis that fails.
Let $K_{\mathrm{prop}}^{\mathrm{geo}}(I;\{(0,0)\})$ denote the right-hand side of
\eqref{eq:geometric-propagation-number} evaluated for these data.
On $I=(0,5)$, for every $\alpha\ge0$,
\begin{equation}
\max\{K_{\mathrm{flat}}(I),K_{\alpha,\mathrm{init}}(I),
                         K_{\mathrm{prop}}^{\mathrm{geo}}(I;\{(0,0)\})\}
 \le2\sqrt2<\frac{158}{49}\le K_\alpha(I)<\infty.
\label{eq:negative-curvature-obstruction}
\end{equation}
The construction and proof are in
Appendix~\ref{app:negative-curvature}.
\end{eexample}

The centers constructed in
\eqref{eq:negative-local-exponential-bound} have the form
$(3,(R_j,y_j))$ with $R_j\to\infty$ and $y_j\to3$.
By \eqref{eq:negative-geodesic-directions}, all minimizing
geodesics to these endpoints have terminal covectors whose
directions tend to $(1,1)$.
At the same centers, \eqref{eq:negative-local-exponential-bound}
and \eqref{eq:exponential-domination} identify the actual
pure phase $\zeta_-=(1,-1)/32$.
Its gain is $158/49$ by \eqref{eq:negative-direction-gain}.
The actual phase formula \eqref{eq:propagation-contribution}
therefore gives $K_{\mathrm{prop}}(I)\ge158/49$.

\subsection{Nonlinear perturbations and the role of continuity}
\label{subsec:borel-perturbations}

A finite Hessian gain controls Lipschitz nonlinearities by a
source-space fixed point: damping absorbs the lower-order variables,
while the Hessian Lipschitz constant is compared with the optimal gain.
The coefficient criteria above use the common continuity modulus to
freeze the principal matrix on shrinking windows. A sufficiently
small Borel perturbation is a linear specialization that can instead
be absorbed directly in the Hessian estimate. Throughout this
subsection, $a,b,q$ satisfy their conditions in
Assumption~\ref{ass:coefficients} and remain fixed.

The formulation below allows a Borel base matrix once its natural
linear inverses are known.

\begin{proposition}[Nonlinear terminal problem]
\label{prop:nonlinear-terminal-problem}
Let $A$ be a bounded Borel symmetric uniformly elliptic matrix field.
Suppose that, for every $J\subseteq I$,
$P_A:\mathcal W_{\alpha,0}(J)\to\mathcal H_\alpha(J)$ is bijective,
with inverse $S_J$ satisfying
\begin{equation*}
C_S:=\sup_{J\subseteq I}
 \|S_J\|_{\mathcal H_\alpha(J)\to\mathcal W_\alpha(J)}<\infty.
\end{equation*}
Set
\(
K_A=K_\alpha(I;\rho,A,q)<\infty.
\)
Let
\(
\mathcal N:I\times\R^d\times\R\times\R^d\times
     \R_{\rm sym}^{d\times d}\longrightarrow\R
\)
be Borel measurable. Set $\mathcal N_0(t,x)=\mathcal N(t,x,0,0,0)$ and assume
$\mathcal N_0\in\mathcal H_\alpha(I)$. Assume that, almost everywhere in
$(t,x)$ and for all $(z,p,M),(z',p',M')$,
\begin{equation}
|\mathcal N(t,x,z,p,M)-\mathcal N(t,x,z',p',M')|
 \le L_0|z-z'|+L_1|p-p'|
 +L_2|q^{1/2}(M-M')q^{1/2}|,
\label{eq:nonlinear-lipschitz}
\end{equation}
where $L_0,L_1,L_2\ge0$ and
\(
L_2K_A<1.
\)
Then, for every $J=(s_0,s_1)\subseteq I$,
$f\in\mathcal H_\alpha(J)$, and $g\in H^1(\mu_{s_1})$, there is a
unique $u\in\mathcal W_\alpha(J)$ satisfying
\begin{equation*}
P_Au=f+\mathcal N(t,x,u,Du,D^2u),
\qquad \operatorname{Tr}_{s_1}u=g,
\end{equation*}
so the equation without an independent forcing is the case $f=0$.
Let $C_j(\kappa;I)$, $j=0,1,2$, be the damped norms of
$S_I,D S_I,\mathcal H_qS_I$ defined in
\eqref{eq:damped-hessian-constant} and
\eqref{eq:damped-lower-order-constants}. For every sufficiently large
$\kappa$,
\begin{equation*}
\vartheta_\kappa
 :=L_0C_0(\kappa;I)+L_1C_1(\kappa;I)+L_2C_2(\kappa;I)<1.
\end{equation*}
The damping threshold is determined by the reference data,
the $A,q$ ellipticity data, $C_S,L_0,L_1,L_2$ and the short-interval gain
profile $h\mapsto K_{\alpha,h}(I;\rho,A,q)$ through
\eqref{eq:poisson-average} and \eqref{eq:linear-lower-order}.
If $\mathscr L_J(f_{\mathrm{src}},g)$ denotes the linear solution of
$P_Au=f_{\mathrm{src}}$, $\operatorname{Tr}_{s_1}u=g$, then, from any
$f_0\in\mathcal H_\alpha(J)$, the source iteration
\begin{equation}
u_n=\mathscr L_J(f_n,g),
\qquad
f_{n+1}=f+\mathcal N(t,x,u_n,Du_n,D^2u_n)
\label{eq:nonlinear-source-picard}
\end{equation}
converges geometrically in the damped source norm, with contraction
factor $\vartheta_\kappa$, and $u_n\to u$ in
$\mathcal W_\alpha(J)$. For fixed $\mathcal N$, the solutions associated
with $(f,g)$ and $(\widetilde f,\widetilde g)$ satisfy
\begin{equation}
\|u-\widetilde u\|_{\mathcal W_\alpha(J)}
 \le \frac{C}{1-\vartheta_\kappa}
 \left(\|f-\widetilde f\|_{\alpha,J}
       +\|g-\widetilde g\|_{H^1(\mu_{s_1})}\right).
\label{eq:nonlinear-data-stability}
\end{equation}
Here $C$ depends only on the reference data,
$\alpha,J,\kappa,C_S$, the $A,q$ ellipticity data, and $L_0,L_1,L_2$.
In particular, its dependence on $\mathcal N$ is only through
these Lipschitz constants.
\end{proposition}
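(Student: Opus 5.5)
The plan is to reduce the nonlinear terminal problem to a contraction for the source variable in the damped norm \eqref{eq:damped-interval-norm}, reusing the linear theory already available. The hypotheses give bijective $P_A$ with uniformly bounded inverses $S_J$. The argument of Theorem~\ref{thm:linear-theory} only used Proposition~\ref{prop:realization}-type bounded inverses together with the reference evolution $U(\cdot,s_1)$. Hence the linear solution map $\mathscr L_J(f_{\mathrm{src}},g)=v+S_J(f_{\mathrm{src}}-P_Av)$ with $v=U(\cdot,s_1)g$ is well defined and bounded into $\mathcal W_\alpha(J)$ with a constant depending on $C_S$ and the reference data. This holds for a Borel $A$ as well, since Lemma~\ref{lem:reference-smoothing} concerns only $a,b$.

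First I would fix the damping. Proposition~\ref{prop:poisson} applies to the Borel base matrix: its proof only used backward causality \eqref{eq:backward-causality} and the compression identity \eqref{eq:inverse-compression}, which follow from bijectivity and uniqueness on subintervals via Lemma~\ref{lem:graph-core}. It gives $C_2(\kappa;I)\to K_A$. The bounds \eqref{eq:linear-lower-order}, derived from Lemma~\ref{lem:direct-energy} applied to $u=S_If$, give $C_0=O(\kappa^{-1})$ and $C_1=O(\kappa^{-1/2})$. Since $L_2K_A<1$, we get $\vartheta_\kappa<1$ for large $\kappa$, with threshold controlled through \eqref{eq:poisson-average} by the profile $h\mapsto K_{\alpha,h}$. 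Compression \eqref{eq:inverse-compression} also shows that the damped norms of $S_J$ on any $J\subseteq I$ are bounded by those of $S_I$. This is because $S_J f=\1_J S_I(\1_J f)$ by causality, and the damping factor $e^{-2\kappa(s_1-t)}$ on $J$ differs from that on $I$ by a constant multiple, which cancels in the ratio.

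Next comes the contraction. Define $\Gamma(f_{\mathrm{src}})=f+\mathcal N(\cdot,u,Du,D^2u)$ with $u=\mathscr L_J(f_{\mathrm{src}},g)$. Membership $\Gamma(f_{\mathrm{src}})\in\mathcal H_\alpha(J)$ follows from $\mathcal N_0\in\mathcal H_\alpha$ and \eqref{eq:nonlinear-lipschitz}, measurability being guaranteed by Borel $\mathcal N$. For two sources the terminal data cancel, so $u-\widetilde u=S_J(f_{\mathrm{src}}-\widetilde f_{\mathrm{src}})$. Minkowski with \eqref{eq:nonlinear-lipschitz} in the damped norm then gives the Lipschitz constant $\vartheta_\kappa$. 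Banach's fixed point theorem yields a unique fixed point and the geometric convergence of \eqref{eq:nonlinear-source-picard}. Fixed points correspond bijectively to solutions, since any solution $u$ has source $f+\mathcal N(u,\ldots)$ and equals $\mathscr L_J$ of it by linear uniqueness. This gives existence and uniqueness. Convergence $u_n\to u$ in $\mathcal W_\alpha(J)$ follows from boundedness of $\mathscr L_J$ and the equivalence of damped and undamped norms on $J$, with factor $e^{\kappa|J|}$.

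Finally, for stability \eqref{eq:nonlinear-data-stability}, write the difference of the fixed points. The source difference $\delta f$ satisfies $\|\delta f\|_\kappa\le\|f-\widetilde f\|_\kappa+\vartheta_\kappa\|\delta f\|_\kappa+(L_0+L_1+L_2\Lambda_q)\|w\|_{\mathcal W}$ with $w=U(\cdot,s_1)(g-\widetilde g)$-part. More precisely, I would split $u-\widetilde u=\mathscr L_J(0,g-\widetilde g)+S_J\delta f$ and bound the first term by the trace energy \eqref{eq:terminal-data-energy} and the linear theory. Absorbing then gives the factor $(1-\vartheta_\kappa)^{-1}$. The main obstacle I expect is the legitimacy of Proposition~\ref{prop:poisson} and \eqref{eq:linear-lower-order} for merely Borel $A$. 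The earlier statements assume $K_\alpha(I)<\infty$ under Assumption~\ref{ass:coefficients}, so I would check that their proofs use only the bijectivity and uniform inverse bounds assumed here, together with Lemma~\ref{lem:graph-core} and Lemma~\ref{lem:direct-energy}, whose constants depend on $A$ only through $\Lambda_A$ and $e_q$.
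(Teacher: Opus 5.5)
Your proposal is correct and follows the paper's proof essentially step for step. You justify the linear solver $\mathscr L_J$, causality and compression, Proposition~\ref{prop:poisson} and Lemma~\ref{lem:direct-energy} for a merely bounded Borel $A$ exactly as the paper does. You then run the source-space Banach contraction in the damped norm with factor $\vartheta_\kappa$, using the constant factor $e^{-\kappa(t_1-s_1)}$ to transfer $C_j(\kappa;I)$ to $J$. You obtain \eqref{eq:nonlinear-data-stability} by splitting off $\mathscr L_J(0,g-\widetilde g)$ and absorbing, and your remark that fixed points correspond to solutions via linear uniqueness just spells out the paper's appeal to Banach's theorem for uniqueness.
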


\begin{proof}
We first justify the linear ingredients for a Borel base matrix.
The reference lift $R_Jg=U(\cdot,s_1)g$ and the $H^1(\mu_{s_1})$ trace
construction in the proof of Theorem~\ref{thm:linear-theory} do not
use continuity of $A$. Since $A$ is bounded,
$P_AR_Jg=(a/2-A):D^2R_Jg\in\mathcal H_\alpha(J)$, and hence
\(
\mathscr L_J(f_{\mathrm{src}},g)=R_Jg+S_J(f_{\mathrm{src}}-P_AR_Jg)
\)
is the bounded linear terminal solver. Restriction and uniqueness
give causality and compression for $S_J$. The proof of
Proposition~\ref{prop:poisson} therefore gives
$C_2(\kappa;I)\to K_A$. Lemma~\ref{lem:direct-energy}, whose proof
uses only boundedness of $A$, gives
$C_0(\kappa;I)=O(\kappa^{-1})$ and
$C_1(\kappa;I)=O(\kappa^{-1/2})$, with constants depending only
on the reference data, the $A,q$ ellipticity data and $C_S$, for
$\kappa\ge\max\{1,2B_b^2/\lambda_a\}$. Indeed,
$k_\alpha(I)\le\Lambda_q C_S$. Thus
$\vartheta_\kappa\to L_2K_A<1$.

For $u\in\mathcal W_\alpha(J)$ define
$\mathcal N[u](t,x)=\mathcal N(t,x,u(t,x),Du(t,x),D^2u(t,x))$.
Equation~\eqref{eq:nonlinear-lipschitz} gives
\begin{equation*}
\|\mathcal N[u]\|_{\alpha,J}
 \le\|\mathcal N_0\|_{\alpha,J}+L_0\|u\|_{\alpha,J}
      +L_1\|Du\|_{\alpha,J}+L_2\|\mathcal H_qu\|_{\alpha,J}<\infty,
\end{equation*}
so $\mathcal N[u]\in\mathcal H_\alpha(J)$.
For fixed $g$, define
\begin{equation*}
\mathcal T_{f,g}(f_{\mathrm{src}})
 =f+\mathcal N[\mathscr L_J(f_{\mathrm{src}},g)].
\end{equation*}
If $f_{\mathrm{src}},\widetilde f_{\mathrm{src}}\in\mathcal H_\alpha(J)$, then
$\mathscr L_J(f_{\mathrm{src}},g)-\mathscr L_J(\widetilde f_{\mathrm{src}},g)
=S_J(f_{\mathrm{src}}-\widetilde f_{\mathrm{src}})$. With functions extended by zero from $J$,
restriction and uniqueness give
\begin{equation*}
\1_JD^jS_I\1_J=D^jS_J,\,j=0,1,\qquad
\1_J\mathcal H_qS_I\1_J=\mathcal H_qS_J.
\end{equation*}
The last identity is \eqref{eq:inverse-compression} for these inverses.
For $I=(t_0,t_1)$ and $J=(s_0,s_1)$,
\eqref{eq:damped-interval-norm} gives
\begin{equation*}
\|\1_J f\|_{\alpha,\kappa,I}
 =e^{-\kappa(t_1-s_1)}\|f\|_{\alpha,\kappa,J}.
\end{equation*}
The same factor occurs for the restricted output, so the damped
norms of $S_J,D S_J,\mathcal H_qS_J$ are bounded by the
corresponding $C_j(\kappa;I)$. Consequently,
\begin{equation*}
\|\mathcal T_{f,g}(f_{\mathrm{src}})-\mathcal T_{f,g}(\widetilde f_{\mathrm{src}})
  \|_{\alpha,\kappa,J}
 \le\vartheta_\kappa
       \|f_{\mathrm{src}}-\widetilde f_{\mathrm{src}}\|_{\alpha,\kappa,J}.
\end{equation*}
Banach's theorem proves convergence of
\eqref{eq:nonlinear-source-picard}, existence, and uniqueness.
Boundedness of the linear terminal solver and norm equivalence on
the finite interval then give convergence in the graph norm.

For two data sets, boundedness of $\mathscr L_J(0,\cdot)$ and
\eqref{eq:nonlinear-lipschitz} give, with a finite
$C$ having the same dependence as the
constant in \eqref{eq:nonlinear-data-stability},
\begin{equation*}
\|\mathcal T_{f,g}(f_{\mathrm{src}})-
      \mathcal T_{\widetilde f,\widetilde g}(\widetilde f_{\mathrm{src}})
  \|_{\alpha,\kappa,J}
 \le \|f-\widetilde f\|_{\alpha,\kappa,J}
      +\vartheta_\kappa\|f_{\mathrm{src}}-\widetilde f_{\mathrm{src}}\|_{\alpha,\kappa,J}
      +C\|g-\widetilde g\|_{H^1(\mu_{s_1})}.
\end{equation*}
Apply this to the two fixed sources and use norm equivalence and
boundedness of the linear terminal solver to obtain
\eqref{eq:nonlinear-data-stability}.
\end{proof}

For a bounded Borel vector field $\beta$ and a bounded Borel
symmetric uniformly elliptic target matrix $\widetilde A$, set
\begin{equation*}
P_{\widetilde A,\beta}u
 =-u_t-\widetilde A:D^2u-\beta\cdot Du.
\end{equation*}
Use superscripts $k_\alpha^\beta,K_{\alpha,h}^\beta,K_\alpha^\beta$
for the constants in \eqref{eq:short-time-constants} with this
operator in the denominator. The notation $\beta$ distinguishes
the drift in the equation from the drift $b$ of the fixed reference
diffusion.
The proof below also gives
$K_\alpha^\beta(I;\rho,\widetilde A,q)=K_\alpha(I;\rho,\widetilde A,q)$
in $[0,\infty]$.

\begin{corollary}[Borel perturbations]
\label{cor:borel-perturbations}
Let $A,\widetilde A$ be bounded Borel symmetric uniformly elliptic
matrix fields. Suppose $P_A$ has the uniform natural inverses of
Proposition~\ref{prop:realization}. Set
$K_A=K_\alpha(I;\rho,A,q)<\infty$ and assume
\begin{equation*}
\delta:=\mathop{\rm ess\,sup}_{(t,x)\in I\times\R^d}
 \left|q^{-1/2}(\widetilde A-A)q^{-1/2}\right|,\qquad \delta K_A<1.
\end{equation*}
Here the matrix norm is Frobenius.
Then, for every bounded Borel $\beta$,
\begin{equation}
K_\alpha^\beta(I;\rho,\widetilde A,q)
 =K_\alpha(I;\rho,\widetilde A,q)
 \le\frac{K_A}{1-\delta K_A}.
\label{eq:borel-principal-bound}
\end{equation}
The operator $P_{\widetilde A,\beta}$ has the same realization and
causality properties. Its damped Hessian norm, defined as in
\eqref{eq:damped-hessian-constant} with its own inverse, converges
to $K_\alpha(I;\rho,\widetilde A,q)$ as $\kappa\to\infty$.
This convergence has the error bound \eqref{eq:poisson-average}
for the short-interval gains of the perturbed operator.
\end{corollary}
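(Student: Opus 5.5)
The plan is to reduce the perturbed operator to $P_A$ by a Neumann-series (near-operator) argument in the Hessian observable, treating the first-order term $\beta\cdot Du$ separately. It contributes nothing to the limiting gain because, on short intervals, it is controlled by lower-order budgets.

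\emph{Step 1: the principal perturbation without drift.} Write
\[
P_{\widetilde A}u=P_Au-(\widetilde A-A):D^2u,
\]
and note the pointwise identity
\[
(\widetilde A-A):D^2u=\bigl(q^{-1/2}(\widetilde A-A)q^{-1/2}\bigr):\mathcal H_qu,
\]
so that
\[
|(\widetilde A-A):D^2u|\le\delta|\mathcal H_qu|
\]
by Cauchy--Schwarz in Frobenius norm. On any $J\subseteq I$ and any $u\in\mathcal W_{\alpha,0}(J)$ this gives
\[
\|\mathcal H_qu\|_{\alpha,J}\le k_\alpha(J;\rho,A,q)\bigl(\|P_{\widetilde A}u\|_{\alpha,J}+\delta\|\mathcal H_qu\|_{\alpha,J}\bigr).
\]
Since $K_{\alpha,h}(I;\rho,A,q)\downarrow K_A$ and $\delta K_A<1$, for small $h$ we may absorb to get
\[
k_\alpha(J;\rho,\widetilde A,q)\le K_{\alpha,h}/(1-\delta K_{\alpha,h}).
\]
Letting $h\downarrow0$ yields the bound in \eqref{eq:borel-principal-bound} for $\beta=0$.

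For realization, I would construct the inverse directly. Solve $P_{\widetilde A}u=f$ as $u=S_Jg$ with
\[
g=f+(\widetilde A-A):D^2S_Jg.
\]
The map $g\mapsto(\widetilde A-A):D^2S_Jg$ has norm at most $\delta\|\mathcal H_qS_J\|=\delta k_\alpha(J)$. This is only $<1$ when $k_\alpha(J)<\delta^{-1}$, which holds for short $J$ but maybe not for long $J$. For the long intervals I would use the damped norm instead: by Proposition~\ref{prop:poisson} (whose proof uses only causality and compression, both available from Proposition~\ref{prop:realization}), $C_2(\kappa;I)\to K_A$. So for large $\kappa$ the map is a contraction in $\|\cdot\|_{\alpha,\kappa,J}$, which is an equivalent norm on the finite interval. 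This gives bijectivity of $P_{\widetilde A}$ on every $J$ with a uniform inverse bound, and causality and compression pass through the Neumann series. This is exactly the argument of Proposition~\ref{prop:nonlinear-terminal-problem} with the linear nonlinearity $\mathcal N=(\widetilde A-A):D^2u$, $L_2=\delta$ and $L_0=L_1=0$, so I would simply invoke that proposition.

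\emph{Step 2: the drift term and the equality of constants.} Here $P_{\widetilde A,\beta}u=P_{\widetilde A}u-\beta\cdot Du$. By \eqref{eq:short-graph-budget} of Corollary~\ref{cor:localization-budget} (proved via Lemma~\ref{lem:direct-energy}, which needs only boundedness of the principal matrix), on $|J|\le h$,
\[
\|\beta\cdot Du\|_{\alpha,J}\le\|\beta\|_\infty C|J|^{1/2}\mathscr G_J(u),
\]
where $\mathscr G_J$ is taken with respect to either operator. Combining this with the uniform bound of Step 1 on $\|D^2u\|$ in terms of $\|P_{\widetilde A}u\|$ gives
\[
\|P_{\widetilde A}u\|_{\alpha,J}=\|P_{\widetilde A,\beta}u\|_{\alpha,J}\,(1+O(h^{1/2})),
\]
and the symmetric comparison also holds. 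Hence $k^\beta_\alpha(J)$ and $k_\alpha(J)$ differ by a factor $1+O(h^{1/2})$, and the limits $K_\alpha^\beta=K_\alpha$ agree in $[0,\infty]$. The infinite case follows by applying the same comparison in reverse to recovered near-extremal tests.

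\emph{Step 3: realization for $\beta$ and the damping limit.} Realization and causality for $P_{\widetilde A,\beta}$ again follow from Proposition~\ref{prop:nonlinear-terminal-problem}, now with $L_1=\|\beta\|_\infty$; the $L_1C_1=O(\kappa^{-1/2})$ term is harmless. The convergence of the damped Hessian norm, with the error bound \eqref{eq:poisson-average}, then follows by rerunning the proof of Proposition~\ref{prop:poisson} for the new inverse, since that proof uses only backward causality \eqref{eq:backward-causality} and compression \eqref{eq:inverse-compression}.

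The main obstacle is that Proposition~\ref{prop:realization} was stated under Assumption~\ref{ass:coefficients}, which requires continuity of the principal matrix, whereas $\widetilde A$ is only Borel. So I must check that the realization, causality and compression properties, as well as Lemma~\ref{lem:direct-energy}, hold with Borel principal matrices. I would handle this by deriving them from the Neumann construction above rather than citing Proposition~\ref{prop:realization} directly.
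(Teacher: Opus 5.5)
Your proposal is correct and follows essentially the paper's argument: you absorb $(\widetilde A-A):D^2u$ through the Hessian observable, use Lemma~\ref{lem:direct-energy} to make $\beta\cdot Du$ an $O(\sqrt h)$ perturbation on short intervals, invoke Proposition~\ref{prop:nonlinear-terminal-problem} with $\mathcal N=(\widetilde A-A):M+\beta\cdot p$ for the uniform inverses, and rerun the proof of Proposition~\ref{prop:poisson}. The only difference is organizational: the paper treats both perturbations at once through the reciprocal gains $\mathsf d_h,\widetilde{\mathsf d}_h$, normalized by $\|\mathcal H_qu\|_{\alpha,J}=1$ and combined with $\|D^2u\|_{\alpha,J}\le\lambda_q^{-1}\|\mathcal H_qu\|_{\alpha,J}$, which yields $K_\alpha^\beta=K_\alpha$ in $[0,\infty]$ without the a priori finiteness your Step~2 takes from Step~1, so your vague remark about the infinite case is not needed.
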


\begin{proof}
For bounded Borel target matrices $A,\widetilde A$, put
$E=\widetilde A-A$ and $\widetilde P=P_{\widetilde A,\beta}$,
with $\delta$ as above.
The short-interval part of Lemma~\ref{lem:direct-energy} gives,
uniformly for $|J|\le h$ and $u\in\mathcal W_{\alpha,0}(J)$,
\begin{equation*}
\bigl|\|\widetilde Pu\|_{\alpha,J}-\|P_Au\|_{\alpha,J}\bigr|
 \le\varepsilon_h\|P_Au\|_{\alpha,J}
                  +(\delta+\varepsilon_h)\|\mathcal H_qu\|_{\alpha,J},
\qquad
\varepsilon_h=C\|\beta\|_\infty\sqrt h\longrightarrow0.
\end{equation*}
Here $C$ and the upper threshold for $h$ depend only on the
reference data, $\Lambda_A$ and $\lambda_q$.
For $\varepsilon_h<1$, take the infimum over the same normalized
tests in the two inequalities:
\begin{equation*}
\begin{aligned}
\mathsf d_h
 &:=\inf_{\substack{J\subseteq I,\ |J|\le h\\
        u\in\mathcal W_{\alpha,0}(J),\ \|\mathcal H_qu\|_{\alpha,J}=1}}
       \|P_Au\|_{\alpha,J}
   =\frac1{K_{\alpha,h}(I;\rho,A,q)},\\
\widetilde{\mathsf d}_h
 &:=\inf_{\substack{J\subseteq I,\ |J|\le h\\
        u\in\mathcal W_{\alpha,0}(J),\ \|\mathcal H_qu\|_{\alpha,J}=1}}
       \|\widetilde Pu\|_{\alpha,J}
   =\frac1{K_{\alpha,h}^\beta(I;\rho,\widetilde A,q)}.
\end{aligned}
\end{equation*}
The local quantities $\mathsf d_h,\widetilde{\mathsf d}_h$ are
reciprocal gains, with $1/\infty=0$. The resulting bounds are
\(
(1-\varepsilon_h)\mathsf d_h-\delta-\varepsilon_h
 \le\widetilde{\mathsf d}_h
 \le(1+\varepsilon_h)\mathsf d_h+\delta+\varepsilon_h.
\)
Taking $h\downarrow0$ with $E=0$ proves the stated first-order
invariance in $[0,\infty]$. Under the corollary's smallness assumption,
$\mathsf d_h\to K_A^{-1}>\delta$, so the lower bound gives
\eqref{eq:borel-principal-bound}.

Now apply Proposition~\ref{prop:nonlinear-terminal-problem} with
\(
\mathcal N(t,x,z,p,M)=(\widetilde A-A):M+\beta\cdot p.
\)
Here $L_2=\delta$, $L_2K_A<1$, and $L_1=\|\beta\|_\infty$.
Choose a common $\kappa$ with
$\vartheta_\kappa=\delta C_2(\kappa;I)
 +\|\beta\|_\infty C_1(\kappa;I)<1$.
The source contraction and the bound $C_S$ in that proposition give
\begin{equation*}
\|u\|_{\mathcal W_\alpha(J)}
 \le\frac{C_Se^{\kappa|I|}}{1-\vartheta_\kappa}
             \|P_{\widetilde A,\beta}u\|_{\alpha,J},
\qquad J\subseteq I,\quad u\in\mathcal W_{\alpha,0}(J).
\end{equation*}
Thus the new inverses are uniform in $J$. Since
$P_{\widetilde A,\beta}:\mathcal W_\alpha(J)\to\mathcal H_\alpha(J)$
is bounded, this estimate makes its graph norm equivalent to the
$\mathcal W_\alpha(J)$ norm on the complete space
$\mathcal W_{\alpha,0}(J)$. Its realization is therefore closed,
and Lemma~\ref{lem:graph-core} gives the same core.
Restriction and uniqueness give causality and compression.
Applying the proof of Proposition~\ref{prop:poisson} to these
inverses identifies their damped Hessian limit with
$K_\alpha^\beta(I;\rho,\widetilde A,q)$, which equals
$K_\alpha(I;\rho,\widetilde A,q)$ by the first-order invariance
proved above.
\end{proof}

Taking $\beta=b$ includes the reference drift in the PDE without
changing the optimal limiting Hessian constant. The finite-interval
and finite-damping norms can depend on $\beta$; the limiting norm
is the reason for writing the equation with $P_A$ alone.

Smallness in \eqref{eq:borel-principal-bound} cannot be replaced by
the pointwise positivity tests of Example~\ref{ex:homogeneous-reference}
when the target matrix is merely Borel. The following example keeps
both radial margins strictly positive, and even makes $I_d-A_{\mathrm{rough}}$
uniformly positive definite.

\begin{eexample}[Discontinuity and infinite gain]
\label{ex:rough-principal}
Let $d=3$, $a=q=I_3$, $b=0$ and $\mu=\delta_0$, so
$\rho(t,x)=\gamma_{tI_3}(x)$. Fix $x_{\rm s}\ne0$ and set
\begin{equation*}
A_{\mathrm{rough}}(x)=\frac16I_3+
 \frac12\frac{(x-x_{\rm s})\otimes(x-x_{\rm s})}{|x-x_{\rm s}|^2}
 ,\,x\ne x_{\rm s},\qquad A_{\mathrm{rough}}(x_{\rm s})=\frac13I_3.
\end{equation*}
This time-independent Borel matrix is smooth away from $x_{\rm s}$,
has no continuous representative, and satisfies
\begin{equation*}
\frac16I_3\preceq A_{\mathrm{rough}}\preceq\frac23I_3,
\qquad
\mathop{\rm ess\,sup}_{x\in\R^3}|A_{\mathrm{rough}}(x)-I_3/2|
 =\frac12,
\qquad \mathsf r_0^{\mathrm{rad}}=\mathsf r_\infty^{\mathrm{rad}}=\frac13.
\end{equation*}
Here the radial numbers are those of
Example~\ref{ex:homogeneous-reference}, evaluated with $A_{\mathrm{rough}}$.
All four quantities on the right of
\eqref{eq:homogeneous-reference-constant} are finite; their maximum is
\begin{equation}
M_\alpha=\max\left\{
\mathfrak P_\alpha(\delta_0,I_3,A_{\mathrm{rough}}(0),I_3),6\right\}<\infty.
\label{eq:rough-frozen-candidate}
\end{equation}
Nevertheless, for every $\alpha\ge0$ and every nonempty interval
$J\subseteq I$,
\begin{equation}
k_\alpha(J;\rho,A_{\mathrm{rough}},I_3)
 =K_\alpha(I;\rho,A_{\mathrm{rough}},I_3)=+\infty.
\label{eq:rough-infinite-gain}
\end{equation}
The proof is in Appendix~\ref{app:rough-principal}.
\end{eexample}

The singular point lies away from the initial atom, where the
Brownian density is smooth and strictly positive at every positive
time. Under dilation about that point, $A_{\mathrm{rough}}$ retains its
angular variation, as shown in \eqref{eq:rough-dilation}.
This produces an additional concentration mechanism that a frozen
constant matrix does not describe. The common continuity modulus
excludes it through \eqref{eq:local-coefficient-freezing}.
For $\alpha\ge1/2$, the matching matrix $A=I_3/2$ has
$K_A=2$ by Theorem~\ref{thm:matching-arbitrary-laws},
part~\ref{item:matching-arbitrary-laws}, so the strict inequality
in Corollary~\ref{cor:borel-perturbations}
cannot be replaced by equality.

Smooth convex interpolation with $I_3/3$ in shrinking balls around
$x_{\rm s}$, avoiding the origin, gives a sequence of smooth
uniformly continuous matrices $A_n\to A_{\mathrm{rough}}$ pointwise,
with common boundedness and ellipticity constants. The quantities
in \eqref{eq:homogeneous-reference-constant} are unchanged, so
$K_\alpha(I;\rho,A_n,I_3)=M_\alpha<\infty$ for every $\alpha\ge0$,
although the limiting matrix has infinite gain. The continuity
moduli are not uniform, and no uniform finite short-interval
bound has a common threshold: otherwise the bound would pass to
$A_{\mathrm{rough}}$ on smooth tests.

\section{Diffusion densities and their local limits}
\label{sec:densities}

This section provides the density estimates behind the local
models and comparison argument in
Sections~\ref{sec:sharp-constants}--\ref{sec:proof}.
Kernel stability and Gaussian bounds justify the initial heat
limits. For $\mu\in\mathcal D$, kernel comparisons and conditional
bridge estimates, together with local initial-mass bounds, yield
phase approximations on fixed rescaled windows and the Hardy
estimates used in localization.

For the propagation formulas in Section~\ref{sec:explicit-constants},
we relate phase laws at different restart times and use logarithmic
density increments on growing windows to detect and recover their
directions. Small-time posterior estimates supply the input for
local shortest-path directions. Under geometric assumptions,
sharper kernel--action bounds compare increments of $-\log\rho$
with minimum-action increments, providing the density input for
the deterministic path formula. The final subsection supplies the
additional estimates for Corollary~\ref{cor:matching-g-initial-laws} with regards to $\mu \in \mathcal G$.

\subsection{Transition kernels and Gaussian comparison}
\label{subsec:kernel-comparison}

Assumption~\ref{ass:coefficients} gives a unique nonexplosive strong
solution; local strong existence and uniqueness follow from
\citet[Theorem~1.3]{zhang-2011-homeomorphism}, and boundedness of the
coefficients excludes explosion. Its transition density $k$ is
strictly positive; we use the jointly continuous version described
in Lemma~\ref{lem:kernel-stability}. By
\citet[Theorem~1.2(i)]{menozzi-pesce-zhang-2020}, there are
$c,C>0$, depending only on the reference data other than
$\omega_a,h_*$, such that
\begin{equation}
c h^{-d/2}e^{-C|x-y|^2/h}
 \le k(r,y;r+h,x)\le C h^{-d/2}e^{-c|x-y|^2/h},
\qquad 0\le r<r+h\le T,\quad\forall x,y\in\R^d.
\label{eq:kernel-gaussian}
\end{equation}
The flow displacement in the cited bounds is at most $B_bh$,
so it is absorbed into $c,C$ on this finite time interval.

\begin{lemma}[Kernel stability]\label{lem:kernel-stability}
Let $a,b$ be Borel, with $a$ symmetric, and satisfy the ellipticity,
spatial Lipschitz and drift bounds of Assumption~\ref{ass:coefficients}.
The transition density $k$ has a strictly positive, jointly continuous
version satisfying
\begin{equation}
k(r,y;t,x)=\int_{\R^d}k(r,y;s,z)k(s,z;t,x)\,dz,
\qquad\forall\,0\le r<s<t\le T,\quad\forall x,y\in\R^d.
\label{eq:chapman-kolmogorov}
\end{equation}
For every $\delta\in(0,T)$, there are $\gamma\in(0,1)$ and $C$,
depending only on the reference data other than $\omega_a,h_*$
and additionally on $\delta$, such that
\begin{equation}
\begin{aligned}
&|k(r,y;t,x)-k(r',y';t',x')|
 \le C
 \bigl(|r-r'|^{1/2}+|y-y'|+|t-t'|^{1/2}+|x-x'|\bigr)^\gamma,\\
&\forall\,0\le r<t\le T,\quad\forall\,0\le r'<t'\le T
 \text{ with }t-r,t'-r'\ge\delta,\quad\forall x,y,x',y'\in\R^d.
\end{aligned}
\label{eq:kernel-holder}
\end{equation}
Let $(a_n,b_n)$ and $(a,b)$ have common ellipticity, spatial
Lipschitz and drift bounds on $[0,T]$, with
transition densities $k_n$ and $k$. If, for some $p>(d+2)/2$,
\begin{equation*}
\|a_n-a\|_{L^p((0,T)\times B_R)}
 +\|b_n-b\|_{L^p((0,T)\times B_R)}\longrightarrow0,
\qquad\forall R>0,
\end{equation*}
then $k_n\to k$ locally uniformly in $(r,y,t,x)$ for
$0\le r<t\le T$ and $x,y\in\R^d$.
\end{lemma}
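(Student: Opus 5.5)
Our plan is to build everything from an $L^p$ Sobolev solution theory for the backward Kolmogorov equation, rather than from an explicit parametrix. The key facts are that $a$ is Lipschitz in space, hence VMO in $x$ uniformly in $t$, and that $b$ is bounded.

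\emph{Existence and continuity.} Fix a terminal time $t$ and a function $\varphi\in C_c^\infty$. Krylov's $W^{1,2}_p$ theory for VMO$_x$ coefficients (the same theorem used in Lemma~\ref{lem:reference-smoothing}) solves $-\partial_rv-\tfrac12a:D^2v-b\cdot Dv=0$ with $v(t)=\varphi$ and gives $v\in W^{1,2}_p$ for every large $p$. The Sobolev It\^o formula then identifies $v(r,y)=\mathbb E[\varphi(X_t)\mid X_r=y]$.

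The Gaussian bounds \eqref{eq:kernel-gaussian}, taken from \citet{menozzi-pesce-zhang-2020}, give existence of a density $k$ with two-sided Gaussian control. The Chapman--Kolmogorov identity \eqref{eq:chapman-kolmogorov} follows from the Markov property of the strong solution, once we fix the continuous version.

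To obtain joint H\"older continuity \eqref{eq:kernel-holder} when $t-r\ge\delta$, we regard $k(\cdot,\cdot;t,x)$ in the backward variables as a nonnegative solution of the backward equation. We regard it in the forward variables as a solution of the adjoint equation. Two routes are available:
\begin{enumerate}
\item Use the H\"older estimates from the same cited reference, which are stated uniformly in the data.
\item Use Krylov--Safonov interior H\"older estimates in nondivergence form for the backward variable. In the forward variable, write the identity through Chapman--Kolmogorov at an intermediate time $s=(r+t)/2$, so that only backward-variable regularity of the second factor is needed. Combine this with the Gaussian tails so that the integral over $z$ converges uniformly.
\end{enumerate}
Either route gives exponent and constant depending only on the ellipticity, Lipschitz and drift bounds and on $\delta$. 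Strict positivity of $k$ follows from the lower Gaussian bound.

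\emph{Stability.} Suppose $(a_n,b_n)\to(a,b)$ in $L^p_{\mathrm{loc}}$. By the uniform H\"older bounds and the Gaussian envelope, the family $\{k_n\}$ is equicontinuous and locally bounded on each region $\{t-r\ge\delta\}$. Arzel\`a--Ascoli then supplies subsequential locally uniform limits $\bar k$, and it suffices to identify $\bar k=k$.

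For the identification, fix $\varphi\in C_c^\infty$ and let $v_n,v$ be the $W^{1,2}_p$ solutions. The difference satisfies
\[
-\partial_r(v_n-v)-\tfrac12a_n:D^2(v_n-v)-b_n\cdot D(v_n-v)=\tfrac12(a_n-a):D^2v+(b_n-b)\cdot Dv .
\]
Because $D^2v\in L^{p'}$ for every large $p'$, H\"older's inequality sends the right-hand side to zero in $L^{\tilde p}_{\mathrm{loc}}$ with $\tilde p>(d+2)/2$; the hypothesis $p>(d+2)/2$ is exactly what makes this exponent bookkeeping work. The global Gaussian decay handles the complement of large balls.

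An Aleksandrov--Krylov estimate with the uniform coefficient bounds, which requires the exponent to exceed $(d+2)/2$ in the parabolic setting, then gives $v_n\to v$ pointwise. Hence $\int\bar k\,\varphi=\int k\,\varphi$ for every $\varphi$, so $\bar k=k$.

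\emph{Main obstacle.} We expect the hardest part to be the uniformity of the H\"older modulus simultaneously in both the forward and backward variables, using only spatial Lipschitz continuity and Borel measurability in time. If route 1 does not provide this uniformity, we fall back on the Chapman--Kolmogorov splitting of route 2, which reduces everything to one-sided Krylov--Safonov estimates.
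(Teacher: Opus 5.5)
\paragraph*{The gap: H\"older continuity in the forward variables.} The missing piece is the H\"older estimate in the forward variables $(t,x)$. You need it for the continuous version, for \eqref{eq:chapman-kolmogorov} at every point, and for equicontinuity in all four variables in your Arzel\`a--Ascoli step.

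Route~2 does not deliver it. In $k(r,y;t,x)=\int k(r,y;s,z)\,k(s,z;t,x)\,dz$, the variables $(t,x)$ still sit in the forward slots of the second factor. Splitting at $s=(r+t)/2$ therefore transfers nothing to backward regularity. Krylov--Safonov controls solutions of the nondivergence backward equation only. In its forward variables the density solves the double-divergence Fokker--Planck equation $\partial_tk=\tfrac12\partial_{ij}(a^{ij}k)-\operatorname{div}_x(bk)$, to which Krylov--Safonov does not apply. In fact, no argument that uses only ellipticity and the drift bound can give forward regularity: for merely continuous $a$ the transition law can even be singular, by the Fabes--Kenig examples.

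Route~1 is not available as stated either. The paper uses the cited reference only for two-sided Gaussian bounds and for a gradient bound in the starting point, which is backward regularity.

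\paragraph*{The missing idea, and the stability step.} The missing idea is to use the spatial Lipschitz bound on $a$ to put both equations in divergence form. Since
\[
\tfrac12\partial_{ij}(a^{ij}k)=\operatorname{div}_x\bigl(\tfrac12aD_xk+\tfrac12(\operatorname{div}a)k\bigr),
\]
the choice $c=\tfrac12\operatorname{div}a-b$, with $\|c\|_\infty\le C_dL_a+B_b$, gives
\[
\partial_tk=\operatorname{div}_x\bigl(\tfrac12aD_xk+ck\bigr),\qquad -\partial_rk=\operatorname{div}_y\bigl(\tfrac12aD_yk\bigr)-c\cdot D_yk .
\]
Apply Aronson's (De Giorgi--Nash) interior H\"older estimate away from the diagonal, with the local supremum controlled by the Gaussian envelope. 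This gives \eqref{eq:kernel-holder} jointly in both pairs of variables, with constants depending only on $d,T,\lambda_a,\Lambda_a,L_a,B_b,\delta$. To reach $r=0$ and $t=T$, extend the coefficients constantly in time outside $[0,T]$.

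Once this is in place, your stability identification is a valid alternative to the paper's argument. The paper shows that subsequential limits of the laws solve the martingale problem for $L$ and invokes weak uniqueness. You instead solve the limit backward problem in $W^{1,2}_q$, apply It\^o's formula to $v$ along $X^n$, and bound $\mathbb E_n\int_r^t|(L_n-L)v|\,ds$ by the Gaussian occupation estimate. Your route avoids tightness and uniqueness for the martingale problem, at the cost of using $W^{1,2}_q$ solvability of the limit equation. The exponent bookkeeping is not delicate: the coefficient differences are bounded, so their local $L^p$ convergence implies local $L^s$ convergence for every finite $s$. However, your route still needs the four-variable equicontinuity above, so it inherits the gap until the forward estimate is supplied.
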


\begin{proof}
By \eqref{eq:kernel-gaussian} the kernels are positive. With
$c=\tfrac12\operatorname{div}a-b$, the forward and backward
equations take the divergence forms
\begin{equation*}
\partial_tk=\operatorname{div}_x(\tfrac12aD_xk+ck),\qquad
-\partial_rk=\operatorname{div}_y(\tfrac12aD_yk)-c\cdot D_yk .
\end{equation*}
Here $\|c\|_\infty\le C_dL_a+B_b$.
For estimates at $r=0$ or $t=T$, extend the coefficients
constantly in time outside $[0,T]$.
The interior H\"older estimate of
\citet[Theorem~C]{aronson-1968}, applied away from the
diagonal and combined with \eqref{eq:kernel-gaussian}, gives
\eqref{eq:kernel-holder}. The Markov identity consequently extends
from almost every terminal point to every point, proving
\eqref{eq:chapman-kolmogorov} for this continuous version.

For stability we adapt the martingale-problem approximation
discussed after Proposition~2.16 of \citet{trevisan-2016}
to the stated local $L^p$ convergence.
The required passage to the limit follows from the occupation
estimate below.
For the law $\mathbb P_n$ started at $(r,y)$, Gaussian integration
and H\"older's inequality give, when $p>(d+2)/2$,
\begin{equation}
\mathbb E_n\int_r^T|f(s,X_s)|\,ds
 \le C\|f\|_{L^p((r,T)\times\R^d)}.
\label{eq:kernel-occupation}
\end{equation}
Here $C$ depends only on the reference data other than
$\omega_a,h_*$, and on $p$.
Equation~\eqref{eq:kernel-occupation} passes to every weak limit, first for continuous
compactly supported $f$ and then by duality.
The common bounded coefficients give tightness and uniform
integrability of their time integrals. For a test function
$\varphi\in C_c^\infty(B_R)$ and
$L_n=\tfrac12a_n:D^2+b_n\cdot D$, \eqref{eq:kernel-occupation} gives that with $Q_R=(0,T)\times B_R$,
\begin{equation*}
\mathbb E_n\int_r^T|(L_n-L)\varphi(s,X_s)|\,ds
 \le C(\|D\varphi\|_\infty+\|D^2\varphi\|_\infty)
             \bigl(\|a_n-a\|_{L^p(Q_R)}
                       +\|b_n-b\|_{L^p(Q_R)}\bigr)\longrightarrow0.
\end{equation*}
Let $\mathbb P_*$ be a weak limit and approximate $a,b$ on $Q_R$
in $L^p$ by bounded continuous coefficients $a^{(m)},b^{(m)}$,
with generators $L^{(m)}$. Equation~\eqref{eq:kernel-occupation}
gives
\begin{align*}
\sup_{\mathbb Q\in\{\mathbb P_*,\mathbb P_1,\mathbb P_2,\ldots\}}
 \mathbb E_{\mathbb Q}\int_r^T
       &|(L^{(m)}-L)\varphi(s,X_s)|\,ds \\
 &\le C(\|D\varphi\|_\infty+\|D^2\varphi\|_\infty)
             \bigl(\|a^{(m)}-a\|_{L^p(Q_R)}
                     +\|b^{(m)}-b\|_{L^p(Q_R)}\bigr)\longrightarrow0.
\end{align*}
For $r\le s<t\le T$ and every bounded continuous function $G$
of finitely many evaluations of $X$ at times in $[r,s]$,
weak convergence first with $L^{(m)}$, followed by this estimate
and the preceding bound for $L_n-L$, yields
\begin{equation*}
\mathbb E_*\left[G\left\{\varphi(X_t)-\varphi(X_s)
              -\int_s^tL\varphi(v,X_v)\,dv\right\}\right]=0.
\end{equation*}
Thus $\mathbb P_*$ solves the martingale problem for $L$.
The occupation bound includes $r$, so the initial point mass is
admissible. Weak uniqueness identifies $\mathbb P_*$.

Finally, \eqref{eq:kernel-holder} makes the kernels locally
precompact away from $r=t$. Every locally uniform limit has
the transition probabilities just identified, so it equals $k$.
This proves local uniform convergence of the full sequence.
\end{proof}

\begin{lemma}[Gaussian comparison]\label{lem:centered-kernel}
Let $a,b$ satisfy the ellipticity, spatial Lipschitz and drift
bounds of Assumption~\ref{ass:coefficients} on $[0,1]\times\R^d$.
Let $B_0(s)$ be Borel and symmetric with the same ellipticity
bounds, and put
\begin{equation*}
\bar\Sigma=\int_0^1B_0(s)\,ds,\qquad
E_a=\|a-B_0\|_\infty.
\end{equation*}
For each fixed $R>0$ and $\gamma\in(0,1)$,
\begin{equation}
\sup_{|z|\le R}
 \left|\frac{k(0,0;1,z)}{\gamma_{\bar\Sigma}(z)}-1\right|
 \le C\bigl(E_a^{1-\gamma}+\|b\|_\infty\bigr),
 \qquad E_a\le1.
\label{eq:centered-uniform}
\end{equation}
Here $C$ depends only on the reference data with
$T=1$ and without $\omega_a,h_*$, and on $R,\gamma$.
There is also a function $\varepsilon_R(u)\to0$ as $u\downarrow0$,
depending on the same reference bounds and $R$, such that, with
$\Sigma_z=\int_0^1a(s,z)\,ds$,
\begin{equation}
\sup_{|z|\le R}|k(0,0;1,z)-\gamma_{\Sigma_z}(z)|
 \le\varepsilon_R(\operatorname{Lip}_xa+\|b\|_\infty).
\label{eq:centered-frozen}
\end{equation}
\end{lemma}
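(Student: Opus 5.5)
The plan is to compare $k$ with the Gaussian kernel $\bar k(z)=\gamma_{\bar\Sigma}(z)$ of the time-dependent, spatially constant generator $\tfrac12B_0(s):D^2$ through a Duhamel (parametrix) identity on $[0,1]$. Both $\bar k$ and $k$ are fundamental solutions. Writing $L_s=\tfrac12a(s,\cdot):D^2+b(s,\cdot)\cdot D$ and $\bar L_s=\tfrac12B_0(s):D^2$, the Duhamel formula gives
\begin{equation*}
k(0,0;1,z)-\gamma_{\bar\Sigma}(z)
 =\int_0^1\int_{\R^d}k(0,0;s,w)\,
   \bigl[(L_s-\bar L_s)\bar k_{s,1}(\cdot,z)\bigr](w)\,dw\,ds,
\end{equation*}
where $\bar k_{s,1}(w,z)=\gamma_{\int_s^1B_0}(z-w)$. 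We justify this identity by testing the forward equation for $k$ against the backward smooth Gaussian. This is legitimate by Lemma~\ref{lem:kernel-stability}: we approximate $a,b$ by smooth coefficients, apply the identity there, and pass to the limit using the locally uniform kernel convergence together with \eqref{eq:kernel-gaussian}.

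\textbf{Step 1 (drift term).} The drift term is bounded by $\|b\|_\infty\int_0^1\int k(0,0;s,w)|D_w\bar k_{s,1}(w,z)|\,dw\,ds$. Combining the Gaussian upper bound \eqref{eq:kernel-gaussian} for $k$ with $|D\bar k|\le C(1-s)^{-1/2}\times$Gaussian, the singularity $(1-s)^{-1/2}$ is integrable. Convolving the two Gaussians, the result for $|z|\le R$ is $C\|b\|_\infty$. Since $\gamma_{\bar\Sigma}(z)\ge c_R>0$ on $|z|\le R$, this yields the relative bound.

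\textbf{Step 2 (second-order term, the main obstacle).} The term $\tfrac12(a-B_0):D^2\bar k$ carries a nonintegrable singularity $(1-s)^{-1}$, so the crude bound $E_a\int(1-s)^{-1}ds$ diverges. We split at $s=1-\eta$.

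On $[0,1-\eta]$ we bound directly by $CE_a\log(1/\eta)$.

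On $[1-\eta,1]$ we integrate by parts once in $w$. This moves one derivative onto $k(0,0;s,\cdot)(a-B_0)(s,\cdot)$. Here $a$ is spatially Lipschitz, $B_0$ is constant in space, and $D_wk(0,0;s,w)$ is bounded for $s\ge1/2$ by interior parabolic (Aronson/De Giorgi--Nash) gradient estimates in the divergence form of Lemma~\ref{lem:kernel-stability}. If only Hölder continuity of $k$ is available, we instead use the Hölder estimate \eqref{eq:kernel-holder} with exponent $\gamma'$: subtracting $k(0,0;s,z)$ (whose integral against $D^2\bar k$ vanishes) leaves a kernel of size $(1-s)^{-1+\gamma'/2}$. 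We expect the interplay between $E_a$ and these regularity moduli to give a contribution $C\eta^{\gamma'/2}$ (not necessarily small in $E_a$).

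Optimizing $E_a\log(1/\eta)+\eta^{c}$ over $\eta$ produces a bound $CE_a^{1-\gamma}$ for any $\gamma\in(0,1)$, and dividing by $\gamma_{\bar\Sigma}\ge c_R$ gives \eqref{eq:centered-uniform}. We expect this step to be where the care lies: we must make the $E_a$-smallness and the regularity-driven smallness combine correctly. A cleaner alternative, which we would try first, is to interpolate: the singular term is bounded both by $CE_a/(1-s)$ and by an integrable $C(1-s)^{-1+\gamma'/2}$ (from Hölder regularity, independent of $E_a$). Taking the geometric mean $E_a^{1-\gamma}\cdot(1-s)^{-1+\gamma\gamma'/2}$ pointwise in $s$ gives an integrable kernel directly, which yields the stated power $E_a^{1-\gamma}$.

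\textbf{Step 3 (\eqref{eq:centered-frozen}).} We argue by contradiction and compactness. Suppose sequences $(a_n,b_n,z_n)$ have $\operatorname{Lip}_xa_n+\|b_n\|_\infty\to0$ while the error stays above $\epsilon$. Since $\operatorname{Lip}_xa_n\to0$, the coefficients become spatially constant on $B_{R'}$. Extract $a_n(\cdot,z_n)\to B(\cdot)$ weakly-$*$ in $L^\infty(0,1)$, and note that $\int_0^1a_n(s,z_n)ds$ converges, so $\Sigma_{z_n}$ converges. Weak-$*$ convergence in time is not $L^p$ convergence, so Lemma~\ref{lem:kernel-stability} does not apply directly. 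We therefore apply Step 2 with $B_0=a_n(\cdot,z_n)$ on a bounded spatial window: $E_a$ there is at most $C\operatorname{Lip}_xa_n\to0$. Gaussian localization \eqref{eq:kernel-gaussian} shows that far-away coefficients affect $k$ only negligibly, which handles the localized version of $E_a$. This gives the contradiction and produces the modulus $\varepsilon_R$.
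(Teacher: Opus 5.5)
Your proof of \eqref{eq:centered-uniform} takes a genuinely different route from the paper's, and it works once the repairs below are made.

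\textbf{The paper's route.} It sets $\sigma=a^{1/2}$ and $\sigma_0=B_0^{1/2}$. It interpolates $\sup_s\|\sigma(s,\cdot)-\sigma_0(s)\|_{C^\gamma}\le CE_a^{1-\gamma}$ between the $L^\infty$ size $E_a$ and the common Lipschitz bound. It then quotes the density stability theorem of Konakov, Kozhina and Menozzi for the two diffusions (with drifts $b$ and $0$), and divides by the lower bound of $\gamma_{\bar\Sigma}$ on $B_R$.

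\textbf{Your route.} You carry out the one-step parametrix yourself: a Duhamel identity around the spatially constant Gaussian. For the nonintegrable second-order term you take, pointwise in $s$, a geometric mean of the bound $CE_a(1-s)^{-1}$ and a regularity bound $C(1-s)^{-1+c}$. The latter comes from \eqref{eq:kernel-holder} and the spatial Lipschitz bound of $a$. This uses only \eqref{eq:kernel-gaussian} and Lemma~\ref{lem:kernel-stability} with $\delta=1/2$, so the constants have the stated dependence. The paper's proof is shorter but outsources exactly this analysis. In both, the loss $E_a^{1-\gamma}$ comes from trading part of the smallness for regularity.

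\textbf{Second estimate.} For \eqref{eq:centered-frozen} your plan matches the paper's: localize so the coefficient is globally close to a matrix depending only on time, control the far field by Gaussian tails, and apply \eqref{eq:centered-uniform}. The paper makes the localization rigorous with the cutoff coefficient $a(s,0)+\chi_K(z)\{a(s,z)-a(s,0)\}$, the first-exit identity at $\tau_K$, and the choice $K=u^{-1/2}$. Your contradiction and weak-$*$ extraction play no role in this.

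\textbf{Repairs.}

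(i) The term $k(0,0;s,z)\,(a(s,w)-B_0(s)):D_w^2\bar k$ does not integrate to zero in $w$, because $a$ depends on $w$. Subtract $k(0,0;s,z)\,(a(s,z)-B_0(s)):D_w^2\bar k$ instead. The leftover $k(0,0;s,z)\,(a(s,w)-a(s,z))$ costs $CL_a(1-s)^{-1/2}$. Your regularity bound then becomes $C(1-s)^{-1+\min\{\gamma',1\}/2}$, and the geometric mean still gives $CE_a^{1-\gamma}$.

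(ii) Drop the integration-by-parts variant. Aronson and De Giorgi--Nash give H\"older bounds, not gradient bounds. With merely Borel $b$ in the forward equation, $D_wk(0,0;s,\cdot)$ need not be bounded.

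(iii) Justify the Duhamel identity by It\^o's formula for $\bar k(s,X_s)$ on $[0,\tau]$, then let $\tau\uparrow1$ using the integrable bound from (i). If you smooth the coefficients instead, the limit near $s=1$ needs that $n$-uniform bound; \eqref{eq:kernel-gaussian} alone is not enough.

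(iv) Step~3 asserts the localization without proof, but your own framework makes it unnecessary. Freeze at the evaluation point, $B_0(s)=a(s,z)$. Then $|a(s,w)-B_0(s)|\le\Lip_xa\,|w-z|$ and $\int k(0,0;s,w)\,|w-z|\,|D_w^2\bar k(s,w)|\,dw\le C(1-s)^{-1/2}$. The Duhamel formula then gives $|k(0,0;1,z)-\gamma_{\Sigma_z}(z)|\le C(\Lip_xa+\|b\|_\infty)$ uniformly in $z$. This is a linear modulus, with no cutoff, first-exit argument or compactness.
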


\begin{proof}
Set $\sigma=a^{1/2}$ and $\sigma_0=B_0^{1/2}$.
Uniform ellipticity gives
$\|\sigma-\sigma_0\|_\infty\le CE_a$ and a common
spatial Lipschitz bound. Interpolation yields
\begin{equation*}
\sup_s\|\sigma(s,\cdot)-\sigma_0(s)\|_{C^\gamma}
 \le C E_a^{1-\gamma}.
\end{equation*}
Apply \citet[Theorem~1.1, with $q=\infty$]{konakov-kozhina-menozzi-2017}
to these two diffusions, with drifts $b$ and zero. The reference
kernel is $\gamma_{\bar\Sigma}$; its positive lower bound on
$B_R$ turns the resulting density difference estimate into
\eqref{eq:centered-uniform}.

For \eqref{eq:centered-frozen}, fix $K>2R+2$ and a cutoff
$\chi_K$ equal to one on $B_K$ and zero outside $B_{2K}$, with
$|D\chi_K|\le C_d/K$. Set
\begin{equation*}
\widetilde a(s,z)=a(s,0)+\chi_K(z)\{a(s,z)-a(s,0)\},\qquad
\widetilde b=\chi_Kb,
\end{equation*}
and denote the modified kernel by $\widetilde k$.
The first-exit identity at $\tau_K=\inf\{s:|X_s|\ge K\}$ and
\eqref{eq:kernel-gaussian} give
\begin{equation*}
\sup_{|z|\le R}|k(0,0;1,z)-\widetilde k(0,0;1,z)|
 \le C e^{-c(K-R)^2}.
\end{equation*}
Put $u=\Lip_xa+\|b\|_\infty$ and $\Sigma_0=\int_0^1a(s,0)\,ds$.
Then $\|\widetilde a-a(\cdot,0)\|_\infty\le2Ku$ and
$|\Sigma_z-\Sigma_0|\le Ru$ on $B_R$.
Using \eqref{eq:centered-uniform} with $\gamma=1/2$, and the
Lipschitz dependence of $\gamma_\Sigma$ on elliptic matrices
$\Sigma$, gives, for $2Ku\le1$,
\begin{equation*}
\sup_{|z|\le R}|k(0,0;1,z)-\gamma_{\Sigma_z}(z)|
 \le Ce^{-c(K-R)^2}+C\{(2Ku)^{1/2}+u\}.
\end{equation*}
For $u=0$ the coefficients are spatially constant and the
kernel equals $\gamma_{\Sigma_z}$. For sufficiently small
$u>0$, take $K=u^{-1/2}$.
The last bound tends to zero and supplies $\varepsilon_R$;
for the remaining $u$, use \eqref{eq:kernel-gaussian}.
\end{proof}

For a bounded Borel potential $V$ on a finite interval $[r,t]$,
write $k^V$ for the Feynman--Kac kernel:
\begin{equation*}
\int_E k^V(r,y;t,z)\,dz
 =\mathbb E_{r,y}\left[e^{\int_r^tV(s,X_s)\,ds}
                         \1_E(X_t)\right],
\qquad\forall\text{ Borel }E\subset\R^d.
\end{equation*}
Its continuous version follows from
\citet[Theorem~10]{aronson-1968}. Positivity gives
\begin{equation}
e^{-(t-r)\|V\|_\infty}k(r,y;t,z)
 \le k^V(r,y;t,z)
 \le e^{(t-r)\|V\|_\infty}k(r,y;t,z).
\label{eq:potential-kernel-bound}
\end{equation}

On a finite interval $[r,t]$, let $a,b$ satisfy the bounds in
Lemma~\ref{lem:kernel-stability}, and let $B$ be a bounded Borel
symmetric matrix function of time. For $\xi\in\R^d$, let
$\widehat k$ have covariance $a$ and drift $b+a\xi$, and set
$V_{\xi}=\xi\cdot b+\tfrac12\xi^{\mathsf T}(a-B)\xi$.
The boundedness of $a$ gives Novikov's condition; Girsanov's formula
and the Feynman--Kac definition give
\begin{equation}
k(r,y;t,x)=
\exp\left\{-\xi\cdot(x-y)+
             \tfrac12\int_r^t\xi^{\mathsf T}B(s)\xi\,ds\right\}
\widehat k^{\,V_{\xi}}(r,y;t,x).
\label{eq:kernel-tilt}
\end{equation}

\begin{lemma}[Comparison along a long tube]\label{lem:long-tube}
Fix $K_0,P_0>0$. There are $K,C>0$, depending only on
$d,\lambda_a,\Lambda_a,K_0,P_0$, with the following property.
Let $k$ be the transition density for Borel coefficients $a,b$ on
$[r,t]\times\R^d$, where $h=t-r\ge1$, and let
$B:[r,t]\to\R^{d\times d}$ be Borel measurable.
Assume $a,B$ are symmetric and $\forall s \in [r,t], z\in \R^d$,
\begin{equation*}
\lambda_a I_d\preceq a(s,z),B(s)\preceq\Lambda_a I_d,\qquad
\sqrt h\,\Lip_xa\le K_0,\qquad
\sqrt h\,\|b\|_\infty\le K_0,\qquad |x-y|\le P_0h.
\end{equation*}
Put $\Sigma=\int_r^tB(s)\,ds,\,
\xi=\Sigma^{-1}(x-y)$
\begin{equation*}
\gamma(s)=y+\int_r^sB(v)\xi\,dv,\qquad
E_a=\sup_{\substack{r\le s\le t\\|z-\gamma(s)|\le2Kh}}
                   |a(s,z)-B(s)|.
\end{equation*}
For every $\varepsilon>0$ there are $H_\varepsilon\ge1$ and
$\delta_\varepsilon\in(0,1]$, depending only on the same data and
$\varepsilon$, such that
\begin{equation}
h\ge H_\varepsilon,\quad
\sqrt h(E_a+\|b\|_\infty)\le\delta_\varepsilon
\quad\Longrightarrow\quad
\left|\log\frac{k(r,y;t,x)}{\gamma_\Sigma(x-y)}\right|
 \le Ch(E_a+\|b\|_\infty)+\varepsilon.
\label{eq:long-tube-log}
\end{equation}
In particular, for a sequence with common bounds in this lemma,
\begin{equation*}
h_n\to\infty,\quad h_n(E_{a,n}+\|b_n\|_\infty)\to0
\quad\Longrightarrow\quad
\frac{k_n(r_n,y_n;t_n,x_n)}{\gamma_{\Sigma_n}(x_n-y_n)}\longrightarrow1.
\end{equation*}
\end{lemma}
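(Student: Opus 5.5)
The plan is to reduce, via the exponential tilt \eqref{eq:kernel-tilt}, to a comparison of two kernels with small drift and nearly constant covariance. Afterwards, parabolic rescaling by $h$ turns the long tube into a unit-time problem in which Lemma~\ref{lem:centered-kernel} applies.

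The steps are as follows.

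\textbf{Step 1 (tilting).} Apply \eqref{eq:kernel-tilt} with the given $B$ and $\xi=\Sigma^{-1}(x-y)$. The tilted diffusion $\widehat X$ has covariance $a$ and drift $b+a\xi$. Since $|\xi|\le C P_0$, its deterministic flow $\dot\gamma=B\xi$ lies within $O(h(E_a+\|b\|_\infty))$ in velocity of the actual drift along the tube. The prefactor equals
\[
\exp\{-\xi\cdot(x-y)+\tfrac12\xi^{\mathsf T}\Sigma\xi\}
 =\exp\{-\tfrac12(x-y)^{\mathsf T}\Sigma^{-1}(x-y)\},
\]
which is exactly the exponential part of $(2\pi)^{d/2}(\det\Sigma)^{1/2}\gamma_\Sigma(x-y)$. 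It therefore remains to show that
\[
(2\pi)^{d/2}(\det\Sigma)^{1/2}\,\widehat k^{\,V_\xi}(r,y;t,x)=e^{O(h(E_a+\|b\|_\infty))+O(\varepsilon)}.
\]

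\textbf{Step 2 (potential).} On the event that $\widehat X$ stays within $2Kh$ of $\gamma$, we have $|V_\xi|\le C(E_a+\|b\|_\infty)$. The time integral is then $O(h(E_a+\|b\|_\infty))$, as in \eqref{eq:potential-kernel-bound}. Off this event, Gaussian bounds \eqref{eq:kernel-gaussian} for the tilted kernel, together with a bridge/exit estimate, make the contribution relatively $e^{-cK^2h}$ compared with $h^{-d/2}$. This is negligible once $K$ is chosen large depending on $P_0,\Lambda_a$; this is how $K$ is fixed.

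\textbf{Step 3 (recentering).} Subtract the drift. Set $\widetilde X_s=\widehat X_s-\gamma(s)$, which has covariance $\widetilde a(s,z)=a(s,z+\gamma(s))$ and drift $b+(a-B)\xi$ evaluated along the shifted path. Inside the tube, this drift is $O(E_a+\|b\|_\infty)$. We modify the coefficients outside the tube, as in the proof of Lemma~\ref{lem:centered-kernel}, with exit error controlled by Step 2.

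\textbf{Step 4 (rescaling).} Rescale by $s=r+h\sigma$ and $z=\sqrt h\,w$. The unit-time coefficients have covariance deviation from $B$ at most $E_a$, drift of size $\sqrt h(E_a+\|b\|_\infty)\le\delta_\varepsilon$, and spatial Lipschitz constant $\sqrt h\Lip_x a\le K_0$. The endpoint is $w=0$. Apply \eqref{eq:centered-uniform} with $\gamma=1/2$ to get
\[
\bigl|\widetilde k/\gamma_{\bar\Sigma}(0)-1\bigr|\le C(E_a^{1/2}+\delta_\varepsilon).
\]
Two points need care here. First, $E_a\le\delta_\varepsilon/\sqrt h$ is small once $h\ge H_\varepsilon$. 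Second, $\bar\Sigma=\Sigma/h$ and $\gamma_{\bar\Sigma}(0)h^{-d/2}=\gamma_\Sigma(0)$. Choosing $H_\varepsilon$ and $\delta_\varepsilon$ so that this error is at most $\varepsilon/2$ gives \eqref{eq:long-tube-log}. The sequential statement follows immediately.

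\textbf{Main obstacle.} The delicate point is Step 2 combined with the localization in Step 3. The covariance error $E_a$ is only controlled inside the $2Kh$-tube, while the drift after tilting is $O(1)$ in original units. We need the tilted path to stay in the tube with overwhelming probability relative to the endpoint density, which is a bridge estimate rather than a marginal one.

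I would first try to deduce this from Chapman–Kolmogorov \eqref{eq:chapman-kolmogorov} and the Gaussian upper and lower bounds \eqref{eq:kernel-gaussian} for the tilted kernel. The tilted drift is bounded by $C(P_0)$, so the tube width $2Kh$ dominates the drift displacement. The two-sided Gaussian bounds then give a relative exit cost of $e^{-cK^2h}$ against an endpoint density of order $h^{-d/2}$.
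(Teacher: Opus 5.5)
\textbf{Verdict: genuine gap, in the step you flagged as the main obstacle; the fix is to do the exit estimate before tilting.}

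Your overall architecture matches the paper's ingredients: tilt by $\xi=\Sigma^{-1}(x-y)$, localize to a tube of width of order $Kh$ around $\gamma$, recentre along $\gamma$, rescale by $h$, and apply \eqref{eq:centered-uniform} with exponent $1/2$. The failure is in the tube-exit bound. You propose to control the exit part of the tilted Feynman--Kac kernel $\widehat k^{V_\xi}$ by applying \eqref{eq:kernel-gaussian} to the tilted diffusion. That diffusion has drift $b+a\xi$ of size of order $P_0$ over a time interval of length $h\to\infty$. After parabolic rescaling by $h$ its drift is of order $\sqrt h\,P_0$, so no Gaussian bound with $h$-independent constants holds for it.

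A concrete counterexample: take $a=B=I_d$, $b=0$ and $x=y+P_0h\,\mathbf e_1$. The tilted process is Brownian motion with drift $P_0\mathbf e_1$, and its kernel at the endpoint equals $(2\pi h)^{-d/2}$. A bound of the form \eqref{eq:kernel-gaussian} would instead give at most $Ch^{-d/2}e^{-cP_0^2h}$. For the same reason, the denominator of order $h^{-d/2}$ cannot come from a Gaussian lower bound; it must come from the localized comparison itself. Also, off the tube $V_\xi$ is of order one, so the exit part carries an extra factor up to $e^{Ch}$, which your Step 2 does not account for.

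The missing idea is to perform the exit estimate on the \emph{untilted} diffusion. There the hypotheses $\sqrt h\,\Lip_xa\le K_0$ and $\sqrt h\,\|b\|_\infty\le K_0$ make \eqref{eq:kernel-gaussian} hold after rescaling by $h$ with uniform constants. The paper proceeds as follows.
\begin{enumerate}
\item Replace $(a,b)$ by $\widetilde a=B+\chi_h(a-B)$ and $\widetilde b=\chi_hb$, with a cutoff $\chi_h$ around $\gamma$. The two processes agree up to the exit time $\tau$, and at exit $|X_\tau-x|\ge(K-D_0)h$ with $D_0=\Lambda_aP_0/\lambda_a$. The strong Markov property then gives the absolute bound $|k-\widetilde k|\le Ch^{-d/2}e^{-c(K-D_0)^2h}$.
\item Only then tilt, and only the localized kernel $\widetilde k$, whose potential is globally $O(E_a+\|b\|_\infty)$. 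This gives both the log-comparison for $\widetilde k$ and the lower bound $\widetilde k\ge ch^{-d/2}e^{-(J_0+CE)h}$, where $J_0=P_0^2/(2\lambda_a)$.
\item Choose $K$ with $c(K-D_0)^2\ge J_0+C+1$. Then $|k-\widetilde k|/\widetilde k\le Ce^{-h}$, and $H_\varepsilon$ makes this at most $\min\{1/2,\varepsilon/4\}$.
\end{enumerate}

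Equivalently, within your ordering you could undo the Girsanov factor on the exit event. The exit part of $\widehat k^{V_\xi}$ equals $e^{\frac12(x-y)^{\mathsf T}\Sigma^{-1}(x-y)}\le e^{J_0h}$ times the exit part of $k$, which the untilted bounds control. Either way, the relative exit cost is $e^{-[c(K-D_0)^2-J_0]h}$, not $e^{-cK^2h}$.
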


\begin{proof}
It suffices to consider $0<\varepsilon\le1$. Put
$E=E_a+\|b\|_\infty$ and $D_0=\Lambda_aP_0/\lambda_a$.
Then $|\xi|\le P_0/\lambda_a$ and
$|\gamma(s)-x|\le D_0h$, $\forall s\in[r,t]$.
For $K\ge D_0+1$, choose $\chi\in C_c^\infty(B_2)$ with
$0\le\chi\le1$, $\chi=1$ on $B_1$ and $\|D\chi\|_\infty\le C_d$,
and define
\begin{equation*}
\chi_h(s,z)=\chi\left(\frac{z-\gamma(s)}{Kh}\right),\qquad
\widetilde a=B+\chi_h(a-B),\qquad
\widetilde b=\chi_hb.
\end{equation*}
These coefficients have the same ellipticity bounds and satisfy
\begin{equation*}
\|\widetilde a-B\|_\infty\le E_a,\qquad
\|\widetilde b\|_\infty\le\|b\|_\infty,\qquad
\sqrt h\,\Lip_x\widetilde a\le K_0+\frac{CE_a}{K\sqrt h}.
\end{equation*}
Let $\widetilde k$ be their transition density and set
$\tau=\inf\{s\in[r,t]:|X_s-\gamma(s)|\ge Kh\}$ for $X_r=y$.
The two diffusions have the same law up to $\tau$, so the strong
Markov property gives
\begin{equation*}
k(r,y;t,x)-\widetilde k(r,y;t,x)
 =\mathbb E_{r,y}\!\left[\1_{\{\tau<t\}}
   \bigl(k(\tau,X_\tau;t,x)-\widetilde k(\tau,X_\tau;t,x)\bigr)\right].
\end{equation*}
On $\{\tau<t\}$, $|X_\tau-x|\ge(K-D_0)h$.
Applying \eqref{eq:kernel-gaussian} after scaling time by $h$ and
space by $\sqrt h$ yields
\begin{equation}
|k(r,y;t,x)-\widetilde k(r,y;t,x)|
 \le C\sup_{0<u\le h}u^{-d/2}e^{-c(K-D_0)^2h^2/u}
 \le Ch^{-d/2}e^{-c(K-D_0)^2h}.
\label{eq:long-tube-exit}
\end{equation}

Let $\widehat k$ correspond to
$(\widetilde a,\widetilde b+\widetilde a\xi)$, and put
$V=\xi\cdot\widetilde b+
\tfrac12\xi^{\mathsf T}(\widetilde a-B)\xi$.
In coordinates $v=r+hs$, $z=\gamma(r+hs)+\sqrt h\,w$,
its coefficients are
\begin{equation*}
a^\circ(s,w)=\widetilde a(v,z),\qquad
b^\circ(s,w)=\sqrt h\bigl[\widetilde b(v,z)
                     +(\widetilde a(v,z)-B(v))\xi\bigr].
\end{equation*}
Writing $B^\circ(s)=B(r+hs)$, we have
\begin{equation*}
\|a^\circ-B^\circ\|_\infty\le E_a,\qquad
\|b^\circ\|_\infty\le C\sqrt h\,E,\qquad
\Lip_wa^\circ\le K_0+CE_a/(K\sqrt h).
\end{equation*}
Thus \eqref{eq:centered-uniform} gives, for a sufficiently small
$\delta_\varepsilon$,
\begin{equation*}
\sqrt h\,E\le\delta_\varepsilon
\quad\Longrightarrow\quad
\left|\log\frac{\widehat k(r,y;t,x)}{\gamma_\Sigma(0)}\right|
 \le\varepsilon/2.
\end{equation*}
Since $\|V\|_\infty\le CE$ and
$\gamma_\Sigma(x-y)=e^{-\xi\cdot(x-y)+\xi^{\mathsf T}\Sigma\xi/2}
\gamma_\Sigma(0)$, equations \eqref{eq:kernel-tilt} and
\eqref{eq:potential-kernel-bound}, rescaled to $[r,t]$, give
\begin{equation*}
\left|\log\frac{\widetilde k(r,y;t,x)}{\gamma_\Sigma(x-y)}\right|
 \le ChE+\varepsilon/2,\qquad
\widetilde k(r,y;t,x)\ge ch^{-d/2}e^{-(J_0+CE)h},
\quad J_0=\frac{P_0^2}{2\lambda_a}.
\end{equation*}
Combining \eqref{eq:long-tube-exit} with this lower bound, enlarge
$K$ so that, for $E\le1$,
\begin{equation*}
\frac{|k(r,y;t,x)-\widetilde k(r,y;t,x)|}{\widetilde k(r,y;t,x)}
 \le C e^{-[c(K-D_0)^2-J_0-CE]h}\le Ce^{-h}.
\end{equation*}
Choose $H_\varepsilon$ so that
$Ce^{-H_\varepsilon}\le\min\{1/2,\varepsilon/4\}$.
Then $|\log(k/\widetilde k)|\le\varepsilon/2$, proving
\eqref{eq:long-tube-log}.
\end{proof}

We record the two computations used to pass from kernels to
phases. Fix $H\ge1$ and $B_1,P>0$.
Let $B:[-H,H/2]\to\R^{d\times d}$ be Borel measurable,
with $\lambda_a I_d\preceq B(s)\preceq\Lambda_a I_d$, put
$\Sigma_H(s)=\int_{-H}^sB(v)\,dv$, and suppose $|w|\le PH$.
For $\xi_H=\Sigma_H(0)^{-1}w$, Gaussian expansion gives
\begin{equation}
\log\frac{\gamma_{\Sigma_H(s)}(z-w)}
              {\gamma_{\Sigma_H(0)}(-w)}
 =\xi_H\cdot z+
  \tfrac12\xi_H^{\mathsf T}
          \left(\int_0^sB(v)\,dv\right)\xi_H
                  +O_{B_1,P}(J^2/H)
\label{eq:long-gaussian-ratio}
\end{equation}
uniformly for $|s|+|z|\le B_1J$, $J\ge1$ and $B_1J\le H/2$.
The constant in $O_{B_1,P}$ uses only
$d,\lambda_a,\Lambda_a,B_1,P$. Indeed, with
$\Sigma=\Sigma_H(0)$ and $D_s=\int_0^sB(v)\,dv$,
\begin{equation*}
\begin{aligned}
(\Sigma+D_s)^{-1}
 &=\Sigma^{-1}-\Sigma^{-1}D_s\Sigma^{-1}+R_s,\qquad
 |R_s|\le C s^2/H^3,\\
|&\log\det(\Sigma+D_s)-\log\det\Sigma|
 \le C|s|/H .
\end{aligned}
\end{equation*}
Substitution into the Gaussian density gives a remainder bounded by
$C\{s^2/H+|s||z|/H+|z|^2/H+|s|/H\}$, proving
\eqref{eq:long-gaussian-ratio}.
The past covariance is integrated in time. Thus $J=1$ gives a
vanishing relative error, whereas division by $J$ only requires
$J/H\to0$.

For a nonzero finite positive measure $\nu$ at a common restart time $u$,
put
\begin{equation*}
f(t,x)=\int k(u,v;t,x)\,\nu(dv),\qquad
\Pi_{u;t,x}(dv)=\frac{k(u,v;t,x)\nu(dv)}{f(t,x)}.
\end{equation*}
Given a measurable set $E$ of positive posterior mass, let
$\varepsilon(t,x)=\Pi_{u;t,x}(E^c)$. Positivity gives the exact identity
\begin{equation}
\frac{f(t',x')}{f(t,x)}
 =\left(\int_E\frac{k(u,v;t',x')}{k(u,v;t,x)}
                 \frac{\Pi_{u;t,x}(dv)}{\Pi_{u;t,x}(E)}\right)
       \frac{1-\varepsilon(t,x)}{1-\varepsilon(t',x')}.
\label{eq:posterior-positive-integration}
\end{equation}
Here $u<t,t'$, and each exceptional proportion uses its own
endpoint density. This identity applies both to a point-started
kernel, by the Chapman--Kolmogorov identity
\eqref{eq:chapman-kolmogorov}, and to a marginal density.

\subsection{Action estimates and conditional bridges}
\label{subsec:action-bridges}

Put $g_s=a(s,\cdot)^{-1}$. For $0\le r<t\le T$ and $h=t-r$, define
\begin{equation*}
\mathcal A^{g,0}(r,y;t,x)=\frac12
 \inf_{\substack{\gamma\in H^1([r,t];\R^d)\\
                  \gamma(r)=y,\ \gamma(t)=x}}
       \int_r^t\dot\gamma^{\mathsf T}a(s,\gamma)^{-1}\dot\gamma\,ds .
\end{equation*}
Ellipticity gives existence by weak $H^1$ compactness and uniform
convergence of paths, and
\begin{equation}
\frac{|x-y|^2}{2\Lambda_a h}\le\mathcal A^{g,0}(r,y;t,x)
 \le\frac{|x-y|^2}{2\lambda_a h}.
\label{eq:action-ellipticity}
\end{equation}
Every subsegment of a minimizer minimizes its own action.
If $a'$ has the same ellipticity bounds, put $g'=(a')^{-1}$.
Comparison of the
two action infima gives
\begin{equation}
|\mathcal A^{g',0}(r,y;t,x)-\mathcal A^{g,0}(r,y;t,x)|
 \le C\|a'-a\|_\infty\,\frac{|x-y|^2}{h}.
\label{eq:action-metric-stability}
\end{equation}
Here $C$ depends only on $\lambda_a,\Lambda_a$.

\begin{lemma}\label{lem:action-speed}
\label{lem:action-momentum}
For every minimizing path, with $D=|x-y|$,
\begin{equation}
cD/h\le
 (\dot\gamma^{\mathsf T}a(s,\gamma)^{-1}\dot\gamma)^{1/2}
 \le CD/h
\quad\text{for almost every }s.
\label{eq:action-speed}
\end{equation}
For $0<\delta\le h/2$,
\begin{equation}
\begin{split}
\mathcal A^{g,0}(r,y;t-\delta,x)-\mathcal A^{g,0}(r,y;t,x)
 &\ge c\delta D^2/h^2,\\
 \mathcal A^{g,0}(r+\delta,y;t,x)-\mathcal A^{g,0}(r,y;t,x)
 &\ge c\delta D^2/h^2,\\
|\mathcal A^{g,0}(r,y;t,x+v)-\mathcal A^{g,0}(r,y;t,x)|
 &\le C(D|v|+|v|^2)/h,\\
|\mathcal A^{g,0}(r,y+v;t,x)-\mathcal A^{g,0}(r,y;t,x)|
 &\le C(D|v|+|v|^2)/h .
\end{split}
\label{eq:action-endpoint-variation}
\end{equation}
The momentum
$\pi=a(s,\gamma(s))^{-1}\dot\gamma(s)$ has an absolutely continuous
representative satisfying
\begin{equation}
\|\pi\|_\infty\le CD/h,\qquad
|\dot\pi|\le CL_aD^2/h^2\quad\text{a.e.},\qquad
\int_r^t|\dot\pi|\,ds\le CL_aD^2/h.
\label{eq:action-momentum}
\end{equation}
All constants depend only on the reference data.
\end{lemma}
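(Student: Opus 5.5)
The plan is to derive all four groups of estimates from the Euler--Lagrange structure of the action, using only ellipticity and the spatial Lipschitz bound $L_a$ for $a$. The drift plays no role here, and the time dependence of $a$ is merely Borel, so every argument must stay at the level of competitor paths and first variations in space.

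\emph{Speed bounds.} First, the lower and upper bounds in \eqref{eq:action-ellipticity} give $\int_r^t|\dot\gamma|_{g}^2\,ds\asymp D^2/h$. Second, I will derive the Euler--Lagrange system from compactly supported spatial variations $\gamma+\epsilon\xi$:
\[
\dot\gamma=a\pi,\qquad \dot\pi=-\tfrac12 D_xg[\cdot](\dot\gamma,\dot\gamma).
\]
Read distributionally, this says $\pi\in W^{1,1}$ with $|\dot\pi|\le CL_a|\pi|^2$. The time derivative of $a$ never enters, because only spatial variations are used. This yields an absolutely continuous representative of $\pi$. To get pointwise speed bounds, I will not differentiate the energy in time, since $a$ is only Borel in $t$. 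Instead I will use a reparametrization argument: a minimizer must be optimal among time reparametrizations $\gamma\circ\phi$, and this forces near-constancy of the metric speed, up to the variation of $a$ in time. Since $a$ may jump in $t$, I expect only comparability, namely speed$\,\asymp D/h$, and that is exactly what \eqref{eq:action-speed} asserts. Concretely, if the speed were much larger than $CD/h$ on a set of positive measure, slowing down there and speeding up elsewhere would lower the energy. This follows from $\lambda_a\preceq a\preceq\Lambda_a$ together with a Cauchy--Schwarz comparison of time-changed competitors.

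\emph{Main obstacle.} The delicate point is making the reparametrization competitor rigorous with Borel time dependence. I will compare $\gamma\circ\phi$ with $\phi$ piecewise linear, and bound $a(s,\gamma(\phi(s)))$ against $a(\phi(s),\gamma(\phi(s)))$ only through the ellipticity constants. This gives comparability with constants depending on $\lambda_a,\Lambda_a$, which is all that is required.

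\emph{Momentum bounds \eqref{eq:action-momentum}.} The speed bound gives $|\pi|\le CD/h$. The Euler--Lagrange inequality then gives $|\dot\pi|\le CL_aD^2/h^2$, and integrating over $[r,t]$ gives the third bound.

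\emph{Endpoint variations \eqref{eq:action-endpoint-variation}.} For the spatial endpoint, I will use the competitor $\gamma+\frac{(s-r)}{h}v$ and apply the speed bound; the reverse inequality follows symmetrically. The same argument handles the initial point.

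For shortening in time, I will compress the minimizer for $(r,t-\delta)$ onto $[r,t]$ and compare. More precisely, any path on $[r,t-\delta]$ joining $y$ to $x$ has displacement $D$ over a time $h-\delta$. The lower speed bound applied to the optimal path on the shorter interval, combined with Jensen's inequality, gives an action of at least $\mathcal A(r,y;t,x)+c\delta D^2/h^2$. Here the competitor is the time-stretched shorter minimizer, extended over $[r,t]$, whose energy decreases by a factor $(h-\delta)/h$ up to ellipticity distortion. To keep the constant $c$ positive despite the time dependence of $a$, I will control this distortion with the lower speed bound established in the first step. The initial-time variation is symmetric.
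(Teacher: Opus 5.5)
The gap is in the speed bound \eqref{eq:action-speed}, and everything else in your proposal rests on it.

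Suppose you compare the metric at a re-timed instant with the original metric only through ellipticity. Then minimality of $\gamma$ on a subinterval $J$, tested against its constant-speed reparametrization, yields at best $\int_J v^2\le\kappa\,|J|^{-1}\bigl(\int_J v\bigr)^2$. Here $v$ is the metric speed and $\kappa>1$ is fixed in terms of $\Lambda_a/\lambda_a$. A reverse H\"older inequality with a fixed constant above one on all scales carries no pointwise information: for small $\beta>0$ the power weights $|s-s_0|^{\pm\beta}$ satisfy it. Your heuristic of slowing down on the fast set and speeding up elsewhere fails for the same reason. Re-timing any portion of the path can change the metric seen there by the factor $\Lambda_a/\lambda_a$, which may cost more than the saving.

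The Euler--Lagrange inequality alone does better, since it makes $1/|\pi|$ Lipschitz with constant $CL_a$. But this gives comparability only with constants that degenerate as $L_aD\to\infty$, whereas the lemma is uniform in $D$.

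The missing ingredient is the square-root Dini modulus $\omega_a$ of Assumption~\ref{ass:coefficients}, which your proposal never uses. On an interval of length $\ell$, freezing time perturbs the metric by the relative amount $C\omega_a(\ell)$. Comparison with the constant-speed reparametrization for the frozen metric then bounds the $L^2$ oscillation of the speed by $C\omega_a(\ell)$ times its squared mean. Hence the averages on the two halves of the interval differ by a relative factor $C\sqrt{\omega_a(\ell)}$. Since $\sum_j\sqrt{\omega_a(2^{-j}\ell_0)}<\infty$, the dyadic products stay bounded above and below, giving $v\asymp\bar v_J$ almost everywhere on intervals of length at most $\ell_0$. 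A chain of $C(1+T/\ell_0)$ overlapping intervals together with \eqref{eq:action-ellipticity} then gives \eqref{eq:action-speed}.

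Your endpoint arguments have the same defect. With a merely Lipschitz metric, the global shift $\gamma+\frac{s-r}{h}v$ costs $CL_a|v|\int_r^t|\dot\gamma|^2\,ds\approx CL_a|v|D^2/h$. This exceeds $C(D|v|+|v|^2)/h$ by the factor $L_aD$. You need the first variation $\pi(t)\cdot v$ of the action in the terminal point. You can get it from a competitor localized in a shrinking terminal window, or, as the paper does, from the smooth-coefficient formula $d\mathcal A^{g,0}=-\pi(r)\cdot dy+\tfrac12v(r)^2\,dr+\pi(t)\cdot dx-\tfrac12v(t)^2\,dt$. Integrate it along $x+\vartheta v$, using $\|\pi\|_\infty\le C(D+|v|)/h$.

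For the two time inequalities, stretching the shorter minimizer over $[r,t]$ re-times the whole path. The gain factor $(h-\delta)/h$ is then swamped by the ellipticity distortion, and the lower speed bound does not control that distortion. Instead, integrate the time terms of the same formula over an interval of length $\delta$, using the lower bound for the terminal speed. Then pass from smooth approximations to $a$ through \eqref{eq:action-metric-stability}; mollification preserves the reference data and hence the constants.

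Once \eqref{eq:action-speed} is available, your Euler--Lagrange step for \eqref{eq:action-momentum} is fine. You should use the inequality form \eqref{eq:action-weak-variation}, because $D_xg$ has no pointwise value along the path.
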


\begin{proof}
On an interval $J\subset[r,t]$ of length $\ell$, choose $s_0\in J$
and freeze the time variable in the metric:
$g_0(z)=a(s_0,z)^{-1}$. Its relative error on $J$
is at most $\varepsilon_{\mathrm{met}}=C\omega_a(\ell)$. Assume first that
$\varepsilon_{\mathrm{met}}\le1/2$. Reparametrize the same
spatial path at constant $g_0$-speed. Put
$v_0(s)=|\dot\gamma(s)|_{g_0(\gamma(s))}$,
$v(s)=|\dot\gamma(s)|_{a(s,\gamma(s))^{-1}}$, and
$\bar v_{0,J}=|J|^{-1}\int_Jv_0$. Minimality gives
\begin{equation*}
(1-\varepsilon_{\mathrm{met}})\int_Jv_0^2
 \le(1+\varepsilon_{\mathrm{met}})|J|\bar v_{0,J}^{\,2},\qquad
\frac1{|J|}\int_J|v_0-\bar v_{0,J}|^2
 \le\frac{2\varepsilon_{\mathrm{met}}}{1-\varepsilon_{\mathrm{met}}}\bar v_{0,J}^{\,2} .
\end{equation*}
Since $|v-v_0|\le C\varepsilon_{\mathrm{met}}v_0$, the averages
$\bar v_J=|J|^{-1}\int_Jv$ satisfy, for either half $J'$ of $J$,
\begin{equation*}
|\bar v_{J'}-\bar v_J|
 \le C\sqrt{\omega_a(\ell)}\,\bar v_J.
\end{equation*}
Choose $\ell_0\le h_*$ so that
$C\sqrt{\omega_a(\ell_0)}\le1/4$.
The hypothesis on $\omega_a$ implies
$\sum_{j\ge0}\sqrt{\omega_a(2^{-j}\ell_0)}<\infty$.
The products of the successive factors are therefore bounded
above and away from zero. Dyadic differentiation proves
\begin{equation}
c\bar v_J\le v(s)\le C\bar v_J
\quad\text{for a.e. }s\in J,\qquad |J|\le\ell_0.
\label{eq:local-action-speed}
\end{equation}

For a longer interval, cover it by consecutive intervals of length
at most $\ell_0$ whose neighbors overlap on at least one quarter
of each interval. Applying \eqref{eq:local-action-speed} on the
overlap compares their means. A chain of
at most $C(1+T/\ell_0)$ intervals shows that $v$ is comparable
to its mean on the whole path. Since
$\int_r^t v^2=2\mathcal A^{g,0}(r,y;t,x)$,
\eqref{eq:action-ellipticity} proves \eqref{eq:action-speed}.

For a smooth variation $\xi$ with zero endpoints, compare $\gamma$
with $\gamma+\varepsilon\xi$ and $\gamma-\varepsilon\xi$ as $\varepsilon\downarrow0$.
The spatial Lipschitz bound on $a^{-1}$ gives
\begin{equation}
\left|\int_r^t\pi\cdot\dot\xi\,ds\right|
\le CL_a\int_r^t|\dot\gamma|^2|\xi|\,ds .
\label{eq:action-weak-variation}
\end{equation}
By \eqref{eq:action-speed}, $|\dot\gamma|^2\le CD^2/h^2$ a.e.
The distributional derivative of $\pi$ consequently belongs to
$L^\infty$ with the bound in \eqref{eq:action-momentum}.
No value of a weak spatial derivative is assigned on the path.

For smooth $a$, endpoint variation along any minimizer gives
\begin{equation*}
d\mathcal A^{g,0}
 =-\pi(r)\cdot dy+\tfrac12v(r)^2\,dr
   +\pi(t)\cdot dx-\tfrac12v(t)^2\,dt .
\end{equation*}
Along a line in the endpoint variables the action is locally
Lipschitz. At its differentiability points this formula holds
for every minimizing path, by using that path as a competitor
for both signs of the increment.
Integrating the time terms and using \eqref{eq:action-speed}
gives the first two inequalities in
\eqref{eq:action-endpoint-variation}. Integrating the spatial
terms along $x+\vartheta v$ or $y+\vartheta v$, $0\le\vartheta\le1$,
gives the other two, since the endpoint distance is at most
$D+|v|$. Smooth approximation preserves the constants in
\eqref{eq:action-speed}; \eqref{eq:action-metric-stability}
then passes \eqref{eq:action-endpoint-variation} to $a$.
\end{proof}

The action bounds of \citet[Theorem~2.7 and equation~(3.6)]{norris-stroock-1991}
provide the kernel estimates below. We compare their drifted,
spatially averaged energy with $\mathcal A^{g,0}$ using
Lemma~\ref{lem:action-speed}; this supplies the remainders needed
for bridge descent and for bounded-displacement limits.

\begin{proposition}[Kernel action bound]\label{prop:kernel-action}
Put $h=t-r$, $D=|x-y|$, $M=D^2/h$, and
$E_{\mathrm{ker}}=\log\{h^{d/2}k(r,y;t,x)\}+\mathcal A^{g,0}(r,y;t,x)$.
Uniformly for $0\le r<t\le T$ and $x,y\in\R^d$,
\begin{equation}
|E_{\mathrm{ker}}|\le C(1+M^{2/3}).
\label{eq:kernel-action-regimes}
\end{equation}
Here $C$ depends only on the reference data.
For each $D_0<\infty$, the refined estimate
\begin{equation}
|E_{\mathrm{ker}}|\le C(1+M^{1/3})
\qquad\text{holds whenever }D\le D_0.
\label{eq:kernel-action}
\end{equation}
Its constant depends only on the reference data and $D_0$.
\end{proposition}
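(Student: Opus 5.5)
The plan is to use the Norris--Stroock action bounds for the lower- and upper-order terms, after reducing to a bounded-displacement, unit-time situation by parabolic scaling. I then compare their drifted, spatially averaged energy with $\mathcal A^{g,0}$ using Lemma~\ref{lem:action-speed}.

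\textbf{Reduction.} If $M\le1$, the Gaussian bounds \eqref{eq:kernel-gaussian} together with \eqref{eq:action-ellipticity} give $|E_{\mathrm{ker}}|\le C$ directly. So I will assume $M\ge1$.

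\textbf{Chaining.} Choose a minimizer $\gamma$ of $\mathcal A^{g,0}(r,y;t,x)$ and cut $[r,t]$ into $N$ equal pieces of length $h/N$. By \eqref{eq:action-speed}, each piece has displacement $O(D/N)$ and local action $O(M/N)$.

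For the lower bound on $k$, apply Chapman--Kolmogorov \eqref{eq:chapman-kolmogorov} and restrict the intermediate points to balls of radius $\rho\sqrt{h/N}$ around $\gamma(s_i)$. On each piece, the local kernel must be compared with the frozen Gaussian along the path. Lemma~\ref{lem:centered-kernel}, after rescaling to unit time, gives a relative error controlled by
\[
\sqrt{h/N}\,(L_a\cdot\text{(piece length in space)}+B_b)+\omega_a(h/N),
\]
applied in the tilted form \eqref{eq:kernel-tilt} so that the drift along the path is removed. A more efficient route is Lemma~\ref{lem:long-tube} applied piecewise with $B(s)=a(s,\gamma(s))$; this needs $E_a\lesssim L_a\cdot(\text{tube width})$ and costs $O(h_{\text{piece}}E)$ per piece.

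Summing the errors, I expect a total of the form
\[
N\log C+M\,\epsilon(N)+(\text{spatial-averaging mismatch}),
\]
with the mismatch of order $L_aD\cdot(\text{piece width})\cdot(M/N)$. There is also the Gaussian cost of ball restrictions, $N\cdot O(1)$, and the displacement of ball centres relative to $\gamma$, controlled by \eqref{eq:action-endpoint-variation} at cost $(D/N)\rho\sqrt{h/N}/(h/N)$ per piece. Balancing these terms is what yields the exponent.

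For the upper bound on $k$, I will take the reverse route and invoke \citet[Theorem~2.7 and (3.6)]{norris-stroock-1991}. This gives $\log k\le-\widetilde{\mathcal A}+C(1+\dots)$, where $\widetilde{\mathcal A}$ is an energy with the metric averaged over spatial balls of radius $\sim\sqrt{h/N}$ and shifted by the drift. I then compare $\widetilde{\mathcal A}$ with $\mathcal A^{g,0}$ via \eqref{eq:action-metric-stability}: the metric changes by $L_a\sqrt{h/N}$, giving an error $L_a\sqrt{h/N}\,M$. The drift perturbs the momentum, giving an error $B_b\|\pi\|_\infty h\le CB_bD$. The Lipschitz momentum estimate \eqref{eq:action-momentum} lets minimizers of one action be used as near-competitors for the other, with quadratic rather than linear loss.

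\textbf{Optimization.} For the general bound, choose $N\sim M^{2/3}$. This balances the chaining count $N$ against the frozen-coefficient error $M\cdot(\text{relative metric error})$. The key point is that this relative error is second order: the first-order variation vanishes at a minimizer by \eqref{eq:action-weak-variation}, so the loss is $M\cdot(\ell/\sqrt{h})^2\cdot\ldots$. With $D\le D_0$ bounded, the spatial Lipschitz error is tied to $D$ rather than $h$, effectively $L_aD/N$ per piece. This improves the balance to $N\sim M^{1/3}$, giving \eqref{eq:kernel-action}; the time modulus enters only through $\omega_a(h/N)\le\omega_a(T)$ terms absorbed via its Dini integrability, as in the proof of Lemma~\ref{lem:action-speed}.

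\textbf{Main obstacle.} I expect the hardest step to be the exact bookkeeping of the spatial-averaging mismatch in the Norris--Stroock upper bound. Getting the second-order error $M(\sqrt{h/N}\,L_a)^2$ rather than a first-order error requires using the Euler--Lagrange structure of $\gamma$ in a weak, merely Lipschitz setting. I would try to handle it with the momentum bound \eqref{eq:action-momentum} and a mollified metric at scale $\sqrt{h/N}$, whose action differs from $\mathcal A^{g,0}$ only quadratically in the mollification scale.
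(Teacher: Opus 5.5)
The step that fails is the one that fixes the exponent: the lower bound on $k$, equivalently $E_{\mathrm{ker}}\ge\cdots$. In your chaining, Lemma~\ref{lem:long-tube} applied to a piece of duration $h/N$ measures the mismatch $E_a$ over a tube whose width is comparable to the piece displacement $D/N$. (In natural coordinates at scale $h/D$, the piece duration and the tube width are both of order $M/N$.) So each piece costs $C(M/N)(L_aD/N)$, and the total is $CL_aDM/N$. Balancing this against the $O(N)$ cost of the ball restrictions gives $N\sim\sqrt{DM}$. That yields only $|E_{\mathrm{ker}}|\lesssim M^{3/4}$ in general and $M^{1/2}$ for $D\le D_0$. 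Lemma~\ref{lem:centered-kernel} does worse, because it needs bounded local action and hence $N\gtrsim M$.

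Your rescue, that the error is second order because the first variation vanishes at a minimizer, is not valid. The identity \eqref{eq:action-weak-variation} concerns perturbations of the path. Averaging a merely Lipschitz $a$ over a ball of radius $\ell$ costs $L_a\ell$, with no cancellation. The claim is also inconsistent with your own balancing: an error $M\ell^2\le MT/N$ would balance at $N\sim M^{1/2}$, not $M^{2/3}$. What actually gives $M^{2/3}$ and $M^{1/3}$ is a first-order error at the fluctuation scale $\ell=\sqrt{h/N}$, namely $L_aM\sqrt{h/N}$, balanced against $N$. With $h\le T$ or $h=D^2/M\le D_0^2/M$, this gives $N\sim M^{2/3}$ or $N\sim M^{1/3}$. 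None of your chaining lemmas sees that scale.

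The missing ingredient is that \citet[equation~(3.6)]{norris-stroock-1991} is itself a \emph{lower} bound on $k$; you assigned it to the upper bound. It bounds $k$ below in terms of the Gaussian-averaged drifted energy $\overline{\mathcal E}_{\vartheta h}$ on the truncated interval $[r+\varepsilon h,t-\varepsilon h]$, with error $C\{1+\vartheta/\varepsilon+\vartheta^{-1}+\varepsilon h+\sqrt{h/\vartheta}\}$. The paper cuts a minimizer of $\mathcal A^{g,0}(r,y;t,x)$ at $r+\varepsilon h$ and $t-\varepsilon h$, at cost $C\varepsilon M$ by Lemma~\ref{lem:action-speed}. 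It then evaluates $\overline{\mathcal E}_{\vartheta h}$ on that path using only spatial Lipschitz continuity of $a^{-1}$ and boundedness of $\widetilde b=b-\tfrac12\operatorname{div}a$, at cost $C\{M\sqrt{\vartheta h}+D+h\}$. Choosing $\vartheta=(1+M)^{-1/3}(1+D)^{-2/3}$ and $\varepsilon=\tfrac14\sqrt{\vartheta/(1+M)}$ gives $E_{\mathrm{ker}}\ge-C\{1+D+(1+D)^{2/3}(1+M)^{1/3}\}$.

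The upper bound on $k$ needs no averaging. Theorem~2.7 there, with the non-averaged drifted energy $\mathcal E$ and $|\mathcal E-\mathcal A^{g,0}|\le C(D+h)$, gives $E_{\mathrm{ker}}\le C\{1+D+\log(2+M)\}$. Then $D\le\sqrt{TM}$ or $D\le D_0$ yields the two stated estimates. You also omit the approximation step. Both Norris--Stroock bounds are for smooth coefficients, so one mollifies and observes that the constants involve no derivatives of $b_n$ and no second derivatives of $a_n$. One then passes to the limit using Lemma~\ref{lem:kernel-stability} and \eqref{eq:action-metric-stability}.
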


\begin{proof}
For $M\le1$, use \eqref{eq:kernel-gaussian} and
\eqref{eq:action-ellipticity}. Assume $M>1$ and first take smooth
coefficients with common reference bounds. Set
\begin{equation*}
\widetilde b=b-\tfrac12\operatorname{div}a,\qquad
L=\tfrac12\operatorname{div}(aD)+\widetilde b\cdot D,
\qquad \|\widetilde b\|_\infty\le B_b+CL_a .
\end{equation*}
In the notation of \citet{norris-stroock-1991}, the coefficient quadruple is
$(a/2,2a^{-1}\widetilde b,0,0)$ and its energy is
\begin{equation*}
\mathcal E(r,y;t,x)
 =\frac12\inf_{\substack{\gamma\in H^1([r,t];\R^d)\\
                         \gamma(r)=y,\ \gamma(t)=x}}
   \int_r^t|\dot\gamma-\widetilde b(s,\gamma)|_{a(s,\gamma)^{-1}}^2\,ds .
\end{equation*}
The straight path gives $\mathcal E\le C(M+h)$.
Evaluating the two actions on each other's minimizing paths
and applying Cauchy--Schwarz gives
$|\mathcal E-\mathcal A^{g,0}|\le C(D+h)$.
Theorem~2.7 of \citet{norris-stroock-1991} therefore yields
\begin{equation}
k(r,y;t,x)\le Ch^{-d/2}(1+M)^N e^{-\mathcal E+Ch},
\qquad
E_{\mathrm{ker}}\le C\{1+D+\log(2+M)\}.
\label{eq:kernel-action-upper}
\end{equation}
Here $N$ depends only on dimension and ellipticity; the constants
in that theorem do not depend on coefficient continuity moduli.

For the lower bound, let $\varphi_\tau=\gamma_{2\tau I_d}$ and define
\begin{equation*}
\overline{\mathcal E}_\tau(u,y;v,x)
 =\frac12\inf_{\substack{\gamma\in C^1([u,v];\R^d)\\
                         \gamma(u)=y,\ \gamma(v)=x}}
 \int_u^v\!\int_{\R^d}\varphi_\tau(z)
 |\dot\gamma-\widetilde b(s,\gamma+z)|_{a(s,\gamma+z)^{-1}}^2\,dz\,ds .
\end{equation*}
For $0<\varepsilon<1/4$, put
$r_\varepsilon=r+\varepsilon h$ and
$t_\varepsilon=t-\varepsilon h$.
Equation~(3.6) of \citet{norris-stroock-1991} gives, for every $\vartheta>0$,
\begin{equation}
\log\{h^{d/2}k(r,y;t,x)\}
 \ge-\overline{\mathcal E}_{\vartheta h}
                 (r_\varepsilon,y;t_\varepsilon,x)
     -C\{1+\vartheta/\varepsilon+\vartheta^{-1}
                         +\varepsilon h+\sqrt{h/\vartheta}\}.
\label{eq:averaged-action-lower}
\end{equation}
This formula uses only boundedness and ellipticity; the additional
continuity assumptions used for subsequent estimates in that
paper are not needed here.

Cut a minimizer of $\mathcal A^{g,0}(r,y;t,x)$ at
$r_\varepsilon,t_\varepsilon$. Its two cut endpoints move by at
most $C\varepsilon D$, by \eqref{eq:action-speed}.
Applying \eqref{eq:action-endpoint-variation} to the cut path gives
\begin{equation*}
\mathcal A^{g,0}(r_\varepsilon,y;t_\varepsilon,x)
 \le\mathcal A^{g,0}(r,y;t,x)+C\varepsilon M .
\end{equation*}
Evaluate the averaged energy on a minimizer of this truncated
pure action. Spatial Lipschitz continuity of $a^{-1}$,
$\int|z|\varphi_{\vartheta h}(z)\,dz\le C\sqrt{\vartheta h}$ and the
bound on $\widetilde b$ give
\begin{equation*}
\overline{\mathcal E}_{\vartheta h}(r_\varepsilon,y;t_\varepsilon,x)
 \le\mathcal A^{g,0}(r,y;t,x)
      +C\{\varepsilon M+M\sqrt{\vartheta h}+D+h\}.
\end{equation*}
Choose
\begin{equation*}
\vartheta=(1+M)^{-1/3}(1+D)^{-2/3},\qquad
\varepsilon=\tfrac14\sqrt{\vartheta/(1+M)} .
\end{equation*}
Writing $F=\vartheta^{-1}$, we have
$\vartheta/\varepsilon+\varepsilon M+M\sqrt{\vartheta h}\le CF$
and $\sqrt{h/\vartheta}\le CF$.
Thus \eqref{eq:averaged-action-lower} implies
\begin{equation*}
E_{\mathrm{ker}}\ge-C\{1+D+(1+D)^{2/3}(1+M)^{1/3}\}.
\end{equation*}
Together with \eqref{eq:kernel-action-upper}, $D\le\sqrt{TM}$ proves
\eqref{eq:kernel-action-regimes}, and $D\le D_0$ proves
\eqref{eq:kernel-action}.

Finally mollify $a,b$ in space and time, preserving common
reference bounds. Then $a_n\to a$ uniformly, $b_n\to b$ locally
in every finite $L^p$, and Lemma~\ref{lem:kernel-stability}
gives kernel convergence. The pure actions converge by
\eqref{eq:action-metric-stability}. The constants in
\eqref{eq:kernel-action-upper} and \eqref{eq:averaged-action-lower},
and in the comparison of $\overline{\mathcal E}_{\vartheta h}$ with
$\mathcal A^{g,0}$, are
independent of derivatives of $b_n$ and of second derivatives
of $a_n$, so the two stated bounds pass to the reference coefficients.
\end{proof}

We use the kernel version of the conditional bridge law:
for $r=s_0<s_1<\cdots<s_n<s_{n+1}=t$, $z_0=y$, and $z_{n+1}=x$,
\begin{equation}
\mathbb P_{r,y;t,x}(X_{s_i}\in dz_i,\ 1\le i\le n)
 =\frac{\prod_{i=0}^n k(s_i,z_i;s_{i+1},z_{i+1})}
        {k(r,y;t,x)}\,dz_1\cdots dz_n.
\label{eq:bridge-kernel-law}
\end{equation}
Conditional probabilities below refer to this version.

\begin{lemma}[Bridge annuli and descent]\label{lem:bridge-annulus}
\label{lem:bridge-descent}
Put $h=t-r$, $D=|x-y|$, $M=D^2/h$ and
$b_\delta=\delta D^2/h^2$. There are common positive constants
$\varepsilon_0,c_0,C_0,K,c,C$, depending only on the reference
data, such that, if
$0<\delta\le\varepsilon_0h$ and $b_\delta\ge K(1+M)^{2/3}$, then
\begin{equation}
\begin{aligned}
\mathbb P_{r,y;t,x}
 \{|X_{t-\delta}-x|\notin[c_0\delta D/h,C_0\delta D/h]\}
 &\le Ce^{-cb_\delta},\\
\mathbb P_{r,y;t,x}
 \{|X_{r+\delta}-y|\notin[c_0\delta D/h,C_0\delta D/h]\}
 &\le Ce^{-cb_\delta}.
\end{aligned}
\label{eq:bridge-single-age}
\end{equation}
More generally, fix $Q\ge1$, put $M=h/e^2$, and assume
$Q^{-1}Me\le |x-y|\le QMe$ and $M\to\infty$.
For each fixed $0<\vartheta<1$ and every
$M^\vartheta\le H\le2M^\vartheta$, we have
\begin{equation*}
\mathbb P\{\,|X_{t-He^2}-x|\notin[cHe,CHe]
                \mid X_r=y,X_t=x\,\}\le Ce^{-cH}.
\end{equation*}
The constants and the lower threshold for $M$ depend only on
the reference data, $\vartheta,Q$. There is $\varepsilon_1>0$, with these
dependencies, such that for $L\ge1$, $BL/H\le\varepsilon_1$,
$|x'-x|\le BLe$, and $|t'-t|\le BLe^2$,
\begin{equation}
\mathbb P_{r,y;t',x'}
 \{|X_{t-He^2}-x|\notin[cHe,CHe]\}\le Ce^{-cH},
\qquad 0\le r<t-He^2<t'\le T.
\label{eq:bridge-perturbed-endpoint}
\end{equation}
At the initial endpoint, \eqref{eq:bridge-perturbed-endpoint}
holds with $\mathbb P_{r',y';t,x}$, $X_{r+He^2}-y$,
$|y'-y|\le BLe$, $|r'-r|\le BLe^2$, and
$0\le r'<r+He^2<t\le T$.
\end{lemma}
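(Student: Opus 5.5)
The plan is to prove \eqref{eq:bridge-single-age} directly from the bridge law \eqref{eq:bridge-kernel-law}, and to get the $H$-version by iterating it over finitely many steps (the ``descent'').

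\textbf{Single-age annuli.} By \eqref{eq:bridge-kernel-law}, $X_{t-\delta}$ under $\mathbb P_{r,y;t,x}$ has density
\begin{equation*}
p(z)=\frac{k(r,y;t-\delta,z)\,k(t-\delta,z;t,x)}{k(r,y;t,x)} .
\end{equation*}
Write $w=|z-x|$.

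\begin{enumerate}
\item \emph{Denominator.} Bound it below with Proposition~\ref{prop:kernel-action}: $k(r,y;t,x)\ge h^{-d/2}e^{-\mathcal A^{g,0}(r,y;t,x)-C(1+M^{2/3})}$.
\item \emph{Short factor.} Bound it above with the Gaussian upper bound \eqref{eq:kernel-gaussian}: $k(t-\delta,z;t,x)\le C\delta^{-d/2}e^{-cw^2/\delta}$.
\item \emph{Long factor.} Use the upper bound in Proposition~\ref{prop:kernel-action} with $M'=|z-y|^2/(h-\delta)$. In the range $w\le\varepsilon_0 D$ one has $M'\asymp M$, so the error is again $C(1+M^{2/3})$. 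Points $z$ with larger $w$ are discarded using only the Gaussian factor and \eqref{eq:action-ellipticity}.
\end{enumerate}

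After these three steps everything reduces to bounding the action excess $\Delta(z)=\mathcal A^{g,0}(r,y;t-\delta,z)-\mathcal A^{g,0}(r,y;t,x)$ from below. Lemma~\ref{lem:action-speed} gives
\begin{equation*}
\Delta(z)\ge\bigl[\mathcal A^{g,0}(r,y;t-\delta,x)-\mathcal A^{g,0}(r,y;t,x)\bigr]-C(Dw+w^2)/h\ge cb_\delta-C(Dw+w^2)/h,
\end{equation*}
where the bracket is handled by the time-decrease bound in \eqref{eq:action-endpoint-variation} and the remainder by the spatial variation bound there.

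\emph{Inner region} $w\le c_0\delta D/h$. Here $CDw/h\le\tfrac12cb_\delta$ once $c_0$ is small, and $w^2/h\le\varepsilon_0^2\delta D^2/h^2\le\tfrac14 cb_\delta$ once $\varepsilon_0$ is small. Hence $\Delta\ge\tfrac c4b_\delta$.

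\emph{Outer region} $w\ge C_0\delta D/h$. The crude bound $\Delta\ge-C(Dw+w^2)/h$ is beaten by the Gaussian factor. Indeed $cw^2/\delta\ge cC_0wD/h$, and $\delta\le\varepsilon_0h$ absorbs $w^2/h$. So the exponent is at most $-\tfrac c2w^2/\delta\le-\tfrac c2 C_0^2b_\delta$.

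\emph{Integration.} In both regions, integrating $\delta^{-d/2}e^{-\cdots}$ over $z$ costs only a polynomial factor. The accumulated errors $C(1+M^{2/3})$ are absorbed because $b_\delta\ge K(1+M)^{2/3}$ with $K$ large. The initial-endpoint statement is symmetric, using the other two lines of \eqref{eq:action-endpoint-variation}.

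\textbf{Descent.} Put $\delta=He^2$, $h=Me^2$ and $D\asymp Me$; then $b_\delta\asymp H$. The single-age bound applies only when $H\gtrsim M^{2/3}$, so for arbitrary $\vartheta\in(0,1)$ I will descend in finitely many steps. Fix ages $H_0=\varepsilon_0M>H_1>\dots>H_N\in[M^\vartheta,2M^\vartheta]$ with $H_{k+1}\ge K(1+H_k)^{2/3}$. For instance take $H_{k+1}\approx H_k^{5/6}$, which reaches $M^\vartheta$ after $N$ steps, with $N$ depending only on $\vartheta$.

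Apply the Markov property of the bridge to $\mathbb P_{r,y;t,x}$ conditioned on $X_{t-H_ke^2}$, with the conditioning point lying in the annulus $|X_{t-H_ke^2}-x|\in[cH_ke,CH_ke]$. The remaining bridge runs from $X_{t-H_ke^2}$ to $(t,x)$ with length $h_k=H_ke^2$, displacement $D_k\asymp H_ke$ and $M_k\asymp H_k$. On this bridge, use the single-age bound at age $\delta=H_{k+1}e^2$, for which $b_\delta\asymp H_{k+1}\ge K(1+M_k)^{2/3}$. Since the constants in \eqref{eq:bridge-single-age} do not depend on the starting point, this step is uniform. Summing the failure probabilities gives $\sum_k Ce^{-cH_k}\le Ce^{-cH_N}$.

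\textbf{Perturbed endpoints.} For \eqref{eq:bridge-perturbed-endpoint}, run the same argument for the bridge to $(t',x')$. Because $|x'-x|\le BLe$, $|t'-t|\le BLe^2$ and $BL/H\le\varepsilon_1$, the shift is a small fraction of the annulus width $cHe$ and of the age $He^2$. Hence the annulus around $x$ sits inside an annulus around $x'$ with slightly adjusted constants, and the hypotheses $Q^{-1}Me\le|x'-y|\le QMe$ persist with $Q$ enlarged. The initial-endpoint version is symmetric.

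\textbf{Main obstacle.} I expect the delicate point to be the descent bookkeeping. One must check that the intermediate bridges keep ratio bounds of the form $Q^{-1}M_ke\le D_k\le QM_ke$ uniformly along the steps, so that the single-age constants stay fixed. One must also check that the $M^{2/3}$ error of Proposition~\ref{prop:kernel-action} is beaten at every step even though $H_k$ shrinks. If the ratio $H_{k+1}\approx H_k^{5/6}$ turns out too aggressive, I would try the refined bound \eqref{eq:kernel-action}, which is available once $D_k$ is bounded, to relax the condition to $H_{k+1}\ge K(1+H_k)^{1/3}$.
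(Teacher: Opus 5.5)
Your proposal is correct and follows the paper's proof essentially step by step: the single-age annuli come from the bridge density \eqref{eq:bridge-kernel-law}, the kernel--action bounds of Proposition~\ref{prop:kernel-action} and the endpoint variations \eqref{eq:action-endpoint-variation}. The $H$-version is then a fixed finite descent with successive exponent ratios above $2/3$, and perturbed endpoints enter only as relative errors $BL/H$ at each level.

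One bookkeeping slip needs fixing. Discard only the region $|z-x|>D/2$, not $|z-x|>\varepsilon_0D$. On $|z-x|>D/2$ you have $cw^2/\delta\ge cM/(4\varepsilon_0)$, which beats $\mathcal A^{g,0}(r,y;t,x)\le M/(2\lambda_a)$ once $\varepsilon_0$ is small. On $|z-x|>\varepsilon_0D$, by contrast, the Gaussian factor only gives $c\varepsilon_0M$. Since $M'\asymp M$ still holds for $|z-x|\le D/2$, the rest of your argument is unchanged.
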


\begin{proof}
We first prove \eqref{eq:bridge-single-age}.
Set $d_z=|z-x|$,
$S(z)=\mathcal A^{g,0}(r,y;t-\delta,z)+\mathcal A^{g,0}(t-\delta,z;t,x)$
and $\mathcal A_{\mathrm{base}}=\mathcal A^{g,0}(r,y;t,x)$.
\eqref{eq:action-endpoint-variation} and
\eqref{eq:action-ellipticity} give
\begin{equation*}
S(z)-\mathcal A_{\mathrm{base}}
 \ge c_1b_\delta-C_1Dd_z/h+c_2d_z^2/\delta
\end{equation*}
after choosing $\varepsilon_0$ small. Put
$G(z)=S(z)-\mathcal A_{\mathrm{base}}\ge0$ and
$E_\delta=\{z:d_z\notin[c_0\delta D/h,C_0\delta D/h]\}$.
For
$d_z\le c_0\delta D/h$, choose $c_0$ small to absorb the
linear term into $c_1b_\delta$. For
$d_z\ge C_0\delta D/h$, choose $C_0$ large to absorb it
into $c_2d_z^2/\delta$. Hence
\begin{equation*}
G(z)\ge cb_\delta+c'd_z^2/\delta,\qquad z\in E_\delta.
\end{equation*}
By \eqref{eq:bridge-kernel-law} and
\eqref{eq:kernel-action-regimes}, the density of $X_{t-\delta}$
under $\mathbb P_{r,y;t,x}$ is at most
\begin{equation*}
C\delta^{-d/2}
 \exp\{-G(z)+C(1+S(z))^{2/3}+C(1+M)^{2/3}\}
 \le C\delta^{-d/2}e^{-G(z)/2+C'(1+M)^{2/3}},
\end{equation*}
since $S=\mathcal A_{\mathrm{base}}+G$ and $\mathcal A_{\mathrm{base}}\le CM$.
Integration over $E_\delta$ gives
\begin{equation*}
\mathbb P\{X_{t-\delta}\in E_\delta\mid X_r=y,X_t=x\}
 \le C\delta^{-d/2}e^{-cb_\delta+C'(1+M)^{2/3}}
       \int_{\R^d}e^{-c'|z-x|^2/\delta}\,dz
 \le Ce^{-c''b_\delta},
\end{equation*}
where the last inequality uses $b_\delta\ge K(1+M)^{2/3}$
with $K$ sufficiently large.
For $X_{r+\delta}$, use the initial-endpoint inequalities in
\eqref{eq:action-endpoint-variation} and the density
$k(r,y;r+\delta,z)k(r+\delta,z;t,x)/k(r,y;t,x)$.

For the descent, choose a fixed integer $J$ such that
$\gamma=\vartheta^{1/J}>2/3$. Set
$M_j=M^{\gamma^j}$ for $0\le j<J$, $M_J=H$, and
$\sigma_j=t-e^2M_j$ for $0\le j\le J$.
Set $E_0=\{Q^{-1}eM\le|X_{\sigma_0}-x|\le QeM\}$.
It has probability one. Inductively, on
$E_j=\{c_jeM_j\le|X_{\sigma_j}-x|\le C_jeM_j\}$, put
$D_j=|X_{\sigma_j}-x|$. The remaining bridge parameters satisfy
\begin{equation*}
\frac{D_j^2}{e^2M_j}\asymp M_j,\qquad
\frac{e^2M_{j+1}D_j^2}{(e^2M_j)^2}
 \asymp M_{j+1},\qquad
\frac{M_{j+1}}{M_j^{2/3}}\longrightarrow\infty.
\end{equation*}
Applying \eqref{eq:bridge-single-age} conditionally at $\sigma_j$
chooses $c_{j+1},C_{j+1}$ so that
\begin{equation*}
\mathbb P_{r,y;t,x}(E_J^c)
 \le\sum_{j<J}\mathbb P_{r,y;t,x}(E_j\cap E_{j+1}^c)
 \le\sum_{j<J}Ce^{-cM_{j+1}}\le Ce^{-cH}.
\end{equation*}
The finite number of steps keeps all annulus constants uniform.
For $\vartheta=1/6$, one may take $J=5$.

For the perturbed endpoint, keep the $\sigma_j$ fixed. On $E_j$,
\begin{equation*}
\left|\frac{t'-\sigma_j}{e^2M_j}-1\right|
 \le\frac{BL}{H},\qquad
\bigl||X_{\sigma_j}-x'|-D_j\bigr|
 \le BLe\le (BL/H)eM_j .
\end{equation*}
Choose $\varepsilon_1$ small relative to the finitely many
$c_j,C_j$. Applying \eqref{eq:bridge-single-age} under
$\mathbb P_{r,y;t',x'}$ then gives
\eqref{eq:bridge-perturbed-endpoint}, with fixed enlarged annuli.
For initial perturbations use $\sigma_j=r+e^2M_j$ in decreasing
order and the second inequality of \eqref{eq:bridge-single-age}.
\end{proof}

For a center $(t,x)$ and scale $e$, write
\begin{equation}
C_H=\int_{t-He^2}^t a(s,x)\,ds,\quad
\overline a_H=\frac{C_H}{He^2}, \quad
V_{H}=\frac{x-X_{t-He^2}}{He},\quad
\pi_{H}=eC_H^{-1}(x-X_{t-He^2})=\overline a_H^{-1}V_{H},
\label{eq:bridge-phase-vectors}
\end{equation}
with $0<He^2\le t$.
For $\ell=\ell_\mu(t,x)$, the phases at scales $e$ and $\ell$
are respectively $\pi_H/2$ and
\begin{equation*}
\zeta_H=\frac{\ell}{2e}\pi_H
        =\frac\ell2 C_H^{-1}(x-X_{t-He^2}).
\end{equation*}
With $B(v)=a(t+e^2v,x)$ in \eqref{eq:long-gaussian-ratio},
$C_H=e^2\Sigma_H(0)$. In its coordinate
$w=(X_{t-He^2}-x)/e$, the Gaussian vector is
$\xi_H=\Sigma_H(0)^{-1}w=-\pi_H$; this accounts for the sign
and factor two in the phase exponent.
The covariance-normalized vector $\pi_{H}$ will transport phase laws
between the restart times $t-He^2$ as $H$ varies;
$V_{H}$ will identify small-time directions.

\begin{lemma}[Natural scale kernel comparison]
\label{lem:natural-kernel-ratio}
Put $h=t-r$, $D=|x-y|$ and $e_0=h/(D+\sqrt h)$.
For every fixed $B<\infty$, there is $C$, depending only on
the reference data and $B$, such that
\begin{equation}
C^{-1}\le
 \frac{k(r',y';t',x')}{k(r,y;t,x)}\le C
\label{eq:natural-kernel-ratio}
\end{equation}
whenever $|x'-x|\vee|y'-y|\le Be_0$,
$|t'-t|\vee|r'-r|\le Be_0^2$,
$0\le r'<t'\le T$ and $(t'-r')/h\in[1/2,2]$.
\end{lemma}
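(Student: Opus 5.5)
I would prove \eqref{eq:natural-kernel-ratio} by splitting according to the size of $M=D^2/h$. Write $h'=t'-r'$ and $D'=|x'-y'|$. The hypotheses give $|D'-D|\le2Be_0$ and $h'\asymp h$.

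\emph{Case $M\le M_0$, for a fixed $M_0$ depending on $B$.} Here $e_0\asymp\sqrt h$ and $D\le C\sqrt h$, so $D'\le C\sqrt{h'}$ as well. The two-sided Gaussian bound \eqref{eq:kernel-gaussian} then gives $k(r,y;t,x)\asymp h^{-d/2}$ and $k(r',y';t',x')\asymp h'^{-d/2}$, with constants depending on $B,M_0$. Since $h'/h\in[1/2,2]$, the ratio is bounded above and below.

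\emph{Case of large $M$.} Now $e_0\asymp h/D$. The crude route through Proposition~\ref{prop:kernel-action} gives $\log k=-\tfrac d2\log h-\mathcal A^{g,0}+O(1+M^{2/3})$. Combined with the action increments of Lemma~\ref{lem:action-speed}, the endpoint changes cost $O(De_0/h)=O(1)$ in space and $O(e_0^2D^2/h^2)=O(1)$ in time. However, the $O(M^{2/3})$ remainder is not bounded, so this route alone fails. I would therefore reduce to a comparison at short, bounded-$M$ scale using the bridge and Chapman--Kolmogorov structure. This is the main obstacle, and I would handle it as follows.

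Choose $\delta=Ke_0^2$ with $K$ a large fixed constant, so that $b_\delta=\delta D^2/h^2\asymp K$. Apply Chapman--Kolmogorov \eqref{eq:chapman-kolmogorov} at the intermediate time $s=t-\delta$, which lies below $t'$ after fixing $K>B$:
\begin{equation*}
k(r,y;t,x)=\int k(r,y;s,z)\,k(s,z;t,x)\,dz .
\end{equation*}
Write $k(r',y';t',x')$ by the same formula, using the same intermediate time and the same initial factor. This needs separate treatment of the perturbation at the initial endpoint, which I would do symmetrically, first at time $r+\delta$ and then combining both. For $z$ in a bridge annulus $|z-x|\asymp\delta D/h\asymp Ke_0$, the two final factors $k(s,z;t,x)$ and $k(s,z;t',x')$ have bounded-$M$ geometry: the displacement is of order $Ke_0$ over a time of order $Ke_0^2$, so the ratio $M$ is of order $K$. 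Hence, by the first case (Gaussian bounds with constants depending on $K,B$), these two factors are comparable.

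Restricting the integral to that annulus loses only a bounded fraction of the mass, by the descent or perturbed-endpoint bounds. This gives the lower bound $k(r',y';t',x')\ge c\,k(r,y;t,x)$, provided the annulus carries at least half the bridge mass under $\mathbb P_{r,y;t,x}$. To guarantee that, I would use \eqref{eq:bridge-single-age} when $b_\delta\ge K'(1+M)^{2/3}$. That condition fails for large $M$ with $b_\delta$ bounded, so instead I would use the descent statement of Lemma~\ref{lem:bridge-descent} at the level $H$ with $He^2=\delta$. The obstacle is precisely that $H$ must then be of order $1$, rather than $M^\vartheta$.

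I would therefore rather iterate. First use the descent to a time $t-M^{1/6}e^2$, where the perturbed-endpoint estimate \eqref{eq:bridge-perturbed-endpoint} keeps the bridge in the annulus $|X-x|\asymp M^{1/6}e$ under both the original and the perturbed endpoint laws. On that last window, with $D''\asymp M^{1/6}e$ and $h''=M^{1/6}e^2$, we have $M''\asymp M^{1/6}$. Apply the refined bound \eqref{eq:kernel-action} there, since $D''$ is small, together with the action-increment bounds. The endpoint perturbation then changes $\log k$ on that window by $O(1)$ from the action increments, plus a remainder $O(M^{1/18})$ that still must be removed. Repeating the descent finitely many times, with $M$ replaced by $M''$ each time, reduces the window until its $M$ is bounded, at which point the Gaussian bounds of the first case close the argument. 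The reverse inequality follows by exchanging the roles of the two endpoint pairs.
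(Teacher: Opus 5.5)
Your small-$M$ case is correct. Your first large-$M$ step is also what the paper does: descend to $u=t-M^{1/6}e^2$, use \eqref{eq:bridge-perturbed-endpoint} to confine the bridge to $\{cHe\le|v-x|\le CHe\}$ under both endpoint laws, and integrate through \eqref{eq:posterior-positive-integration}. The gap is in how you close the last window.

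\textbf{Why the iteration fails.} Each use of Lemma~\ref{lem:bridge-descent} needs $M$ above a threshold and $H\in[M^\vartheta,2M^\vartheta]$. So after any fixed number $k$ of descents the remaining window has parameter of order $M^{\vartheta^k}$, which is still unbounded as $M\to\infty$. The refined remainder $O(M_k^{1/3})$ from \eqref{eq:kernel-action} is then still unbounded as well. Reaching bounded $M$ would take about $\log\log M$ descents. The lemma only returns annulus constants depending on $(\vartheta,Q)$, and each step feeds the previous annulus ratio back in as $Q$. Each step also rescales your perturbation relative to the new natural scale by a factor set by those constants. Hence neither $Q$ nor the effective $B$ stays bounded, and the resulting constant depends on $M$. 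You also cannot stop at a bounded window: \eqref{eq:bridge-single-age} needs $b_\delta\ge K(1+M)^{2/3}$, so there is no bridge localization there, as you observed yourself. Kernel--action bounds never give an $O(1)$ log-error on a window whose own $M$ tends to infinity, so no version of your scheme closes with them.

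\textbf{The missing ingredient.} It is Lemma~\ref{lem:long-tube}, a relative Gaussian comparison along a long tube. Its error is $Ch(E_a+\|b\|_\infty)+\varepsilon$, not a power of the action.
\begin{itemize}
\item In the coordinates $(s,z)\mapsto(t+e^2s,x+ez)$, the last window has duration $H=M^{1/6}$.
\item On the tubes from $v$ in the annulus to either terminal point, the covariance differs from the spatially frozen $\bar a(s)=a(t+e^2s,x)$ by at most $CL_aeH$. The rescaled drift is at most $B_be$.
\item Hence $H(E_a+\|b\|_\infty)\le CeH^2\le C\sqrt T\,M^{-1/6}\to0$.
\item So \eqref{eq:long-tube-log} makes $k(u,v;t,x)$ and $k(u,v;t',x')$ each comparable to Gaussians with covariance $\int\bar a$, uniformly over the annulus.
\item Then \eqref{eq:long-gaussian-ratio} bounds the Gaussian ratio, because $|\xi_H|$ is bounded and the terminal increments are $O(B)$ in rescaled units.
\end{itemize}
This finishes the argument with a single descent. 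The choice $H=M^{1/6}$ meets both requirements at once: $H\ge M^\vartheta$ for the descent, and $eH^2\to0$ for the thin tube. The initial endpoint is handled the same way, with $u=r+He^2$ and a comparison of the covariances $\int_r^u a(s,y)\,ds$ and $\int_{r'}^u a(s,y)\,ds$.
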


\begin{proof}
For bounded $D^2/h$, use \eqref{eq:kernel-gaussian}.
For $M=D^2/h\to\infty$, take $e=h/D$ and $H=M^{1/6}$
and put $u=t-He^2$,
$\mathcal E_H=\{v:cHe\le|v-x|\le CHe\}$, with the constants in
\eqref{eq:bridge-perturbed-endpoint}. In coordinates
$(s,z)\mapsto(t+e^2s,x+ez)$ the coefficients are
\begin{equation*}
c(s,z)=a(t+e^2s,x+ez),\qquad
b^{\mathrm{sc}}(s,z)=e\,b(t+e^2s,x+ez),\qquad \bar a(s)=a(t+e^2s,x).
\end{equation*}
On the tubes joining $v\in\mathcal E_H$ to either terminal point,
$\|c-\bar a\|_\infty\le CL_aeH$ and $\|b^{\mathrm{sc}}\|_\infty\le B_be$.
Since $eH^2\le\sqrt T M^{-1/6}\to0$,
\eqref{eq:long-tube-log} and \eqref{eq:long-gaussian-ratio},
applied with spatially constant covariance $\bar a$, give
\begin{equation*}
C^{-1}\le\frac{k(u,v;t',x')}{k(u,v;t,x)}\le C,
\qquad v\in\mathcal E_H .
\end{equation*}
Apply \eqref{eq:posterior-positive-integration} with
$\nu(dv)=k(r,y;u,v)\,dv$. Both exceptional proportions are
at most $Ce^{-cH}$ by \eqref{eq:bridge-perturbed-endpoint}.
This proves \eqref{eq:natural-kernel-ratio} when $(r',y')=(r,y)$.

For initial perturbations put $u=r+He^2$ and
$\mathcal E_H=\{v:cHe\le|v-y|\le CHe\}$.
Apply \eqref{eq:long-tube-log} at starting times $r,r'$.
The Gaussian covariance matrices
$\int_r^u a(s,y)\,ds$ and $\int_{r'}^u a(s,y)\,ds$
differ by at most $\Lambda_a Be^2$. Expanding their densities gives
$C^{-1}\le k(r',y';u,v)/k(r,y;u,v)\le C$ on $\mathcal E_H$.
Integrate these two kernels against $k(u,v;t,x)\,dv$;
the initial version of \eqref{eq:bridge-perturbed-endpoint}
controls the two normalized masses of $\mathcal E_H^c$.
Finally compare $(r,y;t,x)$ first with $(r,y;t',x')$, then
with $(r',y';t',x')$. For large $M$ their durations are
comparable to $h$ and their natural scales to $e_0$, so the
two comparisons prove \eqref{eq:natural-kernel-ratio}.
\end{proof}

\begin{lemma}\label{lem:kernel-log-gradient}
For $0\le r<t\le T$, every $x \in \R^d$ and almost every $y \in \R^d$,
\begin{equation}
|D_y\log k(r,y;t,x)|
\le C\{(t-r)^{-1/2}+|x-y|/(t-r)\}.
\label{eq:kernel-log-gradient}
\end{equation}
Here $C$ depends only on the reference data.
\end{lemma}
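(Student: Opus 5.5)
We prove \eqref{eq:kernel-log-gradient} by splitting into the Gaussian regime $M=|x-y|^2/h\le M_0$ and the tail regime $M>M_0$, with $h=t-r$. In both regimes we turn the lower bound $C^{-1}\le k(r,y';t,x)/k(r,y;t,x)\le C$ for $|y'-y|\le Be_0$ into a gradient bound through the backward equation. Here $e_0=h/(|x-y|+\sqrt h)$ is the natural scale of Lemma~\ref{lem:natural-kernel-ratio}, and the claimed bound is exactly $C/e_0$ up to constants, since $e_0^{-1}\asymp h^{-1/2}+|x-y|/h$.

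\textbf{Positive solution on a cylinder.} Fix the terminal point $(t,x)$ and set $w(r',y')=k(r',y';t,x)$. As recorded in the proof of Lemma~\ref{lem:kernel-stability}, $w$ solves the backward divergence-form equation $-\partial_{r'}w=\operatorname{div}_{y'}(\tfrac12aD_{y'}w)-c\cdot D_{y'}w$ with bounded $c$. On the parabolic cylinder $Q=\{|y'-y|<2e_0,\ r\le r'<r+4e_0^2\}$ we have $e_0^2\le h$. For small $e_0^2/h$ the cylinder lies in the admissible range of Lemma~\ref{lem:natural-kernel-ratio}, after extending coefficients constantly before time $r$ if needed, as in the proof of Lemma~\ref{lem:kernel-stability}. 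That lemma then gives $C^{-1}\le w/w(r,y)\le C$ on $Q$. Rescale $Q$ to the unit cylinder by $(r',y')\mapsto((r'-r)/e_0^2,(y'-y)/e_0)$. The rescaled coefficients keep the ellipticity bounds, the rescaled drift is $e_0c$ and hence small, and the rescaled spatial Lipschitz constant of $a$ is $e_0L_a\le C$.

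\textbf{Gradient estimate.} We need a gradient bound for a positive solution of a divergence-form equation with Lipschitz leading coefficient, which is available because $a$ is spatially Lipschitz. Write the equation in nondivergence form $-\partial w-\tfrac12a:D^2w+\widetilde c\cdot Dw=0$ with bounded $\widetilde c$, and apply interior $W^{1,2}_p$ estimates (Krylov VMO theory, as already used in Lemma~\ref{lem:reference-smoothing}) with $p>d+2$. Sobolev embedding then gives $\sup_{Q/2}|Dw|\le C\sup_Q|w|$ in rescaled variables. Returning to original variables gives
\[
|D_yw|\le Ce_0^{-1}\sup_Q w\le Ce_0^{-1}w(r,y),
\]
where the last step uses the comparability of $w$ on $Q$. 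The estimate holds at $y$ up to a null set, since $Dw$ is only defined almost everywhere, which matches the a.e.\ statement.

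\textbf{Terminal time.} One subtlety is that the backward equation is posed for $r'<t$, while time is measured only up to the terminal point. The cylinder we use looks forward in $r'$ from $r$ toward $t$, which is the correct parabolic direction for the backward equation, with the "final" side at $r'=r+4e_0^2<t$. Whether the interior estimate needs room on both sides in time is the main technical point. We expect to handle it by centering the cylinder at $r$ and using the ratio lemma with $|r'-r|\le Be_0^2$ on both sides, extending the coefficients constantly for $r'<0$ when $r=0$.

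\textbf{Main obstacle.} The delicate step is uniformity in the tail regime. There $w$ decays like $e^{-\mathcal A}$ with $\mathcal A\sim M$, so the bound is only useful because the ratio lemma controls $w$ relative to its value on the small scale $e_0$. Everything reduces to Lemma~\ref{lem:natural-kernel-ratio}, and the remaining work is just checking that its hypotheses hold on $Q$.
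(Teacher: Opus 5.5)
Your argument is essentially the paper's proof: the natural scale $e_0$, a cylinder extending from $r$ toward $t$, Krylov's $W^{1,2}_p$ estimate with $p>d+2$ giving $|D_yk|\le Ce_0^{-1}\sup_Q k$, and then Lemma~\ref{lem:natural-kernel-ratio} to replace the supremum by $Ck(r,y;t,x)$. Two small adjustments are needed.

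First, shrink the cylinder by a fixed factor. The paper uses $[r,r+4c^2e_0^2]\times B(y,2ce_0)$ with $4c^2\le1/2$. When $|x-y|\le\sqrt h$, the ratio $e_0^2/h$ is of order one, so your $[r,r+4e_0^2)$ can reach $t$ and leave the admissible range.

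Second, no room before $r$ is needed. The paper obtains the gradient on the face $\tau=0$ from the Gaussian terminal inverse $Dv(0,\cdot)=\int_0^{c^2}D\gamma_{2\tau I_d}*f(\tau,\cdot)\,d\tau$, using $D\gamma_{2\tau I_d}\in L^{p'}$. It proves the estimate first for smooth approximations and passes to Borel drift by Lemma~\ref{lem:kernel-stability}. That approximation step is also what justifies the $W^{1,2}_p$ regularity of $w$, which your proposal takes for granted.
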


\begin{proof}
Put $e_0=(t-r)/(|x-y|+\sqrt{t-r})$ and choose a small fixed
$c>0$ with $4c^2\le1/2$. On
$Q=[r,r+4c^2e_0^2]\times B(y,2ce_0)$, the function
$u(s,z)=k(s,z;t,x)$ satisfies
\begin{equation*}
-\partial_su=\tfrac12a(s,z):D_z^2u+b(s,z)\cdot D_zu.
\end{equation*}
Set $\widehat u(\tau,z)=u(r+e_0^2\tau,y+e_0z)$ and
\begin{equation*}
Q_{\rm out}=(0,4c^2)\times B_{2c},\qquad
Q_{\rm in}=(0,c^2)\times B_c,\qquad p=2(d+2).
\end{equation*}
The scaled coefficients have common ellipticity, spatial
Lipschitz and drift bounds. For smooth approximations,
\citet[Theorem~2.1]{krylov-2007-vmo}, with a cutoff vanishing
at $\tau=4c^2$ and outside $B_{2c}$, and interpolation of
the first derivative terms give
\begin{equation*}
\|\widehat u\|_{W_p^{1,2}(Q_{\rm in})}
 \le C\|\widehat u\|_{L^p(Q_{\rm out})}.
\end{equation*}
To obtain the gradient at $\tau=0$, choose a smooth cutoff
$\chi=1$ near $\{0\}\times B_{c/2}$, vanishing near $\tau=c^2$
and outside $B_c$, and set $v=\chi\widehat u$,
$f=-v_\tau-\Delta v$. The Gaussian terminal inverse gives
\begin{equation*}
\begin{gathered}
Dv(0,\cdot)=\int_0^{c^2}D\gamma_{2\tau I_d}*f(\tau,\cdot)\,d\tau,\\
\|D\widehat u(0,\cdot)\|_{L^\infty(B_{c/2})}
 \le\|D\gamma_{2\tau I_d}\|_{L^{p'}((0,c^2)\times\R^d)}
       \|f\|_{L^p(Q_{\rm in})}
  \le C\|\widehat u\|_{W_p^{1,2}(Q_{\rm in})}.
\end{gathered}
\end{equation*}
Here $p'=p/(p-1)$, and the Gaussian derivative norm is finite
because $p>d+2$. Scaling back gives
\begin{equation*}
|D_y k(r,y;t,x)|
 \le Ce_0^{-1}
    \sup_{\substack{r\le s\le r+4c^2e_0^2\\|z-y|\le2ce_0}}
                       k(s,z;t,x).
\end{equation*}
The constants are independent of derivatives of $b$.
Lemma~\ref{lem:kernel-stability} and weak Sobolev compactness
pass these bounds to Borel drift. The Gaussian formula defines
the gradient trace at $\tau=0$, including when $r=0$.
\eqref{eq:natural-kernel-ratio} bounds the supremum by
$Ck(r,y;t,x)$, which proves the assertion.
\end{proof}

\subsection{Initial integration and local mass}
\label{subsec:closed-initial-windows}

We now integrate the kernel estimates against the initial law.
The essential normalization is the local doubling mass near
the initial support. We retain it in the kernel integral before
forming the posterior ratios used in the next subsection.

\begin{lemma}\label{lem:initial-kernel-integration}
\label{lem:relative-initial-mass}
Let $\mu\in\mathcal D$, $S=\supp\mu$, $d_D=\log_2C_D$,
$x\notin S$, $D=\dist(x,S)$, and choose any Euclidean nearest
point $y_0\in S$. Put $m_\mu(x)=\mu(B(y_0,D))$.
Then
\begin{equation}
\begin{aligned}
\mu(B(x,RD))&\le C(R+1)^{d_D} m_\mu(x),\qquad R\ge1,\\
\mu(B(y_*,e))&\ge c(e/D)^{d_D} m_\mu(x),\qquad
 y_*\in S,\quad |y_*-y_0|\le KD,\quad0<e\le D.
\end{aligned}
\label{eq:relative-initial-mass}
\end{equation}
If $\kappa_-D\le D'\le\kappa_+D$, $y_0'\in S$ and
$|y_0'-y_0|\le KD$, then
\begin{equation}
C_*^{-1}m_\mu(x)\le\mu(B(y_0',D'))\le C_*m_\mu(x).
\label{eq:neighboring-initial-mass}
\end{equation}
Here the constants in \eqref{eq:relative-initial-mass} use only
$C_D,K$, and $C_*$ uses only $C_D,K,\kappa_-,\kappa_+$.

Put $M=D^2/t\ge1$, $e=t/D$, and
$k_*(t,x)=\max_{y\in S}k(0,y;t,x)$. Then
\begin{equation}
c M^{-d_D}m_\mu(x)k_*(t,x)\le\rho(t,x)\le C m_\mu(x)k_*(t,x).
\label{eq:initial-kernel-integration}
\end{equation}
Moreover, for a fixed sufficiently large $K$,
\begin{equation}
\frac{\int_{|y-x|>KD}k(0,y;t,x)\,\mu(dy)}{\rho(t,x)}
 \le Ce^{-cM}.
\label{eq:compact-initial-posterior}
\end{equation}
The constants and the choice of $K$ in
\eqref{eq:initial-kernel-integration}--\eqref{eq:compact-initial-posterior}
depend only on the reference data and $C_D$.
\end{lemma}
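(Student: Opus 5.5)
The proof splits into three blocks. The mass comparisons \eqref{eq:relative-initial-mass}--\eqref{eq:neighboring-initial-mass} follow from doubling alone. The two-sided kernel integration \eqref{eq:initial-kernel-integration} uses Proposition~\ref{prop:kernel-action} and Lemma~\ref{lem:natural-kernel-ratio}. The posterior tail bound \eqref{eq:compact-initial-posterior} uses the Gaussian bounds \eqref{eq:kernel-gaussian}.

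\emph{Mass comparisons.} Since $y_0\in S$ and $|y_0-x|=D$, we have $B(x,RD)\subset B(y_0,(R+1)D)$. Iterating the doubling inequality at $y_0$ about $\log_2(R+1)+1$ times gives the first line of \eqref{eq:relative-initial-mass}. For the second line, note $B(y_0,D)\subset B(y_*,(K+1)D)$. Doubling at $y_*\in S$ from radius $e$ up to $(K+1)D$ costs a factor $C_D^{\log_2((K+1)D/e)+1}=C(e/D)^{-d_D}$. For \eqref{eq:neighboring-initial-mass}, both balls $B(y_0',D')$ and $B(y_0,D)$ lie inside a ball of radius comparable to $D$ centred at the other support point. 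Doubling in both directions, with constants depending on $K,\kappa_\pm$, compares them.

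\emph{Kernel integration.} The upper bound in \eqref{eq:initial-kernel-integration} is the one that needs care, because the trivial bound $\rho\le k_*$ loses the factor $m_\mu(x)$. Split $\mu$ into the ball $B(x,K D)$ and dyadic shells $\{2^{j}KD<|y-x|\le2^{j+1}KD\}$. On the ball, bound the kernel by $k_*$ and the mass by $CK^{d_D}m_\mu$. On the $j$-th shell, \eqref{eq:kernel-gaussian} gives $k\le Ct^{-d/2}e^{-c4^jK^2M}$, and the shell mass is at most $C(2^jK)^{d_D}m_\mu$. To compare with $k_*$, write $k_*(t,x)\ge k(0,y_0;t,x)\ge ct^{-d/2}e^{-CM}$. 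Once $K$ is large, $e^{-c4^jK^2M+CM}$ is summable with total $O(e^{-cM})$, which gives the upper bound. The same shell estimate, divided by the lower bound for $\rho$ below, proves \eqref{eq:compact-initial-posterior}, since the loss $M^{d_D}$ is absorbed by $e^{-cM}$.

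For the lower bound, let $y_*\in S$ attain $k_*$. The maximizer satisfies $|y_*-x|\le C D$, since any farther point has a smaller kernel than $y_0$ by \eqref{eq:kernel-gaussian}, at least for $M$ large; bounded $M$ is handled by \eqref{eq:kernel-gaussian} directly. Hence $|y_*-y_0|\le KD$. The scale $e_0=t/(|x-y_*|+\sqrt t)$ is comparable to $e=t/D$. Lemma~\ref{lem:natural-kernel-ratio} then gives $k(0,y;t,x)\ge c\,k_*(t,x)$ for all $y\in B(y_*,c e)$. Therefore
\[
\rho(t,x)\ge c\,k_*\,\mu(B(y_*,ce)),
\]
and the second line of \eqref{eq:relative-initial-mass} with $e/D=M^{-1/2}$ gives $\rho\ge cM^{-d_D/2}m_\mu k_*$. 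This is stronger than claimed.

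The main obstacle is the uniformity of these estimates in $t\le T$ and in $x$ arbitrarily far away. In particular, the tail sum must be dominated using only the two-sided Gaussian bounds, whose constants $c,C$ differ. That is why $K$ must be chosen large relative to $C/c$, and why the final estimates depend only on the reference data and $C_D$.
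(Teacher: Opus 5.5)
Your argument follows the paper's proof essentially step for step. It uses the same ball inclusions and doubling iteration for the mass bounds. It uses \eqref{eq:kernel-gaussian} to place every maximizer $y_*$ of $k_*$ within $K_0D$ of $x$. It uses Lemma~\ref{lem:natural-kernel-ratio} to get $k(0,\cdot;t,x)\ge c\,k_*$ on a ball of radius comparable to $e$ about $y_*$. Finally, it compares the dyadic shell sum with $k_*\ge k(0,y_0;t,x)\ge ct^{-d/2}e^{-CM}$ for both the upper bound and the posterior tail. The paper divides the shell sum by the cruder bound $\rho\ge ct^{-d/2}m_\mu(x)e^{-CM}$, coming from $B(y_0,D)$, rather than by the lower bound in \eqref{eq:initial-kernel-integration}; either works. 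Proposition~\ref{prop:kernel-action}, mentioned in your outline, is never used, and the paper does not need it either.

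There is one arithmetic error in the lower bound. With $e=t/D$ and $M=D^2/t$ you have $e/D=t/D^2=M^{-1}$, not $M^{-1/2}$; the latter is $\sqrt t/D$. So the second line of \eqref{eq:relative-initial-mass} gives exactly $\rho\ge cM^{-d_D}m_\mu(x)k_*(t,x)$, which is the stated bound.

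Your remark that you obtain the stronger factor $M^{-d_D/2}$ is therefore unjustified, and that strengthening is false in general. Take Brownian motion in $d=1$ with $\mu$ uniform on $[0,1]$, so $C_D=2$ and $d_D=1$, and take $x=1+D$ with $D\le1$. A direct computation gives $\rho(t,x)\asymp (t/D)\,k_*(t,x)$ and $m_\mu(x)=D$. Hence $\rho/(m_\mu k_*)\asymp M^{-1}$ as $t\downarrow0$, so the exponent $d_D$ is attained there.

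Nothing else depends on the slip. In the posterior step the loss is $M^{d_D}$, as you actually wrote there, and $e^{-cM}$ still absorbs it.
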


\begin{proof}
Iteration of doubling gives
$\mu(B(y,R))\le C_D(R/r)^{d_D}\mu(B(y,r))$ for
$y\in S$ and $R\ge r>0$.
The inclusions $B(x,RD)\subset B(y_0,(R+1)D)$ and
$B(y_0,D)\subset B(y_*,(K+1)D)$ prove
\eqref{eq:relative-initial-mass}. For
\eqref{eq:neighboring-initial-mass}, use
\begin{equation*}
B(y_0,D)\subset B(y_0',(K+1)D),\qquad
B(y_0',D')\subset B(y_0,(K+\kappa_+)D).
\end{equation*}
Different nearest points are at distance at most $2D$,
so their definitions of $m_\mu(x)$ are uniformly comparable.

The bounds \eqref{eq:kernel-gaussian}, from $y_0$ and from
a maximizing point respectively, show that every maximizer $y_*$ of
$k_*$ has $|x-y_*|\le K_0D$. Its natural scale
$t/(|x-y_*|+\sqrt t)$ is comparable to $e$.
Lemma~\ref{lem:natural-kernel-ratio} therefore bounds the kernel
below by $c k_*$ on $B(y_*,e)$. Its mass is at least $cM^{-d_D}m_\mu(x)$ by
\eqref{eq:relative-initial-mass}, proving the lower bound.
For the upper bound, the mass in $B(x,K_1D)$ is at most
$C_{K_1}m_\mu(x)$. On
$A_j=\{y:2^jK_1D<|y-x|\le2^{j+1}K_1D\}$,
\eqref{eq:kernel-gaussian} and \eqref{eq:relative-initial-mass} give
\begin{equation}
\sum_{j\ge0}\frac{\int_{A_j}k(0,y;t,x)\,\mu(dy)}
                    {m_\mu(x)t^{-d/2}e^{-C_0M}}
 \le C\sum_{j\ge0}(2^jK_1)^{d_D}
                   e^{-c4^jK_1^2M+C_0M}
 \le Ce^{-c'M},
\label{eq:initial-annular-sum}
\end{equation}
for each fixed $C_0$ and sufficiently large $K_1=K_1(C_0,C_D)$.
Since $k_*\ge ct^{-d/2}e^{-C_0M}$ for a reference-data constant
$C_0$, this proves the upper bound for $\rho$.

Finally, the ball $B(y_0,D)$ gives
$\rho(t,x)\ge ct^{-d/2}m_\mu(x)e^{-CM}$.
Using this denominator in \eqref{eq:initial-annular-sum}
proves \eqref{eq:compact-initial-posterior}.
\end{proof}

\subsection{Propagated phases and weighted estimates}
\label{subsec:posterior-mixtures}

The initial posterior selects a displacement scale. Conditional
bridge descent then supplies an annular law for the last segment;
Gaussian comparison turns this actual law into the phase mixture.
The fixed-window phases first give the Hardy estimate needed
for localization. We then compare phase laws at restart times
$t-He^2$ as $H$ varies, extend the comparison to the growing sets
with $H,L$ in \eqref{eq:growing-window-scales}, and recover their
directions at nearby centers.

The actual posterior at a physical restart time $u$ is
\begin{equation*}
\Pi_{u;t,x}(dy)=\frac{k(u,y;t,x)\,\mu_u(dy)}{\rho(t,x)},
\qquad 0\le u<t.
\end{equation*}
This includes $u=0$ and the restarted densities in
\eqref{eq:true-restart-posterior}; a perturbed endpoint retains the
same physical $u$ as in \eqref{eq:restart-posterior-annulus}.

\begin{lemma}[Initial and restarted posterior annuli]
\label{lem:posterior-annuli}
Let $\mu\in\mathcal D$, and use the scales in
\eqref{eq:tail-window-scales}. As $M\to\infty$,
\begin{equation}
\Pi_{0;t,x}\{\,|y-x|\notin[cMe,CMe]\,\}\le Ce^{-cM}.
\label{eq:initial-posterior-annulus}
\end{equation}
Here $c,C$ and the lower threshold for $M$ depend only on
the reference data and $C_D$.
Fix $0<\vartheta<1$ and $M^\vartheta\le H\le2M^\vartheta$.
The actual restarted posterior is
\begin{equation}
\Pi_{t-He^2;t,x}(dv)
 =\frac{\rho(t-He^2,v)k(t-He^2,v;t,x)}{\rho(t,x)}\,dv.
\label{eq:true-restart-posterior}
\end{equation}
There are $c,C,M_0,\varepsilon_1>0$, depending only on the reference
data, $C_D$ and $\vartheta$, such that, for $M\ge M_0$,
$u=t-He^2$, $B>0$, $L\ge1$ and $BL/H\le\varepsilon_1$,
\begin{equation}
\begin{gathered}
\Pi_{u;t',x'}(dv)
 =\frac{\rho(u,v)k(u,v;t',x')}{\rho(t',x')}\,dv,\\
\sup_{\substack{|x'-x|\le BLe,\ |t'-t|\le BLe^2\\u<t'\le T}}
\Pi_{u;t',x'}(\mathcal V_H^c)\le Ce^{-cH},
\qquad \mathcal V_H=\{v:cHe\le|v-x|\le CHe\}.
\end{gathered}
\label{eq:restart-posterior-annulus}
\end{equation}
The annulus may depend on $\vartheta$ but is chosen independently
of $B$. The same restarted conclusion, for $M\ge M_0$ and
$BL/H\le\varepsilon_1$, holds for any initial probability law
and centers with positive scales $e,e'$, $M=t/e^2$, $M'=t'/e'^2$,
provided that
\eqref{eq:initial-posterior-annulus} holds with common constants
at the base and perturbed endpoints and
\begin{equation*}
\tfrac12\le e'/e\le2,\qquad \tfrac12\le M'/M\le2.
\end{equation*}
In this case $c,C,M_0,\varepsilon_1$ depend only on the reference
data, $\vartheta$ and the constants in the initial annulus bound.
\end{lemma}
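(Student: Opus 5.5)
The plan has three parts: the initial annulus, the restarted annulus, and the extension to general laws.

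For \eqref{eq:initial-posterior-annulus}, work directly from Lemma~\ref{lem:initial-kernel-integration}. The outer part $\{|y-x|>CMe\}=\{|y-x|>CD\}$ is \eqref{eq:compact-initial-posterior}, once $C\ge K$. For the inner part $\{|y-x|<cMe\}$, note that $|y-x|\ge D=Me$ for every $y\in S$. So the inner event is empty as soon as $c<1$, and there is nothing to prove.

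For the restarted statement \eqref{eq:restart-posterior-annulus}, first justify the formula for $\Pi_{u;t',x'}$. It is the Chapman--Kolmogorov identity \eqref{eq:chapman-kolmogorov} integrated against $\mu$, since $\rho(t',x')=\int\rho(u,v)k(u,v;t',x')\,dv$.

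Next, write $\Pi_{u;t',x'}$ as a mixture over initial points $y$ of bridge laws of $X_u$. Under the joint law of $(X_0,X_u)$ given $X_{t'}=x'$, the $X_0$-marginal is $\Pi_{0;t',x'}$, and the conditional law of $X_u$ given $X_0=y$ is the bridge $\mathbb P_{0,y;t',x'}$ at time $u$, by \eqref{eq:bridge-kernel-law}. Hence
\begin{equation*}
\Pi_{u;t',x'}(\mathcal V_H^c)\le \Pi_{0;t',x'}(E^c)
 +\sup_{y\in E}\mathbb P_{0,y;t',x'}\{X_{u}\notin\mathcal V_H\},
\end{equation*}
where $E=\{y:cMe\le|y-x|\le CMe\}$.

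Control the first term by applying \eqref{eq:initial-posterior-annulus} at the perturbed endpoint $(t',x')$. This needs $e(t',x')/e\to1$ and $M'\asymp M$ uniformly, which holds because $BLe\le\varepsilon_1He\ll Me$ and $BLe^2\ll t$. The annulus for $(t',x')$ then sits inside a slightly enlarged annulus around $x$. This gives a bound $Ce^{-cM}\le Ce^{-cH}$.

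For the second term, apply the descent lemma \eqref{eq:bridge-perturbed-endpoint} with $r=0$, base endpoint $(t,x)$ and $h=t$. The scale is $e$ with $M=t/e^2$, and the hypothesis $Q^{-1}Me\le|x-y|\le QMe$ holds on $E$ with $Q$ depending on $c,C$. The perturbed endpoint $(t',x')$ satisfies $|x'-x|\le BLe$, $|t'-t|\le BLe^2$ and $BL/H\le\varepsilon_1$. This yields $\mathbb P_{0,y;t',x'}\{|X_{t-He^2}-x|\notin[cHe,CHe]\}\le Ce^{-cH}$, uniformly in $y\in E$. Take $\mathcal V_H$ to be that annulus: its constants come from the descent lemma with $\vartheta$ and $Q$ fixed, so it is independent of $B$.

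Finally, for an arbitrary initial law, run the same argument verbatim. Its only inputs were the initial annulus bound at both endpoints with common constants, together with the comparability $e'/e,M'/M\in[1/2,2]$. That comparability ensures $Q$ and the perturbation size stay controlled relative to the base scale. The main obstacle is the bookkeeping in the perturbed-endpoint step: one must check that the initial annulus at $(t',x')$, expressed in its own scales $e',M'$, maps into a fixed $Q$-annulus at the base scales $e,M$. That condition is exactly what the comparability hypothesis is designed to supply.
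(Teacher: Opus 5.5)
Your proposal is correct and follows the paper's proof essentially step for step. You obtain the initial annulus from \eqref{eq:compact-initial-posterior} together with the trivial lower bound $|y-x|\ge D=Me$ on $S$. You then use the bridge mixture $\Pi_{u;t',x'}(\cdot)=\int\mathbb P_{0,y;t',x'}(X_u\in\cdot)\,\Pi_{0;t',x'}(dy)$, apply the initial annulus at the perturbed endpoint (placed inside a fixed $Q$-annulus about $x$ via $e'/e,M'/M\to1$), and apply \eqref{eq:bridge-perturbed-endpoint} to the bridges; the general-law case follows because only these inputs were used.

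One cosmetic point: your set $E$ should be defined from the outset as that enlarged $Q$-annulus, not the base annulus with constants $c,C$, since that is what the rest of your argument actually uses.
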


\begin{proof}
For $\mu\in\mathcal D$, \eqref{eq:compact-initial-posterior}
gives the upper displacement bound; the lower bound is
$|y-x|\ge D=Me$. This proves \eqref{eq:initial-posterior-annulus}.
The distance function is Lipschitz, so the stated perturbations
satisfy $e'/e=1+O(BL/M)$ and $M'/M=1+O(BL/M)$.
Since $BL/M\le2\varepsilon_1M^{\vartheta-1}$, increasing $M_0$
gives the displayed factor-two bounds.

For every Borel $E\subset\R^d$, the bridge identity gives
\begin{equation*}
\Pi_{u;t',x'}(E)
 =\int\mathbb P_{0,y;t',x'}(X_u\in E)\,\Pi_{0;t',x'}(dy).
\end{equation*}
The factor-two bounds and $BL/M\le2\varepsilon_1M^{\vartheta-1}$
put every point in the good initial annulus at $(t',x')$ in
$\{y:Q^{-1}Me\le|y-x|\le QMe\}$, after increasing $M_0$,
with $Q$ determined only by the initial annulus constants.
Apply \eqref{eq:bridge-perturbed-endpoint} to these bridges.
The discarded probabilities
sum to $Ce^{-cM}+Ce^{-cH}$, proving
\eqref{eq:restart-posterior-annulus}.
This argument uses only the input bounds stated in the last
part of the lemma and therefore proves that part as well.
\end{proof}

For $M^\vartheta\le H\le2M^\vartheta$ with fixed $0<\vartheta<1$, choose
$\mathcal V_{H}$ in \eqref{eq:restart-posterior-annulus} with fixed
margins and use $C_H$ from \eqref{eq:bridge-phase-vectors}.
Put $\ell=\ell_\mu(t,x)$ and define
\begin{equation}
\lambda_{t,x}^{H,e}
 =\left(v\mapsto\tfrac e2C_H^{-1}(x-v)\right)_\#
      \frac{\1_{\mathcal V_{H}}\Pi_{t-He^2;t,x}}{\Pi_{t-He^2;t,x}(\mathcal V_{H})},
\qquad
\lambda_{t,x}^{H,\ell}=(\xi\mapsto(\ell/e)\xi)_\#\lambda_{t,x}^{H,e}.
\label{eq:posterior-phase-law}
\end{equation}
These are the laws of $\pi_H/2$ and $\zeta_H$ under the
posterior conditioned on $\mathcal V_H$. The complete posterior laws
differ by at most twice the discarded mass in total variation.
Ellipticity and $\ell/e\to1$ put these laws in nonzero compact
annuli, with constants depending only on the reference data,
$C_D$ and the fixed exponent $\vartheta$.

\begin{lemma}[Relative phase approximation]\label{lem:posterior-phase}
For $\mu\in\mathcal D$, there are constants
$0<m_{\mathrm{ph},\mu}\le M_{\mathrm{ph},\mu}<\infty$ and probability measures
$\lambda_{t,x}$ with
\begin{equation*}
\supp\lambda_{t,x}
 \subset\{\zeta\in\R^d:m_{\mathrm{ph},\mu}\le|\zeta|\le M_{\mathrm{ph},\mu}\}
\end{equation*}
such that, with $\ell=\ell_\mu(t,x)$ and
\(
W_{t,x}=W_{\lambda_{t,x},a(t,x)},
\)
the following estimates hold for every $L>0$.
\begin{equation}
\lim_{R\to\infty}
\sup_{\substack{t\in(0,T],\ x\in\R^d\\
                 \dist(x,S)/\sqrt t\ge R}}
\ \sup_{\substack{|s|,|z|\le L\\0<t+\ell^2s\le T}}
\left|\log\frac{\rho(t+\ell^2s,x+\ell z)}
                    {\rho(t,x)W_{t,x}(s,z)}\right|=0.
\label{eq:posterior-phase-doubling}
\end{equation}
The annulus constants depend only on the reference data and $C_D$.
The convergence is uniform over initial laws with common $C_D$
and coefficients with common reference data, for each fixed $L$.
In particular, it is uniform in the starting point for $\mu=\delta_y$.
\end{lemma}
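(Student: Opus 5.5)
The phase law will be $\lambda_{t,x}=\lambda^{H,\ell}_{t,x}$ from \eqref{eq:posterior-phase-law}, with $H=M^{1/6}$. The density ratio will be written through the restarted posterior at the physical time $u=t-He^2$ using \eqref{eq:posterior-positive-integration}. The starting point is
\begin{equation*}
\rho(t',x')=\int\rho(u,v)\,k(u,v;t',x')\,dv ,
\end{equation*}
a positive integration of kernels from the common restart time $u$. Take $(t',x')=(t+\ell^2s,x+\ell z)$ with $|s|,|z|\le L$, and $E=\mathcal V_H$. Since $\ell\le e$ and $L$ is fixed, the perturbation satisfies $BL/H\to0$. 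Hence \eqref{eq:restart-posterior-annulus} bounds both exceptional proportions at the base and perturbed endpoints by $Ce^{-cH}$, so the correction factor in \eqref{eq:posterior-positive-integration} tends to one uniformly. It then remains to show that the kernel ratio $k(u,v;t',x')/k(u,v;t,x)$ is uniformly close to $e^{2(s\zeta^{\mathsf T}a(t,x)\zeta-\zeta\cdot z)}$ for $v\in\mathcal V_H$, where $\zeta=\tfrac{\ell}{2}C_H^{-1}(x-v)$.

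For the kernel comparison, rescale $(s,z)\mapsto(t+e^2s,x+ez)$ as in the proof of Lemma~\ref{lem:natural-kernel-ratio}. On tubes of width $O(H)$ joining $v$ to the endpoints, the rescaled coefficients satisfy $\|c-\bar a\|_\infty\le CL_aeH$ and drift $\le B_be$. The tube length is $h\asymp H$, so $h(E_a+\|b\|_\infty)\le CeH^2\to0$ because $eH^2\le\sqrt T M^{-1/6}$. The sequential form of Lemma~\ref{lem:long-tube} then gives that both kernels are $(1+o(1))$ times Gaussians with covariance $\Sigma_H$ built from $\bar a(s)=a(t+e^2s,x)$. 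The ratio of these Gaussians is given by \eqref{eq:long-gaussian-ratio} with $J$ fixed, and its error is $O(1/H)\to0$. Converting from the $e$-scale to the $\ell$-scale uses $\ell/e\to1$ and the fact that $\xi_H=-\pi_H$. The covariance in the exponent is $\int_0^s B$, which equals $s\,a(t,x)+o(1)$ by the time modulus $\omega_a(L\ell^2)\to0$. This identifies the exponent $2(s\zeta^{\mathsf T}a(t,x)\zeta-\zeta\cdot z)$ with vanishing error. Integrating against the conditioned posterior gives $W_{t,x}(s,z)$ up to a factor $1+o(1)$. Since the phases lie in a compact annulus, $W_{t,x}$ is bounded away from $0$ and $\infty$ on the window, so the error converts to a uniform logarithmic error.

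The annulus bounds for $\supp\lambda_{t,x}$ follow from $|x-v|\asymp He$ on $\mathcal V_H$ and $C_H\asymp He^2$, together with ellipticity and $\ell/e\to1$. Uniformity holds because every constant used depends only on the reference data, $C_D$ and $\vartheta=1/6$, through \eqref{eq:initial-posterior-annulus}, \eqref{eq:restart-posterior-annulus}, Lemma~\ref{lem:long-tube} and the Gaussian expansion. Each of these is uniform over laws with common $C_D$, so in particular it is uniform in $y$ for point starts.

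I expect the main obstacle to be the uniform long-tube comparison when $t'$ lies slightly before $t$, that is $s<0$, or near $T$, since the tube length changes with the endpoint. I would handle this as Lemma~\ref{lem:natural-kernel-ratio} does. The restart time $u$ stays fixed, and $|t'-u|/(He^2)=1+O(L/H)$, so Lemma~\ref{lem:long-tube} applies to each endpoint separately and the two Gaussian expansions are compared directly. Care is needed that the relative error of Lemma~\ref{lem:long-tube} is $o(1)$ and not merely $O(1)$. This requires the sequential form with $h_n(E_{a,n}+\|b_n\|)\to0$, which is exactly where the choice $H=M^{1/6}$ (giving $eH^2\to0$) is used.
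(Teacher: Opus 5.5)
Your proposal is correct and follows the paper's proof essentially step for step. It uses the restarted posterior at $u=t-He^2$ with $H=M^{1/6}$ and the annulus bound \eqref{eq:restart-posterior-annulus} at both endpoints inside \eqref{eq:posterior-positive-integration}. It then applies the long-tube Gaussian comparison, justified by $eH^2\to0$, and the expansion \eqref{eq:long-gaussian-ratio} with only the terminal covariance increment frozen through $\omega_a$, before substituting to scale $\ell$. The one detail you leave implicit is that at the remaining centers, those with $M$ below the threshold or with $x\in S$, one simply assigns any fixed law in the common annulus, as the paper does.
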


\begin{proof}
Use the scales in \eqref{eq:tail-window-scales}.
Use \eqref{eq:posterior-phase-law} with
$M^{1/6}\le H\le2M^{1/6}$. The annulus is chosen independently
of the observation window.

The final physical interval $[t-He^2,t]$ becomes $[-H,0]$
in the time coordinate $s=(t'-t)/e^2$. At spatial scale $e$,
the coefficients have Lipschitz bound $L_ae$ and drift bound $B_be$.
On its tube the covariance differs from the spatially constant
matrix $a(t+e^2v,x)$ by $O(eH)$. Since
\begin{equation*}
eH^2\le4\sqrt T\,M^{-1/6}\to0,
\end{equation*}
Lemma~\ref{lem:long-tube} gives relative Gaussian comparison
uniformly over $\mathcal V_{H}$, also for terminal increments
$|s|+|z|\le B$, for every fixed $B$.
In \eqref{eq:long-gaussian-ratio} freeze only the terminal
covariance increment:
\begin{equation*}
\left|\int_0^s\{a(t+e^2v,x)-a(t,x)\}\,dv\right|
 \le B\,\omega_a(Be^2).
\end{equation*}
The Gaussian remainder is $O_B(H^{-1})$.
Use \eqref{eq:posterior-positive-integration} with $\nu=\mu_{t-He^2}$.
\eqref{eq:restart-posterior-annulus} bounds both discarded proportions
by $Ce^{-cH}$. It follows that
\begin{equation}
\begin{aligned}
\left|\log\frac{\rho(t+e^2s,x+ez)}{\rho(t,x)}
 -\log\int e^{2(s\xi^{\mathsf T}a(t,x)\xi-\xi\cdot z)}
                               \,\lambda_{t,x}^{H,e}(d\xi)\right|
 \le \varepsilon_{M,B}+C_B/H+C_B\omega_a(Be^2),
\end{aligned}
\label{eq:posterior-fixed-window}
\end{equation}
Here $\varepsilon_{M,B}$ can be chosen as a function of $M,B$
and the reference data and $C_D$ alone, tending to zero as
$M\to\infty$ for each fixed $B$; the bound is uniform in
$M^{1/6}\le H\le2M^{1/6}$.
Here $|s|+|z|\le B$ and $0<t+e^2s\le T$.
Substituting $((\ell/e)^2s,(\ell/e)z)$ for $(s,z)$ in
\eqref{eq:posterior-fixed-window} gives the density ratio at
scale $\ell$ with the posterior law $\lambda_{t,x}^{H,\ell}$.

This calculation uses only the restarted annulus bounds at
the two endpoints, $M=t/e^2\to\infty$ and the reference data.
Thus \eqref{eq:posterior-fixed-window}, at scale $e$, also holds
for every family covered by the last assertion of
Lemma~\ref{lem:posterior-annuli}.

For $\mathcal D$, define explicitly
$\lambda_{t,x}=\lambda_{t,x}^{M^{1/6},\ell}$ for sufficiently
large $M$; at the remaining centers choose any fixed law in
the common annulus. Equation~\eqref{eq:posterior-fixed-window}
proves \eqref{eq:posterior-phase-doubling} for every fixed $L$.
For point laws, $m_\mu(x)=1$ and $C_D=1$, so all bounds and
thresholds are uniform in the starting point and reference data.

We also retain the comparison with any
$M^{1/6}\le H\le2M^{1/6}$. At $s=0$, both laws approximate
the same normalized density, and their common compact support gives
\begin{equation}
\sup_{|z|\le B}\left|
 \int e^{-2\zeta\cdot z}\lambda_{t,x}(d\zeta)
 -\int e^{-2\zeta\cdot z}\lambda_{t,x}^{H,\ell}(d\zeta)\right|
 \longrightarrow0,\qquad B<\infty.
\label{eq:posterior-law-identification}
\end{equation}
Uniqueness of Laplace transforms identifies their weak limits
along every common coefficient subsequence.
\end{proof}

\begin{lemma}[Phase Hardy inequality]
\label{lem:phase-hardy}
Let $\lambda$ be a compactly supported probability measure with
$|\zeta|\ge m>0$ on its support. Then
\begin{equation}
m^2\|f\|_{L^2(W_\lambda^0)}^2
 \le\|Df\|_{L^2(W_\lambda^0)}^2,
\qquad f\in C_c^\infty(\R^d).
\label{eq:phase-hardy}
\end{equation}
For every locally finite smooth real partition with
$\sum_i\chi_i^2=1$,
\begin{equation}
\sum_i|D(\chi_i f)|^2
 =|Df|^2+\sum_i|D\chi_i|^2|f|^2.
\label{eq:square-gradient-identity}
\end{equation}
\end{lemma}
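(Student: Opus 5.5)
The Hardy inequality \eqref{eq:phase-hardy} will follow from an integration by parts against the exponential weight. The square-gradient identity is a direct expansion.

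For \eqref{eq:phase-hardy}, fix $\zeta\in\supp\lambda$ and work first with the single exponential $w_\zeta(z)=e^{-2\zeta\cdot z}$. Since $Dw_\zeta=-2\zeta w_\zeta$, the divergence theorem for $f\in C_c^\infty$ gives
\begin{equation*}
2|\zeta|^2\int|f|^2w_\zeta
 =-\int|f|^2\,\zeta\cdot Dw_\zeta
 =\int 2\operatorname{Re}(\bar f\,\zeta\cdot Df)\,w_\zeta .
\end{equation*}
Cauchy--Schwarz bounds the right side by $2|\zeta|\,\|f\|_{L^2(w_\zeta)}\|Df\|_{L^2(w_\zeta)}$. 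Dividing yields
\begin{equation*}
|\zeta|^2\|f\|_{L^2(w_\zeta)}^2\le\|Df\|_{L^2(w_\zeta)}^2,
\end{equation*}
and hence the same inequality with $|\zeta|^2$ replaced by $m^2$. Integrating this inequality in $\lambda(d\zeta)$ and using Tonelli, as in Lemma~\ref{lem:weight-operations}\ref{item:positive-mixtures}, gives \eqref{eq:phase-hardy} for $W_\lambda^0=\int w_\zeta\,\lambda(d\zeta)$. Compact support of $\lambda$ and of $f$ make every integral finite. If $f$ is complex, the same computation applies, or one can treat the real and imaginary parts separately.

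For \eqref{eq:square-gradient-identity}, expand pointwise. For real $\chi_i$ we have $|D(\chi_if)|^2=\chi_i^2|Df|^2+|D\chi_i|^2|f|^2+2\chi_i\operatorname{Re}(\bar f\,D\chi_i\cdot Df)$. Summing uses $\sum_i\chi_i^2=1$, and differentiating that identity gives $\sum_i\chi_iD\chi_i=0$, which removes the cross terms. Local finiteness makes every sum finite near each point.

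There is no real obstacle here. The only point needing care is the sign convention, namely that the weight is $e^{-2\zeta\cdot z}$ and not its reciprocal. With the opposite sign the computation is the same up to replacing $\zeta$ by $-\zeta$, so the constant $m^2$ does not depend on that convention.
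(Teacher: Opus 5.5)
Your proof is correct and follows the paper's argument: integrate by parts against the single exponential $e^{-2\zeta\cdot z}$ and then average over $\lambda$. The paper writes the first step as the exact identity $\int|Df|^2e^{-2\zeta\cdot z}\,dz=\int\bigl(|D(e^{-\zeta\cdot z}f)|^2+|\zeta|^2|e^{-\zeta\cdot z}f|^2\bigr)\,dz$ rather than using Cauchy--Schwarz, and it proves the square-gradient identity exactly as you do, by expanding and using $\sum_i\chi_iD\chi_i=0$.
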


\begin{proof}
For a single slope, integration by parts gives
\begin{equation*}
\int |Df|^2e^{-2\zeta\cdot z}\,dz
 =\int\bigl(|D(e^{-\zeta\cdot z}f)|^2
                  +|\zeta|^2|e^{-\zeta\cdot z}f|^2\bigr)\,dz.
\end{equation*}
Integrate this nonnegative identity against $\lambda$ to obtain
\eqref{eq:phase-hardy}. Expanding the squares and using
$\sum_i\chi_iD\chi_i=0$ proves \eqref{eq:square-gradient-identity}.
\end{proof}

\begin{lemma}[Hardy estimates for diffusion marginals]\label{lem:dynamic-hardy}
For $\mu\in\mathcal D$, with
$\ell_\mu$ given by \eqref{eq:doubling-tail-scale},
\begin{equation}
\|\ell_\mu(t,\cdot)^{-1}f\|_{L^2(\mu_t)}^2
 \le C\bigl(\|Df\|_{L^2(\mu_t)}^2+t^{-1}\|f\|_{L^2(\mu_t)}^2\bigr),
\qquad\forall\,0<t\le T,\quad
\forall f\text{ with }f,Df\in L^2(\mu_t).
\label{eq:doubling-hardy}
\end{equation}
Here $C$ depends only on $C_D$ and the reference data; for
$\mu=\delta_y$ it is independent of $y$.
\end{lemma}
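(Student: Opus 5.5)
The plan is to prove \eqref{eq:doubling-hardy} by splitting space at time $t$ into the region near the support and the tail region. We use a smooth square partition adapted to the scale $\ell_\mu(t,\cdot)$. Near the support we apply the trivial bound. In the tail we apply the phase Hardy inequality of Lemma~\ref{lem:phase-hardy}, transported to the actual density through the relative phase approximation of Lemma~\ref{lem:posterior-phase}.

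Fix a large $R$. In the region $\{\dist(x,S)\le R\sqrt t\}$ we have $\ell_\mu\ge t/((R+1)\sqrt t)$, so there $\ell_\mu^{-2}\le (R+1)^2t^{-1}$. This part is therefore bounded by $C_Rt^{-1}\|f\|^2_{L^2(\mu_t)}$. Also $\ell_\mu\le\sqrt t$ everywhere, so the problem lies only in the tail set $\{\dist(x,S)/\sqrt t\ge R\}$.

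On that set we cover $\R^d$ by a locally finite family of balls $B(x_i,L\ell_i)$ with $\ell_i=\ell_\mu(t,x_i)$. Because $\ell_\mu(t,\cdot)$ is Lipschitz with constant at most $\ell_\mu/(\dist+\sqrt t)\le \ell_\mu/(R\sqrt t)$, it varies by a factor at most $2$ on such balls once $R\ge CL$. This is the same admissibility check as \eqref{eq:column-scale-increments}. We take a spatial square partition $\sum\chi_i^2=1$ subordinate to these balls, with $|D\chi_i|\le C/(L\ell_\mu)$, together with a cutoff pair separating the near region as in Proposition~\ref{prop:uniform-model-estimates}.

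By \eqref{eq:square-gradient-identity}, $\sum_i\|D(\chi_if)\|^2=\|Df\|^2+\sum_i\||D\chi_i|f\|^2$. For each $i$ we rescale $z=(x-x_i)/\ell_i$ at the fixed time $t$, so $s=0$. Lemma~\ref{lem:posterior-phase} then makes $\rho(t,x_i+\ell_iz)/\rho(t,x_i)$ uniformly within a factor $e^{\pm\varepsilon}$ of $W^0_{\lambda_{t,x_i}}(z)$ on $|z|\le 2L$, once $R$ is large. Since $\supp\lambda_{t,x_i}\subset\{|\zeta|\ge m_{\mathrm{ph},\mu}\}$, \eqref{eq:phase-hardy} applied to $\chi_if$ in rescaled variables gives
\[
m_{\mathrm{ph},\mu}^2\|\ell_\mu^{-1}\chi_if\|^2_{L^2(\mu_t)}\le C\|D(\chi_if)\|^2_{L^2(\mu_t)}.
\]
Summing over $i$ yields
\[
\|\ell_\mu^{-1}f\|^2_{\mathrm{tail}}\le C\|Df\|^2+CL^{-2}\|\ell_\mu^{-1}f\|^2+\text{(near-region terms)}.
\]
Fixing $L$ large absorbs the middle term, provided the left-hand side is finite. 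We first handle $f\in C_c^\infty$, where finiteness is automatic because $\ell_\mu^{-1}$ is locally bounded, and then pass to general $f$ with $f,Df\in L^2(\mu_t)$ by cutoff and mollification, using Fatou.

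The step I expect to be the main obstacle is the bookkeeping of constants: $C$ must depend only on $C_D$ and the reference data. The threshold $R$ in Lemma~\ref{lem:posterior-phase} is uniform over laws with common $C_D$ and common reference data, and $m_{\mathrm{ph},\mu}$ has the same dependence. The near-region constant $(R+1)^2$ therefore also has that dependence, which gives the stated uniformity, including the case $\mu=\delta_y$.
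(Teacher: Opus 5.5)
Your proposal is correct and follows essentially the same route as the paper: a near/tail split at $\dist(x,S)\sim R\sqrt t$ with the trivial bound $\ell_\mu^{-2}\le C_R/t$ near $S$, a square partition at scale $\ell_\mu$ in the tail, the rescaled phase Hardy inequality \eqref{eq:phase-hardy} transported via Lemma~\ref{lem:posterior-phase} at $s=0$, summation with \eqref{eq:square-gradient-identity}, absorption, and extension by cutoff and local smoothing. One point should be stated explicitly, and the paper does so: the order of the parameter choices. Fix $L$ first, depending only on $d$ and $m_{\mathrm{ph},\mu}$, so that the absorbed $CL^{-2}$ term has a constant independent of $R$; only then take $R$ large enough for both the scale admissibility and the phase approximation on $|z|\le 2L$. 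Your opening ``Fix a large $R$'' reverses this order as written.
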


\begin{proof}
For $\mu\in\mathcal D$, \eqref{eq:posterior-phase-doubling} at
$s=0$ bounds the logarithmic error by one on
$B(x,L_{\mathrm{Hardy}}\ell_\mu(t,x))$ whenever
$\dist(x,S)/\sqrt t\ge R_{\mathrm{Hardy}}$. Choose $L_{\mathrm{Hardy}}$ large depending only on $d,m_{\mathrm{ph},\mu}$,
and then $R_{\mathrm{Hardy}}$ large.
Since $\dist(\cdot,S)$ is Lipschitz, $\ell_\mu(t,\cdot)$ is
comparable to its center value on these balls. Rescale
\eqref{eq:phase-hardy}, and sum it using
Lemma~\ref{lem:parabolic-partition} and
\eqref{eq:square-gradient-identity} outside
$\{x:\dist(x,S)\le R_{\mathrm{Hardy}}\sqrt t\}$.
On $\{x:\dist(x,S)\le2R_{\mathrm{Hardy}}\sqrt t\}$,
\begin{equation*}
\ell_\mu(t,x)^{-2}\le(1+2R_{\mathrm{Hardy}})^2/t,
\end{equation*}
and the joining cutoff can be chosen Lipschitz with
$|D\chi|\le C/(R_{\mathrm{Hardy}}\sqrt t)$ almost everywhere,
where $C$ is numerical.
The resulting estimate before absorption is
\begin{equation*}
\|\ell_\mu^{-1}f\|_{L^2(\mu_t)}^2
 \le C\|Df\|_{L^2(\mu_t)}^2
    +CL_{\mathrm{Hardy}}^{-2}\|\ell_\mu^{-1}f\|_{L^2(\mu_t)}^2
    +C(1+R_{\mathrm{Hardy}})^2t^{-1}\|f\|_{L^2(\mu_t)}^2.
\end{equation*}
Absorbing the middle term proves \eqref{eq:doubling-hardy}.
For point laws, the phase bounds and thresholds in
Lemma~\ref{lem:posterior-phase} are uniform after translating the
starting point, so the Hardy constant is independent of $y$.

Spatial cutoff and local smooth approximation extend the
estimate to every $f$ with $f,Df\in L^2(\mu_t)$.
\end{proof}

The fixed-window and Hardy estimates supply the density inputs
for the main localization theorem. To identify propagation
directions, we now compare different restart times and then use
growing windows.

Both the comparison of $H$ with $H/2$ and the growing windows below use a
spatially frozen kernel estimate. Fix $K\ge2$. If $H\ge1$,
$He\le(2K)^{-1}$, $0\le u<v\le T$,
$K^{-1}He^2\le v-u\le KHe^2$ and $y,z\in B(x,KHe)$, then
\begin{equation}
\left|\log\frac{k(u,y;v,z)}
 {\gamma_{\int_u^v a(s,x)\,ds}(z-y)}\right|
 \le C_K\{1+H^{1/3}+L_aeH^2\}=:E_{\mathrm{ker},H}.
\label{eq:spatial-frozen-kernel}
\end{equation}
Here $C_K$ depends only on the reference data and $K$.
Indeed, minimizers for $\mathcal A^{g,0}$ and for the spatially
constant covariance $a(s,x)$ satisfy
\begin{equation*}
\int_u^v|\dot\gamma|^2\,ds\le C_KH,\qquad
\sup_{u\le s\le v}|\gamma(s)-x|\le C_KHe,
\end{equation*}
by the straight-path energy bound and Cauchy--Schwarz.
Evaluating the two actions on each other's minimizers gives
\begin{equation*}
\left|\mathcal A^{g,0}(u,y;v,z)
 -\tfrac12(z-y)^{\mathsf T}
       \left(\int_u^v a(s,x)\,ds\right)^{-1}(z-y)\right|
 \le C_KL_aeH^2.
\end{equation*}
Apply
\eqref{eq:kernel-action} on the physical interval $[u,v]$
with $D_0=1$ and $|z-y|^2/(v-u)\le C_KH$.
Ellipticity bounds the Gaussian determinant factor, proving
\eqref{eq:spatial-frozen-kernel}.

\begin{lemma}[Restart scale comparison]
\label{lem:adjacent-bridge-ages}
Fix $(t,x)$, $e>0$, and $P>0$. There is $\varepsilon_*(P)>0$ such that,
for $H\ge1$, $He^2\le t$, $He\le\varepsilon_*(P)$, and
$|w-x|\le PHe$, the bridge from $(t-He^2,w)$ to $(t,x)$ satisfies
\begin{equation}
\mathbb P\{|\pi_{H/2}-\pi_{H}|>\varepsilon_H\mid X_{t-He^2}=w,X_t=x\}
 \le Ce^{-cH^{2/3}}, \qquad
\varepsilon_H=C\{\sqrt{L_aeH}+H^{-1/6}\}.
\label{eq:adjacent-bridge-ages}
\end{equation}
The constants and $\varepsilon_*(P)$ depend only on the reference
data and $P$.
For fixed $P_{0}>0$, let $H_j=2^{-j}H_0$ and $H_{\mathrm{lo}}=H_J\ge1$.
There is $\varepsilon_{**}(P_{0})>0$ such that, if
$H_0e^2\le t$ and
$H_0e+\sqrt{L_aeH_0}+H_{\mathrm{lo}}^{-1/6}\le\varepsilon_{**}(P_{0})$,
then
\begin{equation}
\sup_{|w-x|\le P_{0}H_0e}
\mathbb P_{t-H_0e^2,w;t,x}
 \left\{|\pi_{H_{\mathrm{lo}}}-\pi_{H_0}|>
       C\bigl(\sqrt{L_aeH_0}+H_{\mathrm{lo}}^{-1/6}\bigr)\right\}
 \le Ce^{-cH_{\mathrm{lo}}^{2/3}}.
\label{eq:bridge-covector-iteration}
\end{equation}
For each such bridge, outside the exceptional event in
\eqref{eq:bridge-covector-iteration},
\begin{equation}
|V_{H_{\mathrm{lo}}}-V_{H_0}|
 \le C\{\sqrt{L_aeH_0}+\omega_a(e^2H_0)+H_{\mathrm{lo}}^{-1/6}\}.
\label{eq:bridge-vector-iteration}
\end{equation}
For any earlier initial law, integration against the posterior of
$X_{t-H_0e^2}$ given $X_t=x$ adds at most
$\mathbb P(|V_{H_0}|>P_{0}\mid X_t=x)$ to the exceptional probability.
The constants in \eqref{eq:bridge-covector-iteration} and
\eqref{eq:bridge-vector-iteration}, and $\varepsilon_{**}(P_0)$,
depend only on the reference data and $P_0$.
\end{lemma}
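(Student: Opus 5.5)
The first estimate \eqref{eq:adjacent-bridge-ages} is a statement about a single bridge of duration $He^2$ from $(t-He^2,w)$ to $(t,x)$, with displacement $O(He)$. I would rescale by $(s,z)\mapsto(t+e^2s,x+ez)$, so that the bridge has duration $H$, displacement $|w-x|/e\le PH$, covariance Lipschitz bound $L_ae$ and drift bound $B_be$. The rescaled problem then has action of order $H$.

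The plan is to condition at the midpoint time $t-He^2/2$ using the bridge kernel law \eqref{eq:bridge-kernel-law}. The density of $X_{t-He^2/2}=v$ is the product of two kernels divided by $k(t-He^2,w;t,x)$. All three kernels live on intervals of length comparable to $He^2$, with endpoints in $B(x,KHe)$ for fixed $K$. Hence the spatially frozen comparison \eqref{eq:spatial-frozen-kernel} replaces each of them by the Gaussian with covariance $\int a(s,x)\,ds$, at log-cost $E_{\mathrm{ker},H}\le C(1+H^{1/3}+L_aeH^2)$. First I need confinement: the endpoints must lie in $B(x,KHe)$ outside an event of probability $Ce^{-cH}$. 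This follows from the bridge annulus argument of Lemma~\ref{lem:bridge-annulus}, or directly from Gaussian tails, once $He\le\varepsilon_*(P)$.

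For the frozen Gaussian bridge, the midpoint is Gaussian. Write $C_H$ and $C_{H/2}$ as in \eqref{eq:bridge-phase-vectors}. The exact Gaussian bridge mean gives $\pi_{H/2}=\pi_H$, because the frozen covariance is spatially constant, so the momentum is constant. The Gaussian fluctuation of $\pi_{H/2}$ has size $e\,C_{H/2}^{-1}\cdot e\sqrt H\sim H^{-1/2}$. The actual midpoint density differs from the Gaussian one by a factor $e^{O(E_{\mathrm{ker},H})}$. A deviation $|\pi_{H/2}-\pi_H|>\varepsilon$ costs Gaussian exponent $cH\varepsilon^2$. Taking $\varepsilon^2H\gtrsim E_{\mathrm{ker},H}+H^{2/3}$ gives probability $Ce^{-cH^{2/3}}$, provided $\varepsilon\ge C(H^{-1/6}+\sqrt{L_aeH})$. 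This matches $\varepsilon_H$, since $L_aeH^2/H=L_aeH$ and $H^{1/3}/H\le H^{-2/3}\le H^{-1/3}$.

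For \eqref{eq:bridge-covector-iteration}, I would iterate over the dyadic ages $H_j$, using the Markov property of the bridge. Conditionally on $X_{t-H_je^2}$, the remaining segment is again a bridge to $(t,x)$. On the good events, the new displacement is bounded by $P_jH_je$ with $P_j\le 2P_0+1$, because $\pi$ stays close to $\pi_{H_0}$. So \eqref{eq:adjacent-bridge-ages} applies at each step with a uniform $P$, and the smallness $H_0e\le\varepsilon_{**}$ keeps every $H_je\le\varepsilon_*$. Summing the errors gives $\sum_j(\sqrt{L_aeH_j}+H_j^{-1/6})\le C(\sqrt{L_aeH_0}+H_{\mathrm{lo}}^{-1/6})$ as geometric series, dominated at the top and bottom respectively. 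The exceptional probabilities sum to $\sum_je^{-cH_j^{2/3}}\le Ce^{-cH_{\mathrm{lo}}^{2/3}}$.

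For \eqref{eq:bridge-vector-iteration}, I would use $V_H=\overline a_H\pi_H$. The difference $\overline a_{H_{\mathrm{lo}}}-\overline a_{H_0}$ is at most $\omega_a(e^2H_0)$, and $|\pi|\le C$ on the good event. The final remark, about an arbitrary earlier initial law, follows by integrating the bridge estimate against the posterior of $X_{t-H_0e^2}$ and discarding $\{|V_{H_0}|>P_0\}$.

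The main obstacle is the midpoint step. The multiplicative error $e^{E_{\mathrm{ker},H}}$ is large, not close to one. I handle it by exploiting the Gaussian concentration exponent $cH\varepsilon^2$, which beats $E_{\mathrm{ker},H}$. The normalization by $k(t-He^2,w;t,x)$ must also be frozen with a consistent error; this works because all three kernels are frozen about the same point $x$.
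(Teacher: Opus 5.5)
Your proposal is correct and follows the paper's proof essentially step for step: midpoint conditioning through the bridge kernel law, freezing all three kernels (including the normalizing one) with \eqref{eq:spatial-frozen-kernel} at cost $e^{3E_{\mathrm{ker},H}}$, Gaussian-tail confinement to $B(x,KHe)$, and a conditional covariance $\asymp H^{-1}$ for $\pi_{H/2}$ whose tail $e^{-cH\varepsilon^2}$ beats that cost. The iteration then uses the bridge Markov property with a uniform displacement bound and geometric sums, followed by $V_H=\overline a_H\pi_H$ and posterior integration. The one imprecision is the displacement constant. From $\pi$ staying near $\pi_{H_0}$ you naturally get $|V_{H_j}|\le\Lambda_a(P_0/\lambda_a+1)$, which the paper enforces with an extra stopping rule. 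Your $2P_0+1$ holds only after also using that $\omega_a(e^2H_0)\le\omega_a(\varepsilon_{**}^2)$ is small.
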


\begin{proof}
Put $u=t-He^2$ and $m=t-He^2/2$.
For the Gaussian bridge with covariance $a(s,x)$, conditional
Gaussian integration gives
\begin{equation*}
\begin{aligned}
\mathbb E_G[x-X_{t-He^2/2}\mid w,x]
 &=C_{H/2}C_H^{-1}(x-w),\\
\mathbb E_G[\pi_{H/2}\mid w,x]=\pi_{H},\qquad
\operatorname{Cov}_G(\pi_{H/2}&\mid w,x)
 =e^2(C_{H/2}^{-1}-C_H^{-1})\asymp H^{-1}I_d.
\end{aligned}
\end{equation*}
Thus $|\pi_{H/2}-\pi_{H}|>\varepsilon$ has Gaussian probability at most
$Ce^{-cH\varepsilon^2}$. For $z\in B(x,KHe)$ and $K\ge\max\{2,P\}$,
\eqref{eq:spatial-frozen-kernel} gives the bridge density bound
\begin{equation*}
\frac{k(u,w;m,z)k(m,z;t,x)}{k(u,w;t,x)}
 \le e^{3E_{\mathrm{ker},H}}
 \frac{\gamma_{C_H-C_{H/2}}(z-w)\gamma_{C_{H/2}}(x-z)}
      {\gamma_{C_H}(x-w)}.
\end{equation*}
Choose the constant in $\varepsilon_H$ large enough that
$cH\varepsilon_H^2-3E_{\mathrm{ker},H}\ge cH^{2/3}$.
By \eqref{eq:bridge-kernel-law} and \eqref{eq:kernel-gaussian},
\begin{equation*}
\mathbb P_{t-He^2,w;t,x}\{|X_{t-He^2/2}-x|>KHe\}
 \le C\exp(CP^2H-cK^2H).
\end{equation*}
Choose $K=K(P)$ so that this is at most $Ce^{-cH}$, and
take $\varepsilon_*(P)\le(2K)^{-1}$. This proves
\eqref{eq:adjacent-bridge-ages}.

For the iteration, fix $|w-x|\le P_{0}H_0e$ and work under
$\mathbb P_{t-H_0e^2,w;t,x}$.
Ellipticity gives $|\pi_{H_0}|\le P_{0}/\lambda_a$.
In the iteration, stop also on first reaching
$|\pi_{H}|>P_{0}/\lambda_a+1$. The errors satisfy
\begin{equation*}
\sum_{j<J}\{\sqrt{L_aeH_j}+H_j^{-1/6}\}
 \le C\{\sqrt{L_aeH_0}+H_{\mathrm{lo}}^{-1/6}\}.
\end{equation*}
Before the additional stopping boundary is reached,
$|V_{H_j}|\le\Lambda_a(P_{0}/\lambda_a+1)=:P$, uniformly in $j$.
Fix the corresponding $K=K(P)$ and choose $\varepsilon_{**}(P_{0})$ so
that the sum is at most $1/2$ and
$H_0e\le\min\{\varepsilon_*(P),(2K)^{-1}\}$.
Then \eqref{eq:spatial-frozen-kernel} applies at every
$H_j$, $0\le j<J$.
Then the vector cannot reach the additional
stopping boundary. The exceptional probabilities sum to
$Ce^{-cH_{\mathrm{lo}}^{2/3}}$, proving \eqref{eq:bridge-covector-iteration}.
Finally,
\begin{equation*}
V_{H_{\mathrm{lo}}}-V_{H_0}=\overline a_{H_{\mathrm{lo}}}(\pi_{H_{\mathrm{lo}}}-\pi_{H_0})
 +(\overline a_{H_{\mathrm{lo}}}-\overline a_{H_0})\pi_{H_0},\qquad
|\overline a_{H_{\mathrm{lo}}}-\overline a_{H_0}|\le2\omega_a(e^2H_0),
\end{equation*}
which gives \eqref{eq:bridge-vector-iteration}. Both bounds are
uniform in $w$ in the stated ball; posterior integration gives
the last assertion.
\end{proof}

\begin{corollary}[Restart invariance of phase limits]
\label{cor:restart-phase}\label{cor:local-phase-vector}
Let $\mu\in\mathcal D$, and consider a sequence
$(t_j,x_j)$ with $M_j\to\infty$ in \eqref{eq:tail-window-scales}.
Write $e_j,M_j$ for its scales and $\ell_j=\ell_\mu(t_j,x_j)$.
Fix $1/6\le\vartheta<1/2$, choose
$M_j^\vartheta\le H_j\le2M_j^\vartheta$, and put
$C_{H_j}=\int_{t_j-H_je_j^2}^{t_j}a(s,x_j)\,ds$.
Under the full posterior conditioned on $X_{t_j}=x_j$,
the laws of
\begin{equation}
\zeta_{H_j}=\tfrac{\ell_j}{2}C_{H_j}^{-1}
                     (x_j-X_{t_j-H_je_j^2})
 =\frac{\ell_j/e_j}{2}
       \left(\frac{C_{H_j}}{H_je_j^2}\right)^{-1}V_{H_j}
\label{eq:local-phase-vector}
\end{equation}
have the same joint weak limits with the coefficients as the
phase laws in Lemma~\ref{lem:posterior-phase}: explicitly,
\begin{equation*}
(\lambda_{t_j,x_j},a(t_j,x_j),A(t_j,x_j),q(t_j,x_j))
 \longrightarrow(\lambda_*,a_*,A_*,q_*)
\Longrightarrow\quad \operatorname{Law}(\zeta_{H_j})
 \rightharpoonup\lambda_* .
\end{equation*}
More precisely, for every $\varepsilon>0$ there is $M_0>0$,
depending only on the reference data, $C_D,\vartheta,\varepsilon$,
such that
\begin{equation}
\sup_{\substack{\|f\|_\infty\le1\\\operatorname{Lip}(f)\le1}}
\left|\int f\,d\operatorname{Law}(\zeta_{H_j})
             -\int f\,d\lambda_{t_j,x_j}\right|
\le\varepsilon
\qquad\text{whenever }M_j\ge M_0.
\label{eq:restart-phase-uniform}
\end{equation}
Also,
\begin{equation*}
\left|\frac{C_{H_j}}{H_je_j^2}-a(t_j,x_j)\right|
\le\omega_a(2TM_j^{\vartheta-1})\longrightarrow0.
\end{equation*}
\end{corollary}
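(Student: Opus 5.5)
The plan is to compare $\operatorname{Law}(\zeta_{H_j})$ with the law defining $\lambda_{t_j,x_j}=\lambda^{M_j^{1/6},\ell_j}_{t_j,x_j}$ by moving between restart ages through the bridge iteration of Lemma~\ref{lem:adjacent-bridge-ages}. Put $H_0=H_j$ and descend dyadically, $H_i=2^{-i}H_0$, stopping at the first $H_{\mathrm{lo}}=H_J$ in $[M_j^{1/6},2M_j^{1/6})$. This is possible because $\vartheta\ge1/6$. The Lipschitz test functions in \eqref{eq:restart-phase-uniform} are controlled by convergence in probability plus a total variation error. It therefore suffices to show three things: $\zeta_{H_{\mathrm{lo}}}-\zeta_{H_0}\to0$ in posterior probability, uniformly; the law of $\zeta_{H_{\mathrm{lo}}}$ is within $o(1)$ of $\lambda_{t_j,x_j}$ in bounded-Lipschitz distance; and both conditional laws sit in a common compact annulus.

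The first ingredient is to place the posterior of $X_{t-H_0e^2}$ in a good annulus. Lemma~\ref{lem:posterior-annuli}, \eqref{eq:restart-posterior-annulus}, applied with $\vartheta$, gives $|V_{H_0}|\le C$ outside posterior mass $Ce^{-cH_0}$. Take $P_0$ equal to that constant. Next I check the smallness hypotheses of \eqref{eq:bridge-covector-iteration}. First, $H_0e^2\le 2M^\vartheta t/M\le t$ for large $M$. Second, $H_0e\le 2M^{\vartheta}\sqrt{t/M}\le 2\sqrt T M^{\vartheta-1/2}\to0$, which uses $\vartheta<1/2$. Consequently $\sqrt{L_aeH_0}\to0$. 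Third, $H_{\mathrm{lo}}^{-1/6}\le M^{-1/36}\to0$. The posterior-integration clause of Lemma~\ref{lem:adjacent-bridge-ages} then shows that $|\pi_{H_{\mathrm{lo}}}-\pi_{H_0}|\le C(\sqrt{L_aeH_0}+H_{\mathrm{lo}}^{-1/6})$ outside probability $Ce^{-cH_{\mathrm{lo}}^{2/3}}+Ce^{-cH_0}$. All these quantities depend only on $M$ and the reference data. Since $\zeta_H=(\ell/e)\pi_H/2$ and $\ell/e\le1$, the same bound holds for the $\zeta$ vectors.

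The second ingredient identifies the law of $\zeta_{H_{\mathrm{lo}}}$ under the full posterior. By definition \eqref{eq:posterior-phase-law}, this is $\lambda^{H_{\mathrm{lo}},\ell}_{t,x}$ up to total variation at most twice $\Pi(\mathcal V_{H_{\mathrm{lo}}}^c)\le Ce^{-cH_{\mathrm{lo}}}$. The remaining step, which I expect to be the main obstacle, is to compare $\lambda^{H,\ell}_{t,x}$ for $H\in[M^{1/6},2M^{1/6})$ with the fixed choice $H=M^{1/6}$ quantitatively and uniformly, rather than only along subsequences. I would use \eqref{eq:posterior-law-identification}: both laws lie in a common compact annulus, and their exponential transforms $z\mapsto\int e^{-2\zeta\cdot z}\,d\lambda$ agree up to $o(1)$ uniformly on $|z|\le B$, with an error depending only on $M,B$ and the data, by \eqref{eq:posterior-fixed-window}. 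A compactness-contradiction argument then converts this into uniform bounded-Lipschitz closeness. Suppose pairs of laws with transform distance tending to zero stayed $\varepsilon$ apart; extracting weak limits on the compact annulus would give two distinct measures with equal Laplace transforms, which is impossible. This yields \eqref{eq:restart-phase-uniform} with $M_0$ depending only on the reference data, $C_D$, $\vartheta$ and $\varepsilon$.

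Finally, the joint weak limit follows from \eqref{eq:restart-phase-uniform}: if $\lambda_{t_j,x_j}\rightharpoonup\lambda_*$, then $\operatorname{Law}(\zeta_{H_j})\rightharpoonup\lambda_*$, and the coefficients converge along the same subsequence. The covariance estimate is direct. Since $C_{H_j}/(H_je_j^2)$ averages $a(s,x_j)$ over an interval of length $H_je_j^2\le 2M^\vartheta t/M\le 2TM^{\vartheta-1}$, the time modulus gives the deviation bound $\omega_a(2TM_j^{\vartheta-1})$. The second form of \eqref{eq:local-phase-vector} is algebra from \eqref{eq:bridge-phase-vectors}.
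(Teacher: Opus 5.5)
Your proposal is correct and follows the paper's proof essentially step for step. You use dyadic descent from $H_j$ to $H_{\mathrm{lo}}\in[M_j^{1/6},2M_j^{1/6})$, the bridge iteration \eqref{eq:bridge-covector-iteration} integrated against the restarted posterior annulus, and the total-variation identification of $\operatorname{Law}(\zeta_{H_{\mathrm{lo}}})$ with $\lambda^{H_{\mathrm{lo}},\ell}_{t_j,x_j}$. You then upgrade the uniform transform estimate \eqref{eq:posterior-law-identification} to uniform bounded-Lipschitz closeness by compactness on the common annulus and uniqueness of Laplace transforms, exactly as the paper does.
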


\begin{proof}
Choose $H_{\mathrm{lo},j}=2^{-k_j}H_j\in[M_j^{1/6},2M_j^{1/6})$.
\eqref{eq:posterior-law-identification} applies at $H_{\mathrm{lo},j}$.
Its convergence is uniform over these ages and over coefficients
and initial laws with common reference data and $C_D$, by
\eqref{eq:posterior-fixed-window}. Compactness of probability
measures on the common annulus and uniqueness of Laplace transforms
give the same uniform convergence when tested against
$\|f\|_\infty\le1$, $\operatorname{Lip}(f)\le1$.
By \eqref{eq:restart-posterior-annulus},
\begin{equation*}
\|\operatorname{Law}(\zeta_{H_{\mathrm{lo},j}})-\lambda_{t_j,x_j}^{H_{\mathrm{lo},j},\ell}\|_{\rm TV}
 \le Ce^{-cH_{\mathrm{lo},j}}.
\end{equation*}
Consequently $\operatorname{Law}(\zeta_{H_{\mathrm{lo},j}})\rightharpoonup\lambda_*$
on every coefficient subsequence in the statement.

Lemma~\ref{lem:posterior-annuli} bounds $|V_{H_j}|$ by a common
constant except for posterior probability $Ce^{-cH_j}$.
Since
\begin{equation*}
e_jH_j\le2\sqrt T M_j^{\vartheta-1/2}\to0,\qquad
e_j^2H_j\le2T M_j^{\vartheta-1}\to0,
\end{equation*}
\eqref{eq:bridge-covector-iteration}, conditioned at
$H_j$ and integrated against this posterior, gives
\begin{equation*}
|\pi_{H_{\mathrm{lo},j}}-\pi_{H_j}|
 \le C\{\sqrt{L_ae_jH_j}+H_{\mathrm{lo},j}^{-1/6}\}
 \longrightarrow0
\end{equation*}
outside an event of probability at most $Ce^{-cH_{\mathrm{lo},j}^{2/3}}$.
Since $\zeta_{h}=(\ell_j/e_j)\pi_{h}/2$ and
$\ell_j/e_j=(1+M_j^{-1/2})^{-1}\le1$,
we obtain $\zeta_{H_j}-\zeta_{H_{\mathrm{lo},j}}\to0$ in posterior probability.
For the same bounded Lipschitz tests, their expectation difference
is at most
\begin{equation*}
C\{\sqrt{L_ae_jH_j}+H_{\mathrm{lo},j}^{-1/6}\}
 +Ce^{-cH_{\mathrm{lo},j}^{2/3}}.
\end{equation*}
Here $C,c$ and the lower threshold for $M_j$ depend only on
the reference data, $C_D,\vartheta$.
Combining this bound with the preceding law comparisons proves
\eqref{eq:restart-phase-uniform} and the asserted equality of weak limits.
Finally,
\begin{equation*}
\left|\frac{C_{H_j}}{H_je_j^2}-a(t_j,x_j)\right|
 \le\omega_a(H_je_j^2)\longrightarrow0 .
\end{equation*}
\end{proof}

\begin{lemma}[Density ratios on growing windows]
\label{lem:growing-windows}
Let $\mu\in\mathcal D$. With $M\to\infty$ in
\eqref{eq:tail-window-scales}, take one of the two scale pairs
\begin{equation}
(H,L)=\bigl(M^{1/6},\lceil M^{1/12}\rceil\bigr)
\quad\text{or}\quad
(H,L)=\bigl(M^{3/8},\lceil M^{17/48}\rceil\bigr).
\label{eq:growing-window-scales}
\end{equation}
For each pair there are probability measures $\lambda_{t,x}^{H,e}$
on a common nonzero compact annulus such that
\begin{equation}
\begin{aligned}
\frac1{2L}\log\frac{\rho(t+Le^2s,x+Lez)}{\rho(t,x)}
={}&\frac1{2L}\log\int
 e^{2L(s\xi^{\mathsf T}a(t,x)\xi-\xi\cdot z)}
 \,\lambda_{t,x}^{H,e}(d\xi)\\
&{}+O\bigl(M^{-1/48}+\omega_a(2BTM^{-31/48})\bigr).
\end{aligned}
\label{eq:growing-window}
\end{equation}
For every fixed $B>0$, the estimate holds for all sufficiently
large $M$, with constant and threshold depending only on the
reference data, $C_D$ and $B$, uniformly for $s\le0$,
$|s|+|z|\le B$ and $0<t+Le^2s\le T$.
The laws are chosen before $B$, and have the same joint
weak limits with the coefficients as the actual phase laws
in Lemma~\ref{lem:posterior-phase}.
\end{lemma}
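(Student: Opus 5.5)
The plan is to repeat the proof of Lemma~\ref{lem:posterior-phase}, but at spatial scale $Le$ instead of $e$ and with a restart age $H$ much larger than $L$. Set $u=t-He^2$ and use the restarted posterior \eqref{eq:true-restart-posterior} with the annulus $\mathcal V_H$ from \eqref{eq:restart-posterior-annulus}. The endpoint perturbations are $|x'-x|\le BLe$ and $|t'-t|\le BLe^2$. For both scale pairs we have $BL/H\to0$ (exponent gaps $1/12$ and $1/48$), so the hypothesis $BL/H\le\varepsilon_1$ holds for large $M$. Both discarded proportions are then at most $Ce^{-cH}$, which is negligible after division by $2L$. Apply the exact identity \eqref{eq:posterior-positive-integration} with $\nu=\mu_u$ and $E=\mathcal V_H$. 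This reduces the claim to estimating the kernel ratio
\[
\frac{k(u,v;t+Le^2s,x+Lez)}{k(u,v;t,x)}, \qquad v\in\mathcal V_H ,
\]
uniformly in $v$. I define $\lambda^{H,e}_{t,x}$ as in \eqref{eq:posterior-phase-law}. It lies in a common annulus, and it is fixed before $B$ because it depends only on $(t,x)$ and $H$.

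For the kernel ratio I would use the spatially frozen estimate \eqref{eq:spatial-frozen-kernel} rather than Lemma~\ref{lem:long-tube}. The time lengths are $He^2$ and $He^2+Le^2s$, which are comparable, and the points lie in $B(x,KHe)$. Each kernel is therefore a Gaussian with covariance $\int a(\cdot,x)$, up to a log error $E_{\mathrm{ker},H}=C(1+H^{1/3}+L_aeH^2)$. The Gaussian expansion \eqref{eq:long-gaussian-ratio}, with $J=BL$, gives the exponent
\[
2L\bigl(s\,\xi^{\mathsf T}\overline{a}\,\xi-\xi\cdot z\bigr)+O(L^2/H),
\]
where $\xi=\pi_H/2$ and $\overline a$ averages $a(\cdot,x)$ over the window. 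Replacing $\overline a$ by $a(t,x)$ costs $L\,\omega_a(BLe^2)$. After division by $2L$, the errors are
\[
\frac{H^{1/3}}{L},\qquad \frac{eH^2}{L},\qquad \frac{L}{H},\qquad \omega_a(BLe^2).
\]
I will check the exponents. For the pair $(M^{1/6},M^{1/12})$, using $e\le\sqrt T M^{-1/2}$: $H^{1/3}/L=M^{1/18-1/12}=M^{-1/36}$, $eH^2/L\lesssim M^{-1/2+1/3-1/12}$, and $L/H=M^{-1/12}$, so all are at most $M^{-1/48}$. For the pair $(M^{3/8},M^{17/48})$: $H^{1/3}/L=M^{1/8-17/48}$, $L/H=M^{-1/48}$, and $eH^2/L=M^{-1/2+3/4-17/48}=M^{-5/48}$. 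Finally $Le^2\le TM^{17/48-1}=TM^{-31/48}$, which gives the modulus term $\omega_a(2BTM^{-31/48})$.

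The main obstacle is that \eqref{eq:spatial-frozen-kernel} has an additive error $E_{\mathrm{ker},H}$ of size $H^{1/3}$, which is not small. A ratio of two such bounds therefore does not cancel it. I expect this to be harmless because the error is divided by $2L$ and $H^{1/3}\ll L$ for both pairs, which is exactly how the exponents were chosen. The ratio inside the integral still requires some care: the $O(E/L)$ error is uniform in $v$, so it factors out of the $\log\int$.

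The identification of weak limits comes from Corollary~\ref{cor:restart-phase}, or from comparison with $H'=M^{1/6}$ through \eqref{eq:bridge-covector-iteration}. These show that $\pi_H/2$ and $\pi_{H_{\mathrm{lo}}}/2$ agree in posterior probability. For the pair with $\vartheta=3/8<1/2$ the condition $eH\to0$ holds, since $eH\le\sqrt T M^{-1/8}$. Together with $\ell/e\to1$, this gives the same joint limits as $\lambda_{t,x}$.
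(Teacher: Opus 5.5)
Your proposal is correct and follows the paper's proof essentially step for step. Both arguments restart at $u=t-He^2$ with the annulus $\mathcal V_H$ and use the identity \eqref{eq:posterior-positive-integration}. Both compare kernels through the spatially frozen bound \eqref{eq:spatial-frozen-kernel} together with \eqref{eq:long-gaussian-ratio}, absorb the $H^{1/3}$ kernel error by dividing by $2L$, and identify the weak limits through Corollary~\ref{cor:restart-phase} with $\vartheta=1/6$ or $3/8$. Your error terms $(1+H^{1/3}+L_aeH^2)/L$, $L/H$ and $\omega_a(BLe^2)$, and the exponents you compute for them, match the paper's bound \eqref{eq:growing-window-error}.
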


\begin{proof}
Put $u=t-He^2$ and use $\lambda_{t,x}^{H,e}$ from
\eqref{eq:posterior-phase-law}. Ellipticity puts the laws for both scale
pairs in one nonzero compact annulus, independently of $B$.
Take $\Sigma_H(s)=\int_{-H}^s a(t+e^2v,x)\,dv$ in
\eqref{eq:long-gaussian-ratio}; the physical covariance is
\begin{equation*}
\int_u^{t+Le^2s}a(v,x)\,dv=e^2\Sigma_H(Ls),\qquad
C_H=e^2\Sigma_H(0).
\end{equation*}
For $|s|+|z|\le B$, the kernel from $(u,v)$ to
$(t+Le^2s,x+Lez)$ has duration
$t+Le^2s-u=(H+Ls)e^2\asymp He^2$ and both
endpoints in $B(x,KHe)$ whenever $v\in\mathcal V_{H}$.
Indeed, $L/H\to0$ and $eH\le\sqrt T\,H/\sqrt M\to0$.
Equation~\eqref{eq:spatial-frozen-kernel} therefore compares it
with $\gamma_{e^2\Sigma_H(Ls)}(x+Lez-v)$, with logarithmic
error at most $E_{\mathrm{ker},H}=C(1+H^{1/3}+L_aeH^2)$.

Use \eqref{eq:long-gaussian-ratio} with increments $(Ls,Lz)$
in coordinates of scale $(e^2,e)$. Freezing only the terminal
time increment gives error at most
$C\{L^2/H+L\omega_a(BLe^2)\}$.
\eqref{eq:restart-posterior-annulus} bounds the discarded proportions
at both endpoints by $Ce^{-cH}$, with the same annulus and
restart time. Equation~\eqref{eq:posterior-positive-integration}
then gives
\begin{equation}
\begin{aligned}
&\left|\frac1{2L}\log\frac{\rho(t+Le^2s,x+Lez)}{\rho(t,x)}
 -\frac1{2L}\log\int
 e^{2L(s\xi^{\mathsf T}a(t,x)\xi-\xi\cdot z)}\lambda_{t,x}^{H,e}(d\xi)\right|\\
&\qquad\le
C\left\{\frac{1+H^{1/3}+L_aeH^2}{L}
              +\frac LH+\omega_a(BLe^2)\right\}.
\end{aligned}
\label{eq:growing-window-error}
\end{equation}
Using $e\le\sqrt T\,M^{-1/2}$, the algebraic terms on the
right-hand side of \eqref{eq:growing-window-error} are
$O(M^{-1/36})$ for the first scale pair in
\eqref{eq:growing-window-scales} and $O(M^{-1/48})$ for the second.
Both pairs also satisfy
\begin{equation*}
\begin{gathered}
L/M\longrightarrow0,\qquad
Le\le\sqrt T\,LM^{-1/2}\longrightarrow0,\qquad
Le^2\le TL/M\longrightarrow0.
\end{gathered}
\end{equation*}
Since $L\le2M^{17/48}$ for either pair and $M\ge1$,
$\omega_a(BLe^2)\le\omega_a(2BTM^{-31/48})$.
This proves the stated bound and gives logarithmic density
error $o(L)$.
On fixed windows, \eqref{eq:posterior-fixed-window} gives a
multiplicative comparison factor $1+o(1)$.
The second pair also satisfies $M^{1/3}/L=O(M^{-1/48})\to0$,
so it also absorbs the geometric cost remainder in
Proposition~\ref{prop:geometric-marginals}.

Finally, Corollary~\ref{cor:restart-phase} applies with
$\vartheta=1/6$ or $3/8$. Removing the annulus conditioning
costs at most $Ce^{-cH}$ in total variation, and $\ell/e\to1$.
It follows that $\lambda_{t,x}^{H,e}$ has the same joint weak limits
with the coefficients as the fixed-window phase laws.
\end{proof}

We next detect phase vectors directly from logarithmic density
increments. With either pair $(H,L)$ in
\eqref{eq:growing-window-scales}, set
\begin{equation}
t^-=t-Le^2,\qquad x^-=x-2Le\,a(t,x)\zeta,\qquad
\mathfrak d_{\rho,L}(t,x,\zeta)
 =|\zeta|_{a(t,x)}^2
  -\frac1{2L}\log\frac{\rho(t^-,x^-)}{\rho(t,x)}.
\label{eq:density-phase-defect}
\end{equation}
We use this expression when $t^->0$ and $x^-\notin S$.
For each fixed $N\ge2$ and $N^{-1}\le|\zeta|\le N$,
the endpoints are defined for all sufficiently large $M$:
uniformly in \eqref{eq:tail-window-scales},
\begin{equation}
\frac{t^-}{t}=1-\frac LM\to1,\qquad
|x^--x|\le2\Lambda_aNL e\to0,\qquad
\frac{|x^--x|}{D(x)}\le2\Lambda_aN\frac LM\to0.
\label{eq:density-phase-endpoints}
\end{equation}
Here $Le\to0$ follows from $e\le\sqrt T\,M^{-1/2}$ and
either prescribed value of $L$.

\begin{lemma}[Phase detection and recovery]
\label{lem:phase-recovery}
Let $\mu\in\mathcal D$, and let $(t_j,x_j)$ satisfy $M_j\to\infty$ in
\eqref{eq:tail-window-scales}, with
$(a,A,q)(t_j,x_j)\to(a_*,A_*,q_*)$, and fix either pair
$(H,L)$ in \eqref{eq:growing-window-scales}.
Write $e_j=e(t_j,x_j)$ and $L_j=L(t_j,x_j)$.
\begin{enumerate}
\item\label{item:density-phase-detection}
If these centers realize a phase law $\lambda_*$ at the
natural scales, then
$\mathfrak d_{\rho,L}(t_j,x_j,\zeta_*)\to0$ for every
$\zeta_*\in\supp\lambda_*$.
\item\label{item:density-phase-recovery}
Fix $N\ge2$ and suppose $N^{-1}\le|\zeta_j|\le N$ and
$\mathfrak d_{\rho,L}(t_j,x_j,\zeta_j)\to0$.
After passage to a subsequence, $\zeta_j\to\zeta_*$ and there
are centers $(\widehat t_j,\widehat x_j)$ with
\begin{equation*}
|\widehat x_j-x_j|\le2\Lambda_aNL_je_j,\qquad
|\widehat t_j-t_j|\le L_je_j^2,
\end{equation*}
which realize the pure tuple
$(\delta_{\zeta_*},a_*,A_*,q_*)$ at their natural scales.
Bounded centers remain bounded, and escaping centers remain
escaping.
\end{enumerate}
The displacement bounds are uniform for fixed $N$.
\end{lemma}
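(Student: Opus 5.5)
Both parts rest on the growing-window expansion \eqref{eq:growing-window} in Lemma~\ref{lem:growing-windows}, evaluated at $(s,z)=(-1,-2a(t_j,x_j)\zeta)$. With $a_j=a(t_j,x_j)$ and $\lambda_j=\lambda^{H,e}_{t_j,x_j}$, the exponent becomes
\[
2L\bigl(-\xi^{\mathsf T}a_j\xi+2\xi^{\mathsf T}a_j\zeta\bigr)
 =2L\bigl(|\zeta|_{a_j}^2-|\xi-\zeta|_{a_j}^2\bigr).
\]
Hence, uniformly for $N^{-1}\le|\zeta|\le N$,
\[
\mathfrak d_{\rho,L}(t_j,x_j,\zeta)
 =-\frac1{2L}\log\int e^{-2L|\xi-\zeta|_{a_j}^2}\,\lambda_j(d\xi)+o(1).
\]
This is a Laplace-type functional. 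Its leading behavior is $\operatorname{ess\,inf}_{\xi}|\xi-\zeta|_{a_j}^2$ over $\lambda_j$, up to corrections controlled by the $\lambda_j$-mass of small balls.

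\emph{Detection (part (1)).} The left side is always $\ge o(1)$, because the integrand is at most $1$. For the upper bound, fix $\varepsilon>0$ and let $\lambda_*$ be the phase limit. By Lemma~\ref{lem:growing-windows}, $\lambda_j\rightharpoonup\lambda_*$ along the realizing sequence, using the common annulus and the identification of joint limits. Since $\zeta_*\in\supp\lambda_*$, we get $\liminf_j\lambda_j(B(\zeta_*,\varepsilon))\ge\lambda_*(B(\zeta_*,\varepsilon))>0$. Restricting the integral to that ball gives
\[
\mathfrak d\le C\varepsilon^2+\frac{-\log\lambda_j(B(\zeta_*,\varepsilon))}{2L_j}+o(1)\to C\varepsilon^2 .
\]
Letting $\varepsilon\downarrow0$ proves part (1).

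\emph{Recovery (part (2)).} From $\mathfrak d\to0$ the same identity shows that $\int e^{-2L_j|\xi-\zeta_j|^2_{a_j}}\,d\lambda_j=e^{-o(L_j)}$. The task is to build centers at which the actual normalized density becomes the pure phase $\delta_{\zeta_*}$.

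\begin{enumerate}
\item Walk along the segment $u\mapsto(t_j-ue_j^2,\;x_j-2ue_ja_j\zeta_j)$ for $0\le u\le L_j$. These centers stay in the regime \eqref{eq:density-phase-endpoints} and have the displacement bounds stated in part (2).
\item The total increment of $\frac12\log\rho$ along the segment is $L_j|\zeta_j|^2_{a_j}-o(L_j)$.
\item Apply the fixed-window phase approximation (Lemma~\ref{lem:posterior-phase}) at each point of the segment. The derivative of $\frac12\log\rho$ in $u$ is then the log-ratio for the local phase law $\lambda_{t(u),x(u)}$ in direction $(-1,-2a\zeta_j)$, up to $o(1)$. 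That local log-ratio is at most $|\zeta_j|_{a}^2$, with equality exactly when the local law is $\delta_{\zeta_j}$.
\item Averaging over $u$ forces a set of $u$ of positive proportion on which the local deficit is $o(1)$. Pick $\widehat u_j$ from that set.
\item By Proposition~\ref{prop:tail-compactness}, extract a limiting tuple at $(\widehat t_j,\widehat x_j)$ with some law $\lambda$. Its slice $W^0_\lambda$ satisfies $\log W_{\lambda,a_*}(-1,-2a_*\zeta_*)$-type equalities at the maximal value $|\zeta_*|^2_{a_*}$.
\item Conclude $\lambda=\delta_{\zeta_*}$ by strict convexity of $|\cdot|^2$: the tilted integral equals $e^{2|\zeta_*|^2_{a_*}}$ only for a point mass. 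Equivalently, use the exponential-domination argument \eqref{eq:exponential-domination}.
\end{enumerate}

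The coefficient limits are preserved because the displacement is $o(1)$ and $L_je_j^2\to0$, and bounded centers stay bounded while escaping ones keep escaping for the same reason.

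\emph{Main obstacle.} The averaging step (3)--(4) is the hard part. It needs the fixed-window approximation to hold uniformly along the whole moving segment, so that the local defects can be integrated. It also needs the pointwise bound to be one-sided with an $o(1)$ error uniform in $u$, so that a small average forces small values on a set of positive measure. If this uniformity is not directly available, I would instead apply the growing-window identity on dyadic subsegments, using the second scale pair \eqref{eq:growing-window-scales}, and descend to fixed windows by a pigeonhole argument.
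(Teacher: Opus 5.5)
Your proposal is correct and follows essentially the paper's proof. Part (1) is the same completed-square identity plus ball retention under the weak convergence from Lemma~\ref{lem:growing-windows}. Part (2) is the same walk along the segment, selection of a unit step whose increment is at least the average, extraction of a phase law $\lambda'$ via Proposition~\ref{prop:tail-compactness}, and the conclusion that $\int e^{-2|\xi-\zeta_*|_{a_*}^2}\lambda'(d\xi)\ge1$ forces $\lambda'=\delta_{\zeta_*}$.

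The obstacle you flag does not arise in the paper's version. It pigeonholes directly on the exact telescoping sum of the $L_j$ discrete unit-step log density ratios. Consequently it needs neither a derivative of $\log\rho$ nor any approximation along the whole segment, and the matching upper bound comes for free in the limit because the integrand is at most one.
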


The first assertion detects every direction in the limiting
phase support. The second recovers a detected direction as a
pure phase at a nearby center. This change of center matters:
masses that disappear in a weak limit can still contribute on
a growing window if their logarithms are $o(L)$.

\begin{proof}
Write $H_j=H(t_j,x_j)$ and $a_j=a(t_j,x_j)$.
Set $s=-1$ and $z=-2a(t,x)\zeta$ in
\eqref{eq:growing-window}. For $|\zeta|\le N$, let $\eta_N(M)$
be its error bound with $B=1+2\Lambda_aN$.
Thus $\eta_N(M)\to0$ with dependence only on the reference
data, $C_D$ and $N$. Completing the square gives
\begin{equation*}
\left|\mathfrak d_{\rho,L}(t,x,\zeta)
 +\frac1{2L}\log\int
   e^{-2L|\xi-\zeta|_{a(t,x)}^2}\,
           \lambda_{t,x}^{H,e}(d\xi)\right|\le\eta_N(M).
\end{equation*}
In particular, $\mathfrak d_{\rho,L}\ge-\eta_N(M)$.
For~\ref{item:density-phase-detection}, the last assertion of
Lemma~\ref{lem:growing-windows} gives
$\lambda_{t_j,x_j}^{H_j,e_j}\rightharpoonup\lambda_*$.
Fix $N\ge\max\{2,|\zeta_*|\}$ and $\varepsilon>0$, and set
\begin{equation*}
c_\varepsilon:=\tfrac12\lambda_*(B(\zeta_*,\varepsilon))>0.
\end{equation*}
This constant depends on $\varepsilon,\lambda_*,\zeta_*$.
By weak convergence,
$\lambda_{t_j,x_j}^{H_j,e_j}(B(\zeta_*,\varepsilon))
\ge c_\varepsilon$ eventually along this sequence.
For such $j$ in the range of the growing-window estimate,
retaining the ball gives
\begin{equation*}
-\eta_N(M_j)\le
\mathfrak d_{\rho,L}(t_j,x_j,\zeta_*)
 \le\Lambda_a\varepsilon^2
       -\frac{\log c_\varepsilon}{2L_j}+\eta_N(M_j).
\end{equation*}
Let first $j\to\infty$ and then $\varepsilon\downarrow0$.

For~\ref{item:density-phase-recovery}, extract
$\zeta_j\to\zeta_*$ in the fixed nonzero compact annulus.
We select one unit increment from the long density increment,
then use the fixed-window phase limit at its starting center.
Put $d_j=\mathfrak d_{\rho,L}(t_j,x_j,\zeta_j)\to0$.
Equation~\eqref{eq:density-phase-defect} gives
\begin{equation*}
\frac1{2L_j}\log
 \frac{\rho(t_j-L_je_j^2,x_j-2L_je_ja_j\zeta_j)}
      {\rho(t_j,x_j)}
 =|\zeta_j|_{a_j}^2-d_j.
\end{equation*}
The logarithm is the sum of the $L_j$ adjacent log density
ratios along
$(t_j-ke_j^2,x_j-2ke_ja_j\zeta_j)$, $0\le k\le L_j$.
Hence some $0\le k_j<L_j$ gives a center
\begin{equation*}
(\widehat t_j,\widehat x_j)
 =(t_j-k_je_j^2,x_j-2k_je_ja_j\zeta_j)
\end{equation*}
for which
\begin{equation}
\frac12\log
 \frac{\rho(\widehat t_j-e_j^2,\widehat x_j-2e_ja_j\zeta_j)}
      {\rho(\widehat t_j,\widehat x_j)}
 \ge |\zeta_j|_{a_j}^2-d_j.
\label{eq:single-step-density-bound}
\end{equation}
The displayed construction gives the stated displacement bounds.
Since $L_je_j\to0$ and $L_j/M_j\to0$, the spatial displacement
and the relative time displacement tend to zero.
The distance ratio satisfies
$D(\widehat x_j)/D(x_j)=1+O(L_j/M_j)$.
Consequently, with
$\widehat r_j=\ell_\mu(\widehat t_j,\widehat x_j)$,
\begin{equation*}
\frac{\widehat r_j}{e_j}\to1,\qquad
\frac{\widehat t_j}{\widehat r_j^2}
 =\frac{\widehat t_j}{t_j}\,M_j\,
       \frac{e_j^2}{\widehat r_j^2}\to\infty.
\end{equation*}
The step in these natural coordinates therefore converges
to $(-1,-2a_*\zeta_*)$, and uniform continuity preserves all
three matrix limits. Proposition~\ref{prop:tail-compactness}
supplies an actual phase law $\lambda'$ at the new centers.
Its weight $W_{\mathbf M'}$, for
$\mathbf M'=(\lambda',a_*,A_*,q_*)$, satisfies
\begin{equation*}
e^{2|\zeta_*|_{a_*}^2}
 \le W_{\mathbf M'}(-1,-2a_*\zeta_*)
 =e^{2|\zeta_*|_{a_*}^2}
        \int e^{-2|\xi-\zeta_*|_{a_*}^2}\,\lambda'(d\xi)
 \le e^{2|\zeta_*|_{a_*}^2}
\end{equation*}
by \eqref{eq:single-step-density-bound} and
\eqref{eq:whole-line-convergence}.
Strict positive definiteness forces $\lambda'=\delta_{\zeta_*}$.
The vanishing spatial displacement preserves the bounded or
escaping center type.
\end{proof}

\subsection{Small-time posterior directions}
\label{subsec:posterior-directions}

The remaining local input for Proposition~\ref{prop:local-directions}
concerns bounded centers as time tends to zero. Two-segment
posteriors locate the initial point and a macroscopic intermediate
point. Lemma~\ref{lem:adjacent-bridge-ages} and
Corollary~\ref{cor:restart-phase} connect their terminal directions
to the phase representation.

\begin{lemma}[Exterior posterior concentration]
\label{lem:exterior-posterior}
Let $\mu$ be any compactly supported probability law,
$S=\supp\mu$, $g_0=a(0,\cdot)^{-1}$, $t_j\downarrow0$ and
$x_j\to x_*\notin S$. Fix $0<\delta<1$.
Let $\mathbb P_j$ be the conditional law of
$(X_0,X_{(1-\delta)t_j})$ given $X_{t_j}=x_j$:
\begin{equation}
\mathbb P_j(dy,dw)
 =\frac{k(0,y;(1-\delta)t_j,w)k((1-\delta)t_j,w;t_j,x_j)}
          {\rho(t_j,x_j)}\,\mu(dy)\,dw .
\label{eq:two-segment-posterior}
\end{equation}
Set
\(
\mathscr E_\delta
 =\{(\gamma(0),\gamma(1-\delta)):
                  \gamma\in\operatorname{Min}_{g_0}(S,x_*)\}.
\)
Then $\mathbb P_j(U^c)\to0$ for every relatively open
$U\subset S\times\R^d$ containing $\mathscr E_\delta$.
For each $\varepsilon>0$ there are $h,r>0$, depending only on
the reference data, $\mu,g_0,x_*,\delta,U,\varepsilon$, such that
$\mathbb P_j(U^c)\le\varepsilon$ whenever
$t_j\le h$ and $|x_j-x_*|\le r$.
\end{lemma}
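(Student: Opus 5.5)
The plan is a Laplace-type (large deviation) estimate on the two-segment posterior, using the kernel--action bounds of Proposition~\ref{prop:kernel-action} in the bounded-displacement regime. We compare each kernel in \eqref{eq:two-segment-posterior} with $\exp(-\mathcal A^{g,0})$. The two segments have durations $(1-\delta)t_j$ and $\delta t_j$, and endpoint displacements bounded by a constant $D_0$ once $w$ is restricted to a fixed ball $B(x_*,R)$. Hence Proposition~\ref{prop:kernel-action}, \eqref{eq:kernel-action}, gives
\begin{equation*}
\log\{\cdots\text{numerator}\cdots\}
 =-\mathcal A^{g,0}(0,y;(1-\delta)t_j,w)-\mathcal A^{g,0}((1-\delta)t_j,w;t_j,x_j)+O(t_j^{-1/3})+O(\log t_j).
\end{equation*}
The contribution of $w$ outside $B(x_*,R)$ is negligible by the Gaussian bounds \eqref{eq:kernel-gaussian} for $R$ large, relative to the lower bound on $\rho$ discussed below.

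Next I would rescale the actions by $t_j$. Write $a_j$ for the metric at times $O(t_j)$. The time modulus $\omega_a$ and \eqref{eq:action-metric-stability} give $t_j\mathcal A^{g,0}(0,y;(1-\delta)t_j,w)=\tfrac{1}{2(1-\delta)}d_{g_0}(y,w)^2+o(1)$ uniformly on compacts. This uses the energy--length identity for the static metric $g_0$, with an error $O(\omega_a(t_j))\cdot t_j^{-1}$ in unscaled units. Similarly, $t_j\mathcal A^{g,0}((1-\delta)t_j,w;t_j,x_j)=\tfrac1{2\delta}d_{g_0}(w,x_j)^2+o(1)$. The rescaled cost is therefore
\begin{equation*}
F(y,w)=\frac{d_{g_0}(y,w)^2}{2(1-\delta)}+\frac{d_{g_0}(w,x_*)^2}{2\delta}
\end{equation*}
up to $o(1)$, uniformly over $y\in S$ and $w\in B(x_*,R)$, using continuity in $x_j\to x_*$. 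Cauchy--Schwarz for the two segments gives $F(y,w)\ge d_{g_0}(y,x_*)^2/2\ge d_{g_0}(S,x_*)^2/2=:F_*$. Equality holds exactly when $y\in S$ is a nearest point and $w$ lies on a shortest path at the parameter $1-\delta$, i.e.\ $(y,w)\in\mathscr E_\delta$.

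By compactness of $S\times\overline B(x_*,R)$ and continuity of $F$, every relatively open $U\supset\mathscr E_\delta$ has a gap: $\inf_{U^c}F\ge F_*+\eta$ for some $\eta>0$. The numerator of $\mathbb P_j(U^c)$ is then at most $\exp(-(F_*+\eta-o(1))/t_j+O(t_j^{-1/3}+|\log t_j|))$. For the denominator I need a lower bound $\rho(t_j,x_j)\ge\exp(-(F_*+o(1))/t_j)$, and this is the main obstacle, since $\mu$ may put tiny mass near the minimizing points. Compactness alone gives no doubling, but $\mu(B(y_0,r))>0$ for every $y_0\in S$ and $r>0$. I would fix a nearest point $y_0$ and a small $r$ with $d_{g_0}(y,x_*)^2\le 2F_*+\eta/2$ on $B(y_0,r)$. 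The kernel lower bound from \eqref{eq:kernel-action}, integrated over that ball, then gives $\rho\ge\mu(B(y_0,r))\exp(-(F_*+\eta/4)/t_j)$ for small $t_j$. Since $\mu(B(y_0,r))$ is a fixed positive constant depending on $\mu,\eta$, the ratio is $\le\exp(-\eta/(2t_j))\to0$.

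Uniformity in $h,r$ follows because every $o(1)$ above is uniform for $t_j\le h$ and $|x_j-x_*|\le r$. These depend only on the listed data, and on $U$ and $\varepsilon$ through $\eta$ and the ball mass.
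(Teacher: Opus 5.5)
Your proposal is correct and follows the paper's proof essentially step for step. You rescale the two-segment actions by time freezing to the split cost $\frac{d_{g_0}(y,w)^2}{2(1-\delta)}+\frac{d_{g_0}(w,x_*)^2}{2\delta}$, identify its equality set with $\mathscr E_\delta$ via the triangle inequality and Cauchy--Schwarz, and cut off large $w$ with the Gaussian bounds. You then bound $\rho(t_j,x_j)$ below using the positive $\mu$-mass of small balls around a $g_0$-nearest point, and conclude from the compactness gap. The only cosmetic difference is that you invoke the bounded-displacement estimate \eqref{eq:kernel-action} where the paper uses \eqref{eq:kernel-action-regimes}; either remainder is $o(1/t_j)$, so the argument goes through unchanged.
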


\begin{proof}
Put $\mathcal A_{\min}^{\mathrm{split}}=d_{g_0}(S,x_*)^2/2$. Time freezing in
\eqref{eq:action-metric-stability}, followed by
\eqref{eq:action-endpoint-variation}, gives
\begin{equation}
\begin{aligned}
t_j\{\mathcal A^{g,0}(0,y;(1-\delta)t_j,w)
       +\mathcal A^{g,0}((1-\delta)t_j,w;t_j,x_j)\}
 &\longrightarrow \mathcal A_\delta^{\mathrm{split}}(y,w),\\
\mathcal A_\delta^{\mathrm{split}}(y,w)
 :=\frac{d_{g_0}(y,w)^2}{2(1-\delta)}
       +\frac{d_{g_0}(w,x_*)^2}{2\delta}&\ge \mathcal A_{\min}^{\mathrm{split}},
\end{aligned}
\label{eq:two-segment-action-limit}
\end{equation}
uniformly for $y\in S$ and $w$ in a fixed compact set.
The triangle inequality and equality in Cauchy--Schwarz
identify the equality set as $\mathscr E_\delta$.
The error in \eqref{eq:kernel-action-regimes}, multiplied by
$t_j$, is $O_R(t_j^{1/3})$ for $|w|\le R$, with the other
fixed data as in the statement. Choose a shortest
initial point $y_*\in S$. Every ball about $y_*$ has positive
$\mu$ mass, so restricting $\rho(t_j,x_j)$ to such a ball
and then shrinking it gives
\begin{equation*}
\liminf_{j\to\infty}t_j\log\rho(t_j,x_j)\ge-\mathcal A_{\min}^{\mathrm{split}}.
\end{equation*}
For large fixed $R$, \eqref{eq:kernel-gaussian} consequently gives
$\mathbb P_j(|w|>R)\le C\exp\{(C-cR^2)/t_j\}$.
On the compact set $(S\times\overline B_R)\setminus U$,
\eqref{eq:two-segment-action-limit} and its equality set give
$\mathcal A_\delta^{\mathrm{split}}\ge \mathcal A_{\min}^{\mathrm{split}}+\varepsilon_R$ for some $\varepsilon_R>0$
(unless the set is empty). Applying
\eqref{eq:kernel-action-regimes} in \eqref{eq:two-segment-posterior}
therefore yields, for all sufficiently large $j$,
\begin{equation*}
\mathbb P_j(U^c)
 \le e^{-\varepsilon_R/(2t_j)}+C\exp\{(C-cR^2)/t_j\}
 \longrightarrow0 .
\end{equation*}
The action convergence is uniform as $(t,x)\to(0,x_*)$;
the mass lower bound uses the fixed law $\mu$, and the positive
gap $\varepsilon_R$ uses $g_0,S,x_*,\delta,U,R$.
Choosing $R$, then sufficiently small $h,r$, gives the stated
thresholds in physical time and space.
\end{proof}

\begin{lemma}[Posterior concentration near the support]
\label{lem:near-support-posterior}
Let $\mu\in\mathcal D$ and suppose
$D_j=\dist(x_j,S)\to0$, $M_j=D_j^2/t_j\to\infty$.
Put $e_j=t_j/D_j$, $G_j=a(0,x_j)^{-1}$ and
$d_j^2=\min_{y\in S}|x_j-y|_{G_j}^2$.
Condition on $X_{t_j}=x_j$, and write $Y=X_0$, $W=X_{t_j/2}$.
For
\begin{equation*}
E_j^{\mathrm{quad}}(y,w)=|x_j-y|_{G_j}^2-d_j^2
           +4\left|w-\frac{x_j+y}{2}\right|_{G_j}^2
\end{equation*}
and $0<\varepsilon\le1$,
\begin{equation}
\mathbb P\{E_j^{\mathrm{quad}}(Y,W)>\varepsilon D_j^2\}
 \le Ce^{-cM_j}
  +CM_j^N
     e^{C\{M_j[D_j+\omega_a(t_j)]+M_j^{2/3}+1\}-c\varepsilon M_j}.
\label{eq:near-support-posterior}
\end{equation}
There are $D_0,M_0>0$ such that the estimate holds whenever
$D_j\le D_0$ and $M_j\ge M_0$, uniformly in $0<\varepsilon\le1$;
all constants and these thresholds depend only on the reference
data and $C_D$. In particular, one can choose $\varepsilon_j\to0$
so that with probability tending to one,
\begin{equation}
|x_j-Y|_{G_j}^2\le d_j^2+\varepsilon_jD_j^2,\qquad
\left|V_{M_j/2}-\frac{x_j-Y}{D_j}\right|
 \le\sqrt{\Lambda_a\varepsilon_j},
\label{eq:near-support-posterior-vector}
\end{equation}
where $V_{H}$ uses the center $(t_j,x_j)$ and scale $e_j$.
\end{lemma}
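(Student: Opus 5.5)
The plan is to write the joint posterior of $(Y,W)$ through the two-segment kernel formula, in the same way as \eqref{eq:two-segment-posterior} with $\delta=1/2$:
\begin{equation*}
\mathbb P_j(dy,dw)=\frac{k(0,y;t_j/2,w)\,k(t_j/2,w;t_j,x_j)}{\rho(t_j,x_j)}\,\mu(dy)\,dw .
\end{equation*}
We then bound the numerator by actions through Proposition~\ref{prop:kernel-action}, and the denominator from below through Lemma~\ref{lem:initial-kernel-integration}.

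First we localize. By \eqref{eq:compact-initial-posterior}, the event $|Y-x_j|>KD_j$ costs at most $Ce^{-cM_j}$. A Gaussian tail bound from \eqref{eq:kernel-gaussian}, applied to the bridge, shows that $|W-x_j|>KD_j$ also costs at most $Ce^{-cM_j}$. On the remaining compact region we compare actions. Freezing the metric at $G_j=a(0,x_j)^{-1}$ uses \eqref{eq:action-metric-stability} in time, which gives an error $\omega_a(t_j)$ times $D_j^2/t_j$. Freezing in space along paths of length $O(D_j)$ gives a relative error $O(L_aD_j)$. Together these replace each segment action by its frozen quadratic form, with total error $CM_j[D_j+\omega_a(t_j)]$. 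The two frozen quadratic actions are
\begin{equation*}
\frac{|w-y|_{G_j}^2}{t_j}+\frac{|x_j-w|_{G_j}^2}{t_j}
=\frac{|x_j-y|_{G_j}^2}{2t_j}+\frac{2\,|w-(x_j+y)/2|_{G_j}^2}{t_j},
\end{equation*}
and this equals $\frac{d_j^2}{2t_j}+\frac{E_j^{\mathrm{quad}}(y,w)}{2t_j}$ by the midpoint (parallelogram) identity. Proposition~\ref{prop:kernel-action} bounds each kernel by $t_j^{-d/2}$ times $\exp(-\text{action}+C(1+M_j^{2/3}))$.

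For the denominator we use \eqref{eq:initial-kernel-integration}: $\rho\ge cM_j^{-d_D}m_\mu(x_j)k_*$. The kernel maximum $k_*$ is bounded below by the action at a $G_j$-nearest point, again up to the freezing and $M^{2/3}$ errors, so that $k_*\ge t_j^{-d/2}e^{-d_j^2/(2t_j)-C\{\dots\}}$.

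On the event $E_j^{\mathrm{quad}}>\varepsilon D_j^2$, the ratio gains the factor $e^{-\varepsilon M_j/2}$. The numerator is then integrated against $\mu(dy)\,dw$. In $w$ we use Gaussian integration, keeping half of the quadratic term in $w$, which yields $t_j^{d/2}$. In $y$ we use $\mu(B(x_j,KD_j))\le Cm_\mu(x_j)$ from \eqref{eq:relative-initial-mass}. The mass factor $m_\mu$ cancels against the denominator and leaves a polynomial factor $M_j^N$. This gives \eqref{eq:near-support-posterior}.

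The main obstacle is keeping the freezing errors of the form $M_j[D_j+\omega_a(t_j)]$ uniform over the whole localized region, while the denominator's lower bound uses the same frozen metric; care with the nearest point in the $G_j$ versus the Euclidean sense is needed there. For \eqref{eq:near-support-posterior-vector}, we choose $\varepsilon_j\to0$ slowly, so that $\varepsilon_jM_j$ dominates $M_j[D_j+\omega_a(t_j)]+M_j^{2/3}+\log M_j$; this is possible since $D_j,t_j\to0$ and $M_j\to\infty$. On the good event, $|x_j-Y|_{G_j}^2\le d_j^2+\varepsilon_jD_j^2$ and $|W-(x_j+Y)/2|_{G_j}\le\sqrt{\varepsilon_j}D_j/2$. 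Since $V_{M_j/2}=(x_j-W)/(\tfrac12 M_je_j)=2(x_j-W)/D_j$, this gives $V_{M_j/2}-(x_j-Y)/D_j=2((x_j+Y)/2-W)/D_j$. Finally, ellipticity converts the $G_j$-norm bound into $\sqrt{\Lambda_a\varepsilon_j}$.
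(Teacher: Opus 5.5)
Your proposal is correct and follows essentially the same route as the paper's proof. The common steps are the two-segment posterior with $\delta=1/2$, localization through \eqref{eq:compact-initial-posterior} and a Gaussian bridge tail, frozen quadratic actions combined by the midpoint identity, a denominator bound at a $G_j$-nearest point, and a slowly decaying $\varepsilon_j$ for \eqref{eq:near-support-posterior-vector}.

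The differences are cosmetic. The paper cites the spatially frozen kernel estimate \eqref{eq:spatial-frozen-kernel} where you freeze the metric by hand. It also bounds the $w$-integral by the volume $CD_j^d$, giving $N=d_D+d/2$, whereas your Gaussian integration in $w$ gives the slightly better $N=d_D$.
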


\begin{proof}
Put $d_D=\log_2C_D$. Choose a Euclidean nearest point $y_{0,j}$ and set
$m_\mu(x_j)=\mu(B(y_{0,j},D_j))$.
Equation~\eqref{eq:compact-initial-posterior} excludes
$|Y-x_j|>L_*D_j$ with probability $Ce^{-cM_j}$.
By \eqref{eq:bridge-kernel-law} and \eqref{eq:kernel-gaussian},
\begin{equation*}
\sup_{|y-x_j|\le L_*D_j}
\mathbb P_{0,y;t_j,x_j}(|X_{t_j/2}-x_j|>KD_j)
 \le C e^{(CL_*^2-cK^2)M_j}\le Ce^{-c'M_j}
\end{equation*}
when $K$ is sufficiently large.

Put $E_j=1+M_j[D_j+\omega_a(t_j)]+M_j^{2/3}$.
For endpoints in $B(x_j,K'D_j)$ and duration $h=t_j/2$ or $t_j$,
\eqref{eq:spatial-frozen-kernel}, with $H=M_j$, $e=e_j$,
and \eqref{eq:action-metric-stability} give
\begin{equation*}
\left|\log\{h^{d/2}k(r,y;r+h,z)\}
                      +\frac{|z-y|_{G_j}^2}{2h}\right|\le C_{K'}E_j.
\end{equation*}
The tube comparison applies for $He=D_j\le D_0$ and
$M_j\ge M_0$, with $D_0$ small and $M_0$ large in terms of
the stated data. Also
$|a(s,x_j)-a(0,x_j)|\le\omega_a(t_j)$.
The Gaussian exponents for $[0,t_j/2]$ and $[t_j/2,t_j]$
satisfy the exact identity
\begin{equation*}
\frac{|w-y|_{G_j}^2+|x_j-w|_{G_j}^2}{t_j}
 =\frac{d_j^2+E_j^{\mathrm{quad}}(y,w)}{2t_j}.
\end{equation*}
For the denominator, take a $G_j$-nearest point $y_j^*$.
In its $e_j$ ball, the frozen action increases only by $O(1)$;
\eqref{eq:relative-initial-mass} gives initial mass at least
$cM_j^{-d_D}m_\mu(x_j)$. Therefore
\begin{equation*}
\rho(t_j,x_j)\ge
ct_j^{-d/2}m_\mu(x_j)M_j^{-d_D}
\exp\left\{-\frac{d_j^2}{2t_j}-CE_j\right\}.
\end{equation*}
On $\{|y-x_j|\le L_*D_j,\ |w-x_j|\le KD_j,\ E_j^{\mathrm{quad}}(y,w)>\varepsilon D_j^2\}$,
the unnormalized two-kernel integral is at most
\begin{equation*}
C t_j^{-d}m_\mu(x_j)D_j^d
 \exp\left\{-\frac{d_j^2}{2t_j}+CE_j-\frac{\varepsilon M_j}{2}\right\}.
\end{equation*}
Division by the lower bound for $\rho$ proves
\eqref{eq:near-support-posterior} with $N=d_D+d/2$.
Choose
\begin{equation*}
\varepsilon_j=\left\{D_j+\omega_a(t_j)+M_j^{-1/3}
                         +\log(2+M_j)/M_j\right\}^{1/2}\longrightarrow0.
\end{equation*}
Since $E_j^{\mathrm{quad}}(y,w)\ge4|w-(x_j+y)/2|_{G_j}^2$, ellipticity gives
\eqref{eq:near-support-posterior-vector}.
\end{proof}

\subsection{Geometric kernel and marginal estimates}
\label{subsec:geometric-tools}

This subsection proves the density estimates used in
Theorem~\ref{thm:geometric-formulas}. Under
Assumption~\ref{ass:geometry}, we obtain an $O(M^{1/3})$
kernel remainder and integrate it against the initial law.
The resulting comparison in
Proposition~\ref{prop:geometric-marginals} has error $o(L)$
for $L=\lceil M^{17/48}\rceil$, as required by the proof in
Subsection~\ref{subsec:geometric-reduction}.

The following drift comparison is used to regularize the
geometric kernel estimate.

\begin{lemma}[Drift comparison]\label{lem:drift-kernel-comparison}
Let $a$ satisfy Assumption~\ref{ass:coefficients}, and let
$b_0,b_1$ be Borel with $\|b_i\|_\infty\le B_b$.
For $0\le r<t\le T$, their kernels satisfy
\begin{equation}
|\log k_{b_1}(r,y;t,x)-\log k_{b_0}(r,y;t,x)|
 \le C\|b_1-b_0\|_\infty(|x-y|+\sqrt{t-r}).
\label{eq:drift-kernel-comparison}
\end{equation}
Here $C$ depends only on the reference data.
\end{lemma}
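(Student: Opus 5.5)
The plan is to use the Girsanov tilt already recorded in \eqref{eq:kernel-tilt}, but with a drift change instead of an exponential tilt, and then control the Radon--Nikodym factor under the conditioned bridge law. Set $\beta=b_1-b_0$ and $\epsilon=\|\beta\|_\infty$. Girsanov's formula for the path laws started at $(r,y)$ gives
\begin{equation*}
\frac{k_{b_1}(r,y;t,x)}{k_{b_0}(r,y;t,x)}
 =\mathbb E^{b_0}_{r,y;t,x}\bigl[Z\bigr],\qquad
Z=\exp\Bigl\{\int_r^t (a^{-1}\beta)(s,X_s)\cdot dX_s
 -\int_r^t(a^{-1}\beta)\cdot b_0\,ds-\tfrac12\int_r^t|\beta|_{a^{-1}}^2ds\Bigr\},
\end{equation*}
where the expectation is under the $b_0$ bridge \eqref{eq:bridge-kernel-law}. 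The last two integrals are $O(\epsilon(t-r))\le O(\epsilon\sqrt{t-r})$ since $t-r\le T$. The main term is the stochastic integral $\int(a^{-1}\beta)\cdot dX$ under the bridge. By symmetry it suffices to prove the upper bound $\log\mathbb E[Z]\le C\epsilon(|x-y|+\sqrt h)$, with $h=t-r$, and then exchange $b_0,b_1$.

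Since $\beta$ is only Borel, I would not try to integrate by parts. Instead, I would bound the exponential moment of $\int_r^t(a^{-1}\beta)\cdot dX_s$ under the bridge by splitting the interval at the midpoint $m=(r+t)/2$ and using the Markov structure. On $[r,m]$, the bridge law is absolutely continuous with respect to the unconditioned $b_0$ diffusion, with density $k_{b_0}(m,X_m;t,x)/k_{b_0}(r,y;t,x)$. By \eqref{eq:kernel-gaussian}, this density is bounded by $C\exp\{C|x-y|^2/h-c|X_m-x|^2/h\}$. This crude bound is not good enough when $|x-y|^2/h$ is large, so I would rather compare with a drifted process. Under the bridge on $[r,m]$, the semimartingale decomposition of $X$ has drift $b_0+aD_z\log k_{b_0}(s,X_s;t,x)$. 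Lemma~\ref{lem:kernel-log-gradient} bounds this gradient by $C\{(t-s)^{-1/2}+|x-X_s|/(t-s)\}$, which is $C(h^{-1/2}+|x-X_s|/h)$ on $[r,m]$. Writing $dX=(\text{drift})\,ds+a^{1/2}dW$, the drift part of the integral contributes at most $C\epsilon\int_r^m(1+h^{-1/2}+|x-X_s|/h)\,ds\le C\epsilon(\sqrt h+\sup_s|x-X_s|)$. The martingale part has quadratic variation at most $C\epsilon^2h$, so its exponential moment costs only $e^{C\epsilon^2h}$. By H\"older, the remaining task is the exponential moment $\mathbb E\exp\{C\epsilon\sup_{[r,m]}|X_s-x|\}\le\exp\{C\epsilon(|x-y|+\sqrt h)\}$. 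For this I would use the annulus/tube estimates of Lemma~\ref{lem:bridge-annulus} together with the Gaussian bridge tail from \eqref{eq:kernel-gaussian}: under the bridge, $\sup_s|X_s-x|$ exceeds $C(|x-y|+\sqrt h)+\lambda$ with probability at most $Ce^{-c\lambda^2/h}$. The time-reversed half $[m,t]$ is handled identically, using the reversed bridge and the backward gradient bound; equivalently, apply the same argument to the terminal kernel $k(r,y;s,\cdot)$ via Lemma~\ref{lem:natural-kernel-ratio}.

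The main obstacle is that the cited gradient bound, and a sup-tail for the bridge uniform in $|x-y|^2/h$, must be uniform without smallness of $\epsilon(|x-y|+\sqrt h)$. When $\epsilon(|x-y|+\sqrt h)\ge1$, I would avoid the issue entirely: by \eqref{eq:kernel-action-regimes} or directly by \eqref{eq:kernel-gaussian}, both kernels equal $h^{-d/2}e^{-\mathcal A^{g,0}+O(1+M^{2/3})}$. Their log-difference is then $O(1+M^{2/3})$ with $M=|x-y|^2/h$. This is not obviously $\le C\epsilon(|x-y|+\sqrt h)$ unless $\epsilon$ is bounded below, which holds only if $\epsilon\gtrsim1$. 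So for large $\epsilon$ I would instead subdivide the drift difference, interpolating $b_\theta=b_0+\theta\beta$ in $N$ steps with $\epsilon/N$ small. Each step is then in the small regime, and the bounds add linearly, which gives the claimed linear dependence with a constant depending only on the reference data.
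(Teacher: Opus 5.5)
Your Girsanov setup and the estimate on $[r,m]$ are sound, but the treatment of $[m,t]$ is a genuine gap. The Girsanov exponent contains the forward It\^o integral $\int_m^t(a^{-1}\beta)(s,X_s)\cdot dX_s$. Along the time-reversed bridge this becomes a backward It\^o integral, which differs from the forward integral of the reversed process by the covariation of $(a^{-1}\beta)(\cdot,X)$ with $X$. For smooth $\beta$ that covariation is $\int_m^t\operatorname{tr}\bigl(a\,D_x(a^{-1}\beta)\bigr)(s,X_s)\,ds$, which involves derivatives of $\beta$. For merely Borel $\beta$ it is at best a local-time type functional, and your argument gives no control of it in terms of $\|\beta\|_\infty$. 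The reversed bridge drift also involves $D_x\log k_{b_0}(r,y;s,\cdot)$, a gradient in the forward variable. The paper proves only the starting-point bound \eqref{eq:kernel-log-gradient}. In the forward variable the Borel drift sits inside a divergence, $\partial_tk=\operatorname{div}_x\bigl(\tfrac12aD_xk+(\tfrac12\operatorname{div}a-b)k\bigr)$, and no pointwise bound on $D_x\log k$ follows from this. Lemma~\ref{lem:natural-kernel-ratio} is a ratio comparison at the natural scale, not a gradient bound. So ``handled identically'' is not justified.

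No reversal is needed: stay forward on all of $[r,t]$. The bridge drift term is bounded by $C\epsilon\{(t-s)^{-1/2}+|x-X_s|/(t-s)\}$. Near $s=t$ it is not controlled by a pathwise supremum, but it is integrable in expectation. Indeed, the single-time bridge tails from \eqref{eq:kernel-gaussian} give $\mathbb E|x-X_s|\le C(|x-y|+\sqrt h)\sqrt{(t-s)/h}$.

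This is the paper's proof in infinitesimal form. For smooth coefficients and $b_\vartheta=(1-\vartheta)b_0+\vartheta b_1$, it uses $\partial_\vartheta\log k_{b_\vartheta}(0,y;t,x)=\mathbb E^\vartheta_{0,y;t,x}\int_0^t(b_1-b_0)(s,X_s)\cdot D_z\log k_{b_\vartheta}(s,X_s;t,x)\,ds$. This is the derivative of your $\log\mathbb E[Z]$ once the mean-zero martingale part is dropped. So only first moments are needed, with no H\"older step and no exponential moments, and Lemma~\ref{lem:kernel-stability} then removes the smoothness. If you prefer to keep the finite ratio, bound the exponential moment of $\epsilon\int_r^t|x-X_s|(t-s)^{-1}\,ds$ by Jensen against the probability measure proportional to $(t-s)^{-1/2}ds$, using the single-time tails of $|x-X_s|/\sqrt{t-s}$.

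Your last paragraph addresses a non-issue. Since $\epsilon\le2B_b$ and $h\le T$, you have $\epsilon^2h\le C\epsilon\sqrt h$, so your exponential-moment bounds are already linear in $\epsilon$ without any smallness. The $N$-step subdivision is therefore unnecessary; its $N\to\infty$ limit is the paper's formula. Likewise, the supremum tail on $[r,m]$ follows from the crude ratio $k_{b_0}(m,X_m;t,x)/k_{b_0}(r,y;t,x)\le Ce^{C|x-y|^2/h}$ that you discarded, with a threshold that is a large multiple of $|x-y|$. Lemma~\ref{lem:bridge-annulus} does not give it.
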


\begin{proof}
First take smooth coefficients. Translate $r$ to zero and
put $D=|x-y|$, $M=D^2/t$.
For $b_\vartheta=(1-\vartheta)b_0+\vartheta b_1$, $0\le\vartheta\le1$, parameter Duhamel
and positivity give
\begin{equation*}
\partial_\vartheta\log k_{b_\vartheta}(0,y;t,x)
=\mathbb E_{0,y;t,x}^{\vartheta}
 \int_0^t(b_1-b_0)(s,X_s)\cdot
          D_z\log k_{b_\vartheta}(s,X_s;t,x)\,ds .
\end{equation*}
Here $\mathbb E^\vartheta_{0,y;t,x}$ uses
\eqref{eq:bridge-kernel-law} for $b_\vartheta$.
For $s\ge t/2$, \eqref{eq:kernel-gaussian} gives
\begin{equation*}
\begin{aligned}
\mathbb P^\vartheta_{0,y;t,x}(|X_s-x|>R)
 &\le\min\{1,C e^{CM-cR^2/(t-s)}\},\\
\mathbb E^\vartheta_{0,y;t,x}|X_s-x|
 =\int_0^\infty\mathbb P^\vartheta_{0,y;t,x}(&|X_s-x|>R)\,dR
 \le C\sqrt{t-s}(1+\sqrt M).
\end{aligned}
\end{equation*}
For $s\le t/2$, the bound with $|X_s-y|$ and $s$ in place
of $|X_s-x|$ and $t-s$ follows from the initial kernel factor
in \eqref{eq:bridge-kernel-law}. The triangle inequality gives
\begin{equation*}
\mathbb E^\vartheta_{0,y;t,x}|x-X_s|
 \le C(D+\sqrt t)\sqrt{(t-s)/t},\qquad 0<s<t.
\end{equation*}
By \eqref{eq:kernel-log-gradient},
\begin{equation*}
|\partial_\vartheta\log k_{b_\vartheta}(0,y;t,x)|
 \le C\|b_1-b_0\|_\infty
 \int_0^t\left\{(t-s)^{-1/2}
       +\frac{D+\sqrt t}{\sqrt{t(t-s)}}\right\}\,ds
 \le C\|b_1-b_0\|_\infty(D+\sqrt t).
\end{equation*}
Integration in $\vartheta$ proves \eqref{eq:drift-kernel-comparison}.
Lemma~\ref{lem:kernel-stability} passes the estimate from
common smooth approximations to the reference coefficients.
\end{proof}

We now impose Assumption~\ref{ass:geometry} to compare the
kernel with the action of Definition~\ref{def:geometric-action}.
The estimate requires the variation of the metric. Its weak time derivative and its Lie derivative
along $Z$ have the coordinate formulas
\begin{equation}
\dot g=-g(\partial_ta)g,\qquad
(\mathcal L_Zg)_{ij}
 =Z^k\partial_kg_{ij}+g_{kj}\partial_iZ^k+g_{ik}\partial_jZ^k.
\label{eq:metric-variation}
\end{equation}
The latter is the usual Lie derivative of a covariant
two-tensor \citep[Proposition~2.1.2]{petersen-2016}.
For a symmetric matrix field $h=(h_{ij})$, define the covector
\begin{equation}
(\mathcal B_g(h))_k
 =\tfrac12a^{ij}\partial_kh_{ij}
       -a^{ij}\partial_ih_{jk}
       +a^{ij}\Gamma^\ell_{ij}h_{\ell k}.
\label{eq:metric-trace-combination}
\end{equation}
This is the Bianchi operator
$\tfrac12d_x\operatorname{tr}_g h-\operatorname{div}_g h$,
with $(\operatorname{div}_g h)_k=\nabla^ih_{ik}$;
see \citet[definition following equation~(2.5)]{anderson-khuri-2013}.
Differentiating $b_g^k=-\tfrac12a^{ij}\Gamma^k_{ij}$ gives,
in distributions,
\begin{equation}
a^{k\ell}(\mathcal B_g(\dot g))_\ell
 =2\partial_tb_g^k+(\partial_ta^{ij})\Gamma^k_{ij}.
\label{eq:geometric-time-coordinates}
\end{equation}
Since $\Gamma$ is bounded, the time bounds in
Assumption~\ref{ass:geometry} are therefore equivalent to
$\dot g,\mathcal B_g(\dot g)\in L^\infty_{t,x}$.
We use the absolutely continuous representative
$g(t,\cdot)=g(0,\cdot)+\int_0^t\dot g(s,\cdot)\,ds$
in $L^\infty_x$.

The lower comparison follows the heat-kernel subsolution
method of \citet[Theorem~3.1]{cheeger-yau-1981}.
Here the metric varies in
time and the drift is only Lipschitz in space. We track the
additional Bianchi term in the action's Laplacian bound and
its dependence on drift smoothing; these terms determine
the uniform remainder.

The next three lemmas separate the action estimate, comparison
from a point source, and uniform smoothing.

\begin{lemma}[Laplacian bound for the action]
\label{lem:geometric-action-laplacian}
Let $g$ be uniformly elliptic with a uniform spatial $C^{1,1}$
bound, absolutely continuous in time with $\dot g\in L^\infty$,
and let $Z$ be bounded, uniformly Lipschitz in space and Borel
in time. For this smooth calculation, assume that
$g,\dot g,Z$ have bounded spatial derivatives of every order,
uniformly in time, with the bounds for $\dot g$ understood
almost everywhere in time. Suppose
\begin{equation*}
h^{\mathrm{met}}=\dot g+\mathcal L_Zg,\qquad
\Theta=\|\mathcal B_g(h^{\mathrm{met}})\|_\infty<\infty,\qquad
\operatorname{Ric}_g\succeq-\varepsilon_{\mathrm{Ric}} g,\quad 0\le\varepsilon_{\mathrm{Ric}}\le1.
\end{equation*}
For fixed $y$, let $\partial_t\Xi_t=Z(t,\Xi_t)$,
$\Xi_0=\operatorname{id}$, and set
\begin{equation*}
\widetilde g_t=\Xi_t^*g_t,\qquad
\mathcal S(t,z)=\mathcal A^{g,Z}(0,y;t,\Xi_t(z)),
\end{equation*}
where the action is defined by \eqref{eq:geometric-action}.
Then $\mathcal S$ is locally Lipschitz in $(t,z)$ for $t>0$ and locally
semiconcave in $z$, uniformly on compact positive-time cylinders.
It satisfies $\mathcal S_t+|\nabla_{\widetilde g_t}\mathcal S|^2/2=0$ almost
everywhere and, for every $t>0$, in spatial distributions,
\begin{equation}
\Delta_{\widetilde g_t}\mathcal S
 \le\frac dt+
 C\{1+\sqrt{1+\Theta}(\mathcal S/t)^{1/4}
                       +\sqrt{\varepsilon_{\mathrm{Ric}}}(\mathcal S/t)^{1/2}\}.
\label{eq:action-laplacian}
\end{equation}
Here $C$ depends only on $d,T$, the ellipticity and spatial
$C^{1,1}$ bounds for $g$, $\|\dot g\|_\infty$, and
$\|Z\|_\infty+\Lip_xZ$, independently of $\Theta,\varepsilon_{\mathrm{Ric}}$
and the higher spatial derivative bounds. The local
semiconcavity constants may depend on those higher bounds.
\end{lemma}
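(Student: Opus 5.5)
The plan is to remove the drift by passing to the pulled-back metric $\widetilde g_t=\Xi_t^*g_t$, where the action becomes a drift-free time-dependent Riemannian action. Then $\mathcal S$ is a Hamilton--Jacobi value function, and we bound its Laplacian by a Riccati comparison along minimizing geodesics, in the spirit of \citet[Theorem~3.1]{cheeger-yau-1981}. Since $\Xi_t$ is a diffeomorphism with uniformly controlled derivatives (flow of a spatially Lipschitz, smooth-in-space field), the change of variables $\eta=\Xi_t^{-1}\circ\gamma$ turns $\dot\gamma-Z$ into $D\Xi_t\,\dot\eta$. Thus
\[
\mathcal S(t,z)=\inf_{\eta(0)=y,\ \eta(t)=z}\tfrac12\int_0^t|\dot\eta|_{\widetilde g_s}^2\,ds .
\]
By the chain rule, $\partial_t\widetilde g=\Xi_t^*(\dot g+\mathcal L_Zg)=\Xi_t^*h^{\mathrm{met}}$. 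Local Lipschitz continuity and the Hamilton--Jacobi equation $\mathcal S_t+\tfrac12|\nabla_{\widetilde g_t}\mathcal S|^2=0$ a.e.\ then follow from Lemma~\ref{lem:geometric-paths}: the endpoint variation \eqref{eq:geometric-endpoint-variation} with $Z=0$ in the new coordinates, together with Rademacher's theorem. Local semiconcavity in $z$ comes from the standard competitor $\eta+\chi(s)v$ at a minimizer, where $\chi$ is a cutoff near $s=t$. Its second variation is bounded using the smooth bounds on $\widetilde g$; this is where the higher derivative bounds enter and why these constants may depend on them.

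For the Laplacian bound, fix a point $z$ and a minimizer $\eta$ with momentum $\pi$, normalized so that $|\pi|_{\widetilde g}^2\asymp\mathcal S/t$ by \eqref{eq:geometric-momentum}. Take a $\widetilde g_t$-orthonormal frame $e_i$ at $z$, transported back along $\eta$. Use the variations $V_i(s)=\phi(s)E_i(s)$ with $\phi(0)=0$ and $\phi(t)=1$, where $E_i$ is transported in the time-dependent sense $\nabla_{\dot\eta}E_i+\tfrac12\partial_s\widetilde g(E_i,\cdot)^\sharp=0$, so that the frame stays $\widetilde g_s$-orthonormal. Semiconcavity gives $\Delta_{\widetilde g_t}\mathcal S(z)\le\sum_i\partial_\varepsilon^2\mathcal I(\eta+\varepsilon V_i)$ in the barrier sense, which upgrades to the distributional inequality. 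Summing the second variations produces four contributions.
\begin{enumerate}
\item The kinetic part $\int_0^t d\,\phi'^2\,ds$ equals $d/t$ for $\phi=s/t$.
\item The curvature part is $-\int_0^t\phi^2\operatorname{Ric}(\dot\eta,\dot\eta)\,ds\le\varepsilon_{\mathrm{Ric}}\,t\,\mathcal S/t$.
\item The time-derivative part is $\int_0^t\phi\phi'\operatorname{tr}_{\widetilde g}\partial_s\widetilde g\,ds=O(1)$.
\item The genuinely new mixed term pairs $\partial_s\widetilde g$ with the velocity. After integrating by parts along $\eta$ and using the geodesic equation, it is $\int_0^t\phi^2\,\mathcal B_{\widetilde g}(\partial_s\widetilde g)(\dot\eta)\,ds$ plus terms of size $\|\dot g\|_\infty$. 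It is therefore at most $Ct\,\Theta\,|\dot\eta|\le C\Theta\sqrt{t\mathcal S}$ from $|\dot\eta|\asymp\sqrt{\mathcal S/t}$.
\end{enumerate}
In general the diffeomorphism-invariance of $\mathcal B$ is used to transfer $\Theta$ to the pulled-back metric.

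The naive choice $\phi=s/t$ gives an error linear in $\Theta\sqrt{\mathcal S/t}$ and $\varepsilon_{\mathrm{Ric}}\mathcal S/t$, not the stated fractional powers. So the key step is to vary $\phi$: take $\phi$ supported on the last interval $[t-\tau,t]$, $\phi=(s-t+\tau)/\tau$, which gives kinetic cost $d/\tau$. Since $d/\tau$ exceeds $d/t$, the full-length transport is kept for the bulk, and the $\phi=s/t$ trace is combined with a short-window correction, i.e.\ the Riccati comparison for $m(s)=\Delta\mathcal S$ along $\eta$. Optimizing the window $\tau$ against the curvature and Bianchi errors turns $\varepsilon_{\mathrm{Ric}}\tau\mathcal S/t$ against $1/\tau$ into $\sqrt{\varepsilon_{\mathrm{Ric}}}(\mathcal S/t)^{1/2}$. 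It turns $\Theta\tau|\dot\eta|$ against $1/\tau$ into $\sqrt{\Theta}(\mathcal S/t)^{1/4}$; the $\sqrt{1+\Theta}$ absorbs the $\|\dot g\|$ terms.

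I expect the main obstacle to be exactly this bookkeeping. The Riccati inequality $m'\le -m^2/d+\varepsilon_{\mathrm{Ric}}|\dot\eta|^2+C\Theta|\dot\eta|+C$ must be derived, with the Bianchi term appearing linearly in $|\dot\eta|$ rather than through second derivatives of $\dot g$. Its comparison solution must then give $d/t$ plus the square roots of the forcing terms, uniformly without using higher derivative bounds.
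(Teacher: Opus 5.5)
Your reduction matches the paper's. You pull back by $\Xi_t$ so that $\partial_s\widetilde g=\Xi_t^*h^{\mathrm{met}}$ and $\operatorname{Ric}$, $\mathcal B$ transfer as tensors. You transport an orthonormal frame by $\nabla_sE_i=-\tfrac12(\partial_s\widetilde g)^\sharp E_i$, sum the second variations of $\phi E_i$, and localize $\phi$ to a final window whose length is balanced against the curvature and Bianchi errors.

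The gap is the step you leave as the main obstacle. You assume the window cost $d/\tau$ must be reconciled with the leading term $d/t$, by combining the $\phi=s/t$ bound with a short-window correction or a Riccati comparison. Each admissible $\phi$ gives its own separate upper bound, so there is nothing to combine, and no combination is needed. Let $V_*=C\sqrt{\mathcal S/t}$ bound $|\dot\eta|_{\widetilde g}$ along the minimizer, put $X=1+\Theta V_*+\varepsilon_{\mathrm{Ric}}V_*^2$, and use the single cutoff $\phi(s)=(1-(t-s)/\sigma)_+$ with $\sigma=\min\{t,X^{-1/2}\}$. The frame sum then gives $\Delta_{\widetilde g_t}\mathcal S\le d/\sigma+C+C\sigma X$.
\begin{itemize}
\item If $\sigma=t$, then $tX\le X^{1/2}$, and the bound is $d/t+C+CX^{1/2}$.
\item If $\sigma<t$, then $d/\sigma=dX^{1/2}$, so the excess over $d/t$ you were worried about is itself of the allowed size.
\end{itemize}
Since $X^{1/2}\le C\{1+\sqrt{1+\Theta}(\mathcal S/t)^{1/4}+\sqrt{\varepsilon_{\mathrm{Ric}}}(\mathcal S/t)^{1/2}\}$, this is exactly \eqref{eq:action-laplacian}.

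If you want the Riccati form of the bound, realize it as one test function. Let $K=CX$ bound the coefficient of $\phi^2$, set $\omega=\sqrt{K/d}$, and take $\phi(s)=\sinh(\omega s)/\sinh(\omega t)$. This gives $\int_0^t(d\phi'^2+K\phi^2)\,ds=\sqrt{dK}\coth(\omega t)\le d/t+\sqrt{dK}$. Because this $\phi$ is monotone, the $\phi\phi'\operatorname{tr}(\partial_s\widetilde g)$ term stays bounded, and no regularity of $\mathcal S$ along $\eta$ is needed.

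An actual ODE comparison for $m=\Delta_{\widetilde g_s}\mathcal S$ along $\eta$ is harder than you suggest. The inequality you wrote omits the term $-\langle\partial_s\widetilde g,\nabla^2\mathcal S\rangle$ coming from the time dependence of $\Delta_{\widetilde g_s}$. Its trace part, $-m\operatorname{tr}(\partial_s\widetilde g)/d$, is of size $1/s$, not $O(1)$. It must be handled by completing the square with a uniform bound on $\operatorname{tr}(\partial_s\widetilde g)$. Young's inequality would instead spoil the exact coefficient $d/t$, which Lemma~\ref{lem:geometric-subsolution-comparison} needs to cancel the $d/(2t)$ coming from $t^{-d/2}$. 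The ODE route also requires $\mathcal S$ to be $C^2$ along $\eta$, so cut points would need a separate argument. The cutoff second variation, by contrast, gives an upper support at every point.
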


\begin{proof}
The change of variables $X_t=\Xi_t(Y_t)$ changes the generator
$\Delta_g/2+Z$ into $\Delta_{\widetilde g_t}/2$, with
$\dot{\widetilde g}=\Xi_t^*h^{\mathrm{met}}$.
In these coordinates the action is
\begin{equation*}
\mathcal S(t,z)=\frac12\inf_{\substack{\gamma\in H^1([0,t];\R^d)\\
                         \gamma(0)=y,\ \gamma(t)=z}}
       \int_0^t|\dot\gamma(s)|_{\widetilde g_s}^2\,ds .
\end{equation*}
Ricci and $\mathcal B$ pull back as tensors.
The flow, inverse flow and Jacobian bounds depend only
on $T,\|Z\|_\infty+\Lip Z$.
Coercivity gives minimizers by the direct method. The endpoint
competitors in the proof of Lemma~\ref{lem:geometric-paths}
give the stated local Lipschitz continuity and Hamilton--Jacobi
identity; these arguments require no curvature condition.

All covariant derivatives, inner products and norms in the
following variation formulas use $\widetilde g_s$; at the
terminal point we write $\nabla=\nabla_{\widetilde g_t}$
and $\Delta=\Delta_{\widetilde g_t}$.
Along a minimizer,
$\nabla_s\dot\gamma=-\dot{\widetilde g}^{\,\sharp}\dot\gamma$
and its kinetic energy
$\mathcal E_\gamma(s)=|\dot\gamma(s)|_{\widetilde g_s}^2/2$ satisfies
$\mathcal E_\gamma'=-\dot{\widetilde g}(\dot\gamma,\dot\gamma)/2$.
Consequently $\mathcal E_\gamma(s)\asymp \mathcal S/t$.
The fixed-time index form is
\begin{equation}
\mathcal I_\gamma^{\mathrm{ind}}(V,V)=\int_0^t
 \left\{|\nabla_sV|^2-\langle R(V,\dot\gamma)\dot\gamma,V\rangle
 +\tfrac12(\nabla_{\dot\gamma}\dot{\widetilde g})(V,V)
 -(\nabla_V\dot{\widetilde g})(\dot\gamma,V)\right\}\,ds .
\label{eq:time-dependent-index}
\end{equation}
To obtain \eqref{eq:time-dependent-index}, vary
$\nabla_s\dot\gamma+\dot{\widetilde g}^{\,\sharp}\dot\gamma=0$
and use the connection variation
\begin{equation*}
\dot\Gamma^k_{ij}
 =\tfrac12\widetilde g^{k\ell}
  (\nabla_i\dot{\widetilde g}_{j\ell}
   +\nabla_j\dot{\widetilde g}_{i\ell}
   -\nabla_\ell\dot{\widetilde g}_{ij}),\qquad
\langle\dot\Gamma(V,\dot\gamma),V\rangle
 =\tfrac12(\nabla_{\dot\gamma}\dot{\widetilde g})(V,V).
\end{equation*}
Integration by parts cancels the term
$\dot{\widetilde g}(\nabla_sV,V)$ from the metric derivative
with its term in the varied Euler equation.

Take an orthonormal frame satisfying
$\nabla_sE_i=-\dot{\widetilde g}^{\,\sharp}E_i/2$
and variations $V_{i}=\phi E_i$. Summation gives
\begin{equation*}
\sum_i\mathcal I_\gamma^{\mathrm{ind}}(V_{i},V_{i})=\int_0^t\{
 d(\phi')^2-\phi\phi'\operatorname{tr}\dot{\widetilde g}
 +\tfrac14\phi^2|\dot{\widetilde g}|^2
 -\phi^2\operatorname{Ric}(\dot\gamma,\dot\gamma)
 +\phi^2\mathcal B_{\widetilde g}(\dot{\widetilde g})(\dot\gamma)\}\,ds .
\end{equation*}
Let $V_{*}=C\sqrt{\mathcal S/t}$ and choose
\begin{equation*}
\sigma=\min\{t,(1+\Theta V_{*}+\varepsilon_{\mathrm{Ric}} V_{*}^2)^{-1/2}\},\qquad
\phi(s)=\left(1-\frac{t-s}{\sigma}\right)_+ .
\end{equation*}
The endpoint second variation and
\eqref{eq:time-dependent-index} give
\begin{equation*}
\Delta \mathcal S\le\sum_i\mathcal I_\gamma^{\mathrm{ind}}(\phi E_i,\phi E_i)
 \le d/\sigma+C+C\sigma(1+\Theta V_{*}+\varepsilon_{\mathrm{Ric}} V_{*}^2).
\end{equation*}
This proves \eqref{eq:action-laplacian} away from cut points.
At a cut point a chosen minimizer gives an upper support.
For fixed smooth coefficients, the competitors
$\gamma_\pm(s)=\gamma(s)\pm(2s/t-1)_+v$ also give, on every
compact set of positive times and endpoints,
\begin{equation*}
\mathcal S(t,z+v)+\mathcal S(t,z-v)-2\mathcal S(t,z)\le C_{\tau,R}|v|^2,
\qquad t\ge\tau,\quad |y|+|z|\le R,
\end{equation*}
for sufficiently small $|v|$. Here $C_{\tau,R}$ may additionally
depend on $\sup_{s\le T}\|\widetilde g_s\|_{C^2(B_{R'})}$,
where $R'=C(1+R)$ bounds the minimizing paths.
Taylor expansion of the energy integrand on $[t/2,t]$ proves
this estimate. Thus $\mathcal S$ is locally semiconcave in space;
its singular spatial Hessian is nonpositive.
The upper-support bound \eqref{eq:action-laplacian} therefore
holds distributionally.
\end{proof}

\begin{lemma}[Geometric subsolution]
\label{lem:geometric-subsolution-comparison}
Under the hypotheses and notation of
Lemma~\ref{lem:geometric-action-laplacian}, let $k$ be the
kernel of $\Delta_g/2+Z\cdot D$. Set
\begin{equation*}
m_t=d\operatorname{vol}_{\widetilde g_t},\qquad
\vartheta_{\mathrm{vol}}=\tfrac12\operatorname{tr}_{\widetilde g_t}\dot{\widetilde g}_t,
\end{equation*}
and let $k^{\mathrm{mov}}(0,y;t,z)$ be the density, relative to $m_t$, of the
diffusion with generator $\Delta_{\widetilde g_t}/2$ started
from $y$ at time zero. There are positive $C_0,K,B$, with
the same dependence as $C$ in that lemma, for which the definitions
\begin{equation}
E(t,\mathcal S)=K\{\sqrt{1+\Theta}\,t^{3/4}(C_0+\mathcal S)^{1/4}
       +\sqrt{\varepsilon_{\mathrm{Ric}}}\,t^{1/2}(C_0+\mathcal S)^{1/2}
       +B(1+\Theta)t\},
\label{eq:geometric-subsolution}
\end{equation}
with $u_{\mathrm{sub}}=(2\pi t)^{-d/2}e^{-\mathcal S-E}$
give
\begin{equation*}
(\partial_t-\Delta_{\widetilde g_t}/2+\vartheta_{\mathrm{vol}})u_{\mathrm{sub}}\le0,\quad
k^{\mathrm{mov}}(0,y;t,z)\ge u_{\mathrm{sub}}(t,z),\quad
u_{\mathrm{sub}}(t,\cdot)m_t\rightharpoonup\delta_y\quad
\end{equation*}
when $t\downarrow0$.
The differential inequality is distributional, and the
measure convergence is weak (tested against bounded continuous functions).
\end{lemma}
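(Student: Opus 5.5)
The plan is the Cheeger--Yau route: verify the differential inequality by direct computation using the Hamilton--Jacobi identity and the Laplacian bound of Lemma~\ref{lem:geometric-action-laplacian}, then conclude by a parabolic maximum principle once the initial behaviour is established.

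\emph{Step 1: the sub-solution inequality.} Write $u_{\mathrm{sub}}=(2\pi t)^{-d/2}e^{-\Phi}$ with $\Phi=\mathcal S+E(t,\mathcal S)$. Then, distributionally,
\begin{equation*}
\frac{(\partial_t-\Delta/2+\vartheta_{\mathrm{vol}})u_{\mathrm{sub}}}{u_{\mathrm{sub}}}
=-\frac d{2t}-\Phi_t+\tfrac12\Delta\Phi-\tfrac12|\nabla\Phi|^2+\vartheta_{\mathrm{vol}}.
\end{equation*}
Put $E_{\mathcal S}=\partial_{\mathcal S}E\ge0$. Then $\Phi_t=(1+E_{\mathcal S})\mathcal S_t+E_t$ and $\nabla\Phi=(1+E_{\mathcal S})\nabla\mathcal S$. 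Since $E$ is concave in $\mathcal S$, we also get $\Delta\Phi\le(1+E_{\mathcal S})\Delta\mathcal S$, using semiconcavity of $\mathcal S$ to give the singular part the right sign.

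Insert $\mathcal S_t=-|\nabla\mathcal S|^2/2$. The quadratic terms combine to $-\tfrac12[(1+E_{\mathcal S})^2-(1+E_{\mathcal S})]|\nabla\mathcal S|^2\le0$, and we simply discard them. Next apply \eqref{eq:action-laplacian}. Its $d/t$ term cancels against $-d/(2t)$ only up to $\tfrac12E_{\mathcal S}\,d/t$. The remaining error terms are $C(1+E_{\mathcal S})\{1+\sqrt{1+\Theta}(\mathcal S/t)^{1/4}+\sqrt{\varepsilon_{\mathrm{Ric}}}(\mathcal S/t)^{1/2}\}$, together with $|\vartheta_{\mathrm{vol}}|\le C$. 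All of these must be dominated by $E_t$.

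This is a bookkeeping check on the three pieces of \eqref{eq:geometric-subsolution}:
\begin{itemize}
\item The term $\sqrt{1+\Theta}\,t^{3/4}(C_0+\mathcal S)^{1/4}$ has $t$-derivative $\tfrac34\sqrt{1+\Theta}\,t^{-1/4}(C_0+\mathcal S)^{1/4}$, which matches $\sqrt{1+\Theta}(\mathcal S/t)^{1/4}$ once $K\ge C$.
\item The Ricci term works the same way, with exponent $1/2$.
\item The linear term $B(1+\Theta)t$ absorbs the constants and the $|\vartheta_{\mathrm{vol}}|$ term.
\end{itemize}
The cross terms are of the form $E_{\mathcal S}\cdot(d/t+\cdots)$, where $E_{\mathcal S}\sim Kt^{3/4}(C_0+\mathcal S)^{-3/4}$ and similarly for the other piece. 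These are the delicate ones. $E_{\mathcal S}\,d/t\lesssim Kt^{-1/4}C_0^{-3/4}$ is absorbed by $E_t\gtrsim Kt^{-1/4}C_0^{1/4}$ once $C_0$ is large. Products like $E_{\mathcal S}(\mathcal S/t)^{1/4}$ are $O(K^2(1+\Theta))$ by Young's inequality, and $B(1+\Theta)$ covers them if $B\gtrsim K^2$. So the order of choices is $K$, then $C_0$, then $B$. This parameter tuning is where I expect the real obstacle, specifically making the $K^2$ cross terms close without circularity; if it fails, I would rescale $E$ by a small prefactor on the nonlinear part.

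\emph{Step 2: initial behaviour.} As $t\downarrow0$, $E\to0$ locally uniformly on bounded sets of $\mathcal S$. Moreover $\mathcal S\asymp|z-y|^2/t$ with error $O(|z-y|)$, because the flow $\Xi_t$ and the metric are $C^1$-close to their values at time $0$. A Laplace-method computation then gives $u_{\mathrm{sub}}(t,\cdot)\,m_t\rightharpoonup\delta_y$: the Gaussian normalisation $(2\pi t)^{-d/2}e^{-|\cdot|^2_{g_0(y)}/2t}\,d\mathrm{vol}$ has unit mass. Where $\mathcal S$ is large, the factor $e^{-E}\le1$ only reduces mass, so tightness and domination give weak convergence.

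\emph{Step 3: comparison.} The density $k^{\mathrm{mov}}$ relative to $m_t$ solves the forward equation $\partial_tk=\tfrac12\Delta k-\vartheta_{\mathrm{vol}}k$, and it converges weakly to $\delta_y$ as $t\downarrow0$. Take the difference $w=u_{\mathrm{sub}}-k^{\mathrm{mov}}$ and test it against the positive solution $\varphi$ of the backward adjoint problem from a time $t_1$ with nonnegative compactly supported data. The time derivative of $\int w\,\varphi\,dm_t$ is $\le0$ in the distributional sense. Its limit at $0$ is $\varphi(0,y)-\varphi(0,y)=0$, by Step 2 together with continuity of $\varphi$. Hence $\int w(t_1)\,\psi\le0$ for every such $\psi\ge0$, which gives $k^{\mathrm{mov}}\ge u_{\mathrm{sub}}$.

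The justification needs Gaussian decay of both functions at spatial infinity, which holds since $\mathcal S\ge c|z-y|^2/t-C$. It also needs the smoothness assumed in Lemma~\ref{lem:geometric-action-laplacian}, so that the integrations by parts against the distributional inequality are legitimate.
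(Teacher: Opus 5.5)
Your proposal is correct and follows the paper's proof step for step: the same Hamilton--Jacobi substitution combined with \eqref{eq:action-laplacian}, the signs from $E_{\mathcal S}\ge0$, $E_{\mathcal S\mathcal S}\le0$ and semiconcavity of $\mathcal S$, absorption through $C_0,K,B$, and Gaussian rescaling for the initial weak limit. The cross terms you worried about are harmless: $E_{\mathcal S}$ times the bracket in \eqref{eq:action-laplacian} is at most $CK(1+\Theta)$ directly, and the ratio of $dE_{\mathcal S}/(2t)$ to the explicit time derivatives is $d/(6(C_0+\mathcal S))$ or $d/(2(C_0+\mathcal S))$, independent of $K$, so the order of choosing $K$ and $C_0$ is immaterial. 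Finally, your duality argument against the backward evolution $\varphi(s,\cdot)=\mathsf P_{s,t_1}\psi$ is just the dual form of the paper's comparison $\nu^{\mathrm{sub}}_t\le\mathsf P^*_{\tau,t}\nu^{\mathrm{sub}}_\tau$ on balls, followed by $\tau\downarrow0$ using continuity of the backward kernel at $(0,y)$.
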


\begin{proof}
Write $\Delta=\Delta_{\widetilde g_t}$ and
$\nabla=\nabla_{\widetilde g_t}$. Since
$\partial_tm_t=\vartheta_{\mathrm{vol}} m_t$, the density equation is
$\partial_t k^{\mathrm{mov}}=\Delta k^{\mathrm{mov}}/2-\vartheta_{\mathrm{vol}} k^{\mathrm{mov}}$.
With $E_t^{\rm exp}$ denoting differentiation at fixed $\mathcal S$,
the Hamilton--Jacobi equation and direct substitution give
\begin{equation*}
\frac{(\partial_t-\Delta/2+\vartheta_{\mathrm{vol}})u_{\mathrm{sub}}}{u_{\mathrm{sub}}}
 =-\frac d{2t}+\frac{1+E_{\mathcal S}}{2}\Delta \mathcal S+\vartheta_{\mathrm{vol}}-E_t^{\rm exp}
 -\tfrac12(E_{\mathcal S}+E_{\mathcal S}^2-E_{\mathcal S\mathcal S})|\nabla \mathcal S|^2 .
\end{equation*}
The last term is nonpositive since $E_{\mathcal S}\ge0$ and
$E_{\mathcal S\mathcal S}\le0$. Put
$Q(t,\mathcal S)=C\{1+\sqrt{1+\Theta}(\mathcal S/t)^{1/4}
                   +\sqrt{\varepsilon_{\mathrm{Ric}}}(\mathcal S/t)^{1/2}\}$,
with $C$ from \eqref{eq:action-laplacian}.
First choose $C_0$ large: the ratios of
$dE_{\mathcal S}/(2t)$ to the explicit time derivatives of the
$t^{3/4}$ and $t^{1/2}$ terms in \eqref{eq:geometric-subsolution} are at most
$d/[6(C_0+\mathcal S)]$ and $d/[2(C_0+\mathcal S)]$, respectively.
Next choose $K$ so these derivatives absorb
$C\sqrt{1+\Theta}(\mathcal S/t)^{1/4}/2$ and
$C\sqrt{\varepsilon_{\mathrm{Ric}}}(\mathcal S/t)^{1/2}/2$ in $Q/2$.
Substitution of \eqref{eq:geometric-subsolution} into $E_{\mathcal S}Q$ gives
$E_{\mathcal S}Q\le CK(1+\Theta)$, since $\varepsilon_{\mathrm{Ric}}\le1$ and $t\le T$.
Finally choose $B$ to absorb these products, $\vartheta_{\mathrm{vol}}$
and the bounded terms. This gives
\begin{equation*}
\frac{dE_{\mathcal S}}{2t}+\frac{1+E_{\mathcal S}}{2}Q+\vartheta_{\mathrm{vol}}\le E_t^{\rm exp},
\qquad
(\partial_t-\Delta/2+\vartheta_{\mathrm{vol}})u_{\mathrm{sub}}\le0.
\end{equation*}
Local spatial semiconcavity and local time Lipschitz continuity
justify the distributional chain rule. Its singular part is
\begin{equation*}
\bigl[(\partial_t-\Delta/2+\vartheta_{\mathrm{vol}})u_{\mathrm{sub}}\bigr]_{\rm sing}
 =\tfrac12u_{\mathrm{sub}}(1+E_{\mathcal S})(\Delta \mathcal S)_{\rm sing}\le0,
\end{equation*}
because $(\Delta \mathcal S)_{\rm sing}\le0$ and $1+E_{\mathcal S}\ge0$.
Thus the differential inequality holds across cut points.

Keep the smooth coefficients, and hence $\Theta,\varepsilon_{\mathrm{Ric}}$, fixed
throughout the initial limit and comparison below.
As $t\downarrow0$,
$\mathcal S(t,y+\sqrt t\,z)\to|z|_{\widetilde g_0(y)}^2/2$ and
$E(t,\mathcal S(t,y+\sqrt t\,z))\to0$, locally uniformly in $z$.
Uniform ellipticity of $\widetilde g$ and $E\ge0$ also give
\begin{equation*}
0\le u_{\mathrm{sub}}(t,z)\le Ct^{-d/2}e^{-c|z-y|^2/t},\qquad
c\,dz\le m_t\le C\,dz.
\end{equation*}
Rescaling the integral near $y$ and controlling its complement
by this Gaussian bound prove the weak convergence
$\nu_t^{\mathrm{sub}}:=u_{\mathrm{sub}}(t,\cdot)m_t\rightharpoonup\delta_y$.

Let $\mathsf P_{s,t}$ be the Markov evolution of
$\Delta_{\widetilde g_s}/2$, and let $\mathsf P_{s,t}^*$
act on finite measures. The identity $\partial_sm_s=\vartheta_{\mathrm{vol}} m_s$
cancels $-\vartheta_{\mathrm{vol}} u_{\mathrm{sub}}$ and gives, for every nonnegative
$f\in C_c^\infty(\R^d)$, the forward inequality
\begin{equation*}
\frac d{ds}\int f\,d\nu_s^{\mathrm{sub}}
 \le\int \tfrac12\Delta_{\widetilde g_s}f\,d\nu_s^{\mathrm{sub}}
\quad\text{in time distributions}.
\end{equation*}

Fix $0<\tau<t$. On a ball $B_R$, compare $u_{\mathrm{sub}}$ with the
nonnegative density, relative to $m_s$, of
$\mathsf P_{\tau,s}^*\nu_\tau^{\mathrm{sub}}$.
They agree at $s=\tau$, and weak parabolic comparison bounds
their positive difference by
\begin{equation*}
\exp{(\|\vartheta_{\mathrm{vol}}\|_\infty(t-\tau))}
 \sup_{\substack{\tau\le s\le t,\,z\in\partial B_R}}u_{\mathrm{sub}}(s,z).
\end{equation*}
The Gaussian bound makes this quantity tend to zero as
$R\to\infty$. Consequently, writing
$k^{\mathrm{mov}}(\tau,w;t,z)$ for the transition density relative
to the terminal volume $m_t$, we obtain
\begin{equation}
\nu_t^{\mathrm{sub}}\le\mathsf P_{\tau,t}^*\nu_\tau^{\mathrm{sub}},\qquad
u_{\mathrm{sub}}(t,z)\le\int k^{\mathrm{mov}}(\tau,w;t,z)u_{\mathrm{sub}}(\tau,w)\,m_\tau(dw).
\label{eq:geometric-measure-comparison}
\end{equation}
The flow change of variables gives
\begin{equation*}
k^{\mathrm{mov}}(s,w;t,z)
 =\frac{k(s,\Xi_s(w);t,\Xi_t(z))}{w_g(t,\Xi_t(z))};
\end{equation*}
the terminal Jacobian is included in $m_t$.
Thus, for fixed $t>0,z$, this kernel is uniformly bounded in
$w$ when $0\le\tau\le t/2$, and is jointly continuous at
$(\tau,w)=(0,y)$, by \eqref{eq:kernel-gaussian} and
Lemma~\ref{lem:kernel-stability}.
The weak initial convergence, with this bound and continuity,
therefore allows $\tau\downarrow0$ in
\eqref{eq:geometric-measure-comparison}, giving
$u_{\mathrm{sub}}(t,z)\le k^{\mathrm{mov}}(0,y;t,z)$.
Continuity of both densities makes the comparison pointwise.
\end{proof}

\begin{lemma}[Geometric regularization]
\label{lem:geometric-smoothing}
Under Assumption~\ref{ass:geometry}, fix any
$0\le\varphi\in C_c^\infty(B_1)$ with $\int\varphi=1$, and write
$\varphi_\delta(x)=\delta^{-d}\varphi(x/\delta)$. For $0<\delta\le1$, set
\begin{equation*}
Z_\delta=Z*\varphi_\delta,\qquad b_\delta=b_g+Z_\delta.
\end{equation*}
Here the convolution is spatial; the corresponding action is
$\mathcal A^{g,Z_\delta}$ from \eqref{eq:geometric-action}. Let $k_\delta$
have covariance $a=g^{-1}$ and drift $b_\delta$.
For $0<\varepsilon\le1$, also set
\begin{equation*}
g_\varepsilon=g*\varphi_\varepsilon,\qquad
a_\varepsilon=g_\varepsilon^{-1},\qquad
h_{\varepsilon,\delta}^{\mathrm{met}}=\dot g_\varepsilon+
                         \mathcal L_{Z_\delta}g_\varepsilon,
\end{equation*}
and let $k_{\varepsilon,\delta}$ have covariance $a_\varepsilon$
and drift $b_{g_\varepsilon}+Z_\delta$.
These approximations have common ellipticity, spatial
$C^{1,1}$ metric bounds, and bounds for
$\dot g_\varepsilon,Z_\delta,DZ_\delta$.
The smoothed curvature satisfies
\begin{equation}
\|\operatorname{Ric}(g_\varepsilon)
       -(\operatorname{Ric}g)*\varphi_\varepsilon\|_\infty
 \le C\varepsilon, \qquad
\operatorname{Ric}(g_\varepsilon)\succeq-\varepsilon_{\mathrm{Ric},\varepsilon} g_\varepsilon,
\quad 0\le\varepsilon_{\mathrm{Ric},\varepsilon}\le C\varepsilon.
\label{eq:smoothed-curvature}
\end{equation}
For the metric variation, we have
\begin{equation}
\begin{aligned}
\|\mathcal B_{g_\varepsilon}
       (\mathcal L_{Z_\delta}g_\varepsilon)\|_\infty
 &\le C(\|Z_\delta\|_{W^{1,\infty}}+\|D^2Z_\delta\|_\infty)
       \le C(1+\delta^{-1}),\\
\|h_{\varepsilon,\delta}^{\mathrm{met}}\|_\infty&\le C,\qquad
\Theta_{\varepsilon,\delta}
 :=\|\mathcal B_{g_\varepsilon}(h_{\varepsilon,\delta}^{\mathrm{met}})\|_\infty
 \le C(1+\delta^{-1}).
\end{aligned}
\label{eq:smoothed-metric-variation}
\end{equation}
For fixed $\delta$, as $\varepsilon\downarrow0$,
$k_{\varepsilon,\delta}\to k_\delta$ locally uniformly for
$0<t\le T$ and finite $x,y$, and
$\mathcal A^{g_\varepsilon,Z_\delta}(0,y;t,x)
\to\mathcal A^{g,Z_\delta}(0,y;t,x)$ at every such endpoint pair.
The drift replacement satisfies
\begin{equation*}
\begin{aligned}
|\mathcal A^{g,Z_\delta}(0,y;t,x)-\mathcal A^{g,Z}(0,y;t,x)|
 &\le C\delta(|x-y|+t),\\
|\log k_\delta(0,y;t,x)-\log k(0,y;t,x)|
 &\le C\delta(|x-y|+\sqrt t).
\end{aligned}
\end{equation*}
All displayed bounds are uniform for $0<t\le T$, $x,y\in\R^d$
and $0<\varepsilon,\delta\le1$; their constants depend only
on the geometric data and $\|D\varphi\|_{L^1}$. At each fixed
$\varepsilon,\delta$, the higher spatial derivative bounds
required by Lemma~\ref{lem:geometric-action-laplacian} are finite.
\end{lemma}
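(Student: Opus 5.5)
The plan is to verify the claims of Lemma~\ref{lem:geometric-smoothing} one group at a time. The main tools are standard mollifier commutator estimates, the kernel stability Lemma~\ref{lem:kernel-stability}, and the drift comparison Lemma~\ref{lem:drift-kernel-comparison}.

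\emph{Uniform bounds.} Spatial mollification preserves ellipticity of $g$, since convex averages of matrices in $[\Lambda_a^{-1},\lambda_a^{-1}]$ stay there. It also preserves the spatial $C^{1,1}$ bound and the bound on $\dot g_\varepsilon=\dot g*\varphi_\varepsilon$. For $Z_\delta$ we have $\|Z_\delta\|_{W^{1,\infty}}\le\|Z\|_{W^{1,\infty}}$. Moreover $\|D^2Z_\delta\|_\infty\le\|DZ\|_\infty\|D\varphi\|_{L^1}\delta^{-1}$, because one derivative falls on the mollifier. Each fixed $(\varepsilon,\delta)$ gives smooth coefficients with bounded spatial derivatives of all orders.

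\emph{Curvature and metric variation.} For \eqref{eq:smoothed-curvature}, write $\operatorname{Ric}$ in the form \eqref{eq:ricci-coordinate}. The first-order terms $\partial\Gamma$ are linear in $\partial^2g$ with coefficients $a$ that are Lipschitz. The remaining terms are quadratic in $\partial g$ with Lipschitz coefficients. Mollification commutes with $\partial^2 g$ up to commutators of the form $[\varphi_\varepsilon*,F](\partial^2 g)$, where $F$ is Lipschitz and $\partial^2g\in L^\infty$; each such commutator is $O(\varepsilon)$ in $L^\infty$. The quadratic terms change by $O(\varepsilon)$ because $\partial g$ is Lipschitz. This yields the $C\varepsilon$ bound. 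Since $\operatorname{Ric}g\succeq0$ a.e., its mollification is $\succeq0$ as well, and ellipticity converts the $C\varepsilon$ error into $-C\varepsilon g_\varepsilon$. For \eqref{eq:smoothed-metric-variation}, the bound on $h^{\mathrm{met}}$ is immediate from \eqref{eq:metric-variation}, since it involves only $Z_\delta$, $DZ_\delta$, $Dg_\varepsilon$ and $\dot g_\varepsilon$. The Bianchi operator \eqref{eq:metric-trace-combination} applied to $\mathcal L_{Z_\delta}g_\varepsilon$ involves at most $D^2Z_\delta$ and $D^2 g_\varepsilon$, which gives $C(1+\delta^{-1})$. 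The bound for $\mathcal B_{g_\varepsilon}(\dot g_\varepsilon)$ is the step I expect to be the main obstacle. Naively it contains $D\dot g_\varepsilon$, which is not assumed bounded. I would use \eqref{eq:geometric-time-coordinates}, which expresses $\mathcal B_g(\dot g)$ through $\partial_tb_g$ and $\partial_ta$, both in $L^\infty$. I would then show that $\mathcal B_{g_\varepsilon}(\dot g_\varepsilon)-\mathcal B_g(\dot g)*\varphi_\varepsilon$ is a sum of commutators $[\varphi_\varepsilon*,F]\partial\dot g$ with $F$ Lipschitz. Writing each commutator as an integral of $(F(x)-F(x-w))\partial\dot g(x-w)\varphi_\varepsilon(w)$ and moving the derivative onto $\varphi_\varepsilon$ and $F$ gives $O(\|\dot g\|_\infty)$.

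\emph{Limits and drift replacement.} As $\varepsilon\downarrow0$, $a_\varepsilon\to a$ uniformly and $b_{g_\varepsilon}\to b_g$ in $L^p_{\mathrm{loc}}$, since $\partial g_\varepsilon\to\partial g$ boundedly a.e. Lemma~\ref{lem:kernel-stability} therefore gives $k_{\varepsilon,\delta}\to k_\delta$ locally uniformly. The action converges by evaluating each action on the other's minimizer; the minimizers are bounded in $H^1$ by the coercivity estimate from Lemma~\ref{lem:geometric-paths}, and the metrics converge uniformly. For the drift replacement, $\|Z_\delta-Z\|_\infty\le\delta\Lip_xZ$. Comparing the two action integrands on each other's minimizers gives an error at most $C\delta\int|\dot\gamma-Z|\,ds+C\delta^2t$. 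By Cauchy--Schwarz and the momentum bound \eqref{eq:geometric-momentum}, this is $\le C\delta(|x-y|+t)$. The kernel estimate is Lemma~\ref{lem:drift-kernel-comparison} applied with $b_0=b$ and $b_1=b_\delta$, using $\|b_\delta-b\|_\infty\le C\delta$.
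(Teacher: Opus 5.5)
Your proposal is correct and follows the paper's proof essentially step by step. You use the same three ingredients: mollifier commutator estimates for the curvature and, via \eqref{eq:geometric-time-coordinates}, for $\mathcal B_{g_\varepsilon}(\dot g_\varepsilon)$ with the derivative moved onto $\varphi_\varepsilon$; comparison of each action on the other's minimizer; and Lemmas~\ref{lem:kernel-stability} and~\ref{lem:drift-kernel-comparison} for the kernels. One term in the Bianchi difference is not a pure commutator, namely $(a_\varepsilon-a)\,\partial(\dot g*\varphi_\varepsilon)$, but it is $O(\varepsilon)\cdot O(\varepsilon^{-1}\|\dot g\|_\infty)=O(1)$; this is exactly the $\varepsilon^{-1}\|Q_\varepsilon-Q\|_\infty$ term in the paper's commutator bound.
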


\begin{proof}
Spatial Lipschitz continuity gives
\begin{equation*}
\|Z_\delta-Z\|_\infty\le C\delta,\qquad
\|DZ_\delta\|_\infty\le C,\qquad
\|D^2Z_\delta\|_\infty\le C\delta^{-1}.
\end{equation*}
For each component involving a second spatial derivative,
the uniform $C^{1,1}$ bound gives
\begin{equation*}
\left\|a_\varepsilon^{k\ell}\partial_{ij}g_{\varepsilon,mn}
       -(a^{k\ell}\partial_{ij}g_{mn})*\varphi_\varepsilon\right\|_\infty
 \le C\varepsilon .
\end{equation*}
Indeed, $|a_\varepsilon(x)-a(y)|\le C\varepsilon$ on the
convolution support and $\|D^2g\|_\infty\le C$.
The remaining terms in \eqref{eq:ricci-coordinate} contain
only $g,a,Dg$, and $\|g_\varepsilon-g\|_{W^{1,\infty}}\le C\varepsilon$.
Convolution preserves nonnegativity of the tensor
$\operatorname{Ric}g$; together with these estimates, this gives
\eqref{eq:smoothed-curvature}.

We estimate the two contributions to
$\mathcal B_{g_\varepsilon}(h_{\varepsilon,\delta}^{\mathrm{met}})$ separately.
For $u\in L^\infty$, $Q\in W^{1,\infty}$ and bounded
$Q_\varepsilon$, integration by parts gives
\begin{equation*}
\begin{aligned}
Q_\varepsilon(x)\partial_j(u*\varphi_\varepsilon)(x)
 -(Q\partial_ju)*\varphi_\varepsilon(x)
 ={}&\int\partial_j\varphi_\varepsilon(x-y)
            [Q_\varepsilon(x)-Q(y)]u(y)\,dy\\
 &+\int\varphi_\varepsilon(x-y)\partial_jQ(y)u(y)\,dy .
\end{aligned}
\end{equation*}
Hence, with $C$ depending only on $d$ and $\|D\varphi\|_{L^1}$,
\begin{equation}
\|Q_\varepsilon\partial_j(u*\varphi_\varepsilon)
              -(Q\partial_ju)*\varphi_\varepsilon\|_\infty
 \le C(\|DQ\|_\infty
          +\varepsilon^{-1}\|Q_\varepsilon-Q\|_\infty)\|u\|_\infty .
\label{eq:metric-smoothing-commutator}
\end{equation}
Apply \eqref{eq:metric-smoothing-commutator} to
\eqref{eq:metric-trace-combination} with $u$ a component of
$\dot g$, $Q=a^{ij}$ and $Q_\varepsilon=a_\varepsilon^{ij}$.
Here $\|a_\varepsilon-a\|_\infty\le C\varepsilon$, and the
Christoffel products are bounded. Equation
\eqref{eq:geometric-time-coordinates} therefore gives
\begin{equation*}
\|\mathcal B_{g_\varepsilon}(\dot g_\varepsilon)\|_\infty
 \le \|\mathcal B_g(\dot g)\|_\infty+C\le C.
\end{equation*}
For the Lie derivative, substitute \eqref{eq:metric-variation}
into \eqref{eq:metric-trace-combination} and use
$\|g_\varepsilon\|_{C^{1,1}}\le C$. This proves
\eqref{eq:smoothed-metric-variation}.

The minimizing paths
for $\mathcal A^{g_\varepsilon,Z_\delta}$ and
$\mathcal A^{g,Z_\delta}$ have kinetic energy at most $C(M_y+t)$,
where $M_y=|x-y|^2/t$.
Evaluating the metrics on each other's minimizers gives
\begin{equation*}
|\mathcal A^{g_\varepsilon,Z_\delta}(0,y;t,x)
                 -\mathcal A^{g,Z_\delta}(0,y;t,x)|
 \le C\varepsilon(M_y+t)\longrightarrow0.
\end{equation*}
Uniform convergence of $g_\varepsilon,Dg_\varepsilon$ and
Lemma~\ref{lem:kernel-stability} give
$k_{\varepsilon,\delta}\to k_\delta$.

Along a minimizer for either drift,
$\int_0^t|\dot\gamma-Z_i(s,\gamma)|\,ds
 \le C(|x-y|+t)$, where $Z_i=Z$ or $Z_\delta$.
Comparing the squared integrands in \eqref{eq:geometric-action},
and applying \eqref{eq:drift-kernel-comparison} with
$b_0=b_g+Z$ and $b_1=b_\delta$, proves the two stated
drift-replacement bounds. Spatial convolution of the bounded
fields $g,\dot g,Z$ gives the higher spatial regularity at
fixed $\varepsilon,\delta$.
\end{proof}

\begin{proposition}[Geometric kernel remainder]\label{prop:geometric-kernel}
Under Assumption~\ref{ass:geometry}, for all $0<t\le T$ and
$y,x\in\R^d$,
\begin{equation}
\left|\log\{t^{d/2}k(0,y;t,x)\}+\mathcal A^{g,Z}(0,y;t,x)\right|
 \le C\{1+M_y^{1/3}+\log(2+M_y)\},\qquad M_y=\frac{|x-y|^2}{t}.
\label{eq:geometric-kernel}
\end{equation}
The constant depends only on the geometric data.
\end{proposition}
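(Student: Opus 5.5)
We prove the two one-sided bounds in \eqref{eq:geometric-kernel} separately, after reducing to smooth coefficients. By Lemma~\ref{lem:geometric-smoothing}, replacing $Z$ by $Z_\delta$ changes $\log k$ by $C\delta(|x-y|+\sqrt t)$ and the action by $C\delta(|x-y|+t)$. Both errors are at most $C\delta(\sqrt{tM_y}+1)$. We then pass to $(g_\varepsilon,Z_\delta)$ at fixed $\delta$ and let $\varepsilon\downarrow0$ at the end, using the kernel and action convergence from the same lemma. Every bound below must therefore be uniform in $\varepsilon$ and carry only an explicit dependence on $\delta$. Throughout, $\Theta_{\varepsilon,\delta}\le C(1+\delta^{-1})$ and $\varepsilon_{\mathrm{Ric},\varepsilon}\le C\varepsilon$, by \eqref{eq:smoothed-metric-variation} and \eqref{eq:smoothed-curvature}. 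For $M_y\le1$, \eqref{eq:kernel-gaussian} and \eqref{eq:action-ellipticity} suffice, so we assume $M_y\ge1$.

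\emph{Lower bound.} Apply Lemma~\ref{lem:geometric-subsolution-comparison} to the smoothed data. It gives $k^{\mathrm{mov}}\ge(2\pi t)^{-d/2}e^{-\mathcal S-E}$. Undo the flow change of variables; the Jacobian $w_g$ and the flow are bounded, so $\mathcal S(t,\Xi_t^{-1}x)=\mathcal A^{g_\varepsilon,Z_\delta}(0,y;t,x)$. Since $\mathcal S\le C(M_y+t)$, \eqref{eq:geometric-subsolution} gives
\[
E\le C\{\sqrt{1+\delta^{-1}}\,t^{1/2}M_y^{1/4}+\sqrt\varepsilon\,M_y^{1/2}+(1+\delta^{-1})t\}.
\]
Let $\varepsilon\downarrow0$ first. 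Then choose $\delta=\min\{1,M_y^{-1/3}\}$ and balance against the drift-replacement error $\delta\sqrt{tM_y}\le CM_y^{1/6}$. The $\Theta$ term becomes $C\delta^{-1/2}M_y^{1/4}\le CM_y^{1/3}$, and the $\delta^{-1}t$ term is $O(M_y^{1/3})$. This yields $\log\{t^{d/2}k\}+\mathcal A^{g,Z}\ge-C(1+M_y^{1/3})$. The point of tracking $\Theta$ through $t^{3/4}(C_0+\mathcal S)^{1/4}$ is precisely that the $\delta^{-1/2}$ blowup meets only the power $M_y^{1/4}$, so the optimization closes.

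\emph{Upper bound.} Here we would not use curvature. We compare with Proposition~\ref{prop:kernel-action}'s proof, i.e. the Norris--Stroock upper bound \eqref{eq:kernel-action-upper}, $k\le Ch^{-d/2}(1+M)^Ne^{-\mathcal E+Ch}$. Its energy $\mathcal E$ uses the drift $\widetilde b=b-\tfrac12\operatorname{div}a$. One checks $\widetilde b=Z+b_g-\tfrac12\operatorname{div}a$, and the last two terms combine to a bounded field $\beta$ built from Christoffel symbols. The obstacle is that this $\beta$ is a genuine $O(1)$ drift mismatch, and a naive comparison between $\mathcal E$ and $\mathcal A^{g,Z}$ loses $O(|x-y|)$, which is too large. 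To avoid this we would apply Norris--Stroock to the operator written in divergence form with respect to the Riemannian volume $w_g\,dx$, i.e. $\tfrac12\Delta_g+Z\cdot D$. In that form the first-order part is exactly $Z$, and the density relative to $w_g\,dx$ differs from $k$ by a bounded factor. Their theorem then gives $\mathcal E=\mathcal A^{g,Z}$ exactly, and the polynomial prefactor contributes the $\log(2+M_y)$ term.

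If the volume-weighted version is not directly available in the cited theorem, the fallback is to run the subsolution argument of Lemma~\ref{lem:geometric-subsolution-comparison} in reverse, as a supersolution. That would require a lower Laplacian bound for $\mathcal S$, which fails at cut points, so the divergence-form route is preferred. This upper-bound matching of the drift is the step we expect to be the main obstacle.
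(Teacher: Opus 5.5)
Your architecture matches the paper's proof: a curvature-free upper bound from Theorem~2.7 of Norris and Stroock, and a lower bound from Lemma~\ref{lem:geometric-subsolution-comparison} applied to $(g_\varepsilon,Z_\delta)$. After that you let $\varepsilon\downarrow0$ at fixed $\delta$, apply the drift replacement of Lemma~\ref{lem:geometric-smoothing}, and optimize in $\delta$.

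The optimization step fails as written. With $\delta=M_y^{-1/3}$ the Bianchi term is $\delta^{-1/2}M_y^{1/4}=M_y^{1/6+1/4}=M_y^{5/12}$, not $O(M_y^{1/3})$. So your argument only gives $\log\{t^{d/2}k\}+\mathcal A^{g,Z}\ge-C(1+M_y^{5/12})$. The binding trade-off is between this $\Theta$-term and the drift-replacement error $\delta\sqrt{M_y}$; the $\delta^{-1}$ term is never critical. Setting $\delta^{-1/2}M_y^{1/4}=\delta M_y^{1/2}$ gives $\delta\asymp M_y^{-1/6}$. The paper takes $\delta=(1+M_y)^{-1/6}$, for which
\[
\delta^{-1}=(1+M_y)^{1/6},\qquad
\delta^{-1/2}M_y^{1/4}\le(1+M_y)^{1/3},\qquad
\delta(1+\sqrt{M_y})\le2(1+M_y)^{1/3}.
\]
This balance is exactly where the exponent $1/3$ comes from.

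For the upper bound you need neither a volume-weighted version of the cited theorem nor the supersolution fallback. That theorem is stated for Lebesgue divergence form, so "divergence form with respect to $w_g\,dx$" has to be implemented by a conjugation. Put $\varphi=\tfrac12\log w_g$. Then
\[
e^{\varphi}Le^{-\varphi}-\partial_t\varphi=\tfrac12\operatorname{div}(aD)+Z\cdot D+c,
\]
because the extra drift $b_g-\tfrac12\operatorname{div}a=aD\varphi$ is removed exactly by the conjugation. The resulting potential is
\[
c=-\tfrac12\operatorname{div}(aD\varphi)-\tfrac12|D\varphi|_a^2-Z\cdot D\varphi-\partial_t\varphi,
\]
which is bounded by the spatial $C^{1,1}$ bound on $g$ together with $\dot g\in L^\infty$.

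The kernel changes only by the bounded factor $e^{\varphi(0,y)-\varphi(t,x)}$. The theorem with coefficient quadruple $(a/2,2a^{-1}Z,0,c)$ then has energy exactly $\mathcal A^{g,Z}$, so $k\le Ct^{-d/2}(1+M_y)^Ne^{-\mathcal A^{g,Z}}$. The polynomial prefactor gives the $\log(2+M_y)$ term, and no curvature is used.

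One more point: $Z$, and likewise your $Z_\delta$, is only Borel in time. You must mollify $g,Z$ in space and time and then pass to the limit. The kernels converge by Lemma~\ref{lem:kernel-stability}. The actions converge because minimizing paths stay in a common ball $B_R$ and $Z_n\to Z$ in $L^2((0,T);C(B_R))$.
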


\begin{proof}
For the upper bound, work in the original coordinates and set
$w_g=\sqrt{\det g}$ and $\varphi=\tfrac12\log w_g$.
For $L=\Delta_g/2+Z\cdot D$, conjugation by the volume factor gives
\begin{equation*}
\begin{aligned}
&\widetilde L
 =e^\varphi L e^{-\varphi}-\partial_t\varphi
   =\tfrac12\operatorname{div}(aD)+Z\cdot D+c,\\
&c=-\tfrac12\operatorname{div}(aD\varphi)
   -\tfrac12D\varphi^{\mathsf T}aD\varphi
   -Z\cdot D\varphi-\partial_t\varphi,\\
&\widetilde k(r,y;t,x)
 =e^{\varphi(r,y)-\varphi(t,x)}k(r,y;t,x).
\end{aligned}
\end{equation*}
The spatial $C^{1,1}$ bound on $g$ and the bounds on
$\dot g,Z$ give $\|c\|_\infty+\|\varphi\|_\infty\le C$.
Theorem~2.7 of \citet{norris-stroock-1991}, with coefficient
quadruple $(a/2,2a^{-1}Z,0,c)$, has energy exactly
$\mathcal A^{g,Z}$. Since $\mathcal A^{g,Z}\le C(M_y+t)$, it gives
\begin{equation}
k(0,y;t,x)\le Ct^{-d/2}(1+M_y)^N e^{-\mathcal A^{g,Z}(0,y;t,x)}.
\label{eq:geometric-kernel-upper}
\end{equation}
To apply \citet[Theorem~2.7]{norris-stroock-1991}
when $Z$ is Borel in time, mollify $g,Z$
in space and time. These approximations preserve the bounds on
$c,\varphi$ and the reference data. On each fixed ball $B_R$,
$g_n\to g$ uniformly and $Z_n\to Z$ in
$L^2((0,T);C(B_R))$. Minimizing paths at fixed endpoints remain
in a common ball $B_R$ and have kinetic energy at most
$C(M_y+t)$. Comparison of their integrands gives
\begin{equation*}
|\mathcal A^{g_n,Z_n}-\mathcal A^{g,Z}|
 \le C\left\{(M_y+t)\|g_n-g\|_\infty
    +\sqrt{M_y+t}\,\|Z_n-Z\|_{L^2((0,t);C(B_R))}\right\}
 \longrightarrow0 .
\end{equation*}
The drifts $b_{g_n}+Z_n$ converge locally in finite $L^p$, so
Lemma~\ref{lem:kernel-stability} passes the bound to $k$.
No Bianchi or curvature bound is needed for this upper estimate.

In the smooth setting of Lemma~\ref{lem:geometric-action-laplacian},
Lemma~\ref{lem:geometric-subsolution-comparison} supplies the
lower bound, with explicit dependence on $\Theta,\varepsilon_{\mathrm{Ric}}$.
Using $\mathcal A^{g,Z}\le C(M_y+t)$ in its correction $E$ and
combining with \eqref{eq:geometric-kernel-upper} gives
\begin{equation}
\left|\log\{t^{d/2}k\}+\mathcal A^{g,Z}\right|
 \le C\{1+\Theta+\sqrt{1+\Theta}\,M_y^{1/4}
                  +\sqrt{\varepsilon_{\mathrm{Ric}}}\,M_y^{1/2}+\log(2+M_y)\}.
\label{eq:geometric-kernel-trace}
\end{equation}

Take the approximations from Lemma~\ref{lem:geometric-smoothing},
choosing the mollifier with $\|D\varphi\|_{L^1}$ bounded only
in terms of $d$.
For fixed $\delta$ and small $\varepsilon$, they satisfy the
smooth hypotheses with $\varepsilon_{\mathrm{Ric},\varepsilon}\le1$. Apply
\eqref{eq:geometric-kernel-trace} at fixed endpoints and let
$\varepsilon\downarrow0$. Their common bounds keep its constant
uniform, and the action and kernel convergences remove the
curvature term. Equations~\eqref{eq:smoothed-curvature} and
\eqref{eq:smoothed-metric-variation} thus give
\begin{equation*}
\left|\log\{t^{d/2}k_\delta\}+\mathcal A^{g,Z_\delta}\right|
 \le C\{1+\delta^{-1}+\delta^{-1/2}M_y^{1/4}
                         +\log(2+M_y)\}.
\end{equation*}
The same constant works for every endpoint pair. The
drift-replacement bounds of Lemma~\ref{lem:geometric-smoothing}
and $|x-y|\le\sqrt{TM_y}$ therefore imply
\begin{equation*}
\left|\log\{t^{d/2}k(0,y;t,x)\}+\mathcal A^{g,Z}(0,y;t,x)\right|
 \le C\{1+\delta^{-1}+\delta^{-1/2}M_y^{1/4}
               +\delta(1+\sqrt{M_y})+\log(2+M_y)\}.
\end{equation*}
Taking $\delta=(1+M_y)^{-1/6}$ proves
\eqref{eq:geometric-kernel} for all $0<t\le T$ and $x,y\in\R^d$.
\end{proof}

The kernel remainder gives the following comparison for the
actual marginals. The local mass is retained until increments
are taken at nearby endpoints.

\begin{proposition}[Marginal density and action]
\label{prop:geometric-marginals}
Under Assumption~\ref{ass:geometry}, let $\mu\in\mathcal D$,
$S=\supp\mu$, and use the scales in \eqref{eq:tail-window-scales}. Put
\begin{equation*}
E_{\mathrm{geo}}(M)=1+M^{1/3}+\log(2+M).
\end{equation*}
For all sufficiently large $M$, the following estimates hold
with constants and threshold depending only on the geometric data
and $C_D$. With $m_\mu$ from
Lemma~\ref{lem:initial-kernel-integration},
\begin{equation}
\left|-\log\rho(t,x)+\log m_\mu(x)
          -\Psi_S(t,x)-\tfrac d2\log t\right|
 \le CE_{\mathrm{geo}}(M).
\label{eq:compact-geometric-cost}
\end{equation}
Every $\gamma\in\operatorname{Min}^{g,Z}_S(t,x)$ has
$|\gamma(0)-x|\le Ct/e$.

In particular, take $L=\lceil M^{17/48}\rceil$, fix $B<\infty$
and put $(t',x')=(t+Le^2s,x+Lez)$. The constant and threshold
additionally depend on $B$. Uniformly for $s\le0$,
$|s|+|z|\le B$ and $0<t'\le T$,
\begin{equation}
\left|\frac{\Psi_S(t',x')-\Psi_S(t,x)}{2L}
       +\frac1{2L}\log\frac{\rho(t',x')}{\rho(t,x)}\right|
 \le C\frac{E_{\mathrm{geo}}(M)}{L}
 =O(M^{-1/48}).
\label{eq:geometric-density-increments}
\end{equation}
\end{proposition}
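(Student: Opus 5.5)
The plan is to repeat the proof of Lemma~\ref{lem:initial-kernel-integration}, replacing the Gaussian bounds \eqref{eq:kernel-gaussian} by the geometric kernel estimate \eqref{eq:geometric-kernel}. The increments will then follow by comparing local masses at nearby endpoints with \eqref{eq:neighboring-initial-mass}.

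\emph{Upper bound in \eqref{eq:compact-geometric-cost}.} First I would localize the initial posterior. By \eqref{eq:compact-initial-posterior}, the initial points with $|y-x|>KD$ carry only a proportion $Ce^{-cM}$ of $\rho(t,x)$, so they can be discarded. For the remaining $y\in S\cap B(x,KD)$ we have $M_y\le CM$, and \eqref{eq:geometric-kernel} gives
\begin{equation*}
k(0,y;t,x)\le t^{-d/2}e^{-\mathcal A^{g,Z}(0,y;t,x)+CE_{\mathrm{geo}}(M)}\le t^{-d/2}e^{-\Psi_S(t,x)+CE_{\mathrm{geo}}(M)}.
\end{equation*}
Since $\mu(B(x,KD))\le Cm_\mu(x)$ by \eqref{eq:relative-initial-mass}, integrating over these $y$ gives the upper bound.

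\emph{Lower bound in \eqref{eq:compact-geometric-cost}.} Take an optimal starting point $y_*\in S$ for $\Psi_S(t,x)$. It satisfies $|y_*-x|\le Ct/e$: comparing with a nearest point $y_0$ and using the straight-path bound gives $\Psi_S\le C(M+t)$, while the coercive bound $\mathcal A^{g,Z}\ge c|y_*-x|^2/t-Ct$ then forces $|y_*-x|^2\le CDt/e\cdot$const. This also proves the assertion about $\gamma(0)$ for every minimizer. On the ball $B(y_*,\eta)$ with $\eta=e\,E_{\mathrm{geo}}(M)^{0}$, i.e. $\eta=e$, the spatial increment bound \eqref{eq:geometric-spatial-increment} gives $\mathcal A^{g,Z}(0,y;t,x)\le\Psi_S+C(D/t+1)e\le\Psi_S+C$. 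By \eqref{eq:relative-initial-mass}, this ball has $\mu$-mass at least $cM^{-d_D}m_\mu(x)$. The geometric kernel lower bound therefore gives
\begin{equation*}
\rho\ge t^{-d/2}m_\mu(x)\,e^{-\Psi_S-CE_{\mathrm{geo}}(M)-d_D\log M},
\end{equation*}
and the $\log M$ loss is absorbed into $E_{\mathrm{geo}}$.

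\emph{Increments.} Apply \eqref{eq:compact-geometric-cost} at $(t,x)$ and at $(t',x')$. The endpoints in \eqref{eq:density-phase-endpoints} show that $M'/M\to1$, $e'/e\to1$ and $|x'-x|/D\to0$. Choosing nearest points $y_0,y_0'$ with $|y_0-y_0'|\le3D$, \eqref{eq:neighboring-initial-mass} bounds $|\log m_\mu(x')-\log m_\mu(x)|$ by a constant. The term $\tfrac d2\log(t'/t)$ is $O(L/M)$. Subtracting the two applications and dividing by $2L$ then yields \eqref{eq:geometric-density-increments}, with error $CE_{\mathrm{geo}}(M)/L$. Since $L\ge M^{17/48}$ and $E_{\mathrm{geo}}(M)=O(M^{1/3})=O(M^{16/48})$, this error is $O(M^{-1/48})$.

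The step I expect to be the main obstacle is keeping every remainder uniform. In particular, I must check that \eqref{eq:geometric-spatial-increment} costs only $O(1)$ on the $e$-ball even when $D/t$ is large. This holds because $(D/t)\cdot e=1$.
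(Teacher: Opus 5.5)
Your proof is correct and follows the paper's argument. The paper packages the initial integration in the sandwich $cM^{-d_D}m_\mu(x)k_*(t,x)\le\rho(t,x)\le Cm_\mu(x)k_*(t,x)$ of \eqref{eq:initial-kernel-integration}, then compares $\log\{t^{d/2}k_*\}$ with $-\Psi_S$ by applying \eqref{eq:geometric-kernel} at a kernel maximizer and at an action minimizer; you instead redo that integration inline with \eqref{eq:compact-initial-posterior}, \eqref{eq:relative-initial-mass} and \eqref{eq:geometric-spatial-increment} on the $e$-ball, which is equivalent. Your increment step matches the paper's, except that \eqref{eq:density-phase-endpoints} concerns only the specific endpoints $(t^-,x^-)$, so you should check $t'/t$, $e'/e$ and $M'/M=1+O(L/M)$ directly for $(t+Le^2s,x+Lez)$ (the same one-line computation).
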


\begin{proof}
Ellipticity, bounded $Z$ and the straight-path competitor give
\begin{equation*}
c|x-y|^2/t-Ct\le\mathcal A^{g,Z}(0,y;t,x)
                    \le C(|x-y|^2/t+t).
\end{equation*}
We first compare the marginal with the minimum action from $S$,
then subtract the estimates at nearby endpoints.

Put $D=\dist(x,S)$, so $M=D^2/t$ and
$e=t/D$. A nearest point of $S$ gives $\Psi_S(t,x)\le C(M+t)$.
The action lower bound therefore puts every optimal starting
point within distance $C(D+t)\le CD=Ct/e$ of $x$ for large $M$.
Every maximizer of $k_*(t,x)=\max_{y\in S}k(0,y;t,x)$ also
satisfies $|y-x|\le CD$, by \eqref{eq:kernel-gaussian}.
Applying \eqref{eq:geometric-kernel} at an action minimizer
and at a kernel maximizer gives
\begin{equation*}
|\log\{t^{d/2}k_*(t,x)\}+\Psi_S(t,x)|
 \le CE_{\mathrm{geo}}(M).
\end{equation*}
Equation~\eqref{eq:initial-kernel-integration} now proves
\eqref{eq:compact-geometric-cost}.

Finally, for the perturbed endpoints in the statement, write
$e'=e(t',x')$ and $M'=M(t',x')$. The scale definitions give
\begin{equation*}
\frac{t'}t=1+O(L/M),\qquad
\frac{e'}e=1+O(L/M),\qquad
\frac{M'}M=1+O(L/M).
\end{equation*}
Also $D(x')/D(x)=1+O(L/M)$, so $x'\notin S$
for large $M$. Nearest points for $x$ and $x'$ are $O(D(x))$
apart; \eqref{eq:neighboring-initial-mass} therefore gives
$|\log m_\mu(x')-\log m_\mu(x)|\le C$.
Also $|\log(t'/t)|=O(L/M)$.
Subtract \eqref{eq:compact-geometric-cost} at the two endpoints
and divide by $2L$. Since $E_{\mathrm{geo}}(M')\le CE_{\mathrm{geo}}(M)$
and $E_{\mathrm{geo}}(M)/L=O(M^{-1/48})$, this proves
\eqref{eq:geometric-density-increments}.
\end{proof}

\subsection{Initial densities in the matching case}
\label{subsec:matching-density-tools}

We prove Corollary~\ref{cor:matching-g-initial-laws} using the
kernel, posterior and localization estimates already established.
Only a fixed interval near time zero requires an additional
density argument; \eqref{eq:matching-positive-time}
handles intervals bounded away from zero.
In this subsection $p\in\mathcal G$, $\mu=p\mathcal L^d$,
$\rho(0,x)=p(x)$ and $\ell_G(x)=(1+|x|)^{-1}$.

\begin{lemma}
\label{lem:matching-initial-density}
For $p\in\mathcal G$, there is a constant $C<\infty$ such that
\begin{equation}
\|\ell_G^{-1}f\|_{L^2(p)}^2
 \le C\bigl(\|Df\|_{L^2(p)}^2+\|f\|_{L^2(p)}^2\bigr),
\qquad f,Df\in L^2(p),
\label{eq:static-hardy}
\end{equation}
and
\begin{equation}
\left|\log\frac{p(y)}{p(x)}\right|
 \le C\{1+(1+|x|)|y-x|+|y-x|^2\},
\qquad x,y\in\R^d.
\label{eq:initial-growth}
\end{equation}
\end{lemma}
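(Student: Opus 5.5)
Both estimates come from the definition of $\mathcal G$, namely \eqref{eq:full-space-law}, combined with a compactness argument over centers $|x|\to\infty$. The Hardy inequality \eqref{eq:static-hardy} is a static version of Lemma~\ref{lem:dynamic-hardy}, with $\ell_G$ playing the role of $\ell_\mu$. The growth bound \eqref{eq:initial-growth} will be obtained by chaining local log-ratio bounds along the segment from $x$ to $y$.

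\emph{Step 1 (uniform local control).} I first upgrade \eqref{eq:full-space-law} to a statement uniform in the center. For each fixed $L$,
\begin{equation*}
\lim_{R\to\infty}\sup_{|x|\ge R}\ \inf_{\lambda}\ \sup_{|z|\le L}
\left|\log\frac{p(x+\ell_G(x)z)}{p(x)W_\lambda^0(z)}\right|=0,
\end{equation*}
where $\lambda$ ranges over probability measures on the annulus $\{m_{\mathrm{ph},p}^0\le|\zeta|\le M_{\mathrm{ph},p}^0\}$. This follows by contradiction: a violating sequence $|x_j|\to\infty$ would have a subsequence satisfying \eqref{eq:full-space-law}. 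Since $|\log W_\lambda^0(z)|\le 2M_{\mathrm{ph},p}^0|z|$, it follows that for $|x|\ge R_L$ and $|z|\le L$,
\begin{equation*}
|\log p(x+\ell_G(x)z)-\log p(x)|\le 1+2M_{\mathrm{ph},p}^0 L .
\end{equation*}
On the compact set $\overline B_{R_L+L}$, $p$ is positive and continuous, so $\log p$ is bounded there.

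\emph{Step 2 (growth bound).} I take $L=1$ in Step~1. Along the segment from $x$ to $y$, I choose points spaced by $\ell_G$ at the current point, which is admissible since $\ell_G$ varies by a bounded factor over a step. Each step changes $\log p$ by at most a constant. The number of steps is at most $\int_0^1|y-x|\,(1+|x+\tau(y-x)|)\,d\tau\le|y-x|(1+|x|)+|y-x|^2$. Steps lying inside the compact ball are handled by the bound on $\log p$ there, which contributes an additive $C$. This proves \eqref{eq:initial-growth}.

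\emph{Step 3 (Hardy inequality), the main step.} Here I follow the proof of Lemma~\ref{lem:dynamic-hardy}.

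First, for $|x|\ge R$ and a large fixed $L_{\mathrm{H}}$, Step~1 gives $p\asymp p(x)W_\lambda^0((\cdot-x)/\ell_G(x))$ on $B(x,L_{\mathrm{H}}\ell_G(x))$ with a constant factor close to $1$. Rescaling the phase Hardy inequality \eqref{eq:phase-hardy} to that ball gives, for $f$ supported in the ball,
\begin{equation*}
(m_{\mathrm{ph},p}^0)^2\ell_G(x)^{-2}\|f\|_{L^2(p)}^2\le C\|Df\|_{L^2(p)}^2 .
\end{equation*}

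Next, I cover $\{|x|\ge R\}$ by a square partition adapted to the scale $\ell_G$, using the spatial version of Lemma~\ref{lem:parabolic-partition}; this applies because $\ell_G$ is Lipschitz with $\ell_G(y)\asymp\ell_G(x)$ on $B(x,8L_{\mathrm{H}}\ell_G(x))$ once $R\gg L_{\mathrm{H}}$. The partition satisfies $\sum_i|D\chi_i|^2\le CL_{\mathrm{H}}^{-2}\ell_G^{-2}$. A single cutoff joins this region to $B_{2R}$.

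Summing the local estimates with \eqref{eq:square-gradient-identity} gives
\begin{equation*}
\|\ell_G^{-1}f\|^2\le C\|Df\|^2+CL_{\mathrm{H}}^{-2}\|\ell_G^{-1}f\|^2+C(1+R)^2\|f\|^2,
\end{equation*}
where the last term comes from the bounded region, on which $\ell_G^{-1}\le 1+2R$. Choosing $L_{\mathrm{H}}$ large absorbs the middle term. A spatial cutoff and mollification then extend the inequality from $C_c^\infty$ to all $f$ with $f,Df\in L^2(p)$.

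The main obstacle is the uniform local approximation in Step~1, which requires phase laws valid for all large centers simultaneously rather than only along subsequences. This is exactly what the contradiction and compactness argument supplies.
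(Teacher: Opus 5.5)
Your proposal is correct and follows essentially the same route as the paper: you use compactness to upgrade \eqref{eq:full-space-law} to uniform phase windows for $|x|\ge R$, you transplant the patching and absorption argument of Lemma~\ref{lem:dynamic-hardy} to the scale $\ell_G$ (choosing $L_{\mathrm H}$ before $R$, with a bounded interior remainder), and you chain bounded log-ratios over steps of length comparable to $(1+|z|)^{-1}$ along $[x,y]$ to get \eqref{eq:initial-growth}. The only difference from the paper is that you prove the growth bound first, which is immaterial.
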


\begin{proof}
Write $m=m_{\mathrm{ph},p}^0$, $M=M_{\mathrm{ph},p}^0$.
The sequential compactness in \eqref{eq:full-space-law} implies
that for every fixed $L$ there is $R$ such that each $|x|\ge R$
admits a law $\lambda_x$ on $\{m\le|\zeta|\le M\}$ with
\begin{equation}
\sup_{|z|\le L}
\left|\log\frac{p(x+\ell_G(x)z)}{p(x)W_{\lambda_x}^0(z)}\right|\le1.
\label{eq:hardy-phase-window}
\end{equation}
Apply the spatial patching argument in the proof of
Lemma~\ref{lem:dynamic-hardy}, with scale $\ell_G$ and exterior
region $\{|x|\ge R\}$. The needed substitutions are
\begin{equation*}
|D\ell_G|\le\ell_G^2,\qquad
\ell_G^{-2}\le(1+2R)^2\quad\text{on }B_{2R}.
\end{equation*}
Choose $L$ so that the coefficient $C_{d,m}L^{-2}$ to be
absorbed is at most $1/2$, then enlarge $R$ so that
\eqref{eq:hardy-phase-window} holds and $R^2\ge C_dL$.
The scales are comparable on $B(x,L\ell_G(x))$.
The same partition, absorption and Sobolev approximation give
\eqref{eq:static-hardy}, now with a bounded interior remainder.
The constant is uniform over positive continuous densities
satisfying \eqref{eq:hardy-phase-window} with these same
$m,M,R,L$; it does not depend on their values on $B_{2R}$.

Definition~\ref{def:g-initial-laws}, positivity and continuity give
\begin{equation*}
C^{-1}\le\frac{p(x+\ell_G(x)z)}{p(x)}\le C,
\qquad x\in\R^d,\quad |z|\le1.
\end{equation*}
For $D=|y-x|>0$ and $v=(y-x)/D$, partition $[x,y]$ into steps
comparable to $(1+|z|)^{-1}$ at the current point $z$.
Their number is at most
\begin{equation*}
C\left\{1+\int_0^D(1+|x+sv|)\,ds\right\}
 \le C\{1+(1+|x|)D+D^2\}.
\end{equation*}
Multiplying the local ratios proves \eqref{eq:initial-growth}.
\end{proof}

\begin{lemma}[Phase windows near the initial time]
\label{lem:matching-density-windows}
Let $p\in\mathcal G$.
There are $\delta\in(0,T/3)$, $0<m_{\mathrm{ph},p}\le M_{\mathrm{ph},p}<\infty$ and
$K_p=\{\zeta:m_{\mathrm{ph},p}\le|\zeta|\le M_{\mathrm{ph},p}\}$ such that, for every $L<\infty$,
\begin{equation}
\lim_{R\to\infty}
\sup_{\substack{0\le t\le2\delta\\|x|\ge R}}
\inf_{\lambda\in\mathcal P(K_p)}
\sup_{\substack{|s|,|z|\le L\\0\le t+\ell_G(x)^2s\le2\delta}}
\left|\log\frac{\rho(t+\ell_G(x)^2s,x+\ell_G(x)z)}
 {\rho(t,x)W_{\lambda,a(t,x)}(s,z)}\right|=0.
\label{eq:matching-phase-window}
\end{equation}
Moreover,
\begin{equation}
\|\ell_G^{-1}f\|_{L^2(\mu_t)}^2
 \le C\bigl(\|Df\|_{L^2(\mu_t)}^2+\|f\|_{L^2(\mu_t)}^2\bigr),
\qquad 0\le t\le2\delta,\quad f,Df\in L^2(\mu_t).
\label{eq:positive-hardy}
\end{equation}
The constants $\delta,m_{\mathrm{ph},p},M_{\mathrm{ph},p},C$ depend
only on $p$ and the reference data. For each $L,\varepsilon>0$,
the quantity inside the limit in \eqref{eq:matching-phase-window} is at most
$\varepsilon$ for $R\ge R_0$, where $R_0$ depends only on
$p$, the reference data, $L,\varepsilon$. This threshold is
common to coefficients with these reference data.
\end{lemma}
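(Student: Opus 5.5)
The plan is to transfer the static phase description of $p$ at infinity, Definition~\ref{def:g-initial-laws}, to the marginals $\rho(t,\cdot)$ for small $t\ge0$. We do this by writing $\rho(t,\cdot)=\int k(0,y;t,\cdot)p(y)\,dy$ and applying the positive posterior identity \eqref{eq:posterior-positive-integration} with restart time $u=0$ and $\nu=\mu$. At a center $(t,x)$ with $|x|\ge R$ and $t\le2\delta$, the scale $\ell=\ell_G(x)$ satisfies $\ell\le 1/R$. We split into two regimes according to $t/\ell^2$.

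\textbf{Regime $t\le N\ell^2$ (boundary visible).} Here the window $(t+\ell^2s,x+\ell z)$ reaches back to time $0$ on a parabolic box of size $O(N)$. Rescale about $x$ by $\ell$. The kernel from $(0,x+\ell w)$ to $(\ell^2\tau,x+\ell z)$ then converges, uniformly in $x$, to $\gamma_{\tau a(0,x)}(z-w)$ by Lemma~\ref{lem:kernel-stability}, the Gaussian comparison \eqref{eq:centered-frozen}, and \eqref{eq:local-coefficient-freezing}; the rescaled Lipschitz and drift bounds are $O(\ell)$. The initial profile satisfies $p(x+\ell w)/p(x)\to W^0_{\lambda}(w)$ locally uniformly along subsequences. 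The growth bound \eqref{eq:initial-growth} gives $|\log p(x+\ell w)/p(x)|\le C(1+|w|+\ell^2|w|^2)$. Combined with the Gaussian tails \eqref{eq:kernel-gaussian}, this justifies truncating the $w$-integral and applying dominated convergence. The heat evolution of $W^0_\lambda$ under covariance $a_*$ is exactly $W_{\lambda,a_*}$, since each exponential $e^{-2\zeta\cdot w}$ evolves to $e^{2(\tau\zeta^{\mathsf T}a_*\zeta-\zeta\cdot w)}$. Normalizing at $(t,x)$ then gives \eqref{eq:matching-phase-window} with the same law $\lambda$.

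\textbf{Regime $t\ge N\ell^2$.} Here we restart at $u=t-N\ell^2/2$ and instead use the already-established mixture at time $u$ as the initial profile, iterating the same heat-evolution argument. Equivalently, we apply the first regime's computation with the initial time shifted, using the Chapman--Kolmogorov identity \eqref{eq:chapman-kolmogorov} for the marginal. To make this self-contained, I would instead directly use posterior annuli: by \eqref{eq:kernel-gaussian} and \eqref{eq:initial-growth}, the posterior of $X_0$ given $X_t=x$ concentrates on $|y-x|\le C(\sqrt t+t|x|)$. Choose $\delta$ small so that $t|x|\ell\le 2\delta\ll1$ there; then the initial mass felt lies in a ball of radius $O(\sqrt t+\delta\ell^{-1})$. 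The relative error is controlled by rescaling at scale $\sqrt t$ with the phase slopes frozen, since $\sqrt t\,|x|\le\sqrt{2\delta}|x|$ may be large. This is the delicate point: across the whole range $\ell^2\ll t\le2\delta$, the profile has logarithmic slope of size $|x|$ on displacements $\sqrt t$. We must show that the Gaussian convolution of a function whose logarithm is locally (on scale $\ell$) asymptotically a mixture of linear exponentials is still such a mixture at scale $\ell$, with error uniform in $t$.

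The tool I would try first for this point is the tilt formula \eqref{eq:kernel-tilt} together with Lemma~\ref{lem:long-tube}, working in units where $\ell$ is the spatial scale and the time span is $H=t/\ell^2$. The posterior displacement $|y-x|\le C H\ell$ matches the tube hypotheses $|x-y|\le P_0H\ell$. The condition $\sqrt H\,\ell L_a\,H\ell\to0$ is where smallness of $\delta$ enters, since $H\ell^2\le2\delta$. With the kernel ratios thus reduced to Gaussians, the ratio $\rho(t',x')/\rho(t,x)$ becomes a posterior average of Gaussian ratios, computed by \eqref{eq:long-gaussian-ratio}, of exponentials in $(s,z)$ with slopes $\xi_H$. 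The posterior law of these slopes lies in a compact annulus $K_p$: this follows from the Lipschitz-type growth of $\log p$ at scale $\ell$ and the lower bound $m^0_{\mathrm{ph},p}$ on phases. Uniformity of the thresholds follows by contradiction and compactness, exactly as in Proposition~\ref{prop:tail-compactness}.

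Finally, \eqref{eq:positive-hardy} follows from \eqref{eq:matching-phase-window} at $s=0$, which supplies windows of the form \eqref{eq:hardy-phase-window} uniformly in $t\in[0,2\delta]$. The uniform static Hardy argument of Lemma~\ref{lem:matching-initial-density} then applies. That constant does not depend on the density values on $B_{2R}$, so positivity and continuity of $\rho(t,\cdot)$ suffice for every $t$.
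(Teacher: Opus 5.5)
Your bounded-$t/\ell^2$ regime is essentially the paper's closed-window argument, and your deduction of \eqref{eq:positive-hardy} from the windows at $s=0$ is the paper's. Two small points there: the paper must also control the column masses $\int k_j(0,y;\tau,z)\,dy$ uniformly down to $\tau=0$; and normalizing at $t=\ell^2\tau_0>0$ yields the tilted law $\propto e^{2\tau_0\zeta^{\mathsf T}a_*\zeta}\lambda(d\zeta)$ rather than $\lambda$, which is harmless because the support is unchanged.

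The regime $M=t/\ell^2\to\infty$, however, has a genuine gap. You only establish the \emph{outer} posterior bound $|y-x|\le Ct|x|$. The claim that the slopes stay away from zero ``by the lower bound $m^0_{\mathrm{ph},p}$'' has no mechanism behind it. Since $p$ has full support, there is no analogue of the trivial bound $|y-x|\ge\dist(x,S)$ used for $\mathcal D$. Without an inner bound, the hypothesis $Q^{-1}Me\le|x-y|$ of the bridge descent in Lemma~\ref{lem:bridge-annulus} fails. Then nothing prevents the phase law from charging neighbourhoods of $0$ and leaving $\mathcal P(K_p)$. The missing idea is the paper's Hardy iteration, in four steps. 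First, apply the static inequality \eqref{eq:static-hardy} to $\chi\sqrt{k(0,\cdot;t,x)}$ with $\chi$ supported in $B(x,2\varepsilon t(1+|x|))$. Second, control $D\sqrt k$ by \eqref{eq:kernel-log-gradient}. Third, absorb the zeroth-order terms by taking $\varepsilon$ small and $M$ large. Fourth, iterate $\Pi_{0;t,x}(B(x,r))\le\tfrac12\Pi_{0;t,x}(B(x,r+L_0/(1+|x|)))$ over about $\varepsilon M/L_0$ shells, which gives $\Pi_{0;t,x}(B(x,\varepsilon t(1+|x|)))\le Ce^{-cM}$.

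There is a second gap: your comparison of $k(0,y;t',x')$ with Gaussians over the whole interval $[0,t]$ via Lemma~\ref{lem:long-tube} fails. The tube joining $y$ to $x$ has physical width of order $t|x|$. Your own smallness quantity is $\sqrt H\,\ell L_a\,H\ell=L_at^{3/2}/\ell\approx L_at^{3/2}|x|$, which diverges for fixed $t\le2\delta$ as $|x|\to\infty$; the bound $H\ell^2\le2\delta$ cannot compensate for the factor $\ell^{-1}$. Moreover, the lemma's error $Ch(E_a+\|b\|_\infty)$ contains the drift contribution $B_bH\ell\approx B_bt|x|$, which is not small even for constant $a$. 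Your alternative of iterating an already established mixture is circular and accumulates errors over $t/\ell^2$ restarts.

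The paper avoids both problems by restarting late. Take $e=(1+|x|)^{-1}$, $M=t/e^2$ and $H=M^{1/6}$. The two-sided initial annulus, at both the base and the perturbed endpoints, is transferred by the last assertion of Lemma~\ref{lem:posterior-annuli} to an annulus at the common physical restart time $t-He^2$. Gaussian comparison is then used only on the final tube, where $eH^2\to0$; this is the calculation \eqref{eq:posterior-fixed-window}. Uniformity then follows from the same compactness argument you indicate.
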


\begin{proof}
We first establish convergence on closed initial windows.
Let $0<e_j\to0$, $e_j\le C_0\ell_G(x_j)$, and suppose
\begin{equation*}
a(0,x_j)\to a_*,\qquad
p_j(z):=\frac{p(x_j+e_jz)}{p(x_j)}\to W_{\lambda_0}^0(z)
\quad\text{locally uniformly},
\end{equation*}
where $\lambda_0$ has compact support, possibly $\lambda_0=\delta_0$.
For $F=W_{\lambda_0,a_*}$, we prove
\begin{equation}
\sup_{0\le\tau\le S,\ |z|\le R}
\left|\log\frac{\rho(e_j^2\tau,x_j+e_jz)}{p(x_j)F(\tau,z)}\right|
 \to0,\qquad R,S<\infty.
\label{eq:closed-window-error}
\end{equation}

Let $k_j$ have coefficients $c_j(s,z)=a(e_j^2s,x_j+e_jz)$ and
$b_j^{\mathrm{sc}}(s,z)=e_jb(e_j^2s,x_j+e_jz)$.
Equation~\eqref{eq:local-coefficient-freezing}, with $t_j=0$ and
$r_j=e_j$, gives local convergence to $a_*,0$, while
\eqref{eq:initial-growth} gives
\begin{equation}
p_j(z)\le C\exp(CC_0|z|+Ce_j^2|z|^2).
\label{eq:closed-window-envelope}
\end{equation}
Set $u_j(\tau,z)=\int k_j(0,y;\tau,z)p_j(y)\,dy$ for $\tau>0$
and $u_j(0,z)=p_j(z)$. The envelope and \eqref{eq:kernel-gaussian} imply
\begin{equation*}
\lim_{N\to\infty}\limsup_{j\to\infty}
\sup_{\substack{0<\tau\le S\\|z|\le R}}
\int_{|y|>N}k_j(0,y;\tau,z)p_j(y)\,dy=0.
\end{equation*}
The kernel stability and truncation argument for
Proposition~\ref{prop:initial-heat-limits}, with this tail bound,
therefore gives $u_j\to F$ locally uniformly for $\tau>0$.

To include $\tau=0$, we need the starting-point derivative bound
\citep[Theorem~1.2(ii)]{menozzi-pesce-zhang-2020}:
\begin{equation}
|D_y k(r,y;r+h,x)|\le Ch^{-(d+1)/2}e^{-c|x-y|^2/h}.
\label{eq:kernel-starting-gradient}
\end{equation}
Its integral in $y$ is at most $Ch^{-1/2}$, with constants
depending only on the reference data.
Put $m_j(\tau,z)=\int k_j(0,y;\tau,z)\,dy$ for $\tau>0$,
and $m_j(0,z)=1$.
For smooth coefficients, with
$L_j=\tfrac12c_j:D^2+b_j^{\mathrm{sc}}\cdot D$ and
$E_j^{\mathrm{FP}}=\tfrac12\operatorname{div}c_j-b_j^{\mathrm{sc}}$,
the forward equation and Duhamel's formula give
\begin{equation*}
\begin{gathered}
(\partial_\tau-L_j^*)(m_j-1)=\operatorname{div}E_j^{\mathrm{FP}},\\
m_j(\tau,z)-1=-\int_0^\tau\!\int_{\R^d}
 D_y k_j(v,y;\tau,z)\cdot E_j^{\mathrm{FP}}(v,y)\,dy\,dv .
\end{gathered}
\end{equation*}
Since $\|E_j^{\mathrm{FP}}\|_\infty\le Ce_j$,
\eqref{eq:kernel-starting-gradient} yields
\begin{equation}
\|m_j(\tau,\cdot)-1\|_\infty
 \le C_Se_j\int_0^\tau(\tau-v)^{-1/2}\,dv
 \le C_Se_j\sqrt\tau,\qquad 0\le\tau\le S.
\label{eq:closed-window-column}
\end{equation}
For the stated coefficients, mollify after constant extension
beyond the time endpoints. The resulting
$c_{j,n},b_{j,n}^{\mathrm{sc}}$ preserve ellipticity,
$\Lip_xc_{j,n}\le L_ae_j$ and
$\|b_{j,n}^{\mathrm{sc}}\|_\infty\le B_be_j$.
The approximations converge locally in every finite $L^p$.
Lemma~\ref{lem:kernel-stability} and Gaussian tails pass their
column masses to $m_j$ for $\tau>0$, so
\eqref{eq:closed-window-column} holds with the same constant.

In the identity
\begin{equation*}
u_j(\tau,z)-p_j(z)
 =\int k_j(0,y;\tau,z)\{p_j(y)-p_j(z)\}\,dy
   +p_j(z)\{m_j(\tau,z)-1\},
\end{equation*}
split at $|y-z|=\varepsilon$. The common local continuity of
$p_j$ controls the inner part; \eqref{eq:kernel-gaussian} and
\eqref{eq:closed-window-envelope} control the outer part.
Together with \eqref{eq:closed-window-column}, this gives
\begin{equation*}
\lim_{\tau_0\downarrow0}\limsup_{j\to\infty}
\sup_{\substack{0\le\tau\le\tau_0\\|z|\le R}}
 |u_j(\tau,z)-p_j(z)|=0.
\end{equation*}
Combining this with the positive-time convergence and positivity
of $F$ on $[0,S]\times\overline B_R$ proves
\eqref{eq:closed-window-error}.

\paragraph*{The initial posterior annulus.}
Put $R_x=1+|x|$, $e=R_x^{-1}$ and $M=t/e^2\to\infty$,
with $0<t\le2\delta$ and $\delta$ chosen below.
For $\chi$ supported in $B(x,2\varepsilon tR_x)$,
$\varepsilon\le(4T)^{-1}$, apply \eqref{eq:static-hardy} to
$\chi\sqrt{k(0,\cdot;t,x)}$. Since $1+|y|\asymp R_x$ on this ball,
\eqref{eq:kernel-log-gradient} gives
\begin{equation*}
R_x^2\int\chi^2\,d\Pi_{0;t,x}
 \le C\int\{|D\chi|^2+
       (\varepsilon^2R_x^2+t^{-1}+1)\chi^2\}\,d\Pi_{0;t,x}.
\end{equation*}
Choose $\varepsilon$ small and then $M$ large to absorb the
$\chi^2$ term. Taking $\chi_r=1$ on $B(x,r)$, supported in
$B(x,r+L_0/R_x)$, with $|D\chi_r|\le CR_x/L_0$, gives
\begin{equation*}
\Pi_{0;t,x}(B(x,r))
 \le(C/L_0^2)\Pi_{0;t,x}(B(x,r+L_0/R_x)).
\end{equation*}
Fix $L_0$ with $C/L_0^2\le1/2$. Iteration at
$r=\varepsilon tR_x+jL_0/R_x$,
$0\le j<\lfloor\varepsilon M/L_0\rfloor$, yields
$\Pi_{0;t,x}(B(x,\varepsilon tR_x))\le Ce^{-cM}$.

For the outer bound, \eqref{eq:initial-growth} and
\eqref{eq:kernel-gaussian} give
\begin{equation*}
p(y)k(0,y;t,x)\le Cp(x)t^{-d/2}
 \exp\{CR_x|y-x|+C|y-x|^2-c|y-x|^2/t\}.
\end{equation*}
Choose $2\delta$ small enough to absorb the positive quadratic
term. For a sufficiently large fixed $C_*$, the integral over
$|y-x|\ge C_*tR_x$ is at most $Cp(x)e^{-cM}$.
On $B(x,e)$, $p(y)\ge cp(x)$, so the Gaussian lower bound gives
$\rho(t,x)\ge cp(x)M^{-d/2}$ for $M\ge1$.
Division by this bound, absorbing $M^{d/2}$ into the exponential,
proves \eqref{eq:initial-posterior-annulus}.
The same estimates hold at perturbed endpoints
$|x'-x|\le BLe$, $|t'-t|\le BLe^2$, $0<t'\le2\delta$, with
$BL/M\le c$, where $c>0$ is sufficiently small in terms of
$p$ and the reference data. Also $e'/e=1+O(BL/M)$ and
$M'/M=1+O(BL/M)$, so decreasing $c$ makes both ratios lie
in $[1/2,2]$.
The last assertion of Lemma~\ref{lem:posterior-annuli} and
the calculation of \eqref{eq:posterior-fixed-window}, with
$H=M^{1/6}$ and $\ell=e$, give the phase approximation.
Both discarded proportions in
\eqref{eq:posterior-positive-integration} are thus controlled.

\paragraph*{Uniform phase windows and Hardy.}
For $|x_j|\to\infty$, $e_j=\ell_G(x_j)$ and
$\Delta_j=t_j/e_j^2\to\Delta<\infty$, extract
$a(0,x_j)\to a_*$ and $\lambda_0$ from \eqref{eq:full-space-law}.
Equation~\eqref{eq:closed-window-error} approximates the density
by $F=W_{\lambda_0,a_*}$ on closed rescaled windows. For
$s+\Delta_j\ge0$,
\begin{equation*}
\frac{F(s+\Delta_j,z)}{F(\Delta_j,0)}
 =W_{\lambda_{\Delta_j},a_*}(s,z), \qquad
\lambda_{\Delta_j}(d\zeta)
 =\frac{e^{2\Delta_j\zeta^{\mathsf T}a_*\zeta}}
 {\int e^{2\Delta_j\xi^{\mathsf T}a_*\xi}\,\lambda_0(d\xi)}
 \,\lambda_0(d\zeta).
\end{equation*}
The support is unchanged, and $a(t_j,x_j)\to a_*$.
Choose a compact nonzero annulus containing these supports
and those of the posterior construction. A sequence violating
\eqref{eq:matching-phase-window} for a fixed $L$, also allowing
coefficient fields with common reference data, has a subsequence
with bounded $\Delta_j$ or with $\Delta_j\to\infty$; both cases
have been excluded. The threshold is common to all centers,
and the approximating law is independent of the test.

At $s=0$, this supplies \eqref{eq:hardy-phase-window} uniformly
for $0\le t\le2\delta$. The uniform patching argument in
Lemma~\ref{lem:matching-initial-density} proves \eqref{eq:positive-hardy}.
On bounded spatial sets, the closed-window calculation with
$\lambda_0=\delta_0$ also gives continuity through $t=0$;
for $t>0$ use kernel continuity.
\end{proof}

\begin{proof}[Proof of Corollary~\ref{cor:matching-g-initial-laws}]
It suffices to prove the upper bound for $\alpha=0$.
Fix $\delta$ from Lemma~\ref{lem:matching-density-windows}.
For $J\subseteq(0,2\delta)$, $|J|\le h\le1$ and
$u\in\mathcal W_{0,0}(J)$, apply \eqref{eq:positive-hardy}
to $Du$ and $\ell_G^{-1}u$
as in \eqref{eq:iterated-hardy}, using $|D\ell_G^{-1}|\le1$.
With $t^{-1},t^{-2}$ replaced by $1$,
\eqref{eq:short-graph-budget} gives \eqref{eq:localization-budget} for
\begin{equation*}
r_J(x)=\min\{\sqrt{|J|},\ell_G(x)\},
\end{equation*}
uniformly in $J,u$.

Fix $k_1>2$ and $L_{\mathrm{loc}}\ge2$.
Since $r_J^{-1}$ is $1$-Lipschitz and $r_J\le\sqrt h$,
\eqref{eq:admissible-local-scale} holds for small $h$.
Apply Lemma~\ref{lem:parabolic-partition} with this scale and
use the exterior-column construction in
\eqref{eq:comparison-columns}, with $\mathcal I_{\rm c}=\varnothing$.
The same derivative and trace checks give the natural domains,
zero terminal traces and \eqref{eq:column-graph-budget};
Lemma~\ref{lem:square-partition} gives
$\varepsilon_{\mathrm{loc}}=CL_{\mathrm{loc}}^{-2}$.

Use the rescaling in the proof of
Proposition~\ref{prop:uniform-model-estimates} with
$(t_i^0,x_i^{\mathrm{cmp}},r_i^{\mathrm{cmp}})=(t_i,x_i,r_J(x_i))$.
Writing $r_i=r_J(x_i)$ and $a_i=a(t_i,x_i)$, take $A_i=a_i/2,\, q_i=a_i,\, c_i=\rho(t_i,x_i)r_i^{d+2}$ and
\begin{equation*}
w_i^{\mathrm{act}}(s,z)
 =\frac{\rho(t_i+r_i^2s,x_i+r_i z)}{\rho(t_i,x_i)},
\qquad w_i^{\mathrm{mod}}=W_{\lambda_i,a_i}.
\end{equation*}
For bounded centers choose $\lambda_i=\delta_0$ and use continuity
through $t=0$. For escaping centers use
\eqref{eq:matching-phase-window} on
$|s|\le9L_{\mathrm{loc}}^2$, $|z|\le3L_{\mathrm{loc}}$,
with the substitution $(s,z)\mapsto(\eta_i^2s,\eta_i z)$,
$\eta_i=r_i/\ell_G(x_i)\le1$; the phase law is pushed forward
by $\zeta\mapsto\eta_i\zeta$.
Lemma~\ref{lem:window-stability} and coefficient continuity
give \eqref{eq:uniform-column-comparison} and the model graph
budget on the whole rescaled $Q_i$, including moving endpoints.
For fixed $L_{\mathrm{loc}}$ and error, the threshold is
independent of $i,J,u$ and applies to every supported test.

For every slope, including zero,
\eqref{eq:whole-line-interval-bound} has matching gain $2$ on
the finite interval $\widetilde J_i$, with its left endpoint free.
Integration against $\lambda_i$ gives the same model bound.
Apply Proposition~\ref{prop:comparison} with $k_0=2$,
$\varepsilon_{\mathrm{app}}=C\varepsilon$ and
$\varepsilon_{\mathrm{gain}}=0$.
Choose $L_{\mathrm{loc}}$ large, then $\varepsilon$ small,
and finally the common $h$, so that
$\varepsilon_{\mathrm{tot}}\le\varepsilon_0$ and
$C\varepsilon_{\mathrm{tot}}<k_1-2$.
Thus $K_{0,h}((0,2\delta))<k_1$.

Equation~\eqref{eq:matching-positive-time} gives
$K_0((\delta,T))=2$. For $h<\delta$, every short interval
in $(0,T)$ lies in $(0,2\delta)$ or $(\delta,T)$.
Taking a common threshold and then $k_1\downarrow2$ gives
$K_0((0,T))\le2$, with $\delta$ fixed.
Time-weight monotonicity in Lemma~\ref{lem:weight-operations}
gives $K_\alpha\le K_0$ for $\alpha\ge0$, and flat recovery
in Lemma~\ref{lem:recovery} gives $K_\alpha\ge2$.
\end{proof}

\section{Conclusion}
\label{sec:conclusion}

For compactly supported doubling initial laws under
Assumption~\ref{ass:coefficients}, the optimal limiting Hessian
constant on the full time interval is the maximum of initial,
flat and propagation contributions. Jointly rescaling the marginal
density and the matrices $a,A,q$ identifies this constant while
allowing coefficient mismatch. Its finiteness yields weighted
Sobolev well-posedness of the linear terminal problem and a
contraction criterion for nonlinear perturbations; the short-interval
and large-damping limits recover the same constant. The initial
contribution admits an explicit operator formula. Propagation
bounds become exact under additional coefficient conditions,
while geometric assumptions including nonnegative Ricci curvature
give a deterministic path characterization.
In the matching case $A=a/2$, $q=a$, the full weighted Sobolev
estimate holds for every initial probability law. At $\alpha=0$,
the dimension-independent bounds $2\le K_0(I)\le2\sqrt2$ are sharp:
finite atomic laws attain the upper endpoint, and densities in
$\mathcal G$, including nondegenerate Gaussian laws and their
finite mixtures, attain the lower endpoint. For $\alpha\ge1/2$,
every initial law has limiting constant $2$.

An open question is whether uniform continuity of $A$ can be
replaced by suitable vanishing mean oscillation (VMO) conditions,
as in classical parabolic theory~\citep{krylov-2007-vmo}.
With the reference diffusion and $q$ fixed, preserving the exact
constant would require localization and recovery from averaged
coefficient limits on the density-adapted scales, with control
of Hessian concentration below those scales and near the initial
boundary. Appendix~\ref{app:rough-principal} shows why this matters:
spatial variation surviving rescaling can produce infinite gain
despite positive radial margins.

For globally Lipschitz reference drifts of linear growth, the
  rescaled drift need not vanish and can contribute at leading
  order. A matching theory must therefore retain the full operator
  $P_{a/2,b}$: the first-order term cannot in general be discarded
  as a lower-order perturbation.
A second issue persists even in moving coordinates that remove
  transport: the limiting principal coefficients can depend on
  time. Appendix~\ref{app:linear-growth-transport} constructs a
  smooth stationary diffusion with bounded uniformly elliptic
  covariance for which moving windows traverse a fixed spatial
  distance over vanishing time. The normalized density limit is
  flat, but spatial covariance variation survives as time dependence
  of the limiting principal matrix. Compactly supported tests on
  shrinking positive-time intervals have Hessian-to-source ratios
  tending to $3/\sqrt2>2$, even for the full operator $P_{a/2,b}$. The construction uses an invariant initial law outside
  $\mathcal D\cup\mathcal G$, so it directly obstructs extending
  the positive-time matching value $2$ to arbitrary initial laws;
  it does not rule out such extensions under the initial-law
  assumptions in $\mathcal D$ or $\mathcal G$. It nevertheless
  shows that a general theory for linear-growth drifts must allow
  time-dependent limit models, with corresponding estimates,
  recovery and localization arguments.

Numerical applications also require usable bounds on the
finite-damping error $C_2(\kappa;I)-K_\alpha(I)$.
Proposition~\ref{prop:poisson} controls it through the short-interval
profile $K_{\alpha,h}(I)-K_\alpha(I)$; the remaining task is to
estimate this profile from the coefficients and initial law.
The smooth approximations in Example~\ref{ex:rough-principal}
have a common limiting constant but no common threshold for a
uniform short-interval bound, so the limiting value alone cannot
provide this control. Quantifying the dependence on dimension, time horizon
and coefficient regularity would give practical damping choices
for numerical stability and Picard iteration.

\appendix
\section{Natural domains and realization}
\label{app:realization}

\subsection{Graph approximation and energy}
\label{subsec:graph-approximation}

\begin{proof}[Proof of Lemma~\ref{lem:graph-core}]
The local positive upper and lower bounds on $t^\alpha F$ identify
weighted Sobolev limits with distributional derivatives, so
$\mathcal W_\alpha(J;F)$ is Hilbert. For $\chi\in C_c^\infty(\R^d)$
and $0<\delta<s_1-s_0$, these bounds give
\begin{equation*}
\chi u\in H^1((s_1-\delta,s_1);L^2(\R^d))
       \cap L^2((s_1-\delta,s_1);H^2(\R^d)),
\qquad\forall u\in\mathcal W_\alpha(J;F).
\end{equation*}
The trace theorem \citep[Theorem~III.4.10.2]{amann-1995}, with
$(L^2(\R^d),H^2(\R^d))_{1/2,2}=H^1(\R^d)$, gives the asserted
continuity of $\operatorname{Tr}^{\mathrm{loc}}_{s_1}$.
Thus $\mathcal W_{\alpha,0}(J;F)$ is closed.

For $u\in\mathcal W_{\alpha,0}(J;F)$, choose spatial cutoffs $\chi_R$
with $\chi_R=1$ on $B_R$, $\supp\chi_R\subset\overline B_{2R}$ and
$|D^j\chi_R|\le CR^{-j}$ for $j=1,2$, with $C$ depending only
on $d$.
Then $u_R=\chi_Ru\to u$ in $\mathcal W_\alpha(J;F)$.
Choose $\chi_{t,\varepsilon}=1$ before $s_1-2\varepsilon$ and
$\chi_{t,\varepsilon}=0$ after $s_1-\varepsilon$, with
$|\chi_{t,\varepsilon}'|\le C\varepsilon^{-1}$ for a numerical $C$.
Let $m_R,M_R$ be positive lower and upper bounds for $t^\alpha F$
on $[(s_0+s_1)/2,s_1]\times\overline B_{2R}$.
For $0<\varepsilon<(s_1-s_0)/4$, the zero terminal trace and
the one-sided Poincar\'e inequality give
\begin{equation*}
\|\chi_{t,\varepsilon}'u_R\|_{\alpha,J;F}^2
 \le C\frac{M_R}{m_R}
          \|(u_R)_t\|_{\alpha,(s_1-2\varepsilon,s_1);F}^2
 \longrightarrow0.
\end{equation*}
All other cutoff errors tend to zero by dominated convergence.
The local mollification argument of
\citet[pp.~1055--1056]{meyers-serrin-1964} applies to
$u,u_t,Du,D^2u$ using the same local weight bounds. With summable
local errors and mollification radii preserving compact spatial
support and the terminal gap, it approximates
$\chi_{t,\varepsilon} u_R$ by elements of $\mathcal C_{\alpha,0}(J;F)$.
The construction is on the open cylinder and imposes no condition
at $s_0$.
Restriction follows from the definitions; integration by parts and
the zero terminal trace give the stated derivative of the zero extension.
\end{proof}

\begin{lemma}[Energy estimate]\label{lem:direct-energy}
Let $J=(s_0,s_1)\subseteq(0,T)$, $\alpha\ge0$ and
$\kappa>B_b^2/\lambda_a$, with $\kappa_{\mathrm{eff}},e_q$ as in
\eqref{eq:linear-energy-data}. Then
\begin{equation*}
\max\{\kappa_{\mathrm{eff}}\|u\|_{\alpha,\kappa,J},
       \sqrt{\lambda_a\kappa_{\mathrm{eff}}}\|Du\|_{\alpha,\kappa,J}\}
 \le\|P_Au\|_{\alpha,\kappa,J}
             +e_q\|\mathcal H_qu\|_{\alpha,\kappa,J},
\qquad\forall u\in\mathcal W_{\alpha,0}(J).
\end{equation*}
There are $h_0,C>0$, depending only on the reference data and
$\Lambda_A$, such that, for $h=|J|\le h_0$,
\begin{equation*}
h^{-2}\|u\|_{\alpha,J}^2+h^{-1}\|Du\|_{\alpha,J}^2
 \le C\{\|P_Au\|_{\alpha,J}^2+\|D^2u\|_{\alpha,J}^2\},
\qquad\forall u\in\mathcal W_{\alpha,0}(J).
\end{equation*}
\end{lemma}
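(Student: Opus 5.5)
The plan is a direct Itô/Dynkin energy argument along the reference diffusion, carried out first for $u\in\mathcal C_{\alpha,0}(J)$; Lemma~\ref{lem:graph-core} then extends both inequalities to $\mathcal W_{\alpha,0}(J)$ by density in the graph norm.

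\emph{Weighted identities.} Write $f=P_Au$. Since $u$ is smooth and compactly supported, Dynkin's formula for the reference generator $\mathcal L_t=\tfrac12a:D^2+b\cdot D$ gives, for $\phi(t)=\tfrac{d}{dt}\mathbb E|u(t,X_t)|^2$,
\begin{equation*}
\phi(t)=\mathbb E\bigl[2u(u_t+\mathcal L_tu)+|Du|_a^2\bigr](t,X_t).
\end{equation*}
Now substitute $u_t=-f-A:D^2u=-f-\tfrac12a:D^2u-(A-a/2):D^2u$. This yields
\begin{equation*}
\phi=\mathbb E\bigl[-2uf-2u(A-a/2):D^2u+2u\,b\cdot Du+|Du|_a^2\bigr].
\end{equation*}
Pointwise, $|(A-a/2):D^2u|\le e_q|\mathcal H_qu|$, because $(A-a/2):D^2u=q^{-1/2}(A-a/2)q^{-1/2}:\mathcal H_qu$.

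Multiply by the damped weight $w(t)=t^\alpha e^{-2\kappa(s_1-t)}$ and integrate over $J$. The terminal value vanishes, and the boundary term at $s_0$ has a favorable sign, since it is $-w(s_0)\mathbb E|u(s_0)|^2\le0$ after moving it to the left. The derivative of the weight produces $2\kappa\|u\|^2_{\alpha,\kappa,J}+\alpha\int t^{\alpha-1}\cdots\ge2\kappa\|u\|^2$. We therefore obtain
\begin{equation*}
2\kappa\|u\|^2+\lambda_a\|Du\|^2\le2\|u\|\bigl(\|f\|+e_q\|\mathcal H_qu\|\bigr)+2B_b\|u\|\,\|Du\|,
\end{equation*}
with all norms in $\|\cdot\|_{\alpha,\kappa,J}$. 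Young's inequality gives $2B_b\|u\|\|Du\|\le\tfrac{B_b^2}{\lambda_a}\|u\|^2+\lambda_a\|Du\|^2$, but this absorbs the entire gradient term. So we split it differently: use half of the gradient term, $\tfrac{\lambda_a}{2}\|Du\|^2$, together with $2B_b^2/\lambda_a\|u\|^2$, arranged so that the constant matches $\kappa_{\mathrm{eff}}=\kappa-B_b^2/\lambda_a$. Set $G=\|f\|+e_q\|\mathcal H_qu\|$.

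\emph{Reading off the two bounds.} The first bound $\kappa_{\mathrm{eff}}\|u\|\le G$ follows from $2\kappa_{\mathrm{eff}}\|u\|^2\le2\|u\|G$. The gradient bound then comes from $\lambda_a\kappa_{\mathrm{eff}}\|Du\|^2\le\kappa_{\mathrm{eff}}\cdot2\|u\|G\le2G^2$, up to the constant bookkeeping. Getting exactly constant $1$ in both statements requires a careful choice of the Young splitting. This is the step I expect to be fiddly: one may need the sharper form $2\kappa_{\mathrm{eff}}\|u\|^2+\lambda_a\|Du\|^2_{\text{(remaining)}}\le2\|u\|G$, and then optimize the quadratic $2\kappa_{\mathrm{eff}}x^2-2xG+\lambda_a y^2\le0$. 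That optimization gives $\lambda_a y^2\le G^2/(2\kappa_{\mathrm{eff}})$, which is better than required.

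\emph{Short-interval bound.} Take $\kappa=0$ and keep $e^{-2\kappa}$ out; on $J$ with $h=|J|$, replace damping by a Poincaré-type argument. The energy identity with weight $t^\alpha(s_1-t)^{-1}$ is problematic, so instead integrate from $t$ to $s_1$. This gives
\begin{equation*}
\mathbb E|u(t)|^2\,t^\alpha\le\int_t^{s_1}\bigl(2|u|(|f|+\Lambda|D^2u|+B_b|Du|)-|Du|_a^2\bigr)s^\alpha\,ds,
\end{equation*}
using $t^\alpha\le s^\alpha$. Integrating over $t\in J$ then gives
\begin{equation*}
\|u\|^2\le2h\|u\|(\|f\|+C\|D^2u\|)+C h\|u\|\|Du\|-\lambda_a\int(t-s_0)\cdots.
\end{equation*}
More simply, take $\kappa=1/h$ in the first part. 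On $J$ the damping factor lies in $[e^{-2},1]$, so the damped norms are comparable to the undamped ones. Then, for $h\le h_0$ small enough that $1/h>2B_b^2/\lambda_a$, we have $\kappa_{\mathrm{eff}}\ge1/(2h)$, and the first part with $e_q\le C(\Lambda_A)$ gives $h^{-2}\|u\|^2+h^{-1}\|Du\|^2\le C(\|P_Au\|+\|D^2u\|)^2$. This is the cleanest route and the one I would use.
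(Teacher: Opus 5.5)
Your argument is essentially the paper's proof, with three bookkeeping corrections needed. The shared steps are: It\^o's formula for $t^\alpha e^{-2\kappa(s_1-t)}u(t,X_t)^2$ on core fields; the split $2|u\,b\cdot Du|\le\tfrac{\lambda_a}{2}|Du|^2+\tfrac{2B_b^2}{\lambda_a}|u|^2$, which gives $2\kappa_{\mathrm{eff}}\|u\|^2+\tfrac{\lambda_a}{2}\|Du\|^2\le2\|u\|\,\|P_{a/2}u\|$; completion of the square; and $\kappa=1/h$ with $e^{-2}\le e^{-2\kappa(s_1-t)}\le1$ for the short-interval bound.

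(i) Only $\tfrac{\lambda_a}{2}\|Du\|^2$ survives the split. Maximizing $2xG-2\kappa_{\mathrm{eff}}x^2$ over $x$ therefore gives exactly $\lambda_a\kappa_{\mathrm{eff}}\|Du\|^2\le G^2$, which is the claim. The quadratic you wrote, with the full coefficient $\lambda_a$ on $\|Du\|^2$, is not what the split leaves. The advertised improvement to $G^2/(2\kappa_{\mathrm{eff}})$ is therefore unjustified when $B_b>0$, though this is harmless since the exact bound suffices.

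(ii) For the second estimate, do not route through $e_q\|\mathcal H_qu\|$ with ``$e_q\le C(\Lambda_A)$''. Both $e_q$ and the ratio $\|\mathcal H_qu\|/\|D^2u\|$ depend on $\lambda_q,\Lambda_q$, whereas the stated $h_0,C$ may depend only on the reference data and $\Lambda_A$. Instead, insert $\|P_{a/2}u\|\le\|P_Au\|+\|(A-a/2):D^2u\|\le\|P_Au\|+C\|D^2u\|$ directly into the $\kappa_{\mathrm{eff}}$ inequality, as the paper does.

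(iii) Core fields need not extend to $t=s_0$, and $t^{\alpha-1}$ may be singular there when $s_0=0$. So apply It\^o on $(c,s_1)$ with $c>s_0$, discard the nonnegative boundary and weight-derivative terms, and let $c\downarrow s_0$ by monotone convergence before invoking the density of $\mathcal C_{\alpha,0}(J)$ in $\mathcal W_{\alpha,0}(J)$.
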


\begin{proof}
For $u\in\mathcal C_{\alpha,0}(J)$, apply It\^o's formula to
$t^\alpha e^{-2\kappa(s_1-t)}u(t,X_t)^2$ between $c>s_0$ and $s_1$.
Using
$2|ub\cdot Du|\le\lambda_a|Du|^2/2+2B_b^2|u|^2/\lambda_a$
and discarding the nonnegative boundary and time-weight terms
before letting $c\downarrow s_0$ gives
\begin{equation*}
2\kappa_{\mathrm{eff}}\|u\|_{\alpha,\kappa,J}^2
 +\tfrac12\lambda_a\|Du\|_{\alpha,\kappa,J}^2
 \le2\|u\|_{\alpha,\kappa,J}\|P_{a/2}u\|_{\alpha,\kappa,J}.
\end{equation*}
Completing the square and using
$P_{a/2}u=P_Au+(A-a/2):D^2u$ proves the first estimate.
Lemma~\ref{lem:graph-core} extends it to $\mathcal W_{\alpha,0}(J)$.
For $\kappa=h^{-1}$ and small $h$, $\kappa_{\mathrm{eff}}\ge\kappa/2$ and
$e^{-2}\le e^{-2\kappa(s_1-t)}\le1$ on $J$.
Together with $|(A-a/2):D^2u|\le C|D^2u|$, this gives the last estimate.
\end{proof}

\subsection{Natural realization}
\label{subsec:natural-realization}

\begin{proof}[Proof of Proposition~\ref{prop:realization}]
Suppose $K_\alpha(I)<\infty$ and choose $h_0>0$ with
$K_{\alpha,h_0}(I)<\infty$. The Hessian bound in
\eqref{eq:short-time-constants}, Lemma~\ref{lem:direct-energy} and
$u_t=-P_Au-A:D^2u$ give, after decreasing $h_0$ by a threshold
depending only on the reference data and $\Lambda_A$,
\begin{equation}
\|u\|_{\mathcal W_\alpha(J)}\le C_{\mathrm{loc}}\|P_Au\|_{\alpha,J},
\qquad\forall J\subseteq I\text{ with }|J|\le h_0,
\quad\forall u\in\mathcal W_{\alpha,0}(J).
\label{eq:local-full-graph}
\end{equation}
Here $C_{\mathrm{loc}}$, and $C_I$ below, depend only on the reference
data, $\alpha$, the $A,q$ ellipticity data, $h_0$ and $K_{\alpha,h_0}(I)$.
Fix $\delta=h_0/4$ and partition a longer $J$ into consecutive
intervals $E_j=(t_{j-1},t_j)$ of length $\delta$, except possibly $E_1$.
For $j<N$, a cutoff on $(t_{j-1},t_{j+1})$, equal to one on $E_j$
and zero near the right endpoint of $E_{j+1}$, has derivative
bounded by $C\delta^{-1}$. Applying \eqref{eq:local-full-graph}
to the cutoff times $u$ yields, with $f=P_Au$,
\begin{equation*}
\|u\|_{\mathcal W_\alpha(E_j)}
 \le C_{\mathrm{loc}}\bigl(\|f\|_{\alpha,E_j}+\|f\|_{\alpha,E_{j+1}}
                   +C\delta^{-1}\|u\|_{\alpha,E_{j+1}}\bigr).
\end{equation*}
Starting with \eqref{eq:local-full-graph} on $E_N$ and iterating
backwards through at most $1+T/\delta$ intervals gives
\begin{equation}
\|u\|_{\mathcal W_\alpha(J)}\le C_I\|P_Au\|_{\alpha,J},
\qquad\forall J\subseteq I,\quad\forall u\in\mathcal W_{\alpha,0}(J).
\label{eq:uniform-full-graph}
\end{equation}

For $p_*>d+2$ and $f\in C_c^\infty(J\times\R^d)$,
\citet[Theorem~2.1]{krylov-2007-vmo} gives a zero-terminal solution
$u\in W^{1,2}_{p_*}(J\times\R^d)$ of $P_Au=f$: the spatial uniform continuity
of $A$ implies the required VMO condition.
Equation~\eqref{eq:weighted-lp-embedding} and H\"older's inequality give
$u\in\mathcal W_{\alpha,0}(J)$; the terminal traces agree locally.
Density of $C_c^\infty(J\times\R^d)$ in $\mathcal H_\alpha(J)$
and \eqref{eq:uniform-full-graph} now give existence, uniqueness
and the common inverse bound for every $f\in\mathcal H_\alpha(J)$.
The same estimate and Lemma~\ref{lem:graph-core} give closedness
and the core assertion; density of the operator domain follows
from its inclusion of $C_c^\infty(J\times\R^d)$.
Surjectivity and \eqref{eq:short-time-constants} give
$\|\mathcal H_qS_J\|=k_\alpha(J)$.

Restriction and uniqueness on $(s,s_1)$ give
\eqref{eq:backward-causality}. Hence, if $f$ is supported in $J$,
$(S_If)|_J$ has zero terminal trace and equals $S_J(f|_J)$.
Applying $\mathcal H_q$ proves \eqref{eq:inverse-compression}.
Conversely, a common graph-norm bound for $S_J$ bounds
$k_\alpha(J)$ uniformly and therefore gives $K_\alpha(I)<\infty$.
\end{proof}

\section{A negative-curvature counterexample}
\label{app:negative-curvature}

We prove Example~\ref{ex:negative-curvature} with initial law
$\delta_{(0,0)}$ on $I=(0,5)$.
For $x_1\to\infty$ and $y\to3$, the terminal covectors of all
shortest paths from $(0,0)$ to $(x_1,y)$ will approach direction $(1,1)$.
A transverse kernel estimate will give the actual phase
$(1,-1)/32$ at nearby centers. We then choose $A,q$ and
apply Proposition~\ref{prop:whole-line-gains} and
Theorem~\ref{thm:sharp-limit} to obtain the gap
in \eqref{eq:negative-curvature-obstruction}.

\subsection{The coefficients and geodesic directions}
\label{subsec:negative-metric}

Choose a smooth odd function $h_{\mathrm{prof}}$ with bounded derivatives,
$|h_{\mathrm{prof}}|\le3$, and
\begin{equation*}
h_{\mathrm{prof}}(y)=2y,\,0\le y\le1,\qquad
h_{\mathrm{prof}}(y)=4-y,\,3/2\le y\le6,\qquad
h_{\mathrm{prof}}(y)=-3,\,y\ge7.
\end{equation*}
Smooth interpolation on the two intervening intervals can
be chosen with $h_{\mathrm{prof}}>0$ on $(0,4)$,
$h_{\mathrm{prof}}<0$ on $(4,\infty)$ and
$|h_{\mathrm{prof}}|\ge1$ on $[1,3/2]\cup[6,7]$.
Thus its only zeros are $0,\pm4$, with slopes $2,-1,-1$.
Define
\begin{equation*}
\begin{gathered}
a(y)=\operatorname{diag}(16-h_{\mathrm{prof}}(y)^2,1),\qquad
w_g(y)=(16-h_{\mathrm{prof}}(y)^2)^{-1/2},\\
g=w_g(y)^2\,dx_1^2+dy^2,\qquad
H_{\mathrm{prof}}(y)=\int_0^y h_{\mathrm{prof}}(u)\,du,\qquad C_*=H_{\mathrm{prof}}(4).
\end{gathered}
\end{equation*}
Choose $b=b_g=(0,w_g'/2w_g)$, so $Z=0$.
Equation~\eqref{eq:ricci-coordinate} gives
\begin{equation*}
\operatorname{Ric}_g=\mathcal K_g g,\qquad
\mathcal K_g=-w_g''/w_g,\qquad
\mathcal K_g(0)=-1/4,\quad \mathcal K_g(\pm4)=-1/16.
\end{equation*}
The metric is smooth and uniformly elliptic with bounded
curvature and derivatives; only the curvature sign in
Assumption~\ref{ass:geometry} fails.
Write $\rho_g$ for the Lebesgue density of the diffusion
with generator $\Delta_g/2$ and initial point $(0,0)$.

Fix $J_y=[2.8,3.2]$. The following distance estimates are uniform
for $y\in J_y$, with constants and thresholds depending only
on $\|h_{\mathrm{prof}}'\|_\infty$.
Since $\varphi(x_1,y)=x_1+H_{\mathrm{prof}}(y)$ satisfies $|D\varphi|_a^2=16$,
integration along any path from $(0,0)$ to $(x_1,y)$ gives
$d_g(0,(x_1,y))\ge(x_1+H_{\mathrm{prof}}(y))/4$.
For the upper bound, take $\varepsilon=e^{-x_1/32}$ and
the unit vector field $V=aD\varphi/4$. An integral curve
from height $\varepsilon$ to $y$ has horizontal displacement
\begin{equation*}
D_\varepsilon(y):=\int_\varepsilon^y\frac{16-h_{\mathrm{prof}}(u)^2}{h_{\mathrm{prof}}(u)}\,du
 =8\log(1/\varepsilon)+O_{J_y}(1)=x_1/4+O_{J_y}(1).
\end{equation*}
Put $x_{1,\mathrm{mid}}=x_1-1-D_\varepsilon(y)\ge0$ for large $x_1$.
Concatenate the horizontal segment from $(0,0)$ to $(x_{1,\mathrm{mid}},0)$,
the curve $c(s)=(x_{1,\mathrm{mid}}+s,\varepsilon s)$, $0\le s\le1$,
and the integral curve of $V$ from $(x_{1,\mathrm{mid}}+1,\varepsilon)$ to $(x_1,y)$.
On the first and third portions, length equals the increment
of $\varphi/4$. On the middle portion,
\begin{equation*}
L_g(c)=\int_0^1\sqrt{w_g(\varepsilon s)^2+\varepsilon^2}\,ds
 =\frac14+O(\varepsilon^2),\qquad
\frac{\varphi(c(1))-\varphi(c(0))}{4}
 =\frac{1+H_{\mathrm{prof}}(\varepsilon)}4=\frac14+O(\varepsilon^2).
\end{equation*}
Summing over the three portions gives
\begin{equation*}
0\le d_g(0,(x_1,y))-\frac{x_1+H_{\mathrm{prof}}(y)}4\le Ce^{-x_1/16}
,\,\forall y\in J_y.
\end{equation*}

Parametrize any shortest path by arclength
$s\in[0,\ell_\gamma]$, and put $u=\dot\gamma$.
The identity $1-d\varphi(u)/4=|u-V|_g^2/2$ gives
\begin{equation}
\int_0^{\ell_\gamma}|u(s)-V(\gamma(s))|_g^2\,ds
 =2\left\{\ell_\gamma-\frac{x_1+H_{\mathrm{prof}}(y)}4\right\}
 \le Ce^{-x_1/16}.
\label{eq:negative-geodesic-alignment}
\end{equation}
The geodesic equation $\dot u^k=-\Gamma^k_{ij}u^iu^j$ and
the bounds on $\Gamma,V,DV$ give $\Lip(u-V\circ\gamma)\le C$.
Equation~\eqref{eq:negative-geodesic-alignment} therefore implies
$|u(\ell_\gamma)-V(x_1,y)|^3\le Ce^{-x_1/16}$.
Write
$v_\gamma=g(x_1,y)u(\ell_\gamma)/|g(x_1,y)u(\ell_\gamma)|$
for the Euclidean unit terminal covector; its value
does not depend on the parametrization.
Since $gV=D\varphi/4$, we have proved
\begin{equation}
\sup_{y\in J_y}
\sup_{\gamma\in\operatorname{Min}_g(\{0\},(x_1,y))}
\left|v_\gamma-\frac{(1,h_{\mathrm{prof}}(y))}{\sqrt{1+h_{\mathrm{prof}}(y)^2}}\right|
 \longrightarrow0\qquad(x_1\to\infty).
\label{eq:negative-geodesic-directions}
\end{equation}

\subsection{The transverse kernel}
\label{subsec:negative-kernel-comparison}

Let $k_0(t,(0,u),(R,v))$ be the transition density of
$L_0=a:D^2/2$, and write
$\rho_0(t,x_1,y)=k_0(t,(0,0),(x_1,y))$.
For $\Lambda>0$, let $K_\Lambda(t,u,v)$ be the kernel of
$e^{-tH_\Lambda}$, where $H_\Lambda$ is the self-adjoint operator
on $L^2(\R)$ with domain $H^2(\R)$ given by
\begin{equation*}
H_\Lambda=-\tfrac12\partial_{yy}+\tfrac{\Lambda^2}{2}h_{\mathrm{prof}}(y)^2.
\end{equation*}
Conditioning on the vertical Brownian motion gives
\begin{equation}
K_\Lambda(t,u,v)
 =e^{-8\Lambda^2t}\int_\R e^{\Lambda R}
                         k_0(t,(0,u),(R,v))\,dR.
\label{eq:negative-kernel-transform}
\end{equation}
Put $J_4(y)=2C_*-H_{\mathrm{prof}}(y)+3/2$. We prove
\begin{equation}
\Lambda^{-1}\log K_\Lambda(3,0,y)\longrightarrow-J_4(y)
\qquad\text{uniformly for }y\in J_y=[2.8,3.2].
\label{eq:negative-kernel-asymptotic}
\end{equation}
The spatial cost is $C_*$ for reaching $4$ and
$C_*-H_{\mathrm{prof}}(y)$ for returning to $y$.
The quadratic oscillator models at $0$ and $4$ have ground-state
energies $\Lambda$ and $\Lambda/2$, respectively. At time $3$,
their confinement costs divided by $\Lambda$ are $3$ and $3/2$.
The latter gives the term $3/2$ in $J_4$; the estimates below
show that the saving offsets the extra travel cost on $J_y$.
Fix $0<\varepsilon<1/8$.
The upper and lower estimates below hold for every $y\in J_y$,
with constants and a lower threshold for $\Lambda$ depending
only on $\varepsilon$ and $\|h_{\mathrm{prof}}'\|_\infty$.

\paragraph*{Upper bound.}
Set $\vartheta=1-\varepsilon$ and, for $0\le s\le3$, define
\begin{equation}
U_0(s,y)=e^{-\Lambda(1-2\varepsilon)s-\vartheta\Lambda H_{\mathrm{prof}}(y)},
\qquad
U_1(s,y)=e^{-\Lambda(1/2-2\varepsilon)s
                   -\vartheta\Lambda(2C_*-\varepsilon-H_{\mathrm{prof}}(y))}.
\label{eq:negative-comparison-functions}
\end{equation}
Direct differentiation gives
\begin{equation}
\begin{aligned}
\frac{(\partial_s+H_\Lambda)U_0}{U_0}
 &=\tfrac12(1-\vartheta^2)\Lambda^2h_{\mathrm{prof}}^2
                    +\Lambda(\vartheta h_{\mathrm{prof}}'/2-1+2\varepsilon),\\
\frac{(\partial_s+H_\Lambda)U_1}{U_1}
 &=\tfrac12(1-\vartheta^2)\Lambda^2h_{\mathrm{prof}}^2
                    +\Lambda(-\vartheta h_{\mathrm{prof}}'/2-1/2+2\varepsilon).
\end{aligned}
\label{eq:negative-supersolution-calculation}
\end{equation}
Choose disjoint neighborhoods $N_0,N_4,N_{-4}$ of the zeros
where $h_{\mathrm{prof}}'=2$ on $N_0$, $h_{\mathrm{prof}}'=-1$ on $N_4\cup N_{-4}$, and
$H_{\mathrm{prof}}\le\varepsilon/4$ on $N_0$,
$C_*-H_{\mathrm{prof}}\le\varepsilon/4$ on $N_4\cup N_{-4}$.
Uniformly for $0\le s\le3$,
\begin{equation*}
\begin{aligned}
\frac{U_1}{U_0}
 &\le e^{-\Lambda\{\vartheta(2C_*-3\varepsilon/2)-3/2\}}
 &&\text{on }N_0,\\
\frac{U_0}{U_1}
 &\le e^{-\vartheta\varepsilon\Lambda/2}
 &&\text{on }N_4\cup N_{-4}.
\end{aligned}
\end{equation*}
The prescribed pieces of $h_{\mathrm{prof}}$ and its positivity on $(1,3/2)$ give
\begin{equation*}
C_*>\int_0^1 2y\,dy+\int_{3/2}^4(4-y)\,dy
 =1+\frac{25}{8}=\frac{33}{8}>4.
\end{equation*}
Thus both ratios tend to zero exponentially.
Equation~\eqref{eq:negative-supersolution-calculation} therefore gives,
for all large $\Lambda$,
\begin{equation*}
(\partial_s+H_\Lambda)(U_0+U_1)\ge
\begin{cases}
\Lambda(\varepsilon U_0-CU_1)\ge0&\text{on }N_0,\\
\Lambda(3\varepsilon U_1/2-CU_0)\ge0
 &\text{on }N_4\cup N_{-4}.
\end{cases}
\end{equation*}
On $\R\setminus(N_0\cup N_4\cup N_{-4})$, $|h_{\mathrm{prof}}|$ is bounded
away from zero, so \eqref{eq:negative-supersolution-calculation}
gives $(\partial_s+H_\Lambda)U_i\ge0$ for $i=0,1$.
Hence $(\partial_s+H_\Lambda)(U_0+U_1)\ge0$ on $[0,3]\times\R$.

Put $L=\max\{1,\|h_{\mathrm{prof}}'\|_\infty\}$ and $s_\Lambda=(L\Lambda)^{-1}$.
Since $\vartheta H_{\mathrm{prof}}(y)\le Ly^2/2$ and the potential is nonnegative,
\begin{equation*}
K_\Lambda(s_\Lambda,0,y)
 \le\sqrt{\frac{L\Lambda}{2\pi}}e^{-L\Lambda y^2/2}
 \le C\sqrt\Lambda\,U_0(0,y).
\end{equation*}
Since $H_{\mathrm{prof}}(y)\to-\infty$ as $|y|\to\infty$, $U_0$ tends to
infinity uniformly for $0\le s\le3$.
Comparison on bounded intervals and passage to the limit give
$K_\Lambda(3,0,y)\le C\sqrt\Lambda
 (U_0+U_1)(3-s_\Lambda,y)$.
On $J_y$, $H_{\mathrm{prof}}(y)+3-J_4(y)=3/2-(4-y)^2\ge3/50$.
Substitution of \eqref{eq:negative-comparison-functions}, using
$C_*\le12$ and $\sup_{J_y}|H_{\mathrm{prof}}|\le48/5$, gives
\begin{equation}
K_\Lambda(3,0,y)
 \le C\sqrt\Lambda\,e^{-\Lambda(J_4(y)-41\varepsilon)},
\qquad y\in J_y.
\label{eq:negative-kernel-upper}
\end{equation}

\paragraph*{Lower bound.}
We will compose two short transitions with a middle interval
near $4$. Only the integral of the middle kernel is needed.
Choose $\chi\in C_c^\infty((-1,1);[0,1])$ with
$\chi=1$ on $[-1/2,1/2]$ and $\|\chi'\|_\infty$ bounded
by a numerical constant, and put
$\chi_\varepsilon(z)=\chi(z/R_\varepsilon)$.
For $R_\varepsilon>1$ sufficiently large,
$f_\varepsilon(z)=\chi_\varepsilon(z)e^{-z^2/2}$ satisfies
\begin{equation*}
\frac{\tfrac12\int\{|f_\varepsilon'|^2+z^2f_\varepsilon^2\}\,dz}
     {\int f_\varepsilon^2\,dz}
 =\frac12+
   \frac{\int|\chi_\varepsilon'|^2e^{-z^2}\,dz}
        {2\int\chi_\varepsilon^2e^{-z^2}\,dz}
 \le\frac12+\varepsilon.
\end{equation*}
The identity follows by integration by parts. The last fraction
is at most $CR_\varepsilon^{-2}e^{-R_\varepsilon^2/4}$,
which determines the choice of $R_\varepsilon$.
Put $I_{y,\Lambda}=[4-R_\varepsilon/\sqrt\Lambda,
4+R_\varepsilon/\sqrt\Lambda]$ and
$f_\Lambda(y)=f_\varepsilon(\sqrt\Lambda(y-4))$.
For large $\Lambda$, $h_{\mathrm{prof}}(y)=4-y$ on $I_{y,\Lambda}$, so
\begin{equation*}
0\le f_\Lambda\le\1_{I_{y,\Lambda}},\qquad
\|f_\Lambda\|_2^2=\Lambda^{-1/2}\|f_\varepsilon\|_2^2,
\qquad
\frac{\langle f_\Lambda,H_\Lambda f_\Lambda\rangle}
     {\|f_\Lambda\|_2^2}\le\Lambda(1/2+\varepsilon).
\end{equation*}
Positivity, the spectral theorem and Jensen's inequality give,
for $1\le s\le3$,
\begin{equation}
\int_{I_{y,\Lambda}}\int_{I_{y,\Lambda}}K_\Lambda(s,u,v)\,du\,dv
 \ge\langle f_\Lambda,e^{-sH_\Lambda}f_\Lambda\rangle
 \ge\|f_\Lambda\|_2^2
    \exp\left\{-s\frac{\langle f_\Lambda,H_\Lambda f_\Lambda\rangle}
                         {\|f_\Lambda\|_2^2}\right\}
 \ge c_\varepsilon\Lambda^{-1/2}
                       e^{-\Lambda(1/2+\varepsilon)s}.
\label{eq:negative-confinement-average}
\end{equation}

For $\gamma\in H^1([0,\tau];\R)$, write
$\mathcal E(\gamma)=\tfrac12\int_0^\tau
\{\dot\gamma(s)^2+h_{\mathrm{prof}}(\gamma(s))^2\}\,ds$.
Set $\delta=\varepsilon/16$.
For $(u,v)\in\{(0,4)\}\cup(\{4\}\times J_y)$, let
$\gamma_\delta(0)=u$ and
$\dot\gamma_\delta=\operatorname{sgn}(v-u)
\sqrt{h_{\mathrm{prof}}(\gamma_\delta)^2+\delta^2}$.
Define $\tau_\delta(u,v)>0$ by $\gamma_\delta(\tau_\delta(u,v))=v$.
Its duration and energy are
\begin{equation*}
\tau_\delta(u,v)
 =\left|\int_u^v\frac{dz}{\sqrt{h_{\mathrm{prof}}(z)^2+\delta^2}}\right|,
\qquad
\mathcal E(\gamma_\delta)
 =\left|\int_u^v
       \frac{h_{\mathrm{prof}}(z)^2+\delta^2/2}{\sqrt{h_{\mathrm{prof}}(z)^2+\delta^2}}\,dz\right|.
\end{equation*}
In particular,
\begin{equation*}
0\le\mathcal E(\gamma_\delta)-\left|\int_u^v|h_{\mathrm{prof}}(z)|\,dz\right|
 \le\frac\delta2|v-u|<\frac\varepsilon4.
\end{equation*}
Set
$\tau_\varepsilon=\max\{\tau_\delta(0,4),
                         \sup_{y\in J_y}\tau_\delta(4,y)\}$.
Extend the path to $[0,\tau_\varepsilon]$ by
\begin{equation*}
\bar\gamma_\delta(s)=\gamma_\delta
 \bigl(\max\{0,s-\tau_\varepsilon+\tau_\delta(u,v)\}\bigr).
\end{equation*}
For $0\le s\le\tau_\varepsilon-\tau_\delta(u,v)$,
$\bar\gamma_\delta(s)=u\in\{0,4\}$,
so $\dot{\bar\gamma}_\delta=0$ and $h_{\mathrm{prof}}(\bar\gamma_\delta)=0$.
Hence $\mathcal E(\bar\gamma_\delta)=\mathcal E(\gamma_\delta)$.
For $|\Delta u|+|\Delta v|\le R_\varepsilon/\sqrt\Lambda$, define
\begin{equation*}
\gamma_\Delta(s)=\bar\gamma_\delta(s)
 +(1-s/\tau_\varepsilon)\Delta u+(s/\tau_\varepsilon)\Delta v.
\end{equation*}
The bounds on $h_{\mathrm{prof}},h_{\mathrm{prof}}'$ give
$|\mathcal E(\gamma_\Delta)-\mathcal E(\bar\gamma_\delta)|
 \le C_\varepsilon(|\Delta u|+|\Delta v|)
 \le C_\varepsilon\Lambda^{-1/2}$ for all large $\Lambda$.
Thus, for all sufficiently large $\Lambda$ and every
$u,v\in I_{y,\Lambda}$, $y\in J_y$, there are paths
$\gamma_{0,u},\gamma_{v,y}\in H^1([0,\tau_\varepsilon];\R)$ satisfying
\begin{equation}
\begin{aligned}
\gamma_{0,u}(0)&=0,&\quad \gamma_{0,u}(\tau_\varepsilon)&=u,
 &\quad \mathcal E(\gamma_{0,u})&\le C_*+\varepsilon/2,\\
\gamma_{v,y}(0)&=v,&\quad \gamma_{v,y}(\tau_\varepsilon)&=y,
 &\quad \mathcal E(\gamma_{v,y})&\le C_*-H_{\mathrm{prof}}(y)+\varepsilon/2.
\end{aligned}
\label{eq:negative-short-path-energy}
\end{equation}

For any one of these paths $\gamma$, denote its endpoints by
$u=\gamma(0)$ and $v=\gamma(\tau_\varepsilon)$,
and put $\tau=\tau_\varepsilon$ and
\begin{equation*}
V_{\gamma}=\int_0^\tau\{16-h_{\mathrm{prof}}(\gamma(s))^2\}\,ds,\qquad
R_\xi(s)=\frac{V_{\gamma}+\xi}{V_{\gamma}}
                    \int_0^s\{16-h_{\mathrm{prof}}(\gamma(w))^2\}\,dw .
\end{equation*}
The path $(R_\xi(\Lambda s),\gamma(\Lambda s))$ on
$[0,\tau/\Lambda]$ gives
\begin{equation*}
\mathcal A^{g,0}(0,(0,u);\tau/\Lambda,(V_{\gamma}+\xi,v))
 \le\frac\Lambda2\left\{
       \frac{(V_{\gamma}+\xi)^2}{V_{\gamma}}
                         +\int_0^\tau\dot\gamma^2\,ds\right\}.
\end{equation*}
For $|\xi|\le\Lambda^{-1/2}$, all endpoint distances are
bounded uniformly, since $7\tau\le V_{\gamma}\le16\tau$.
The bounded-displacement estimate \eqref{eq:kernel-action},
with duration $\tau/\Lambda$, therefore gives
\begin{equation*}
e^{-8\Lambda\tau+\Lambda(V_{\gamma}+\xi)}
           k_0(\tau/\Lambda,(0,u),(V_{\gamma}+\xi,v))\ge c_\varepsilon\Lambda
 \exp\left\{-\Lambda\mathcal E(\gamma)
          -\frac{\Lambda\xi^2}{2V_{\gamma}}
          -C_\varepsilon\Lambda^{1/3}\right\}.
\end{equation*}
Integrate over $|\xi|\le\Lambda^{-1/2}$ in
\eqref{eq:negative-kernel-transform}.
For large $\Lambda$, $C_\varepsilon\Lambda^{1/3}\le
\varepsilon\Lambda/2$, so \eqref{eq:negative-short-path-energy} gives
\begin{equation}
\begin{aligned}
K_\Lambda(\tau_\varepsilon/\Lambda,0,u)
 &\ge c_\varepsilon\sqrt\Lambda\,
                         e^{-\Lambda(C_*+\varepsilon)},
 &&u\in I_{y,\Lambda},\\
K_\Lambda(\tau_\varepsilon/\Lambda,v,y)
 &\ge c_\varepsilon\sqrt\Lambda\,
                   e^{-\Lambda(C_*-H_{\mathrm{prof}}(y)+\varepsilon)},
 &&v\in I_{y,\Lambda},\quad y\in J_y.
\end{aligned}
\label{eq:negative-travel-lower}
\end{equation}

Set $s=3-2\tau_\varepsilon/\Lambda\in[1,3]$ for large $\Lambda$.
The semigroup identity for $K_\Lambda$ follows from
\eqref{eq:chapman-kolmogorov} and \eqref{eq:negative-kernel-transform}.
Together with \eqref{eq:negative-travel-lower}, it gives
\begin{equation*}
\begin{aligned}
K_\Lambda(3,0,y)\ge\int_{I_{y,\Lambda}}\int_{I_{y,\Lambda}}
 &K_\Lambda(\tau_\varepsilon/\Lambda,0,u)
 K_\Lambda(s,u,v)
 K_\Lambda(\tau_\varepsilon/\Lambda,v,y)\,du\,dv\\
\ge{}&c_\varepsilon\Lambda
 e^{-\Lambda(2C_*-H_{\mathrm{prof}}(y)+2\varepsilon)}
 \int_{I_{y,\Lambda}}\int_{I_{y,\Lambda}}K_\Lambda(s,u,v)\,du\,dv.
\end{aligned}
\end{equation*}
Equation~\eqref{eq:negative-confinement-average} and $s\le3$
therefore yield
\begin{equation}
K_\Lambda(3,0,y)
 \ge c_\varepsilon\sqrt\Lambda\,
                      e^{-\Lambda(J_4(y)+5\varepsilon)}
 ,\,\forall y\in J_y.
\label{eq:negative-kernel-lower}
\end{equation}
Taking logarithms in \eqref{eq:negative-kernel-upper}
and \eqref{eq:negative-kernel-lower}, dividing by $\Lambda$,
and then letting $\Lambda\to\infty$ and
$\varepsilon\downarrow0$ proves
\eqref{eq:negative-kernel-asymptotic}.

\subsection{The actual phase and the constant gap}
\label{subsec:negative-phase}

We first compare $\rho_g$ with $\rho_0$.
Let $\psi=-\frac12\log w_g$. Then $b_g+aD\psi=0$ and
$U_\psi=\Delta_g\psi/2+|D\psi|_a^2/2$ is bounded.
The identity $e^{-\psi}(\Delta_g/2)e^\psi=L_0+U_\psi$ gives
$\rho_g(t,x)=e^{\psi(0)-\psi(x)}
 k_0^{U_\psi}(t,(0,0),x)$, where $k_0^{U_\psi}$ is the
Feynman--Kac kernel of $L_0+U_\psi$.
Equation~\eqref{eq:potential-kernel-bound} therefore gives,
for $0<t\le5$,
\begin{equation}
C^{-1}\rho_0(t,x)\le\rho_g(t,x)\le C\rho_0(t,x),
\qquad C=e^{2\|\psi\|_\infty+5\|U_\psi\|_\infty}.
\label{eq:geometric-density-gauge}
\end{equation}

Let $Y$ be the Brownian bridge from zero to $y$ in time $3$,
write $\mathbb E_{0,y}^3$ for expectation under its law, and let
$\gamma_3$ be the density of $N(0,3)$ on $\R$.
Set $C_Y=\int_0^3 a^{11}(Y_s)\,ds=48-\int_0^3h_{\mathrm{prof}}(Y_s)^2\,ds$.
Then $21\le C_Y\le48$, and conditioning on $Y$ gives
\begin{equation*}
\begin{gathered}
\rho_0(3,x_1,y)
 =\gamma_3(y)\,
 \mathbb E_{0,y}^3[(2\pi C_Y)^{-1/2}e^{-x_1^2/(2C_Y)}],
 \qquad \Lambda=\frac{x_1}{48},\\
\frac{x_1^2}{2C_Y}
 =24\Lambda^2+\frac{\Lambda^2(48-C_Y)}{2}
                    +\frac{(\Lambda (48-C_Y))^2}{2(C_Y)}.
\end{gathered}
\end{equation*}
The Feynman--Kac formula for $H_\Lambda$ gives
$\mathbb E_{0,y}^3[e^{-\Lambda^2(48-C_Y)/2}]
 =K_\Lambda(3,0,y)/\gamma_3(y)$.
The lower bound \eqref{eq:negative-kernel-lower} makes
this expectation at least $e^{-C\Lambda}$, uniformly
for $y\in J_y$ and all large $\Lambda$.
Choose $L>4(C+1)$. On $\{48-C_Y>L/\Lambda\}$,
$e^{-\Lambda^2(48-C_Y)/2}\le e^{-\Lambda L/4}e^{-\Lambda^2(48-C_Y)/4}$, so
\begin{equation*}
\frac{\mathbb E_{0,y}^3[e^{-\Lambda^2(48-C_Y)/2}
                    \1_{\{48-C_Y>L/\Lambda\}}]}
     {\mathbb E_{0,y}^3[e^{-\Lambda^2(48-C_Y)/2}]}
 \le e^{(C-L/4)\Lambda}\le e^{-\Lambda}.
\end{equation*}
On $\{48-C_Y\le L/\Lambda\}$,
$0\le(\Lambda (48-C_Y))^2/[2(C_Y)]\le L^2/42$.
Since $21\le C_Y\le48$, it follows that
\begin{equation*}
c e^{-x_1^2/96}K_\Lambda(3,0,y)
 \le\rho_0(3,x_1,y)
 \le C e^{-x_1^2/96}K_\Lambda(3,0,y),
\qquad y\in J_y,\quad x_1=48\Lambda .
\end{equation*}
Combining this with \eqref{eq:negative-kernel-asymptotic}
and \eqref{eq:geometric-density-gauge} yields
\begin{equation}
\log\rho_g(3,x_1,y)
 =-\frac{x_1^2}{96}-\frac{x_1}{48}J_4(y)+o(x_1)
 \qquad\text{uniformly for }y\in J_y,\quad x_1\to\infty.
\label{eq:negative-density-asymptotic}
\end{equation}
The $o(x_1)$ value error in \eqref{eq:negative-density-asymptotic}
does not control the local logarithmic derivatives needed to
identify a phase. We use nearby penalized contact points, where
maximality gives a local exponential upper bound.
Proposition~\ref{prop:tail-compactness} and
\eqref{eq:exponential-domination} then identify a pure phase
at these new centers.
Put $f(x_1,y)=-x_1^2/96-x_1J_4(y)/48$.
For each large $R$, choose $x_R=(x_{1,R},y_R)$ maximizing
\begin{equation*}
\log\rho_g(3,x_1,y)-f(x_1,y)
       -R\{(x_1-R)^2+(y-3)^2\}
\end{equation*}
on $\overline B((R,3),1/10)$.
Maximality and \eqref{eq:negative-density-asymptotic} give
\begin{equation*}
R|x_R-(R,3)|^2
 \le2\sup_{\overline B((R,3),1/10)}
                |\log\rho_g(3,\cdot)-f|
 =o(R).
\end{equation*}
Thus $x_R-(R,3)\to0$, and the maximizer is interior
for large $R$.
This vanishing displacement preserves the joint limits of $a,A,q$
for the uniformly continuous target coefficients chosen below.
At the natural scale $\ell_R=3/(|x_R|+\sqrt3)$,
we have $R\ell_R\to3$ and
$\ell_RDf(x_R)\to(-1,1)/16$.
Put $\zeta_-=(1,-1)/32$.
For every fixed $z\in\R^2$, maximality and Taylor expansion of $f$ give
\begin{equation}
\log\frac{\rho_g(3,x_R+\ell_Rz)}{\rho_g(3,x_R)}
 \le f(x_R+\ell_Rz)-f(x_R)
       +2R\ell_R(x_R-(R,3))\cdot z+R\ell_R^2|z|^2
 =-2\zeta_-\cdot z+o_z(1).
\label{eq:negative-local-exponential-bound}
\end{equation}

We choose $A,q$ to bound the geodesic gain by $2$
and increase the gain in direction $\zeta_-$.
Put $w(y)=(-h_{\mathrm{prof}}(y),1)$, which is perpendicular to
$(1,h_{\mathrm{prof}}(y))$, and set $B(y)=ww^{\mathsf T}/2-I_2/16$.
By \eqref{eq:negative-geodesic-directions}, choose
$R_0$ so large that every shortest path to $(x_1,y)$,
$x_1\ge R_0$, $y\in J_y$, satisfies
$|w(y)\cdot v_\gamma|^2\le1/8$.
Choose smooth cutoffs $0\le\chi_t,\chi_1,\chi_2\le1$:
$\chi_t$ is supported in $(2,4)$ with $\chi_t(3)=1$;
$\chi_2$ is supported in $(2.8,3.2)$ with $\chi_2(3)=1$;
and $\chi_1(x_1)=0$ for $x_1\le R_0$, $\chi_1(x_1)=1$ for $x_1\ge R_0+1$.
Define
\begin{equation*}
\omega(t,x_1,y)=\chi_t(t)\chi_1(x_1)\chi_2(y),\qquad
A=a/2+\omega B,\qquad q=2A.
\end{equation*}
Then $A\succeq7I_2/16$, and $A,q$ satisfy
Assumption~\ref{ass:coefficients}.
Where $\omega\ne0$, we have $y\in J_y$ and
$u=h_{\mathrm{prof}}(y)^2\le36/25$, so
\begin{equation*}
a/2-B=
 \begin{pmatrix}8-u+1/16&h_{\mathrm{prof}}/2\\h_{\mathrm{prof}}/2&1/16\end{pmatrix},
\qquad
\det(a/2-B)=\frac{129-80u}{256}
                  \ge\frac{69}{1280}>0.
\end{equation*}
There $a-A=(1-\omega)a/2+\omega(a/2-B)$;
elsewhere $a-A=a/2$.
Thus $a-A$ is uniformly positive definite.
For every shortest path ending where $\omega\ne0$,
\begin{equation*}
v_\gamma^{\mathsf T}\{q-2(a-A)\}v_\gamma
 =4\omega\left(\tfrac12|w\cdot v_\gamma|^2-\tfrac1{16}\right)
 \le0.
\end{equation*}
Where $\omega=0$, the same quadratic form is zero.
Since $a-A\succ0$, the definition
\eqref{eq:geometric-propagation-number} gives $K_{\mathrm{prop}}^{\mathrm{geo}}(I;\{(0,0)\})\le2$.

Along the centers $(3,x_R)$, we have
$h_{\mathrm{prof}}(y_R)\to1$ and $\omega(3,x_R)\to1$.
The joint limiting matrices and directional gain are
\begin{equation}
a_*=\begin{pmatrix}15&0\\0&1\end{pmatrix},\qquad
A_*=\frac1{16}\begin{pmatrix}127&-8\\-8&15\end{pmatrix},
\qquad q_*=2A_*,
\qquad
\frac{\zeta_-^{\mathsf T}q_*\zeta_-}
     {\zeta_-^{\mathsf T}(a_*-A_*)\zeta_-}
 =\frac{158}{49}.
\label{eq:negative-direction-gain}
\end{equation}
Proposition~\ref{prop:tail-compactness} supplies a subsequence
$R_j\to\infty$ realizing a phase law $\lambda$ with the matrix limits
in \eqref{eq:negative-direction-gain}.
Equation~\eqref{eq:negative-local-exponential-bound} gives
\begin{equation*}
\int e^{-2\zeta\cdot z}\,\lambda(d\zeta)
 \le e^{-2\zeta_-\cdot z}\qquad\forall z\in\R^2.
\end{equation*}
By \eqref{eq:exponential-domination}, $\lambda=\delta_{\zeta_-}$.
Thus $(3,x_{R_j})$, with $x_{1,R_j}\to\infty$ and $y_{R_j}\to3$,
realizes the pure tuple $(\delta_{\zeta_-},a_*,A_*,q_*)$,
whose gain is $158/49$ by Proposition~\ref{prop:whole-line-gains}.

Since $q=2A$, \eqref{eq:flat-gain} gives $K_{\mathrm{flat}}(I)=2$.
Near time zero, $A=a/2$ and $q=a$ throughout space;
Theorem~\ref{thm:matching-arbitrary-laws},
part~\ref{item:matching-initial-laws}, gives
$K_{\alpha,\mathrm{init}}=C_\alpha^{\rm G}\le2\sqrt2$.
The uniform positivity of $a-A$ and boundedness of $q$,
together with Proposition~\ref{prop:whole-line-gains}, bound
all phase gains by a common finite constant.
Theorem~\ref{thm:sharp-limit} therefore gives
$158/49\le K_\alpha(I)<\infty$.
Together with $K_{\mathrm{prop}}^{\mathrm{geo}}(I;\{(0,0)\})\le2$ and the flat and
initial bounds, this proves
\eqref{eq:negative-curvature-obstruction}.

\section{A Borel principal matrix with infinite Hessian gain}
\label{app:rough-principal}

\begin{proof}[Proof of Example~\ref{ex:rough-principal}]
Write $r=|x-x_{\rm s}|$ and $n=(x-x_{\rm s})/r$ for $x\ne x_{\rm s}$.
The eigenvalues of $A_{\mathrm{rough}}$ are $2/3$ in the radial direction
and $1/6$ in each tangential direction. In particular,
$I_3-A_{\mathrm{rough}}\succeq I_3/3$, and
\begin{equation*}
|A_{\mathrm{rough}}(x)-I_3/2|^2
 =\left(\frac23-\frac12\right)^2
       +2\left(\frac16-\frac12\right)^2=\frac14
 ,\,x\ne x_{\rm s}.
\end{equation*}
The value $A_{\mathrm{rough}}(x_{\rm s})=I_3/3$ obeys the same bounds.
For every $z\ne0$ and $\varepsilon>0$,
\begin{equation}
A_{\mathrm{rough}}(x_{\rm s}+\varepsilon z)
 =\frac16I_3+\frac12\frac{z\otimes z}{|z|^2}.
\label{eq:rough-dilation}
\end{equation}
Distinct directions give distinct limits. A continuous
representative would coincide with $A_{\mathrm{rough}}$ on the punctured
space, where it is smooth, so these limits exclude such a
representative.

The radial ratio in Example~\ref{ex:homogeneous-reference} satisfies
\begin{equation*}
\frac{x^{\mathsf T}(I_3-A_{\mathrm{rough}}(x))x}{|x|^2}\ge\frac13,
\qquad x\ne0.
\end{equation*}
Equality holds at points collinear with $x_{\rm s}$ other than
$0,x_{\rm s}$, including arbitrarily distant points. Hence
$\mathsf r_0^{\mathrm{rad}}=\mathsf r_\infty^{\mathrm{rad}}=1/3$, and the flat expression is
$\sup_x\lambda_{\max}(A_{\mathrm{rough}}(x)^{-1})=6$.
Since $I_3-A_{\mathrm{rough}}(0)\succ0$,
Corollary~\ref{cor:gaussian-finiteness}, applied to the constant
matrix $A_{\mathrm{rough}}(0)$, gives the finite Gaussian term in
\eqref{eq:rough-frozen-candidate}.

For $v(x)=r^{1/2}$ on the punctured space, direct differentiation gives
\begin{equation*}
\begin{gathered}
D^2v=r^{-3/2}\left(\frac12I_3-\frac34n\otimes n\right),
\qquad |D^2v|^2=\frac9{16}r^{-3},\\
A_{\mathrm{rough}}:D^2v
 =\frac23\left(-\frac14r^{-3/2}\right)
        +\frac13\left(\frac12r^{-3/2}\right)=0.
\end{gathered}
\end{equation*}
Fix $\alpha\ge0$, a nonempty interval $J=(s_0,s_1)\subseteq I$,
and $0<R<|x_{\rm s}|/4$. Put $h=|J|$ and choose
$\psi\in C_c^\infty((s_0+h/4,s_0+3h/4);[0,1])$ equal
to one on $[s_0+h/3,s_0+2h/3]$, with
$\|\psi'\|_\infty\le C/h$ for a numerical constant $C$.
Choose $\chi\in C^\infty([0,\infty))$ with $0\le\chi\le1$,
$\chi=0$ on $[0,1]$, $\chi=1$ on $[2,\infty)$, and
$\|\chi'\|_\infty+\|\chi''\|_\infty\le C$ for a numerical $C$.
For $0<\delta<R/4$, define
\begin{equation*}
v_\delta(x)=\chi(r/\delta)\bigl(1-\chi(r/R)\bigr)r^{1/2},
\qquad u_\delta(t,x)=\psi(t)v_\delta(x),
\end{equation*}
with value zero at $x_{\rm s}$. Then
$u_\delta\in C_c^\infty(J\times\R^3)\subset
\mathcal W_{\alpha,0}(J)$.
On the fixed compact cylinder containing their supports,
$t^\alpha\rho$ is bounded above and below by positive constants.
Since $v_\delta=v$ on $2\delta\le r\le R$,
\begin{equation}
\|D^2u_\delta\|_{\alpha,J}^2
 \ge c\int_{2\delta}^R\frac{dr}{r}
 =c\log\frac{R}{2\delta},
\qquad
\|P_{A_{\mathrm{rough}}}u_\delta\|_{\alpha,J}^2\le C,
\label{eq:rough-cutoff-estimates}
\end{equation}
where $c,C>0$ depend only on $\alpha,J,x_{\rm s},R$,
uniformly for $0<\delta<R/4$.
To check the source bound, use
$P_{A_{\mathrm{rough}}}u_\delta=-\psi'v_\delta
-\psi A_{\mathrm{rough}}:D^2v_\delta$.
The spatial $L^2$ norms of $v_\delta$ are uniformly bounded, and
the harmonic identity above gives
\begin{equation*}
|A_{\mathrm{rough}}:D^2v_\delta|
 \le C\delta^{-3/2}\mathbf1_{\{\delta<r<2\delta\}}
       +CR^{-3/2}\mathbf1_{\{R<r<2R\}}.
\end{equation*}
The inner annulus has volume of order $\delta^3$, so its squared
$L^2$ contribution is bounded independently of $\delta$; the outer
annulus is fixed. This proves \eqref{eq:rough-cutoff-estimates}.
Letting $\delta\downarrow0$ gives an unbounded Hessian-to-source
quotient on smooth tests. Since $J$ was arbitrary, the definition
\eqref{eq:short-time-constants} gives \eqref{eq:rough-infinite-gain}.

\end{proof}

\section{Linear-growth drift and moving local limits}
\label{app:linear-growth-transport}

We use $P_{a/2,b}u=-u_t-(a/2):D^2u-b\cdot Du$ on smooth
compactly supported functions. This differential expression is
well defined for the unbounded drift below; the example uses only
Hessian-to-source quotients for such functions.

\begin{proposition}[A positive-time matching obstruction]
\label{prop:linear-growth-transport}
There exist smooth time-independent coefficients on $\R^2$ with
$I_2\preceq a\preceq3I_2$, all spatial derivatives of $a$ bounded,
and $b$ globally Lipschitz with $|b(x)|\le C(1+|x|)$, together
with a smooth positive invariant probability density $p$, such
that the following holds. For every $\alpha\ge0$ and
$I=(t_0,t_1)$ with $0<t_0<t_1<\infty$, there are intervals
$J_j\subset I$ and nonzero real functions
$u_j\in C_c^\infty(J_j\times\R^2)$ satisfying
\begin{equation}
|J_j|\longrightarrow0,\qquad
\frac{\|\mathcal H_a u_j\|_{\alpha,J_j;p}}
     {\|P_{a/2,b}u_j\|_{\alpha,J_j;p}}
\longrightarrow\frac3{\sqrt2}>2.
\label{eq:linear-growth-test-gap}
\end{equation}
\end{proposition}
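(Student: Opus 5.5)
The plan is to build the example from three ingredients: a heavy-tailed radial invariant density, so that the normalized density is flat on every shrinking window; a rigid rotation drift, which transports windows a unit distance in vanishing time; and a covariance that varies on the unit spatial scale in the direction of transport. In the comoving, rescaled frame the principal matrix then survives as a genuinely time-dependent limit $a(s)$. This is exactly the mechanism the paper excludes for bounded drift through \eqref{eq:local-coefficient-freezing}.

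\textbf{Coefficients.} I would take
\[
a(x)=I_2+2e(x)e(x)^{\mathsf T},\qquad e(x)=(\cos x_1,\sin x_1).
\]
Then $I_2\preceq a\preceq3I_2$, all spatial derivatives of $a$ are bounded, and $a$ varies at unit scale. I would take $p(x)=c(1+|x|^2)^{-2}$, so that $|D\log p|\le C/(1+|x|)$. The drift is
\[
b=\tfrac12\operatorname{div}a+\tfrac12 aD\log p+\omega x^\perp .
\]
The first two terms form the reversible part; it is bounded and its contribution preserves $p$. The rotation $\omega x^\perp$ is divergence free and tangent to the level sets of $p$, so $\operatorname{div}(p\,x^\perp)=0$. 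Hence $p$ is invariant, $b$ is globally Lipschitz, and $|b|\le C(1+|x|)$.

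\textbf{Tests and rescaling.} Fix $t_*\in I$ and a radius $R_j\to\infty$, with the center $x_j$ placed where the rotation velocity points along $e_1$, so that transport changes $x_1$. I set $r_j^2=R_j^{-1}$, and the spatial scale $r_j$ is then also small. The tests are
\[
u_j(t,x)=r_j^2\,v\bigl((t-t_*)/r_j^2,\ \Phi_{t}^{-1}(x)-x_j\bigr)/r_j ,
\]
where $\Phi_t$ is the rotation flow. I then need three facts:
\begin{enumerate}
\item Over rescaled time $s\in[0,S]$ the center travels distance about $R_jr_j^2s=s$ along $x_1$. The frame itself rotates only by the angle $\omega r_j^2S\to0$, so the Hessian is unchanged in the limit.
\item The reversible part of $b$, after multiplication by $r_j$, tends to zero.
\item Since $r_j|D\log p|\to0$ and $t^\alpha$ is essentially constant on the window, the weights $t^\alpha p$ normalize to $1$, as in \eqref{eq:tail-time-factor} and Lemma~\ref{lem:window-stability}.
\end{enumerate}
The quotient for $u_j$ therefore converges to the flat model quotient for
\[
-v_s-\tfrac12a(s):D^2v\quad\text{with}\quad a(s)=I_2+2e_se_s^{\mathsf T},\ e_s=(\cos(\phi_0+s),\sin(\phi_0+s)),
\]
with the Hessian measured by $\mathcal H_{a(s)}$. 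The transport term $b\cdot Du$ is absorbed exactly by the comoving change of variables. Hence the same limit holds for the full operator $P_{a/2,b}$.

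\textbf{Model computation (main obstacle).} After a spatial Fourier transform at a fixed frequency $\xi$, write $m(s)=\xi^{\mathsf T}a(s)\xi$. This is also the Frobenius norm $|a^{1/2}\xi\xi^{\mathsf T}a^{1/2}|$. The problem reduces to the scalar causal problem
\[
-w'+\tfrac12m(s)w=f,\qquad \text{gain}=\sup\|mw\|_{L^2}/\|f\|_{L^2}.
\]
The function $m$ oscillates between $|\xi|^2$ and $3|\xi|^2$. With $|\xi|$ large and $m$ frozen the gain is $2$, so the excess must come from order-one frequencies, where the source is placed at times with $m$ small and the output is read where $m$ is large. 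I would first evaluate the operator norm of this one-dimensional Volterra operator in the periodic regime, or in a two-state limit obtained by choosing the phase profile of $e$ to be nearly piecewise constant. I would then check that the supremum equals $3/\sqrt2$, adjusting the profile of $e(x)$ if necessary; this exact constant is the step I am least sure of. Once a model test is found with quotient close to $3/\sqrt2$, I localize it with a compactly supported cutoff $v$ and transfer it to $u_j$ by the rescaling argument above, in the style of Lemma~\ref{lem:recovery}. Since $|J_j|=r_j^2\cdot\mathrm{const}\to0$, this yields \eqref{eq:linear-growth-test-gap}.
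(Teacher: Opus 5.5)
Your reduction is sound and is a legitimate variant of the paper's construction. You use a radial density that is flat to leading order on the windows, a rotation drift that carries the window a unit distance in vanishing time, and a covariance varying at unit scale along the transport. In the comoving frame the limit model is $-v_s-\tfrac12a(s):D^2v$ with Hessian $\mathcal H_{a(s)}$, and a Fourier transform in $z$ reduces it to the scalar causal problem $-w'+\tfrac12m(s)w=g$ with output $mw$. The paper differs only in inessential ways: it takes a scalar $a=cI_2$ with $c=2-\chi_{\rm ann}\tanh x_2$, puts the windows on exactly flat annuli of $p$, and uses a translated rather than rotated frame.

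The genuine gap is the last step, which carries the entire quantitative content of the proposition. You never exhibit a test whose quotient tends to $3/\sqrt2$, and you do not even show that the model quotient exceeds $2$. The plan you outline also aims at the wrong target. The statement does not claim that $3/\sqrt2$ is the supremum of any limit model; it is only the limit of one explicit test family. Computing the operator norm of your periodic Volterra operator, and adjusting the profile of $e$ until it equals $3/\sqrt2$, is therefore unnecessary and may be impossible. For example, take the step $m=3\cdot\mathbf 1_{\{s<0\}}+\mathbf 1_{\{s>0\}}$ and $w=e^{3s/4}\mathbf 1_{\{s<0\}}+e^{-s/4}\mathbf 1_{\{s>0\}}$. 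Then $g=\tfrac34w$ on both sides, and the squared quotient is $16/3>9/2$.

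The missing idea is a gauge that makes the source an explicit multiple of the test. The paper takes $\bar c(s)=2-\tanh s$ and $f(s)=(\cosh s)^{-1/2}$, which satisfy $-f'+\tfrac12\bar cf=f$ exactly. For $v_j=\chi_t(s/j)f(s)\chi(z/j)\cos z_1$, the source is $f\cos z_1$ and the Hessian is $-\bar cf\cos z_1\,\mathbf e_1\otimes\mathbf e_1$, up to $O(1)$ cutoff errors against $O(j)$ main terms. The squared quotient therefore tends to $\int\bar c^2f^2\,ds/\int f^2\,ds=(9\pi/2)/\pi$.

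Your construction can be completed the same way. With $m(s)=|\xi|^2\{2+\cos(2\omega s+c_0)\}$, set $w=\phi E$ and $E=\exp\{\tfrac12\int_0^s(m-2|\xi|^2)\}$; then the source is exactly $g=(-\phi'+|\xi|^2\phi)E$. Let $|\xi|^2/\omega\to0$, so that $E\to1$ uniformly. Take $\phi(s)=\Phi(|\xi|^2s)$ with $\Phi$ spreading out. The squared quotient then tends to $\langle m^2\rangle/(\langle m\rangle/2)^2=9/2$.

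This regime uses spatial frequencies $|\xi|\to0$ and tests lasting many periods. Your rescaled windows $|s|,|z|\le L_j$ must therefore grow with $j$, while their physical sizes $r_jL_j$ and $r_j^2L_j$ still tend to zero. The cutoff errors are then controlled as in the paper's choice $\chi_t(s/j)$, $\chi(z/j)$.
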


\begin{proof}
We first construct the coefficients and an invariant density. Set
\begin{equation*}
\mathsf J=\begin{pmatrix}0&-1\\1&0\end{pmatrix},
\qquad R_j=e^{4j}.
\end{equation*}
Choose smooth $4$-periodic functions $\psi_0,\psi_1$ taking
values in $[0,1]$ such that
\begin{equation*}
\begin{aligned}
\psi_0(v)&=0 &&\text{if }\dist(v,4\mathbb Z)\le1,&
\psi_0(v)&=1 &&\text{if }\dist(v,4\mathbb Z)\ge3/2,\\
\psi_1(v)&=1 &&\text{if }\dist(v,4\mathbb Z)\le1/4,&
\psi_1(v)&=0 &&\text{if }\dist(v,4\mathbb Z)\ge1/2.
\end{aligned}
\end{equation*}
The fixed transition widths allow the choices to satisfy
$\|\psi_0^{(m)}\|_\infty+\|\psi_1^{(m)}\|_\infty\le C_m$,
$m\ge1$, where $C_m$ depends only on the derivative order $m$.
Set $V(x)=\chi_{\mathrm{ann}}(x)=0$ for $|x|\le2$, and, for $|x|>2$, set
\begin{equation*}
V(x)=\int_2^{|x|}r\psi_0(\log r)\,dr,
\qquad \chi_{\mathrm{ann}}(x)=\psi_1(\log|x|).
\end{equation*}
Both definitions are smooth across $|x|=2$, since the two
periodic functions vanish near $\log2$. Put
$m=\int_0^4\psi_0(v)\,dv\in[1,4]$. For $r=|x|\ge2e^4$,
\begin{equation*}
m e^{-8}r^2
\le\int_{re^{-4}}^r t\psi_0(\log t)\,dt
\le V(x)\le r^2/2.
\end{equation*}
The radial and tangential eigenvalues of $D^2V$ outside $B_2$
are $\psi_0(\log r)+\psi_0'(\log r)$ and
$\psi_0(\log r)$, so $D^2V$ is bounded. On each annulus
$e^{-1}R_j<|x|<eR_j$, the function $V$ is constant.

Define $p(x)=Z_p^{-1}\exp{(-V(x))}$ with,
$Z_p=\int_{\R^2}\exp{(-V(x))}\,dx$ and
\begin{equation*}
c(x)=2-\chi_{\mathrm{ann}}(x)\tanh x_2, \qquad
a(x)=c(x)I_2, \qquad b(x)=\mathsf Jx-DV(x)+\tfrac12Dc(x).
\end{equation*}
Here $0<Z_p<\infty$, $1\le c\le3$, and all derivatives of
$c$ are bounded. The support of $\chi_{\mathrm{ann}}$ lies where $DV=0$,
so $cDV=2DV$ and
\begin{equation*}
Db=\mathsf J-D^2V+\tfrac12D^2c,\qquad
bp=\mathsf Jxp+\tfrac12D(cp).
\end{equation*}
The first identity proves the Lipschitz bound for $b$.
Since $p$ is radial, $\operatorname{div}(\mathsf Jxp)=0$;
the second gives
\begin{equation*}
\tfrac12\Delta(cp)-\operatorname{div}(bp)=0.
\end{equation*}
The coefficients $b,\sqrt c$ are globally Lipschitz and $p$
has finite second moment. The constant curve $\nu_t=p\mathcal L^2$
solves the weak forward equation above and satisfies
\begin{equation*}
\int_0^T\int_{\R^2}(|c(x)I_2|+|b(x)|)\,\nu_t(dx)\,dt
 \le CT\int_{\R^2}(1+|x|)p(x)\,dx<\infty.
\end{equation*}
The superposition principle \citep[Theorem~2.5]{trevisan-2016}
therefore gives a martingale solution with marginals $\nu_t$.
Weak uniqueness for the globally Lipschitz SDE
$dX_t=b(X_t)\,dt+\sqrt{c(X_t)}\,dW_t$ identifies that solution
with the diffusion started from $p\mathcal L^2$.
Its actual marginals consequently satisfy $\rho(t,x)=p(x)$.

\paragraph*{Moving coordinates.}
Set $t_*=(t_0+t_1)/2$ and put
\begin{equation*}
r_j=R_j^{-1/2},\qquad x_j=R_j\mathbf e_1,\qquad
\gamma_j(s)=e^{r_j^2s\mathsf J}x_j,
\qquad (t,x)=(t_*+r_j^2s,\gamma_j(s)+r_jz).
\end{equation*}
Set $Q_j=\{(s,z):|s|\le2j,\ |z|\le2j\}$.
For $(s,z)\in Q_j$ and all sufficiently large $j$, the physical
points lie where $V=V(x_j)$ and $\chi_{\mathrm{ann}}=1$. Thus
\begin{equation*}
\frac{p(\gamma_j(s)+r_jz)}{p(x_j)}=1,
\qquad
c_j(s,z):=c(\gamma_j(s)+r_jz)
 =2-\tanh\bigl(R_j\sin(s/R_j)+r_jz_2\bigr).
\end{equation*}
With $\bar c(s)=2-\tanh s$, the Lipschitz bound for $\tanh$
and $|\sin v-v|\le|v|^3/6$ give
\begin{equation*}
\sup_{|s|,|z|\le2j}|c_j(s,z)-\bar c(s)|
 \le C\bigl(j^3R_j^{-2}+jr_j\bigr)\longrightarrow0.
\end{equation*}
For a smooth function $v(s,z)$ with $\supp v\subset Q_j$,
write its pullback as
\begin{equation*}
u(t,x)=v\left(\frac{t-t_*}{r_j^2},
              \frac{x-e^{(t-t_*)\mathsf J}x_j}{r_j}\right).
\end{equation*}
At the corresponding coordinates, the chain rule gives exactly
\begin{equation}
r_j^2P_{a/2,b}u
 =-v_s-\tfrac12c_j\Delta_zv
   -\bigl(r_j^2\mathsf Jz+\tfrac12r_jDc(\gamma_j+r_jz)\bigr)
       \cdot D_zv, \qquad
r_j^2\mathcal H_a u=c_jD_z^2v.
\label{eq:linear-growth-chain-rule}
\end{equation}
The remaining drift is bounded by $C(jr_j^2+r_j)$ on $Q_j$. Writing $F$ for the normalized density limit, the joint
limit of the density and coefficients in these moving coordinates is
\begin{equation*}
F=1,\qquad
\bar a(s)=\bar q(s)=\bar c(s)I_2,\qquad
\bar A(s)=\bar c(s)I_2/2,
\end{equation*}
with $P_{\bar A}=-\partial_s-\bar c(s)\Delta_z/2$ and
$\mathcal H_{\bar q}=\bar c(s)D_z^2$. Its time domain is
the whole line, since $t_*/r_j^2\to\infty$.

\paragraph*{Explicit compact tests.}
Choose $\chi_t\in C_c^\infty((-2,2);[0,1])$ equal to one
on $[-1,1]$, and a real $\chi\in C_c^\infty(B_2)$ with
$\|\chi\|_2=1$, such that
$\|\chi_t'\|_\infty+\|D\chi\|_2+\|D^2\chi\|_2$
is bounded by a numerical constant.
Set
\begin{equation*}
f(s)=(\cosh s)^{-1/2},\qquad
v_j(s,z)=\chi_t(s/j)f(s)\chi(z/j)\cos z_1,
\end{equation*}
and let $u_j$ be its pullback above, on
$J_j=(t_*-3jr_j^2,t_*+3jr_j^2)$.
For large $j$, $J_j\subset I$, $|J_j|=6j/R_j\to0$, and
$u_j\in C_c^\infty(J_j\times\R^2)$.

The choice of $f$ gives the exact identity
\begin{equation*}
f'=-\tfrac12\tanh s\,f,\qquad
-f'+\tfrac12\bar c f=f.
\end{equation*}
The substitution $y=\tanh s$ gives
\begin{equation*}
\int_{\R} f^2\,ds=\pi,\qquad
\int_{\R}\tanh s\,f^2\,ds=0,\qquad
\int_{\R}\tanh^2s\,f^2\,ds=\pi/2,
\qquad \int_{\R}\bar c^2f^2\,ds=9\pi/2.
\end{equation*}
Also, integration by parts in the oscillatory term of
$\cos^2z_1=(1+\cos2z_1)/2$ yields
\begin{equation*}
j^{-2}\int_{\R^2}\chi(z/j)^2\cos^2z_1\,dz
 \longrightarrow\tfrac12\|\chi\|_{L^2(\R^2)}^2.
\end{equation*}
Differentiating the cutoffs shows that, in
$L^2(ds\,dz)$,
\begin{equation*}
\begin{aligned}
\bigl\|D_z^2v_j+
 \chi_t(s/j)f(s)\chi(z/j)\cos z_1\,\mathbf e_1\otimes \mathbf e_1\bigr\|_2
 &=O(1),\\
\bigl\|P_{\bar A}v_j-
 \chi_t(s/j)f(s)\chi(z/j)\cos z_1\bigr\|_2
 &=O(1).
\end{aligned}
\end{equation*}
Indeed, every spatial cutoff derivative contributes $j^{-1}$
or $j^{-2}$, while $\|\chi(\cdot/j)\|_2=O(j)$;
the time cutoff error is bounded by
$C\|f\mathbf1_{\{|s|\ge j\}}\|_2$.
Since $|\mathbf e_1\otimes \mathbf e_1|=1$, these calculations give
\begin{equation*}
j^{-2}\|\mathcal H_{\bar q}v_j\|_2^2
 \longrightarrow\tfrac{9\pi}4\|\chi\|_2^2,
\qquad
j^{-2}\|P_{\bar A}v_j\|_2^2
 \longrightarrow\tfrac\pi2\|\chi\|_2^2.
\end{equation*}

Finally, $\|D_zv_j\|_2+\|D_z^2v_j\|_2=O(j)$, so the
coefficient errors in \eqref{eq:linear-growth-chain-rule}
are $o(j)$ in these norms. The coordinate Jacobian is $r_j^4$,
which cancels the squared derivative factor $r_j^{-4}$.
On the support, the density is exactly $p(x_j)$ and
$(t_*+r_j^2s)^\alpha/t_*^\alpha\to1$ uniformly. Consequently
\begin{equation*}
\frac{\|\mathcal H_a u_j\|_{\alpha,J_j;p}^2}
     {j^2t_*^\alpha p(x_j)}
 \longrightarrow\tfrac{9\pi}4\|\chi\|_2^2,
\qquad
\frac{\|P_{a/2,b}u_j\|_{\alpha,J_j;p}^2}
     {j^2t_*^\alpha p(x_j)}
 \longrightarrow\tfrac\pi2\|\chi\|_2^2.
\end{equation*}
Taking the square root of their ratio proves
\eqref{eq:linear-growth-test-gap}.
\end{proof}

The invariant law $\mu=p\mathcal L^2$ has full support and hence
does not belong to $\mathcal D$. On the flat annuli, the normalized
densities $p(x_j+\ell_G(x_j)z)/p(x_j)$ are identically one on
$\{|z|\le L\}$ for every fixed $L>0$ and all sufficiently large $j$.
Equations~\eqref{eq:full-space-law} and
\eqref{eq:exponential-domination} force their only exponential-mixture
phase law to be $\delta_0$, which excludes the nonzero annular
phase support required for $\mathcal G$.

\bibliographystyle{plainnat}
\bibliography{references}
\end{document}